\documentclass[10pt]{amsart}

\usepackage{pdfpages}
\usepackage[margin=1.16in]{geometry}
\usepackage{setspace}
\usepackage{array}
\usepackage{graphicx}
\graphicspath{{images/}}
\usepackage{amsmath,amssymb,amsfonts,amsthm,mathtools}
\numberwithin{equation}{section}
\usepackage{cite}
\usepackage{listings}
\usepackage{multicol}
\usepackage{xcolor}
\usepackage{titlesec}
\usepackage{fancyhdr}
\usepackage{tikz-cd}
\usepackage{amscd}
\usepackage{xspace}
\usepackage[hidelinks]{hyperref}
\usepackage{booktabs}
\usepackage{longtable}
\usepackage{enumitem}
\usepackage[T1]{fontenc}
\usepackage[utf8]{inputenc}
\usepackage{microtype}
\usepackage{mathrsfs}
\usepackage{etoolbox}
\usepackage{bm}
\AtBeginEnvironment{tabular}{\footnotesize}

\titleformat{\section}
  {\normalfont\Large\bfseries}
  {\thesection}
  {1em}
  {}

\titlespacing*{\section}
  {0pt}
  {3.5ex plus 1ex minus .2ex}
  {2.3ex plus .2ex}

\titleformat{\subsection}
  {\normalfont\large\bfseries}
  {\thesubsection}
  {1em}
  {}
  
\titlespacing*{\subsection}
  {0pt}
  {3.5ex plus 1ex minus .2ex}
  {2.3ex plus .2ex}

\titleformat{\subsubsection}
  {\normalfont\fontsize{9.6pt}{11.5pt}\selectfont\bfseries}
  {\thesubsubsection}
  {1em}
  {}

\titlespacing*{\subsubsection}
  {0pt}
  {2.5ex plus .5ex minus .2ex}
  {1ex plus .2ex}
  
\makeatletter
\def\l@section{\@tocline{1}{1.0em}{0pt}{1.5em}{\bfseries}}
\def\l@subsection{\@tocline{2}{0pt}{1.5em}{2.3em}{}}
\makeatother

\newcommand{\currentsectiontitle}{}

\fancypagestyle{mainstyle}{
  \fancyhf{}
  \fancyhead[C]{\footnotesize\currentsectiontitle}
  \fancyfoot[C]{\small\thepage}

}

\newcommand{\mysection}[1]{%
  \section{#1}%
  \gdef\currentsectiontitle{#1}%
  \thispagestyle{mainstyle}%
}

\makeatletter
\renewenvironment{abstract}{%
  \ifx\maketitle\relax
    \ClassWarning{\@classname}{Abstract should precede
      \protect\maketitle\space in AMS document classes; reported}%
  \fi
  \global\setbox\abstractbox=\vtop \bgroup
    \normalfont\normalsize
    \list{}{\labelwidth\z@
      \leftmargin3pc \rightmargin\leftmargin
      \listparindent\normalparindent \itemindent\z@
      \parsep\z@ \@plus\p@
      
    }%
    \item[\hskip\labelsep\scshape\abstractname.]%
}{%
  \endlist\egroup
  \ifx\@setabstract\relax \@setabstracta \fi
}
\makeatother

\newcommand{\ttt}{two-by-two\xspace}
\newcommand{\bbb}{ura\xspace}

\newcommand{\C}{\mathbb C}
\newcommand{\Z}{\mathbb Z}

\newcommand{\OO}{\mathcal O}

\newcommand{\M}{\mathcal M}

\newcommand{\Jac}{\operatorname{Jac}}
\newcommand{\Pic}{\operatorname{Pic}}
\newcommand{\Prym}{\operatorname{Prym}}
\newcommand{\Nm}{\operatorname{Nm}}
\newcommand{\tr}{\operatorname{tr}}
\newcommand{\rank}{\operatorname{rank}}
\newcommand{\id}{\operatorname{id}}

\newcommand{\Tot}{\operatorname{Tot}}

\newcommand{\Aut}{\operatorname{Aut}}

\newcommand{\PT}{\operatorname{PT}}
\newcommand{\Res}{\operatorname{Resultant}}

\theoremstyle{plain}
\newtheorem{theorem}{Theorem}[section]
\newtheorem{proposition}[theorem]{Proposition}
\newtheorem{lemma}[theorem]{Lemma}
\newtheorem{corollary}[theorem]{Corollary}

\theoremstyle{definition}
\newtheorem{definition}[theorem]{Definition}
\newtheorem{example}[theorem]{Example}

\theoremstyle{remark}
\newtheorem{remark}[theorem]{Remark}

\numberwithin{equation}{section}
\numberwithin{figure}{section}
\numberwithin{table}{section}

\makeatletter
\def\@settitle{%
  \begin{center}%
    \normalfont\huge \@title
  \end{center}%
}
\makeatother

\title{Elementary Moves on Ideal Webs, Local Monodromies on Hitchin Fibrations, and Tropical Mutations on $\mathrm{K_0}$ Groups}
\author{Yuhan Liao}

\begin{document}

\pagestyle{empty}
\pagenumbering{gobble}

\begin{abstract}
This paper builds on the work of Goncharov by studying the correspondence between topological spectral covers constructed from generic $A_{m-1}$-webs with rank $m\ge2$, Hitchin spectral curves over good Hitchin loci, and the Grothendieck groups $K_0(\mathcal{D}_{\Gamma})$ associated with the quivers with potential $(Q_{\Gamma},W_{\Gamma})$ assigned to corresponding bipartite $A_{m-1}$-graphs.

\smallskip
We show that the automorphisms on the non-frozen quiver lattices $\Lambda_{\Gamma}^{\circ}$ induced by compositions of elementary moves on ideal webs/bipartite graphs are the same as the Gauss--Manin monodromy actions on the $SL_m$ Hitchin fiber lattices $H_1(\operatorname{Prym}(\Sigma);\mathbb{Z})$ locally around discriminant loci. And the Gauss--Manin monodromy actions agree with the automorphisms on the Grothendieck groups $K_0(\mathcal{D}_{\Gamma}^{\circ})$ induced by compositions of mutations on the quivers with potential $(Q_{\Gamma},W_{\Gamma})$.
\end{abstract}

\maketitle
\thispagestyle{empty}

\clearpage

\newgeometry{
  top=1.1in,
  bottom=1.1in,
  left=1.16in,
  right=1.16in
}

\tableofcontents
\thispagestyle{empty}

\clearpage

\restoregeometry

\pagenumbering{arabic}
\pagestyle{mainstyle}

\newpage
\mysection{Introduction}
The present work is motivated by the interplay among three structures associated with decorated surfaces:
\begin{itemize}
    \item the combinatorial data encoded by ideal webs;
    \item the geometric data arising from Hitchin fibrations;
    \item the algebraic and categorical data provided by cluster structures on moduli spaces of local systems and by quivers with potential.
\end{itemize}
A central theme is that elementary moves on ideal webs are reflected in the geometry of spectral covers and in the mutation theory of the corresponding cluster and 3-Calabi--Yau categorical structures.

\bigskip
Let $\mathring{C}=C\setminus P$ be a smooth complex genus-$g$ curve with a set $P$ of $n$ punctures satisfying $2g-2+n>0$.
In rank $m=2$, the relation between the categorical and spectral lattices is well-established.
Goncharov's $A_1$-webs/bipartite $A_1$-graphs are in bijection with ideal triangulations, and two-by-two moves correspond to flips.
If $\Gamma$ is the bipartite $A_1$-graph associated with an ideal triangulation $T$, then Goncharov's non-frozen quiver lattice
$\Lambda_\Gamma^{\circ}
=\mathbb Z[\{\text{internal faces of }\Gamma\}]$
is indexed by the edges of $T$ \cite{Goncharov}.

\medskip
Let $W$ be the $A_1$-web corresponding to $\Gamma$. Denote by $\pi:\Sigma\to \mathring{C}$ its spectral cover.
One has
$\Lambda_{\Gamma}^{\circ}
\cong\ker(
H_1(\Sigma;\mathbb Z)
\xrightarrow{\pi_*}
H_1(\mathring{C};\mathbb Z))$ \cite{Goncharov}.
On the other hand, Bridgeland--Smith define
the hat-homology lattice by
$\widehat H(\phi)
=H_1(\Sigma;\mathbb Z)^{-}$,
the anti-invariant part for the covering involution \cite{BS15}.

\medskip
For a saddle-free quadratic differential $\phi$, the WKB triangulation
$T(\phi)$ is an ideal triangulation, and Bridgeland--Smith associate to
each edge $e\in T(\phi)$ a spectral cycle
$\gamma_e\in \widehat H(\phi)$.
They identify the edge lattice of $T(\phi)$ with
$\widehat H(\phi)$, and
$K_0(\mathcal D(T,\epsilon))
\xrightarrow{\sim}
\widehat H(\phi)$,
$[S_e]\longmapsto \gamma_e$.
Thus
$\Lambda_\Gamma^\circ
\cong
K_0(\mathcal D(T,\epsilon))
\cong
\widehat H(\phi)$,
under which the basis elements correspond as
$e_G
\leftrightarrow
[S_e]
\leftrightarrow
\gamma_e$, where $G$ denotes an internal face of $\Gamma$.

\medskip
Moreover, this identification is compatible with wall-crossing.
If $\Gamma_-$ and $\Gamma_+$ are related by the flip corresponding to
crossing a simple wall, Proposition~10.7 of
\cite{BS15} gives an equivalence
$\Psi:
\mathcal D_{\Gamma_-}^{\circ}
\xrightarrow{\sim}
\mathcal D_{\Gamma_+}^{\circ}$
such that the induced isomorphism on Grothendieck groups and the isomorphism given by the Gauss--Manin connection fit into the following commutative diagram, under above spectral identifications \cite{BS15}:
\[
\begin{CD}
K_0(\mathcal D_{\Gamma_-}^{\circ})
@>{\Psi_*}>>
K_0(\mathcal D_{\Gamma_+}^{\circ})
\\
@V{\nu_-}V{\sim}V
@V{\nu_+}V{\sim}V
\\
\widehat H(\phi_-)
@>{\mathrm{GM}}>>
\widehat H(\phi_+),
\end{CD}
\]
and the Euler form is identified with the intersection form.
Thus, in rank two, mutation of the Goncharov's quiver lattice and the induced mutation on $K_0$ are compatible with the Gauss--Manin connection of the corresponding spectral cycles.

\medskip
Goncharov introduces $A_m$-webs/bipartite $A_m$-graphs as higher-rank analogues of ideal triangulations. An $A_m$-web $W$ and its corresponding bipartite $A_m$-graph $\Gamma$ determine a spectral cover
$\pi_W:\Sigma_W\to \mathring{C}$
and the non-frozen quiver lattice
$\Lambda_\Gamma^\circ=\mathbb Z[\{\text{internal faces of }\Gamma\}]
\cong\ker(
H_1(\Sigma_W;\mathbb Z)\to H_1(\mathring{C};\mathbb Z))$ \cite{Goncharov}.
For the Hitchin system, Goncharov proposes these quiver lattices and the associated $K_0$ groups as higher-rank counterparts of the spectral homology and categorical structures appearing in rank two.

\newpage
This paper builds on the framework introduced in \cite{Goncharov}. Section 2 recalls the background and notation for bipartite graphs and webs on decorated surfaces. At the beginning of Sections 3, 6 and 7, we recall additional results from \cite{Goncharov} that are specific to the corresponding arguments.

\medskip
For rank $m\ge 2$, we first show the correspondence between topological spectral covers constructed from generic $A_{m-1}$-webs and Hitchin spectral curves over good Hitchin loci, and the correspondence between the mutations on non-frozen quiver lattices $\Lambda_\Gamma^\circ$ induced by elementary moves on ideal webs and the Gauss--Manin monodromy actions on $SL_m$ Hitchin fiber lattices $H_1(\Prym(\Sigma);\mathbb Z)$.

\medskip
We show that the automorphisms on $\Lambda_\Gamma^\circ$ induced by two successive \ttt moves in the same orientation are the same as the Gauss--Manin monodromy actions on $H_1(\Prym(\Sigma);\mathbb Z)$ along small meridians in good Hitchin loci around discriminant loci $\mathcal{D}_{bdry,C,P}$ (Proposition~\ref{prop:hittoweb} and Proposition~\ref{prop:match}). 

\medskip
The automorphisms on $\Lambda_\Gamma^\circ$ induced by other types of elementary moves and the Gauss--Manin monodromy actions on $H_1(\Prym(\Sigma);\mathbb Z)$ along small meridians in good Hitchin loci around other types of discriminant loci are trivial.
Hence we conclude the correspondence between the web side and the Hitchin side in Table~\ref{tab:compare} and Table~\ref{table:3compare} which we reproduce below.

\medskip
We prove that the Gauss--Manin monodromy actions on $H_1(\Prym(\Sigma);\mathbb Z)$ along small meridians in good Hitchin loci around discriminant loci $\mathcal{D}_{bdry,C,P}$, and the automorphisms on $K_0(\mathcal{D}_{\Gamma}^{\circ})$ induced by two successive \ttt moves in the same orientation on generic $A_{m-1}$-webs, fit into the commutative diagram of isomorphisms in Theorem~\ref{thm:GM}, which is reproduced below as Theorem~\ref{thm:intro}, and the intersection form is identified with the Euler form.

\begin{table}[htbp]
\label{tab:1}
\centering
\caption{Correspondence between the web perspective and the Hitchin perspective.}
\begin{tabular}{c|c|c}
&Web Perspective & Hitchin Perspective \\
\hline
$\text{Parametrization}$
&$\begin{array}{ccc}
\text{Two successive two-by-two moves}  \\
\text{A singular rank-$m$ spectral web}   \\
\text{Return to the initial generic $A_{m-1}$-web}   \\
\end{array}
$
& $\begin{array}{ccc}
\text{A small meridian around } \mathcal{D}_{bdry,C,P}  \\
\text{A point in } \mathcal{D}_{bdry,C,P}^{\circ}\\
\text{Return to the initial point in }B_{SL_m,C,P}^{\mathrm{good}}\\
\end{array}$ \\
\hline
$\begin{array}{cc}
\mathring{S}=S\setminus P \\
\mathring{C}=C\setminus P  
\end{array}$
&$\begin{array}{ccc}
\text{Half-twist of simple branch points }q_+,q_-  \\
\text{Simple branch points $q_+,q_-$ collide}  \\
q_+ \text{ and }q_- \text{ braid and switch positions}  \\
\end{array}$ 
& $\begin{array}{ccc}
\text{Half-twist of simple zeros $q_+,q_-$ of }\Delta  \\
\text{Simple zeros $q_+,q_-$ collide into a double zero}  \\
q_+ \text{ and }q_- \text{ braid and switch positions}  \\
\end{array}$ \\
\hline
$\text{Spectral Cover}$
&$\begin{array}{ccc}
\text{Dehn twist along the face loop lift}  \\
\text{Nodal singularity at the degenerated loop}  \\
\text{Automorphism on the quiver lattice}\\
\end{array}$ 
& $\begin{array}{ccc}
\text{Dehn twist along the vanishing cycle}  \\
\text{Nodal singularity at the "vanished" cycle}  \\
\text{Monodromy on the Hitchin fiber lattice}\\
\end{array}$
\end{tabular}
\end{table}

\begin{table}[htbp]
\label{tab:2}
\centering
\caption{Elementary moves and disc areas around discriminant loci.}
\begin{tabular}{c|c|c}
Two \ttt moves & Two shrink \& expand moves & Isotopy of webs \\
\hline
Around $\mathcal{D}_{bdry,C,P}$
& Around $\mathcal{D}_{cau,C,P}$
& Around $\mathcal{D}_{Max,C,P}$\\
\hline
2 simple branch points braid/collide
& 2 simple branch points braid/coalesce
& 2 simple branch points braid/coincide\\
\hline
Dehn twist/nodal singularity
& 2 ramification points rotate/coalesce
& 2 ramification points over close values\\
\hline
Picard-Lefschetz transformation
& Trivial automorphism or monodromy
& Trivial automorphism or monodromy
\end{tabular}
\end{table}

\newpage
\begin{theorem}
\label{thm:intro}
Let $\mathring{C}=C\setminus P$
be a $n$-punctured smooth complex curve of genus-$g$ with $2g-2+n>0$. Let $G=SL_m$, $m\ge2$.
Let $a\in B_{G,C,P}^{\mathrm{good}}$ be close to a point $b_0\in \mathcal{D}_{bdry,C,P}^\circ$.

Then for any sufficiently small meridian in $B_{G,C,P}^{\mathrm{good}}$ around $\mathcal{D}_{bdry,C,P}$ starting and ending at $a$, there is a commutative diagram of isomorphisms:
\begin{equation}
\begin{tikzcd}
H_1(\Prym(\Sigma_a);\mathbb Z)
\arrow[r, "GM"]
\arrow[d, "k_c"]
&
H_1(\Prym(\Sigma_a);\mathbb Z)
\arrow[d, "k_c"]
\\
K_0(\mathcal{D}_{\Gamma}^{\circ})
\arrow[r, "(\Phi'^\pm)_\ast\circ(\Phi^\pm)_\ast"]
&
K_0(\mathcal{D}_{\Gamma}^{\circ}),
\end{tikzcd}
\end{equation} 
where the top automorphism is given by the Gauss--Manin monodromy, and $K_0(\mathcal{D}_{\Gamma}^{\circ})$ is the Grothendieck group associated with the quiver with potential assigned to certain generic $A_{m-1}$-web $W$ and its corresponding bipartite $A_{m-1}$-graph $\Gamma$ on $C$.

$(\Phi'^\pm)_\ast\circ(\Phi^\pm)_\ast$ is the composition of two successive mutations on $K_0(\mathcal{D}_{\Gamma}^{\circ})$-basis induced by two successive \ttt moves in the same orientation at certain internal face of $W$ in the same orientation.

The skew bilinear intersection form on the Hitchin fiber lattices and the skew Euler form on the $K_0$ groups are compatible with the isomorphisms in the commutative diagram.
\end{theorem}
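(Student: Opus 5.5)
The plan is to assemble the commutative square out of three identifications, working in the neighbourhood of the chosen point $b_0\in\mathcal{D}_{bdry,C,P}^\circ$.

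\textbf{Step 1: match spectral curves and lattices.} Using the correspondence between Hitchin spectral curves over $B_{G,C,P}^{\mathrm{good}}$ and topological spectral covers of generic $A_{m-1}$-webs (Proposition~\ref{prop:hittoweb}), I will choose a generic web $W$ with $\Sigma_a\cong\Sigma_W$ as topological covers of $\mathring{C}$. This yields an isomorphism
\[
H_1(\Prym(\Sigma_a);\mathbb Z)\cong \ker\bigl(H_1(\Sigma_W;\mathbb Z)\to H_1(\mathring{C};\mathbb Z)\bigr)\cong\Lambda_\Gamma^\circ .
\]
For the identification of the Prym lattice with this kernel (for $SL_m$), I will cite Goncharov's description of $\Lambda_\Gamma^\circ$ via face loop lifts.

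\textbf{Step 2: compose with the categorical identification.} Next I will compose with Goncharov's identification $\Lambda_\Gamma^\circ\cong K_0(\mathcal{D}_\Gamma^\circ)$, which sends the face basis $e_G$ to the classes of simple modules $[S_G]$. The composite of Steps 1 and 2 is the vertical map $k_c$. Under it:
\begin{itemize}
\item the intersection form on $H_1(\Sigma)$ restricts to the quiver form $\langle e_G,e_{G'}\rangle$ (signed count of shared edges of faces), since face loop lifts intersect exactly along common edges;
\item the quiver form equals the skew Euler form on $K_0$ of the 3-CY category, by the standard Ginzburg-algebra computation $\chi(S_i,S_j)=\#(j\to i)-\#(i\to j)$.
\end{itemize}
This gives the compatibility of forms.

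\textbf{Step 3: the top arrow.} For a small meridian around $\mathcal{D}_{bdry,C,P}$, two simple zeros $q_\pm$ of the discriminant undergo a full braid. Locally the spectral curve degenerates to a node whose vanishing cycle $\gamma$ is the lift of the face loop of the internal face $G_0$ where the \ttt moves happen (Proposition~\ref{prop:match}). By Picard--Lefschetz,
\[
GM(x)=x+\langle x,\gamma\rangle\gamma \qquad\text{(sign fixed by orientation)}.
\]

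\textbf{Step 4: the bottom arrow.} Each \ttt move at $G_0$ is a mutation at $G_0$, and $\Phi^\pm$ is the corresponding Keller--Yang derived equivalence. On $K_0$ it acts by
\[
[S_{G_0}]\mapsto-[S_{G_0}],\qquad [S_G]\mapsto [S_G]+[\pm\langle e_G,e_{G_0}\rangle]_+[S_{G_0}].
\]
The quiver after the first move has $G_0$ with reversed arrows, so the second move in the same orientation uses the opposite sign convention. I will compute the composite $(\Phi'^\pm)_*\circ(\Phi^\pm)_*$ explicitly: $[S_{G_0}]$ returns to itself, and each $[S_G]$ picks up $[\epsilon]_++[-\epsilon]_+$-type terms, which sum to $\pm\langle e_G,e_{G_0}\rangle[S_{G_0}]$. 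This is exactly the Picard--Lefschetz transvection (equivalently, the spherical twist by $S_{G_0}$). Hence after returning to the original quiver $Q_\Gamma$, the two arrows agree under $k_c$, the square commutes, and both arrows are isomorphisms preserving the forms.

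\textbf{Main obstacle.} The hardest part will be the sign and orientation bookkeeping in Steps 3--4. Concretely, I must check three things simultaneously:
\begin{itemize}
\item that the meridian's direction corresponds to the chosen orientation $\pm$ of the moves;
\item that the vanishing cycle equals $k_c^{-1}[S_{G_0}]$ rather than its negative;
\item that the composite of two same-orientation mutations is the twist rather than its inverse.
\end{itemize}
I would first fix orientations in the local model $y^2=x^2-\epsilon$ near the node and test the case $m=2$. There the statement reduces to the rank-two Bridgeland--Smith square from the introduction, and the general case is local, so the same computation should apply.
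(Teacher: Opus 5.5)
Your skeleton is the paper's (Corollary~\ref{cor:iso1}, the face-basis identification of Theorem~\ref{thm:7com}, Proposition~\ref{prop:hittoweb}). The one difference is that in Step~4 you compute the composite of the two Keller--Yang mutations directly, instead of going through the ura bijections $\phi^\pm$ and the Dehn twist. That shortcut is legitimate and cleaner: once the forms match and the vanishing cycle corresponds to $\pm[S_{G_0}]$, the comparison is pure algebra.

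The genuine gap is in Step~1, and you inherit it from the paper's Corollary~\ref{cor:iso1} and the lemma defining $k_c$. The lattice $H_1(\Prym(\Sigma_a);\mathbb Z)=\ker\bigl(H_1(\Sigma_a;\mathbb Z)\to H_1(C;\mathbb Z)\bigr)$ is built from the compact spectral curve. By contrast, $\Lambda_\Gamma^\circ$ is the kernel for the punctured cover $\Sigma_W\setminus\pi_W^{-1}(P)\to\mathring C$, and the two are not isomorphic. The first has rank $2(g(\Sigma_a)-g)=2(m^2-1)(g-1)+m(m-1)n$. The second has rank equal to the number of faces, $(m^2-1)(2g-2+n)$. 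The excess $(m-1)n$ is spanned by differences of the loops around the $m$ preimages of each puncture. These lie in $\Lambda_\Gamma^\circ$, where they form the radical of the quiver form, but they vanish in $H_1$ of the compact curve.

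Two examples make this concrete. For $m=2$ on the once-punctured torus you would need $\mathbb Z^3\cong\mathbb Z^2$; moreover $e_1+e_2+e_3$ lies in the radical of the Markov form, whereas the intersection form on a Prym lattice is nondegenerate over $\mathbb Q$. On the four-punctured sphere you would need $\mathbb Z^6\cong\mathbb Z^2$. Your proposed $m=2$ test would have exposed this, because Bridgeland--Smith's $\widehat H(\phi)$ is built from the punctured cover.

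What is true is the following. $K_0(\mathcal D_\Gamma^\circ)\cong\Lambda_\Gamma^\circ$ surjects onto $H_1(\Prym(\Sigma_a);\mathbb Z)$ with kernel exactly that radical. It is also isometric to $\ker\bigl(H_1(\Sigma_a\setminus\pi_a^{-1}(P);\mathbb Z)\to H_1(\mathring C;\mathbb Z)\bigr)$. The Picard--Lefschetz transvection fixes the radical, so your argument proves the square with $k_c$ replaced by either of these maps. It does not prove it with an isomorphism $k_c$ as stated, since none exists.

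Three smaller points.

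\textbf{Direction of Step~3.} You invoke Proposition~\ref{prop:match}, which goes from a face to a meridian. The theorem quantifies over meridians, so you need the converse: that the web of Step~1 can be chosen so that the arc along which $q_\pm$ collide runs through a $4$-valent face $G_0$ whose two $\bullet$-vertices are $q_\pm$. A web that is merely Hurwitz-isomorphic need not have this property. This is where the freedom in choosing $W$ must actually be used; Proposition~\ref{prop:hittoweb} asserts it without argument.

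\textbf{Coefficient in Step~4.} The coefficient is $[\epsilon]_+-[-\epsilon]_+=\epsilon$, with the minus sign coming from $e_{G_0}'\mapsto-e_{G_0}$. The sum $[\epsilon]_++[-\epsilon]_+=|\epsilon|$ would not give the transvection.

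\textbf{Sign bookkeeping.} Part of your sign worry is vacuous: $x\mapsto x+\langle x,\gamma\rangle\gamma$ is invariant under $\gamma\mapsto-\gamma$. Only two things need matching. First, the direction of the meridian against the choice of $\pm$; the two composites are mutually inverse transvections. Second, your Euler-form sign, which is opposite to the paper's right-module convention $\chi(S_i,S_j)=\varepsilon_{ij}$.
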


\bigskip
\noindent
\textsc{Section 2. }
We show the \emph{connectedness} of $A_m$ or $A_m^*$-webs/bipartite $A_m$ or $A_m^*$-graphs under finite sequences of elementary moves (Proposition~\ref{prop:connect}).
We define \emph{singular spectral} webs and \emph{generic ideal} webs for use in the correspondence developed in Section 4 and 5. Then we show the associated spectral covers of generic $A_{m-1}$-webs have \emph{full monodromy groups} $S_m$ (Corollary~\ref{cor:fullmono}).

\bigskip
\noindent
\textsc{Section 3. }
We introduce a local operation, called a ura operation, on spectral covers that reflects the change of spectral covers constructed from ideal webs under a \ttt move on the ideal webs/bipartite graphs (Definition~\ref{def:Ura}).
The composition of two successive ura operations in the same orientation gives a \emph{Dehn twist} on spectral covers (Proposition~\ref{prop:dehn}).

The ura operation gives a \emph{discontinuous bijection} between spectral covers and induces an isomorphism on the quiver lattices assiganed to the bipartite graphs (Theorem~\ref{thm:ttt}).
The \emph{two possible orientations} of the ura operation match the two signed mutations between the relevant $K_0$ groups, which we discuss in Section 7.

We then discuss the change of the spectral covers under "shrink \& expand" moves and isotopy of webs, and conclude that the compositions of such moves give \emph{trivial automorphisms} on quiver lattices.

\bigskip
\noindent
\textsc{Section 4. }
As a preliminary to Section 5, we recall definitions and basic results for Hitchin fibrations and other relevant concepts.
For a fixed complex curve $C$ and $G=GL_m$ or $SL_m$,
we define the \emph{good Hitchin locus} $B_{G,C,P}^{\mathrm{good}}$ in the Hitchin base $B_{G,C,P}$ and take the restrictions of the three types of the discriminant loci, $\mathcal{D}_{bdry}$, $\mathcal{D}_{Max}$ and $\mathcal{D}_{cau}$, in universal Hitchin base introduced in \cite{Basok}.

We show that spectral curves over good Hitchin loci have \emph{full monodromy groups} $S_m$ (Corollary~\ref{cor:good}). Then we prove that for any spectral curve over a point in the good Hitchin locus and any spectral cover constructed from a given isotopy class of generic $A_{m-1}$-webs, we can find an isotopy web representative such that the two spectral covers are \emph{Hurwitz isomorphic} (Corollary~\ref{cor:iso}).
This result lays the groundwork for the subsequent discussions in Section 5.

\newpage
\noindent
\textsc{Section 5. }
We establish two complementary propositions:

Proposition~\ref{prop:hittoweb} states that given any small meridian $m$ in $B_{SL_m,C,P}^{\mathrm{good}}$ around $\mathcal{D}_{bdry,C,P}$, we can find a generic $A_{m-1}$-web $W$ on $C$, such that the monodromy action along $m$ on the $SL_m$ Hitchin fiber lattice can be realized by the automorphism on the quiver lattice $\Lambda_{\Gamma}^{\circ}$ resulting from two successive \ttt moves in the same orientation at some internal face of $W$.

Proposition~\ref{prop:match} states that given any generic $A_{m-1}$-web $W$ on $C$, the automorphism on the quiver lattice $\Lambda_{\Gamma}^{\circ}$ resulting from two successive \ttt moves in the same orientation at an internal face can be realized by the monodromy action on the $SL_m$ Hitchin fiber lattice along some small meridian in $B_{SL_m,C,P}^{\mathrm{good}}$ around $\mathcal{D}_{bdry,C,P}$.

We discuss the small deformations on spectral curves around $\mathcal{D}_{cau,C,P}$ and $\mathcal{D}_{Max,C,P}$, and show that they correspond to the changes of spectral covers constructed from ideal webs on $C$ produced by the composition of two successive "shrink \& expand" moves and isotopy of webs respectively.

We state the correspondence between the web side and the Hitchin side in Table~\ref{tab:compare} and Table~\ref{table:3compare}.

\bigskip
\noindent
\textsc{Section 6. }
For $G=GL_m$ or $SL_m$, we recall the definitions of moduli spaces of decorated/$*$-decorated local systems and regular functions produced by $A_{m-1}$/$A_{m-1}^*$-webs introduced in \cite{Goncharov}. We show that a \ttt move centred at an internal face $F$ of an ideal web $W$ induces a \emph{cluster $\mathcal{A}$-mutation} at $F$ (Proposition~\ref{prop:22}).

We show that an $A_{m-1}$-web on a decorated surface $S$ provides a \emph{cluster $\mathcal{A}$-coordinate chart} on the moduli space of decorated $SL_m$-local systems $\mathcal{A}_{SL_m,S}$; an $A_{m-1}^*$-web on $S$ provides a \emph{cluster $\mathcal{A}$-coordinate chart} on the moduli space of $*$-decorated $SL_m$-local systems $\mathcal{A}_{SL_m,S}^*$, and the collection of such cluster $\mathcal{A}$-coordinate charts gives a \emph{cluster atlas} (Theorem~\ref{thm:general}).

\bigskip
\noindent
\textsc{Section 7. }
Denote by $K_0(\mathcal{D}_{fd}(\Gamma(Q,W)))$ the Grothendieck group associated with the quiver with potential $(Q,W)$ assigned to a bipartite $A_{m-1}$ or $A_{m-1}^*$-graph on a decorated surface.

We recall the isomorphisms on $K_0(\mathcal{D}_{fd}(\Gamma(Q,W)))$ induced by mutations on $(Q,W)$, and that the two signed dual isomorphisms on $K_0(\mathcal{D}_{fd}(\Gamma(Q,W)))^\vee$ induced by mutations on quivers with potential match the two linear pieces in tropicalizations of cluster $\mathcal{A}$-mutations induced by the same mutations.

For mutations given by \ttt moves, the two signed induced isomorphisms on $K_0$ groups coincide with the two signed isomorphisms on the quiver lattices induced by ura bijections in two orientations, giving the commutative diagram in Theorem~\ref{thm:7com}.

We conclude in Theorem~\ref{thm:GM} that the automorphisms on the $SL_m$ Hitchin fiber lattices given by Gauss--Manin monodromy actions agree with the induced automorphisms on the Grothendieck groups $K_0(\mathcal{D}_{\Gamma}^{\circ})$ associated with certain generic $A_{m-1}$-webs on smooth complex genus-$g$ curves with $n$ punctures, satisfying $2g-2+n>0$.

\subsection*{Acknowledgements}
This work was submitted as a Master's thesis to the School of Mathematics and Statistics at The University of Melbourne.
I would like to thank my supervisors, Gufang Zhao and Paul Norbury, for their guidance, encouragement, and many helpful discussions throughout this work. I would also like to thank Yuxin Chen for drawing all the figures for the paper models, and Brice Arrigo for proofreading the manuscript.

\newpage
\mysection{Ideal Webs and Spectral Covers}

This section recalls the parts of the framework of \cite{Goncharov} that will be used throughout the paper, together with some additional material needed later. It is included to fix notation and to make the subsequent arguments reasonably self-contained.

\subsection{\texorpdfstring{$\mathrm{A_m}$}{A}-Webs and \texorpdfstring{$\mathrm{A_m^*}$}{A}-Webs on Decorated Surfaces}

A \emph{decorated surface S} is a connected oriented surface with a nonempty finite set of special points, which is the union of the sets of punctures and marked points on the boundary. Each boundary component is assumed to have one special point.

In this paper, we will focus on compact genus-$g$ decorated surfaces without boundary, and with a finite set $P$ of $n>0$ special points satisfying $2g-2+n>0$.

\subsubsection{Bipartite Graphs and Webs}
An \emph{ideal triangulation} $T$ of $S$ is a triangulation with vertices at the special points. An ideal triangulation is called regular if it has no
self-folded triangles, i.e., every triangle has three distinct
sides. Vertices of a regular triangle are allowed to be identified. An m-ideal triangulation is obtained by subdividing an ideal triangulation into $(m+1)^2$ small triangles by drawing three families of m equidistant lines, parallel to the sides of the triangle.

A \emph{standard bipartite $A_m$-graph $\Gamma_{A_m}$(T)} associated with an m-ideal triangulation T is a bipartite graph on an m-ideal triangulation, with $\bullet$-vertices to the centers of all looking-down subtriangles of one triangle T of the m-triangulation, and $\circ$-vertices to the centers of all internal looking-up subtriangles, to the centers of middle m-1 little segments on each side of T which do not contain the special points, and to the special points, edges connecting adjacent vertices of different kinds.\cite{Goncharov}

A \emph{standard bipartite $A_m^*$-graph $\Gamma_{A_m^*}$(T)} associated with an m-ideal triangulation T is a bipartite graph on an m-ideal triangulation, with $\circ$-vertices to the centers of all looking-up subtriangles of one triangle T of the m-triangulation, and the $\bullet$-vertices to the centers of all looking-down subtriangles, as well as to the centers of m+1 little segments on each side of T, edges connecting adjacent vertices of different kinds.\cite{Goncharov}

\begin{remark}
The $A_m$ here is the Dynkin type of the semisimple part of the corresponding reductive Lie algebras that we will introduce later.
\end{remark}

\begin{remark}
Shrinking each 2-valent $\bullet$-vertex of the bipartite $A_m^*$-graph $\Gamma_{A_m^*}$(T) connecting two $\circ$-vertices to a single $\circ$-vertex, we get the bipartite $A_m$-graph $\Gamma_{A_m}$(T).\cite{Goncharov}
\end{remark}

A face F of a bipartite graph $\Gamma$ on $S$ is \emph{external} if it intersects the boundary of S, or its interior contains a special point. It is called \emph{internal} otherwise. A face F of $\Gamma$ is \emph{non-special} if it does not contain a special point of $S$.\cite{Goncharov}

A collection of \emph{zig-zag strands} associated with a bipartite graph $\Gamma$ is the set of paths on $\Gamma$ which go left at the $\bullet$-vertices, and right at the $\circ$-vertices. A strand on $S$ is either an oriented loop, or an oriented path connecting two boundary points.\cite{Goncharov}

\begin{definition}[{\cite{Goncharov}}]
A \emph{web} on a decorated surface $S$ is a connected collection of oriented strands in generic position, considered modulo isotopy, satisfying the following two conditions:
\begin{itemize}
      \item The directions of the strands intersecting any given strand alternate.
      \item The connected components of the complement to the union of strands are discs.
  \end{itemize}
\end{definition}

\begin{definition}[\cite{Goncharov}]
A \emph{web} associated with a bipartite graph $\Gamma$ is the collection of all zig-zag strands associated with $\Gamma$.
\end{definition}

Let $p:\widetilde{S} \to S$ be the universal cover of $S$. The special points on $\widetilde{S}$ are the preimage of the special points on $S$. So $\widetilde{S}$ is a decorated surface. Let $W$ be a web on a decorated surface $S$. Then $\widetilde{W} := p^{-1}(W)$ is a $\pi_1(S)$-equivariant web on $\widetilde{S}$.\cite{Goncharov}

\begin{figure}[htbp]
\centering
\includegraphics[width=0.2\textwidth]{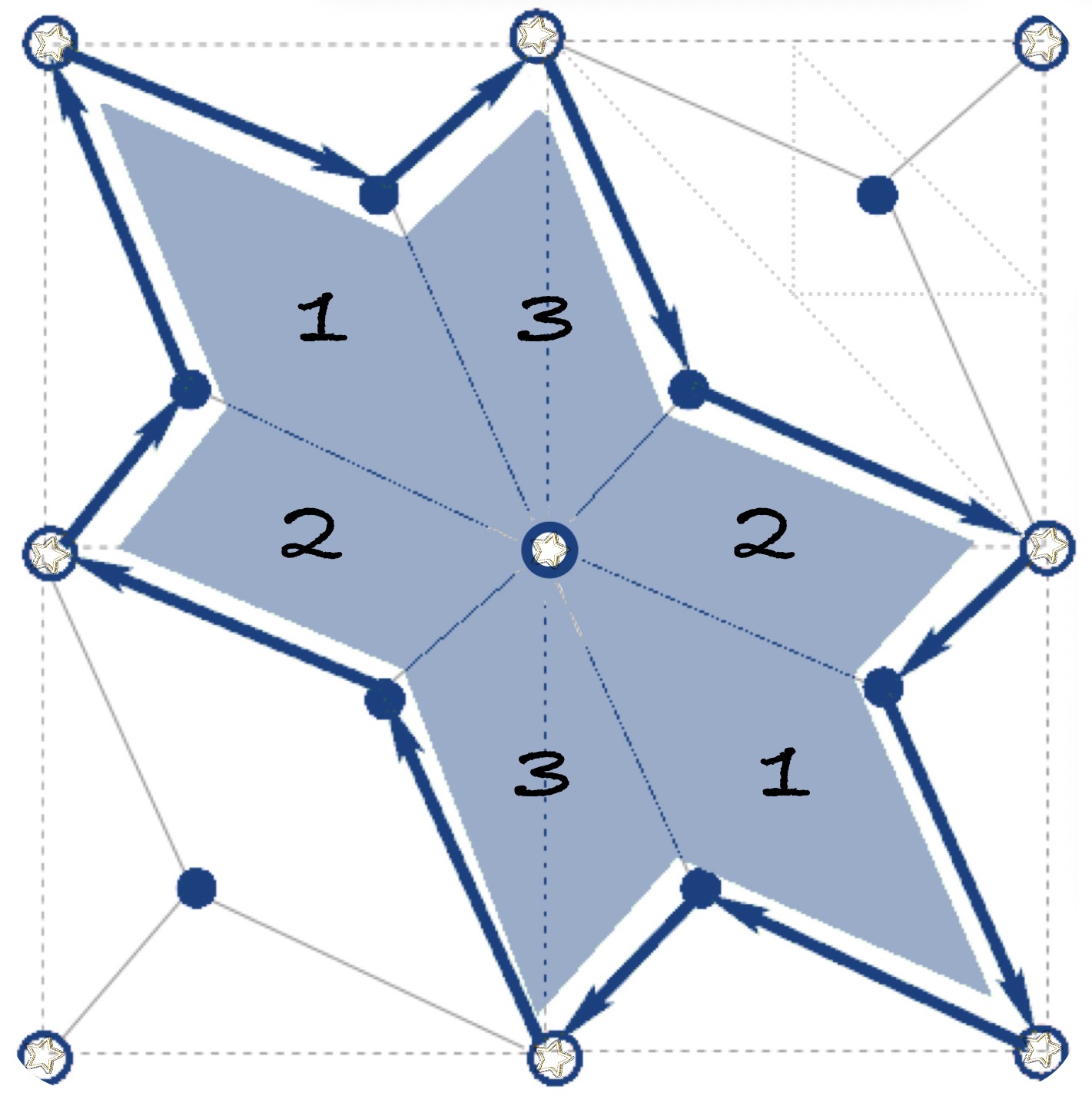}
\caption{Lift of an $A_1$-web on the universal cover of a torus with one special point.\cite{Goncharov}}
\label{fig:A1web}
\end{figure}

\begin{figure}[htbp]
\centering
\includegraphics[width=0.35\textwidth]{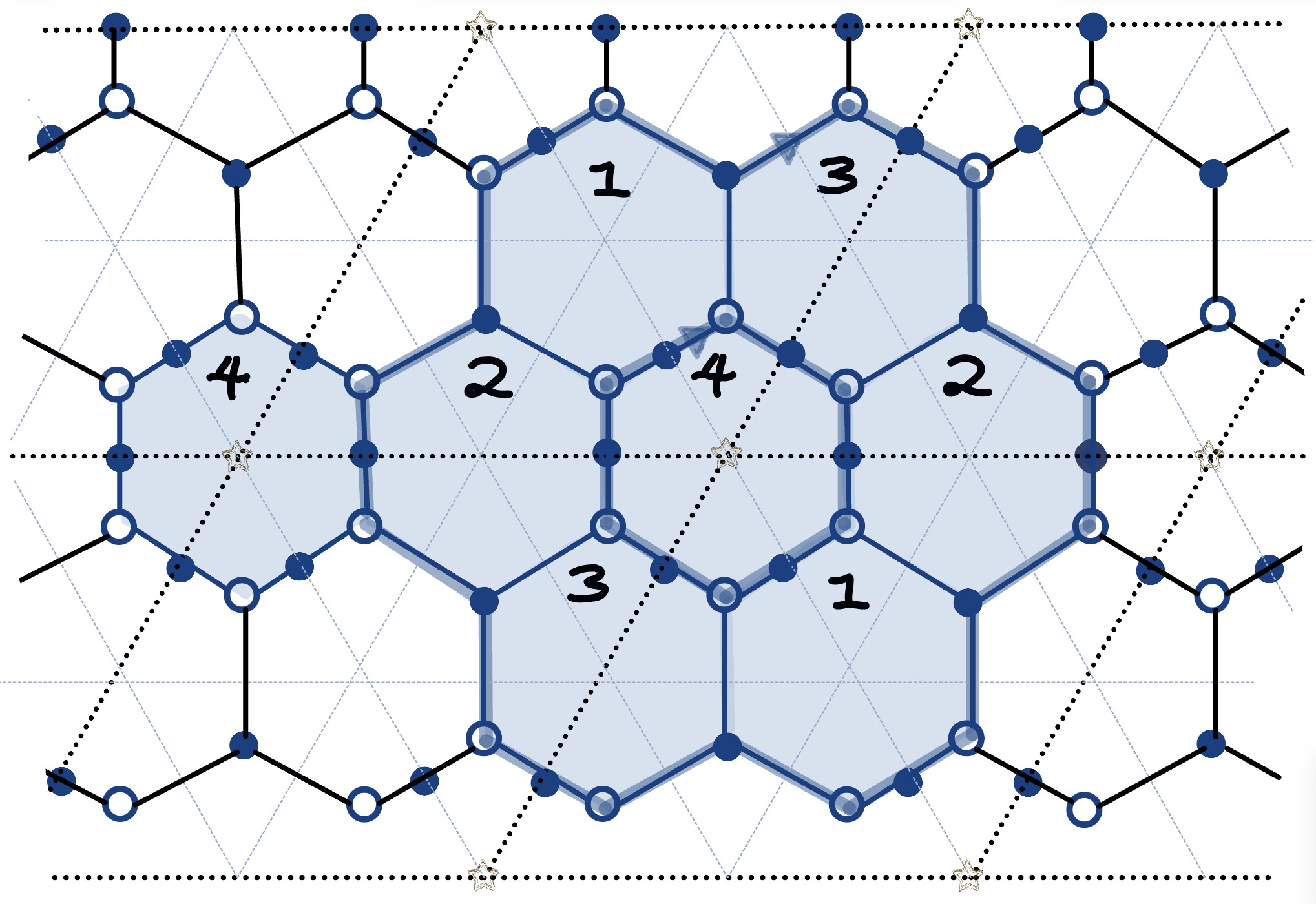}
\caption{Lift of an $A_1^*$-web on the universal cover of a torus with one special point.}
\label{fig:A1star}
\end{figure}

\subsubsection{Ideal Webs and Spectral Webs}

\begin{definition}[\cite{Goncharov}]
A web $W$ on a decorated surface $S$ is \emph{minimal} if its lift $\widetilde{W}$ on $\widetilde{S}$ consists of strands with no selfcrossings, and there are no parallel bigons. A bipartite graph on a surface $S$ is \emph{minimal} if the corresponding web is minimal.
\end{definition}

\begin{definition}[{\cite{Goncharov}}]
\label{def:ideal}
A \emph{strict ideal web} on a decorated surface $S$ is a minimal web satisfying the additional condition:
 \begin{itemize}
     \item The part of the surface on the right of each strand is a disc containing a single special point.
 \end{itemize}
An \emph{ideal web} is a web that becomes strict ideal on the universal cover of $S$.
\end{definition}

\begin{definition}[\cite{Goncharov}]
An ideal web $W$ on $S$ is an $A_m$ (resp. $A_m^*$)-web if its lift $\widetilde{W}$ on the universal cover $\widetilde{S}$ has m (resp. $m+1$) strands around each special point, and its rank is $m+1$.
\end{definition}
\noindent The collection of strands associated with a bipartite $A_m$ (resp. $A_m^*$)-graph forms an $A_m$ (resp. $A_m^*$)-web.

\begin{definition}[\cite{Goncharov}]
A web $W$ on a decorated surface $S$ is called \emph{spectral} if each strand $\gamma$ of $\widetilde{W}$ cuts the surface $\widetilde{S}$ into two components:
\begin{equation}
\label{eq:2.1}
\widetilde{S} - \gamma = \widetilde{S}_{\gamma} \cup \widetilde{S}_{\gamma}'.
\end{equation}
\end{definition}

We have the following relation of webs on a decorated surface:
\begin{equation}
\{A_m\text{-webs and }A_m^*\text{-webs}\}=\{\text{rank $m+1$ ideal webs}\}\subset\{\text{spectral webs}\}.
\end{equation}

\begin{definition}
A \emph{standard} $A_m$ or $A_m^*$-web is a web associated with a standard bipartite $A_m$ or $A_m^*$-graph.
\end{definition}

\begin{remark}
We may call a bipartite $A_m$ or $A_m^*$-graph an \emph{ideal} bipartite graph as its corresponding web is an ideal web.
\end{remark}

\begin{lemma}[{\cite{Goncharov}}]
On a decorated surface, there is a bijection between the isotopy classes of webs and connected bipartite graphs, whose faces are discs.
\end{lemma}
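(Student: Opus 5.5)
The lemma is quoted from Goncharov, so I will reconstruct the standard bipartite-graph/web correspondence and verify that the two constructions are mutually inverse up to isotopy.

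\emph{From graphs to webs.} Let $\Gamma$ be a connected bipartite graph on $S$ whose faces are discs. Take the zig-zag strands as defined above: turn left at $\bullet$-vertices and right at $\circ$-vertices, and push each strand slightly off $\Gamma$ so that it crosses each edge $e$ once near its midpoint. Each edge is traversed by exactly two strands, in opposite directions, and these two strands cross transversally at the midpoint of $e$. So crossings of strands correspond bijectively to edges of $\Gamma$.

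Around a $\bullet$-vertex the strands form a small oriented polygon bounding a disc, and the same holds around a $\circ$-vertex with the opposite orientation. Inside each face of $\Gamma$ the strands also bound a region, which is a disc because the face is one. Hence the complementary regions of the strands are discs of three types (black, white, face), and this gives the second web axiom.

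For the alternation axiom, follow a strand $\gamma$. Its consecutive crossings occur at consecutive edges of a zig-zag path, which alternately turns at $\bullet$- and $\circ$-vertices. The strand crossing $\gamma$ at each edge therefore comes alternately from the left and from the right. Connectedness of the resulting collection follows from connectedness of $\Gamma$.

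\emph{From webs to graphs.} Given a web $W$, 2-colour the complementary discs. The alternation axiom says exactly that around each disc the boundary strands are oriented coherently, either all clockwise or all counterclockwise, or else they alternate. This is the main point to check, and I would prove it by looking at the local picture at a crossing:
\begin{itemize}
\item a crossing has four corners;
\item alternation of the directions crossing each strand forces two opposite corners to be "oriented" regions, one clockwise and one counterclockwise;
\item the other two corners are "alternating" regions.
\end{itemize}

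Now put a $\bullet$-vertex in each clockwise-oriented disc and a $\circ$-vertex in each counterclockwise-oriented disc. For each crossing, join the two oriented corners by an edge through the crossing. This graph $\Gamma$ is bipartite by construction. Its faces are exactly the alternating discs, so they are discs, and $\Gamma$ is connected because $W$ is.

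\emph{Mutual inverses.} Both constructions are local and preserve the incidence pattern (edges $\leftrightarrow$ crossings, vertices $\leftrightarrow$ oriented discs, faces $\leftrightarrow$ alternating discs). So applying one after the other returns an isotopic object: the zig-zag strands of $\Gamma(W)$ are isotopic to $W$, and $\Gamma(W(\Gamma))=\Gamma$. Isotopies on either side correspond to isotopies on the other, so the maps descend to isotopy classes.

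\emph{Main obstacle.} The step needing care is the trichotomy of complementary discs. One must exclude a disc whose boundary is neither coherently oriented nor alternating. I would handle this by walking around the boundary of a disc $D$: at each corner, the alternation axiom along each strand relates the orientation of consecutive boundary arcs. Propagating this constraint shows that either every consecutive pair of boundary arcs agrees in orientation or every pair disagrees.
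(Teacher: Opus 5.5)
The paper gives no proof of this lemma (it is quoted from Goncharov), and your argument is the standard proof from that source, and it is correct: zig-zag strands in one direction, and in the other, sorting the complementary discs of $W$ into clockwise, counterclockwise and alternating ones, with $\bullet$/$\circ$ vertices in the oriented discs and an edge through each crossing. One point should be stated more precisely: two opposite oriented corners and two alternating corners occur at any transverse crossing of two oriented strands, with or without the axiom; what the alternation axiom actually gives is that, on a fixed side of each arc between consecutive crossings, the corners at its two ends have the same type, which is exactly what your propagation around the boundary of a disc needs, and which also yields the checkerboard $2$-colouring rather than presupposing it.
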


\begin{lemma}
\label{le:connectedness}
On a decorated surface, there is a bijection between isotopy classes of $A_m$ $($resp. $A_m^*)$-webs and bipartite $A_m$ $($resp. $A_m^*)$-graphs.
\end{lemma}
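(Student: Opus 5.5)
The plan is to restrict the bijection of the previous lemma (Goncharov's correspondence between isotopy classes of webs and connected bipartite graphs all of whose faces are discs) to the relevant subclasses. By definition, a bipartite $A_m$ (resp. $A_m^*$)-graph has as its web of zig-zag strands an $A_m$ (resp. $A_m^*$)-web. So one direction is already a well-defined map: send a bipartite $A_m$-graph $\Gamma$ to the isotopy class of its zig-zag strand collection. This map is injective, since it is the restriction of an injective map. What has to be shown is that it lands in the right class and that it is surjective onto isotopy classes of $A_m$ (resp. $A_m^*$)-webs.

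First, the image lies in the right class. If $\Gamma$ is a bipartite $A_m$-graph, its web is connected and its complementary regions are discs, because $\Gamma$ is connected with disc faces. By the remark preceding the definition of standard webs, the collection of strands of $\Gamma$ is an $A_m$-web. I will take this, and the analogous statement for $A_m^*$, as the defining property of bipartite $A_m$ (resp. $A_m^*$)-graphs.

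Second, surjectivity. Given an $A_m$-web $W$, the previous lemma produces a connected bipartite graph $\Gamma(W)$ with disc faces whose zig-zag strands recover $W$ up to isotopy. I will recall the construction: colour the complementary discs of $W$ by the orientation pattern of their boundary. Discs whose boundary strands are all oriented clockwise become $\bullet$-vertices, counterclockwise ones become $\circ$-vertices, and the remaining, alternating discs become faces. Each crossing of strands becomes an edge. The graph $\Gamma(W)$ is then by definition the bipartite graph whose web is $W$, and since $W$ is ideal of type $A_m$, $\Gamma(W)$ is a bipartite $A_m$-graph in the sense of the definition above. The $A_m^*$ case is identical, with $m+1$ strands around each special point. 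The two constructions are mutually inverse up to isotopy, again by the previous lemma.

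The main obstacle is the definitional gap: the paper introduces bipartite $A_m$-graphs concretely only as the standard graphs $\Gamma_{A_m}(T)$, and then says more generally that they are graphs whose strands form $A_m$-webs. If "bipartite $A_m$-graph" is meant as the standard graphs together with everything obtained from them by elementary moves, then surjectivity requires that every $A_m$-web is connected to a standard one by elementary moves. That is precisely the connectedness statement (Proposition~\ref{prop:connect}) proved later, so to avoid circularity I would adopt the intrinsic definition. I would also check that the special points sit correctly relative to the faces. Under the bijection the special points lie in the discs to the right of the strands, so they fall in the external faces (for $A_m^*$), or become $\circ$-vertices after shrinking the 2-valent $\bullet$-vertices (for $A_m$). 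This matches the remark relating $\Gamma_{A_m^*}(T)$ and $\Gamma_{A_m}(T)$.
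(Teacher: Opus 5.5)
Your proposal is correct and follows the paper's approach. The paper gives no separate proof: it treats the lemma as the restriction of Goncharov's bijection (between isotopy classes of webs and connected bipartite graphs with disc faces) to those classes whose strand collections are $A_m$ (resp. $A_m^*$)-webs, which is exactly your argument. Reading ``bipartite $A_m$-graph'' intrinsically, rather than as graphs reachable from the standard $\Gamma_{A_m}(T)$ by elementary moves, is the right choice, since the paper itself uses this lemma as an input to Proposition~\ref{prop:connect} and the other reading would make the argument circular.
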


\begin{definition}[{\cite{Goncharov}}]
Given an ideal web $W$ on a decorated surface S, the \emph{codistance}
$\langle F,s\rangle$ between a face $F$ of the web $\widetilde{W}$ and
a special point s on $\widetilde{S}$ is the number of strands associated with s containing $F$.
\end{definition}

\begin{corollary}[\cite{Goncharov}]
Given an ideal web  $W$ on a decorated surface $S$, for any face $F$ of the web
$\widetilde{W}$ on $\widetilde{S}$,
\begin{equation}
\sum_s \langle F,s\rangle = \operatorname{rank}(W).
\end{equation}
\end{corollary}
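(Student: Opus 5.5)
The plan is to show that the function $f(F):=\sum_s \langle F,s\rangle$ on faces $F$ of $\widetilde W$ is locally constant under passing between adjacent faces. Connectedness then makes it constant, and I would identify the constant with $\operatorname{rank}(W)$ by evaluating it near a special point. By Definition~\ref{def:ideal}, every strand $\gamma$ of $\widetilde W$ is associated with exactly one special point $s_\gamma$, namely the one in the disc to its right. Hence
\[
f(F)=\#\{\gamma \text{ strand of } \widetilde W : F \text{ lies to the right of } \gamma\}.
\]
So $f$ simply counts the strands having $F$ on their right, and $f$ can only change when one crosses strands.

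The first step is to recall how the regions of the complement of a web are organised. The complementary regions of a web come in two kinds. The first kind are regions whose boundary strands are all oriented coherently, clockwise or counterclockwise; these correspond to the $\bullet$- and $\circ$-vertices of the bipartite graph $\Gamma$ attached to $W$ (by the bijection between webs and bipartite graphs recalled above). The second kind are regions whose boundary orientations alternate; these are the faces of $\Gamma$, and these are the ``faces'' $F$ in the statement. Two faces of $\Gamma$ sharing an edge $e$ correspond to the two opposite quadrants at the crossing point of the two strands $\gamma_1,\gamma_2$ that run along $e$. The other two quadrants at that crossing are the vertex regions at the two ends of $e$.

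The second step is the local computation at such a crossing. Passing from $F$ to the opposite quadrant $F'$ crosses both $\gamma_1$ and $\gamma_2$ and no other strand, so only their two contributions to $f$ can change. The condition that the adjacent quadrants are vertex regions, with coherently oriented boundaries, forces the orientations of $\gamma_1$ and $\gamma_2$ at the crossing to be such that one of them has $F$ on its right and $F'$ on its left, while the other does the reverse. Therefore one contribution drops by $1$ and the other rises by $1$, giving $f(F)=f(F')$. I expect this orientation check, read off from the alternation condition and the zig-zag rule (left at $\bullet$, right at $\circ$), to be the main point needing care.

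The third step is to conclude. The faces of $\widetilde\Gamma$ are connected through shared edges, because $\widetilde S$ is connected and the web is connected with disc complementary regions. Hence $f$ is constant on all faces of $\widetilde W$. This constant is the rank: evaluate $f$ on a face adjacent to a special point $s$ and use the description of $A_m$ (resp.\ $A_m^*$)-webs near $s$, for instance in a standard web, where the count gives $m+1$. Alternatively, if rank is taken by definition to be the number of strands having a given face on their right, the statement follows directly from constancy. Finally, $f$ is invariant under the deck group $\pi_1(S)$, so the equality descends to faces of $W$.
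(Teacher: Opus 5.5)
Your local-constancy step is correct. At a crossing of two strands (distinct, because $\widetilde W$ has no self-crossings by minimality), the two quadrants lying to the right of exactly one of the strands are opposite each other. These are exactly the corners of the alternating regions, i.e.\ the faces of $\Gamma$. Spectrality turns ``$F$ lies to the right of $\gamma$'' into the global condition $F\subset\widetilde S_\gamma$, so no other strand contributes when you pass from $F$ to $F'$.

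The gap is the final identification with $\operatorname{rank}(W)$. In this paper $\operatorname{rank}(W)$ is \emph{defined} as $\deg\pi$, the degree of the spectral cover, and nothing in your argument touches $\pi$.
\begin{itemize}
\item Your first alternative is only carried out in a standard web. To work for an arbitrary ideal web it would need a local count near $s$ valid for every such web. It would also need the input that $\deg\pi=m+1$ when there are $m$ (resp.\ $m+1$) strands around each special point, which is itself a statement about the degree that you never compute.
\item Your second alternative replaces the definition of rank by the conclusion.
\item On $\widetilde S$ there are infinitely many strands, so constancy alone does not even give $f(F)<\infty$.
\end{itemize}

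The missing link is a fiber count, which is how the cited result is obtained in \cite{Goncharov}. Since $\widetilde\Sigma_{\widetilde W}=\widetilde\Gamma\cup_\gamma F_\gamma$ with $F_\gamma\cong\widetilde S_\gamma$, a point $x$ in the interior of a face $F$ has no preimage on $\widetilde\Gamma$. It has exactly one preimage in each $F_\gamma$ with $F\subset\widetilde S_\gamma$. Hence $\#\widetilde\pi^{-1}(x)=f(F)=\sum_s\langle F,s\rangle$, and this equals $\deg\widetilde\pi=\deg\pi=\operatorname{rank}(W)$. This also gives finiteness.

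Once this is in place, your crossing computation is no longer needed. For a ramified covering of a connected surface (here ramified only at $\bullet$-vertices), the number of preimages of a point off the branch locus is automatically constant. What your computation does buy is an elementary, self-contained check of that constancy on faces: it is precisely the verification that the sheets of $\widetilde\pi$ over $F$ and $F'$ match up across each edge of $\Gamma$.
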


\begin{example}
Let us consider bipartite $A_1$-graph $\Gamma$ and bipartite $A_1^*$-graph $\Gamma^*$ and the corresponding $A_1$-web $W$ and $A_1^*$-web $W^*$ on a torus T with one special point. Let $\widetilde{W}$ and $\widetilde{W^*}$ be the lifts of $W$ and $W^*$ respectively on the universal cover of T.

The lifts of the graphs and webs on the universal cover of the torus, with the special points the lifts of the special point on the torus, are shown in Figure~\ref{fig:A1web} and Figure~\ref{fig:A1star}, where the special points are the little stars on the corner of the fundamental domain. Face 4 in Figure~\ref{fig:A1star} is an external face containing a special point. Contracting face 4 we get Figure~\ref{fig:A1web}.

Here, $rank(W) = rank(W^*) = 2$. There is one ideal strand of $\widetilde{W}$ associated with a special point, bounding two copies of three internal faces; there are two ideal strands of $\widetilde{W^*}$ associated with a special point, one bounding two copies of three internal faces and an external face, another bounding an external face.
\end{example}

\subsection{Three Elementary Moves}

\textbf{Shrinking a 2-valent vertex} of a bipartite graph on a decorated surface $S$. It amounts to resolving a bigon move of a web.\cite{Goncharov}

\noindent\textbf{Expanding a k-valent vertex ($k\ge 4$)} of a bipartite graph on a decorated surface $S$. It amounts to creating a bigon move of a web.\cite{Goncharov}

\noindent A \textbf{\ttt move} of a bipartite graph on a decorated surface $S$ is a local transformation affecting four vertices of one color, and two of the other. The middle face F is called the face that the \ttt move is performed on.

A \ttt move amounts to a local transformation of a web, involving four zig-zag strands, also called a \ttt move. It \emph{decreases by one the codistances of F to two special points and increases by one the codistances of F to other two ones}.\cite{Goncharov}

\begin{figure}[htbp]
\centering
\includegraphics[width=0.4\textwidth]{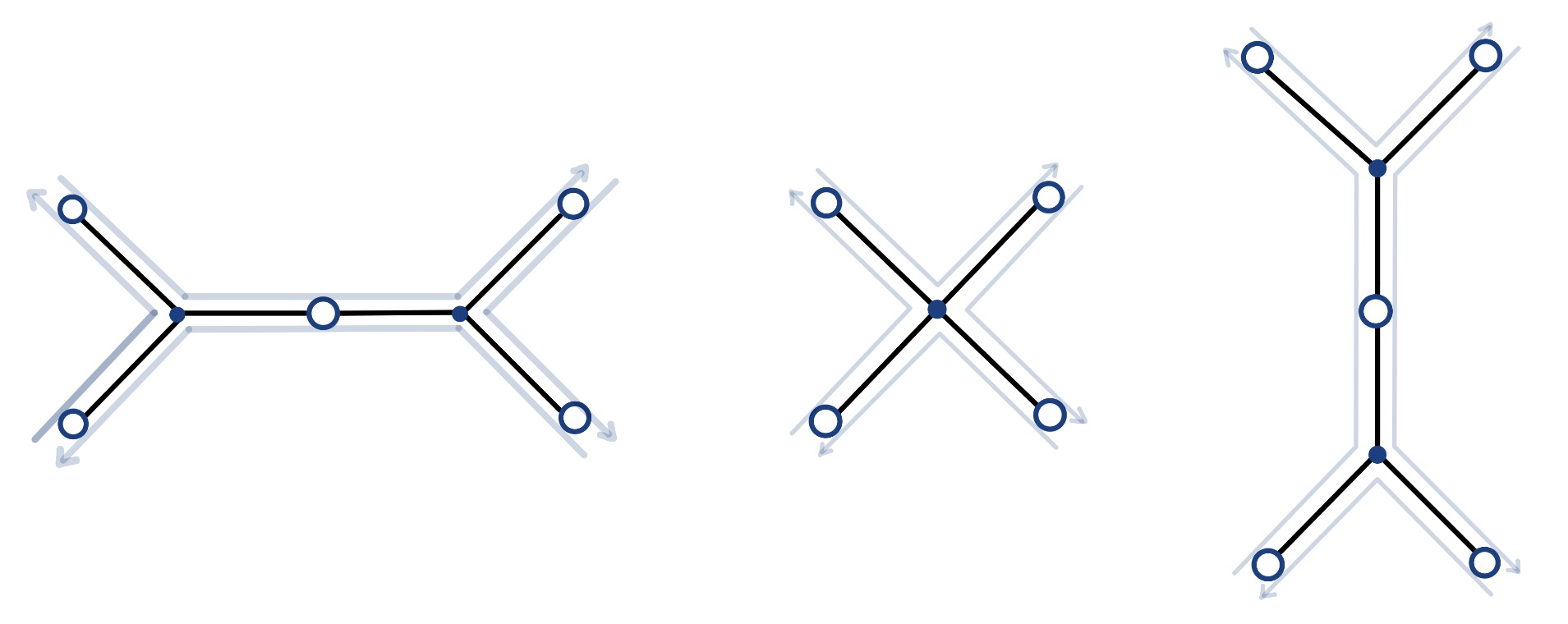}
\caption{Shrinking a 2-valent $\circ$-vertex, and then expanding a 4-valent $\bullet$-vertex\cite{Goncharov}.}
\label{fig:se}
\end{figure}

\begin{figure}[htbp]
\centering
\includegraphics[width=0.45\textwidth]{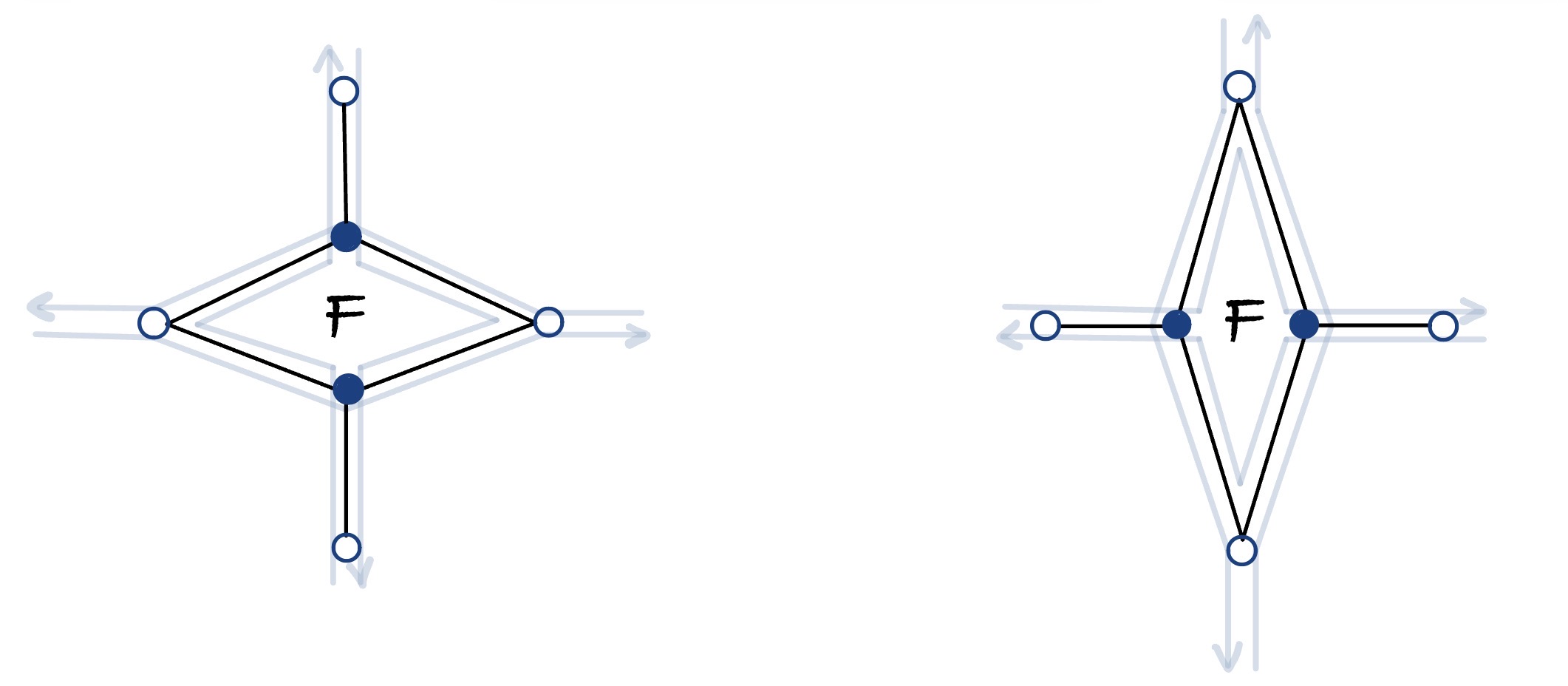}
\caption{A Two-by-two move.}
\label{fig:22move}
\end{figure}

\begin{remark}
Elementary moves send an ideal web to an ideal web and preserve the rank.
\end{remark}

\begin{remark}
Shrink/expand moves change only the bipartite graph while leaving the associated web isotopic;
whereas \ttt moves change the isotopy class of the web.

The \ttt move is the only type of elementary move that changes the codistances, hence, as we will see in Section 6, a \ttt move amounts to a cluster coordinate mutation on the corresponding moduli space; while shrink and expand moves do not change the coordinates.
\end{remark}

\begin{remark}
Let $\Gamma$ be a bipartite $A_m$ or $A_m^*$-graph on $S$. Let $\Gamma'$ be the bipartite graph after performing a \ttt move on $\Gamma$. Notice that $\Gamma$ and $\Gamma'$ are not isomorphic as graphs in general, but they are homotopy equivalent. Therefore, the spectral covers associated with $\Gamma$ and $\Gamma'$ are homotopy equivalent.
\end{remark}

\begin{remark}
For the bipartite graphs associated with a fixed ideal triangulation $T$, the graph $\Gamma_{A_m^*}(T)$ is reduced to $\Gamma_{A_m}(T)$
by shrinking all of its $2$-valent $\bullet$-vertices. Hence the $A_m^*$-web associated with $T$ reduces to the corresponding $A_m$-web by resolving all the bigons.

Accordingly, whenever the $A_m^*$-web differs from an $A_m$-web only by these removable bigons, we may work with the reduced $A_m$-web, and the results can be transferred to the $A_m^*$-web by the inverse bigon-creation moves.
\end{remark}

Any two ideal triangulations without self-folded triangles can be connected by a finite sequence of flips through ideal triangulations without self-folded triangles\cite{Goncharov}.
From now on, we will only consider triangulations without self-folded
triangles.

\begin{lemma}[{\cite{Goncharov}}]
There is a bijection between $A_1$-webs/bipartite $A_1$-graphs and ideal triangulations of S, such
that \ttt moves of ideal webs correspond to flips of triangulations.
Each face of the bipartite $A_1$-graph corresponds to an edge of the ideal triangulation.
\end{lemma}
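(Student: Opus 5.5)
The statement immediately above is the last lemma quoted from \cite{Goncharov}: the bijection between $A_1$-webs (equivalently, bipartite $A_1$-graphs) and ideal triangulations, under which \ttt moves correspond to flips and faces correspond to edges. I will prove it by making the correspondence explicit. I build the standard graph from a triangulation, show every $A_1$-web arises this way, and then compare the local pictures of a flip and a \ttt move.

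\textbf{Construction.} Given an ideal triangulation $T$ without self-folded triangles, take the $1$-ideal triangulation ($m=1$) and the standard bipartite $A_1$-graph $\Gamma_{A_1}(T)$. It has one $\bullet$-vertex at the centre of each triangle, since there is exactly one looking-down subtriangle. Its $\circ$-vertices are the special points; an $A_1$-graph has no middle segments and no internal looking-up triangles. Each $\bullet$-vertex is joined to the three vertices of its triangle, so $\Gamma_{A_1}(T)$ is the graph whose faces are quadrilaterals $\bullet\circ\bullet\circ$. Each such face is the union of the two triangles adjacent to an edge $e$ of $T$, and contains $e$ as its diagonal. This already gives the face--edge correspondence. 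One checks that the zig-zag strands form an ideal web of rank $2$ with one strand around each special point, so the web is an $A_1$-web. By Lemma~\ref{le:connectedness}, this yields a well-defined map from triangulations to isotopy classes of $A_1$-webs.

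\textbf{Inverse map.} Start from an $A_1$-web $W$ with bipartite graph $\Gamma$. Since $\operatorname{rank}(W)=2$ and each special point has exactly one strand, the codistance corollary says every face $F$ of $\widetilde W$ satisfies $\sum_s\langle F,s\rangle=2$. So each face lies inside the discs of exactly two special points, or has codistance $2$ to a single one. Minimality rules out the latter, because it would force a self-folded configuration or a parallel bigon. For each face, join its two special points by an arc through the face. These arcs are pairwise disjoint up to isotopy, because the strand-discs meet only along faces. The complementary regions are the discs cut out by the $\bullet$-vertices, and each $\bullet$-vertex is trivalent, so the regions are triangles.

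The main obstacle is justifying this step: that every $\bullet$-vertex of an arbitrary $A_1$-graph is trivalent and each $\circ$-vertex is a special point. I would argue it via the equality
\[
\sum_s\langle F,s\rangle=2,
\]
together with an Euler-characteristic count on $\widetilde S$, comparing the numbers of faces, edges and vertices against the triangulation count $6g-6+3n$ edges. Once this is established, the two maps are mutually inverse by construction.

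\textbf{Moves.} A flip at an edge $e$ changes only the quadrilateral $Q$ formed by the two triangles adjacent to $e$. In $\Gamma$, the region $Q$ contains the face $F_e$, with its two $\bullet$-vertices and four $\circ$-neighbours as the corners of $Q$. Replacing the diagonal $e$ by the other diagonal $e'$ is exactly the local \ttt move of Figure~\ref{fig:22move} on $F_e$, possibly after a shrink/expand of bivalent vertices that does not change the web. The codistance check confirms this: $F_e$ drops its codistances to the endpoints of $e$ by one and gains one at the endpoints of $e'$. Conversely, every \ttt move is centred at a quadrilateral face, so it arises this way.
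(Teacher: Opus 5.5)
The paper gives no proof of this lemma: it quotes it from \cite{Goncharov} and only illustrates the move correspondence in Figure~\ref{fig:A1flip}. Your construction of $\Gamma_{A_1}(T)$, the face--edge correspondence, and the local comparison of a flip with a two-by-two move (including the codistance check) agree with that picture.

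\textbf{The gap is the inverse direction}, which carries all the content of the word ``bijection''. You explicitly defer the step it rests on: that every $\bullet$-vertex of an $A_1$-graph is trivalent and every $\circ$-vertex is a special point. Your claims that the arcs are disjoint and cut $S$ into triangles also depend on this step.

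\textbf{This step is false for general bipartite $A_1$-graphs.} Take $\Gamma_{A_1}(T)$ and expand the $\circ$-vertex at a special point of valency $k\ge 4$ (valency $6$ on the once-punctured torus). Split it into two $\circ$-vertices of valency $\ge 3$ joined through a new bivalent $\bullet$-vertex.
\begin{itemize}
\item By the paper's remark, elementary moves preserve ideal webs and rank. This move only creates an anti-parallel bigon, so each special point still has exactly one strand around it, and the result is again an $A_1$-graph.
\item Yet it has a bivalent $\bullet$-vertex and a non-special $\circ$-vertex.
\item Its faces and codistances are unchanged, since only two-by-two moves change codistances. So your face-to-arc recipe sends it to the same $T$ as $\Gamma_{A_1}(T)$, and your two maps are not mutually inverse.
\end{itemize}
For the same reason, the Euler-characteristic count you propose cannot supply the missing step. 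Every quantity it uses is unchanged by such an expansion: $V-E+F$, the number of faces, the codistance sums, and the branch data of the spectral cover (a bivalent $\bullet$-vertex is unramified).

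\textbf{What a repair needs.}
\begin{itemize}
\item State the bijection for graphs modulo shrink/expand moves, or equivalently for reduced graphs with all removable bivalent vertices shrunk.
\item Check that your arc recipe is invariant under these moves.
\item Prove that a reduced $A_1$-graph is of the form $\Gamma_{A_1}(T)$. For the $\bullet$-side you can use the spectral cover: a $\bullet$-vertex of valency $v$ has ramification index $v-1$ in a cover of degree $\operatorname{rank}(W)=2$, so $v\le 3$.
\item Give actual arguments for the remaining points: no non-special $\circ$-vertices survive reduction, the arcs are embedded and pairwise disjoint, and no self-folded triangles arise.
\end{itemize}

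A smaller correction: each lift of a special point carries exactly one strand, so every codistance is $0$ or $1$. The case ``codistance $2$ to a single special point'' is therefore excluded trivially, not by minimality.
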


\begin{figure}[htbp]
\centering
\includegraphics[width=0.45\textwidth]{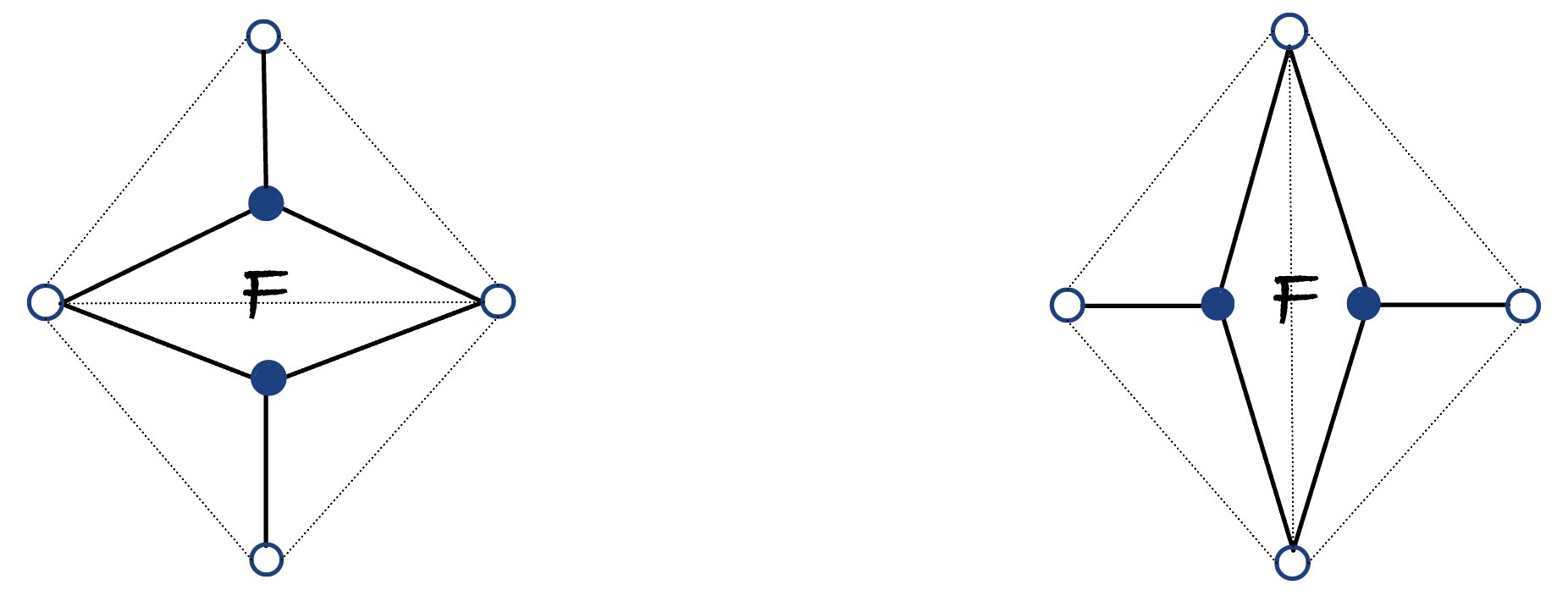}
\caption{An $A_1$-flip is the same thing as a \ttt move.\cite{Goncharov}}
\label{fig:A1flip}
\end{figure}

\begin{proposition}[{\cite{Goncharov}}]
\label{prop:am-flip-decomposition}
There is a decomposition of an $A_m$-flip into a composition of elementary
moves, including $\binom{m+2}{3}$ \ttt moves.
\end{proposition}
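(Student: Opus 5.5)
We prove the statement of Proposition~\ref{prop:am-flip-decomposition} by induction on $m$, reducing an $A_m$-flip to the $A_1$ case where, by the preceding lemma, a flip is literally a single \ttt move. Fix a quadrilateral $Q$ of the ideal triangulation $T$ with diagonal $E$, and let $T'$ be obtained by flipping $E$. Since all elementary moves are local, it suffices to work with the restriction of the bipartite $A_m$-graph $\Gamma_{A_m}(T)$ to the $m$-triangulated quadrilateral $Q$. We must exhibit a finite sequence of shrink/expand moves and \ttt moves, supported in $Q$, turning $\Gamma_{A_m}(T)|_Q$ into $\Gamma_{A_m}(T')|_Q$, and then count the \ttt moves.

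\textbf{Step 1 (codistance bookkeeping).} Label the internal faces of $\widetilde W$ inside $Q$ by their codistance vectors $(a,b,c,d)$ to the four vertices of $Q$, with $a+b+c+d=m+1$ by the corollary on codistances. For $T$ the faces have $b=0$ or $d=0$ (they lie in one of the two triangles), and for $T'$ they have $a=0$ or $c=0$. A \ttt move at a face $F$ decreases two of these entries by one and increases the other two by one, so the moves must transport the first configuration to the second. Following the standard octahedron/tetrahedron picture (Fock--Goncharov), we view $Q$ as the projection of a tetrahedron $\Delta$ subdivided into level-$m$ small tetrahedra. Each \ttt move then corresponds to sweeping across one small tetrahedron of the form $\{a+b+c+d=m-2\}$-shifted.

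\textbf{Step 2 (ordering the moves).} We sweep $\Delta$ layer by layer from the face $\{b=d=0\}$ side to the $\{a=c=0\}$ side. The $k$-th layer contains $\binom{k+1}{2}$ small tetrahedra, and at each one we perform a \ttt move on the face whose codistance vector is the relevant vertex. Between these moves, a face may fail to be quadrilateral; when it is, we insert shrink/expand moves (Figure~\ref{fig:se}) to make it 4-valent with alternating colours. This does not change the web up to isotopy, and we verify locally that it is always possible because each intermediate web is still ideal (by the remark that elementary moves preserve ideal webs).

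\textbf{Step 3 (counting).} Summing over the layers gives
\[
\sum_{k=1}^{m}\binom{k+1}{2}=\binom{m+2}{3}
\]
\ttt moves, which is the claimed count.

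\textbf{Main obstacle.} The main difficulty is Step 2: verifying that at each stage the face to be moved is indeed a quadrilateral face of the current bipartite graph (after shrink/expand), so that the \ttt move is legal. For this we would check the induction step explicitly. Removing the outermost layer of $\Delta$ reduces the remaining problem to an $A_{m-1}$-flip on an inner quadrilateral, and the base case $m=1$ is the flip lemma.
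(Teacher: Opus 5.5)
The paper gives no proof here; it quotes the statement from Goncharov. The standard argument behind it is Fock--Goncharov's sweep through the subdivided tetrahedron, which is what you are aiming at, and your final count is correct. However, Steps 1--2 misidentify the cells of the sweep.

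Write $e_a,\dots,e_d$ for the unit vectors and use codistance vectors with $a+b+c+d=m+1$. The two-by-two moves are attached to the \emph{octahedra} $O_\alpha$ of the subdivision, where $\alpha\in\mathbb Z_{\ge0}^4$ and $\alpha_a+\alpha_b+\alpha_c+\alpha_d=m-1$. The move at $O_\alpha$ turns the face with codistances $\alpha+e_a+e_c$ into $\alpha+e_b+e_d$, which is exactly the codistance change you quote. It requires that face to be bordered by the four equatorial faces $\alpha+e_a+e_b$, $\alpha+e_b+e_c$, $\alpha+e_c+e_d$, $\alpha+e_a+e_d$. There are $\binom{m+2}{3}$ such $\alpha$. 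Small tetrahedra of either orientation carry no two-by-two move, and cells indexed by sum $m-2$ would number only $\binom{m+1}{3}$.

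The layering is also off. Sweeping from the $T$-side $\{b=0\}\cup\{d=0\}$ to the $T'$-side $\{a=0\}\cup\{c=0\}$ means moving between the opposite edges $ac$ and $bd$. The layers are then the rectangles $\{\alpha_b+\alpha_d=j\}$, each containing $(j+1)(m-j)$ octahedra. Your triangular counts $\binom{k+1}{2}$ come instead from slicing by a single coordinate, $\{\alpha_a=\mathrm{const}\}$. Either slicing yields an admissible order, but you must fix one and check it against the dependencies. For example, $O_\alpha$ needs its equatorial faces to exist, so it must come after $O_{\alpha+e_a-e_d}$, $O_{\alpha+e_c-e_d}$, $O_{\alpha+e_a-e_b}$ and $O_{\alpha+e_c-e_b}$ whenever these are defined.

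The genuine gap is the one you flag, and the justification you offer does not close it. Idealness of the intermediate webs does not make the face to be moved a quadrilateral, and shrink/expand moves cannot repair this in general. $\Gamma_{A_m}(T)$ is ideal, yet its faces at interior points of the two triangles are hexagons, i.e.\ $6$-valent quiver vertices. Shrink/expand moves do not change the quiver, so these faces never become quadrilaterals that way. Of the $m^2$ faces in the interior of $Q$, only the $m$ on the diagonal are quadrilaterals.

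What must be proved is the following. For the chosen order, the graph obtained after the moves preceding $O_\alpha$ is, up to shrink/expand, one in which the face $\alpha+e_a+e_c$ is a $4$-gon bordered exactly by the four equatorial faces. In addition, the final graph must be $\Gamma_{A_m}(T')$. This needs an explicit description of the intermediate bipartite graphs, for instance one attached to each stepped surface in $\Delta$.

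Your induction does not supply this. Take the outer layer to be $\{\alpha_a=0\}$. The faces with $a\ge1$ do form a level-$m$ tetrahedron, but the part of $\Gamma_{A_m}(T)$ they span is not the bipartite $A_{m-1}$-graph of a quadrilateral:
\begin{itemize}
\item three of its corners, $(1,m,0,0)$, $(1,0,m,0)$ and $(1,0,0,m)$, are faces of $\Gamma_{A_m}(T)$ rather than special points;
\item the interior faces of the rows $a=1$, which would be boundary faces of the small quadrilateral, are hexagons of the big graph.
\end{itemize}
So the inductive hypothesis, which is a statement about standard graphs, does not apply without a comparison lemma. The outer layer must also still be shown to consist of legal moves on a graph you have not described.
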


\begin{proposition}
\label{prop:connect}
Any two $A_m$-webs on a decorated surface $S$ are connected by a finite sequence of \ttt and shrink/expand moves.
\end{proposition}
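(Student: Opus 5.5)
The plan is to reduce the statement to the connectedness of ideal triangulations under flips and the decomposition of $A_m$-flips into elementary moves, using Lemma~\ref{le:connectedness} to pass between webs and bipartite graphs.

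\textbf{Step 1: each web is reached from a standard web.} Let $W$ be an arbitrary $A_m$-web on $S$. I would show that $W$ is connected by a finite sequence of \ttt and shrink/expand moves to some standard $A_m$-web $W_{A_m}(T)$, attached to an ideal triangulation $T$ without self-folded triangles. Here I would follow Goncharov's reduction of an ideal web via its codistances. Passing to $\widetilde{S}$, the corollary $\sum_s\langle F,s\rangle=\operatorname{rank}(W)=m+1$ attaches to each face a lattice point in a simplex. By ideality, near each special point the $m$ strands are nested, right-of-strand discs. I would then choose a face at which a \ttt move strictly decreases a global complexity, for example the total number of crossings between strands belonging to distinct special points in a fundamental domain, or the sum of codistances to a chosen far special point. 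Minimality (no self-crossings, no parallel bigons) guarantees that such a move exists unless the web is already in standard form over some triangulation. Throughout, shrink/expand moves are used freely to normalise the bipartite graph, since they do not change the isotopy class of the web.

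\textbf{Step 2: connect the standard webs.} Given two standard webs $W_{A_m}(T)$ and $W_{A_m}(T')$, the cited result connects $T$ and $T'$ by a finite sequence of flips through regular triangulations. By Proposition~\ref{prop:am-flip-decomposition}, each $A_m$-flip decomposes into elementary moves, among them $\binom{m+2}{3}$ \ttt moves, and so it carries $W_{A_m}(T_i)$ to $W_{A_m}(T_{i+1})$. Composing these sequences, and inverting the sequence from Step 1 (elementary moves are invertible, since shrink and expand are mutually inverse and a \ttt move is an involution), connects any two $A_m$-webs. The $A_m^*$ case follows by the bigon-creation remark relating $\Gamma_{A_m^*}(T)$ and $\Gamma_{A_m}(T)$.

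\textbf{Main obstacle.} The hard step is Step 1, namely producing a monovariant that strictly decreases under a suitably chosen \ttt move and showing that a non-standard minimal ideal web always admits such a move. I would first try to prove it equivariantly on $\widetilde{S}$, working locally at one ideal triangle at a time, triangulating by the arcs cut out between consecutive special points. The risk is that the move found on $\widetilde{S}$ must descend to $S$, so the face chosen has to be $\pi_1(S)$-equivariantly consistent. I would handle this by performing the move on the whole orbit of the face.

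If the reduction proves too delicate, the fallback is to invoke Goncharov's result that every ideal web is associated with the cluster chart of some sequence of mutations from a standard chart. This is then combined with the correspondence of \ttt moves with mutations, and Lemma~\ref{le:connectedness} transfers the conclusion back to bipartite graphs.
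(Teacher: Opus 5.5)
Your Step 2 is the second half of the paper's proof: flip-connectivity of regular ideal triangulations together with Proposition~\ref{prop:am-flip-decomposition}. Step 1, however, is a genuine gap. Reducing an arbitrary $A_m$-web to some standard $W_{A_m}(T)$ is the real content of the proposition, and you list candidate strategies without carrying any of them out.

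The first monovariant cannot work as stated. Crossings of $W$ correspond to edges of $\Gamma$, and a two-by-two move replaces the six edges at the two trivalent $\bullet$-vertices of the face by six new ones, so it never changes the crossing number. You give no reason why counting only crossings between strands of distinct special points would strictly drop.

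The second candidate changes by at most one per move, since only the codistances of the single face $F$ change. Nothing shows that it can always be lowered on a non-standard web, or that its minimum detects standard webs. The sentence ``minimality guarantees that such a move exists unless the web is already in standard form'' is exactly what needs proof.

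The fallback does not help either. In this paper, the fact that $\{A_F^W\}$ is a cluster chart mutation-equivalent to a standard one is derived from Proposition~\ref{prop:connect} in Section 6, so citing it here is circular. Even granting it, a mutation path between seeds does not lift to a path of web moves, because mutations at quiver vertices that are not $4$-valent are not realized by two-by-two moves.

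The paper fills Step 1 by localization instead of a global descent. It chooses an ideal triangulation $T$ adapted to $\Gamma$. On the interior $\mathring t$ of each ideal triangle (the triangle minus the small subtriangles touching its internal sides), the situation is that of a polygon. There every ideal web is strict, and any two $A_m$-webs are connected by finitely many elementary moves (the polygon result quoted from \cite{FG06}). So $\Gamma|_{\mathring t}$ can be moved to $\Gamma_{A_m}(T)|_{\mathring t}$ triangle by triangle.

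What remains sits in the strips along common edges of adjacent triangles. There the graph has the local shape of Figure~\ref{fig:22move}, and finitely many explicit local moves (Figure~\ref{fig:6valent}) bring it to $\Gamma_{A_m}(T)$. Since every move is performed inside an embedded disc of $S$, your $\pi_1(S)$-equivariance worry does not arise.

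To repair your argument, replace the monovariant search by this reduction to the polygon case plus the edge-gluing step. The point that needs care is choosing $T$ so that each $\Gamma|_{\mathring t}$ really is an ideal $A_m$-graph on the triangle.
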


\begin{proof}
By Lemma~\ref{le:connectedness}, the connectedness of $A_m$-webs under \ttt moves may be studied equivalently as connectedness of the corresponding $A_m$-bipartite graphs.

Given an ideal triangulation $T$ of $S$, let $\Gamma_{A_m}(T)$ be the standard $A_m$-bipartite graph associated with $T$. Suppose $\Gamma_1,\Gamma_2$ are $A_m$-bipartite graphs on $S$. We first show that they can be related to standard graphs associated with triangulations: $\Gamma_1
\sim
\Gamma_{A_m}(T_1)$, and $\Gamma_2
\sim
\Gamma_{A_m}(T_2),$
where $\sim$ denotes a finite sequence of the three elementary moves. Since the elementary moves are local changes, the local connectedness problem may be reduced to the case of a single ideal triangle and the matter of gluing.

An ideal triangle $t$ of an ideal triangulation $T$ on $S$ is called internal if it is contained in the interior of $S$; it is called external otherwise.
Let $t$ be an ideal triangle of a $m$-ideal triangulation $T$. We define the boundary and interior of $t$ as following:

Color all the sub-triangulations inside the $m$-ideal triangulation which has at least one boundary on the boundary of $t$ that is not on the boundary of $S$. The complement of the colored area is defined to be the interior of $t$.

\begin{figure}[htbp]
\centering
\includegraphics[width=0.2\textwidth]{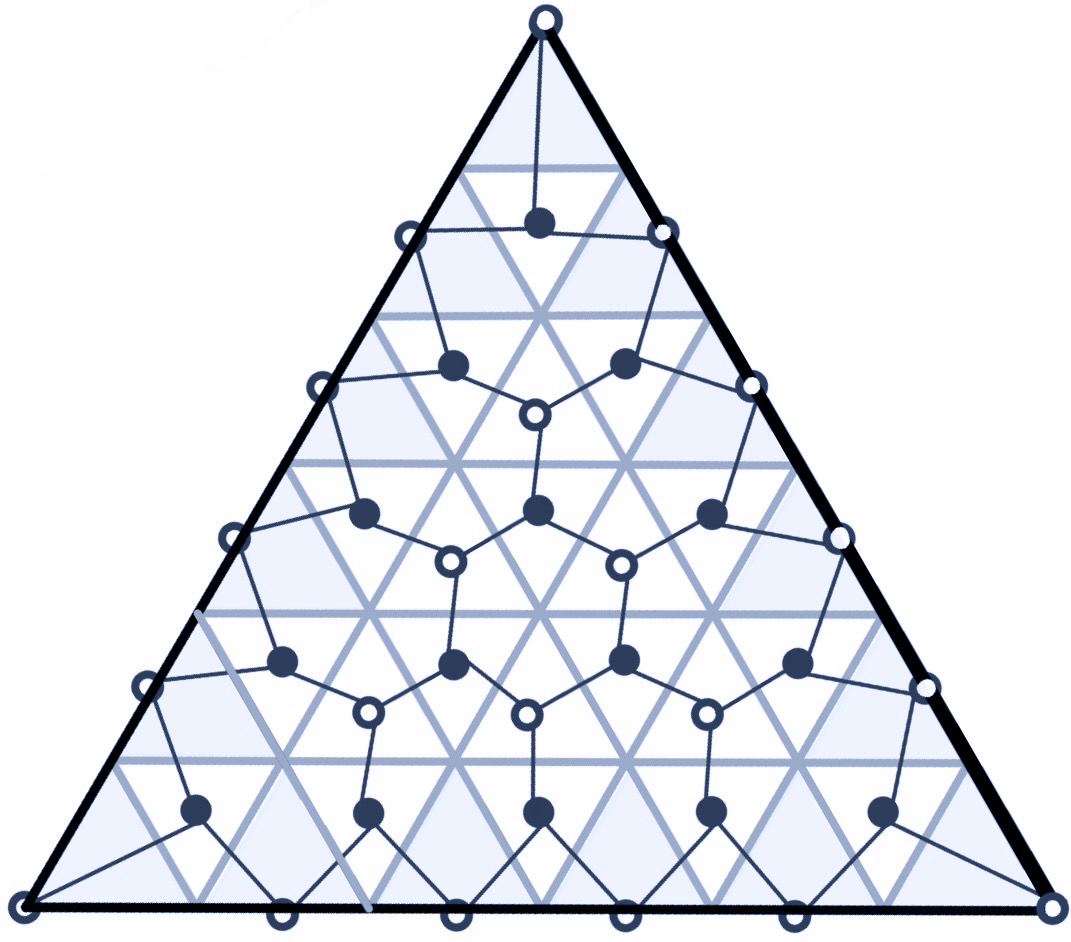}
\caption{The boundary and interior of an internal ideal triangle $t$.}
\label{fig:bdry}
\end{figure}

Any ideal triangle $t$ of $T$ has the same interior as the polygon version triangle, i.e., triangle whose boundaries are the boundary of the geometric object.

For a polygon, every ideal web is strict ideal. For a fixed rank, it is proven that any two $A_m$-webs on a polygon are connected by finitely many elementary moves~\cite{FG06}. Equivalently, any two $A_m$-bipartite graphs on a polygon are connected by finitely many elementary moves.

Therefore, given an $A_m$-bipartite graph $\Gamma$ on $S$, we can find an ideal triangulation $T$, such that on the interior of each ideal triangle $t$ of $T$, $\Gamma|_{\mathring{t}}
\sim
\Gamma_{A_m}(T)|_{\mathring{t}}$. Performing the elementary moves on $\Gamma$ on each each $\mathring{t}$, we obtain a new $A_m$-bipartite graph $\Gamma'$.

\begin{figure}[htbp]
\centering
\includegraphics[width=0.35\textwidth]{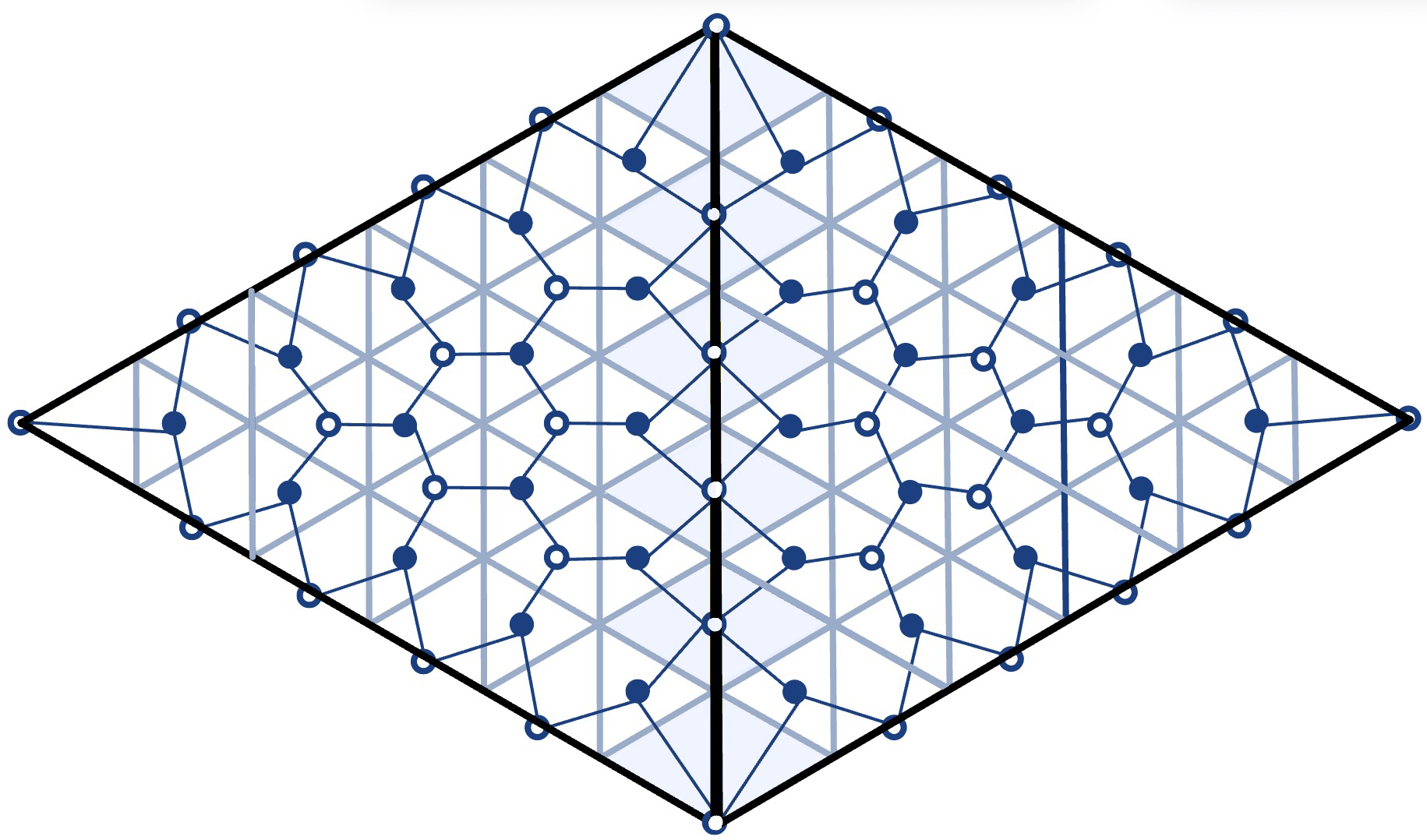}
\caption{Glue two ideal triangles.}
\label{fig:glue}
\end{figure}

When we glue two internal sides of two ideal triangles $t_a$ and $t_b$ of $T$, pairs of sub-triangles on the boundaries combines into the part of an ideal triangulation of an internal quadrilateral that is on the common edge of the two triangles, as shown in Figure~\ref{fig:glue}. We can see that having $\Gamma_{A_m}(T)|_{\mathring{t}}$ on the interior of each ideal triangle $t$, the parts of any $A_m$-bipartite graph $\Gamma$ on the common edge of the two ideal triangles $t_a$ and $t_b$ has the same structure as in the Figure~\ref{fig:22move}. Thus $\Gamma'$ and $\Gamma_{A_m}(T)$ only differ by finitely many elementary moves on the common edges of the ideal triangles, as shown in Figure~\ref{fig:6valent}. Hence $\Gamma\sim\Gamma_{A_m}(T)$.

\begin{figure}[htbp]
\centering
\includegraphics[width=1.0\textwidth]{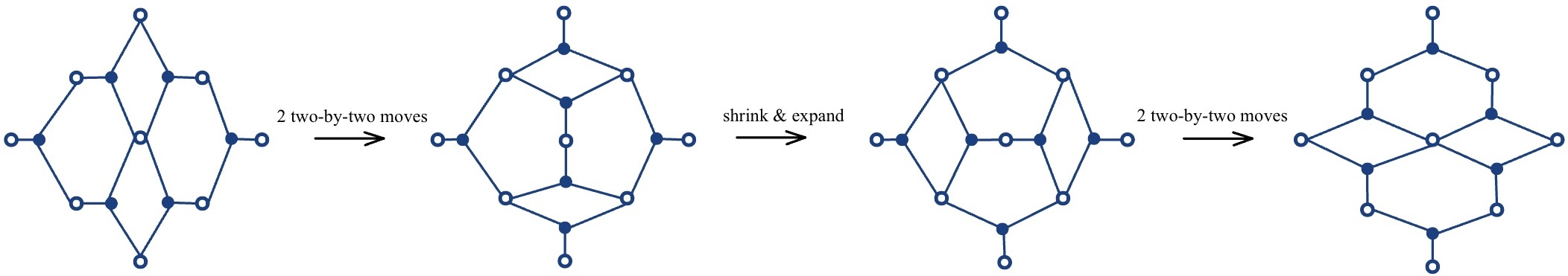}
\caption{Elementary moves on common edges of ideal triangles.}
\label{fig:6valent}
\end{figure}

Any two ideal triangulations $T_1$ and $T_2$ of $S$ are connected by finitely many flips\cite{Goncharov}. Each flip is supported on an ideal quadrilateral, hence locally on two triangles, and is realized by a finite sequence of elementary moves of the associated $A_m$-graphs. Therefore, $\Gamma_{A_m}(T_1)\sim\Gamma_{A_m}(T_2)$. Combining the three steps gives
$\Gamma_1
\sim
\Gamma_{A_m}(T_1)
\sim
\Gamma_{A_m}(T_2)
\sim
\Gamma_2.$

Hence connectedness of the $A_m$-bipartite graphs implies connectedness of the $A_m$-webs:
$\Gamma_1\sim\Gamma_2
\to
W_{\Gamma_1}\sim W_{\Gamma_2}.$
\end{proof}

\subsection{Conjugate Ribbon Graphs and Spectral Covers}

\subsubsection{Construction of Spectral Covers}
\label{sssec:231}

We recall the construction of the topological spectral cover from a spectral web introduced in \cite{Goncharov}.

Let $W$ be a \emph{spectral web} on a decorated surface $S$, and let $\widetilde{\Gamma}$ be the corresponding bipartite graph on the universal cover $\widetilde{S}$ associated with the web $\widetilde{W}$. Each zig-zag strand $\gamma$
on $\widetilde{\Gamma}$ bounds the unique component $\widetilde{S}_{\gamma}$
on $\widetilde{S}$, as in (\ref{eq:2.1}). We attach the component $\widetilde{S}_{\gamma}$ to the graph $\widetilde{\Gamma}$, viewed as an abstract ribbon bipartite graph, along the strand $\gamma$. Performing this procedure for
all zig-zag strands, we get a surface $\widetilde{\Sigma}_{\widetilde{W}}$.
By construction, each component $\widetilde{S}_{\gamma}$ becomes a part of
$\widetilde{\Sigma}_{\widetilde{W}}$, denoted by $F_{\gamma}$:
\begin{equation}
\widetilde{\Sigma}_{\widetilde{W}}
=
\widetilde{\Gamma}\cup_{\gamma_i} F_{\gamma_i}.
\end{equation} 

Using the tautological isomorphisms $F_{\gamma}\xrightarrow{\sim}\widetilde{S}_{\gamma}$,
we get a map
$\widetilde{\pi}:
\widetilde{\Sigma}_{\widetilde{W}}
=\widetilde{\Gamma}\cup_{\gamma_i}F_{\gamma_i}
\to
\widetilde{S}$.

The fundamental group $\pi_1(S)$ action
commutes with the projections on $\widetilde{S}$ and $\widetilde{\Sigma}_{\widetilde{W}}$. So the group $\pi_1(S)$ acts by automorphisms
of the cover.
The \emph{spectral cover} is $\Sigma_{W}
:=\widetilde{\Sigma}_{\widetilde{W}}/\pi_1(S)$.

Then we have a commutative diagram, with the horizontal maps given by the
quotients of the free action of $\pi_1(S)$:
\begin{equation}
\begin{tikzcd}
\widetilde{\Sigma}_{\widetilde{W}}
\arrow[r]
\arrow[d, "\widetilde{\pi}"']
&
\Sigma_{W}
:=
\widetilde{\Sigma}_{\widetilde{W}}/\pi_1(S)
\arrow[d, "\pi"]
\\
\widetilde{S}
\arrow[r]
&
S:=\widetilde{S}/\pi_1(S).
\end{tikzcd}
\end{equation}

Therefore we get the \emph{spectral cover map}:
$\pi:\Sigma_{W}\to S$.
The special points on the spectral cover $\Sigma_{W}$ are the preimages of the special points on $S$.

\begin{lemma}[\cite{Goncharov}]
Given any ideal web $W$ on a decorated surface $S$, $\deg(\widetilde{\pi})=\deg(\pi)$.
\end{lemma}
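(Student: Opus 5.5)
The statement says that $\deg(\widetilde{\pi})$ equals the rank of $W$, and that this number descends to the quotient. My plan is to compute the degree of $\widetilde{\pi}$ by counting preimages of a generic point. Then I check that passing to the quotient by the free $\pi_1(S)$-action does not change this count.

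\textbf{Counting preimages upstairs.} Take a point $x\in\widetilde{S}$ lying in the interior of a face $F$ of $\widetilde{W}$, away from the strands and from the graph $\widetilde{\Gamma}$. By construction $\widetilde{\Sigma}_{\widetilde{W}}=\widetilde{\Gamma}\cup_{\gamma_i}F_{\gamma_i}$, and the ribbon graph part is one-dimensional, so it contributes no preimages of a generic $x$. The remaining preimages are in bijection with the strands $\gamma$ for which $x\in\widetilde{S}_\gamma$, because each $F_\gamma$ maps homeomorphically onto $\widetilde{S}_\gamma$ via the tautological isomorphism. Since $W$ is ideal, each strand $\gamma$ of $\widetilde{W}$ is associated with a special point $s$, and $\widetilde{S}_\gamma$ is the disc on the right of $\gamma$ containing $s$ (Definition~\ref{def:ideal}). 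So the number of such strands is $\sum_s\langle F,s\rangle$. By the Corollary on codistances this sum equals $\operatorname{rank}(W)$, and in particular it is independent of $F$.

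\textbf{Degree of $\widetilde{\pi}$.} Over the complement of the strands, $\widetilde{\pi}$ is therefore a covering of constant degree $\operatorname{rank}(W)$. The map extends continuously across the strands, where the pieces $F_\gamma$ are glued to $\widetilde{\Gamma}$. So $\deg(\widetilde{\pi})=\operatorname{rank}(W)$, either as a branched covering or as a count of generic preimages.

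\textbf{Descending to the quotient.} The action of $\pi_1(S)$ on $\widetilde{S}$ is free, and on $\widetilde{\Sigma}_{\widetilde{W}}$ it commutes with $\widetilde{\pi}$. Let $\bar x\in S$ be generic and choose a lift $x\in\widetilde{S}$.

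\begin{itemize}
\item I claim that $\widetilde{\pi}^{-1}(x)$ maps bijectively onto $\pi^{-1}(\bar x)$.
\item \emph{Surjectivity.} Any orbit lying over $\bar x$ has a representative lying over $x$, obtained by translating by the appropriate deck element.
\item \emph{Injectivity.} Suppose two points over $x$ are identified by some $g\in\pi_1(S)$. Then $gx=x$, and freeness of the action on $\widetilde{S}$ forces $g=1$.
\end{itemize}

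Hence $\deg(\pi)=\#\pi^{-1}(\bar x)=\#\widetilde{\pi}^{-1}(x)=\deg(\widetilde{\pi})$.

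\textbf{Expected obstacle.} The delicate step is the first one: justifying that the preimages of a generic point are exactly the strands whose right-hand discs contain it. This requires that the gluing along $\gamma$ attaches $F_\gamma$ on the correct side, namely that $\widetilde{S}_\gamma$ is the right-hand component. It also requires that the strands associated with distinct special points are counted without repetition; this holds because strands have no self-crossings on $\widetilde{S}$, by minimality.
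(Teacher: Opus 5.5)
Your descent step is correct and carries the real content of the lemma. Equivariance of $\widetilde\pi$ and freeness of the $\pi_1(S)$-action on $\widetilde S$ give a bijection $\widetilde\pi^{-1}(x)\to\pi^{-1}(p(x))$; in other words, the square with the quotient maps is Cartesian. (The paper gives no proof of this lemma; it only cites Goncharov.)

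The problem is your first step. You obtain constancy and finiteness of $\#\widetilde\pi^{-1}(x)$ from the codistance corollary $\sum_s\langle F,s\rangle=\operatorname{rank}(W)$. But the only general definition of rank in the paper is $\operatorname{rank}(W):=\deg(\pi)$. You have also just shown $\sum_s\langle F,s\rangle=\#\widetilde\pi^{-1}(x)$ for $x\in F$. So the corollary literally says $\#\widetilde\pi^{-1}(x)=\deg(\pi)$, which is the statement you are proving. Your write-up shows the circularity: granting the corollary, your first paragraph already proves the lemma and the descent paragraph is redundant. Your opening sentence also misreads the statement, which only compares the two degrees and says nothing about the rank.

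The fix is to prove constancy upstairs directly.
\begin{itemize}
\item \emph{Constancy.} Each edge $E$ of $\widetilde\Gamma$ is traversed by exactly two zig-zag strands, in opposite directions. By minimality (no self-crossings on $\widetilde S$) they are distinct: call them $\gamma_1$, with the face $G_1$ on its right along $E$, and $\gamma_2$, with the adjacent face $G_2$ on its right. Every other strand misses the interior of $E$, so $G_1$ and $G_2$ lie on the same side of it. Moving $x$ from $G_1$ to $G_2$ therefore loses exactly $\widetilde S_{\gamma_1}$ and gains exactly $\widetilde S_{\gamma_2}$. Since $\widetilde S$ is connected, the count is the same on all faces.
\item \emph{Local homeomorphism.} Near an interior point of $E$, the half-discs of $F_{\gamma_1}$ and $F_{\gamma_2}$ are glued along $E$. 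This shows $\widetilde\pi$ is a local homeomorphism there, which is what justifies calling the count a degree.
\item \emph{Finiteness.} The right-hand discs are compact and are the $\pi_1(S)$-translates of finitely many discs, one orbit per strand of $W$. By proper discontinuity, only finitely many translates contain $x$.
\end{itemize}
With this replacing the corollary, your descent argument completes the proof.

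One small correction to your "expected obstacle": there is no risk of double counting across special points, since each strand is associated with exactly one special point. Minimality is really needed for the distinctness of $\gamma_1$ and $\gamma_2$ along an edge.
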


\begin{definition}[\cite{Goncharov}]
Given any spectral web $W$ on a decorated surface $S$, $\rank(W):=\deg(\pi)$.
\end{definition}

\begin{definition}
A spectral web $W$ on a decorated surface $S$ is called \emph{singular} if the associated spectral cover $\Sigma_W$ has nodal singularities.
\end{definition}

\begin{remark}
Ideal webs are non-singular.
\end{remark}

\begin{figure}[htbp]
\centering
\includegraphics[width=1.0\textwidth]{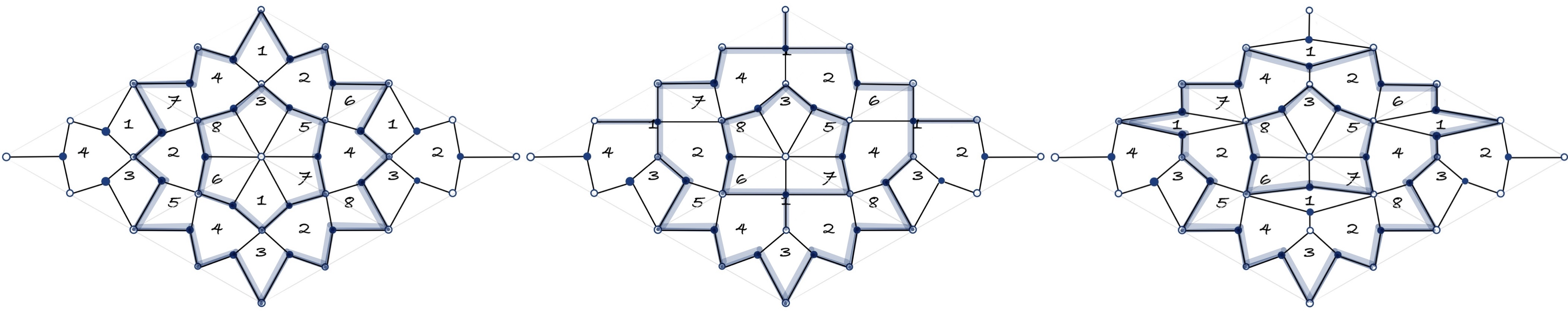}
\caption{An example of a singular web formed from collision of two 3-valent $\bullet$-vertices. The 4-valent $\bullet$-vertex in face 1 corresponds to the nodal singularity. The strand $\gamma$ of $\widetilde{W}$ that passes the 4-valent $\bullet$-vertex in face 1 on the universal cover $\widetilde{S}$ is an infinite line.
$\widetilde{S}_{\gamma}$ and $\widetilde{S}_{\gamma}'$ in Equation~\ref{eq:2.1}: $\widetilde{S} - \gamma = \widetilde{S}_{\gamma} \cup \widetilde{S}_{\gamma}'$ are two infinite areas.}
\label{fig:singular}
\end{figure}

Let
$f:X\to Y$
be a finite morphism of smooth curves.
A point $p\in X$ is a \emph{ramification point} if its \emph{ramification index} satisfies
$e_p\ge 2.$
Equivalently, in suitable local coordinates near $p$ and $f(p)$,
$f(t)=t^{e_p}.$
A point $q\in Y$ is a \emph{branch point} if
$q=f(p)$
for some ramification point $p\in X$.

A point $p\in X$ is \emph{simply ramified} if
$
e_p=2.
$
Equivalently, locally,
$f(t)=t^2.$
A cover is \emph{simply branched} if, over every branch point
$q\in Y$, there is exactly one ramification point
$p\in f^{-1}(q)$, and that point is simply ramified.

Hence a fibre over a \emph{simple branch point} has multiplicities
$2,1,\ldots,1,$
and the corresponding local monodromy is a single transposition. If two distinct simply ramified points lie over the same
$q\in Y$, then the individual points are simply ramified, but
the cover is not simply branched at $q$.

\begin{theorem}[{\cite{Goncharov}}]
Let $W$ be a non-singular spectral web on a decorated surface $S$. The spectral cover map $\pi$ is ramified only at the $\bullet$-vertices of the
associated bipartite graph $\Gamma$. The ramification index at a $\bullet$-vertex is its valency minus one.
\end{theorem}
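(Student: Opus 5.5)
The statement to prove is the theorem of Goncharov: for a non-singular spectral web $W$, the cover $\pi:\Sigma_W\to S$ ramifies only at $\bullet$-vertices, with ramification index equal to valency minus one. I would argue locally, using the explicit construction $\widetilde\Sigma_{\widetilde W}=\widetilde\Gamma\cup_{\gamma_i}F_{\gamma_i}$. Since $\pi_1(S)$ acts freely and compatibly on $\widetilde\Sigma_{\widetilde W}$ and $\widetilde S$, ramification of $\pi$ at a point equals ramification of $\widetilde\pi$ at any lift. So it suffices to work on the universal cover and compute the local degree of $\widetilde\pi$ near each point of $\widetilde\Sigma_{\widetilde W}$.

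I would split the points of $\widetilde\Sigma_{\widetilde W}$ into three types.

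\textbf{Interior points of some $F_\gamma$.} Here $\widetilde\pi$ is the tautological isomorphism $F_\gamma\to\widetilde S_\gamma$, so it is a local homeomorphism and there is no ramification.

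\textbf{Interior points of edges of $\widetilde\Gamma$.} An edge $e$ of the ribbon graph is traversed by exactly two zig-zag strands, one on each side, running in opposite directions. Near $e$, the surface $\widetilde\Sigma$ is the union of two half-discs, glued from $F_{\gamma}$ and $F_{\gamma'}$. I would check from the left/right turning rule that $\widetilde S_\gamma$ and $\widetilde S_{\gamma'}$ lie on opposite sides of $e$ in $\widetilde S$. The two half-discs then map onto a full disc neighbourhood of $e$ with degree one.

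\textbf{Vertices.} Let $v$ have valency $k$. The $k$ corners of the ribbon graph at $v$ are each filled by a sector of some $F_\gamma$. Going once around $v$ in $\widetilde\Sigma$ therefore passes through $k$ corner sectors.

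At a $\circ$-vertex the strands turn right. The sector of $\widetilde S_\gamma$ attached at each corner is then the single sector of $\widetilde S$ between the two consecutive edges. So one loop around $v$ upstairs covers one loop downstairs, and the local degree is $1$.

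At a $\bullet$-vertex the strands turn left. Each strand $\gamma$ entering along $e_i$ and leaving along $e_{i+1}$ has $\widetilde S_\gamma$ containing the complementary angle at $v$, which consists of $k-1$ consecutive sectors. The $k$ corners together therefore sweep $k(k-1)$ elementary sectors around $v$ in $\widetilde S$. Since $v$ has $k$ sectors downstairs, a small loop around $v$ in $\widetilde\Sigma$ maps to a loop winding $k(k-1)/k=k-1$ times. Hence the ramification index is $k-1$.

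The main obstacle is the orientation bookkeeping: I must verify that "which side is $\widetilde S_\gamma$" is consistent with the zig-zag turning rule. This is the side on the right of the strand, as in the ideal-web condition, and it is well defined because the web is spectral. I would fix conventions on a single $\bullet$-vertex and a single $\circ$-vertex picture, then check the angle count. The non-singular hypothesis guarantees that the glued sectors around $v$ form a single disc, so the neighbourhood is a disc and not a wedge of discs as at a node. This is what makes "ramification index" meaningful. In the singular case this fails, as in Figure~\ref{fig:singular}, where a 4-valent $\bullet$-vertex gives a node.
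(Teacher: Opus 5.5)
The paper gives no proof of this statement; it is quoted from \cite{Goncharov}, so there is no in-paper argument to compare against. Your local angle count is correct, and it is the standard argument behind Goncharov's statement. Take $\widetilde S_\gamma$ on the right of $\gamma$, which is the convention compatible with face paths of $\Gamma^\vee$ turning right. Then along edges and at $\circ$-vertices $\pi$ is a local homeomorphism. At a $k$-valent $\bullet$-vertex, each of the $k$ corners covers an angle $2\pi(k-1)/k$, and consecutive corners concatenate, which gives local degree $k-1$.

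One logical step should be made explicit. The claim that $\widetilde S_\gamma$ contains the full complementary $(k-1)$-sector angle at a $\bullet$-vertex $v$ is true only if $\gamma$ passes through $v$ once. Suppose instead that $\gamma$ uses two corners $e_a\to e_{a-1}$ and $e_b\to e_{b-1}$ at $v$.
\begin{itemize}
\item Near $v$, the set $\widetilde S_\gamma$ is then only the two sectors from $e_a$ to $e_{b-1}$ and from $e_b$ to $e_{a-1}$.
\item The start-edge-to-end-edge permutation of the corner pieces becomes a $k$-cycle composed with a transposition.
\item The link of $v$ therefore splits into two circles, so $v$ is a point where two discs meet. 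For $k=4$ these are two degree-one discs, i.e.\ a node.
\end{itemize}
So you should invoke the non-singular hypothesis first, to exclude double passages of a strand through a $\bullet$-vertex (double passages through $\circ$-vertices are harmless). Only after that does your sector count, and hence $z\mapsto z^{k-1}$, apply. As written, you assert the complementary-angle claim before using the hypothesis, and in the excluded case that claim is false.
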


\begin{remark}
In particular, the spectral cover map is unramified over the special points.
\end{remark}

\begin{remark}
Let $W$ be an $A_m$-web constructed from an $A_m$-sub-triangulation on an decorated surface $S$. Let $\Gamma$ be the associated bipartite $A_m$-graph. Then $\bullet$-vertices in $\Gamma$ are 3-valent, so the spectral cover map is simply branched at the $\bullet$-vertices of $\Gamma$.
\end{remark}

\begin{definition}
An $A_{m}$-web $W$ is called \emph{generic} if every
$\bullet$-vertex of its associated bipartite graph is 3-valent.
Equivalently, the associated spectral cover $\pi_W:\Sigma_W\to S$ is simply branched.
\end{definition}

\begin{lemma}
Every $A_{m}$-web is related by a finite sequence of shrink/expand moves to a generic $A_{m}$-web.
More precisely, a $k$-valent $\bullet$-vertex, $k\ge 4$, can be replaced by $k-2$ 3-valent $\bullet$-vertices by $k-3$
successive expansion moves.
\end{lemma}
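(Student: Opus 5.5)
We prove the two claims separately: $k-3$ expansions suffice locally at a high-valence vertex, and only finitely many such vertices exist.

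\emph{Local step.} Let $v$ be a $k$-valent $\bullet$-vertex with $k\ge 4$, and list its incident edges $e_1,\dots,e_k$ in cyclic order around $v$ (given by the orientation of $S$). An expansion move at $v$ does the following. It splits $v$ into two $\bullet$-vertices $v',v''$. It joins them through a new $2$-valent $\circ$-vertex $w$. Then $v'$ keeps $e_1,e_2$, and $v''$ keeps $e_3,\dots,e_k$.

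After this move, $v'$ is $3$-valent (edges $e_1,e_2$ and the edge to $w$) and $v''$ is $(k-1)$-valent. The graph stays bipartite because $w$ is a $\circ$-vertex.

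On the web side this expansion is the creation of a bigon. So the associated ideal web is unchanged up to isotopy and resolution of a bigon, as in the remark that shrink/expand moves leave the web isotopic. In particular, by the remark that elementary moves preserve ideality and rank, the new graph is again a bipartite $A_m$-graph.

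Iterating at the remaining vertex of valence at least $4$, each step lowers its valence by one and creates one new $3$-valent $\bullet$-vertex. After $k-3$ steps the last vertex has valence $3$. The result is exactly $k-2$ $3$-valent $\bullet$-vertices, since $(k-3)$ of them split off plus the final one. The count is also consistent with Riemann--Hurwitz, by the theorem that the ramification index at a $\bullet$-vertex is its valency minus one:
\[
(k-1)-1 = (k-2)\,(3-1-1),
\]
so the total ramification contribution $k-2$ is preserved.

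\emph{Global step.} The bipartite graph $\Gamma$ on the compact surface $S$ has finitely many $\bullet$-vertices. Expansions at different vertices are supported in disjoint small neighbourhoods, so they do not interact. Applying the local step at each $\bullet$-vertex of valence at least $4$ therefore yields a graph all of whose $\bullet$-vertices are $3$-valent.

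$\bullet$-vertices of valence $\le 2$ do not create an obstruction. A $2$-valent $\bullet$-vertex can be shrunk, which removes it and does not raise the valence of any $\bullet$-vertex. By the ramification theorem it contributes no branching anyway.

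By Lemma~\ref{le:connectedness}, the resulting bipartite $A_m$-graph determines an $A_m$-web, which is generic by definition.

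\emph{Main obstacle.} The step needing care is checking that a single expansion keeps the graph a bipartite $A_m$-graph. Concretely, the faces adjacent to the new edges must remain discs, and the zig-zag strands must be unchanged up to isotopy and bigon creation. I would verify this by drawing the zig-zag paths through $v'$, $w$, $v''$ locally. Each strand passing through $v$ between $e_i$ and $e_{i+1}$ is rerouted either unchanged or through the new bigon formed at $w$. Hence the strand set and its codistances are preserved.
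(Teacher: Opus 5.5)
Your proof is correct and follows the paper's argument: split off a $3$-valent $\bullet$-vertex through a new $2$-valent $\circ$-vertex, iterate $k\mapsto 3+(k-1)\mapsto 3+3+(k-2)\mapsto\cdots$ for $k-3$ steps, and check that the ramification contribution $k-2$ is preserved. Your global finiteness step, your treatment of $2$-valent $\bullet$-vertices by shrinking, and your local zig-zag check that the graph remains a bipartite $A_m$-graph are extra bookkeeping that the paper leaves implicit.
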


\begin{proof}
An expansion may split a $k$-valent $\bullet$-vertex into a 3-valent $\bullet$-vertex and a $(k-1)$-valent
$\bullet$-vertex, separated by a $2$-valent $\circ$-vertex. Repeating this operation gives
\begin{equation}
k\mapsto 3+(k-1)
\mapsto 3+3+(k-2)
\mapsto\cdots
\mapsto
\underbrace{3+\cdots+3}_{k-2}.
\end{equation}
Thus after $k-3$ expansions all resulting $\bullet$-vertices
are 3-valent.

Moreover, the ramification contribution is preserved:
$k-2=\sum_{i=1}^{k-2}(3-2)$.
Hence the operation resolves one higher ramification point of
order $k-2$ into $k-2$ simple ramification points.
\end{proof}

\begin{figure}[htbp]
\centering
\includegraphics[width=1.0\textwidth]{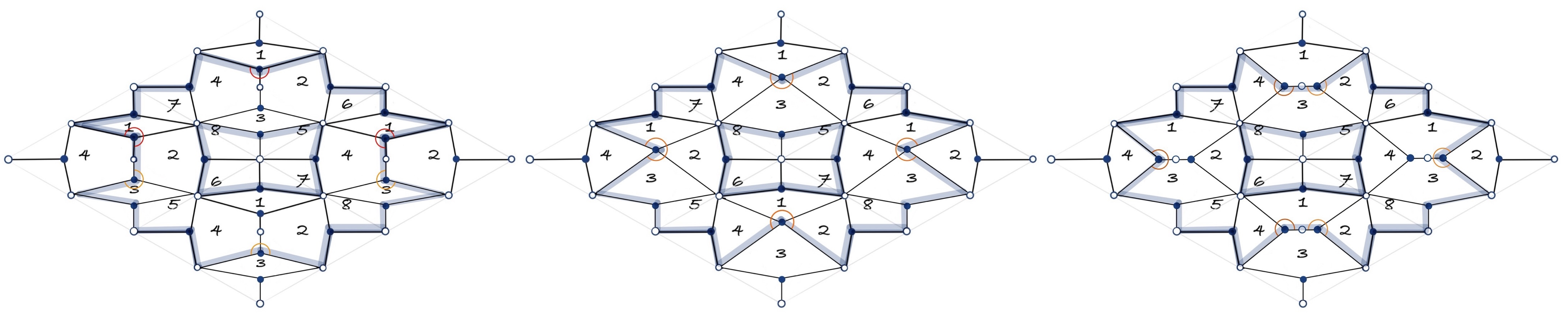}
\caption{An example of two simple ramification points coalescing into a double ramification point (corresponding to the $4$-valent $\bullet$-vertex) through shrinking, and a double ramification point splitting to two simple ramification points through expanding.}
\label{fig:3rami}
\end{figure}

Let $W$ be a generic $A_m$-web on a decorated surface $S$, and let $\Gamma$ be the corresponding bipartite $A_m$-graph. If we allow distinct vertices of $\Gamma$ to be placed at the same point on $S$ during an isotopy, then the associated spectral cover $\pi_W:\Sigma_W\to S$ is simply ramified but not simply branched, since two distinct simple ramification points can have the same branch value. Such webs are not generic.

\begin{remark}
Two-by-two moves have orientations at the level of the spectral cover. The orientation of a \ttt move and the orientation of the corresponding tropical mutation on the $K_0$ groups constructed from the associated quivers with potential are chosen in accordance with each other. We will formally define the orientation in Section 3 Definition~\ref{def:orittt}.
\end{remark}

\begin{remark}
Two-by-two moves can be performed when rank $m\ge 2$.
Shrinking a 2-valent $\circ$-vertex connecting two 3-valent $\bullet$-vertices and then expanding a 4-valent $\bullet$-vertex into another two 3-valent $\bullet$-vertices can be performed when rank $m\ge 3$.
\end{remark}

\begin{remark}
Two-by-two moves have orientations at the level of the spectral cover. This means \ttt moves are not their own inverse in that case, as opposed to \ttt moves on webs/bipartite graphs. So performing two \ttt moves in the same orientation results in a non-trivial automorphism on the lattice $\Lambda_\Gamma^\circ=\ker(H_1(\mathring{\Sigma};\mathbb Z)\to H_1(\mathring{S};\mathbb Z))$, which we will discuss in Section 3. This also corresponds to a non-trivial monodromy around $\mathcal D_{bdry,C,P}$ on the Hitchin fibration, which we will discuss in Section 5. It also results in a non-trivial cluster mutation on associated regular functions, which we will discuss in Section 6.

In contrast, the whole process of  "shrinking a 2-valent $\circ$-vertex connecting two 3-valent $\bullet$-vertices and then expanding a 4-valent $\bullet$-vertex into another two 3-valent $\bullet$-vertices" do not have orientations at the level of the spectral cover. So shrink \& expand moves are their own inverses. This means performing two successive shrink \& expand moves and eventually come back to the initial web induce trivial automorphism on the the lattice $\Lambda_\Gamma^\circ$, which corresponds to a trivial monodromy around $\mathcal D_{cau,C,P}$ on the Hitchin fibration, and a trivial cluster mutation on associated regular functions on the moduli space of decorated local systems.

If we allow overlapping of $\bullet$-vertices during isotopy of initial $A_m$-webs. The non-generic middle stage web corresponds to the locus $\mathcal D_{Max,C,P}$ in the Hitchin base, where two distinct simple ramification points have the same branch value. The monodromy around $\mathcal D_{Max,C,P}$ is trivial since the isotopy class of the web remains unchanged during the process. We will discuss these further in Section 5.
\end{remark}

\subsubsection{Bipartite Ribbon Graphs}
A \emph{ribbon graph} is a graph
with a cyclic order of the edges at every vertex. A \emph{face path}
on a ribbon graph $\Gamma$ is an oriented path on the graph which turns
right at every vertex. A closed face path is called a \emph{face loop}.\cite{Goncharov}

A graph embedded into an oriented surface $S$ has an induced ribbon structure. Its faces $F_{\varphi_i}$ are the areas on the surface bounded by the union of the boundary of $S$ and face paths $\varphi_i$. We may assume that its external vertices are on the boundary of $S$.\cite{Goncharov}

The \emph{conjugate bipartite ribbon graph} $\Gamma^\vee$ is given by reversing the cyclic
orders at all $\bullet$-vertices of a bipartite ribbon graph $\Gamma$.\cite{Goncharov}

$\Gamma$ and $\Gamma^\vee$
are isomorphic as graphs, differing only in their ribbon structures.
There is a bijection:
\begin{equation}
\label{eq:conj}
\{\text{zig-zag strands } \gamma_i \text{ on } \Gamma\}
\longleftrightarrow
\{\text{face paths on } \Gamma^\vee\}.
\end{equation}
The orientations of zig-zag strands match the orientations of the face
paths on $\Gamma^\vee$.\cite{Goncharov}

A \emph{completed conjugate surface $\mathbf{\Sigma}$} of the bipartite ribbon graph $\Gamma$ is constructed by attaching faces $F_{\varphi_i}$ of the conjugate ribbon graph $\Gamma^\vee$ to the union of the boundary of $S$ and face paths $\varphi_i$:
$\mathbf{\Sigma}
=\Gamma^\vee\cup_{\varphi_i}F_{\varphi_i}$.\cite{Goncharov}

\begin{proposition}[{\cite{Goncharov}}]
Given a spectral web $W$ on a decorated surface $S$, the spectral cover $\Sigma_{W}$ is canonically identified with the completed conjugate surface $\mathbf{\Sigma}$ of the bipartite ribbon graph $\Gamma$ associated with $W$:
$\mathbf{\Sigma}=\Sigma_{W}$.
\end{proposition}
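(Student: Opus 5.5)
The plan is to make the identification $\Sigma_W=\mathbf{\Sigma}$ explicit on both pieces of the decomposition. We use $\widetilde{\Sigma}_{\widetilde W}=\widetilde\Gamma\cup_{\gamma_i}F_{\gamma_i}$ and $\mathbf{\Sigma}=\Gamma^\vee\cup_{\varphi_i}F_{\varphi_i}$, build the identification on the universal cover first, and then descend by $\pi_1(S)$-equivariance.

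\textbf{Graph parts.} First identify the one-skeleta. By definition $\Gamma$ and $\Gamma^\vee$ are the same abstract graph; only the ribbon structure differs, by reversing the cyclic order at the $\bullet$-vertices. In the construction of \S\ref{sssec:231}, $\widetilde\Gamma$ is used as an abstract ribbon bipartite graph, and its lift to $\Gamma$ is the quotient by $\pi_1(S)$. So we take the identity map on vertices and edges of the underlying graph, $\Gamma\to\Gamma^\vee$.

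\textbf{Boundary circles.} Next check that the attaching circles match. By the bijection \eqref{eq:conj}, each zig-zag strand $\gamma$ of $\Gamma$, which turns left at $\bullet$ and right at $\circ$, is exactly a face path of $\Gamma^\vee$, which turns right everywhere. This works because reversing the cyclic order at $\bullet$-vertices turns ``left'' into ``right'' there. The orientations agree as well. Hence the boundary circles along which $F_\gamma$ is glued in $\Sigma_W$ are exactly the face loops $\varphi$ of $\Gamma^\vee$, with matching orientations.

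\textbf{Attached pieces.} Then match the attached discs and regions, and check that the ribbon structure induced on $\widetilde\Gamma\subset\widetilde\Sigma_{\widetilde W}$ is the conjugate one. The key local computation is at a vertex $v$ of $\widetilde\Gamma$. The pieces $F_{\gamma}$ attached along the strands through $v$ are the regions $\widetilde S_\gamma$ on one fixed side, the right, of each strand. Reading off the cyclic order of the sectors these pieces contribute around $v$ in the glued surface:
\begin{itemize}
\item at a $\circ$-vertex the strand turns right, so the attached sector is the face sector of $S$ itself, and the cyclic order is preserved;
\item at a $\bullet$-vertex the strand turns left, so the sector between consecutive edges is glued in the opposite orientation, and the cyclic order is reversed.
\end{itemize}
Thus a neighbourhood of $\widetilde\Gamma$ in $\widetilde\Sigma_{\widetilde W}$ is the ribbon surface of $\widetilde\Gamma^\vee$. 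Its boundary components are the face loops $\varphi_i$, and the complement consists of the $F_{\gamma_i}$. These are glued along the $\varphi_i$ exactly as the faces $F_{\varphi_i}$ are glued in $\mathbf{\Sigma}$. This gives a canonical homeomorphism $\widetilde\Sigma_{\widetilde W}\cong\widetilde{\mathbf\Sigma}$.

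\textbf{Descent.} Finally, every step is natural in the graph data, so the homeomorphism commutes with the deck action of $\pi_1(S)$. It therefore descends to $\Sigma_W\cong\mathbf\Sigma$. Canonicity holds because no choices were made beyond the tautological identifications $F_\gamma\cong\widetilde S_\gamma$.

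\textbf{Main obstacle.} The hardest step is the local orientation check at $\bullet$-vertices. One must verify carefully that the attached half-planes, taken on the right of each strand, induce precisely the reversed cyclic order and not something inconsistent between different strands through the same vertex. The spectral condition \eqref{eq:2.1} is what guarantees that each side region is well defined and glued once, so that this local picture is coherent globally.
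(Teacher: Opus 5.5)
The paper gives no proof of this statement of its own. It is quoted from \cite{Goncharov}, so there is no competing route to compare with. Your argument is the standard one and it is correct. The zig-zag strands of $\Gamma$ are exactly the face paths of $\Gamma^\vee$. Taking $\widetilde S_\gamma$ on the right of $\gamma$, the piece attached at a $\circ$-vertex is the actual face sector of $\widetilde S$. At a $\bullet$-vertex it is instead the complementary (reflex) sector, and its two boundary edges occur in the opposite rotational order. This is the precise content of your phrase ``glued in the opposite orientation.'' Gluing these corners around each vertex therefore keeps the cyclic order at $\circ$-vertices and reverses it at $\bullet$-vertices, so the ribbon structure induced on $\Gamma\subset\Sigma_W$ is that of $\Gamma^\vee$.

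\textbf{Step to tighten.} The step that needs more care is the last one, where you identify the attached pieces with the faces $F_{\varphi_i}$ of $\mathbf{\Sigma}$.
\begin{itemize}
\item After descent, the piece glued along the orbit of a strand $\gamma$ is $\widetilde S_\gamma/\operatorname{Stab}_{\pi_1(S)}(\gamma)$. Spectrality alone does not make this a disc. For example, when a lifted strand is a bi-infinite line, as in the paper's singular example, $\widetilde S_\gamma$ is unbounded.
\item If the faces of $\mathbf{\Sigma}$ are meant to be discs, as for the usual conjugate surface, you should add the following argument for an ideal web. There $\widetilde S_\gamma$ is a disc containing a single special point, and any stabilizer element preserves its closure, which is a closed disc. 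By Brouwer it has a fixed point, and since the deck action is free the stabilizer is trivial. Hence the piece descends to a disc.
\item For a general spectral web, the identification holds only if $F_{\varphi_i}$ is understood as these quotient pieces.
\end{itemize}
It is also cleaner to run your local vertex check directly on $\Gamma=\widetilde\Gamma/\pi_1(S)$, using that deck transformations preserve orientation and the bipartite colouring. That avoids passing through a ``completed conjugate surface'' of the infinite graph $\widetilde\Gamma$, which is never defined.
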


\begin{figure}[htbp]
\centering
\includegraphics[width=0.25\textwidth]{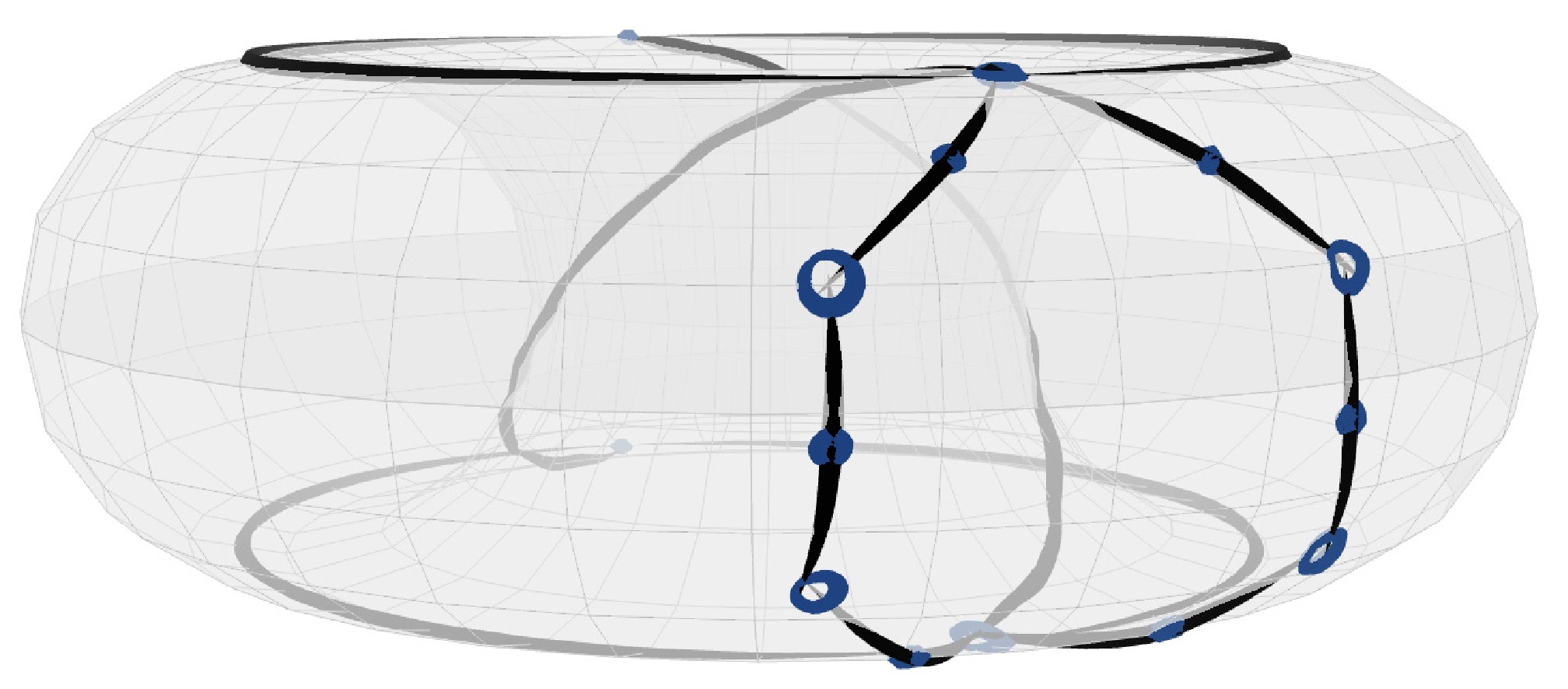}
\caption{Bipartite $A_1^*$-graph on a torus with one special point.}
\label{fig:genus1}
\end{figure}

\begin{figure}[htbp]
\centering
\includegraphics[width=0.45\textwidth]{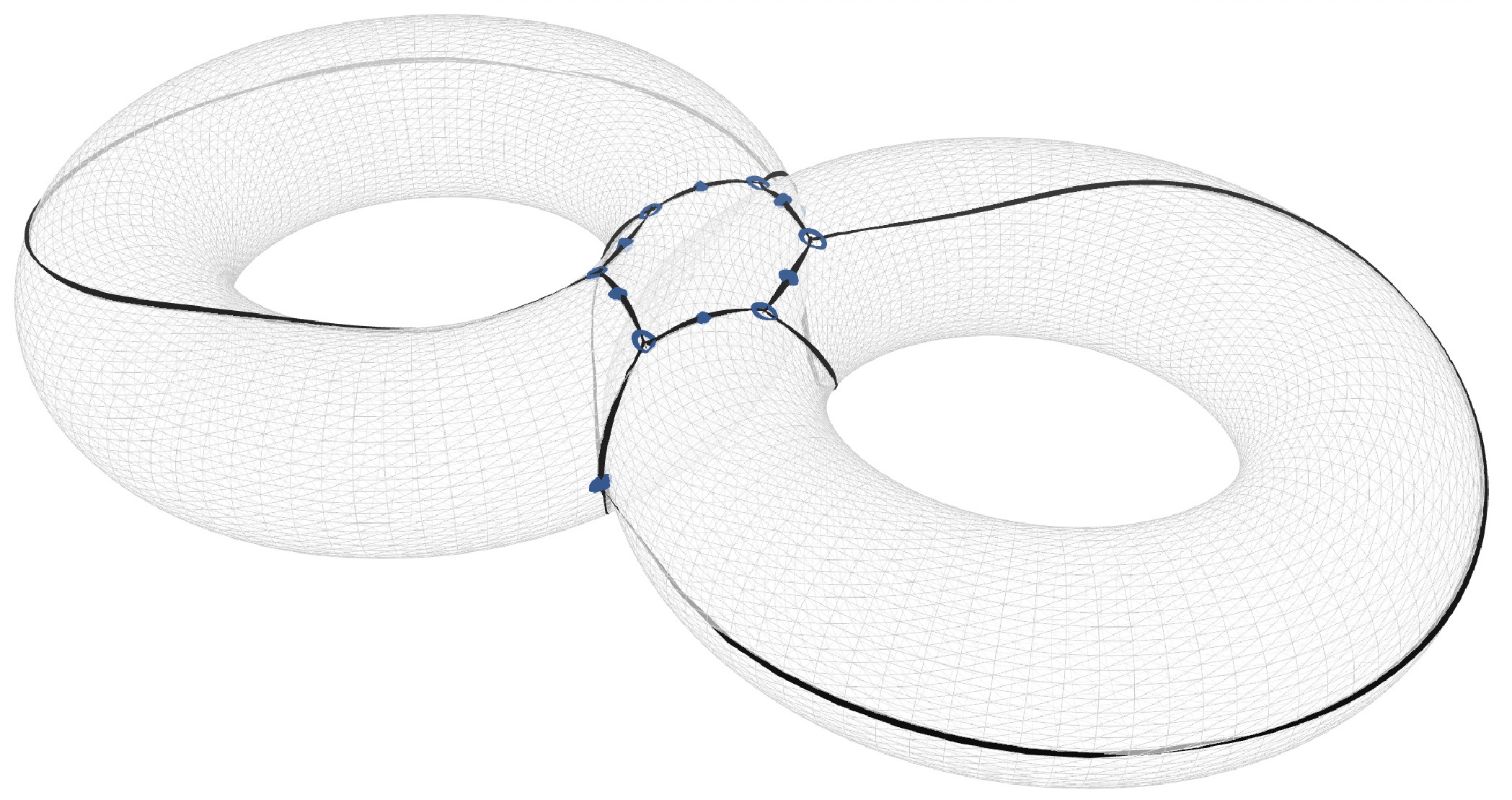}
\caption{Lift of the $A_1^*$-web on a spectral cover of a torus with one special point.}
\label{fig:genus2}
\end{figure}

\medskip
Let $W$ be an ideal web on a decorated surface $S$. Let $\Gamma$ be the bipartite graph associated with $W$. Let $\varphi$ be a face path on the conjugate bipartite ribbon graph $\Gamma^\vee$. Denote by $\Sigma_{\varphi}$ the unique domain obtained by cutting $\Sigma_{W}$ along
$\varphi$, such that the orientation of the path $\varphi$ coincides with  the orientation of the boundary of $\Sigma_{\varphi}$. According to Definition~\ref{def:ideal}, Equation~\ref{eq:conj} and the construction of the spectral cover $\Sigma_{W}$, each face path on
$\Gamma^\vee$ surrounds one special point on $\Sigma$, thus there is a bijection\cite{Goncharov}
\begin{equation}
\begin{aligned}
\{\text{face paths on }\Gamma^\vee\}
&\longleftrightarrow
\{\text{special points of }\Sigma_{W}\}.
\\
\{\text{face path }\varphi\}
&\longleftrightarrow
\{\text{the unique special point }s_{\varphi}
\text{ inside of the domain }\Sigma_{\varphi}\}.
\end{aligned}
\end{equation}

Let $\Gamma$ be a bipartite $A_m$ or $A_m^*$-graph on a decorated surface $S$.
Since the abstract graphs $\Gamma$ and $\Gamma^\vee$ coincide, and the spectral cover $\Sigma$ is homotopy equivalent to $\Gamma^\vee$, $\Sigma$ is hence homotopy equivalent to $\Gamma$. The \emph{completed spectral cover} $\overline{\Sigma}$ is given by filling the faces on $\Sigma$ by discs.\cite{Goncharov}

Let $\Gamma_{A_{m}^{*}}$ be an $A_{m}^{*}$-bipartite graph on a decorated surface $S$, and let $\Gamma_{A_{m}}$ be the associated $A_{m}$-bipartite graph on $S$.
The special points of $S$ form a subset of the set of $\circ$-vertices
of $\Gamma_{A_{m}}$, while the $\Gamma_{A_{m}^{*}}$ is a graph on the punctured surface $\mathring{S} := S \setminus \{\text{special points}\}$. Moreover,\cite{Goncharov}
\begin{equation}
\{\text{Faces of the graph }\Gamma_{A_{m}}\}
=
\{\text{Non-special faces of the graph }\Gamma_{A_{m}^{*}}\}.
\end{equation}

\begin{remark}
Let $\Gamma$ be a bipartite graph and $W$ be the associated web on a decorated surface $S$. Let $\widetilde{\Gamma}$ be the lift of $\Gamma$ and $\widetilde{W}$ be the lift of $W$ on the universal cover $\widetilde{S}$ of $S$. The spectral cover $\Sigma$ is constructed by attaching discs on $\widetilde{S}$ to the strands of $\widetilde{W}$, and then take the quotient.

Thus we consider the lift of $\Gamma$ to each disc of $\Sigma$ as the corresponding part of $\widetilde{\Gamma}$ on $\widetilde{S}$, and hence the lift of $\Gamma$ to $\Sigma$ is the connected graph we get after attaching discs to strands and taking the quotient. The lift of $W$ to $\Sigma$ is considered to be the quotient of $\widetilde{W}$, which is the conjugate ribbon graph $\Gamma^\vee$.
\end{remark}

\begin{example}
Let us continue to consider bipartite $A_1$-graph $\Gamma$ and bipartite $A_1^*$-graph $\Gamma^*$ and the corresponding $A_1$-web $W$ and $A_1^*$-web $W^*$ on a torus T with one special point. Let $\widetilde{W}$ and $\widetilde{W^*}$ be the lifts of $W$ and $W^*$ respectively on the universal cover of T. Let $\Sigma$ and $\Sigma^*$ be the corresponding spectral covers.

Here, $\rank(W) = \rank(W^*) =$ degree of the spectral cover maps $=$ number of copies of the faces bounded by all strands of $\widetilde{W}$ or $\widetilde{W^*}$ associated with any special point $=$ 2.

The spectral covers $\Sigma$ and $\Sigma^*$ are genus 2 surfaces with two special points. This can be obtained by the Riemann-Hurwitz formula, Euler's formula for graphs, or cutting-and-pasting. 

Figure~\ref{fig:genus1} is the torus T with one special point, with $\Gamma$ on it. Figure~\ref{fig:genus2} is $\Sigma^*$ with the lift of $W^*$, which is the conjugate ribbon graph $\Gamma^\vee$. The graphs in Figure~\ref{fig:genus1} and Figure~\ref{fig:genus2} are isomorphic. There are three internal faces and one external face in Figure~\ref{fig:genus1}, while there are only two external faces in Figure~\ref{fig:genus2}. If we consider the special points in Figure~\ref{fig:genus2} as punctures, then the surface deformation retracts to the graph.
\end{example}

\subsection{Properties of Topological Spectral Covers}

\subsubsection{Monodromy Representations}
Let $S$ be a closed genus-$g$ decorated surface with a finite set $P$ of $n$ special points, and let $\mathring{S}= S \setminus P$. Let
$\pi:\Sigma\to S$
be a degree-$m$ ramified cover with a finite set of distinct branch points $B=\{q_1,\dots,q_r\}$, $B\cap P=\varnothing$. Assume $2g-2+n>0$.

\begin{definition}[\cite{Hatcher}]
Choose a base point $x_0\in \mathring{S}\setminus B$
and label the fiber $\pi^{-1}(x_0)=\{1,\dots,m\}.$
Path lifting defines the monodromy representation
\begin{equation}
\rho_\pi:
\pi_1(\mathring{S}\setminus B,x_0)\to S_m.
\end{equation}
Its image $\operatorname{Mon}(\pi):=\operatorname{im}\rho_\pi$
is the monodromy group of the covering. Changing the labeling of the fiber conjugates $\rho_\pi$ by an
element of $S_m$.
\end{definition}

\begin{lemma}[\cite{Hatcher}]
The covering $\Sigma$ is connected if and only if $\operatorname{im}\rho_\pi$ acts transitively on $\{1,\dots,m\}$.
\end{lemma}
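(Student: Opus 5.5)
The statement to prove is the last lemma: the covering $\Sigma$ is connected if and only if $\operatorname{im}\rho_\pi$ acts transitively on $\{1,\dots,m\}$. I will work first with the unramified part. Set $\Sigma^\circ:=\pi^{-1}(\mathring{S}\setminus B)$. The restriction $\pi|_{\Sigma^\circ}:\Sigma^\circ\to \mathring{S}\setminus B$ is a genuine finite covering map of degree $m$. The plan is to prove the equivalence for $\Sigma^\circ$ and then pass to $\Sigma$.

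\textbf{Step 1: transitivity implies $\Sigma^\circ$ is connected.} Take $i,j\in\pi^{-1}(x_0)$. Transitivity gives a loop $\alpha$ at $x_0$ with $\rho_\pi(\alpha)(i)=j$. Lifting $\alpha$ from $i$ produces a path in $\Sigma^\circ$ from $i$ to $j$. Any other point $y\in\Sigma^\circ$ is joined to some fibre point over $x_0$: choose a path in the connected base from $\pi(y)$ to $x_0$ and lift it starting at $y$. So $\Sigma^\circ$ is path connected.

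\textbf{Step 2: connectedness of $\Sigma^\circ$ implies transitivity.} Suppose $\Sigma^\circ$ is connected; it is a manifold, so it is path connected. Given $i,j$, choose a path $\tilde\alpha$ in $\Sigma^\circ$ from $i$ to $j$. Its image $\alpha=\pi\circ\tilde\alpha$ is a loop at $x_0$. By uniqueness of path lifting, $\tilde\alpha$ is the lift of $\alpha$ starting at $i$, so $\rho_\pi(\alpha)(i)=j$.

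\textbf{Step 3: $\Sigma$ is connected if and only if $\Sigma^\circ$ is connected.} Here $\Sigma^\circ$ is obtained from the surface $\Sigma$ by deleting the finite set $\pi^{-1}(B)\cup\pi^{-1}(P)$. Removing finitely many points from a surface does not change its set of connected components: each point has a disc neighbourhood whose punctured version is connected. Hence $\Sigma^\circ$ is dense in $\Sigma$, and two points of $\Sigma^\circ$ lie in the same component of $\Sigma^\circ$ exactly when they lie in the same component of $\Sigma$.

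The only step needing care is this last one. It relies on $\Sigma$ being a topological surface near the ramification points, which holds in the present setting because the covers are branched covers of smooth curves, locally of the form $t\mapsto t^{e}$. For a nodal $\Sigma$, one would instead argue on the normalization or observe that a node does not disconnect. The rest is standard covering space theory, as in \cite{Hatcher}. A change of labelling conjugates $\rho_\pi$ and so does not affect transitivity, which makes the statement well defined.
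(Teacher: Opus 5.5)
Your proof is correct. It is the standard covering-space argument that the paper does not write out and only cites from \cite{Hatcher}; your Step 3, filling back in the finitely many points over $B\cup P$, is the one addition needed to pass from Hatcher's unramified statement to the branched cover $\Sigma$. Your closing aside about nodal curves is wrong, although it lies outside the lemma's smooth branched-cover setting: a node can disconnect (two components meeting at one point), and for nodal $\Sigma$ transitivity detects connectedness of the normalization, not of $\Sigma$, so the ``only if'' direction genuinely fails there.
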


\begin{lemma}[\cite{EisenbudHarris}]
If $\pi$ is simply branched, then the monodromy of a sufficiently small
positively oriented loop $\gamma_i$ around $q_i$ is a transposition: $\rho_\pi(\gamma_i)=\tau_i\in S_m.$
\end{lemma}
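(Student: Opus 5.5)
The statement to prove is the last lemma: for a simply branched cover, the monodromy of a sufficiently small positively oriented loop $\gamma_i$ around a branch point $q_i$ is a transposition. The plan is to reduce the claim to a local computation in a small disc around $q_i$, using the local normal form of the map near each preimage.

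First, choose a closed disc $D\subset \mathring{S}$ centred at $q_i$ with $D\cap B=\{q_i\}$. Shrink $D$ so that its preimage is a disjoint union of discs, one around each point of $\pi^{-1}(q_i)$. This is possible because $\pi$ is a finite map and the fibre over $q_i$ is finite.

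Simple branching gives the shape of this fibre. It consists of one ramification point $p$ with $e_p=2$ and $m-2$ unramified points. Near each unramified point, $\pi$ is a local homeomorphism, so each such component of $\pi^{-1}(D)$ maps homeomorphically onto $D$. Near $p$, the local form $f(t)=t^2$ shows that the component containing $p$ is a degree-$2$ cover of $D$ branched only at $q_i$.

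Next, take $\gamma_i$ to be $\partial D$, connected to the base point $x_0$ by a path $\delta$, so that $\gamma_i=\delta\cdot\partial D\cdot\delta^{-1}$. Lifting along $\delta$ identifies $\pi^{-1}(x_0)$ with the fibre over a point $y\in\partial D$; this bijection only conjugates the final permutation.

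Over $\partial D$, the $m-2$ sheets from the unramified discs each lift $\partial D$ to a closed loop, so they are fixed points of the monodromy. On the branched disc, $t\mapsto t^2$ restricted to the circle $|t|=r$ is the connected double cover $S^1\to S^1$. Going once around the loop therefore exchanges its two preimages. The local monodromy is thus a single $2$-cycle fixing the other $m-2$ points, and conjugating by the relabelling from $\delta$ gives a transposition $\tau_i\in S_m$.

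The main point needing care is the first step. One must justify that a small enough disc around $q_i$ has preimage splitting into disjoint discs on which $\pi$ takes the stated local forms. This follows from properness of $\pi$ together with the local normal form at each point of the finite fibre. Independence of the choice of small loop follows because any two such loops are freely homotopic in $D\setminus\{q_i\}$, so their monodromies are conjugate, and a conjugate of a transposition is a transposition.
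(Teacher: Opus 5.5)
Your proof is correct and follows the idea the paper relies on. The paper gives no argument beyond citing Eisenbud--Harris and noting that the fibre over a simple branch point has multiplicities $2,1,\ldots,1$, so the local monodromy is a single transposition; your local-model computation (one connected double cover of $\partial D$, $m-2$ trivially covered circles, and a mere conjugation from the connecting path $\delta$) is the standard justification of that remark.
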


If $ S$ is a closed genus-$g$ surface and $B=\{q_1,\dots,q_r\},$
then
\begin{equation}
\pi_1( S\setminus B)
=
\left\langle
a_1,b_1,\dots,a_g,B_{G,C,P},\gamma_1,\dots,\gamma_r
\ \middle|\
\prod_{i=1}^g[a_i,b_i]\gamma_1\cdots\gamma_r=1
\right\rangle.
\end{equation}

So a connected simply branched degree-$m$ cover is represented by $(A_1,B_1,\dots,A_g,B_{G,C,P},\tau_1,\dots,\tau_r)$
with $A_i,B_i\in S_m$, $\tau_j$ a transposition,
such that
\begin{equation}
\prod_{i=1}^g[A_i,B_i]\tau_1\cdots\tau_r=1,
\end{equation}
and the generated subgroup acts transitively. For a fixed branch configuration, covering isomorphism corresponds to simultaneous conjugation by elements in $S_m$.
Allowing the branch points to move introduces the surface braid action.

\begin{proposition}
Let $t$ be one ideal triangle in an ideal triangulation $T$ of $S$. Let $W|_t$ be a generic $A_{m-1}$-web $W$ compatible with $T$ on $S$ restricted to $t$, and $\Gamma|_t$ be the corresponding bipartite $A_{m-1}$-graph $\Gamma$ on $S$ restricted in $t$. Then the spectral cover
$\pi|_t:\Sigma|_t\to t$
is connected, and the monodromy group of $\pi|_t:\Sigma|_t\to t$
is $S_m$.
\end{proposition}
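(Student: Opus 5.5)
We work with the standard generic web on a single triangle $t$. By Proposition~\ref{prop:connect} and the fact that elementary moves inside $\mathring t$ preserve the spectral cover up to homeomorphism over $t$ (shrink/expand moves only coalesce or split ramification points, and \ttt moves give homotopy equivalent covers), we may replace $W|_t$ by the restriction of the standard bipartite $A_{m-1}$-graph $\Gamma_{A_{m-1}}(T)|_t$. In that model every $\bullet$-vertex is 3-valent, so $\pi|_t$ is simply branched. Each small loop around a $\bullet$-vertex then has monodromy a transposition, by the lemma from \cite{EisenbudHarris}. The plan is to show that these transpositions generate $S_m$, which gives both claims at once. Transitivity of $\operatorname{Mon}(\pi|_t)$ is equivalent to connectedness by the lemma from \cite{Hatcher}.

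Label the $m$ sheets over a base point $x_0$ in a small region of $t$. By the construction in \S\ref{sssec:231}, the sheets over a face $F$ correspond to the components $\widetilde S_\gamma$ containing $F$, that is, to strands. Near the three vertices of $t$ there are $m-1$ strands, and with the codistance identity $\sum_s\langle F,s\rangle=m$ this gives an explicit labeling of the $m$ sheets over each face by triples of codistances. Crossing an edge of $\Gamma$ between two faces exchanges exactly the sheets attached to the two strands passing along that edge. Hence the transposition at a 3-valent $\bullet$-vertex swaps two sheets that are adjacent in the linear order of the strands around one vertex of $t$.

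The key step is to find $m-1$ $\bullet$-vertices whose transpositions are $(1\,2),(2\,3),\dots,(m-1\,m)$ after a suitable labeling. Take the row of looking-down subtriangles along one side of the $(m-1)$-triangulation of $t$, adjacent to a special point $s$. Moving along this row crosses the $m-1$ strands around $s$ one at a time. We then conjugate all the loops back to $x_0$ by paths running along the row. This yields adjacent transpositions $\tau_i=(i\ i{+}1)$, and these are well known to generate $S_m$, so $\operatorname{Mon}=S_m$ and $\Sigma|_t$ is connected. An alternative is to prove transitivity from the explicit labeling and then use that a transitive group generated by transpositions is the full symmetric group.

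The main obstacle is bookkeeping. We must check that the transpositions really are consecutive and pairwise distinct, rather than repeating the same pair. This requires tracking how the conjugating paths permute labels as they pass other branch cuts. We expect to handle it by induction on $m$: removing the outermost row of subtriangles along a side leaves the standard $A_{m-2}$ configuration covering $m-1$ of the sheets, and the removed row supplies one new transposition joining the remaining sheet. The inductive step uses that $S_{m-1}$ together with one transposition joining the new point generates $S_m$.
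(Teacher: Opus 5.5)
The outer logic is fine. The meridian transpositions lie in $\operatorname{Mon}(\pi|_t)$, so if they generate $S_m$ you get transitivity (hence connectedness) and $\operatorname{Mon}(\pi|_t)=S_m$ at once. But that generation step is the whole content of the statement, and you do not prove it. You defer it to ``bookkeeping'' and an induction you have not checked, and the heuristics you offer for it are wrong.

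With your strand labelling of the sheets over a face $F$ next to a trivalent $\bullet$-vertex $b$, the loop around $b$ interchanges the two sheets labelled by the two strands through $b$ whose right-hand discs contain $F$. The third strand through $b$ is the one whose disc omits $F$. In the standard graph, the three zig-zag strands through the centre of a looking-down subtriangle run along the three strips through it, one parallel to each side. So they are attached to three different corners of $t$, and the interchanged sheets are labelled by strands from two different corners, not by consecutive strands around one corner.

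Likewise, moving along the row of looking-down subtriangles next to a side crosses, alternately, the strands around both endpoints of that side. It does not cross ``the $m-1$ strands around $s$ one at a time''.

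The induction also does not apply as stated. After deleting a boundary row, two corners of the remaining subtriangle are faces of $\Gamma$ rather than special points, so you are not left with the standard $A_{m-2}$ configuration. You would still need explicit conjugating paths to see that some transposition from the deleted row involves the extra sheet.

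The missing idea is that transitivity needs no sheet-tracking at all. $\Sigma|_t$ is obtained by attaching discs along the face paths of $\Gamma^\vee|_t$, whose underlying abstract graph is the connected graph $\Gamma|_t$. Every point of an attached disc is joined, within that disc, to its boundary, which lies on the graph. So $\Sigma|_t$ is connected and the monodromy is transitive.

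Since $t$ is a disc, $\pi_1(t\setminus\{\text{branch points}\})$ is generated by meridians, so $\operatorname{Mon}(\pi|_t)$ is generated by transpositions. A transitive subgroup of $S_m$ generated by transpositions is $S_m$: the graph on $\{1,\dots,m\}$ whose edges are these transpositions is connected, and the transpositions along the edges of a connected graph generate $S_m$. This is the paper's proof.

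That argument works directly for the given generic web, so your preliminary reduction to the standard graph is unnecessary. Its stated justification is also inaccurate. A two-by-two move acts on the cover by the ura operation and moves the branch points, so it is not an isomorphism of covers over $t$; only the homotopy type of the total space is preserved.
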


\begin{proof}
$\Gamma|_t$ is connected. Passing to the conjugate ribbon structure does not change its
underlying abstract graph, hence the conjugate graph remains connected.
The spectral cover is obtained by attaching the face discs to this
connected graph. Attaching discs cannot disconnect it, so
$\Sigma|_t$ is connected.

Because $t$ is a disc, the fundamental group of $t\setminus \text{\{branch points\}}$ is generated by meridians around the branch points. Since every branch point is simple, their monodromies
$\tau_1,\dots,\tau_s$ are transpositions.
If $G\subseteq S_m$
is generated by transpositions and acts transitively on $\{1,\dots,m\}$, then $G=S_m.$ Since $\Sigma|_t$ is connected, the monodromy action on the $m$ sheets is transitive, so $\operatorname{Mon}(\pi|_t) = S_m$.
\end{proof}

\begin{corollary}
\label{cor:fullmono}
Let $W$ be a generic $A_{m-1}$-web on $S$, and let $\pi_W:\Sigma_W\to S$ be the spectral cover. Then
$\operatorname{Mon}(\pi_{W})=S_m.$
\end{corollary}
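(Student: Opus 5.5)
The plan is to reduce the global statement to the preceding proposition, applied one ideal triangle at a time. The preceding proposition gives, for each triangle $t$ of $T$, that $\Sigma|_t$ is connected and $\operatorname{Mon}(\pi|_t)=S_m$. The first step is to check that a generic $A_{m-1}$-web $W$ on $S$ may be assumed compatible with some ideal triangulation $T$.

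\textbf{Reduction to a compatible web.} By Proposition~\ref{prop:connect}, $W$ is connected by elementary moves to the standard web associated with some $T$. However, the monodromy group is not obviously invariant under such moves. It is cleaner to argue directly, and I would do so as follows.

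\textbf{Connectedness and transitivity.} $\Sigma_W$ is connected, because it deformation retracts onto (a thickening of) the conjugate ribbon graph $\Gamma^\vee$, which is connected since $\Gamma$ is; attaching face discs preserves connectedness. By the connectedness lemma, connectedness of $\Sigma_W$ is equivalent to transitivity of $\operatorname{im}\rho_{\pi_W}$ on $\{1,\dots,m\}$.

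\textbf{Generation by transpositions.} Genericity says every $\bullet$-vertex is $3$-valent. By the ramification theorem and the simple branching lemma, the cover is simply branched, so each small loop $\gamma_i$ around a branch point $q_i$ has monodromy a transposition $\tau_i$. It remains to show that the subgroup $H$ generated by the $\tau_i$ is already transitive. Then $H=S_m$, because a transitive group generated by transpositions is the full symmetric group (the transposition graph is connected). Consequently $\operatorname{Mon}(\pi_W)\supseteq S_m$, hence equals $S_m$.

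\textbf{Main obstacle: transitivity of $H$.} The loops $a_i,b_i$ and the loops around punctures also contribute monodromy, so transitivity of the whole group does not immediately give transitivity of $H$. I would handle this by choosing the base point $x_0$ inside one ideal triangle $t$ of a triangulation $T$ adapted to $W$. Over $t$, the restricted cover is connected, since it is built from the connected restricted graph $\Gamma|_t$ with discs attached. Because $t$ is a disc, $\pi_1(t\setminus B)$ is generated by the small loops around the branch points inside $t$. Hence these $\tau_i$ alone act transitively on the fibre over $x_0$; this is exactly the preceding proposition. Including $t\setminus B\hookrightarrow \mathring S\setminus B$ then gives $S_m=\operatorname{Mon}(\pi|_t)\subseteq \operatorname{Mon}(\pi_W)\subseteq S_m$.

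The delicate point is the existence of such a $t$ with $W|_t$ a generic $A_{m-1}$-web compatible with $T$. If the given web is not literally compatible, I would fall back on the following. For $m\ge2$, any face path of $\Gamma^\vee$ meets every sheet locally near a $\bullet$-vertex. Alternatively, the isotopy/elementary-move invariance of the cover's isomorphism class away from a disc can be used to transport the local transitivity from the standard graph $\Gamma_{A_{m-1}}(T)$.
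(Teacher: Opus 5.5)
Your argument is essentially the paper's. The paper also fixes an ideal triangulation $T$ compatible with $W$, applies the preceding proposition on a single triangle $t$ with the base point in $t$, and concludes from $S_m=\operatorname{Mon}(\pi|_t)\subseteq\operatorname{Mon}(\pi_W)\subseteq S_m$.

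The paper simply takes the existence of such a compatible $T$ for granted, so the point you flag as delicate is assumed there too. Your two fallbacks are not needed to match the paper, and as written they are only sketches: the first is too vague to yield transitivity, since only two sheets come together at a simple branch point, and the second would itself require proving that the relevant transitivity survives the elementary moves.
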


\begin{proof}
Let $T$ be an ideal triangulation of $S$ compatible with $W$. Choose one ideal triangle $t$ of $T$. The corresponding local monodromy is a subgroup of the monodromy of the global cover. So
$S_m=
\operatorname{Mon}(\pi_{W|_t})
\subseteq
\operatorname{Mon}(\pi_{W})
\subseteq
S_m.
$
Hence $
\operatorname{Mon}(\pi_{W})=S_m.
$
\end{proof}

\subsubsection{Spectral Covers as Connected Sums}
Denote by $S$ a genus-$g_S$ oriented topological surface with $n$ special points. Assume $2g_S-2+n>0$. Let $W$ be a generic $A_{m-1}$-web corresponding to an ideal
triangulation $T$ of $S$. Denote by $\Gamma$ the corresponding bipartite $A_{m-1}$-graph on $S$, and by $\Sigma$ the corresponding spectral cover. Let $g_\Sigma$ be the genus of $\Sigma$.

The spectral cover map $\pi$ is a ramified $m:1$ map. It is unramified over special points. The spectral cover $\Sigma$ related to the bipartite graph $\Gamma$ is a oriented surface with $mn$ special points.

Recall that the branch points of the spectral cover map $\pi$ are the $\bullet$-vertices of a spectral web $W$ on $S$ of valency $\geq 3$: a $\bullet$-vertex of valency $v$ has ramification index $v-1$.

The maximal valency of the $\bullet$-vertices of $\Gamma$ is 3 by the construction of $\Gamma$. The number of 3-valent $\bullet$-vertices in a single triangle is $\binom{m}{2}$, while the number of ideal triangles is $2(2g_S-2+n)$. So the number of 3-valent $\bullet$-vertices is $\binom{m}{2}2(2g_S-2+n) = m(m-1)(2g_S-2+n)$. Therefore, there are $m(m-1)(2g_S-2+n)$ branch points, each having ramification index 2\cite{Goncharov}.

By the Riemann-Hurwitz formula, we have
\begin{align}
\label{eq:RH}
\chi(\Sigma) &= \deg(\pi)\chi(S) - m(m-1)(2g_S-2+n)
\\
&= m^2(2-2g_S)-m^2n+mn.
\notag
\end{align}
Hence
\begin{equation}
\label{eq:genus}
g_\Sigma = 1 - \frac{m}{2}(m(2-2g_S)-mn+n).
\end{equation}

\begin{lemma}
The spectral cover $\Sigma$ can be equivalently constructed by making branch cuts and gluing copies of $S$. Namely, $\Sigma$ can be constructed as a connected sum of m copies of S, with $2g_S-2+n$ connected tubes in between each pair of copies of $S$. So the number of total connected tubes is $\binom{m}{2}(2g_S-2+n)$.
\end{lemma}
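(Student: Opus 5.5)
The plan is to prove the statement in two layers. The first is a purely topological identification via Euler characteristic. The second is an explicit cut-and-paste model, which shows that the identification is realized compatibly with the cover map $\pi$.

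\textbf{Counting.} The number of simple branch points is $m(m-1)(2g_S-2+n)=2\binom{m}{2}(2g_S-2+n)$, so the claimed number of tubes is exactly half the number of branch points. A model built from $m$ copies of $S$ joined by $k$ tubes has Euler characteristic $m\chi(S)-2k$. With $k=\binom{m}{2}(2g_S-2+n)$ this equals $m(2-2g_S)-m(m-1)(2g_S-2+n)$, which is the first line of \eqref{eq:RH} computed with $\chi(S)=2-2g_S$ for the surface with special points filled in. By Corollary~\ref{cor:fullmono}, $\Sigma$ is connected. The classification of closed orientable surfaces then identifies $\Sigma$, with its $mn$ special points, with such a connected sum, provided the tubes connect all $m$ copies. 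They do, since $\binom{m}{2}(2g_S-2+n)\ge\binom{m}{2}$ tubes can be distributed over all pairs.

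\textbf{Branch-cut model.} To upgrade this to the statement about $\pi$, I would use the monodromy description. Take the $\binom{m}{2}$ simple branch points lying in each ideal triangle $t$ of $T$. Their transpositions $\tau_1,\dots,\tau_s$ generate $S_m$ by the Proposition preceding Corollary~\ref{cor:fullmono}. Group the $2(2g_S-2+n)$ triangles into $2g_S-2+n$ pairs of adjacent triangles, i.e.\ quadrilaterals. Inside each quadrilateral, apply Hurwitz (braid) moves to the local branch points; these are realizable by isotopy of $W$ within the quadrilateral and do not change the cover up to isomorphism. The goal is to arrange the branch points into $\binom{m}{2}$ adjacent pairs with equal transpositions $(i\,j)$, one pair for each $1\le i<j\le m$.

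For each such pair, draw a short branch cut joining the two points. Cut the $m$ copies of $S$ along all these slits and glue sheet $i$ to sheet $j$ crosswise across each slit. A slit whose two endpoints carry equal transpositions has trivial monodromy around the whole slit, so the crosswise gluing along that slit produces exactly one tube between copies $i$ and $j$. This yields $2g_S-2+n$ tubes for each pair $\{i,j\}$, for a total of $\binom{m}{2}(2g_S-2+n)$, and the resulting surface is a degree-$m$ cover of $S$ with the same monodromy as $\pi_W$. Since covers with the same monodromy are isomorphic, it agrees with $\Sigma$.

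\textbf{Main obstacle.} The key difficulty is the local pairing step. One must show that the product of the local monodromies over a quadrilateral allows a Hurwitz-equivalent arrangement in which every transposition $(i\,j)$ appears exactly twice, consecutively. I would first try to verify this directly for the standard graph $\Gamma_{A_{m-1}}(T)$. There the sheets over a small triangle are labelled by the codistances to the three vertices, and each $\bullet$-vertex swaps two adjacent strands. Gluing two triangles across a common edge should then pair each $\bullet$-vertex with its mirror partner, carrying the same transposition. If this direct check fails, the fallback is the Euler-characteristic argument together with the classification of surfaces, which still gives the lemma at the level of homeomorphism type.
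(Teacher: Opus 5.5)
Your first layer is correct and is essentially the paper's own argument. The paper justifies the lemma only by checking that $m\chi(S)-2\binom{m}{2}(2g_S-2+n)$ agrees with \eqref{eq:RH}, leaving connectedness and the classification of closed orientable surfaces implicit. You supply both, and this settles the lemma at the level of homeomorphism type; the $mn$ special points can be placed by homogeneity.

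The branch-cut upgrade, however, has a genuine gap. It is global, and it appears already for $m=2$, where your local pairing is automatic.

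\begin{itemize}
\item In a cut-and-glue model every loop of $S$ that misses the cuts has trivial monodromy. This includes the handle generators $a_i,b_i$ and every loop passing from one quadrilateral to another.
\item Arranging transpositions inside each quadrilateral gives no control over the monodromy of $\pi_W$ along such loops.
\item It also does not provide one labelling of the sheets valid in all quadrilaterals. You need such a labelling before you can say that each $(i\,j)$ occurs once per quadrilateral.
\end{itemize}

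So the sentence ``the resulting surface is a degree-$m$ cover of $S$ with the same monodromy as $\pi_W$'' does not follow, and it fails for the $A_1$-web on the once-punctured torus.

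\emph{Setup.} Lift to $\mathbb R^2$ with special points $\mathbb Z^2$ and diagonals $[(i,j),(i+1,j+1)]$. Write $h_0,h_1,v_0,d$ for the faces around $[(0,0),(1,0)]$, $[(0,1),(1,1)]$, $[(0,0),(0,1)]$ and $[(0,0),(1,1)]$. Sheets over the face around $[p,q]$ are labelled $\{p,q\}$. Crossing an edge of $\Gamma$ replaces the label lying only in the old face by the label lying only in the new one.

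\emph{Computation.} Both branch points carry $(12)$, and the short cut joining them is the diagonal of the face $d$. The loop $h_0\to d\to v_0\to h_1$ turns around the corner $(0,0)$, so it misses this cut, and it represents $b$. Transport sends $(0,0)\mapsto(1,1)$ and $(1,0)\mapsto(0,1)$. Translating back by $-(0,1)$ gives the swap $(0,0)\leftrightarrow(1,0)$. The analogous loop $v_0\to d\to h_0\to v_1$ for $a$ also gives a swap.

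\emph{Conclusion.} Writing $\rho$ for the monodromy of $\pi_W$ along loops missing the cut, $\rho(a)=\rho(b)=(12)$, whereas the glued model is trivial on both. Cutting instead through the face around a horizontal (resp.\ vertical) edge changes $\rho$ by the Poincar\'e dual of $b$ (resp.\ $a$), which is still nonzero. So the cut-and-glue description can hold only after moving branch points, equivalently with cuts that wind around handles.

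To repair this, drop the quadrilateral bookkeeping. Take the standard glued cover: short cuts, with each pair $\{i,j\}$ used $2g_S-2+n$ times. It is connected, simply branched over $r$ points, unramified over $P$, and has monodromy $S_m$. After the Riemann existence principle it lies in $H^{0,P}_{m,r}(C)$ together with $\pi_W$ (Lemma~\ref{le:hurweb}). Theorem~\ref{thm:irr}, Lemma~\ref{le:irr} and Lemma~\ref{le:hur} then make the two covers Hurwitz equivalent relative to $P$. That is, $\pi_W$ is the cut-and-glue cover with cuts pulled back by an isotopy of the base.

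This route also avoids your grouping of triangles into adjacent pairs, which needs a perfect matching of the dual cubic graph of $T$. Such a matching can fail to exist. An example is a once-punctured genus-$3$ triangulation containing a triangle bounded by three separating loop edges, each cutting off three triangles; by Tutte's criterion its dual graph has no perfect matching.
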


\begin{remark}
The symmetric group of a such spectral cover viewed as a connected sum of the original surface is $S_m$, which is the Weyl group of $Gl_m$ or $SL_m$.
\end{remark}

We verify that the Euler characteristic of $\Sigma$ as a connected sum is the same as Equation~\ref{eq:RH}
\begin{equation}
\chi(\Sigma) = m \cdot \chi(S) - 2 \cdot \binom{m}{2}(2g_S-2+n).
\end{equation}

\begin{figure}[htbp]
\centering
\includegraphics[width=0.4\textwidth]{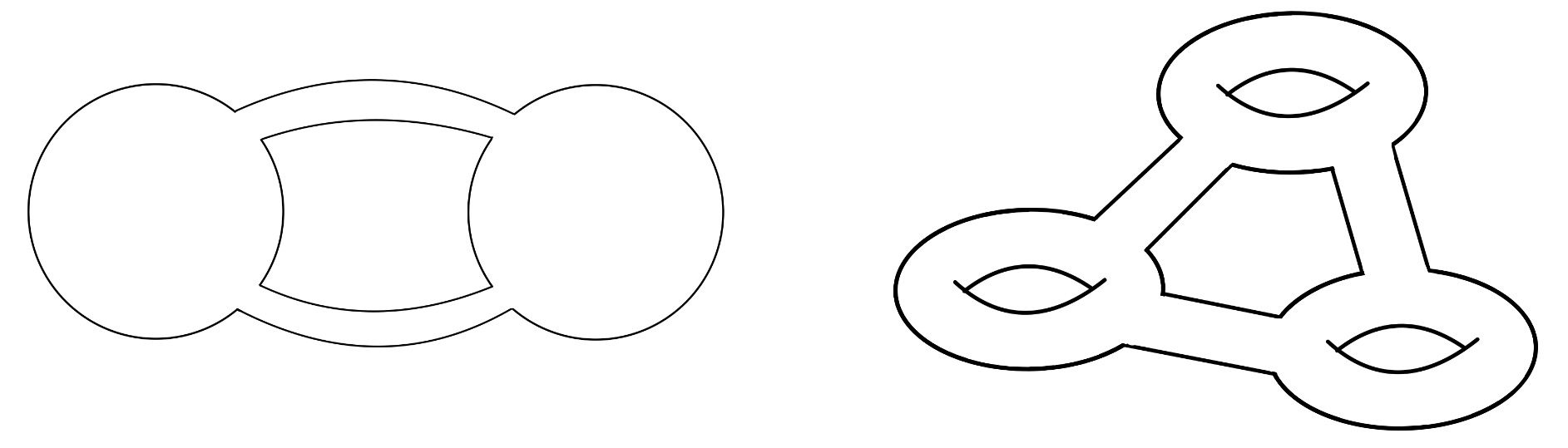}
\caption{The spectral cover of a 4-punctured sphere with an $A_1$-web (left) and the spectral cover a 1-punctured torus with an $A_2$-web (right).}
\label{fig:consum}
\end{figure}

\begin{example}
Consider an $A_2^*$-web on a torus with one special point. The lift of an $A_2^*$-web on its universal cover is shown in Figure~\ref{fig:A2star}. The three highlighted blue strands are the strands associated with the middle lift of the special point, which is the yellow star in the middle of the figure. The spectral cover map is 3:1, with 6 simple branch points. The spectral cover has genus 4, as shown in Figure~\ref{fig:consum}.
\end{example}

\begin{figure}[htbp]
\centering
\includegraphics[width=0.4\textwidth]{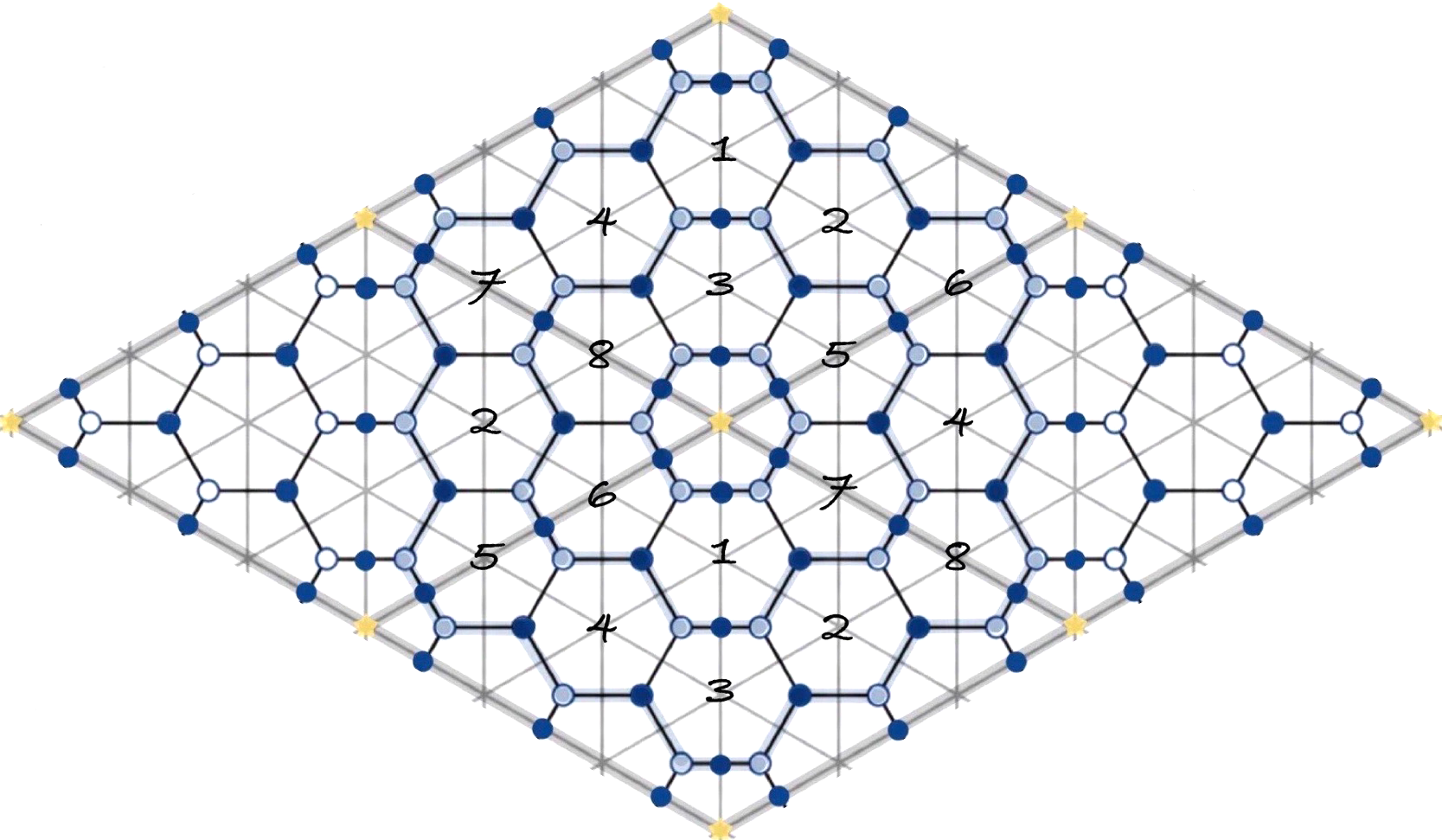}
\caption{Lift of an $A_2^*$-web on the universal cover of a torus with one special point.}
\label{fig:A2star}
\end{figure}

\begin{example}
The spectral cover of a four-punctured sphere with an $A_1$-web has genus one. The spectral cover map is 2:1 and simply branched.
\end{example}

\newpage
\mysection{Local Operations on Spectral Covers that Respect Elementary Moves}

In this section, we will first recall the quivers associated with bipartite graphs on decorated surfaces introduced in \cite{Goncharov}.
Then we will find a local operation on the spectral cover that gives a discontinuous bijection on the spectral cover, which realizes the change of the spectral cover under a \ttt move.
We will discuss three elementary moves at the level of the spectral cover and find the automorphisms on quiver lattices induced by compositions of elementary moves on ideal webs.

\medskip
From this section, we consider only decorated surfaces without boundary, i.e., connected closed oriented genus-$g$ surfaces with a non-empty finite set $P$ of $n$ special points. Denote such a surface by $S$, and denote the punctured surface by $\mathring{S} = S \setminus P$. Assume $2g-2+n>0$.

\subsection{Quivers Associated with Bipartite Graphs on Decorated Surfaces}

\subsubsection{Quiver Lattices}

We first recall the quivers associated with bipartite graphs on decorated surfaces introduced in \cite{Goncharov}.

Let $\Gamma$ be a bipartite $A_m^*$-graph on $S$. Then $\Gamma \subset \mathring{S}$. The quiver lattice $\Lambda_{\Gamma}$ and non-frozen quiver lattice $\Lambda_{\Gamma}^{\circ}$ are given by \cite{Goncharov}
\begin{equation}
\Lambda_{\Gamma}
:=
\operatorname{ker}
\bigl(
H_{1}(\Gamma;\mathbb Z)
\to
H_{1}(S;\mathbb Z)
\bigr).
\end{equation}
\begin{equation}
\Lambda_{\Gamma}^{\circ}
:=
\operatorname{ker}
\bigl(
H_{1}(\Gamma;\mathbb Z)
\to
H_{1}(\mathring{S};\mathbb Z)
\bigr).
\end{equation}
There is a canonical embedding $\Lambda_{\Gamma}^{\circ} \hookrightarrow \Lambda_{\Gamma}$.

Recall that the spectral cover $\Sigma$ of $\Gamma$ is homotopy equivalent to $\Gamma$.
\begin{lemma}[{\cite{Goncharov}}]
There are isomorphisms of lattices, where $\pi_{*}$ is induced by the spectral cover map:
\begin{equation}
\operatorname{ker}
\bigl(
H_{1}(\Sigma;\mathbb Z)
\xrightarrow{\pi_{*}}
H_{1}(S;\mathbb Z)
\bigr)
\cong
\operatorname{ker}
\bigl(
H_{1}(\Gamma;\mathbb Z)
\rightarrow
H_{1}(S;\mathbb Z)
\bigr) = \Lambda_{\Gamma}.
\end{equation}
\begin{equation}
\operatorname{ker}
\bigl(
H_{1}(\Sigma;\mathbb Z)
\xrightarrow{\pi_{*}}
H_{1}(\mathring{S};\mathbb Z)
\bigr)
\cong
\operatorname{ker}
\bigl(
H_{1}(\Gamma;\mathbb Z)
\rightarrow
H_{1}(\mathring{S};\mathbb Z)
\bigr) = \Lambda_{\Gamma}^{\circ}.
\end{equation}
\end{lemma}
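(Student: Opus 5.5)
The plan is to reduce both isomorphisms to one statement: the inclusion of the graph into the spectral cover is a homotopy equivalence, and it commutes with the maps to $S$ and to $\mathring S$. Throughout, $\Sigma$ is read as in the preceding discussion, namely as the surface homotopy equivalent to $\Gamma$. Concretely, this is the spectral cover with the preimages $\pi^{-1}(P)$ of the special points removed; the completed cover $\overline{\Sigma}$ is not used here. The two kernels then agree because they are kernels of the same homomorphism out of isomorphic groups.

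\textbf{Step 1: the inclusion $\iota$.} I will use the construction of \S\ref{sssec:231},
\[
\widetilde{\Sigma}_{\widetilde W}=\widetilde{\Gamma}\cup_{\gamma_i}F_{\gamma_i},
\]
together with its quotient by $\pi_1(S)$. This gives an embedding $\iota:\Gamma^\vee\hookrightarrow\Sigma$, where the abstract graph underlying $\Gamma^\vee$ is $\Gamma$ itself.

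\textbf{Step 2: $\iota$ is a homotopy equivalence.} Take $W$ ideal. By Definition~\ref{def:ideal} and the bijection between face paths of $\Gamma^\vee$ and special points of $\Sigma_W$, each attached face $F_\gamma$ is a disc containing exactly one special point. A once-punctured disc deformation retracts onto its boundary face path. Performing this retraction $\pi_1(S)$-equivariantly on $\widetilde{\Sigma}_{\widetilde W}$ and passing to the quotient shows that $\iota$ is a homotopy equivalence. Hence
\[
\iota_*:H_1(\Gamma;\mathbb Z)\xrightarrow{\sim}H_1(\Sigma;\mathbb Z).
\]

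\textbf{Step 3: compatibility with the projection.} I need the triangle
\[
\Gamma\xrightarrow{\iota}\Sigma\xrightarrow{\pi}\mathring S
\]
to agree with the given embedding $\Gamma\subset\mathring S$, at least up to homotopy. Upstairs, $\widetilde\pi$ is the identity on the ribbon graph $\widetilde\Gamma$, since $\widetilde\Gamma$ is mapped tautologically onto $\widetilde\Gamma\subset\widetilde S$. It sends each $F_\gamma$ to $\widetilde S_\gamma$ by the tautological isomorphism. This map is $\pi_1(S)$-equivariant, so it descends to the quotient, and $\pi\circ\iota$ equals the inclusion $\Gamma\subset\mathring S$ on the nose. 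Composing further with $\mathring S\hookrightarrow S$ gives the same statement over $S$.

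\textbf{Step 4: conclusion.} For $X=S$ or $X=\mathring S$, we get
\[
\pi_*\circ\iota_*=j_*:H_1(\Gamma;\mathbb Z)\to H_1(X;\mathbb Z),
\]
where $j$ denotes the inclusion of $\Gamma$ into $X$. Since $\iota_*$ is an isomorphism, it restricts to an isomorphism $\ker j_*\cong\ker\pi_*$. This gives both displayed isomorphisms, and $\ker j_*$ is $\Lambda_\Gamma$, respectively $\Lambda_\Gamma^\circ$, by definition.

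\textbf{Main obstacle.} I expect the difficulty to lie in Step 3, specifically in the fact that the ribbon structures differ. The copy of the graph inside $\Sigma$ carries the conjugate ribbon structure $\Gamma^\vee$, whereas $\Gamma\subset S$ carries the original one. One must check that $\pi$ restricted to the graph is still the identity on the underlying abstract graph, i.e.\ that reversing the cyclic orders at $\bullet$-vertices only changes how the faces are glued and not the map on edges. My first attempt is to verify this edge by edge on $\widetilde\Gamma$, where the identification is literally tautological by the construction. The bipartite and ramification structure at $\bullet$-vertices only affects which $F_\gamma$ meet there, and so plays no role in $H_1$ of the graph.
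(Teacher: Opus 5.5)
Your proof is correct and follows the same route as the paper. The paper quotes the lemma from Goncharov and justifies it only by the preceding remark that the punctured spectral cover $\Sigma$ is homotopy equivalent to $\Gamma^\vee$, hence to $\Gamma$, so that the two kernels agree; your Steps 2--4 spell this out, and your Step 3 makes explicit the compatibility $\pi\circ\iota=j$ that the paper leaves implicit.
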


The completed spectral cover $\overline{\Sigma}$ is obtained by filling the faces on $\Sigma$ by discs. The inclusion $\Sigma \hookrightarrow \overline{\Sigma}$ gives a short exact sequence
\begin{equation}
0
\to
Z_{\Gamma}
\to
H_{1}(\Sigma;\mathbb Z)
\to
H_{1}(\overline{\Sigma};\mathbb Z)
\to
0.
\end{equation}

The subgroup $Z_{\Gamma}$ is generated by the zig-zag loops of $\Gamma$,
which are the same as the face loops of $\Gamma^\vee$, with a single relation: the sum of all generators equals to zero. The \emph{intersection
form} on $H_{1}(\Sigma;\mathbb Z)$ gives a skew-symmetric bilinear
form with kernel $Z_{\Gamma}$:
\begin{equation}
H_{1}(\Sigma;\mathbb Z)
\wedge
H_{1}(\Sigma;\mathbb Z)
\to
\mathbb Z.
\end{equation}

$\Gamma$ is homotopy equivalent to $\Sigma$, so we get a skew-symmetric bilinear form on
$H_{1}(\Gamma;\mathbb Z)$. The skew-symmetric bilinear form on $\Lambda_{\Gamma}$ is given by restriction:
\begin{equation}
(*,*)_{\Gamma}
:
\Lambda_{\Gamma}
\wedge
\Lambda_{\Gamma}
\to
\mathbb Z.
\end{equation}

Denote the free abelian group generated by a set X by $\mathbb{Z}[X]$. Assuming that the faces of $\Gamma$ are discs on S, $S\cong \Gamma \cup (\bigcup F_i
)$, $\Gamma \cap (\bigcup F_i)\cong \bigcup S^1_i$, there are Mayer-Vietoris sequences
\begin{equation}
0
\to
H_{2}(S;\mathbb Z)
\xrightarrow{r}
\mathbb Z[\{\text{Faces of }\Gamma\}]
\to
H_{1}(\Gamma;\mathbb Z)
\to
H_{1}(S;\mathbb Z)
\to
0.
\end{equation}
\begin{equation}
0
\to
\mathbb Z[\{\text{Internal faces of }\Gamma\}]
\to
H_{1}(\Gamma;\mathbb Z)
\to
H_{1}(\mathring{S};\mathbb Z)
\to
0.
\end{equation}

Hence there are isomorphisms
\begin{equation}
\Lambda_{\Gamma}^{\circ}
\cong
\mathbb Z[\{\text{Internal faces of }\Gamma\}],
\qquad
\Lambda_{\Gamma}
\cong
\frac{\mathbb Z[\{\text{Faces of }\Gamma\}]}{r(H_{2}(S;\mathbb Z))}.
\end{equation}

There is a basis $ \{e_G\} $ of the lattice $ \Lambda_\Gamma^\circ $ given by the internal faces $G$ of $ \Gamma $, and a collection of vectors $ \{e_F\} $ given by all faces $ F $ of $ \Gamma $, satisfying a single relation
\begin{equation}
\sum_{F: \text{faces of } \Gamma} e_F = 0.
\end{equation}

Let $ E $ be an edge of the graph $ \Gamma $. $ \langle e_F, e_G \rangle^{(E)} = 0 $ if $ E $ is not a common edge of $ F $ and $G$, 
$ \langle e_F, e_G \rangle^{(E)} = +1 $ if the anticlockwise orientation of $F$ induces $\bullet \to \circ$ orientation of $E$, and $ \langle e_F, e_G \rangle^{(E)} = -1 $ otherwise. \cite{Goncharov}

\begin{lemma}[{\cite{Goncharov}}]
\label{lemma:form}
The bilinear form on $\Lambda_{\Gamma}$ satisfies
\begin{equation}
\label{eq:arrow}
\langle e_F, e_G\rangle_{\Gamma}
=
\sum_{E:\ \text{edges of }\Gamma}
\langle e_F, e_G\rangle^{(E)}.
\end{equation}
\end{lemma}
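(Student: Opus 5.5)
The plan is to compute the intersection pairing of two face cycles as a sum of local contributions, one for each edge they share. Fix faces $F,G$ of $\Gamma$ and let $e_F,e_G\in H_1(\Gamma;\mathbb Z)$ be the classes of their boundary face loops, each oriented anticlockwise. By the previous Lemma these classes lie in $\Lambda_\Gamma$, and the homotopy equivalence $\Sigma\simeq\Gamma^\vee=\Gamma$ lets me regard them as classes in $H_1(\Sigma;\mathbb Z)$. The form $\langle e_F,e_G\rangle_\Gamma$ is by definition the intersection number of these classes on $\Sigma$.

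The first step is to choose explicit representatives. On $\Sigma$ the graph $\Gamma^\vee$ sits as a ribbon graph whose cyclic orders agree with those of $\Gamma$ at $\circ$-vertices and are reversed at $\bullet$-vertices. I will realize the cycles by pushing them slightly off the graph.
\begin{itemize}
\item Push the loop $\partial F$ slightly off $\Gamma^\vee$ into the thickened ribbon surface.
\item Do the same for $\partial G$, pushing it to the opposite side near each shared edge.
\end{itemize}
After this perturbation the two curves meet transversally, and every intersection point lies near either a common edge or a common vertex.

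The second step is to localize. For a thickened ribbon graph, the intersection number of two cycles supported on the graph is a sum of local terms, one at each vertex where the two paths meet. At a vertex $v$ the local term depends only on how the incoming and outgoing half-edges of the two paths interleave in the cyclic order at $v$. I will reorganize these vertex terms edge by edge. A common edge $E$ of $F$ and $G$ joins a $\bullet$-vertex to a $\circ$-vertex. Consider a maximal common segment of the two paths. At its two ends the paths diverge, and the contributions from the $\bullet$-end and the $\circ$-end can be attributed to the edges of that segment. The key point is that the cyclic order is reversed at $\bullet$-vertices but not at $\circ$-vertices. Because of this asymmetry, the contributions at the two ends of a segment do not cancel as they would on $S$. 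Instead, each edge $E$ contributes exactly $\pm1$.

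The sign is then fixed as follows. The anticlockwise orientation of $F$ traverses $E$ either $\bullet\to\circ$ or $\circ\to\bullet$. Since $G$ lies on the other side of $E$, it traverses $E$ in the opposite direction. Checking the local picture at the $\bullet$-endpoint, in the reversed cyclic order, gives $+1$ in the first case and $-1$ in the second. This matches the definition of $\langle e_F,e_G\rangle^{(E)}$, and edges not shared by $F$ and $G$ contribute nothing. Summing over all edges gives (\ref{eq:arrow}), and bilinearity extends the formula to all of $\Lambda_\Gamma$.

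I expect the main obstacle to be the local bookkeeping in the second step. Two things need care: showing that the vertex contributions redistribute consistently over shared edges when $F$ and $G$ share several, possibly non-adjacent, segments, and getting the sign convention right. I would handle this first by a direct check on the trivalent local models at the $\bullet$- and $\circ$-vertices. After that, I would verify the resulting overall normalization on the $A_1$ torus example.
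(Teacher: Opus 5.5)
Your argument is correct, and it is the standard local argument behind this lemma (the paper states the lemma with a citation to Goncharov and gives no proof): face loops of $\Gamma$ become zig-zag loops on $\Gamma^\vee\subset\Sigma$, and after pushing each to its own side of the shared edges, two loops sharing an isolated edge cross exactly once, at its $\bullet$-end, with your sign, and never at $\circ$-ends or at vertices touched without a shared edge. The one correction, for the bookkeeping you flagged, concerns longer shared segments through $2$-valent vertices: crossings occur only at the $\bullet$-type ends, so a $\bullet$--$\circ$--$\bullet$ segment gives two crossings of opposite sign that do cancel (and $\circ$--$\bullet$--$\circ$ gives none); this matches the telescoping of the alternating edge terms $\pm1\mp1$, and it is the check that should replace your heuristic that end contributions never cancel.
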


\begin{proposition}
\label{prop:3.3}
Let $S$ be a decorated surface without boundary.

\begin{enumerate}
    \item An $A_m$-bipartite graph $\Gamma$ on $S$ has only internal faces.
    Hence the only quiver lattice associated with $\Gamma$ is 
    \[
    \Lambda_\Gamma^\circ
    \cong
    \mathbb Z[\{\text{Internal faces of }\Gamma\}],
    \]
    with no relation between basis vectors.

    \item An $A_m^*$-bipartite graph $\Gamma^*$ on $S$ has both internal faces and external faces.
    Hence the full quiver lattice associated with $\Gamma^*$ is
    \[
    \Lambda_{\Gamma^*}
    \cong
    \frac{\mathbb Z[\{\text{Faces of }\Gamma^*\}]}{r(H_{2}(S;\mathbb Z))},
    \]
    with one single relation $\sum_{F: \text{faces of } \Gamma^*} e_F = 0$.
    And its non-frozen sublattice is
    \[
    \Lambda_{\Gamma^*}^\circ
    \cong
    \mathbb Z[\{\text{Internal faces of }\Gamma^*\}].
    \]
    The basis in $\Lambda_{\Gamma^*} \setminus \Lambda_{\Gamma^*}^\circ$ corresponds to external faces of $\Gamma^*$.
    For any basis $e_F$ in $\Lambda_{\Gamma^*}\setminus\Lambda_{\Gamma^*}^\circ$ and any basis $e_G$ in $\Lambda_{\Gamma^*}$,
    \[
    \langle e_F, e_G\rangle_{\Gamma}
    =\sum_{E:\ \text{edges of }\Gamma^*}
    \langle e_F, e_G\rangle^{(E)}=0.
    \]

    \item The $A_m$-bipartite graph $\Gamma_*$ obtained by shrinking each 2-valent $\bullet$-vertex of $\Gamma^*$ connecting two $\circ$-vertices to a single $\circ$-vertex has only internal faces, which are internal faces of $\Gamma^*$.
    All the external faces of $\Gamma^*$ are contracted to $\circ$-vertices of $\Gamma_*$ on special points.   
    The only quiver lattice associated with $\Gamma_*$ is
    \[   \Lambda_{\Gamma_*}^{\circ}=\Lambda_{\Gamma^*}^\circ
    \cong
    \mathbb Z[\{\text{Internal faces of }\Gamma^*\}],
    \]
    with no relation between basis vectors. 
\end{enumerate}
\end{proposition}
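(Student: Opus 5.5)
The proof has three parts, one for each item of the statement. Each item follows from the two Mayer--Vietoris sequences above once we know which faces are internal and which are external. For a surface without boundary, a face is external exactly when its interior contains a special point. So the whole argument reduces to locating the special points of $S$ relative to the faces of $\Gamma$, $\Gamma^*$ and $\Gamma_*$.

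For (1), I will use the construction of the standard graph $\Gamma_{A_m}(T)$, in which the special points are $\circ$-vertices. I will then argue that this property persists for every bipartite $A_m$-graph. Proposition~\ref{prop:connect} connects any $A_m$-graph to a standard one, and elementary moves are local moves supported away from the special points. Hence no special point ever lies in the interior of a face, and since $\partial S=\varnothing$ every face is internal. The second Mayer--Vietoris sequence then gives $\Lambda_\Gamma^\circ\cong\mathbb Z[\{\text{Internal faces}\}]=\mathbb Z[\{\text{Faces}\}]$ with a free basis. I also need $\Lambda_\Gamma^\circ$ to be the only lattice. Here I will check that, because $\Gamma\ni P$, the relevant $\Lambda_\Gamma$ is defined with $\mathring S$ in the role of $S$. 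I expect this bookkeeping to be the main subtlety: the definition $\ker(H_1(\Gamma)\to H_1(S))$ was stated for $\Gamma\subset\mathring S$. I will handle it by remarking that $r(H_2)$ acts only through faces containing special points, and there are none.

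For (2), the standard $\Gamma^*=\Gamma_{A_m^*}(T)$ lies in $\mathring S$. Each special point lies in a unique face, namely the face bounded by the $m+1$ strands around it, so such faces are external. The faces not containing special points are internal. The first Mayer--Vietoris sequence gives $\Lambda_{\Gamma^*}\cong\mathbb Z[\{\text{Faces}\}]/r(H_2(S))$. Here $r$ sends the fundamental class to $\sum_F e_F$, which yields the single relation, and the second sequence gives $\Lambda_{\Gamma^*}^\circ$.

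For the vanishing of the pairing, I will apply Lemma~\ref{lemma:form} to an external face $F$. Its boundary is a loop homotopic to a small loop around the special point $s$, so in $H_1(\Gamma)\cong H_1(\Sigma)$ the class $e_F$ lifts to a sum of loops around the preimages of $s$. Since the cover is unramified over the special points, these lifts are loops around the punctures of $\Sigma$, i.e. zig-zag/face-loop classes in $Z_\Gamma$. Because $Z_\Gamma$ is the kernel of the intersection form, $\langle e_F,e_G\rangle=0$ for all $G$. As a combinatorial check, the boundary edges of $F$ alternate between $\bullet\to\circ$ and $\circ\to\bullet$ orientation, and the signed contributions cancel in~\eqref{eq:arrow}.

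For (3), I will use the Remark on shrinking 2-valent $\bullet$-vertices. Each external face of $\Gamma^*$ is bounded by 2-valent $\bullet$-vertices alternating with $\circ$-vertices around a special point. Shrinking those $\bullet$-vertices collapses the face to a single $\circ$-vertex at the special point. Internal faces are unchanged up to contraction of boundary edges, so they correspond bijectively to the faces of $\Gamma_*$. Together with (1) this gives $\Lambda_{\Gamma_*}^\circ=\Lambda_{\Gamma^*}^\circ$ with free basis. The identification respects the forms, since contracting a 2-valent vertex merges its two edges, whose contributions to~\eqref{eq:arrow} combine consistently.
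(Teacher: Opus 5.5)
Apart from one step, your argument is the paper's proof with more detail. The paper only records three facts: the special points sit on $\circ$-vertices of an $A_m$-graph; the $\bullet$-vertices on the boundary of an external face of $\Gamma^*$ are 2-valent, with their two edges oppositely oriented by that face's boundary orientation (this is your ``combinatorial check'' of the cancellation in \eqref{eq:arrow}); and shrinking these $\bullet$-vertices collapses each external face onto its special point.

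The step that fails is the lattice bookkeeping in (1). The claim that $r(H_2(S;\mathbb Z))$ ``acts only through faces containing special points'' is false. In the first Mayer--Vietoris sequence $r$ is injective and sends $[S]$ to $\sum_F e_F$ over \emph{all} faces, exactly as you use in (2).

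For an $A_m$-graph $\Gamma\ni P$ on closed $S$, all faces are discs and $\sum_F\partial F=0$ already at chain level. So the face classes in $H_1(\Gamma;\mathbb Z)$ satisfy exactly one relation, $\sum_F e_F=0$. Hence $\ker(H_1(\Gamma)\to H_1(S))\cong\mathbb Z[\{\text{Faces}\}]/\mathbb Z\sum_F e_F$ is not freely generated by the face classes. The second sequence is also unavailable, since $\Gamma\not\subset\mathring S$. So ``no relation'' cannot be extracted from $\Gamma$ itself; the paper's proof of (1) only shows that all faces are internal.

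The freeness must come from $\Gamma^*\subset\mathring S$ instead. There the unique relation involves the external faces, and at least one exists since $n>0$. So the internal-face classes are independent in $H_1(\Gamma^*;\mathbb Z)$ and span $\ker(H_1(\Gamma^*)\to H_1(\mathring S))$. The face bijection of (3) transports this to $\Gamma_*$. Statement (1) should then be read through this identification, for example by realising a given $A_m$-graph as $\Gamma_*$ after inserting a cycle of 2-valent $\bullet$-vertices around each special $\circ$-vertex. In particular, your (3) must not invoke (1) for freeness; the logic runs the other way.

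There are two smaller inaccuracies.

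First, elementary moves are not supported away from the special points: for $m=1$ a two-by-two move is a flip whose four $\circ$-vertices are special points. What holds is that a two-by-two move alters only $\bullet$-vertices and edges, and shrink/expand moves can be done keeping special points on $\circ$-vertices. The paper simply takes this as part of the structure of $A_m$-graphs.

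Second, $e_F$ does not ``lift to a sum of loops around the preimages of $s$''. The isomorphism $H_1(\Gamma)\cong H_1(\Sigma)$ goes through the abstract graph $\Gamma=\Gamma^\vee$, not through $\pi$. The right reason is that every $\bullet$-vertex on $\partial F$ is 2-valent, so $\partial F$ is itself a zig-zag strand. It is therefore a face loop of $\Gamma^\vee$ around a single special point of $\Sigma$, and so lies in $Z_\Gamma$. Your combinatorial cancellation already suffices for the vanishing of the form.
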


\begin{proof}
An $A_m$-bipartite graph $\Gamma$ on $S$ has only internal faces since all the special points lie on its $\circ$-vertices and $S$ has no boundary.

$\bullet$-vertices on the boundary of external faces of $\Gamma^*$ are 2-valent.
And the two edges incident to the 2-valent $\bullet$-vertex has opposite orientation induced from the orientation on the boundary of the external face. Thus the bilinear form vanishes for external face basis vectors with other face basis vectors.

Shrinking each 2-valent $\bullet$-vertex of $\Gamma^*$ connecting two $\circ$-vertices to a single $\circ$-vertex of $\Gamma^*$ contracts every external face of $\Gamma^*$ to the special point inside it, which becomes a $\circ$-vertex of $\Gamma_*$.
\end{proof}

\subsubsection{Quasi-Quiver}
We follow \cite{Goncharov} and restate the definition of a quasi-quiver, where the vectors $\{e_i\}$ generate the lattice $\Lambda$. $\{e_i\}$ are called \emph{basic vectors}. Each basic vector is classified as either \emph{frozen} or
\emph{non-frozen}.

\begin{definition}[{\cite{Goncharov}}]
\label{def:quasi}
A quasi-quiver is a data $\bigl(\Lambda,\{e_i\},(*,*)\bigr)$,
where $\Lambda$ is a lattice, $\{e_i\}$ is a finite set of vectors of two types: non-frozen and frozen, generating the lattice $\Lambda$, and $(*,*)$ is a skew-symmetric $\mathbb{Z}$-valued bilinear form on $\Lambda$ such that $(e_i,e_j)\in \mathbb{Z}$ unless both $e_i$ and $e_j$ are frozen. The basic vectors $e_i$ satisfy a single relation: $\sum_i e_i=0$.
\end{definition}

\begin{definition}[{\cite{Goncharov}}]
The \emph{quiver assigned to a bipartite graph $\Gamma$ on S} is given by
$Q_{\Gamma}
:=
\bigl(\Lambda_{\Gamma},\{e_F\},(*,*)_{\Gamma}\bigr).$
The vectors $\{e_G\}$ assigned to internal faces $G$ of $\Gamma$ are the
non-frozen basic vectors. The vectors $\{e_F\}$ assigned to external faces $F$ of $\Gamma$ are the frozen basic vectors.

The non-frozen subquiver is given by the sublattice
$\Lambda_{\Gamma}^{\circ}$ with a basis $\{e_G\}$ and the restricted form:
$Q_{\Gamma}^{\circ}
:=\bigl(
\Lambda_{\Gamma}^{\circ},
\{e_G\},
\operatorname{Res}_{\Lambda_{\Gamma}^{\circ}}(*,*)_{\Gamma}
\bigr).$
\end{definition}

\begin{figure}[htbp]
\centering
\includegraphics[width=0.5\textwidth]{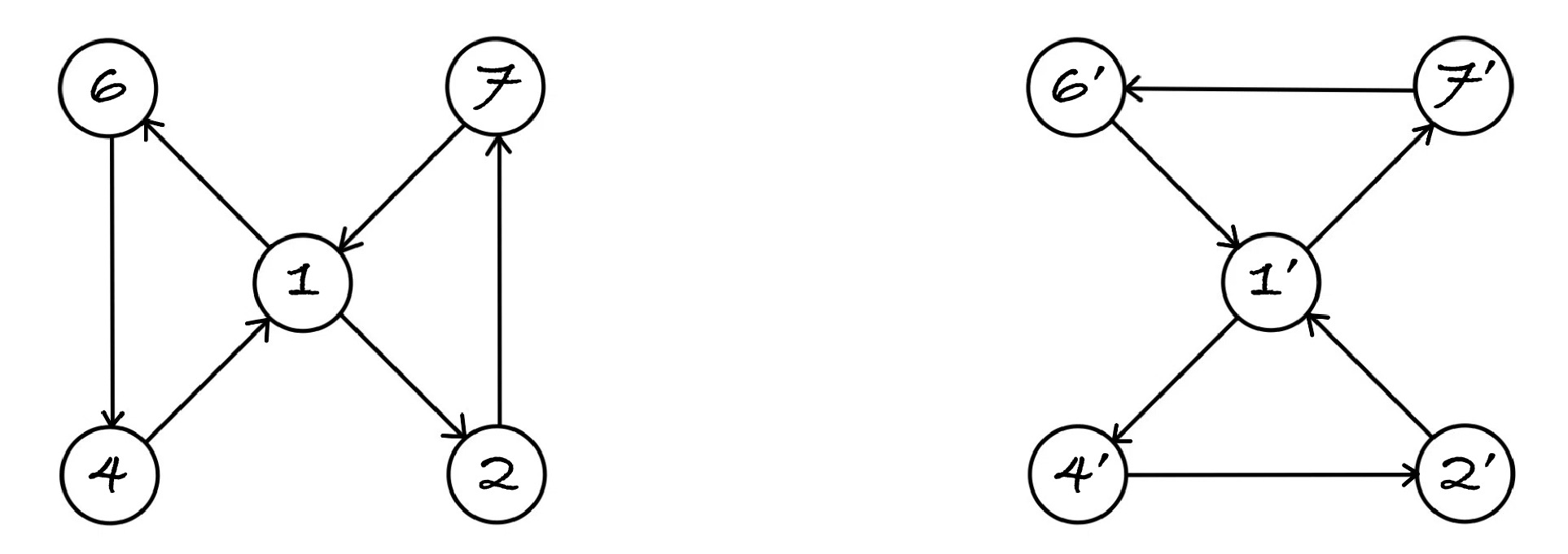}
\caption{The local change of the quiver under a \ttt move at face 1.\\ (This figure only shows the part of the quiver that changes under the mutation.\\ The rest of the quiver remains unchanged.)}
\label{fig:localquiver3}
\end{figure}

\begin{lemma}
\label{le:quiverchange}
The Two-by-two move on $\Gamma$ is the only type of elementary move that changes the quiver $Q_\Gamma$, as shown in Figure~\ref{fig:localquiver3}.
Shrink/expand moves on $\Gamma$ leave the quiver $Q_\Gamma$ unchanged.
\end{lemma}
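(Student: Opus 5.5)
The plan is to compute the quiver $Q_\Gamma$ directly from Lemma~\ref{lemma:form}: the arrows between faces $F$ and $G$ are
\[
\langle e_F,e_G\rangle_\Gamma=\sum_{E}\langle e_F,e_G\rangle^{(E)},
\]
a sum of local contributions over the common edges $E$ of $F$ and $G$. Each contribution depends only on which faces meet along $E$ and on the $\bullet/\circ$ colouring of the endpoints of $E$. The lattice $\Lambda_\Gamma$ is determined by the set of faces of $\Gamma$, by Proposition~\ref{prop:3.3}. Since every elementary move is local, I only need to compare face sets, adjacencies and colourings inside the disc where the move is supported. Outside this disc nothing changes.

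\textbf{Shrink/expand moves.} Consider shrinking a 2-valent vertex $v$, say of colour $\circ$, with neighbours $u_1,u_2$ of colour $\bullet$.
\begin{itemize}
\item Faces: the two faces $F,G$ adjacent to the path $u_1 - v - u_2$ survive as faces after the contraction. No face is created or destroyed, since every face is still a disc. So there is a canonical bijection of face sets, and hence of bases $\{e_F\}$.
\item Shared edges: before the move, $F$ and $G$ share the two edges $u_1v$ and $vu_2$. The anticlockwise orientation of $F$ traverses one of them as $\bullet\to\circ$ and the other as $\circ\to\bullet$. So their contributions are $+1$ and $-1$ and cancel. This is the same cancellation already used in the proof of Proposition~\ref{prop:3.3}(2).
\item After the move: the two edges are merged into the vertex $u_1=u_2$. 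That vertex now has valency $\deg u_1+\deg u_2-2$, and its remaining edges keep their colourings and adjacent faces. All other local contributions $\langle e_{F'},e_{G'}\rangle^{(E)}$ are therefore unchanged, and the cancelled pair is simply removed.
\end{itemize}
Hence $(*,*)_\Gamma$ is preserved. Expansion is the inverse operation, so the same argument applies. Frozen/non-frozen types are also preserved, because an internal face stays internal: the move does not touch special points.

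\textbf{Two-by-two move.} I would read off the local change from Figure~\ref{fig:22move}.
\begin{itemize}
\item The face set is again canonically preserved: the central face $F$ and its four neighbours persist.
\item The colours of the four boundary vertices of $F$ swap.
\item Using Lemma~\ref{lemma:form} on the edges around $F$, I would compute that every arrow incident to $F$ reverses. For neighbours $G,H$ of $F$, new contributions $\langle e_G,e_H\rangle$ appear or cancel through the edges that are created and deleted. This produces exactly the Fomin--Zelevinsky quiver mutation at $F$ shown in Figure~\ref{fig:localquiver3}.
\item In general this changes $Q_\Gamma$. For instance, $\langle e_F,e_G\rangle$ changes sign whenever it is nonzero, and it is nonzero for an internal face $F$ of a minimal web with four distinct neighbours.
\end{itemize}

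\textbf{Main obstacle.} The delicate step is the two-by-two bookkeeping when some of the four faces around $F$ coincide globally on $S$. This can happen because the surface is not simply connected. I would handle it by passing to the universal cover $\widetilde{S}$, where the faces are distinct, and then summing the local contributions equivariantly before descending to $S$.
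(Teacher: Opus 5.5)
The paper states this lemma without proof and relies only on Figure~\ref{fig:localquiver3}. Your edge-by-edge computation via Lemma~\ref{lemma:form} therefore supplies the standard argument that the figure abbreviates. For a shrink at a $2$-valent vertex with two \emph{distinct} neighbours, your $+1/-1$ cancellation is exactly right. Two points need tightening.

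\textbf{Degenerate shrinks.} Your claim that no face is created or destroyed fails when the two neighbours of the $2$-valent vertex coincide. Your valency count $\deg u_1+\deg u_2-2$ tacitly excludes $u_1=u_2$, but this case occurs. The two edges then bound a face of degree $2$, and shrinking collapses it. This is precisely the last step, at each special point, of the paper's own reduction of $\Gamma_{A_m^*}(T)$ to $\Gamma_{A_m}(T)$ by shrinking $2$-valent $\bullet$-vertices, in which every external face disappears (Proposition~\ref{prop:3.3}(3)). There the full quiver $Q_\Gamma$ does change, because a frozen vertex is deleted. Only $Q_\Gamma^\circ$ survives, because that frozen vertex is isolated by Proposition~\ref{prop:3.3}(2). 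Either restrict the shrink statement to $u_1\neq u_2$ or treat this case separately.

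\textbf{Your ``main obstacle'' is misplaced.} Lemma~\ref{lemma:form} is already a sum over the edges of $\Gamma$ on $S$, so coincidences among the neighbours of $F$ cost nothing in the bookkeeping. Call the four edges of $F$ its slots, incoming or outgoing according to the arrow they contribute. Deleting the two old spokes and creating the two new ones amounts to adding one arrow from each incoming slot to each outgoing slot. So if $i_G,o_G$ count the incoming and outgoing slots occupied by a face $G$, you get $\varepsilon'_{FG}=-\varepsilon_{FG}$ and $\varepsilon'_{GH}-\varepsilon_{GH}=i_Go_H-i_Ho_G$.

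What coincidences can break is the match with the Fomin--Zelevinsky rule. Its correction term $\tfrac12\bigl(|i_G-o_G|(o_H-i_H)+(i_G-o_G)|o_H-i_H|\bigr)$ equals $i_Go_H-i_Ho_G$ only if no face occupies both an incoming and an outgoing slot. That is exactly the condition that $v_F$ is $4$-valent. The paper imposes it in the remark after the lemma and in Theorem~\ref{thm:7com}, and the punctured-torus case $2=6$, $4=7$ satisfies it. Passing to $\widetilde S$ does not avoid this, since mutation at a $\pi_1(S)$-orbit commutes with folding only under the same sign-coherence condition. State the $4$-valency hypothesis instead of relying on the universal-cover detour.
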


\begin{remark}
Two-by-two moves can only act on internal faces that correspond to 4-valent vertices in the quiver $Q_{\Gamma}^{\circ}
:=\bigl(
\Lambda_{\Gamma}^{\circ},
\{e_G\},
\operatorname{Res}_{\Lambda_{\Gamma}^{\circ}}(*,*)_{\Gamma}
\bigr)$.
\end{remark}

\begin{figure}[htbp]
\centering
\includegraphics[width=0.5\textwidth]{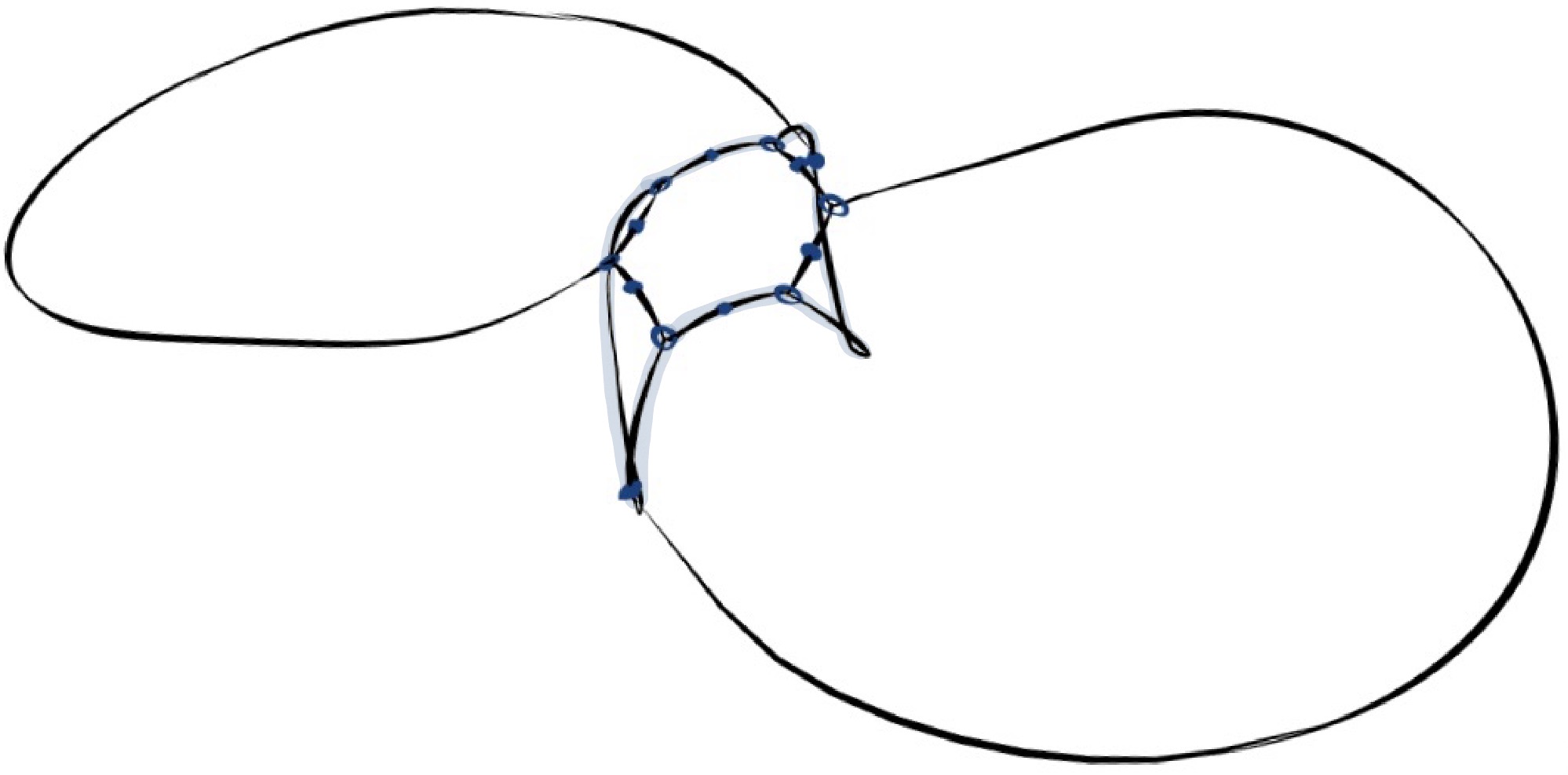}
\caption{Lift of the bipartite $A_1^*$-graph on a spectral cover of a torus with one special point. The highlighted loop lies in $\Lambda_{\Gamma}^{\circ}$.}
\label{fig:kernal}
\end{figure}

\begin{figure}[htbp]
\centering
\includegraphics[width=0.15\textwidth]{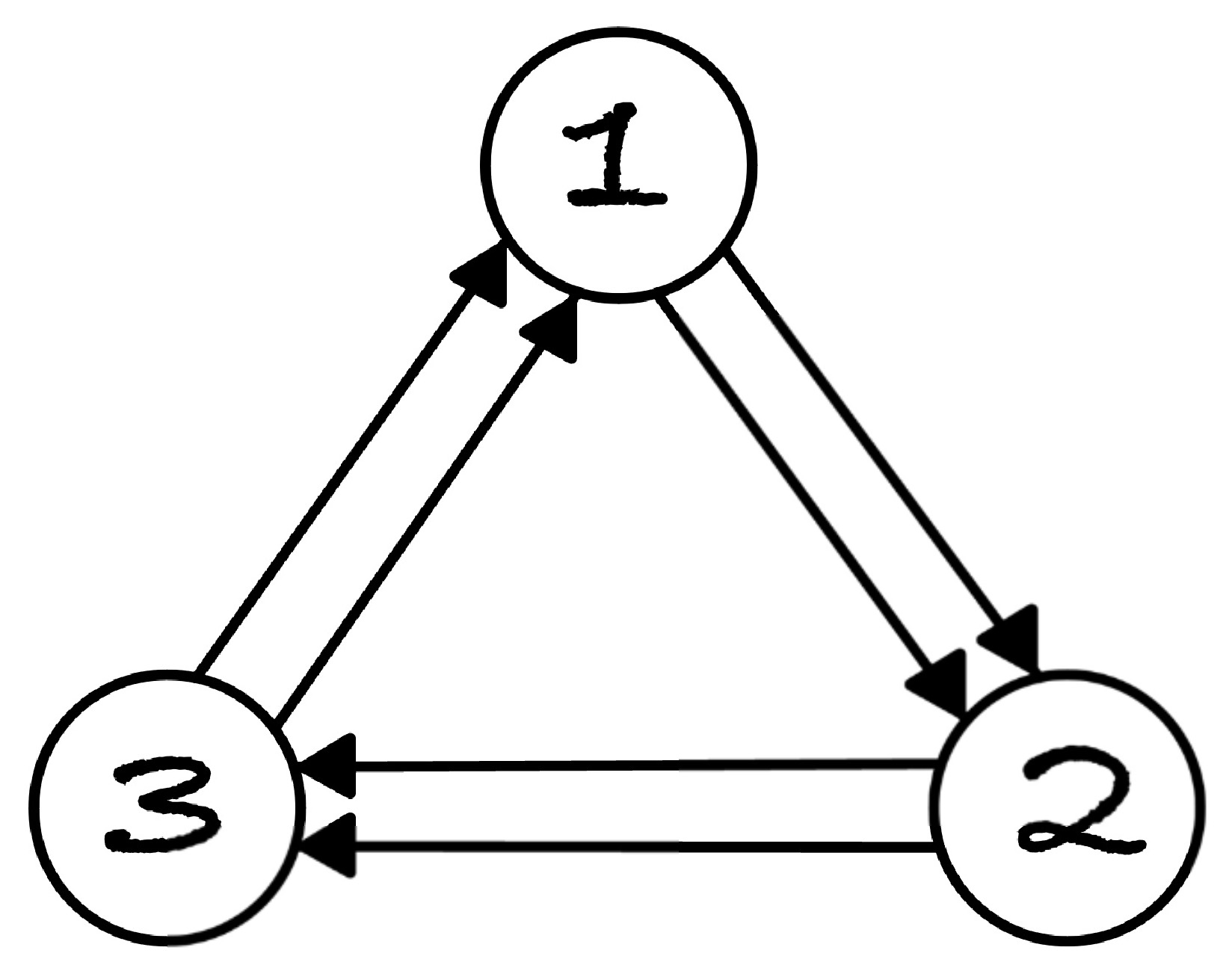}
\caption{Non-frozen quiver assigned to a bipartite $A_1$ or $A_1^*$-graph on a torus with one special point.}
\label{fig:A1quiver}
\end{figure}

\begin{example}
Let us consider a bipartite $A_1^*$-graph $\Gamma$ and the corresponding $A_1^*$-web $W$ on a torus $S$ with one special point. Let $\widetilde{W}$ be the lift of $W$ on the universal cover of $S$. Let $\Sigma$ be the corresponding spectral cover.

There are three non-trivial elements in $H_1(\Sigma; \mathbb Z)$ that maps to internal face loops on $\mathring{S}$, which are 0 in $H_1(\mathring{S}; \mathbb Z)$. The Highlighted Loop in Figure~\ref{fig:kernal} is one of those elements. The non-frozen quiver assigned to $\Gamma$ is shown in Figure~\ref{fig:A1quiver}.
\end{example}

\begin{example}
The quiver assigned to a bipartite $A_2$ or $A_2^*$-graph on a torus with one special point has one frozen vertex and eight non-frozen vertices. The non-frozen sub-quiver is shown in Figure~\ref{fig:A2quiver}.
\end{example}

\begin{figure}[htbp]
\centering
\includegraphics[width=0.2\textwidth]{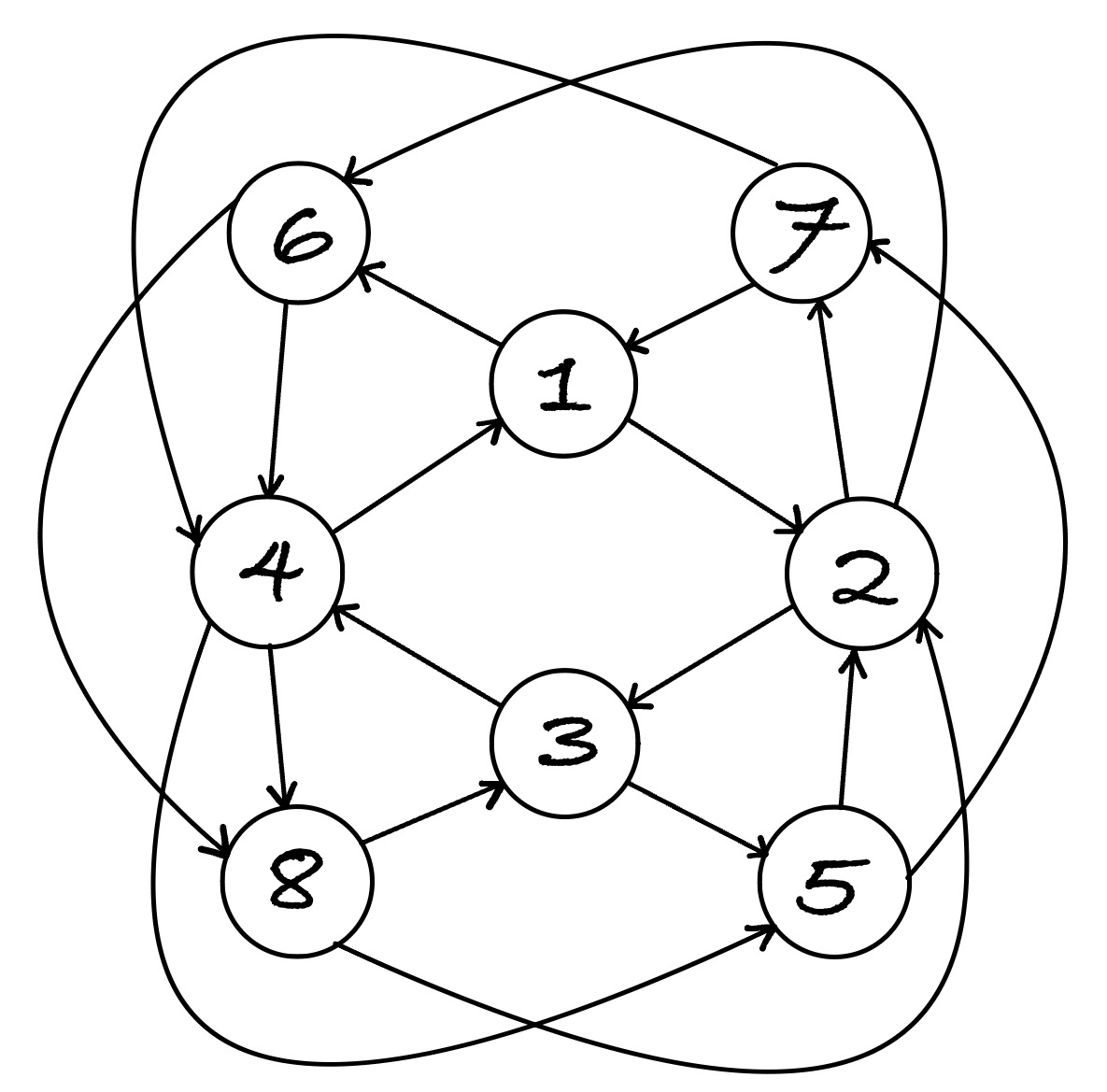}
\caption{Non-frozen quiver assigned to a bipartite $A_2$ or $A_2^*$-graph on a torus with one special point.}
\label{fig:A2quiver}
\end{figure}

\begin{example}
The quiver assigned to a bipartite $A_1$ or $A_1^*$-graph on a sphere with four special points has four frozen vertices and six non-frozen vertices. The non-frozen sub-quiver is shown in Figure~\ref{fig:4Squiver}.
\end{example}

\begin{figure}[htbp]
\centering
\includegraphics[width=0.2\textwidth]{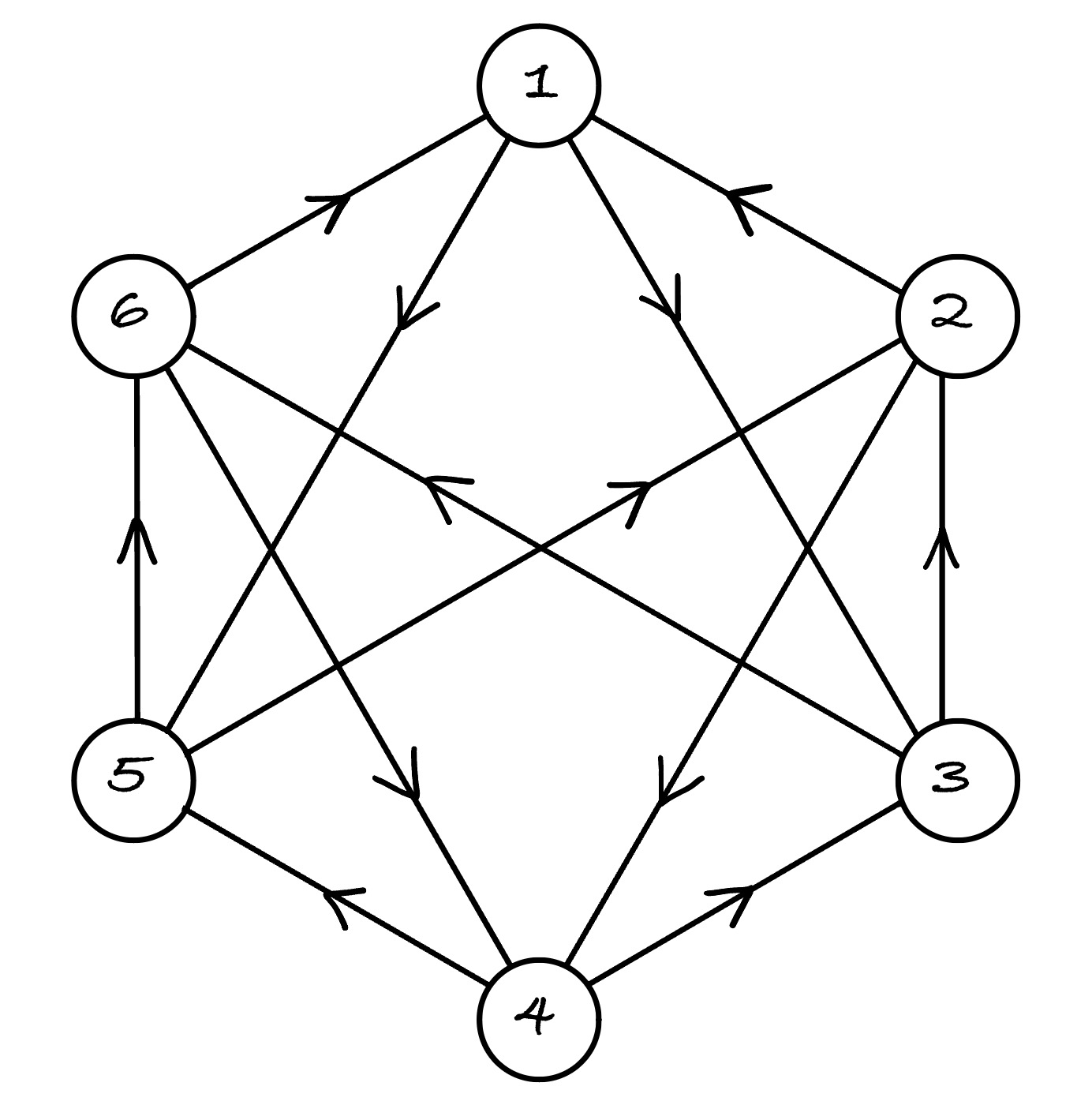}
\caption{Non-frozen quiver assigned to a bipartite $A_1$ or $A_1^*$-graph on a sphere with four special point.}
\label{fig:4Squiver}
\end{figure}

\subsection{A Local Operation on Spectral Covers that Respects Two-by-two Moves}

In this subsection, we will study the local changes of topological spectral covers under \ttt moves on $A_{m-1}$-webs with $m\ge 2$. A \ttt move amounts to a local discrete change on the spectral cover, inducing a discontinuous bijection between spectral covers before and after the move.

\subsubsection{Local Quiver}
Let $W$ be a $A_{m-1}$-web on a decorated surface $S$ of genus-$g_S$ and $\Gamma$ the corresponding bipartite graph. Let $\Sigma$ be the corresponding spectral cover, $\widetilde{W}$ be the lift of $W$ on $\Sigma$ and $\widetilde{\Gamma}$ be the lift of $\Gamma$ on $\Sigma$.

Label all the faces of $\Gamma$. Each face of $\Gamma$ has $m$ lifts on $\Sigma$. We hence label all the lifts of each face of label $i$: $i_1,\cdots, i_m$ on $\Sigma$. The set of faces of $\widetilde{\Gamma}$ consists of the lifts of faces of $\Gamma$.

\begin{figure}[htbp]
\centering
\includegraphics[width=0.4\textwidth]{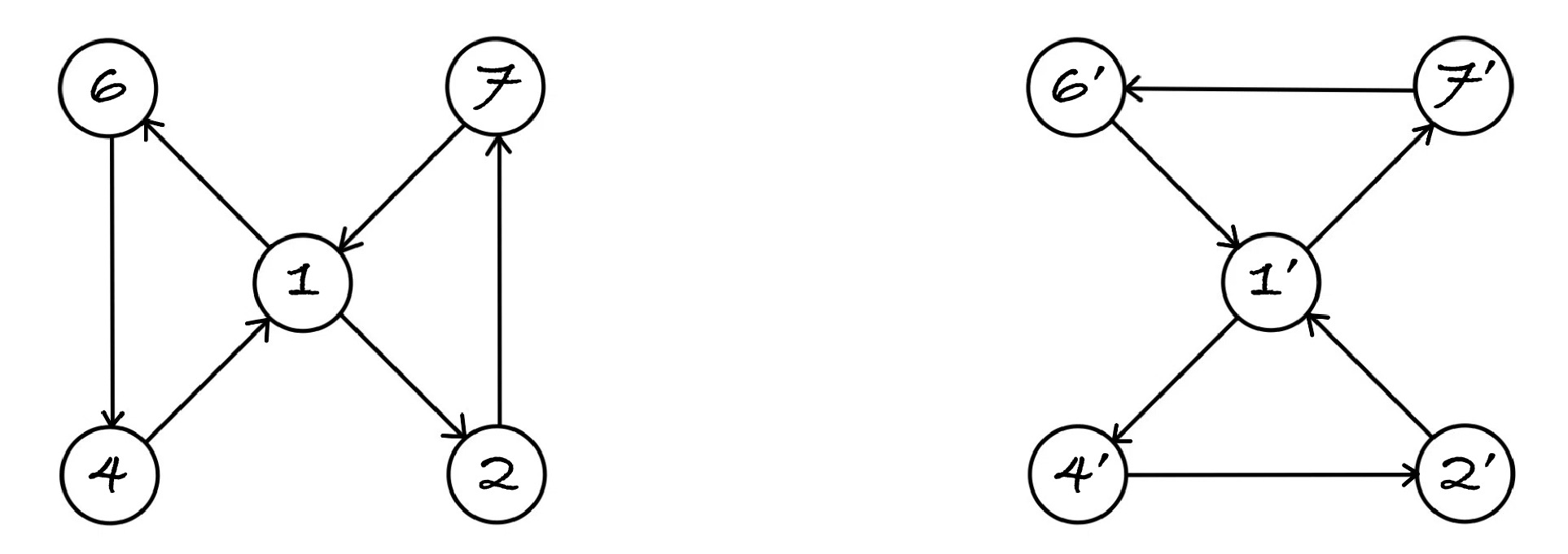}
\caption{The local change of the quiver under a \ttt move at face 1.\\ (This figure only shows the part of the quiver that changes under the mutation.\\ The rest of the quiver remains unchanged.)}
\label{fig:localquiver32}
\end{figure}

Recall the quiver $Q_{\Gamma}$ assigned to $\Gamma$, and the fact that a \ttt move of $\Gamma$ at an internal face amounts to a mutation of $Q_{\Gamma}$ at a 4-valent vertex. Consider the local part of $Q_{\Gamma}$ around the 4-valent vertex that a \ttt move acts on, as shown in Figure~\ref{fig:localquiver32}. Each vertex corresponds to an internal face of $\Gamma$. The two anti-clockwise cycles bound two 3-valent $\bullet$-vertices in $\Gamma$.

Notice that every quiver assigned to an $A_{m-1}$ or $A_{m-1}^*$-bipartite graph has local parts like this around each 4-valent vertex, where face 2 and face 6, face 4 and face 7, need not be distinct, like the one in Figure~\ref{fig:A1quiver}.

We call the local part around the mutated vertex \emph{degenerated} if it is the same as the part shown in Figure~\ref{fig:A1quiver}, and call the local part around the mutated vertex \emph{non-degenerated} if it is the same as the part shown in Figure~\ref{fig:localquiver32}.

\subsubsection{Local Topology}
There are $m$ disjoint lifts of the interior of face 1 on $\Sigma$. Now take an open set $U\subset \mathring{S}$ slightly larger then face 1. The preimage of $U$ on $\Sigma$ has $m-1$ connected components, with $m-2$ of them containing only one lift of face 1. The other one containing two lifts of face 1 and two lifts of the branch points on the boundary on face 1 -- this is the local $2:1$ ramified component of the preimage of $U$, whereas the other $m-2$ components are unramified. Only the ramified component will get involved in the process of the \ttt move.

Denote the local $2:1$ ramified component of the preimage of $U$ on $\Sigma$ by $U_r$. A \ttt move at face 1 only changes the topology in $U_r$ on $\Sigma$. By the construction of the spectral cover, $U_r$ is topologically a cylinder, with labels of the lifts of faces. When mapping to $U \subset S$, the labels on the model are all facing up, hence there will be no "fold lines" on the actual cover, as shown in Figure~\ref{fig:Ura}. The reason that there are fold lines on the model is that we "untwisted" the actual ramified shape of the mapping and only kept the topology of the relevant local open set $U_r$ on $\Sigma$.

\subsubsection{Ura Operation}

\begin{definition}
\label{def:Ura}
The clockwise \emph{\bbb operation} on a topological surface along loop $1$ is a local operation performed on a local cylinder area as the following procedure in Figure~\ref{fig:Ura}.

\begin{figure}[htbp]
\centering
\includegraphics[width=0.9\textwidth]{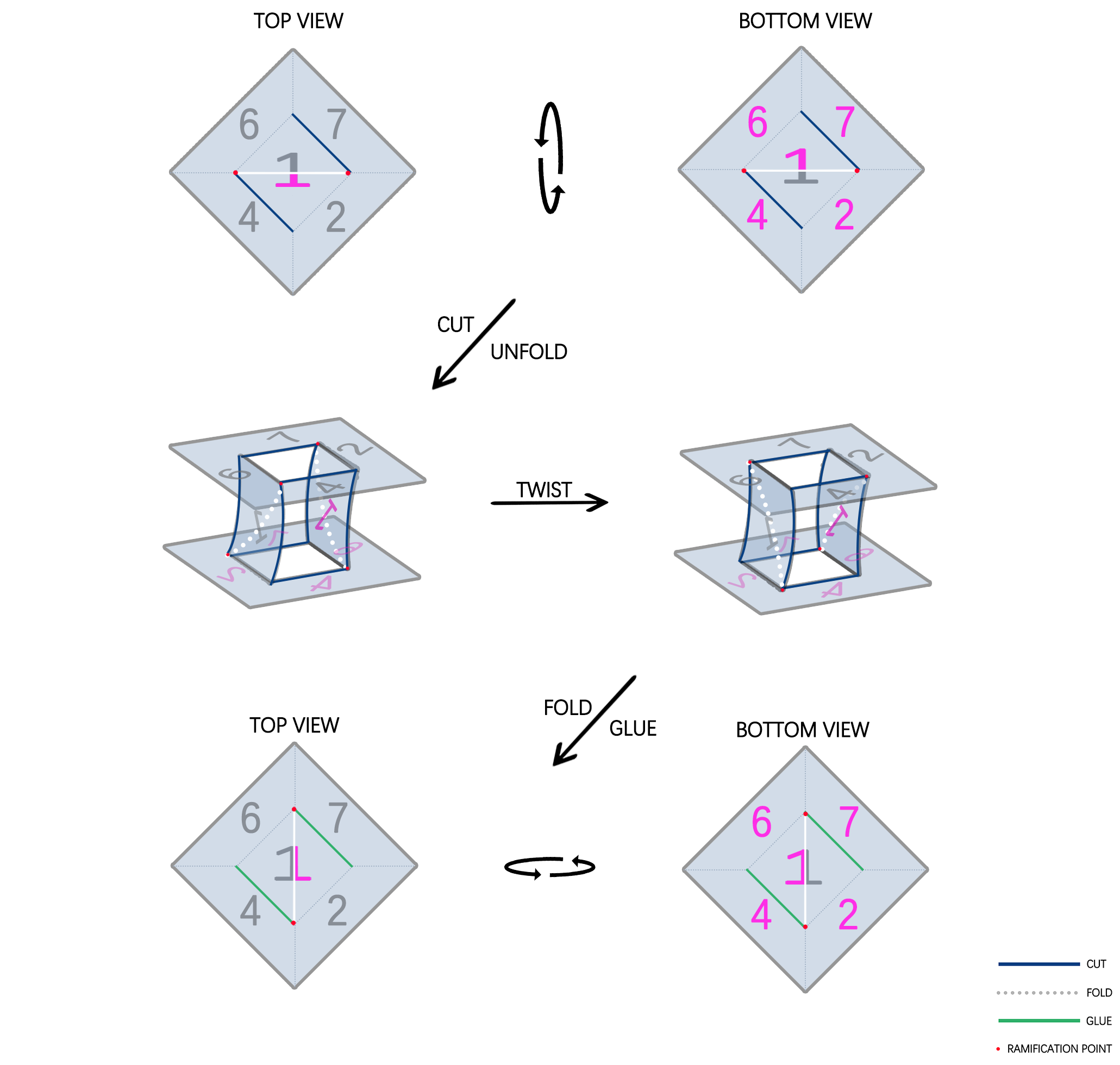}
\caption{Clockwise ura operation. \\ (Instructions for making the paper model are provided in the appendix.)}
\label{fig:Ura}
\end{figure}

The \bbb operation applies to general topological cylinders, and the labels here are not necessarily associated with faces of a bipartite graph. The labels are only used to indicate the relative positions on the cylinder.

The \bbb operation gives a discontinuous bijection, called \emph{\bbb bijection}, from the spectral cover before the operation to the one after.
\end{definition}

\begin{remark}
The top row and the bottom row in Figure~\ref{fig:Ura} are both topologically cylinders.
\end{remark}

\begin{proposition}
Given an ideal web $W$ and its associated bipartite graph $\Gamma$ on a decorated surface $S$, the topological change of the spectral cover $\pi_W:\Sigma_W\to S$ under a \ttt move at an internal face $i$ of $\Gamma$ is given by a ura operation along the lift of face loop $i$.
\end{proposition}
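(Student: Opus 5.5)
We reduce the statement to a comparison of two explicit cylinders, the local ramified component $U_r$ before and after the move. Let $U\subset\mathring{S}$ be a small open disc neighbourhood of the closure of face $i$ that contains no other faces' interiors beyond the adjacent ones appearing in the local quiver of Figure~\ref{fig:localquiver32}. Since a \ttt move is supported on $U$ (it affects only the four vertices of one colour and two of the other around $i$), the construction of \S\ref{sssec:231} shows that $\Sigma_W$ and $\Sigma_{W'}$ agree, canonically, over $S\setminus U$: the strands of $\widetilde W$ and $\widetilde W'$ coincide outside $p^{-1}(U)$, and so do the attached components $F_\gamma$ away from $U$. The comparison is therefore local over $U$.

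First, describe $\pi_W^{-1}(U)$. By the theorem on ramification, the only branch points in $U$ are the two $3$-valent $\bullet$-vertices on $\partial i$, and each is a simple branch point. Their local monodromies are transpositions. Because the two anticlockwise $3$-cycles in the local quiver bound exactly these vertices and share the vertex $i$, both transpositions swap the same pair of sheets. Hence $\pi_W^{-1}(U)$ splits into $m-2$ discs mapping homeomorphically to $U$, together with one component $U_r$ that is a $2{:}1$ cover of a disc branched at two points. By Riemann--Hurwitz, $U_r$ is a cylinder, and its core circle is the lift of the face loop $i$. The same argument applies after the move, with the two new $3$-valent $\bullet$-vertices. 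The unramified discs are unaffected, since the codistances of $i$ change only for the two pairs of special points whose strands pass through $U_r$. This is the quoted fact that a \ttt move changes codistances of $F$ by $\pm1$ at exactly four strands.

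Second, label the eight lifts of faces $1,\dots,7$ meeting $U_r$ using the conjugate-ribbon description $\Sigma_W=\mathbf{\Sigma}$. The face paths of $\Gamma^\vee$ through the $\bullet$-vertices on $\partial i$ determine how lifts $j_a$ of the neighbouring faces are cyclically arranged along the two boundary circles of $U_r$. Redo this for $\Gamma'$, whose conjugate ribbon structure at the two new $\bullet$-vertices is reversed relative to the rotated configuration. One then reads off that the new boundary pattern is the old one with the two lifts of face $i$ interchanged and the adjacent lifts reattached by a half-turn. This pattern is exactly the bottom row of Figure~\ref{fig:Ura} obtained from the top row. The degenerate case of Figure~\ref{fig:A1quiver}, where faces $2,6$ and $4,7$ coincide, is handled by the same picture with labels identified.

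Finally, we glue. Both cylinders have boundary circles matched with the common complement $\Sigma\setminus U_r$, and the ura bijection is the identity near $\partial U_r$. Therefore the global change is the ura operation along the lift of face loop $i$. The orientation is clockwise or anticlockwise according to which of the two pairs of strands has its codistances decreased.

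The main obstacle is the second step: the careful bookkeeping of the cyclic orders in $\Gamma^\vee$ that certifies the reglued labels agree with Figure~\ref{fig:Ura} rather than with its mirror. I would settle it by checking the $A_1$ case on the once-punctured torus (Figure~\ref{fig:kernal}) and then arguing that the configuration over $U$ is the same for all $m$.
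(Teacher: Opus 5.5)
Your route is the paper's own. You localize to a neighbourhood $U$ of face $i$, split $\pi_W^{-1}(U)$ into $m-2$ unramified discs and one $2{:}1$ ramified cylinder $U_r$, note that the move alters only $U_r$, and identify the labelled change with the ura picture; the paper's Property~1 is also read off from the figure rather than proved. As written, though, one step is justified incorrectly, the decisive step is only asserted, and the closing orientation remark is wrong.

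First, the reason you give for the two transpositions being equal is not valid. Two $3$-valent $\bullet$-vertices can border common faces and lie on $3$-cycles through a common quiver vertex, yet have different transpositions. An example is the pair in a shrink \& expand move, with $(ij)$ and $(jk)$. The correct argument is specific to the quadrilateral:
\begin{itemize}
\item only the four strands $a,b,c,d$ of the move meet a small neighbourhood $U$ of face $i$ and its two $\bullet$-vertices;
\item face $i$ lies in the discs of $a,b$ and of $m-2$ further strands, and those $m-2$ discs contain all of $U$, because their boundary strands miss it;
\item their sheets are therefore the $m-2$ unramified discs, so both transpositions act on the remaining pair $\{a,b\}$ of sheets over face $i$.
\end{itemize}
This is exactly what makes $U_r$ a single cylinder.

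Second, the step you call the main obstacle can be done directly, without conjugate cyclic orders or an $A_1$ check.
\begin{itemize}
\item \emph{Labels.} Over each face, label the two $U_r$-sheets by the two strands among $a,b,c,d$ whose discs contain it. With the paper's labelling from Section~6 these are $\{b,d\},\{b,c\},\{a,c\},\{a,d\}$ for faces $2,4,6,7$, and $\{a,b\}$ (before) resp.\ $\{c,d\}$ (after) for face $1$.
\item \emph{Transition rule.} Across an edge of $\Gamma$, i.e.\ a crossing of strands $\sigma_1,\sigma_2$, the cover is unramified and every other sheet stays inside its own $F_\gamma$. Hence the $\sigma_1$-sheet continues into the $\sigma_2$-sheet.
\item \emph{Before the move.} $1{:}a$ borders $2{:}d,4{:}c,6{:}a,7{:}a$, and $1{:}b$ borders $2{:}b,4{:}b,6{:}c,7{:}d$.
\item \emph{After the move.} $1'{:}d$ borders $2{:}d,4{:}b,6{:}a,7{:}d$, and $1'{:}c$ borders $2{:}b,4{:}c,6{:}c,7{:}a$.
\end{itemize}
So the lifts of $2,4,6,7$ and the two boundary circles of $U_r$ do not change at all; this is not the ``new boundary pattern'' you predict. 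Matching $1{:}a\leftrightarrow 1'{:}d$ keeps the attachments to $2,6$ and exchanges those to $4,7$, which is exactly Property~1. Matching $1{:}a\leftrightarrow 1'{:}c$ instead cuts between $1$ and $2$, $6$, which is the opposite orientation in Definition~\ref{def:orittt}.

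This also corrects your last remark. The orientation is a \emph{choice} of this matching, and both choices are available for one and the same move. It is not determined by which strands lose codistance, since that is fixed by the move itself. It is precisely this freedom that the paper pairs with $\Phi^\pm$.
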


\begin{lemma}
The transposition monodromies of the two simple ramification points affected by a \ttt move are both $(ij)\in S_m$, $m\ge 2$.
\end{lemma}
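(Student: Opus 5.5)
We have to show that the two simple ramification points affected by a two-by-two move at an internal face $i$ have the same transposition monodromy, namely $(ij)$, and that this is possible once $m\ge 2$. The argument is local and uses the description of the preimage of a neighbourhood $U$ of the face given in the Local Topology discussion.

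First, fix $U\subset\mathring{S}$ a small disc slightly larger than the internal face on which the move is performed. Choose a base point $x_0\in U$ inside the face and label the fibre $\pi^{-1}(x_0)$ by the $m$ lifts $i_1,\dots,i_m$ of the face. By the Local Topology discussion, $\pi^{-1}(U)$ has $m-1$ components:
\begin{itemize}
\item $m-2$ unramified discs, each containing exactly one lift of the face;
\item one component $U_r$ containing two lifts, say the sheets labelled $i$ and $j$, together with the two ramification points lying over the two $3$-valent $\bullet$-vertices on the boundary of the face.
\end{itemize}

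Next, compute the monodromy of a small loop $\gamma_\pm$ around each of these two branch points $q_\pm$, based at $x_0$ via a path inside the face. Since the web is generic, both points are simply branched, so by the Eisenbud--Harris lemma each $\rho_\pi(\gamma_\pm)$ is a transposition. A loop inside $U$ lifts inside $\pi^{-1}(U)$. Hence the $m-2$ sheets lying in unramified components return to themselves, and the transposition must exchange the two sheets $i$ and $j$ belonging to $U_r$. Therefore $\rho_\pi(\gamma_+)=\rho_\pi(\gamma_-)=(ij)$. As a consistency check, $U_r$ is a cylinder: a $2$:$1$ cover of a disc with two simple branch points has Euler characteristic $2\cdot 1-2=0$, which also forces the two local monodromies to agree. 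The condition $m\ge 2$ is exactly what is needed for the two distinct sheets $i\neq j$ to exist.

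The main obstacle is justifying that both ramification points lie in the same component $U_r$ and involve the same pair of sheets, rather than, for example, transpositions $(ij)$ and $(jk)$. I would argue this from the spectral cover construction via the conjugate ribbon graph. The two $\bullet$-vertices on the face boundary are each incident to the same two face-path lifts, namely the lifts of the zig-zag strands passing along the face, which are the ones whose codistance to the face changes under the move. Equivalently, the face loop lifts to exactly two closed curves in the ramified component, and these two lifts are glued at both $\bullet$-vertices. If this is hard to read off in general, I would reduce to the standard local model: by Proposition~\ref{prop:connect} and locality, the neighbourhood of a $4$-valent quiver vertex looks like Figure~\ref{fig:localquiver32}, and there I would check the identification directly.

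Finally, the base point only matters up to simultaneous conjugation in $S_m$, so the statement that both transpositions are equal to a common $(ij)$ is well defined.
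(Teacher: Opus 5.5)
Your argument is correct and is essentially the paper's own reasoning: the paper gives no separate proof of this lemma and reads it off from the Local Topology description of $\pi^{-1}(U)$ as $m-2$ unramified sheets plus a single $2{:}1$ cylinder $U_r$ containing both ramification points, exactly as you do. For the step you flag as the main obstacle, the precise reason is the following: at each $3$-valent $\bullet$-vertex of the face, the local transposition swaps the two lifts of the face lying in the discs of the two strands through that vertex that have the face on their right. For both $\bullet$-vertices this is the same pair of strands, namely the ones turning at the two $\circ$-vertices of the face.
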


\begin{definition}
\label{def:orittt}
The \emph{orientation of a ura operation} is defined to be the orientation of the "Twist" operation in the definition of the \bbb operation.

More precisely, for \emph{clockwise} orientation, we start with the fold lines as shown on the top row and the left figure on the second row, and choose to make the cuts between 1 and 4, and 1 and 7 at first, then after unfolding, the orientation of the "Twist" is constrained to be rotating the top square clockwise by $\pi$ relative to the bottom square.

For \emph{anticlockwise} orientation, we start with the fold lines as shown on the top row, make cuts between 1 and 2, and 1 and 6; or start with the fold lines as shown on the bottom row, make cuts between 1 and 4, and 1 and 7. Then the orientation of the "Twist" is constrained to be rotating the top square anticlockwise by $\pi$ relative to the bottom square.

At the level of the spectral cover, the \emph{orientation of a \ttt move} is defined to be the orientation of the \bbb operation.
\end{definition}

\subsubsection{Properties of the Ura Operation}
Let us observe two properties of the \bbb operation regarding our framework. We refer to the lift of face i when we say face i on the spectral cover.

Let $W'$ be the $A_{m-1}$-web after performing a \ttt move at face 1. Denote the local $2:1$ ramified component of the preimage of $U$ on $\Sigma'$ by $U_r'$. Similarly, $U_r'$ is topologically a cylinder.

We know from Section 3.1 that the lattice $\Lambda_{\Gamma}^\circ$ has a basis indexed by internal faces of $\Gamma$. On the level of the loops and intersection form on $\Sigma$, the basis becomes the set of the equivalent classes of lifts of the internal face loops in $H_1(\Sigma;\mathbb Z)$, and the bilinear skew-symmetric form is the intersection form.

When performing a \ttt move at face 1, the only change on $\Sigma$ happens locally on $U_r\to U_r'$, which is the \bbb operation along the lift of face loop 1.

\medskip
\noindent
\textbf{Property 1.} Before the \bbb operation, two lifts of face 1 form the "neck" of the model. The fold lines on two lifts of face 1 are the two horizontal diagonal lines. The two ramification points on the two 3-valent $\bullet$-vertices of $\Gamma$ are the two end points of the fold lines (they share the same end points). When performing the \bbb operation, we cut the two edges between two pairs of face 1 and face 4, and the two edges between two pairs of face 1 and face 7.

Then we twist around the neck and glue the cut line on one face 4 to the other cut line on face 1 that was originally glued to the other face 4 -- we perform similar gluing to all four cuts simultaneously. So after the \bbb operation, two lifts of face 4 / 7 switched places, while the lifts of other faces stay unchanged. The two ramification points on the two 3-valent $\bullet$-vertices of $\Gamma'$ are the two ends of the vertical fold lines.

\medskip
\noindent
\textbf{Property 2.} Now we draw four circles around four corners on each sides of the model, as shown in Figure~\ref{fig:circlesquare}. Notice that there are two disjoint pink / purple circles, while the two yellow / green circles on different sides are connected through the middle fold line. We make four cuts, so there are four pairs of cut lines in total. After the cut, there are two hollow squares on the side, whose boundaries are the cut lines. The twisting process between cutting and gluing on the hollow squares is shown in Figure~\ref{fig:hollow}.

Notice that the transformations on the two hollow squares happen at the same time, but they don't interact with each other. After gluing, there are two disjoint yellow / green circles, while the two pink / purple circles on different sides are connected through the middle fold line. The two lifts of face 1 still form the "neck" of the model. The new fold lines on two lifts of face 1 are now the two vertical diagonal lines. The two new ramification points are the two end points of the new fold lines. The circles after the move are shown in Figure~\ref{fig:circlesquare}.

\begin{figure}[htbp]
\centering
\includegraphics[width=0.5\textwidth]{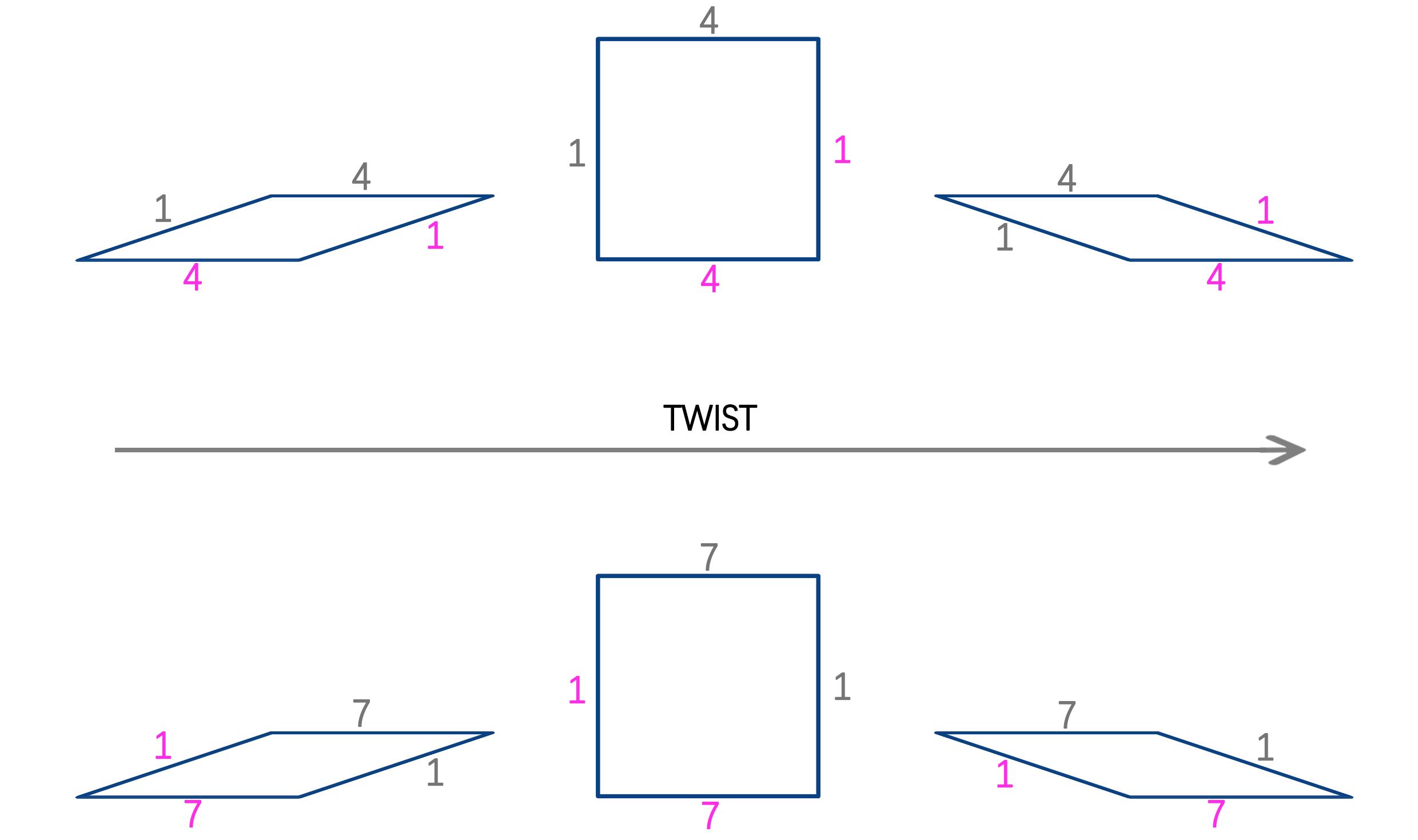}
\caption{Property 2: Changes of two hollow squares after two cuts when twisting.}
\label{fig:hollow}
\end{figure}

\begin{figure}[htbp]
\centering
\includegraphics[width=0.6\textwidth]{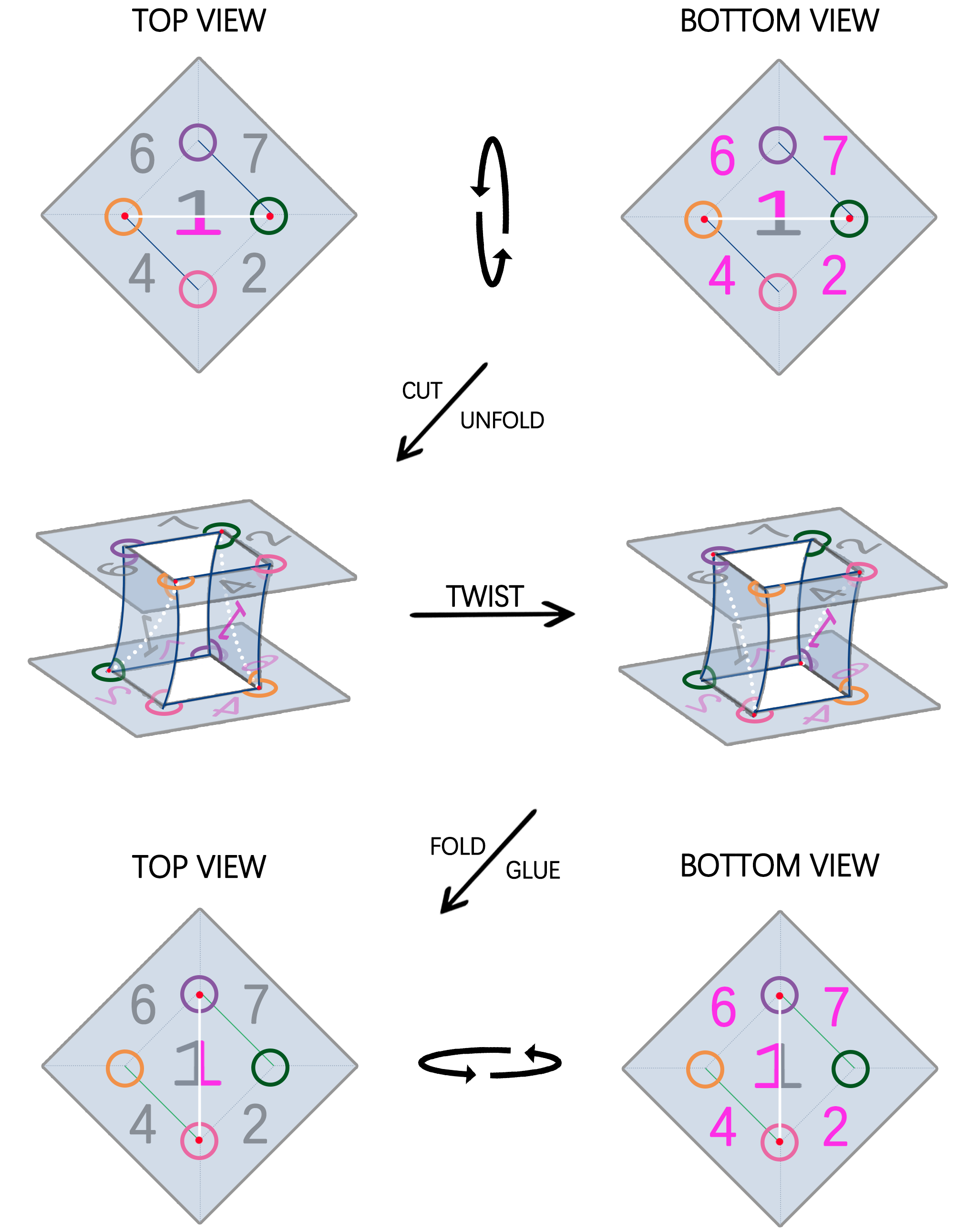}
\caption{Property 2: Four circles capture the change of two ramification points.}
\label{fig:circlesquare}
\end{figure}

\begin{figure}[htbp]
\centering
\includegraphics[width=0.83\textwidth]{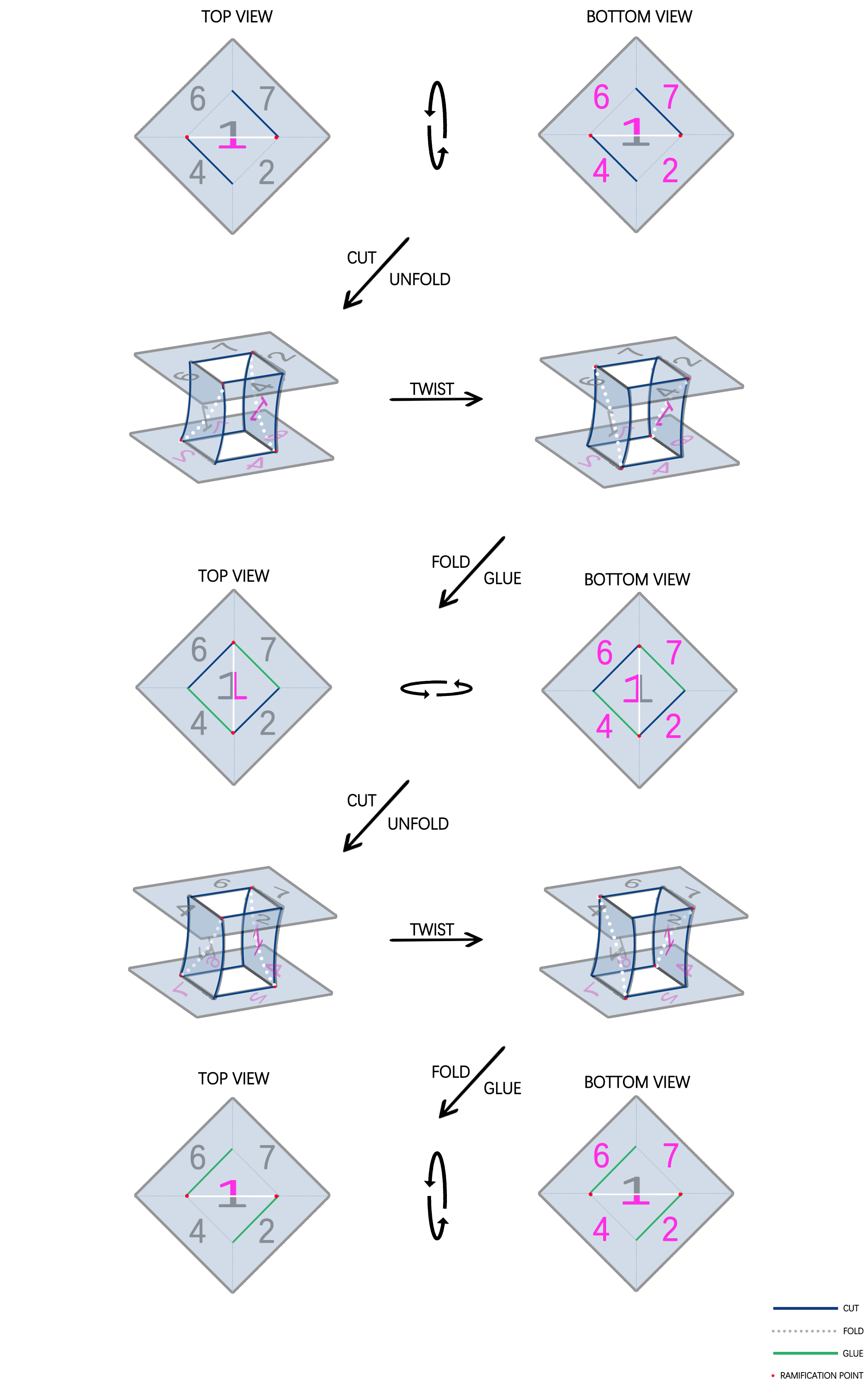}
\caption{Composition of two successive ura operations in the same orientation forms a Dehn twist.}
\label{fig:Dehn}
\end{figure}

\subsection{Two Successive Ura Operations and Automorphisms on Lattices}

\subsubsection{Two Successive Ura Operations in the Same Orientation}

When performing one clockwise \bbb operation, we make two cuts on the opposite pair of edges on the top and two cuts on the other opposite pair of edges on the bottom, and get two independent hollow squares on the "neck" with boundaries the cut lines. And then we glue different pairs of the edges of two hollow squares. Now let us perform another clockwise \bbb operation on top of this. This procedure amounts to a second mutation at vertex 1 / a second \ttt move at face 1 in the same orientation.

As shown in Figure~\ref{fig:Dehn}, when performing the second clockwise \bbb operation, we make the other two cuts on the top and the other two cuts on the bottom, where we didn't cut in the first \bbb operation. The second \bbb operation makes two new hollow squares on the neck with boundaries the cut lines. Then we glue the different pairs of boundaries. Observing the overall change of the model, we see that the loop 1 got cut completely during the process. The pair of the cut lines we got from cutting loop 1 was the boundary of the middle-stage surface. Then the cut lines were relatively twisted by $2\pi$ before glued. Therefore, we conclude the following statement.

\begin{proposition}
\label{prop:dehn}
Given a non-separating loop on a topological surface, the composition of two successive \bbb operations in the same orientation along that same loop forms a Dehn twist.
\end{proposition}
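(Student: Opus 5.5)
The plan is to exhibit the composite operation as a homeomorphism that is the identity outside a small annular neighbourhood of the loop and acts inside that annulus by a full $2\pi$ twist. That is exactly the definition of a Dehn twist along the loop.

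First I would normalise the local picture. Let $c$ be the non-separating loop, namely the lift of face loop $1$ forming the neck of the cylinder $U_r$. Choose a closed regular neighbourhood $A\cong S^1\times[0,1]$ of $c$ inside the cylinder on which the ura operation is performed, with $c=S^1\times\{1/2\}$. By Definition~\ref{def:Ura} and Property 1, a single ura operation changes only this cylinder. The labelled faces $4$, $7$, $2$, $6$ sit on either side of the neck, and the surface outside $U_r$ is glued back unchanged. I would therefore parametrise the complement $\Sigma\setminus \mathrm{int}(A)$ once and for all and track only how the two boundary circles $S^1\times\{0\}$ and $S^1\times\{1\}$ are reattached.

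Second, I would describe one clockwise ura operation in these coordinates. By Property 2, the two cuts on top and the two cuts on bottom produce two hollow squares. These are regluted after the top square is rotated clockwise by $\pi$ relative to the bottom one, as in Definition~\ref{def:orittt}. Reading the ura bijection on the boundary circles, the gluing map on one side is changed by rotation by $\pi$ and is unchanged on the other side. One ura operation alone is not a homeomorphism of the original surface. It only gives the discontinuous bijection, because the cut edges $1|4$ and $1|7$ are reattached to the other lifts of faces $4$ and $7$.

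Third, I would compose two such operations. The second clockwise operation cuts along the complementary pair of edges ($1|2$ and $1|6$), so the union of the four cuts over the two operations severs $c$ completely, as the observation before the statement shows. The middle-stage surface is $\Sigma$ cut along $c$, with two boundary circles $c_\pm$. The two rotations by $\pi$ have the same orientation and add, so the total change is reattaching $c_+$ to $c_-$ through a rotation by $2\pi$. The half-rotation discontinuities of the two steps cancel: after two operations the lifts of faces $4$ and $7$ have switched back, so the labelling away from $A$ is restored. The resulting map is therefore the identity on $\Sigma\setminus A$, and on $A$ it equals $(\theta,t)\mapsto(\theta+2\pi\rho(t),t)$ for a monotone $\rho$ with $\rho(0)=0$ and $\rho(1)=1$. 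That is a Dehn twist $T_c$. The non-separating hypothesis guarantees that $c$ is an essential curve, not merely a boundary-parallel one, so $T_c$ is a genuine nontrivial mapping class. The same hypothesis makes the cut surface connected, so the regluing is well defined as a self-map of one surface. For anticlockwise orientation the same argument gives $T_c^{-1}$.

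The main obstacle is the third step: checking rigorously that the two half-twists compose to a continuous map isotopic to a $2\pi$ twist rather than to the identity. This means verifying, from the fold-line bookkeeping in Figures~\ref{fig:hollow} and \ref{fig:circlesquare}, that the relative rotation directions agree and do not cancel. I would settle it by tracking one transverse arc crossing $c$, for instance an arc joining face $2$ to face $6$ through face $1$. I would then check that its image after both operations winds once around $c$, giving algebraic intersection with $c$ changed by $\pm(c\cdot\alpha)$, consistent with the Picard--Lefschetz formula.
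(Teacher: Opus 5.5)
Your proposal follows essentially the same route as the paper, whose argument is the paragraph preceding the statement (with Figure~\ref{fig:Dehn}). There, the second clockwise ura operation makes the complementary cuts, so loop $1$ is severed completely, the two cut lines form the boundary of the middle-stage surface, and they are reglued after a relative twist of $2\pi$ --- exactly your second and third steps. Your annular normalisation and transverse-arc check usefully address the point the paper leaves to the picture: that the two $\pi$-twists add rather than cancel, and that the $2\pi$ must be realised as a continuous twist of a collar, since a bare regluing through a $2\pi$ rotation is just the identity gluing.
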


\subsubsection{Induced Quiver Lattice Isomorphism}
Choose the clockwise orientation as the positive orientation for face loops when all labels are facing-up (as mapping to $U \subset S$). Let us now draw the relevant local part of the initial-state non-trivial lifts of face loops 1, 2, 4, 6 and 7 on the model, as in the top row of Figure~\ref{fig:latticeiso}. We could say that the intersection form restricting to these loops provides the same structure as the left image in Figure~\ref{fig:localquiver3}.

Let us now draw the relevant local part of the final-state non-trivial lifts of face loops 1, 2, 4, 6 and 7 on the model after the change, as in the bottom row of Figure~\ref{fig:latticeiso}. We notice that the intersection form restricting to these loops provides the same structure as the right image in Figure~\ref{fig:localquiver3}.

\medskip
Now observe the change of the non-trivial lifts of face loops between the quiver lattices $\Lambda_{\Gamma}$ and $\Lambda_{\Gamma'}$. In Figure~\ref{fig:latticeiso}, the green string (string 6) is the the local part of the lift of face loop 6, as it eventually bounds the lift of face 6. The purple string (string 4) is the local part of loop 4, the yellow string (string 2) is the local part of loop 2, and the pink string (string 7) is the local part of loop 7.

The initial string 2 / 6 and the initial line 7 / 4 intersect respectively before the change, while there is no intersection between initial string 2 / 6 and initial line 4 / 7. The initial loop 1 will lie on the position of the final-state lift of face loop 1 with opposite orientation. During the change, the ends of the initial string 2 / 6 wrap around the initial loop 1 once in the opposite direction, while the ends of the initial line 4 / 7 stay unchanged.

Therefore, we arrive at an isomorphism between the bases of the initial and final quiver lattices $\phi^\pm : \Lambda_{\Gamma'} \to \Lambda_{\Gamma}$, with the restriction $\phi^\pm|_{\Lambda_{\Gamma'}^\circ} : \Lambda_{\Gamma'}^\circ \to \Lambda_{\Gamma}^{\circ}$.

\begin{figure}[htbp]
\centering
\includegraphics[width=0.8\textwidth]{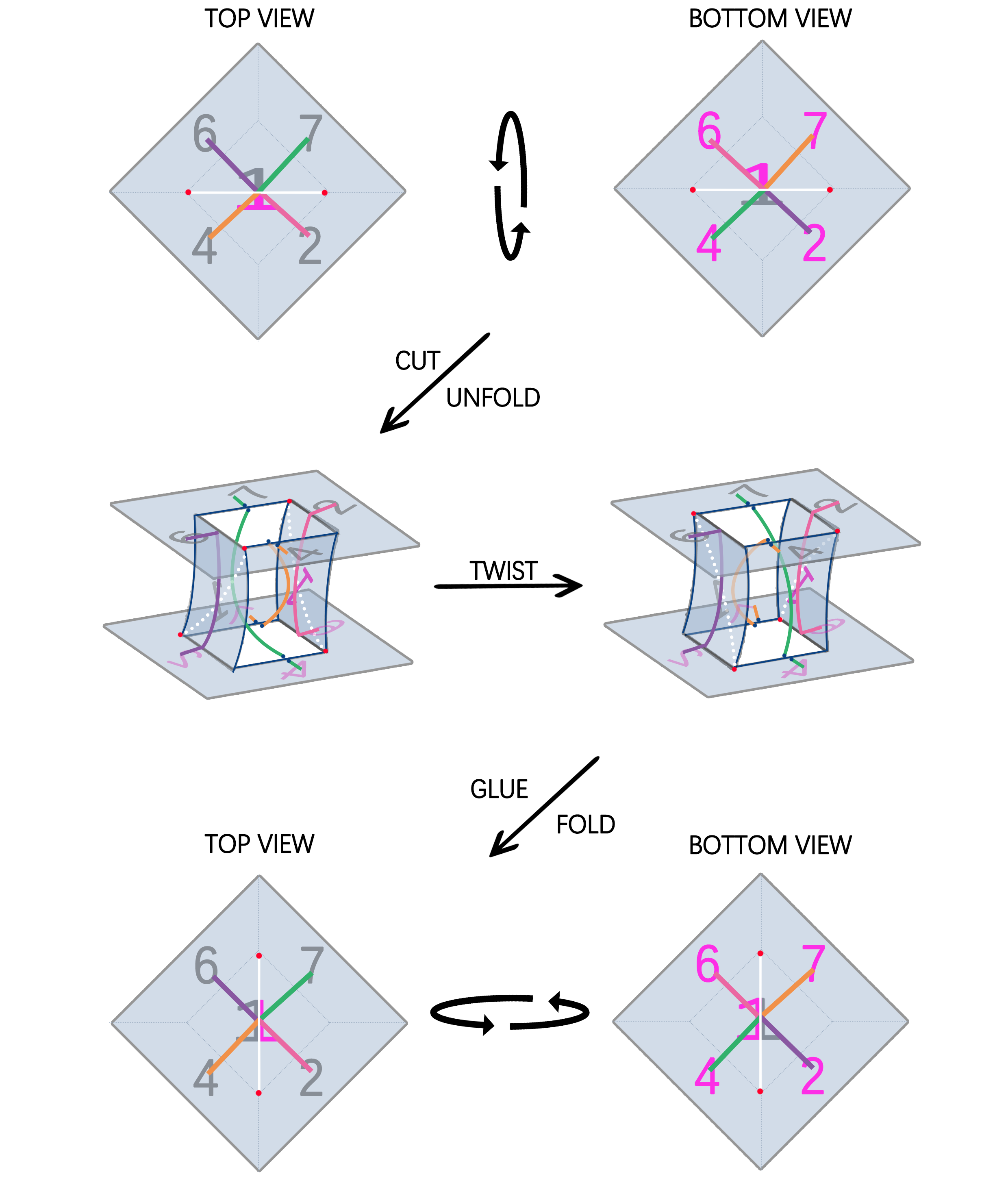}
\caption{Induced isomorphism between quiver lattices.}
\label{fig:latticeiso}
\end{figure}

\begin{theorem}
\label{thm:ttt}
The clockwise \bbb bijection induces a quiver lattice isomorphism $\phi^-: \Lambda_{\Gamma'}\to\Lambda_{\Gamma}$, with restriction $\phi^-|_{\Lambda_{\Gamma'}^\circ} : \Lambda_{\Gamma'}^{\circ}\to\Lambda_{\Gamma}^{\circ}$. The map between two bases is given by:
\begin{equation}
\label{eq:basis}
\begin{array}{cc}
   l_1'\mapsto -l_1,\qquad 
\begin{array}{l}
l_7'\mapsto l_7,\\[1pt]
l_4'\mapsto l_4,
\end{array}
\qquad
\begin{array}{l}
l_6'\mapsto l_6+l_1,\\[1pt]
l_2'\mapsto l_2+l_1.
\end{array}
\end{array}
\end{equation}
The rest of the basis is mapped to the corresponding ones with the same labels.

Equivalently,
$l_1'\mapsto -l_1,\quad l_j'\mapsto l_j+\epsilon_{1j} l_1, \quad j\neq 1$,
where $\epsilon_{1j}=\#\{arrows: 1\to j\}$ in $Q_{\Gamma}$.

The anticlockwise \bbb bijection induces a quiver lattice isomorphism $\phi^+: \Lambda_{\Gamma'}\to\Lambda_{\Gamma}$, with restriction $\phi^+|_{\Lambda_{\Gamma'}^\circ} : \Lambda_{\Gamma'}^{\circ}\to\Lambda_{\Gamma}^{\circ}$. The map between two bases is given by:
\begin{equation}
\label{eq:basis1}
\begin{array}{cc}
   l_1'\mapsto -l_1,\qquad 
\begin{array}{l}
l_7'\mapsto l_7+l_1,\\[1pt]
l_4'\mapsto l_4+l_1,
\end{array}
\qquad
\begin{array}{l}
l_6'\mapsto l_6,\\[1pt]
l_2'\mapsto l_2.
\end{array}
\end{array}
\end{equation}
The rest of the basis is mapped to the corresponding ones with the same labels.

Equivalently,
$l_1'\mapsto -l_1,\quad l_j'\mapsto l_j+\epsilon_{j1} l_1, \quad j\neq 1$,
where $\epsilon_{j1}=\#\{arrows: j\to 1\}$ in $Q_{\Gamma}$.
\end{theorem}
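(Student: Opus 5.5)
The plan is to read off the isomorphism from how the ura bijection carries each lift of an internal face loop in $\Sigma'$ to a cycle in $\Sigma$. We compare homology classes in $H_1(\Sigma;\mathbb Z)$. By the lemma identifying $\Lambda_\Gamma^\circ$ and $\Lambda_\Gamma$ with kernels of $\pi_*$, both lattices sit inside $H_1$ of the respective spectral covers, and the bases $l_G$ are the classes of the lifted face loops.

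Although the ura bijection is discontinuous, it is a homeomorphism off the cut lines in $U_r$. Outside $U_r$ it is the identity, so every face loop not meeting $U_r$ satisfies $l_j'\mapsto l_j$. This disposes of all labels other than $1,2,4,6,7$. The argument is therefore reduced to the cylinder $U_r$, whose first homology is generated by the core loop $l_1$.

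Next we track the five relevant loops through the clockwise operation, using Figure~\ref{fig:latticeiso} and Properties 1 and 2.

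\textbf{Loop 1.} The new lift of face loop $1$ occupies the position of the old one with reversed orientation, because the twist rotates the top square by $\pi$ and the fold lines change from horizontal to vertical. Hence $l_1'\mapsto -l_1$.

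\textbf{Loops 4 and 7.} Their strands cross only the cut edges that are reglued in place, so their endpoints on $\partial U_r$ are fixed. They are pushed back by an isotopy inside $U_r$ relative to the boundary, giving $l_4'\mapsto l_4$ and $l_7'\mapsto l_7$.

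\textbf{Loops 2 and 6.} Pulled back through the clockwise twist, each of these strands acquires one winding around the neck. This is the half-twist on each of the two lifts, giving a full turn on the pair. So the pulled-back strand is homologous, relative to $\partial U_r$, to the old strand plus one copy of the core. Closing up outside $U_r$ gives $l_2'\mapsto l_2+l_1$ and $l_6'\mapsto l_6+l_1$.

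The sign of this added core loop is fixed by the convention that face loops are clockwise when labels face up. It is checked by computing an intersection number with $l_1$ in $U_r$.

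To rewrite the formula as $l_j'\mapsto l_j+\epsilon_{1j}l_1$, I would use Lemma~\ref{lemma:form} on the local quiver of Figure~\ref{fig:localquiver32}. The arrows out of $1$ go to $2$ and $6$, and the arrows into $1$ come from $4$ and $7$. In the degenerated case, where $2=6$ or $4=7$, the contributions add and multiplicities appear, which matches $\epsilon_{1j}$ counted with multiplicity.

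The anticlockwise case is symmetric. Reversing the twist direction interchanges which pair of cuts is made first, by Definition~\ref{def:orittt}. So the winding is picked up by the strands $4$ and $7$ rather than $2$ and $6$, giving $\epsilon_{j1}$.

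Two consistency checks follow. First, the map preserves the intersection form: one verifies that it transforms $\langle\cdot,\cdot\rangle_{\Gamma'}$ into $\langle\cdot,\cdot\rangle_\Gamma$, which reproduces the mutated quiver of Figure~\ref{fig:localquiver3}. Second, the map is invertible, since its matrix is unitriangular up to the sign on $l_1$. It also carries internal faces to internal faces, so it restricts to the non-frozen sublattices.

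The main obstacle is to make rigorous that a discontinuous bijection induces a well-defined map on homology. It is also the hardest place to pin down the sign and count of the core windings. I would handle both by decomposing each loop into arcs outside $U_r$ plus arcs in $U_r$, and then computing in $H_1(U_r,\partial U_r)$ via intersection with $l_1$.
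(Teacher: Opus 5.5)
Your route is the paper's: it too reads the formulas off the local model in Figure~\ref{fig:latticeiso}. Loop $1$ comes back reversed, strands $4,7$ are untouched, and the ends of strands $2,6$ wrap once around loop $1$. The gap is in the step you propose for the main obstacle. That step sits exactly on the content of the theorem, namely the coefficient of $l_1$ in the images of $l_2',l_6'$, which is what distinguishes $\phi^-$ from $\phi^+$. Winding an arc $k$ times around the neck changes it by the closed class $k\,l_1$. In the annulus $U_r$ the core $l_1$ is homologous to a boundary circle, so it is zero in $H_1(U_r,\partial U_r)\cong\mathbb Z$; moreover $l_1\cdot l_1=0$. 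Hence neither the class of an arc in $H_1(U_r,\partial U_r)$ nor its algebraic intersection with $l_1$ detects the winding. Your statement that the pulled-back strand is ``homologous relative to $\partial U_r$ to the old strand plus one copy of the core'' is equally true with zero or five copies. Likewise, the sign check ``by computing an intersection number with $l_1$ in $U_r$'' returns no information. As planned, the argument can neither confirm the $+l_1$ nor tell the clockwise formula from the anticlockwise one.

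To repair it, first replace the discontinuous bijection by an orientation-preserving homeomorphism $h:\Sigma'\to\Sigma$ equal to the identity outside $U_r'$. This exists because both regions are annuli glued to the same complement. Any two such $h$ differ, up to isotopy rel $\partial U_r$, by a power of the Dehn twist $T_{l_1}$, and the two orientations correspond to choices differing by one such twist. For an arc $\beta'$ of $l_j'$ in $U_r'$ and the corresponding old arc $\beta$, one has $h(\beta')-\beta=c_j\,l_1$ in $H_1(U_r)$. The integer $c_j$ is detected by pairing this closed class with a fixed arc crossing the neck (e.g.\ an arc of strand $4$ or $7$), not with $l_1$.

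Your form-preservation ``check'' then becomes the engine of the proof, since $h_*$ automatically preserves intersection forms. With $l_1'\mapsto-l_1$ and $l_j'\mapsto l_j+c_jl_1$, compatibility with the mutated quiver of Figure~\ref{fig:localquiver3} forces $c_2=c_6=c$ and $c_4=c_7=1-c$. So one geometric input suffices: the clockwise twist does not drag strands $4,7$, giving $c=1$, while the anticlockwise one does not drag $2,6$, giving $c=0$.

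Your stated mechanism for this is off. By Property~1 the clockwise cuts are exactly the $1$--$4$ and $1$--$7$ edges, and they are reglued crosswise (the lifts of $4,7$ swap), not ``in place''. Strands $4,7$ are unaffected because they are detached before the twist, whereas $2,6$ stay attached along uncut edges and get dragged around the neck.

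Similarly, the sign in $l_1'\mapsto-l_1$ is not produced by the twist, since a homeomorphism fixing $\partial U_r$ preserves the oriented core class. It records that the new lifted face loop $1$, oriented by the same convention, runs around the neck the other way.

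Finally, your cylinder argument says nothing about the degenerated case, where the paper notes the affected region is the whole spectral cover. State the $\epsilon_{1j}$ form only for the non-degenerate local quiver, where $\epsilon_{1j}\in\{0,1\}$.
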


We will investigate this further in in section~\ref{sec:7.4}.

\medskip
After two successive \ttt moves in the same orientation, the bipartite graph $\Gamma''$ returns back to the initial $\Gamma$.
Then following Theorem~\ref{thm:ttt}, we check the compositions of the two induced quiver lattice isomorphisms
$\phi'^\pm\circ\phi^\pm:\Lambda_{\Gamma''}=\Lambda_{\Gamma} \to \Lambda_{\Gamma'} \to \Lambda_{\Gamma}$, with the restriction $\phi'^\pm\circ\phi^\pm|_{\Lambda_{\Gamma}^{\circ}}:$ $\Lambda_{\Gamma''}^{\circ}=\Lambda_{\Gamma}^{\circ} \to \Lambda_{\Gamma'}^{\circ} \to \Lambda_{\Gamma}^{\circ}$.

\begin{corollary}
The composition of two successive ura bijections in the same orientation induces an automorphism $\phi'^\pm\circ\phi^\pm$ on $\Lambda_{\Gamma}$, with restriction $\phi'^\pm\circ\phi^\pm|_{\Lambda_{\Gamma}^{\circ}}$ an automorphism on $\Lambda_{\Gamma}^{\circ}$.

For the clockwise orientation, the map $\phi'^{-}\circ\phi^{-}$ on the basis is given by:
\begin{equation}
\begin{array}{cc}
   l_1\mapsto l_1,\qquad 
\begin{array}{l}
l_7\mapsto l_7+l_1,\\[1pt]
l_4\mapsto l_4+l_1,
\end{array}
\qquad
\begin{array}{l}
l_6\mapsto l_6-l_1,\\[1pt]
l_2\mapsto l_2-l_1,
\end{array}
\end{array}
\end{equation}
and the rest of the basis remains unchanged.

For the anticlockwise orientation, the map $\phi'^{+}\circ\phi^{+}$ on the basis is given by:
\begin{equation}
\begin{array}{cc}
   l_1\mapsto l_1,\qquad 
\begin{array}{l}
l_7\mapsto l_7-l_1,\\[1pt]
l_4\mapsto l_4-l_1,
\end{array}
\qquad
\begin{array}{l}
l_6\mapsto l_6+l_1,\\[1pt]
l_2\mapsto l_2+l_1,
\end{array}
\end{array}
\end{equation}
and the rest of the basis remains unchanged.
\end{corollary}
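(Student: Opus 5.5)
The plan is to compute the composite directly from Theorem~\ref{thm:ttt}, applying it twice. The key point is that the second \ttt move is performed on $\Gamma'$, not on $\Gamma$, so the second application must use the quiver $Q_{\Gamma'}$. By Lemma~\ref{le:quiverchange} and Figure~\ref{fig:localquiver3}, $Q_{\Gamma'}$ is the mutation of $Q_\Gamma$ at vertex $1$. All arrows incident to $1$ are therefore reversed. In $Q_\Gamma$ the clockwise formula \eqref{eq:basis} shows that the relevant arrows are $1\to 2$ and $1\to 6$, with $4\to 1$ and $7\to 1$. In $Q_{\Gamma'}$ they become $1\to 4$, $1\to 7$, $2\to 1$ and $6\to 1$. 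The arrows among $2,4,6,7$ produced by the mutation do not matter, since the formulas in Theorem~\ref{thm:ttt} only involve arrows at the mutated vertex.

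First step: write down $\phi'^-:\Lambda_{\Gamma''}\to\Lambda_{\Gamma'}$ using the rule $l_1''\mapsto -l_1'$ and $l_j''\mapsto l_j'+\epsilon'_{1j}l_1'$. With the reversed arrows this reads
\[
l_4''\mapsto l_4'+l_1',\qquad l_7''\mapsto l_7'+l_1',\qquad l_2''\mapsto l_2',\qquad l_6''\mapsto l_6',
\]
and every other basis vector goes to the one with the same label.

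Second step: compose with $\phi^-$ from \eqref{eq:basis}, substituting $l_1'=-l_1$ and $l_2'=l_2+l_1$, and so on. This gives $l_1\mapsto l_1$ and shifts of $\pm l_1$ on $l_2,l_4,l_6,l_7$. The pair $\{2,6\}$ is shifted with one sign and the pair $\{4,7\}$ with the opposite sign, and all other basis vectors are untouched. The anticlockwise case is identical, using \eqref{eq:basis1} twice. Invertibility is immediate, since the composite is unipotent with inverse given by flipping the signs of the $l_1$ terms. The composite preserves $\Lambda_\Gamma^\circ$ because each factor does, as stated in Theorem~\ref{thm:ttt}.

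The main obstacle is keeping the sign bookkeeping consistent. The naive composite $\phi^-\circ\phi'^-$ produces $l_2\mapsto l_2+l_1$ and $l_4\mapsto l_4-l_1$. Which sign is labelled ``clockwise'' depends on two things. One is how $\Lambda_{\Gamma''}$ is identified with $\Lambda_\Gamma$ once the graph returns to $\Gamma$. The other is the orientation convention for the face loops after the twist: the final loop $1$ sits where the initial one did, with opposite orientation, as described before Theorem~\ref{thm:ttt}.

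To fix these conventions I would cross-check against Proposition~\ref{prop:dehn}. The composite is the action of a Dehn twist $T_{l_1}$ along the lift of loop $1$. The Picard--Lefschetz formula gives $T_{l_1}(x)=x+\langle l_1,x\rangle\, l_1$, and $\langle l_1,l_1\rangle=0$. Lemma~\ref{lemma:form} then gives $\langle l_1,l_2\rangle=\langle l_1,l_6\rangle=\mp1$ and $\langle l_1,l_4\rangle=\langle l_1,l_7\rangle=\pm1$, in agreement with the arrows $1\to2$, $1\to6$, $4\to1$ and $7\to1$. The direction of the twist determines the overall sign, and matching it to Figure~\ref{fig:Dehn} yields the stated formulas for each orientation. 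Independently of that choice, the formula shows that the two orientations give mutually inverse automorphisms, as they must.
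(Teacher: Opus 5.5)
Your setup is right: the second move must be read off $Q_{\Gamma'}$, with the arrows at $1$ reversed. Your composite $\phi^-\circ\phi'^-:\Lambda_{\Gamma''}\to\Lambda_{\Gamma'}\to\Lambda_\Gamma$ is also computed correctly: $l_1\mapsto l_1$, $l_2\mapsto l_2+l_1$, $l_6\mapsto l_6+l_1$, $l_4\mapsto l_4-l_1$, $l_7\mapsto l_7-l_1$. But that is the \emph{inverse} of the stated clockwise map. It is exactly the formula the statement assigns to the anticlockwise orientation, and the proposal never resolves this.

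Since the signs are precisely what distinguishes the two displayed formulas, deferring them to ``conventions'' and to matching Figure~\ref{fig:Dehn} leaves the statement unproved. The conventions you name are also not the source of the discrepancy. Identifying $l_j''$ with $l_j$ is the only natural choice once the graph returns to $\Gamma$, and the reversal of loop $1$ is already built into $l_1'\mapsto -l_1$.

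The Picard--Lefschetz cross-check cannot settle the sign either. The paper asserts the Dehn-twist identification only up to sign. Moreover, the formula you wrote, $x+\langle l_1,x\rangle l_1$, has the opposite sign to the paper's $\alpha+\langle\alpha,\delta\rangle\delta$. With $\langle e_i,e_j\rangle=\varepsilon_{ij}$ it reproduces your naive composite, not the stated one.

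The missing point is the direction of composition. The map $\phi'^\pm\circ\phi^\pm$ in the statement is the transport along the moves, $\Lambda_\Gamma\to\Lambda_{\Gamma'}\to\Lambda_{\Gamma''}=\Lambda_\Gamma$, which is the inverse of what you computed. Each map in Theorem~\ref{thm:ttt} coincides with its own inverse after swapping primed and unprimed labels. For example, $(\phi^-)^{-1}(l_2)=l_2'+l_1'$, $(\phi^-)^{-1}(l_4)=l_4'$ and $(\phi^-)^{-1}(l_1)=-l_1'$. So \eqref{eq:basis} may be read from old to new. Using the reversed arrows $1\to 4$, $1\to 7$ of $Q_{\Gamma'}$ for the second step, one gets
\[
l_1\mapsto -l_1'\mapsto l_1'',\qquad l_2\mapsto l_2'+l_1'\mapsto l_2''-l_1'',\qquad l_4\mapsto l_4'\mapsto l_4''+l_1'',
\]
and likewise for $6$ and $7$. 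This is the stated clockwise formula, obtained with no appeal to pictures.

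The anticlockwise case is the same computation using \eqref{eq:basis1} and the arrows $2\to 1$, $6\to 1$ of $Q_{\Gamma'}$. In other words, with $\langle e_i,e_j\rangle=\varepsilon_{ij}$, the clockwise map is $\alpha\mapsto\alpha+\langle\alpha,l_1\rangle l_1$ and the anticlockwise one is its inverse. The rest of your plan (unipotence, invertibility, preservation of $\Lambda_\Gamma^\circ$) is fine.
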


This is exactly the induced isomorphism of a Dehn twist $T_\delta$ along the loop 1 denoted by $\delta$, i.e., $(T_\delta)_*:
\Lambda_{\Gamma}
\to
\Lambda_{\Gamma},$
$(T_\delta)_*(\alpha)
=\alpha+\langle\alpha,\delta\rangle\delta$,
where $\langle\alpha,\delta\rangle$ is the intersection form, up to a chosen sign convention. The pair of branch points braids a half turn and switches positions.

\medskip
Let $\mathring{S}=S\setminus P$, where $S$ is a closed genus-g decorated surface with a set $P$ of $n$ punctures. Let $W$ be a generic $A_{m-1}$-web on $S$ and $\Gamma$ be the corresponding bipartite $A_{m-1}$-graph. Let $\mathring{\Sigma}=\Sigma_{g_{\Sigma},nm}\setminus \widetilde{P}$ be the topological spectral cover associated with $W$ with $nm$ punctures.

The \emph{pure mapping class group} of $S$ is
\begin{equation}
\operatorname{PMCG}(S)
:=
\operatorname{Diff}^{+}(S;p_1,\ldots,p_n)
\big/
\operatorname{Diff}_{0}(S;p_1,\ldots,p_n)
\end{equation}
That is, the group of isotopy classes of orientation-preserving
diffeomorphisms of $S$ fixing each puncture individually, where the
isotopies also fix the punctures. \cite{FarbMargalit}

Loops in $\Lambda_\Gamma\setminus\Lambda_\Gamma^\circ$ are lifts of external face loops, so they are separating loops on $\mathring{\Sigma}$, hence Dehn twists along them are isotopic to the identity.
Dehn twists along loops in $\Lambda_\Gamma^{\circ}$ give non-trivial diffeomorphisms in $\operatorname{PMCG}(\mathring{\Sigma})$, while induce the trivial diffeomorphism of
$\operatorname{PMCG}(\mathring{S}).$
Dehn twists along loops in $H_1(\mathring{\Sigma};\mathbb Z)\setminus\Lambda_\Gamma$ induce non-trivial diffeomorphisms in
$\operatorname{PMCG}(\mathring{S})$.
Thus such a change generally changes the marking of the base surface $\mathring{S}$.

\medskip
\begin{example}
Consider an $A_2$-web on a torus with one special point, as shown in Figure~\ref{fig:A2}. We perform a \ttt move at face 1. The resulting web is shown in Figure~\ref{fig:22A2}. The change on the genus four spectral cover amounts to an \bbb operation along loop 1.
\end{example}

\begin{figure}[htbp]
\centering
\includegraphics[width=0.5\textwidth]{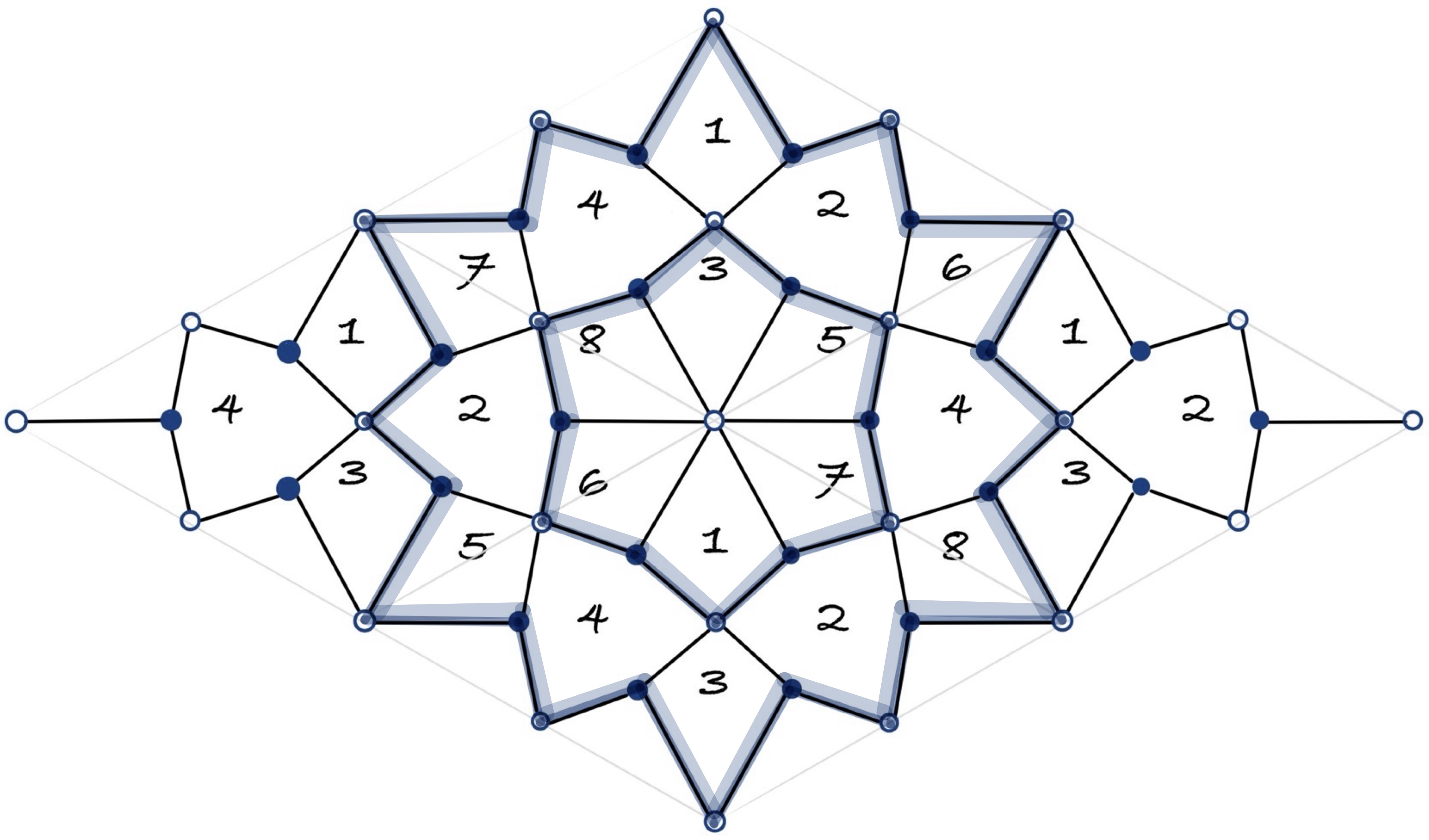}
\caption{Lift of an $A_2$-web 
$W$ on the universal cover of a torus with one special point. The two strands associated with the middle lift of the special point are the two blue highlighted loops.}
\label{fig:A2}
\end{figure}

\begin{figure}[htbp]
\centering
\includegraphics[width=0.5\textwidth]{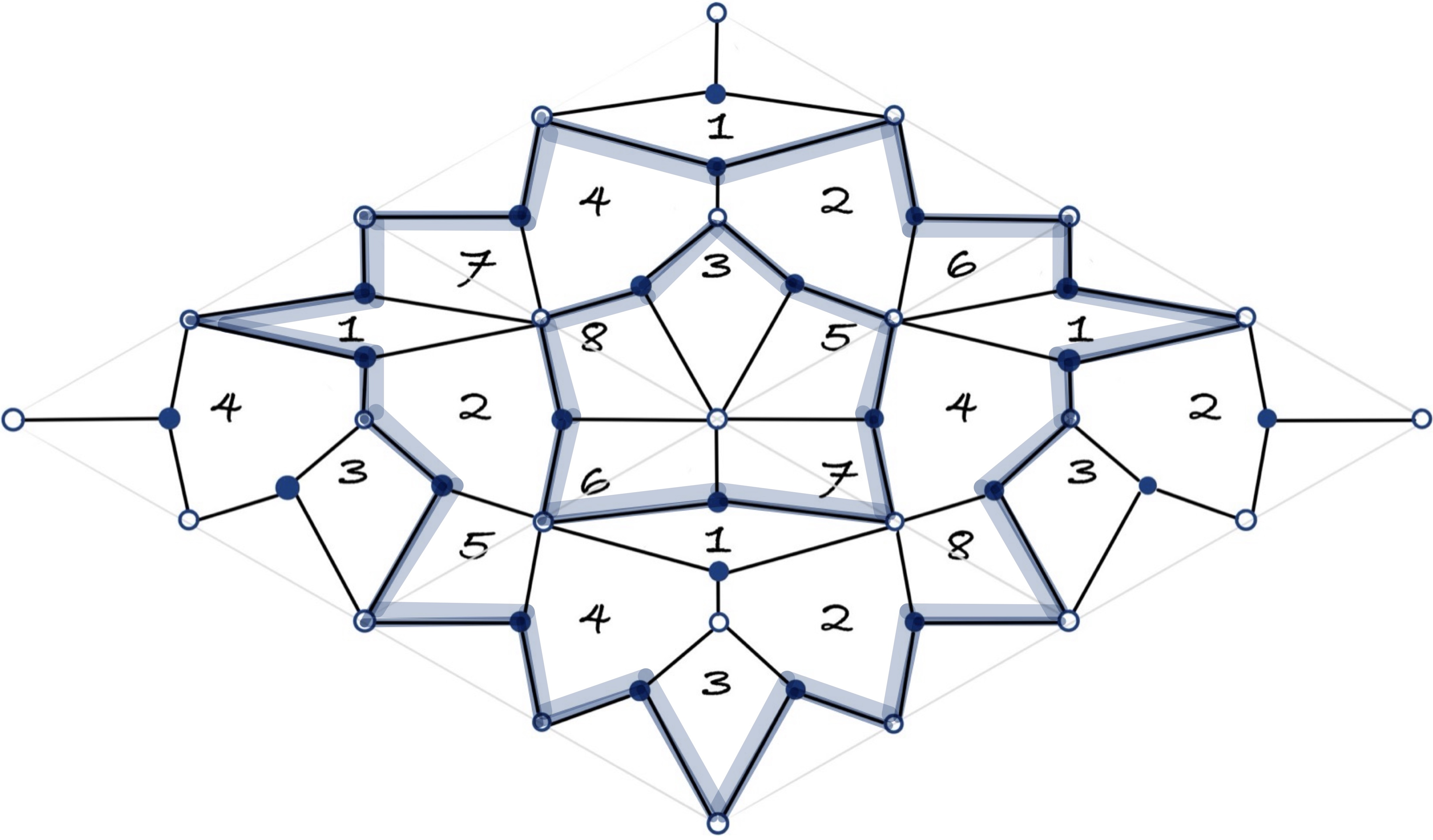}
\caption{Lift of the $A_2$-web $W$ in Figure~\ref{fig:A2} after performing a \ttt move. The two strands associated with the middle lift of the special point are the two blue highlighted loops.}
\label{fig:22A2}
\end{figure}

\newpage
\subsubsection{Degenerated Case}
Let us now consider the degenerated case where in the quiver the vertex 2 and vertex 6 are the same vertex, denoted by 2; and the vertex 4 and vertex 7 are the same vertex, denoted by 3. This is the case for a one-punctured torus with $A_1$-web. The quiver is shown in Figure~\ref{fig:A1quiver}. Now we mutate at face 1.

The relevant area on the spectral cover that is affected by the mutation is now no longer a cylinder. It is the whole two-punctured spectral cover whose compactification has genus 2, as shown in Figure~\ref{fig:genus2}. The lifts of all three face loops on the spectral cover are shown in Figure~\ref{fig:A1web}.

If we compare the faces with the labelled faces on the \bbb operation model, the two pairs of face 2 and face 6 connecting to the same face 1 in the model corresponds to the pair of face 2 in the degenerated case. And likewise, the two pairs of face 4 and face 7 connecting to the same face 1 in the model corresponds to the pair of face 3 in the degenerated case.

The two-punctured spectral cover can be deformation retracted to the graph skeleton in Figure~\ref{fig:kernal}. We can see that the face loop 1, the highlighted loop in Figure~\ref{fig:kernal}, is now a separating loop on the two-punctured spectral cover.

If we were to perform a degenerated version of the \bbb operation along loop 1 here, the two edges between two pairs of face 1 and face 3 get cut, and are glued to the cut line of the other pair. Thus performing one degenerated \bbb operation along face loop 1 is a reflection along the middle line through face 2, face 1, and face 2, which is orientation-reversing, while the \bbb operation is orientation-preserving for generic cases. Performing two successive degenerated \bbb operations along face loop 1 in the same orientation is the identity map on the spectral cover, which is a Dehn twist along a separating loop.

\subsection{A Local Spectral Cover Model that Respects Shrink \& Expand Moves}

In this subsection, we will study the local changes of the topological spectral covers under the whole process of "shrinking a 2-valent $\circ$-vertex connecting two 3-valent $\bullet$-vertices and then expanding a 4-valent $\bullet$-vertex into another two 3-valent $\bullet$-vertices" on $A_{m-1}$-webs with $m\ge 3$. We call this process a "shrink \& expand" move.

Performing the shrink \& expand move twice successively on an $A_{m-1}$-web, as shown in Figure~\ref{fig:trimon}, the web returns back to the initial web.

\begin{figure}[htbp]
\centering
\includegraphics[width=1.0\textwidth]{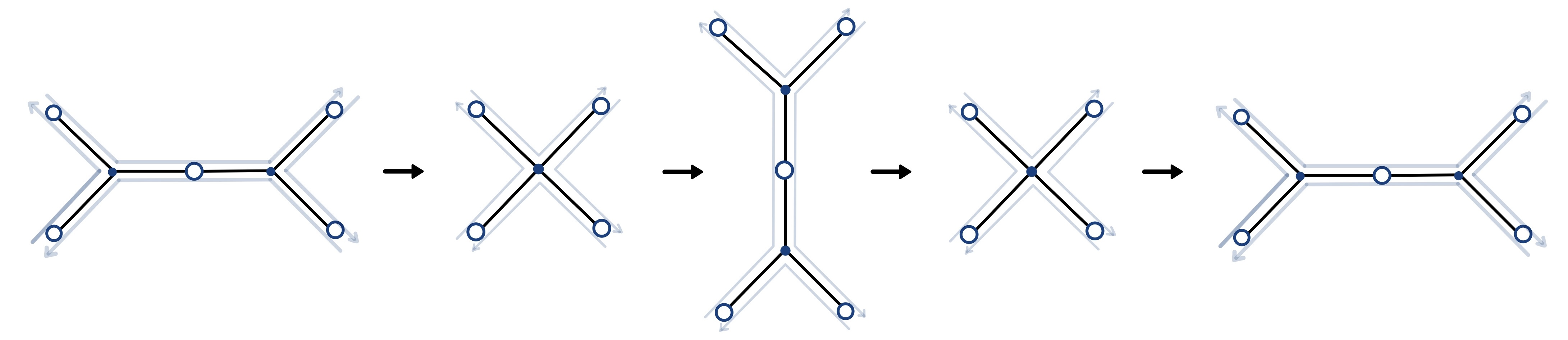}
\caption{Performing "shrinking a 2-valent $\circ$-vertex connecting two 3-valent $\bullet$-vertices and then expanding a 4-valent $\bullet$-vertex into two 3-valent $\bullet$-vertices" twice.}
\label{fig:trimon}
\end{figure}

At the level of the spectral cover, the shrink \& expand move coalesces two simple ramification points on the spectral cover into a double ramification point, and then splits the double ramification point to two simple ramification points. The spectral cover remains non-singular through the process. The local behaviour of the spectral cover is shown in Figure~\ref{fig:shrinkexpand}. A specific example is shown in Figure~\ref{fig:3rami}.

\begin{figure}[htbp]
\centering
\includegraphics[width=1.0\textwidth]{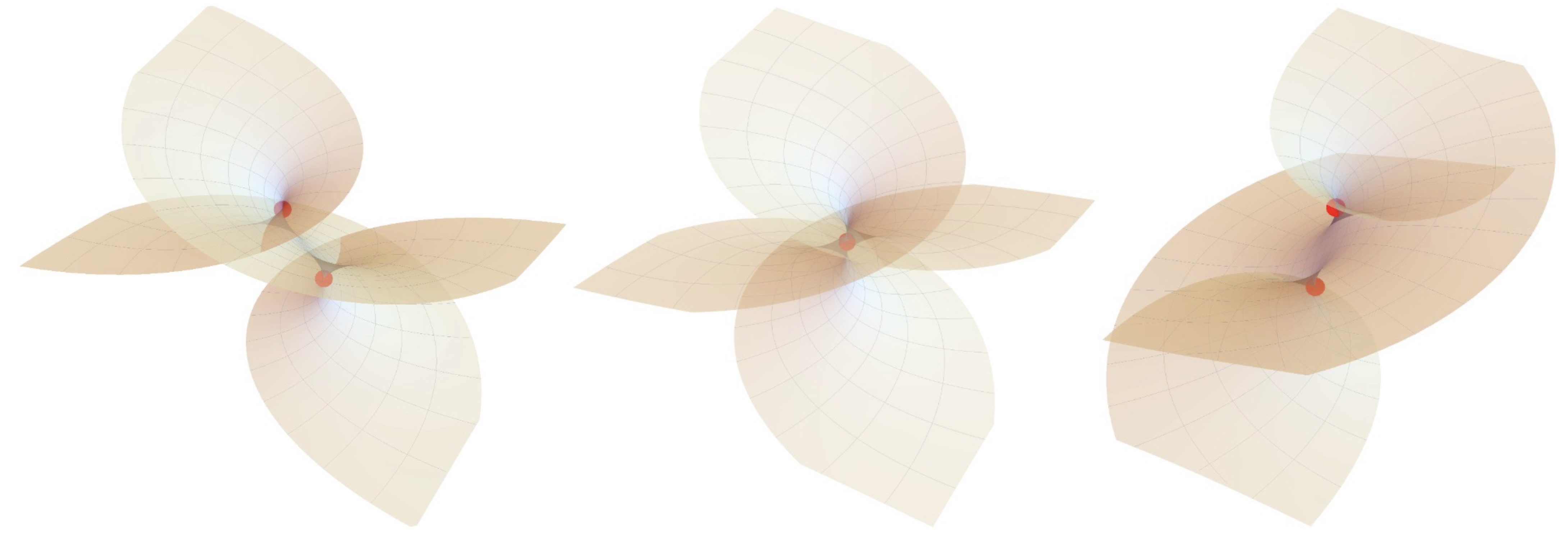}
\caption{Local change of the spectral cover under a shrink \& expand move. (Mathematica animation code is provided in the appendix.)}
\label{fig:shrinkexpand}
\end{figure}

The shrink \& expand move is still its own inverse at the level of the spectral cover. Equivalently, performing the shrink \& expand move does not involve a choice of orientation, as opposed to performing a \ttt move on the spectral cover.
So performing the shrink \& expand move twice and returning to the initial spectral cover results in a trivial automorphism on the quiver lattice $\Lambda_{\Gamma}$.

\begin{lemma}
Compositions of two successive shrink $\&$ expand moves produce trivial automorphisms on the quiver lattice $\Lambda_{\Gamma}$.
\end{lemma}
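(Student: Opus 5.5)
We will show that a shrink \& expand move, followed by a second one that returns to the initial web, acts on the spectral cover by a homeomorphism isotopic to the identity relative to the lifts of the face loops. The automorphism it induces on $\Lambda_{\Gamma}$ is then the identity.

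The first step is to localize. A shrink \& expand move is supported in a small disc $D\subset\mathring{S}$ containing the two 3-valent $\bullet$-vertices, the 2-valent $\circ$-vertex between them, and the adjacent edges. Over $D$ the cover has exactly one component of $\pi^{-1}(D)$ that meets the ramification. Since both ramification points are simple and the total ramification over $D$ is $2$ by the Riemann--Hurwitz count in the expansion lemma, this component is a disc branched over two points, and every other component maps homeomorphically onto $D$. The faces of $\Gamma$ meeting $D$ keep their labels by Lemma~\ref{le:quiverchange}, so the basis $\{e_F\}$ is canonically matched before and after the move. The middle stage, where the two points coalesce into one double ramification point (the $4$-valent $\bullet$-vertex), is still a non-singular cover of $D$.

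Next we identify the monodromy. By Lemma~\ref{le:connectedness} the combinatorics of the cover over $D$ are determined by the local monodromies of the two branch points. Let these be $\tau_1,\tau_2\in S_m$ (with $m\ge 3$). Their product $\tau_1\tau_2$ is the monodromy around $\partial D$, and it is a $3$-cycle because the coalesced point has ramification index $3$. Two successive moves return to the same bipartite graph, and along the way the branch points move only inside $D$ without braiding around one another. This contrasts with the \ttt move, where Definition~\ref{def:orittt} forces a half twist with a chosen orientation. Passing through a totally ramified point of index $3$ has no orientation choice: the local model $w^3=z$ perturbed to $w^3-\epsilon w=z$ depends only on the unordered pair of critical values. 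The path of branch configurations therefore has to be shown null-homotopic in the configuration space of two points in $D$ rel $\partial D$, or equivalently a pure braid that is trivial, so that the cover over $D$ is carried back to itself by a homeomorphism isotopic to the identity rel $\partial D$.

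Finally we read off the action on the lattice. Each face-loop lift $e_G$ can be isotoped off the interior of the ramified component, or meets it only in arcs near $\partial D$. By the previous step, the induced map on $H_1(\Sigma;\mathbb Z)$ fixes every $e_G$, so $\phi$ is the identity on $\Lambda_\Gamma$, and hence also on $\Lambda_\Gamma^\circ$. I expect the main obstacle to be the second step: making rigorous that the two-step loop of branch points has trivial braid class, rather than, say, a full twist that would produce a Dehn twist as in Proposition~\ref{prop:dehn}. I would handle it with the explicit polynomial family $w^3-\epsilon(t)w$ for $\epsilon(t)$ passing through $0$ and back along a real segment. In that family the critical values are $\pm\tfrac{2}{3\sqrt3}\epsilon^{3/2}$, and a path that retraces itself in the $\epsilon$-plane gives a trivial loop.
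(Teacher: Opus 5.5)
Your argument is correct and is essentially the paper's. The paper justifies the lemma by saying the move passes only through a non-singular index-$3$ stage and is its own inverse on the spectral cover (equivalently, involves no orientation choice). Its Hitchin-side counterpart in Section~5.4 uses the same cubic model $\lambda^3-3t\lambda-z$ and the contractibility of the loop in $B^{\mathrm{sm}}_{G,C,P}$, which is exactly what your retracing path in the $\epsilon$-plane supplies.

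The obstacle you single out is not really there. You already showed the ramified component over $D$ is a disc, since its boundary monodromy is a $3$-cycle, unlike the cylinder $U_r$ of the \ttt move. So any lift that is the identity on $\pi^{-1}(\partial D)$ is isotopic to the identity by the Alexander trick. No motion of the two branch points, not even the three half-twists from a meridian around the caustic, can produce a Dehn twist. In particular you never need the ``trivial pure braid'' condition, which is in any case ill-posed because the path passes through the collision.
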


\begin{lemma}
The transposition monodromies of the two simple ramification points affected by the shrink $\&$ expand move are $(ij)$ and $(jk)$ in $S_m$, $m\ge 3$. The double ramification point formed by the coalescing two simple ramification points has transposition monodromy $(ij)(jk)=(ijk)\in S_m$.
\end{lemma}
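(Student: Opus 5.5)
We will argue locally on the spectral cover, using the local model around the two simple ramification points that take part in the shrink \& expand move. Let $U\subset\mathring{S}$ be a small disc containing the $2$-valent $\circ$-vertex, the two adjacent $3$-valent $\bullet$-vertices, and the $4$-valent $\bullet$-vertex obtained after shrinking. Since the move is supported in $U$, only the component of $\pi^{-1}(U)$ that contains the two ramification points changes. Fix a base point $x_0\in U$ off the branch locus and label the fibre. Let $\gamma_a,\gamma_b$ be small positively oriented loops around the two branch points $q_a,q_b$, joined to $x_0$ by disjoint arcs, so that $\gamma_a\gamma_b$ is homotopic to $\partial U$. By the lemma of \cite{EisenbudHarris} quoted earlier, $\rho(\gamma_a)$ and $\rho(\gamma_b)$ are transpositions.

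The first step is to show that these two transpositions share exactly one letter. We read the sheets off the strands: by the construction in \S\ref{sssec:231}, a $3$-valent $\bullet$-vertex is a simple ramification point at which the two sheets glued along two consecutive zig-zag strands are exchanged. The two $\bullet$-vertices are joined through the $2$-valent $\circ$-vertex, so they share exactly one strand passing between them. Hence the two sheet-pairs meet in exactly one sheet. Call the common sheet $j$, so that the transpositions are $(ij)$ and $(jk)$. The pairs cannot be equal, because then merging the points would give a product equal to the identity. That contradicts the fact that the merged $4$-valent $\bullet$-vertex has ramification index $3$ by the theorem of \cite{Goncharov}, i.e. a single $3$-cycle over its branch point. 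They also cannot be disjoint, because then the merged point would not be a single ramification point at all; fibre multiplicities $2,2$ are not $3$. This forces $i,j,k$ to be distinct, so $m\ge 3$, in agreement with the earlier remark that the move needs rank $\ge 3$.

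The second step treats the coalesced point. After shrinking, the small loop $\partial U$ encircles only the double ramification point, and its monodromy is $\rho(\gamma_a)\rho(\gamma_b)=(ij)(jk)$. With the paper's composition convention this equals $(ijk)$, a $3$-cycle. This is consistent with ramification index $3$, i.e. local form $t\mapsto t^3$ with three sheets cyclically permuted, as Riemann--Hurwitz requires: the contribution $2=1+1$ is preserved, as in the expansion lemma.

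The step I expect to be the main obstacle is making the strand-to-sheet bookkeeping rigorous in the first step. The statement that a $3$-valent $\bullet$-vertex swaps the sheets attached along adjacent strands has to be read off carefully from the conjugate ribbon graph description $\Sigma_W=\Gamma^\vee\cup F_{\varphi_i}$. The cleanest route is to avoid it and use only the valency argument: the $4$-valent vertex has index $3$, so the product of the two transpositions must be a $3$-cycle. Among pairs of transpositions this holds exactly when they overlap in one letter. Everything else is then routine.
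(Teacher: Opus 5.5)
The paper gives no proof of this lemma. It is stated after the figures of the shrink \& expand move. The only supporting argument comes later, in the discussion of $\mathcal D_{cau,C,P}$: there the three participating sheets are isolated in the normal form $Q_t=\lambda^3-3t\lambda-z$, and the two branch points are \emph{assumed} to carry adjacent transpositions whose product is the monodromy of the index-$3$ point.

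Your valency argument is correct and takes a genuinely different, web-side route. It rests on three ingredients:
\begin{itemize}
\item locality of the spectral-cover construction, so the monodromy along $\partial U$ is unchanged by shrinking;
\item Goncharov's valency theorem, applied to the intermediate web (ideal, hence non-singular), which identifies that monodromy as a $3$-cycle;
\item the cycle types of products of two transpositions, which force exactly one common letter and hence $m\ge 3$.
\end{itemize}
This supplies the adjacency that the paper only asserts, and it needs no Hitchin-side normal form. The paper's cubic model is what carries the same picture over to spectral curves near $\mathcal D_{cau,C,P}$.

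You are right to drop the strand-counting heuristic, because as written it is inaccurate. A zig-zag path entering the $2$-valent $\circ$-vertex along one edge must leave along the other. So both strands through that vertex turn at both $\bullet$-vertices and bound both $\bullet$-regions; the two $\bullet$-vertices share two strands, not one. The correct version is this: over either face flanking the path $\bullet$--$\circ$--$\bullet$, exactly one of these two strands labels a sheet, and that sheet is the common letter $j$.

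Finally, the product is a $3$-cycle under either composition convention. Whether it reads $(ijk)$ or $(ikj)$ is only a matter of how $i$ and $k$ are named.
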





\newpage
\mysection{Hitchin Fibrations and Spectral Curves}

This section serves as a preliminary to Section 5. Readers already familiar with the basic material may wish to focus only on subsection~\ref{sec:4.1.4} and subsection~\ref{sec:4.3}, which contain material beyond the basics.

\medskip
Let $\mathring{C}=C\setminus P$ be a $n$-punctured complex curve, where $C$ is a smooth compact complex curve of genus-$g$ and $P=\{p_1,\cdots,p_n\}$ is a finite set of punctures, $2g-2+n>0$.

Let $K_{C}=\Omega_{C}^1$ be the canonical line bundle, and let $K_{C}(P)=K_{C}\otimes \OO_{C}(P)$ be the meromorphic/logarithmic twist. $\deg K_{C}(P)=2g-2+n$. $K_{C}(P)^{\otimes i}\simeq K_{C}^{\otimes i}(iP)$.

Let $G$ be $GL_m$ or $SL_m$, $m\ge 2$. In general, $G$ can be any reductive Lie group whose derived subgroup has Dynkin type $A_{m-1}$.

\subsection{Hitchin Fibrations and Good Hitchin Loci}

\subsubsection{Higgs Bundles and Hitchin Bases}

For $G=GL_m$, a Higgs bundle is a pair $(E,\phi)$, where $E$ is a rank $m$ holomorphic vector bundle on $C$, and $\phi\in H^0(C,\operatorname{End}(E)\otimes K_C(P))$ is the Higgs field. Locally, if $z$ is a coordinate on $C$, then $\phi$ looks like a matrix of meromorphic one-forms: $\phi=A(z)\,dz$. For $G=SL_m$, one imposes $\det E\simeq L_0$ for a fixed determinant line bundle $L_0$, and $\tr(\phi)=0$.

\begin{definition}[\cite{Hitchin}]
Let $\mathcal M_{\mathrm{Higgs}}(GL_m)$ be the moduli space of
$GL_m$-Higgs bundles $(E,\phi)$ on $C$.
The $GL_m$ Hitchin base is
\begin{equation}
B_{GL_m,C,P}
:=
\bigoplus_{i=1}^m H^0(C,K_C(P)^{\otimes i}).
\end{equation}
The Hitchin map is
\begin{equation}
h_{GL_m}:
\mathcal M_{\mathrm{Higgs}}(GL_m)
\to
B_{GL_m,C,P},
\qquad
(E,\phi)\mapsto
\bigl(
\operatorname{tr}\phi,
\operatorname{tr}\phi^2,
\ldots,
\operatorname{tr}\phi^m
\bigr).
\end{equation}
\end{definition}

\begin{definition}[[\cite{Hitchin}]]
Let $\mathcal M_{\mathrm{Higgs}}(SL_m)$ be the moduli space of
$SL_m$-Higgs bundles $(E,\phi)$ on $C$, with $\det E\simeq L_0$ for a fixed determinant line bundle $L_0$, and $\tr \phi=0$.
The $SL_m$ Hitchin base is
\begin{equation}
B_{SL_m,C,P}
:=
\bigoplus_{i=2}^m H^0(C,K_C(P)^{\otimes i}).
\end{equation}
The Hitchin map is
\begin{equation}
h_{SL_m}:
\mathcal M_{\mathrm{Higgs}}(SL_m)
\to
B_{SL_m,C,P},
\qquad
(E,\phi)\mapsto
\bigl(
\operatorname{tr}\phi^2,
\ldots,
\operatorname{tr}\phi^m
\bigr).
\end{equation}
\end{definition}

\begin{lemma}[\cite{Goncharov}]
Let $G$ be $GL_m$. The two descriptions of the Hitchin base
\begin{equation}
\begin{array}{cc}
&B_{G,C,P}=\bigoplus_{i=1}^m H^0(C,K_C(P)^{\otimes i})\\[2pt]
\text{and}\quad
&B_{G,C,P}
=
H^0\!\left(C,(K_C(P)\otimes \mathfrak h)/W\right)
\end{array}
\end{equation}
are equivalent, where $\mathfrak h$ is a Cartan subalgebra of
$\mathfrak{gl}_m$ and $W\cong S_m$ is its Weyl group.
\end{lemma}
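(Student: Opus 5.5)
The plan is to reduce the statement to Chevalley's theorem for $W\cong S_m$ acting on $\mathfrak h\cong\C^m$, and then twist by the $\C^*$-torsor of $K_C(P)$. First, identify $\mathfrak h$ with the diagonal matrices in $\mathfrak{gl}_m$, with $S_m$ permuting the coordinates $x_1,\dots,x_m$. By Chevalley's theorem (here simply the fundamental theorem on symmetric polynomials), $\C[\mathfrak h]^W=\C[e_1,\dots,e_m]$ is a polynomial algebra on the elementary symmetric functions, with $\deg e_i=i$. Over $\mathbb Q$ the Newton identities express the power sums $p_i=\sum_j x_j^i$ triangularly in terms of the $e_i$, and conversely. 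So we may equally use the generators $p_1,\dots,p_m$, which matches the Hitchin map $(E,\phi)\mapsto(\tr\phi,\dots,\tr\phi^m)$. This gives an isomorphism of affine varieties
\[
\mathfrak h/W\xrightarrow{\ \sim\ }\C^m,\qquad x\mapsto (p_1(x),\dots,p_m(x)).
\]

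The second step records the scaling action. Let $\C^*$ act on $\mathfrak h$ by scalar multiplication. This action commutes with $W$, so it descends to $\mathfrak h/W$. Under the isomorphism above it becomes the weighted action $\lambda\cdot(c_1,\dots,c_m)=(\lambda c_1,\lambda^2c_2,\dots,\lambda^m c_m)$, because $p_i$ is homogeneous of degree $i$. In other words, $\mathfrak h/W\cong\bigoplus_{i=1}^m\C_{(i)}$ $\C^*$-equivariantly, where $\C_{(i)}$ denotes $\C$ with weight $i$.

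Third, let $L^\times\to C$ be the principal $\C^*$-bundle of $L=K_C(P)$. By definition $K_C(P)\otimes\mathfrak h=L^\times\times_{\C^*}\mathfrak h$, and the fibrewise quotient is
\[
(K_C(P)\otimes\mathfrak h)/W:=L^\times\times_{\C^*}(\mathfrak h/W).
\]
This definition is consistent because forming the associated bundle commutes with taking the quotient by the finite group $W$, whose action commutes with $\C^*$. This can be checked on a trivializing cover of $L$, where both sides are $U\times\mathfrak h/W$ and the transition functions are $\C^*$-valued. Applying the associated-bundle functor to the equivariant isomorphism of the second step gives an isomorphism of fibre bundles over $C$:
\[
L^\times\times_{\C^*}(\mathfrak h/W)\;\cong\;\bigoplus_{i=1}^m L^\times\times_{\C^*}\C_{(i)}=\bigoplus_{i=1}^m K_C(P)^{\otimes i}.
\]
Taking holomorphic sections then yields $H^0\bigl(C,(K_C(P)\otimes\mathfrak h)/W\bigr)\cong\bigoplus_{i=1}^m H^0(C,K_C(P)^{\otimes i})$.

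Finally, I would check that the two Hitchin maps agree under this isomorphism. A Higgs field $\phi$ has a fibrewise characteristic class in $(K_C(P)\otimes\mathfrak{gl}_m)/\!/GL_m\cong(K_C(P)\otimes\mathfrak h)/W$, by Chevalley restriction. Its image in the $i$-th summand is $\tr\phi^i$, because the $W$-invariant polynomials $p_i$ extend to the conjugation-invariant polynomials $A\mapsto\tr A^i$. If one prefers the coefficients of $\det(t-\phi)$ instead of the traces, the two descriptions differ by a triangular polynomial automorphism of the base, so they are equivalent as varieties.

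I expect the main obstacle to be the third step: making precise that $(K_C(P)\otimes\mathfrak h)/W$ is a fibre bundle at all, and that its identification with a vector bundle is only through the graded, nonlinear, map $(p_1,\dots,p_m)$. I would handle this via the local trivialization argument and the equivariance of the second step. Note that the linear structure on $\bigoplus_i K_C(P)^{\otimes i}$ is transported from the weighted action on $\mathfrak h/W$; it is not the quotient of the linear structure on $K_C(P)\otimes\mathfrak h$.
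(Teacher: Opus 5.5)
Your proof is correct and is essentially the paper's argument: Chevalley's theorem for $S_m$ acting on $\mathfrak h$, homogeneity of the invariant generators, and twisting by $K_C(P)$. Your associated-bundle formulation is the coordinate-free version of the paper's check that $a_i\mapsto f^{-i}a_i$ under a change of frame $e\mapsto fe$; it also gives you the inverse for free, whereas the paper builds it from the roots of $t^m+a_1t^{m-1}+\cdots+a_m$. Your use of power sums, together with the Newton-identity remark, matches the trace form $(\operatorname{tr}\phi,\dots,\operatorname{tr}\phi^m)$ of the Hitchin map, whereas the paper's proof works with the characteristic-polynomial coefficients instead.
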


\begin{proof}
By Chevalley restriction, $\mathbb C[\mathfrak g]^G
\cong
\mathbb C[\mathfrak h]^W$. So $G$-conjugation-invariant functions on the full Lie algebra are the same as Weyl-invariant functions on the Cartan.

Now choose homogeneous generators $p_1,\dots,p_r\in \mathbb C[\mathfrak h]^W$ with degrees $\deg p_i=d_i$. Take $\mathfrak h
=
\{(\lambda_1,\cdots,\lambda_m)\}$, with $W=S_m$, and choose
\begin{equation}
\begin{array}{l}
p_1(\lambda_1,\cdots,\lambda_m)
=(-1)^1\Sigma_{i=1}^{m}\lambda_m,\\[2pt]
p_2(\lambda_1,\cdots,\lambda_m)
=(-1)^2\Sigma_{1\leq i<j\leq m}\lambda_i\lambda_j,\\[2pt]
\cdots\\[2pt]
p_m(\lambda_1,\cdots,\lambda_m)
=(-1)^m\prod_{i=1}^{m}\lambda_i.
\end{array}
\end{equation}
Then the invariant ring is freely generated by homogeneous invariants:
\begin{equation}
\mathbb C[\mathfrak h]^{S_m}
=
\mathbb C[p_1,p_2,\cdots,p_m].
\end{equation}

\medskip
Now define $\Theta:
H^0\!\left(C,(K_C(P)\otimes \mathfrak h)/W\right)
\to
\bigoplus_{i=1}^m H^0(C,K_C(P)^{\otimes i})$ as follows: Take a section $s\in H^0\!\left(C,(K_C(P)\otimes \mathfrak h)/W\right)$. Locally on an open set $U\subset C$, choose a trivialization of $K_C(P)$. Then $s$ is locally represented by a $W$-orbit of $K_C(P)\otimes\mathfrak h$-valued functions $s|_U:U\to \mathfrak h/W$. Let $x_U=s|_U(U)=(\lambda_1,\dots,\lambda_m)$, where the $\lambda_i$ are local $K_C(P)$-valued eigenvalue functions. Then set
\begin{equation}
\Theta(s|_U)
=
\bigl(p_1(x_U),\dots,p_m(x_U)\bigr).
\end{equation}

Because $p_i$ is homogeneous of degree $d_i$, the local expression $p_i(x_U)$ transforms as a section of $K_C(P)^{\otimes d_i}$. So globally $p_i(s)\in H^0(C,K_C(P)^{\otimes d_i})$. Thus $\Theta(s) \in \bigoplus_{i=1}^m H^0(C,K_C(P)^{\otimes i})$.

The map is $[\lambda_1,\lambda_2,\dots,\lambda_m]
\mapsto
(a_1,a_2,\dots,a_m)$, where
\begin{equation}
\prod_{i=1}^m(\lambda-\lambda_i)
=
\lambda^m+a_1\lambda^{m-1}+\cdots+a_m.
\end{equation}

\medskip
Now we construct the inverse map, take $a=(a_1,\cdots ,a_m)\in \bigoplus_{i=1}^m H^0(C,K_C(P)^{\otimes i})$. Locally trivialize $K_C(P)$ by a frame $e$ on an open set $U\subset C$. Write $a_1=a_1|_U\cdot e^1$, $\cdots$, $a_m=a_m|_U\cdot e^m$. Then define a local section $s|_U \in H^0\!\left(U,(K_C(P)\otimes \mathfrak h)/W\right)$ by unordered roots $(t_1,\cdots,t_m)$ of $t^m+a_1t^{m-1}+\cdots+a_m=0$. Multiplying back by $e$, the eigenvalues are $\lambda_i=t_i e$.

If we change the trivialization $e\mapsto f e$, then $a_i\mapsto f^{-i}a_i$, and the roots transform by $t_i\mapsto f^{-1}t_i$. Therefore $t_i e$ is unchanged. So the construction patches globally. Thus we defined a section $s\in H^0\!\left(C,(K_C(P)\otimes \mathfrak h)/W\right)$. At each point $x\in C$, $s(x)$ is the unordered tuple of m roots of $P_a(x)=\lambda^m+a_1(x)\lambda^{m-1}+\cdots+a_m(x)=0$.

There is no global choice of the roots. The roots may permute locally around branch points, which is why the target is the quotient by $W$.

The inverse map sends $(a_1,a_2,\dots,a_m)$ to the unordered roots of $\lambda^m+a_1\lambda^{m-1}+\cdots+a_m=0$.
\end{proof}

\begin{lemma}[\cite{Goncharov}]
Let $G$ be $SL_m$. The two descriptions of the Hitchin base
\begin{equation}
\begin{array}{cc}
&B_{SL_m,C,P}
=\bigoplus_{i=2}^m H^0(C,K_C(P)^{\otimes i})\\[2pt]
\text{and}\quad
&B_{SL_m,C,P}
=
H^0\!\left(C,(K_C(P)\otimes\mathfrak h)/W\right)
\end{array}
\end{equation}
are equivalent, where $\mathfrak h$ is a Cartan subalgebra of $\mathfrak{sl}_m$ and
$W\cong S_m$ is its Weyl group.
\end{lemma}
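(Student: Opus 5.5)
The plan mirrors the proof of the preceding lemma for $GL_m$, restricting to the trace-free Cartan. First I fix $\mathfrak h=\{(\lambda_1,\dots,\lambda_m):\sum_i\lambda_i=0\}\subset\mathfrak{sl}_m$ with $W=S_m$ acting by permutation. Chevalley restriction gives $\mathbb C[\mathfrak{sl}_m]^{SL_m}\cong\mathbb C[\mathfrak h]^{S_m}$. I then show that this invariant ring is freely generated by the elementary symmetric functions $p_2,\dots,p_m$, restricted to $\mathfrak h$. This follows from the $GL_m$ case: $\mathbb C[\lambda_1,\dots,\lambda_m]^{S_m}=\mathbb C[p_1,\dots,p_m]$, and restricting to the hyperplane $\mathfrak h$ means quotienting by the ideal generated by the invariant linear form $p_1$. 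Hence $\mathbb C[\mathfrak h]^{S_m}=\mathbb C[p_1,\dots,p_m]/(p_1)=\mathbb C[p_2,\dots,p_m]$, a polynomial ring with homogeneous generators of degrees $2,\dots,m$. To justify passing invariants to the quotient, I use that $p_1$ is $S_m$-invariant and that averaging over $S_m$ is exact in characteristic zero.

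Next I define
\[
\Theta:H^0\!\left(C,(K_C(P)\otimes\mathfrak h)/W\right)\to\bigoplus_{i=2}^m H^0(C,K_C(P)^{\otimes i})
\]
exactly as in the previous proof. A section is locally an unordered $m$-tuple of $K_C(P)$-valued eigenvalues summing to zero, and $\Theta$ sends it to $(p_2(x_U),\dots,p_m(x_U))$. Homogeneity of $p_i$ of degree $i$ makes these local expressions glue to sections of $K_C(P)^{\otimes i}$.

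For the inverse, given $(a_2,\dots,a_m)$, I take the unordered roots of $\lambda^m+a_2\lambda^{m-2}+\cdots+a_m$ in a local frame $e$. Rescaling $e\mapsto fe$ shows that the eigenvalues $t_ie$ are independent of the frame, as before. The vanishing of the $\lambda^{m-1}$ coefficient gives $\sum_i t_i=0$, so the resulting unordered tuple lies in $\mathfrak h/W$. The two constructions are mutually inverse pointwise, because a monic polynomial is determined by its unordered roots and conversely.

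Alternatively, the $SL_m$ statement follows from the $GL_m$ lemma directly. The subspace $\{a_1=0\}\subset B_{GL_m,C,P}$ corresponds under the $GL_m$ isomorphism to sections whose eigenvalue sum vanishes, and these are exactly the sections of $(K_C(P)\otimes\mathfrak h_{\mathfrak{sl}_m})/W$.

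The main obstacle is the invariant-theory step, namely identifying $\mathbb C[\mathfrak h]^{S_m}$ with the restrictions of $p_2,\dots,p_m$. This requires checking that the restrictions remain algebraically independent and that every invariant on the hyperplane extends to an invariant on all of $\mathbb C^m$. I would handle it by extending an invariant $f$ on $\mathfrak h$ along the $S_m$-equivariant projection $\mathbb C^m\to\mathfrak h$, $\lambda\mapsto\lambda-\tfrac1m\sum_i\lambda_i$. The extension is $S_m$-invariant, hence a polynomial in $p_1,\dots,p_m$, and setting $p_1=0$ expresses $f$ in terms of $p_2,\dots,p_m$. Algebraic independence then follows from a dimension count, since $\mathfrak h$ has dimension $m-1$ and there are $m-1$ generators. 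The rest is the same bookkeeping as in the $GL_m$ lemma.
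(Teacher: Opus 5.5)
Your proposal is correct and is essentially the argument the paper gives for the preceding $GL_m$ lemma: Chevalley restriction, elementary symmetric generators, and the local construction of $\Theta$ and its inverse via unordered roots of the characteristic polynomial, here applied to the trace-free Cartan (the paper states the $SL_m$ version without a separate proof). Your extra step $\mathbb C[\mathfrak h]^{S_m}=\mathbb C[p_1,\dots,p_m]/(p_1)\cong\mathbb C[p_2,\dots,p_m]$, justified by invariance of $p_1$ and exactness of $S_m$-averaging (or by the equivariant projection $\lambda\mapsto\lambda-\tfrac1m\sum_i\lambda_i$), correctly supplies the point the paper leaves implicit, namely that $\mathfrak h/W$ sits inside $\mathfrak h_{\mathfrak{gl}_m}/W$ as the locus $p_1=0$.
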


\subsubsection{Dimensions of Hitchin Bases}
For $i\ge 1$, Riemann--Roch gives
\begin{equation}
h^0(C,K_C(P)^{\otimes i})
=h^1(C,K_C(P)^{\otimes i})+
\deg K_C(P)^{\otimes i}+1-g.
\end{equation}
By Serre duality,
\begin{equation}
h^1(C,K_C(P)^{\otimes i})
=
h^0(C,K_{C}\otimes (K_C(P)^{\otimes i})^{-1}).
\end{equation}
So
\begin{equation}
h^0(C,K_C(P)^{\otimes i})
=
h^0(C,K_{C}\otimes (K_C(P)^{\otimes i})^{-1})+
\deg K_C(P)^{\otimes i}+1-g.
\end{equation}
Because $\deg K_{C}\otimes (K_C(P)^{\otimes i})^{-1}=2g-2-\deg K_C(P)^{\otimes i}=2g-2-i(2g-2+n)<0$,
we have $h^0(C,K_{C}\otimes (K_C(P)^{\otimes i})^{-1})=0$,
and hence
\begin{equation}
h^0(C,K_C(P)^{\otimes i})
=
i(2g-2+n)+1-g.
\end{equation}

For $GL_m$,
$B_{GL_m,C,P}
=
\bigoplus_{i=1}^m H^0(C,K_C(P)^{\otimes i})$.
Hence
\begin{equation}
\dim_{\mathbb C}B_{GL_m,C,P}
=
\sum_{i=1}^m
(i(2g-2+n)+1-g)
=
m^2(g-1)
+
\frac{m(m+1)}2 n.
\end{equation}

For $SL_m$, the trace term is removed,
$B_{SL_m,C,P}
=
\bigoplus_{i=2}^m H^0(C,K_C(P)^{\otimes i}).$
Since
$B_{GL_m,C,P}
=
H^0(C,K_C(P))\oplus B_{SL_m,C,P},$
and
$h^0(C,K_C(P))=g-1+n,$
we obtain
\begin{equation}
\dim_{\mathbb C}B_{SL_m,C,P}
=
\dim_{\mathbb C}B_{GL_m,C,P}-(g-1+n)
=
(m^2-1)(g-1)
+
\left(\frac{m(m+1)}2-1\right)n.
\end{equation}

\subsubsection{Spectral Curves}

\begin{definition}[\cite{GRR14}]
Let $p:\Tot(K_C(P))\to C$ be the bundle projection, and let $\lambda\in
H^0\!\left(
\Tot(K_C(P)),p^*K_C(P)
\right)$
be the tautological section. The spectral curve of a Higgs bundle
$(E,\phi)$ is defined by the characteristic equation
\begin{equation}
\det(\lambda I-p^*\phi)=0.
\end{equation}
Therefore the equation defines a curve
$\Sigma\subset\Tot(K_C(P))$.
Geometrically, $\Sigma$ records the eigenvalues of the Higgs field.
\end{definition}

A point $a=(a_1,\ldots,a_m)\in B_{G,C,P}$ determines a spectral curve $\Sigma_a\subset \operatorname{Tot}(K_C(P))$ by
\begin{equation}
\Sigma_a
:=
\left\{
\lambda\in \operatorname{Tot}(K_C(P))
\;\middle|\;
P_a=\lambda^m+a_1\lambda^{m-1}+\cdots+a_m=0
\right\}.
\end{equation}
Therefore the Hitchin base parametrizes spectral curves.

The projection $\operatorname{Tot}(K_C(P))\to C$ induces the spectral cover map
\begin{equation}
\pi_a:\Sigma_a\to C.
\end{equation}
It is a $m:1$ ramified cover of $C$. For generic $a\in B_{G,C,P}$, $\pi_a$ has only simple
ramification points, whose projections, i.e., the branch points of the
spectral cover, are the simple zeros of the discriminant
\begin{equation}
\Delta_a
=
\prod_{1\leq i<j\leq m}(\lambda_i-\lambda_j)^2
\in
H^0(C,K_{C}(P)^{\otimes m(m-1)}).\end{equation}
of equation $P_a=0$, where $\{\lambda_i\}_{i=1,\ldots,m}$ are the
roots. Each eigenvalue differential $\lambda_i\in H^0(C,K_{C}(P))$ has at most a simple pole at every puncture. $\Delta_a$ may
have pole order $m(m-1)$ at each puncture if viewed as an ordinary meromorphic $m(m-1)$-differential. The discriminant divisor has degree
\begin{equation}
\deg \operatorname{div}_{K_C(P)^{\otimes m(m-1)}}(\Delta_a)
=
m(m-1)(2g-2+n).
\end{equation}

For generic $a\in B_{G,C,P}$, $\Delta_a$ has only simple zeros, each corresponds to the collision of exactly two eigenvalues and hence to
a simple branch point of the spectral cover $\pi_b:\Sigma_a\to C$. Hence the total number of branch points is $m(m-1)(2g-2+n)$. \cite{Goncharov}

For $GL_m$, the characteristic polynomial has the form
\begin{equation}
\lambda^m+a_1\lambda^{m-1}+a_2\lambda^{m-2}+\cdots+a_m=0,
\end{equation}
where $a_i\in H^0(C,K_C(P)^{\otimes i})$.

For $SL_m$, the trace term $a_1=0$, so the spectral equation has no $\lambda^{m-1}$ term:
\begin{equation}
\lambda^m+a_2\lambda^{m-2}+\cdots+a_m=0.
\end{equation}

If $L$ is the line bundle on $C$ in whose total space the spectral curve lies, and if $\Sigma_a$ is smooth and compact, then
\begin{equation}
g(\Sigma_a)
=1+m(g-1)+\frac{m(m-1)}{2}\deg L.
\end{equation}
For the case we are considering, $L = K_{C}(P=-\Sigma_{i=1}^{n}p_i)$, $\deg L = 2g-2-n$, this becomes
\begin{equation}
g(\Sigma_a)=1 - \frac{m}{2}(m(2-2g)-mn+n).
\end{equation}
This is the same as Equation~\ref{eq:genus}.

\subsubsection{Good Hitchin Loci and Discriminant Loci}
\label{sec:4.1.4}

\begin{definition}
Define the good locus in the Hitchin base as
\begin{equation}
B_{G,C,P}^{\mathrm{good}}
:=
\left\{
a\in B_{G,C,P}:
\Sigma_a \text{ is smooth, }
\pi_a \text{ is simply branched and } \operatorname{Br}(\pi_a)\cap P=\varnothing
\right\}\subset B_{G}.
\end{equation}
Define further
$B_{G,C,P}^{\mathrm{good},S_m}
:=
\left\{
x\in B_{G,C,P}^{\mathrm{good}}:
\operatorname{Mon}(\pi_x)=S_m
\right\}$.
\end{definition}

Assume that $\Sigma_a$ is smooth. Then, in characteristic zero, the discriminant divisor is the branch divisor with multiplicities:
$\operatorname{ord}_q(\Delta_a)
=
\sum_{p\in\pi_a^{-1}(q)}(e_p-1).$ \cite{Basok}

\begin{lemma}[\cite{Basok}]
The discriminant $\Delta_a$ has only simple zeros on
$\mathring C=C\setminus P$ and is nonzero at every marked point
$p_i\in P$ if and only if $\Sigma_a$ is smooth, $\pi_a$ is simply
branched over $\mathring C$, and $\pi_a$ is unramified over $P$.
\end{lemma}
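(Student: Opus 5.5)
We prove both implications by comparing the branch divisor of $\pi_a$ with the divisor of $\Delta_a$ through the multiplicity formula $\operatorname{ord}_q(\Delta_a)=\sum_{p\in\pi_a^{-1}(q)}(e_p-1)$, which holds once $\Sigma_a$ is smooth. The one genuinely new ingredient is a local analysis showing that smoothness of $\Sigma_a$ is forced by simple zeros of $\Delta_a$.

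\emph{($\Leftarrow$)} Assume $\Sigma_a$ is smooth, $\pi_a$ is simply branched over $\mathring C$, and $\pi_a$ is unramified over $P$.
\begin{itemize}
\item Take $q\in\mathring C$. If $q$ is not a branch point, every $e_p=1$, so $\operatorname{ord}_q(\Delta_a)=0$. If $q$ is a branch point, simple branching gives exactly one $p$ with $e_p=2$ and all others unramified, so $\operatorname{ord}_q(\Delta_a)=1$. Hence all zeros of $\Delta_a$ on $\mathring C$ are simple.
\item Take $p_i\in P$. Choose a local coordinate $z$ at $p_i$ and trivialize $K_C(P)$ by the frame $dz/z$. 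The eigenvalues $\lambda_j$ then become holomorphic functions, and $\Delta_a$ becomes the ordinary discriminant in this frame. Unramified over $p_i$ with $\Sigma_a$ smooth means the $m$ roots are distinct at $z=0$. So $\Delta_a(p_i)\neq0$ as a section of $K_C(P)^{\otimes m(m-1)}$.
\end{itemize}

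\emph{($\Rightarrow$)} Assume $\Delta_a$ has only simple zeros on $\mathring C$ and does not vanish on $P$.
\begin{itemize}
\item Off the zeros of $\Delta_a$ the roots of $P_a$ are distinct. So $\partial_\lambda P_a\neq0$ there, $\Sigma_a$ is smooth, and $\pi_a$ is étale. This covers all of $P$, so $\pi_a$ is unramified over $P$.
\item At a simple zero $q$, work with the trivialized polynomial $P_a(z,\lambda)$ near $z=0$. Since $\Delta_a(q)=0$, some root is repeated at $q$.
\item Suppose a root $\lambda_0$ had multiplicity $k\ge3$, or two distinct roots were repeated. Then the discriminant vanishes to order at least $2$ in $z$, because each colliding pair $(\lambda_i-\lambda_j)^2$ contributes at least $1$ to $\operatorname{ord}_q$. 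This contradicts simplicity. Hence exactly one root $\lambda_0$ is double and the others are simple.
\item Split off the simple roots by Hensel's lemma, so that locally $P_a=(\lambda^2+b(z)\lambda+c(z))\cdot u$ with $u$ a unit polynomial. The factor $\lambda^2+b\lambda+c$ contributes $b^2-4c$ to the discriminant up to a unit, so $\operatorname{ord}_0(b^2-4c)=1$.
\item Complete the square with $\mu=\lambda+b/2$: the local equation becomes $\mu^2=z\cdot(\text{unit})$. This is smooth, and the projection to $z$ is a simple ramification.
\end{itemize}
Therefore $\Sigma_a$ is smooth and $\pi_a$ is simply branched over $\mathring C$.

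The main obstacle is this last local step. We must deduce smoothness of $\Sigma_a$, not assume it, since the multiplicity formula presupposes smoothness. The clean way to handle it is to note that if $\Sigma_a$ had a node or worse at a point over $q$, the local discriminant of that branch factor would have order at least $2$. For example, $\mu^2=z^2 g(z)$ has a discriminant of order $\ge2$, so it is excluded. We also need the observation that the pole normalization at punctures is purely a choice of frame, so that "nonzero at $p_i$" is well-defined.
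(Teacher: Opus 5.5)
The paper gives no proof of this lemma: it cites \cite{Basok} and records, just before it, the formula $\operatorname{ord}_q(\Delta_a)=\sum_{p\in\pi_a^{-1}(q)}(e_p-1)$ for smooth $\Sigma_a$. Your ($\Leftarrow$) direction is exactly an application of that formula. You are right that ($\Rightarrow$) cannot use it, since the formula presupposes smoothness. The rest of your ($\Rightarrow$) argument is correct and yields smoothness and simple branching at a simple zero: the \'etaleness off the zeros of $\Delta_a$ (including over $P$), the Hensel splitting $P_a=(\lambda^2+b\lambda+c)\,u$, and the normal form $\mu^2=z\cdot(\text{unit})$.

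There is one genuine error. The step excluding a root of multiplicity $k\ge 3$ rests on a false claim: it is not true that each colliding pair $(\lambda_i-\lambda_j)^2$ contributes at least $1$ to $\operatorname{ord}_q\Delta_a$. The roots are in general Puiseux series. For $\lambda^3=z$, the three differences $(\omega^a-\omega^b)z^{1/3}$, with $\omega$ a primitive cube root of unity, contribute only $2/3$ each, and the discriminant is $-27z^2$. This is exactly the triple-ramification (caustic) configuration the lemma must rule out, so the step cannot be waved through.

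For a root of multiplicity exactly $2$ your bound does hold. That pair's contribution is $\operatorname{ord}_0$ of the discriminant of a quadratic Hensel factor, which is a positive integer. So your treatment of two distinct double roots survives.

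The conclusion $\operatorname{ord}_q\Delta_a\ge 2$ for $k\ge 3$ is true and has a short proof:
\begin{enumerate}
\item Hensel-split off the cluster factor $Q$ of degree $k$, and translate $\lambda$ so that $Q=\lambda^k+c_1\lambda^{k-1}+\cdots+c_k$ with all $c_i(0)=0$.
\item $\operatorname{disc}(Q)$ is a polynomial in the $c_i$, weighted homogeneous of degree $k(k-1)$ when $c_i$ has weight $i$.
\item Hence every monomial has at least $k-1$ factors, each vanishing at $0$, so $\operatorname{ord}_0\operatorname{disc}(Q)\ge k-1\ge 2$.
\item Finally, $\operatorname{disc}(P_a)=\operatorname{disc}(Q)\operatorname{disc}(u)\operatorname{Res}(Q,u)^2$ with holomorphic cofactors, which gives the bound for $\Delta_a$.
\end{enumerate}
With this replacement your proof is complete.
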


Therefore,
$B_{G,C,P}^{\mathrm{good}}
=
\left\{
a\in B_{G,C,P}:
\Delta_a \text{ has only simple zeros}
\right\}$.
Consequently,
$\deg\operatorname{div}(\Delta_a)
=
m(m-1)(2g-2+n).$
Hence for $a\in B_{G,C,P}^{\mathrm{good}}$,
$|\operatorname{Br}(\pi_a)|
=m(m-1)(2g-2+n).$

\begin{remark}
$\operatorname{ord}_q(\Delta_a)=2$
may describe two different phenomena:
triple ramification: $e_p=3,$ $\Sigma_a$ smooth; collision of simple branches: $\Sigma_a$ nodal.
The distinction is detected by the spectral polynomial: ramification: $P=\partial_\lambda P=0$; singularity of $\Sigma$: $P=\partial_\lambda P=\partial_zP=0$.
\end{remark}

Let $B_{G}$ be the universal Hitchin base, and denote the multiple-zero discriminant locus in the universal Hitchin base by $\mathcal D
:=
\left\{
b\in B_{G}:
\Delta_b
\text{ has at least one zero of multiplicity }\ge2
\right\}$ \cite{Basok}.
At a generic point of $\mathcal D$, the discriminant $\Delta_b$ has
exactly one double zero and all its other zeros are simple.
$\mathcal D$
is a complex divisor in the universal Hitchin base $B_G$. In particular, $\operatorname{codim}_{\mathbb C}\mathcal D=1$. \cite{Basok}

At a generic point of $\mathcal D$ where the discriminant has a double
zero, there are up to three possible local geometries: the boundary,
Maxwell, and caustic types. For $m=2$ only the boundary
type occurs; for $m=3$ the boundary and caustic types occur; and for
$m\ge4$ all three types may occur. \cite{Basok}

\begin{definition}[\cite{Basok}]
The boundary stratum
$\mathcal D_{\mathrm{bdry}}\subset B_G$
is generically the locus where the spectral curve develops a nodal
singularity over a double zero of the discriminant. Thus the spectral
curve is singular.
\end{definition}

\begin{definition}[\cite{Basok}]
The Maxwell stratum
$\mathcal D_{\mathrm{Max}}\subset B_G$
is generically the locus where two distinct simple ramification points
$p_1\neq p_2$ have the same branch value
$\pi_b(p_1)=\pi_b(p_2)=q.$
Then
$\operatorname{ord}_q(\Delta_b)=2,$
and the spectral curve is generically smooth.
\end{definition}

\begin{definition}[\cite{Basok}]
The caustic
$\mathcal D_{\mathrm{cau}}\subset B_G$
is generically the locus where two simple ramification points coalesce
into a single ramification point $p$ of ramification index $e_p=3$.
Then
$\operatorname{ord}_{\pi_b(p)}(\Delta_b)=2,$
and the spectral curve is generically smooth.
\end{definition}

Therefore, for $m\ge4$, the full discriminant divisor decomposes as
$\mathcal D
=
\mathcal D_{\mathrm{bdry}}
\cup
\mathcal D_{\mathrm{Max}}
\cup
\mathcal D_{\mathrm{cau}}$
\cite{Basok}.

\begin{remark}
The boundary--Maxwell--caustic decomposition is established on the
universal Hitchin base.
For a fixed curve $(C,P)$, restricting $\mathcal D
=
\mathcal D_{\mathrm{bdry}}
\cup
\mathcal D_{\mathrm{Max}}
\cup
\mathcal D_{\mathrm{cau}}$ to the particular
Hitchin base $B_{G,C,P}$ gives the corresponding geometric strata.
\end{remark}

Define
\begin{equation}
\mathcal D_{C,P}
:=
\mathcal D\cap B_{G,C,P}
=
\left\{
a\in B_{G,C,P}:
\Delta_a\text{ has at least one multiple zero}
\right\}.
\end{equation}
The locus $\mathcal D_{C,P}$ is a complex codimension-one locus in
$B_{G,C,P}$ \cite{Basok}.
Since $B_{G,C,P}^{\mathrm{good}}$ also requires the spectral cover to
be unramified over the marked points, its complement is
\begin{equation}
B_{G,C,P}^{\mathrm{bad}}
=
\mathcal D_{C,P}
\cup
\left\{
a\in B_{G,C,P}:
\Delta_a(p_i)=0
\text{ for some }p_i\in P
\right\}.
\end{equation}
This is a proper complex analytic locus of complex codimension-one in the complex vector space $B_{G,C,P}$. Consequently,
$B_{G,C,P}^{\mathrm{good}}$
is path-connected.

For later use, we define
\begingroup
\small
\begin{equation}
\mathcal D_{bdry,C,P}^\circ
:=
\left\{
b\in \mathcal D_{bdry,C,P}
\;\middle|\;
\begin{array}{l}
\mathcal D_{bdry,C,P} \text{ is smooth at } b,\\
\Delta_b \text{ has exactly one double zero},\\
\text{all other zeros of } \Delta_b \text{ are simple},\\
\pi_b^{-1}(q)
\text{ contains exactly one nodal singularity},\\
b \text{ lies on no other discriminant component}
\end{array}
\right\}.
\end{equation}

\begin{equation}
\mathcal D_{cau,C,P}^\circ
:=
\left\{
b\in \mathcal D_{cau,C,P}
\;\middle|\;
\begin{array}{l}
\mathcal D_{cau,C,P} \text{ is smooth at } b,\\
\Delta_b \text{ has exactly one double zero } q,\\
\text{all other zeros of } \Delta_b \text{ are simple},\\
\pi_b^{-1}(q) \text{ contains exactly one double ramification point},\\
b \text{ lies on no other discriminant component}
\end{array}
\right\}.
\end{equation}

\begin{equation}
\mathcal D_{Max,C,P}^\circ
:=
\left\{
b\in \mathcal D_{Max,C,P}
\;\middle|\;
\begin{array}{l}
\mathcal D_{Max,C,P} \text{ is smooth at } b,\\
\Delta_b \text{ has exactly one double zero } q,\\
\text{all other zeros of } \Delta_b \text{ are simple},\\
\pi_b^{-1}(q) \text{ contains exactly two distinct simple ramification points},\\
b \text{ lies on no other discriminant component}
\end{array}
\right\}.
\end{equation}
\endgroup

\subsubsection{Spectral Correspondence}

For $a\in B_{G,C,P}$, the Hitchin fiber
$h^{-1}(a)$
consists of Higgs bundles with fixed characteristic polynomial, and hence
with fixed spectral curve $\Sigma_a$.

A point $a\in B_{G,C,P}$ determines a spectral curve $\Sigma_a$, but not the
full Higgs bundle. By the spectral correspondence, if $\Sigma_a$ is smooth,
a Higgs bundle in the fiber determines a spectral line bundle $\mathcal L$ on
$\Sigma_a$, and conversely the Higgs bundle is recovered by
$E=\pi_{a*}\mathcal L,$
with Higgs field induced by multiplication by the tautological section.
For $G=SL_m$, the spectral line bundle is additionally required to satisfy
the fixed-determinant condition
$\det(\pi_{a*}\mathcal L)\simeq\mathcal O_C$
\cite{BNR}.

More precisely, let
$p:\Tot(K_C(P))\to C$
be the bundle projection. The tautological section
$\lambda\in
H^0\!\left(\Tot(K_C(P)),p^*K_C(P)\right)$
restricts to
$\lambda\in
H^0\!\left(\Sigma_a,\pi_a^*K_C(P)\right).$
Multiplication by $\lambda$ defines a morphism
\begin{equation}
m_\lambda:
\mathcal L
\to
\mathcal L\otimes\pi_a^*K_C(P),
\qquad
s\mapsto\lambda s,
\end{equation}
for every local section $s$ of $\mathcal L$.
Pushing forward and applying the projection formula gives
\begin{equation}
\phi:
\pi_{a*}\mathcal L
\to
\pi_{a*}\!\left(
\mathcal L\otimes\pi_a^*K_C(P)
\right)
\cong
\pi_{a*}\mathcal L\otimes K_C(P).
\end{equation}

Thus, morally, the spectral correspondence is
$(C,E,\phi)
\Longleftrightarrow
(\Sigma_a,\mathcal L,\lambda).$

Fiberwise, for $x\in C$ away from the branch locus, write
$\pi_a^{-1}(x)=\{q_1,\ldots,q_m\}.$
Then
$E_x
=
\bigoplus_{i=1}^m\mathcal L_{q_i},$
and the Higgs field acts diagonally:
\begin{equation}
\phi_x
=
\begin{pmatrix}
\lambda(q_1) & & 0\\
& \ddots &\\
0 & & \lambda(q_m)
\end{pmatrix}.
\end{equation}
Thus the values of $\lambda$ on the sheets are precisely the eigenvalues
of $\phi$.

\subsection{Jacobian and Prym Varieties}

\subsubsection{Jacobian}
The sheaf $\OO_C^*$ is the sheaf of nowhere-zero holomorphic functions: $\OO_C^*(U)=\{f:U\to \C^*\mid f\text{ holomorphic}\}$. Line bundles are glued using transition functions valued in $\OO_C^*$.

Suppose $L$ is a line bundle on $C$ and $\{U_i\}$ is a trivializing open cover. On overlaps $U_i\cap U_j$, the two trivializations differ by functions $g_{ij}\in \OO_C^*(U_i\cap U_j)$. These satisfy the cocycle condition $g_{ij}g_{jk}g_{ki}=1$ on triple overlaps. Thus line bundles are classified by the cohomology group $H^1(C,\OO_C^*)$.

The Picard group is the set of isomorphism classes of holomorphic line bundles on C. The group operation is tensor product: $[L_1]+[L_2]=[L_1\otimes L_2]$. The identity is $\OO_C$, and the inverse of $L$ is the dual line bundle $L^\vee$. Therefore $\Pic(C)\simeq H^1(C,\OO_C^*)$ as abelian groups.

Every line bundle on a compact smooth curve has an integer degree. This gives a homomorphism $\deg:\Pic(C)\to \Z$. The Jacobian is defined as the kernal of deg:
\begin{equation}
\Jac(C)=\Pic^0(C)\simeq \operatorname{Div}^0(C)/\operatorname{Prin}(C).
\end{equation}
If $C$ has genus $g_C$, then $\Jac(C)$ is a complex torus of dimension $g_C$:
\begin{equation}
\Jac(C)\cong H^1(C,\mathcal O_C)/H^1(C;\mathbb Z)\cong H^0(C,K_C)^\vee/H_1(C;\Z),
\end{equation}
where $H_1(C;\Z)$ embeds into $H^0(C,K_C)^\vee$ by integration:
\begin{equation}
\gamma\mapsto\left(\omega\mapsto \int_\gamma \omega\right).
\end{equation}

Let $a\in B_{GL_m,C,P}^{\mathrm{good}}$. The $GL_m$ Hitchin fiber over
$a$ is a Picard variety of the spectral curve $\Sigma_a$:
$h_{GL_m}^{-1}(a)\cong \Pic^{d'}(\Sigma_a)$
for a suitable degree $d'$ \cite{BNR}. Since
$\Pic^{d'}(\Sigma_a)$ is a torsor for
$\Pic^0(\Sigma_a)=\Jac(\Sigma_a),$
the $GL_m$ Hitchin fiber is a torsor for the Jacobian of $\Sigma_a$.
Its first homology lattice is
\begin{equation}
H_1\!\left(h_{GL_m}^{-1}(a);\mathbb Z\right)
\cong
H_1\!\left(\Jac(\Sigma_a);\mathbb Z\right)
\cong
H_1(\Sigma_a;\mathbb Z).
\end{equation}

\subsubsection{Prym}
\begin{definition}[\cite{DonagiPantev}]
Let $\pi:\Sigma_a\to C$ be a finite spectral cover of degree $m$. The norm map is a homomorphism
\begin{equation}
\operatorname{Nm}_{\Sigma_a/C}:
\operatorname{Jac}(\Sigma_a)
\to
\operatorname{Jac}(C).
\end{equation}
If $L\simeq\mathcal O_{\Sigma_a}(D)$, $D=\sum_{p\in\Sigma_a}n_p p$, then
\begin{equation}
\operatorname{Nm}_{\Sigma_a/C}(L)
=
\mathcal O_C(\pi_*D),
\end{equation}
where $\pi_*D
=
\sum_{p\in\Sigma_a}n_p\,\pi(p)$. Hence $\operatorname{deg}(\operatorname{Nm}_{\Sigma_a/C}(L))=\operatorname{deg}(L)$.

Equivalently, the norm map is characterized by
\begin{equation}
\operatorname{Nm}_{\Sigma_a/C}(L)
\simeq
\det(\pi_*L)\otimes
\det(\pi_*\mathcal O_{\Sigma_a})^{-1}.
\end{equation}
Thus
\begin{equation}
\det(\pi_*L)
\simeq
\operatorname{Nm}_{\Sigma_a/C}(L)
\otimes
\det(\pi_*\mathcal O_{\Sigma_a}).
\end{equation}
\end{definition}

One has \cite{BNR}
\begin{equation}
\deg(\pi_{a*}\mathcal O_{\Sigma_a})
=
-\frac{m(m-1)}{2}\deg K_C(P)
=
-\frac{m(m-1)}{2}
\bigl(2g_C-2+\deg P\bigr).
\end{equation}
Therefore, for $E=\pi_{a*}\mathcal L$,
\begin{equation}
\deg E
=
\deg\mathcal L+\deg(\pi_{a*}\mathcal O_{\Sigma_a})
=
\deg\mathcal L
-
\frac{m(m-1)}{2}
\bigl(2g_C-2+\deg P\bigr).
\end{equation}
If $\deg P=n$, then
$\deg\mathcal L
=
\deg E
+
\frac{m(m-1)}{2}(2g_C-2+n)$.

Away from the branch locus, if
$\pi_a^{-1}(x)=\{p_1,\dots,p_m\}$,
then fiberwise \cite{HauselThaddeus}
\begin{equation}
\operatorname{Nm}_{\Sigma_a/C}(\mathcal L)_x
\simeq
\mathcal L_{p_1}\otimes\cdots\otimes\mathcal L_{p_m}.
\end{equation}

\begin{definition}[\cite{DonagiPantev}]
The Prym variety of the cover $\pi:\Sigma_a\to C$ is
\begin{equation}
\operatorname{Prym}(\Sigma_a/C)
:=
\ker\!\left(
\operatorname{Nm}_{\Sigma_a/C}:
\operatorname{Jac}(\Sigma_a)\to\operatorname{Jac}(C)
\right)^0.
\end{equation}
Here the superscript $0$ denotes the connected component of the identity.
\end{definition}

Thus the Prym variety parametrizes degree-zero line bundles
$L\in\operatorname{Jac}(\Sigma_a)$ satisfying $\operatorname{Nm}_{\Sigma_a/C}(L)
\simeq
\mathcal O_C$, and lying in the identity component of the kernel.

For every compact Riemann surface $X$, $H_1(\operatorname{Jac}(X);\mathbb Z)
\cong
H_1(X;\mathbb Z)$. Therefore, the map induced by the norm map on homology is
the pushforward
\begin{equation}
\pi_*:
H_1(\Sigma_a;\mathbb Z)
\to
H_1(C;\mathbb Z).
\end{equation}

\begin{lemma}[\cite{Goncharov}]
Let $\pi_a:\Sigma_a\to C$ be a smooth finite spectral cover, and let
\begin{equation}
P:=\operatorname{Prym}(\Sigma_a/C)
=
\ker\!\left(
\operatorname{Nm}:J\Sigma_a\to JC
\right)^0.
\end{equation}
Then there is a canonical isomorphism
\begin{equation}
H_1(P;\mathbb Z)
\cong
\ker\!\left(
\pi_*:H_1(\Sigma_a;\mathbb Z)\to H_1(C;\mathbb Z)
\right).
\end{equation}
\end{lemma}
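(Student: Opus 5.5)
The plan is to build the isomorphism $H_1(\Prym(\Sigma_a);\mathbb Z)\cong\ker(\pi_*)$ out of two standard identifications, and then show that the resulting sublattice is exactly the tangent lattice of the Prym. Write $J=\Jac(\Sigma_a)$ and $JC=\Jac(C)$.

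First, $J=H^0(\Sigma_a,K_{\Sigma_a})^\vee/H_1(\Sigma_a;\mathbb Z)$, and likewise for $C$, with the lattices embedded by period integration. The norm map $\Nm$ is the homomorphism of complex tori whose analytic representation is the dual of the pullback $\pi^*:H^0(C,K_C)\to H^0(\Sigma_a,K_{\Sigma_a})$. Its restriction to lattices is $\pi_*:H_1(\Sigma_a;\mathbb Z)\to H_1(C;\mathbb Z)$, because
\[
\int_\gamma\pi^*\omega=\int_{\pi_*\gamma}\omega .
\]
This step should only need to be checked on divisors via the Abel--Jacobi map, using $\Nm(\mathcal O(D))=\mathcal O(\pi_*D)$ as in the definition above.

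Second, for a homomorphism of complex tori $f:V/\Lambda\to V'/\Lambda'$ with analytic representation $\rho:V\to V'$, the identity component of $\ker f$ is the subtorus $(\ker\rho)/(\Lambda\cap\ker\rho)$. This holds provided $\Lambda\cap\ker\rho$ is a full-rank lattice in $\ker\rho$, and that in turn follows once we know $\ker f$ is a closed subgroup of dimension $\dim\ker\rho$. Hence
\[
H_1(P;\mathbb Z)=\Lambda\cap\ker\rho,\qquad \Lambda\cap\ker\rho\subseteq\ker(\pi_*|_\Lambda).
\]
The content of the lemma is that this inclusion is an equality. Since $\rho$ is $\mathbb R$-linear and extends $\pi_*\otimes\mathbb R$ on $\Lambda\otimes\mathbb R=V$, any $\gamma\in\Lambda$ with $\pi_*\gamma=0$ satisfies $\rho(\gamma)=0$ and so lies in $\ker\rho$. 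The equality is then immediate, provided we use the fact that $\rho|_{\Lambda\otimes\mathbb R}$ is precisely $\pi_*\otimes\mathbb R$. That fact is exactly the first step.

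The step I expect to be the main obstacle is showing that $\Lambda\cap\ker\rho$ has full rank in $\ker\rho$, so that the identity component is a genuine compact subtorus and the $H_1$ computation above is valid. My first approach would be the pullback map $\pi^*:JC\to J$. It satisfies $\Nm\circ\pi^*=m$, and on lattices $\pi_*\pi^*=m$, where $\pi^*$ is the transfer map on $H_1$. Consequently
\[
\Lambda\otimes\mathbb Q=\ker(\pi_*)_{\mathbb Q}\oplus\pi^*H_1(C;\mathbb Q).
\]
The kernel $\ker(\pi_*)$ therefore has rank $2g_{\Sigma_a}-2g$, which equals $2\dim_{\mathbb C}\ker\rho$ because $\rho$ is surjective, being dual to the injective $\pi^*$ on differentials. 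So $\ker(\pi_*)\otimes\mathbb R=\ker\rho$, which is the required full rank. I would also note that $\ker(\pi_*)$ is saturated in $\Lambda$, being the kernel of a map to a torsion-free group, so no finite-index ambiguity arises. Canonicity is clear because every map in the argument is intrinsic.
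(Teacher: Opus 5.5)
Your proposal is correct and follows the same route as the paper. Both identify the map induced by $\Nm$ on $H_1$ with $\pi_*$ via the Abel--Jacobi/period description, and then identify $H_1$ of the identity component of $\ker\Nm$ with $\ker(\Nm_*)$; the paper only asserts this last step, whereas your analytic-representation argument actually proves it.

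The full-rank issue you flag as the main obstacle is in fact automatic. Once $\rho|_{\Lambda}=\pi_*$, one has $\rho=\pi_*\otimes\mathbb R$ on $V=\Lambda\otimes\mathbb R$, so $\ker\rho=(\ker\pi_*)\otimes\mathbb R$ by flatness of $\mathbb R$ over $\mathbb Z$. Alternatively, compactness of $\ker\Nm$ forces $\Lambda\cap\ker\rho$ to be a lattice. Either way, the transfer argument via $\pi^*$ is not needed.
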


\begin{proof}
Choose Abel--Jacobi maps $a_{\Sigma_a}:\Sigma_a\to J\Sigma_a,$ $a_C:C\to JC.$ They induce canonical isomorphisms
\begin{equation}
(a_{\Sigma_a})_*:
H_1(\Sigma_a;\mathbb Z)
\xrightarrow{\sim}
H_1(J\Sigma_a;\mathbb Z),\qquad
(a_C)_*:
H_1(C;\mathbb Z)
\xrightarrow{\sim}
H_1(JC;\mathbb Z).
\end{equation}
The identity $\operatorname{Nm}\circ a_{\Sigma_a}
=
a_C\circ\pi$
implies that the following diagram commutes:
\begin{equation}
\begin{array}{ccc}
H_1(\Sigma_a;\mathbb Z)& \xrightarrow{(a_{\Sigma_a})_*}& H_1(J\Sigma_a;\mathbb Z) \\
\downarrow\pi_* && \downarrow(\operatorname{Nm})_* \\
H_1(C;\mathbb Z)& \xrightarrow{(a_C)_*}& H_1(JC;\mathbb Z)
\end{array}
\end{equation}
Since the horizontal maps are isomorphisms, $\ker(\operatorname{Nm})_*
\cong
\ker\pi_*.$
Let $\iota:P\hookrightarrow J\Sigma_a$
be the inclusion. Since $P$ is the identity component of the kernel
of the norm map, $\iota_*$ identifies $H_1(P;\mathbb Z)$
with $\ker\!\left(
(\operatorname{Nm})_*:
H_1(J\Sigma_a;\mathbb Z)\to H_1(JC;\mathbb Z)
\right)$.
Consequently,
\begin{equation}
H_1(P;\mathbb Z)
\cong
\ker\!\left(
H_1(\Sigma_a;\mathbb Z)
\xrightarrow{\pi_*}
H_1(C;\mathbb Z)
\right),
\end{equation}
where the isomorphism is $\Phi
=
(a_{\Sigma_a})_*^{-1}\circ\iota_*.$
\end{proof}

In other words, the Prym lattice consists of cycles on the spectral curve
whose projection to $C$ is homologically trivial.

Let $a\in B_{SL_m,C,P}^{good}$. The $SL_m$ Hitchin fiber over $a$ is the Prym-type norm fiber. Fix a determinant line bundle $L_0\simeq \det(\pi_*\mathcal L)$ on $C$. The spectral line bundle $\mathcal L$ satisfies the norm condition:
\begin{equation}
\Nm_{\Sigma_a/C}(\mathcal L)\simeq L_0\otimes \det(\pi_*\OO_{\Sigma_a})^{-1}.
\end{equation}
Therefore
\begin{equation}
h_{SL_m}^{-1}(a)\simeq\Nm^{-1}\left(L_0\otimes \det(\pi_*\OO_{\Sigma_a})^{-1}\right),
\end{equation}
inside a suitable component of $\Pic(\Sigma_a)$. This fiber is not canonically equal to a group unless one chooses a base point. It is a torsor for the Prym variety $\Prym(\Sigma_a/C)$.

Its lattice is
\begin{equation}
H_1(h_{SL_m}^{-1}(a);\Z)\cong H_1(\Prym(\Sigma_a/C);\Z)\cong\ker\left(H_1(\Sigma_a;\Z)\xrightarrow{\pi_*} H_1(C;\Z)\right).
\end{equation}

\subsection{Properties of Geometric Spectral Covers}
\label{sec:4.3}

Let $\mathring{C}=C\setminus P$
be a $n$-punctured smooth complex curve of genus-$g$ satisfying $2g-2+n>0.$
Let $G$ be $GL_m$ or $SL_m$, $m \ge 2$.
Set $L:=K_{C}(P)$, $d:=\deg L=2g-2+n>0.$

\subsubsection{Monodromy Groups of Hitchin Spectral Curves}
Let $x$ be a point in the Hitchin base $B_{G,C,P}$, and $\pi_x:\Sigma_x\to C$ be the corresponding spectral curve.

\begin{lemma}[\cite{BNR}]
For any spectral curve
$\pi_x:\Sigma_x\to C,$
defined by
$P_x(\lambda)=\lambda^m+a_1\lambda^{m-1}+\cdots+a_m=0,$
one has
$(\pi_x)_*\mathcal O_{\Sigma_x}
\simeq
\mathcal O_{C}
\oplus L^{-1}\oplus L^{-2}
\oplus\cdots\oplus L^{-(m-1)}$, as $\mathcal O_{C}$-modules.
\end{lemma}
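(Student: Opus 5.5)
The statement to prove is the last lemma: for the spectral curve $\Sigma_x\subset\Tot(L)$ cut out by $P_x(\lambda)=\lambda^m+a_1\lambda^{m-1}+\cdots+a_m$, one has $(\pi_x)_*\mathcal O_{\Sigma_x}\simeq\bigoplus_{i=0}^{m-1}L^{-i}$. The plan is to compute the pushforward directly from the ring structure of $\Tot(L)$.

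First I will use that $p:\Tot(L)\to C$ is affine with $p_*\mathcal O_{\Tot(L)}=\operatorname{Sym}^\bullet(L^{-1})=\bigoplus_{i\ge0}L^{-i}$. Here the tautological coordinate $\lambda$ is a section of $p^*L$. Equivalently, $\lambda$ is a fiberwise linear function on $\Tot(L)$ with values twisted by $L$. Hence a monomial $\lambda^i$, paired with a local section of $L^{-i}$, gives a function on $\Tot(L)$.

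Next, the equation $P_x(\lambda)=0$ is a section of $p^*L^{m}$. Its coefficients $a_i$ lie in $H^0(C,L^{i})$, so the equation is homogeneous of total weight $m$. The ideal sheaf of $\Sigma_x$ is therefore the image of $p^*L^{-m}\xrightarrow{P_x}\mathcal O_{\Tot(L)}$. Pushing forward gives
\[
0\to \bigoplus_{i\ge0}L^{-i-m}\xrightarrow{\cdot P_x}\bigoplus_{i\ge0}L^{-i}\to(\pi_x)_*\mathcal O_{\Sigma_x}\to0 .
\]
Exactness holds because $p$ is affine, so $p_*$ is exact on quasi-coherent sheaves, and because $P_x$ is monic in $\lambda$, hence a nonzerodivisor.

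The key step is the division algorithm. Since $P_x$ is monic of degree $m$ in $\lambda$, locally over a trivializing open $U\subset C$ every polynomial in $\lambda$ has a unique remainder of degree $<m$. This gives a splitting of the above sequence: the composite $\bigoplus_{i=0}^{m-1}L^{-i}\hookrightarrow\bigoplus_{i\ge0}L^{-i}\to(\pi_x)_*\mathcal O_{\Sigma_x}$ is an isomorphism on each $U$. Locally, $\mathcal O_U[\lambda]/(P_x)$ is free with basis $1,\lambda,\dots,\lambda^{m-1}$.

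What remains is to check that this local isomorphism glues. I expect this to be the main, though mild, obstacle. Under a change of trivialization $e\mapsto fe$, the coordinate $\lambda$ rescales by $f^{-1}$ and $a_i$ rescales by $f^{-i}$. The submodule generated by $\lambda^i$ over $\mathcal O_U$ is therefore intrinsically the summand $L^{-i}$, and the division algorithm is compatible with this rescaling, so the splitting is canonical. This is the same homogeneity argument used in the proof of the Chevalley-restriction lemma above. The resulting decomposition is as $\mathcal O_C$-modules, not as algebras, which is exactly what the statement claims.

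Finally, as a consistency check I will take degrees. This gives $\deg(\pi_x)_*\mathcal O=-\tfrac{m(m-1)}{2}\deg L$, which matches the formula quoted earlier from \cite{BNR}. Note that no smoothness of $\Sigma_x$ is needed anywhere, so the result holds for every $x\in B_{G,C,P}$.
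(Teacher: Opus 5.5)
Your proposal is correct and follows essentially the same route as the paper: monicity of $P_x$ gives the local basis $1,\lambda,\dots,\lambda^{m-1}$ of $\mathcal O_U[\lambda]/(P_x)$, and the weight of $\lambda^i$ identifies the $i$-th summand with $L^{-i}$. Your global presentation via $p_*\mathcal O_{\Tot(L)}=\bigoplus_{i\ge0}L^{-i}$ makes the gluing step you flag as the main obstacle automatic: the composite $\bigoplus_{i=0}^{m-1}L^{-i}\to(\pi_x)_*\mathcal O_{\Sigma_x}$ is defined globally, and being an isomorphism is a local property.
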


\begin{proof}
Locally on an open subset $U\subset C$, after trivializing $L$, the spectral algebra is
$
\mathcal O_U[\lambda]/(P_x(\lambda)).
$
Since $P_x$ is monic of degree $m$,
then $1,\lambda,\dots,\lambda^{m-1}$
form an $\mathcal O_U$-basis. Globally $\lambda^i$ transforms as $L^{-i}$ on the function-algebra
side, giving the claimed splitting.
\end{proof}

\begin{corollary}[\cite{BNR}]
Suppose
$\deg L>0.$
Then
$H^0(\Sigma_x,\mathcal O_{\Sigma_x})
=
\mathbb C.$ So $\Sigma_x$ is connected.
In particular, if $\Sigma_x$ is smooth, then it is irreducible.
\end{corollary}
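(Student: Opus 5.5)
The statement to prove is the final corollary: if $\deg L>0$ then $H^0(\Sigma_x,\mathcal O_{\Sigma_x})=\mathbb C$, so $\Sigma_x$ is connected, and if moreover $\Sigma_x$ is smooth it is irreducible. My plan is to compute global functions on $\Sigma_x$ by pushing forward to $C$ and using the preceding lemma.

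Since $\pi_x:\Sigma_x\to C$ is finite, hence affine, the pushforward preserves global sections:
\[
H^0(\Sigma_x,\mathcal O_{\Sigma_x})=H^0\bigl(C,(\pi_x)_*\mathcal O_{\Sigma_x}\bigr).
\]
By the preceding lemma of \cite{BNR},
\[
(\pi_x)_*\mathcal O_{\Sigma_x}\simeq \mathcal O_C\oplus L^{-1}\oplus\cdots\oplus L^{-(m-1)}
\]
as $\mathcal O_C$-modules. Taking global sections summand by summand gives
\[
H^0(\Sigma_x,\mathcal O_{\Sigma_x})\cong H^0(C,\mathcal O_C)\oplus\bigoplus_{i=1}^{m-1}H^0(C,L^{-i}).
\]
Because $C$ is a compact connected curve, $H^0(C,\mathcal O_C)=\mathbb C$. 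For $i\ge1$ we have $\deg L^{-i}=-i\deg L<0$, and a line bundle of negative degree on a smooth projective curve has no nonzero sections, since a nonzero section would give an effective divisor of negative degree. Hence $H^0(\Sigma_x,\mathcal O_{\Sigma_x})=\mathbb C$. Here $L=K_C(P)$ and $\deg L=2g-2+n>0$ by hypothesis.

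Connectedness then follows. If $\Sigma_x$ were a disjoint union of two nonempty closed and open subschemes, the idempotent that equals $1$ on one piece and $0$ on the other would give a second linearly independent global function, contradicting $\dim H^0=1$. This argument uses only that $\Sigma_x$ is a projective scheme, so it holds for singular and nonreduced spectral curves as well.

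For irreducibility, assume $\Sigma_x$ is smooth. Its connected components are then irreducible, since at a point where two components met the local ring could not be regular. Being connected, $\Sigma_x$ is therefore irreducible.

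The only point needing care is that the \cite{BNR} splitting holds for every $x$, including singular spectral curves. The lemma's local argument shows this: the basis $1,\lambda,\dots,\lambda^{m-1}$ comes from $P_x$ being monic and uses no smoothness. So no step here is a genuine obstacle.
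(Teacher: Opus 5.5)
Your proof is correct and follows the same route as the paper: identify $H^0(\Sigma_x,\mathcal O_{\Sigma_x})$ with $H^0(C,(\pi_x)_*\mathcal O_{\Sigma_x})$, apply the splitting $\mathcal O_C\oplus L^{-1}\oplus\cdots\oplus L^{-(m-1)}$, kill the negative-degree summands, and then use that irreducible components of a smooth curve cannot meet. Your idempotent argument for connectedness is slightly better than the paper's, which appeals to smoothness at that step even though the connectedness claim is stated without assuming it.
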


\begin{proof}
Since
$\deg L^{-i}<0$ $(i>0)$,
$H^0(C,L^{-i})=0.$
Therefore
$H^0(\Sigma_x,\mathcal O_{\Sigma_x})
=
H^0(C,(\pi_x)_*\mathcal O_{\Sigma_x})
=
\bigoplus_{i=0}^{m-1}H^0(C,L^{-1})
=
H^0(C,\mathcal O_{C})
=
\mathbb C.$
A smooth proper curve with only constant global functions is
connected. If $\Sigma_x$ is additionally smooth, then its irreducible components cannot intersect. Thus $\Sigma_x$ has only one irreducible component.
\end{proof}

\begin{definition}[\cite{BKP}]
A finite morphism $f:Y\to X$
between smooth irreducible projective complex curves is called \emph{genuinely ramified} if it does not factor as
$Y\to X'\to X,$
with $X'\to X$ a non-trivial finite unramified holomorphic covering.
Equivalently, $f$ does not factor through any non-trivial finite-sheeted
topological covering space of $X$.
\end{definition}

\begin{proposition}[\cite{BKP}]
Let $f:Y\to X$ be a finite morphism between smooth irreducible projective complex curves.
Then $f$ is genuinely ramified if and only if $\mathcal O_X$ is the maximal semistable subbundle of $f_*\mathcal O_Y$.
\end{proposition}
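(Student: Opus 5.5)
This is the proposition from \cite{BKP} that has just been stated; in this paper it is simply quoted. To prove it, I would translate both conditions into statements about étale subcovers and about a decomposition of $f_*\mathcal O_Y$.

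\textbf{Step 1: a factorization produces a destabilizing subbundle.} Suppose $f$ factors as $Y\xrightarrow{h} X'\xrightarrow{g} X$ with $g$ étale of degree $d>1$. Then $g_*\mathcal O_{X'}\subset f_*\mathcal O_Y$, since $\mathcal O_{X'}\hookrightarrow h_*\mathcal O_Y$. Because $g$ is étale, $g_*\mathcal O_{X'}$ has degree $0$ (the relative discriminant vanishes). It is also semistable: its pullback to the Galois closure is trivial, so it is a polystable flat bundle. Its rank is $d>1$, so it is a semistable degree-zero subbundle strictly containing $\mathcal O_X$. Hence $\mathcal O_X$ is not the maximal semistable subbundle.

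\textbf{Step 2: every subbundle has degree $\le 0$.} I would first show that any subbundle $V\subset f_*\mathcal O_Y$ satisfies $\deg V\le 0$. Use the trace splitting $f_*\mathcal O_Y=\mathcal O_X\oplus E^\vee$ in characteristic zero. Then apply the standard fact that $\mu_{\max}(f_*\mathcal O_Y)=0$. One proof: pull back along $f$ and use the natural map $f^*f_*\mathcal O_Y\to\mathcal O_Y$, which is injective on each summand after passing to the Galois closure. Consequently the maximal destabilizing subbundle $V$ has slope $0$.

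\textbf{Step 3: the maximal slope-zero subbundle is an étale algebra.} Let $V$ be the maximal subbundle of slope $0$. I would show that $V$ is closed under the multiplication $f_*\mathcal O_Y\otimes f_*\mathcal O_Y\to f_*\mathcal O_Y$. The argument: $V\otimes V$ is semistable of slope $0$ (tensor products of semistable bundles are semistable in characteristic zero), so its image lies in the maximal slope-zero part, which is $V$. Thus $V$ is an $\mathcal O_X$-subalgebra, so $V=g_*\mathcal O_{X'}$ for the normalization $X'$ of $X$ in $V$, and $f$ factors through $X'$. Since $\deg V=0$, the discriminant of $X'\to X$ vanishes, so $X'\to X$ is étale. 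Therefore, if $f$ is genuinely ramified, $V$ has rank $1$, and $V=\mathcal O_X$.

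The main obstacle is Step 3: proving that the multiplication preserves $V$ and that degree zero forces the cover to be étale. This uses semistability of tensor products and the discriminant–degree relation $\deg g_*\mathcal O_{X'}=-\tfrac12\deg(\text{branch divisor})$.
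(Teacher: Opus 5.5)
The paper gives no proof of this proposition; it is simply quoted from \cite{BKP}. Your argument is correct and is essentially the standard one from that source. The main line is: show $\mu_{\max}(f_*\mathcal O_Y)=0$; prove that the first Harder--Narasimhan piece $V$ is a subalgebra; then use the trace-form discriminant to see that $\operatorname{Spec}_X V\to X$ is étale. Your Step 1 gives the converse implication. The subalgebra step works because $V\otimes V$ is semistable of slope $0$ and has no nonzero map to $f_*\mathcal O_Y/V$, whose $\mu_{\max}$ is negative.

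A few points need tidying.

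\textbf{Step 2.} The justification as written is off. The evaluation map $f^*f_*\mathcal O_Y\to\mathcal O_Y$ is surjective from rank $d$ to rank $1$, not injective, and the trace splitting plays no role. There are two standard fixes:
\begin{itemize}
\item Use adjunction: $\operatorname{Hom}(V,f_*\mathcal O_Y)=\operatorname{Hom}(f^*V,\mathcal O_Y)$. This vanishes for $V$ semistable of positive slope, because in characteristic zero $f^*V$ stays semistable of positive slope.
\item Pull back to the Galois closure $\psi:Z\to X$. There the $d$ maps $Z\to Y$ give an injection $\psi^*f_*\mathcal O_Y\hookrightarrow\mathcal O_Z^{\oplus d}$.
\end{itemize}

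\textbf{Step 3.} Set $X':=\operatorname{Spec}_X V$ rather than speaking of a normalization, since $X'$ is not known a priori to be normal. Then apply the discriminant argument to $V$ itself. The determinant of the trace pairing $V\to V^\vee$ is a section of $(\det V)^{-2}$. It is not identically zero, because $V\otimes K(X)$ is a subfield of $K(Y)$ and is separable over $K(X)$. Since $(\det V)^{-2}$ has degree $0$, the section vanishes nowhere. Hence $X'\to X$ is étale, and in particular $X'$ is smooth.

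Also note that $X'$ is connected, which the definition of genuine ramification needs. This holds because $\mathcal O_{X'}\subset h_*\mathcal O_Y$ forces $H^0(\mathcal O_{X'})\subset H^0(\mathcal O_Y)=\mathbb C$.

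With these adjustments the proof is complete.
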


\begin{corollary}[\cite{BKP}]
If $\deg L>0$ and $\Sigma_x$ is smooth.
Then $\pi_x:\Sigma_x\to C$ is genuinely ramified.
\end{corollary}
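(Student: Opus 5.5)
We have to prove the last statement: the corollary that if $\deg L>0$ and $\Sigma_x$ is smooth, then $\pi_x:\Sigma_x\to C$ is genuinely ramified.

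The plan is to apply the criterion of \cite{BKP} stated just above. By that proposition, it suffices to show that $\mathcal O_C$ is the maximal semistable subbundle of $(\pi_x)_*\mathcal O_{\Sigma_x}$, i.e.\ the first term of its Harder--Narasimhan filtration. Before invoking the criterion we check its hypotheses. By the preceding corollary, $\Sigma_x$ is connected because $H^0(\Sigma_x,\mathcal O_{\Sigma_x})=\mathbb C$. Being smooth, it is therefore irreducible. It is projective because it is a closed curve in the compactified total space, or equivalently because it is finite over $C$. So $\pi_x$ is a finite morphism of smooth irreducible projective curves.

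Next we use the splitting lemma of \cite{BNR}:
\[
(\pi_x)_*\mathcal O_{\Sigma_x}\simeq \mathcal O_C\oplus L^{-1}\oplus\cdots\oplus L^{-(m-1)}.
\]
This is a direct sum of line bundles of slopes $0>-d>-2d>\cdots>-(m-1)d$, where $d=\deg L>0$. For a direct sum of line bundles with strictly decreasing degrees, the Harder--Narasimhan filtration is the obvious one:
\[
0\subset \mathcal O_C\subset \mathcal O_C\oplus L^{-1}\subset\cdots,
\]
with line-bundle subquotients, which are automatically semistable, and strictly decreasing slopes. By uniqueness of the Harder--Narasimhan filtration, the maximal destabilizing subbundle is $\mathcal O_C$.

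To be careful about this step we give a direct argument. Let $F\subset(\pi_x)_*\mathcal O_{\Sigma_x}$ be any subsheaf with $\mu(F)\ge 0$. If $F$ is semistable, then every nonzero map $F\to L^{-i}$ with $i\ge1$ would force $\mu(F)\le -id<0$, so all such projections vanish. Hence $F\subset\mathcal O_C$, and therefore $\mathcal O_C$ is the unique maximal semistable subbundle of maximal slope.

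The only point needing care is that the splitting is an isomorphism of $\mathcal O_C$-modules, which is exactly what the Harder--Narasimhan argument uses; the algebra structure plays no role. We then conclude by the proposition of \cite{BKP}. There is no real obstacle beyond correctly identifying the maximal semistable subbundle, and the strict inequality $d>0$ is precisely what makes $\mathcal O_C$ strictly of largest slope.
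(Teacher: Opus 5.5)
Your proposal is correct and takes the same route as the paper: the splitting $(\pi_x)_*\mathcal O_{\Sigma_x}\simeq\mathcal O_C\oplus L^{-1}\oplus\cdots\oplus L^{-(m-1)}$ with $\deg L>0$ makes $\mathcal O_C$ the maximal semistable subbundle, and the Biswas--Kumar--Parameswaran criterion then gives genuine ramification. Your extra checks are welcome but not a different argument. These are the irreducibility and projectivity of $\Sigma_x$ needed for the criterion, and the direct Harder--Narasimhan verification, which the paper states in one line.
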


\begin{proof}
By the spectral-algebra splitting,
$(\pi_x)_*\mathcal O_{\Sigma_x}
\simeq
\mathcal O_{C}
\oplus L^{-1}\oplus\cdots\oplus L^{-(m-1)}.$
The first summand has degree $0$, whereas every other summand has
negative degree. Hence
$\mathcal O_{C}$
is the maximal semistable subbundle of
$(\pi_x)_*\mathcal O_{\Sigma_x}$.
The criterion of Biswas-Kumar-Parameswaran identifies this condition with genuine ramification.
\end{proof}

\begin{theorem}[\cite{BKP}]
Let
$f:Y\to X$
be a genuinely ramified simply branched morphism of degree-$m$ between smooth
irreducible projective curves.
Then the Galois group of its Galois closure, equivalently its
geometric monodromy group, is the full symmetric group $S_m$.
\end{theorem}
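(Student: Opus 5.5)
The plan is to compare the full monodromy group $G=\operatorname{Mon}(f)\subseteq S_m$ with the normal subgroup $H\trianglelefteq G$ generated by the local monodromies around the branch points. Genuine ramification should force $H$ to be transitive, and then the classical fact already used in the proof of Corollary~\ref{cor:fullmono} finishes the argument: a transitive subgroup of $S_m$ generated by transpositions is all of $S_m$.

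\textbf{Step 1: setup.} Let $B\subset X$ be the branch locus and choose $x_0\in X\setminus B$. Let $\rho:\pi_1(X\setminus B,x_0)\to S_m$ be the monodromy representation, with image $G$.
\begin{itemize}
\item Since $Y$ is irreducible, hence connected, $G$ is transitive on the fiber $\{1,\dots,m\}$.
\item Since $f$ is simply branched, the loop $\gamma_i$ around each $q_i\in B$ has $\rho(\gamma_i)$ a transposition. Every conjugate $\rho(\delta\gamma_i\delta^{-1})$ is then also a transposition.
\end{itemize}
Define $H$ to be the subgroup generated by all these conjugates, i.e.\ by the image of the normal closure $N$ of $\langle\gamma_1,\dots,\gamma_r\rangle$. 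Then $H$ is normal in $G$ and is generated by transpositions.

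\textbf{Step 2: the orbits of $H$ give an intermediate cover.} Because $H$ is normal in $G$, the $H$-orbits $O_1,\dots,O_k$ form a $G$-invariant block system, and $G$ acts transitively on the set of blocks. This action defines a degree-$k$ covering $X'^{\circ}\to X\setminus B$. It is connected because $G$ is transitive on the blocks.
\begin{itemize}
\item Each $\rho(\gamma_i)$ lies in $H$, so it fixes every block. Hence $X'^{\circ}$ has trivial monodromy around each $q_i$.
\item It therefore extends to a finite unramified topological covering $X'\to X$, since $\pi_1(X)=\pi_1(X\setminus B)/N$.
\item By the Riemann existence theorem, $X'$ carries a unique compatible complex structure, making it a smooth irreducible projective curve.
\end{itemize}
The map that sends a sheet $j$ to the block containing it is $G$-equivariant. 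So it defines a covering map $Y\setminus f^{-1}(B)\to X'^{\circ}$ over $X\setminus B$. This extends holomorphically over the punctures, giving a factorization $Y\to X'\to X$.

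\textbf{Step 3: conclude.} Genuine ramification says $Y\to X$ does not factor through a non-trivial finite unramified cover. So $X'\to X$ must be trivial, i.e.\ $k=1$, and $H$ is transitive. A transitive subgroup generated by transpositions is $S_m$: its transposition graph on $\{1,\dots,m\}$ is connected, and the transpositions along a spanning tree already generate $S_m$. Hence $S_m=H\subseteq G\subseteq S_m$, and the Galois closure has group $S_m$.

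\textbf{Expected obstacle.} I expect the main difficulty to lie in Step 2. One has to check carefully that the block quotient really is a covering on which $Y$ factors. One also has to check that the extension over $B$ is genuinely unramified, and not merely that it has trivial monodromy on small loops. This follows from the description of $\pi_1(X)$ as the quotient of $\pi_1(X\setminus B)$ by $N$, but it is the point where the equivalence with the ``topological covering'' formulation in the definition of genuine ramification is actually used.
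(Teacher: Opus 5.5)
The paper gives no proof of this statement. It is quoted with a citation to \cite{BKP} and used as a black box in Corollary~\ref{cor:good}, after genuine ramification of smooth spectral curves has been deduced from the splitting $(\pi_x)_*\mathcal O_{\Sigma_x}\simeq\mathcal O_C\oplus L^{-1}\oplus\cdots\oplus L^{-(m-1)}$ and the BKP semistability criterion. So there is no internal argument to compare with, and your proof has to be judged on its own. On those terms it is correct and complete; it is the standard monodromy argument.

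The step you flag as the expected obstacle is fine. Since the blocks are the orbits of $H=\rho(N)$, every element of $N$, not only the $\gamma_i$, acts trivially on them. Hence the block action factors through $\pi_1(X)=\pi_1(X\setminus B)/N$, and the associated cover of $X$ is unramified by construction. The map $Y\setminus f^{-1}(B)\to X'^{\circ}$ also extends over $f^{-1}(B)$. A small punctured disc around a point of $f^{-1}(q_i)$ is connected, so it maps into a single one of the punctured discs of $X'$ over $q_i$. The cover $X'\to X$ is trivial over a disc around $q_i$, and the removable singularity theorem then applies.

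Compared with simply citing \cite{BKP}, your route makes the role of each hypothesis explicit. Simple branching is used once, to make the generators of $H$ transpositions. Genuine ramification is used once, to make $H$ transitive. Only the topological form of genuine ramification, which the paper records as equivalent, is needed. It is the global version of the group-theoretic fact used in the triangle-by-triangle argument behind Corollary~\ref{cor:fullmono}. There $N$ is the whole fundamental group of the punctured disc, so transitivity of $H=G$ is immediate.

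What the citation buys the paper is that it stays within the algebro-geometric characterization of genuine ramification (maximal semistable subbundle of $f_*\mathcal O_Y$). That is how the hypothesis is actually verified for Hitchin spectral curves. Your argument supplies the group-theoretic conclusion, not that verification.
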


\begin{corollary}
\label{cor:good}
For any
$x\in B_{G,C,P}^{\mathrm{good}}$, one has $\operatorname{Mon}(\pi_x)=S_m.$
Therefore
$B_{G,C,P}^{\mathrm{good}}
=
B_{G,C,P}^{\mathrm{good},S_m}$.
\end{corollary}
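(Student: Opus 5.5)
The plan is to chain together the three results on geometric spectral covers recalled just above, and then reconcile the algebro-geometric monodromy with the topological one of Section~2. Fix $x\in B_{G,C,P}^{\mathrm{good}}$. By definition $\Sigma_x$ is smooth, $\pi_x$ is simply branched, and $\operatorname{Br}(\pi_x)\cap P=\varnothing$. Since $d=\deg L=2g-2+n>0$, the BNR corollary gives $H^0(\Sigma_x,\mathcal O_{\Sigma_x})=\mathbb C$, so $\Sigma_x$ is connected and, being smooth, irreducible. The corollary of Biswas--Kumar--Parameswaran then says that $\pi_x$ is genuinely ramified. Since $\pi_x$ is also simply branched, the BKP theorem identifies the Galois group of the Galois closure of $\pi_x$ with $S_m$. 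Note also that the branch locus is nonempty, since $|\operatorname{Br}(\pi_x)|=m(m-1)d>0$.

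The remaining step is to identify this Galois group with $\operatorname{Mon}(\pi_x)=\operatorname{im}\rho_{\pi_x}$, where $\rho_{\pi_x}$ is defined on $\pi_1(\mathring C\setminus B)$. I will do this in two parts.

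\begin{itemize}
\item By the Riemann existence theorem, the geometric monodromy group of $\pi_x$ is the image of $\pi_1(C\setminus B)$ acting on a generic fibre.
\item The inclusion $\mathring C\setminus B\hookrightarrow C\setminus B$ is surjective on $\pi_1$, because removing finitely many points from a surface does not shrink the fundamental group's image: every loop can be pushed off $P$. Hence the two images in $S_m$ coincide.
\end{itemize}

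Since $\pi_x$ is unramified over $P$, the loops around punctures contribute nothing new in any case.

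As a backup, in case the appeal to BKP's theorem is considered too black-box, I would give a direct argument. Let $N\subseteq\operatorname{Mon}(\pi_x)$ be the subgroup generated by the local monodromies $\tau_1,\dots,\tau_r$ around the branch points, which are transpositions by simple branching.

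\begin{itemize}
\item $N$ is normal, because the meridians around the branch points generate the kernel of $\pi_1(C\setminus B)\to\pi_1(C)$ as a normal subgroup.
\item Suppose $N$ were not transitive. Its orbits form blocks permuted by $\operatorname{Mon}(\pi_x)$. Quotienting the fibre by these blocks yields an intermediate cover $\Sigma_x\to X'\to C$. The cover $X'\to C$ has trivial local monodromy at every branch point, so it is unramified, and it is nontrivial because the number of blocks exceeds $1$ and transitivity of $\operatorname{Mon}(\pi_x)$ (from connectedness) keeps $X'$ connected.
\item This factorization contradicts genuine ramification. Hence $N$ is transitive.
\item A transitive subgroup of $S_m$ generated by transpositions is all of $S_m$, exactly as in the proof of Corollary~\ref{cor:fullmono}. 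Therefore $\operatorname{Mon}(\pi_x)=S_m$.
\end{itemize}

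The main obstacle I expect is the middle point of this backup argument. One must check carefully that the block quotient $X'$ is a genuine holomorphic unramified cover of $C$, by applying the Riemann existence theorem to the permutation representation on blocks. One must also check that it remains nontrivial and connected after compactifying over $B\cup P$.

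Once $\operatorname{Mon}(\pi_x)=S_m$ is established for every $x$, the equality $B_{G,C,P}^{\mathrm{good}}=B_{G,C,P}^{\mathrm{good},S_m}$ is immediate from the definition.
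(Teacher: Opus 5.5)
Your main argument is correct and follows the same chain the paper relies on: the BNR splitting gives connectedness and irreducibility, the BKP semistability criterion gives genuine ramification, and the BKP theorem then gives $S_m$. Your check that $\pi_1(\mathring C\setminus B)\to\pi_1(C\setminus B)$ is surjective usefully makes explicit the identification of the Galois group with $\operatorname{Mon}(\pi_x)$, which the paper leaves implicit.

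One small repair is needed in your backup argument. Take $N$ to be the normal closure of the $\tau_i$, i.e.\ the image of $\ker\bigl(\pi_1(C\setminus B)\to\pi_1(C)\bigr)$. The subgroup generated by one fixed system $\tau_1,\dots,\tau_r$ need not be normal, so its orbits need not form blocks. The normal closure is still generated by transpositions, and with that choice your argument goes through verbatim.
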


\subsubsection{Hurwitz Equivalence}

Let $\mathring{S}=S\setminus P$ be a decorated surface with a finite puncture set $P$. Let $\pi_i:\Sigma_i\to S$, $i=1,2$, be ramified covers whose ramification sets avoid $P$.

$\pi_1$ and $\pi_2$ are \emph{Hurwitz equivalent relative to $P$} if there exist
orientation-preserving homeomorphisms
$F:\Sigma_1\to\Sigma_2$, $h: S\to S$
such that
$\pi_2\circ F=h\circ\pi_1,$
where
$h|_P=id_P$
and $h$ is isotopic to the identity relative to $P$. Thus the topological surface $S$ is unchanged, but its branch points
are allowed to move. We denote the corresponding class by
$[\pi]_{\mathrm{Hur}}.$

\begin{definition}
Let $r=m(m-1)(2g-2+n)$ be the number of distinct branch points. Define $\mathfrak C_{g,n,m}$ to be the set of Hurwitz-equivalence classes of connected degree-$m$
covers $\pi:\Sigma\to S$, such that
\begin{enumerate}
    \item $\pi$ is simply branched over $r$ distinct points;
    \item $\operatorname{Br}(\pi)\cap P=\varnothing$.
\end{enumerate}
\end{definition}

Fix a complex curve $\mathring{C}=C\setminus P$. Let $\pi_i:\Sigma_i\to C$, $i=1,2$, be ramified covers that are unramified over $P$.
$\pi_1$ and $\pi_2$ are \emph{Hurwitz isomorphic} if there exists an orientation-preserving homeomorphism
$F:\Sigma_1\to \Sigma_2$,
such that
$\pi_2\circ F=\pi_1.$
We denote the corresponding class by
$[\pi]$.

\begin{definition}
Let $H^0_{m,r}(C)$
denote the Hurwitz space of isomorphism classes of connected smooth degree-$m$ covers
$\pi:\Sigma \to C$,
such that
\begin{enumerate}
    \item $\pi$ is simply branched over $r$ distinct points;
    \item
    $
    \operatorname{Mon}(\pi)=S_m.
    $
\end{enumerate}
In addition, define
$H^{0,P}_{m,r}(C)
:=
\left\{
[\pi]\in H^0_{m,r}(C):
\operatorname{Br}(\pi)\cap P=\varnothing
\right\}.$
\end{definition}

\begin{remark}
$H^0_{m,r}(C)$ fixes the target complex curve
\emph{pointwise}, whereas $\mathfrak C_{g,n,m}$ forgets the complex
structure and allows isotopies of the base moving branch values.
\end{remark}

\begin{lemma}
\label{le:hurhit}
Each $x\in B_{G,C,P}^{\mathrm{good}}$ determines a Hurwitz isomorphism class $[\pi_x]\in H^{0,P}_{m,r}(C)$.
\end{lemma}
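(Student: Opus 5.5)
We need to show that each $x\in B_{G,C,P}^{\mathrm{good}}$ gives a connected smooth degree-$m$ cover $\pi_x:\Sigma_x\to C$ that is simply branched over exactly $r=m(m-1)(2g-2+n)$ distinct points, has monodromy group $S_m$, and has no branch points in $P$. Its Hurwitz isomorphism class is then an element of $H^{0,P}_{m,r}(C)$. The plan is to check these conditions one at a time, using results already established in Section 4, and then to observe that the class does not depend on any choices.

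First, by the definition of $B_{G,C,P}^{\mathrm{good}}$, the curve $\Sigma_x$ is smooth, $\pi_x$ is simply branched, and $\operatorname{Br}(\pi_x)\cap P=\varnothing$. The map $\pi_x$ is the restriction to $\Sigma_x$ of the bundle projection $\operatorname{Tot}(K_C(P))\to C$. Since $P_x$ is monic of degree $m$ in $\lambda$, the map $\pi_x$ is finite and proper of degree $m$. Here $\Sigma_x$ is the closed spectral curve in the total space; it is compact because the total space has been compactified fiberwise, or equivalently because the monic equation makes $\pi_x$ proper. Hence $\pi_x$ is a finite holomorphic map of compact Riemann surfaces. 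Connectedness, and in fact irreducibility, follows from the BNR splitting $(\pi_x)_*\mathcal O_{\Sigma_x}\simeq\bigoplus_{i=0}^{m-1}L^{-i}$ together with $\deg L=2g-2+n>0$, which is the Corollary giving $H^0(\Sigma_x,\mathcal O)=\mathbb C$.

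Second, for the number of branch points: by Basok's lemma, $x\in B^{\mathrm{good}}$ means that $\Delta_x$ has only simple zeros on $\mathring C$ and does not vanish at $P$. Its zero divisor, taken as a section of $K_C(P)^{\otimes m(m-1)}$, has degree $m(m-1)(2g-2+n)=r$. Since $\operatorname{ord}_q\Delta_x=\sum_{p\in\pi_x^{-1}(q)}(e_p-1)$, each zero contributes exactly one simple branch point. So $|\operatorname{Br}(\pi_x)|=r$. The monodromy condition $\operatorname{Mon}(\pi_x)=S_m$ is exactly Corollary~\ref{cor:good}, which comes from genuine ramification together with the result of Biswas--Kumar--Parameswaran.

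Finally, well-definedness. The only choice involved is the labeling of a fiber, and a relabeling conjugates the monodromy representation. Two presentations of the same $\Sigma_x$ differ by the identity map on $\Sigma_x$, which commutes with $\pi_x$. So $[\pi_x]$ is a well-defined class with $[\pi_x]\in H^{0,P}_{m,r}(C)$.

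The main obstacle I expect is the compactness and properness issue near the punctures: $\lambda$ has poles in the ordinary trivialization there. I will handle this by working in $\operatorname{Tot}(K_C(P))$, where the $a_i$ are holomorphic sections. Properness then follows from the monic equation over all of $C$, including $P$, so no extra compactification is needed.
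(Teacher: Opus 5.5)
Your proposal is correct and is essentially the argument the paper intends. The paper states this lemma without a separate proof, relying on exactly the facts you assemble: the definition of $B_{G,C,P}^{\mathrm{good}}$, the count $|\operatorname{Br}(\pi_a)|=r$, the BNR connectedness corollary, and Corollary~\ref{cor:good}. This parallels its proof of Lemma~\ref{le:hurweb}.

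The only slip is the phrase about the total space being ``compactified fiberwise.'' The space $\operatorname{Tot}(K_C(P))$ is not compact, and compactness of $\Sigma_x$ comes solely from finiteness of $\pi_x$ (a monic equation over the compact curve $C$), as you correctly say at the end.
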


\begin{lemma}[Riemann Existence Principle\cite{CavalieriMiles}]
Let
$f:Y\to C$
be a finite orientation-preserving topological ramified cover of a
Riemann surface. There is a unique complex structure on $Y$ for which
$f:Y\to C$
is holomorphic.
\end{lemma}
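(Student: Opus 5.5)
Since $f$ is a finite orientation-preserving topological ramified cover, it is an honest covering map away from a finite branch set, and the plan is to pull back the complex structure of $C$ there and then extend it across the finitely many ramification points by hand. Concretely, let $B\subset C$ be the branch set and $R=f^{-1}(B)$. On $Y\setminus R$ the map $f$ is a local homeomorphism onto $C\setminus B$, so for every point I take a small open set $V$ on which $f$ is injective and use $z\circ f|_V$ as a chart, where $z$ is a holomorphic chart of $C$. The transition maps between two such charts are, locally, compositions $z_2\circ z_1^{-1}$ of charts of $C$, hence holomorphic. This gives a Riemann surface structure on $Y\setminus R$ for which $f$ is holomorphic, in fact a local biholomorphism.

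Next I extend the structure across each $p\in R$. Choose a small holomorphic disc $D\ni f(p)$ with $D\cap B=\{f(p)\}$, and let $U$ be the connected component of $f^{-1}(D)$ containing $p$. Then $f:U\setminus\{p\}\to D\setminus\{f(p)\}$ is a connected finite covering of a punctured disc, of some degree $e$. By the classification of coverings of $D^*$ via subgroups of $\pi_1(D^*)\cong\mathbb Z$, it is isomorphic to $w\mapsto w^e$ on the punctured disc. I therefore obtain a homeomorphism $w:U\to\mathbb D$ with $z\circ f=w^e$. This $w$ is holomorphic on $U\setminus\{p\}$ for the structure already built, because $w^e$ is holomorphic there and $w$ is locally a branch of $(z\circ f)^{1/e}$. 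Adjoining $w$ as a chart at $p$ therefore gives compatible charts, and $f$ is holomorphic, given locally by $w\mapsto w^e$.

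For uniqueness, suppose $J'$ is another complex structure on $Y$ making $f$ holomorphic. On $Y\setminus R$, $f$ is a holomorphic local homeomorphism for $J'$, hence a local biholomorphism, so the identity map $(Y,J')\to(Y,J)$ is holomorphic there. The identity is a homeomorphism, hence bounded near each point of $R$, and Riemann's removable singularity theorem extends holomorphicity across $R$. Its inverse is handled the same way, so $J=J'$.

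The only step needing care is identifying the local model at ramification points. This rests on the classification of finite connected covers of the punctured disc, together with checking that the resulting $w$ is holomorphic for the pulled-back structure. Compatibility with the orientation is automatic, because $f$ is orientation-preserving.
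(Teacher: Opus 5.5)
Your proof is correct. The paper gives no argument of its own and only cites Cavalieri--Miles, and your route is the standard proof of that result: pull back charts off the branch locus, use $e$-th root charts coming from the classification of finite connected covers of a punctured disc, and use removable singularities for uniqueness.

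Two details should be made explicit. First, $D$ must be shrunk so that $U\cap f^{-1}(f(p))=\{p\}$; this uses that $f$ is proper with finite fibres, or the local-model clause if that is part of your definition of a ramified cover. Second, the extension of $w$ by $w(p)=0$ is a homeomorphism; continuity at $p$ follows from $|w|^e=|z\circ f|$, and continuity of the inverse follows, e.g.\ by invariance of domain.
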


\begin{corollary}
Let $W$ be an $A_{m-1}$-web on $\mathring{C}=C\setminus P$. Let $\pi_W:\Sigma_W\to C$ be its associated with topological spectral cover, where $\Sigma$ is a compact topological surface after filling in the $m$ punctures over each point of $P$.
By the Riemann existence principle, there is a unique complex structure on
$\Sigma_W$ for which $\pi_W$ is holomorphic.
\end{corollary}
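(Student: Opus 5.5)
The corollary applies the Riemann Existence Principle to the topological spectral cover $\pi_W:\Sigma_W\to C$ built in Subsection~\ref{sssec:231}. The plan has three steps. First, check that $\pi_W$ extends over the punctures to a finite orientation-preserving ramified cover of the compact Riemann surface $C$. Second, apply the lemma. Third, note that uniqueness follows directly from the lemma.

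\textbf{Step 1: compactify the cover.} Recall that the spectral cover map is unramified over the special points. Its degree is $\deg(\pi_W)=\rank(W)=m$, by the lemma $\deg(\widetilde\pi)=\deg(\pi)$ and the definition of an $A_{m-1}$-web. Over a small punctured disc $D^*$ around $p\in P$, the restriction $\pi_W^{-1}(D^*)\to D^*$ is therefore an unramified $m$-sheeted covering. Each special point of $\Sigma_W$ corresponds bijectively to a face path of $\Gamma^\vee$, and each such face path surrounds exactly one special point. I would use this to show that the preimage of $D^*$ is a disjoint union of $m$ punctured discs, each mapping homeomorphically to $D^*$. Filling in these $m$ punctures gives a compact topological surface $\Sigma_W$ and a continuous extension $\pi_W:\Sigma_W\to C$. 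This extension is a local homeomorphism near the filled points.

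\textbf{Step 2: verify the hypotheses.} By the theorem on ramification, $\pi_W$ is ramified only at the $\bullet$-vertices of $\Gamma$, and the ramification index there is the valency minus one. Ideal webs are non-singular, so $\Sigma_W$ is a genuine topological surface. Near each $\bullet$-vertex the map is locally modelled on $z\mapsto z^{e}$. There are finitely many $\bullet$-vertices, so the branch set is finite. Orientation-preservation holds because the attached pieces $F_\gamma\cong\widetilde S_\gamma$ carry the orientation of $\widetilde S$, and the tautological maps are the identity on them. This is compatible with the ribbon structure of $\Gamma^\vee$, and it descends through the free $\pi_1(S)$-quotient. Hence $\pi_W$ is a finite orientation-preserving topological ramified cover of the Riemann surface $C$.

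\textbf{Step 3: pull back the complex structure.} The Riemann Existence Principle now yields a unique complex structure on $\Sigma_W$ making $\pi_W$ holomorphic. Concretely, on the unramified locus the charts are pulled back from $C$. Near a ramification point one uses a local $e$-th root of a chart on $C$. Uniqueness holds because any such structure must agree with these pulled-back charts on the dense unramified locus. It therefore agrees everywhere, by removability of isolated points.

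\textbf{Main obstacle.} The only non-formal step is Step 1, namely that the $m$ sheets over a puncture close up as $m$ separate discs rather than joining into fewer cyclically permuted ones. I would derive this from the ideal-web condition: the region to the right of each strand is a disc containing a single special point. This forces the local monodromy around each $p\in P$ to be trivial. That is precisely the remark that $\pi_W$ is unramified over special points.
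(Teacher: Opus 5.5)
Your proposal is correct and takes the same approach as the paper, which states this corollary without a separate proof as a direct application of the Riemann Existence Principle lemma. The paper relies on the earlier facts that $\pi_W$ is a finite cover of degree $\rank(W)=m$, ramified only at the $\bullet$-vertices and unramified over the special points; your Steps 1--3 spell out those hypothesis checks, including the compactification, orientation-preservation and uniqueness.
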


\begin{lemma}
\label{le:hurweb}
Every spectral cover $\pi_W:\Sigma_W\to C$ constructed from a generic $A_{m-1}$-web $W$ on $C$ determines a Hurwitz isomorphism class $[\pi_W]\in H^{0,P}_{m,r}(C)$.
\end{lemma}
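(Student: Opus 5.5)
To prove Lemma~\ref{le:hurweb}, I would check the four defining conditions of $H^{0,P}_{m,r}(C)$ one at a time for $\pi_W$, and then note that the class $[\pi_W]$ is well defined.

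\textbf{Step 1 (complex structure and smoothness).} The topological spectral cover $\pi_W:\Sigma_W\to C$, after filling the $m$ punctures over each point of $P$, is a finite orientation-preserving ramified cover of a compact Riemann surface. By the Riemann existence principle and the corollary following it, $\Sigma_W$ carries a unique complex structure making $\pi_W$ holomorphic. Hence $\Sigma_W$ is a smooth compact complex curve and $\pi_W$ is a finite morphism.

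\textbf{Step 2 (branching type).} Since $W$ is generic, every $\bullet$-vertex of $\Gamma$ is 3-valent. By Goncharov's theorem on ramification of non-singular spectral webs (ideal webs are non-singular), $\pi_W$ is ramified only at $\bullet$-vertices, each with ramification index $3-1=2$. By the definition of a generic web, the cover is simply branched, so no two $\bullet$-vertices share a branch value. The ramification points lie at $\bullet$-vertices of $\Gamma\subset\mathring{C}$, and $\pi_W$ is unramified over special points, so $\operatorname{Br}(\pi_W)\cap P=\varnothing$. To see that the number of branch points is $r=m(m-1)(2g-2+n)$, I would use Riemann--Hurwitz together with the Euler characteristic count (\ref{eq:RH}). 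Rather than relying on the triangulation count of $\bullet$-vertices, I would argue this way: elementary moves preserve the rank and the total ramification, and by Proposition~\ref{prop:connect} together with the expansion lemma, every generic web is connected to a standard one. Alternatively, $\chi(\Sigma_W)=\chi(\Gamma)$ with punctures is fixed by $\Lambda_\Gamma$ data.

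\textbf{Step 3 (connectedness and monodromy).} Corollary~\ref{cor:fullmono} gives $\operatorname{Mon}(\pi_W)=S_m$. This also yields connectedness, since $S_m$ acts transitively on the fibre.

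\textbf{Step 4 (well-definedness).} Isotopic web representatives, or equivalently different choices made in the construction of Subsection~\ref{sssec:231}, change $\pi_W$ only by a homeomorphism of $\Sigma_W$ over a fixed embedded $W$. Fixing $W$ on $C$ therefore fixes the class $[\pi_W]$ up to Hurwitz isomorphism.

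The main obstacle is Step~2: verifying that the branch count is exactly $r$ for an arbitrary generic web, not only for a standard one. I would handle this through the Riemann--Hurwitz invariance argument, using that the genus of $\Sigma_W$ is a homotopy invariant of $\Gamma$, which is preserved by elementary moves.
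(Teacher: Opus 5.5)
Your proposal is correct and follows essentially the same approach as the paper, which likewise just records that $\pi_W$ is a connected degree-$m$ cover, simply branched over $r=m(m-1)(2g-2+n)$ points, unramified over $P$, with $\operatorname{Mon}(\pi_W)=S_m$, and then applies the Riemann existence principle. Your invariance-of-total-ramification argument for the branch count is a useful addition, since the paper only counts $\bullet$-vertices for webs coming from a triangulation; but correct the first clause of Step~4: isotopic representatives of $W$ generally move the branch values and give Hurwitz \emph{equivalent}, not Hurwitz isomorphic, covers (this is exactly what Corollary~\ref{cor:iso} exploits), so only the fixed embedded $W$ determines $[\pi_W]$.
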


\begin{proof}
$\pi_W:\Sigma_W\to C$
is a connected degree-$m$ cover, simply branched at $r=m(m-1)(2g-2+n)$
points, unramified over $P$, and
$\operatorname{Mon}(\pi_W)=S_m.$ After compactification and application of the Riemann existence principle, it determines a point $[\pi_W]\in H^{0,P}_{m,r}(C).$
\end{proof}

\begin{lemma}
\label{le:irr}
Suppose
$H^0_{m,r}(C)$
is irreducible and
$H^{0,P}_{m,r}(C)\neq\varnothing.$
Then
$H^{0,P}_{m,r}(C)$ is irreducible.
\end{lemma}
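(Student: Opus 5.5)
Since $H^{0,P}_{m,r}(C)$ is defined as a subset of $H^0_{m,r}(C)$, the plan is to show that it is a nonempty Zariski open subset of the irreducible space $H^0_{m,r}(C)$. A nonempty open subset of an irreducible space is irreducible (and dense), so this suffices.

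\textbf{Step 1: algebraic structure.} I would use the branch morphism
\[
\operatorname{br}: H^0_{m,r}(C)\to \operatorname{Sym}^r(C)\setminus\Delta_{\mathrm{big}},\qquad [\pi]\mapsto \operatorname{Br}(\pi),
\]
where $\Delta_{\mathrm{big}}$ is the big diagonal. This map is a finite étale covering; its fibres are the finitely many monodromy tuples $(A_i,B_i,\tau_j)$ modulo simultaneous conjugation. By the Riemann existence principle, this covering structure gives $H^0_{m,r}(C)$ the structure of a quasi-projective variety, and in particular $\operatorname{br}$ is continuous in the Zariski topology.

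\textbf{Step 2: openness of the $P$-avoiding locus.} Inside $\operatorname{Sym}^r(C)$, consider
\[
Z_P=\bigcup_{i=1}^n\{D : p_i\in\operatorname{supp} D\}.
\]
Each set $\{D: p_i\in \operatorname{supp}D\}$ is the image of the closed embedding $\operatorname{Sym}^{r-1}(C)\to\operatorname{Sym}^r(C)$, $D'\mapsto D'+p_i$, so it is a closed subvariety of codimension one. Hence $Z_P$ is a closed proper subvariety. By definition,
\[
H^{0,P}_{m,r}(C)=H^0_{m,r}(C)\setminus \operatorname{br}^{-1}(Z_P),
\]
which is Zariski open because $\operatorname{br}$ is continuous.

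\textbf{Step 3: conclusion.} The set $H^{0,P}_{m,r}(C)$ is nonempty by hypothesis; this is in fact already realized by Lemma~\ref{le:hurweb} or Lemma~\ref{le:hurhit}. Being a nonempty open subset of an irreducible variety, it is irreducible. In the analytic or topological reading this also gives connectedness, since the complement of a proper analytic subset of an irreducible complex space does not disconnect it.

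\textbf{Main obstacle.} The only point requiring care is Step 1, namely that $H^0_{m,r}(C)$ carries an algebraic (or at least complex-analytic) structure for which $\operatorname{br}$ is a morphism. I would cite the standard construction: the Hurwitz space is the finite étale cover of the configuration space determined by the monodromy action of the surface braid group. With that in place the remaining steps are formal. If one prefers to avoid algebraic structure altogether, the same argument runs in the analytic category: $\operatorname{br}^{-1}(Z_P)$ is a proper analytic hypersurface in a connected complex manifold, so its complement is connected and dense.
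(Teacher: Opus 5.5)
Your proof is correct and follows the same route as the paper: the locus of covers with a branch point in $P$ is Zariski closed, so $H^{0,P}_{m,r}(C)$ is a nonempty open subset of the irreducible $H^0_{m,r}(C)$ and hence irreducible. Your branch morphism to $\operatorname{Sym}^r(C)\setminus\Delta_{\mathrm{big}}$ and the closed set $Z_P$ simply spell out the closedness that the paper asserts in one line.
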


\begin{proof}
A branch point in the finite set $P$ is
algebraically closed. Its complement is therefore Zariski open. A nonempty open subset of an irreducible variety is irreducible.
\end{proof}

\begin{theorem}[\cite{Kanev}]
\label{thm:irr}
For a fixed positive-genus target $C$, $H^0_{m,r}(C)$ is irreducible in the range $r\ge\max\{2,2m-4\}$. For genus zero the corresponding full-symmetric simply branched
Hurwitz space is irreducible.
\end{theorem}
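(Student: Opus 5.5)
The statement immediately above is Theorem~\ref{thm:irr} (Kanev): for a fixed positive-genus target $C$, the Hurwitz space $H^0_{m,r}(C)$ of simply branched degree-$m$ covers with full monodromy $S_m$ is irreducible once $r\ge\max\{2,2m-4\}$, and in genus zero the full-symmetric simply branched Hurwitz space is irreducible. I would reduce irreducibility to a connectedness statement and prove that connectedness by braid-group monodromy.

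\emph{Step 1 (reduction to monodromy data).} The branch-point map
\[
H^0_{m,r}(C)\to C^{(r)}\setminus\Delta
\]
to unordered configurations of $r$ distinct points is a finite étale covering, by the Riemann existence principle. The base is smooth and irreducible, so the total space is smooth, and irreducibility is equivalent to connectedness. Connectedness in turn means that $\pi_1(C^{(r)}\setminus\Delta)$, the surface braid group $B_r(C)$, acts transitively on the fiber. This fiber is the set of tuples
\[
(A_1,B_1,\dots,A_g,B_g,\tau_1,\dots,\tau_r)
\]
with each $\tau_j$ a transposition, satisfying $\prod_i[A_i,B_i]\,\tau_1\cdots\tau_r=1$ and generating $S_m$, taken modulo simultaneous conjugation, exactly as in the presentation of $\pi_1(S\setminus B)$ recalled earlier.

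\emph{Step 2 (normal form).} Using only the half-twist generators $\sigma_j$, which act by $(\tau_j,\tau_{j+1})\mapsto(\tau_{j+1},\tau_j^{\tau_{j+1}})$, I would run the Clebsch--Hurwitz argument. It moves the transpositions into the form
\[
(12),(12),\dots,(12),(23),(23),\dots,(m-1\,m),(m-1\,m),
\]
preceded by a block whose product equals $\bigl(\prod_i[A_i,B_i]\bigr)^{-1}$. In genus zero there are no $A_i,B_i$, and this is already the classical Clebsch--Hurwitz theorem, which gives the genus-zero claim.

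\emph{Step 3 (genus $g\ge 1$).} Here the additional generators of $B_r(C)$ drag one branch point around the loops $a_i$ and $b_i$. Their effect is to multiply $A_i$ or $B_i$ by a transposition, up to conjugation. Iterating these moves should reduce every $A_i$ and $B_i$ to the identity, provided there are enough transpositions available to absorb the changes. This is exactly where the hypothesis $r\ge 2m-4$ enters, together with $r\ge2$ so that at least one pair of transpositions can be moved.

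The \textbf{main obstacle} is Step 3: showing that every tuple $(A_i,B_i)$ can be driven to the trivial one by braid moves. My first attempt would be induction on $m$. I would isolate a pair $(m-1\,m),(m-1\,m)$, use the point-pushing generators to strip $m$ out of the supports of the $A_i$ and $B_i$, and then apply induction to the remaining degree-$(m-1)$ data. This is the part of the argument where I am least sure the count of transpositions suffices.
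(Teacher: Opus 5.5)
The paper gives no proof of this statement (it is quoted from Kanev), so your outline has to stand on its own. Step~1 is fine, but Steps~2--3 have a genuine gap.

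First, Step~2 cannot come before Step~3. The half-twists $\sigma_j$ preserve both the product $\tau_1\cdots\tau_r$ and the subgroup $\langle\tau_1,\dots,\tau_r\rangle$, and in positive genus that subgroup need not be transitive. For $m\ge 3$ and $g=1$, take $A_1=(1\,2\,\cdots\,m)$, $B_1=1$ and $\tau_1=\cdots=\tau_r=(1\,2)$ with $r$ even. This is a legitimate point of the fiber with monodromy $S_m$, yet no sequence of half-twists will ever produce $(2\,3)$. So the Clebsch--Hurwitz normal form is only available after the point-pushing moves have made the $\tau_j$ generate $S_m$.

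That step is exactly where the hypothesis $\operatorname{Mon}=S_m$ must be used. In a transitive but imprimitive group, every transposition preserves the block system and therefore swaps two points of the same block. Hence the $\tau_j$, which stay inside the (braid-invariant) monodromy group, can never become transitive. This is why, without the $S_m$ condition, covers factoring through a nontrivial unramified cover of $C$ give extra components. Your outline never invokes primitivity, so as written it cannot tell these cases apart.

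Second, the endpoint of Step~3, namely all $A_i=B_i=1$, does not exist in part of the stated range. If all $A_i,B_i$ are trivial, then $\tau_1\cdots\tau_r=1$ and the $\tau_j$ generate $S_m$. They are then the monodromy of a connected simply branched degree-$m$ cover of $\mathbb P^1$, and Riemann--Hurwitz forces $r\ge 2m-2$; your displayed normal form visibly needs $2(m-1)$ entries. Since $\tau_1\cdots\tau_r=\bigl(\prod_i[A_i,B_i]\bigr)^{-1}$ is even, $r$ is even. So the boundary case $r=2m-4$ with $m\ge 3$ allowed by the statement (e.g.\ $m=3$, $r=2$) is out of reach. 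There every tuple has some nontrivial $A_i$ or $B_i$, so the target normal form must retain nontrivial handle monodromy, and a separate argument is needed to show that all such tuples are braid-equivalent. Thus $r\ge 2m-4$ is not ``exactly where'' your count enters: your strategy can at best give $r\ge 2m-2$.

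Finally, Step~3 is the whole content of the theorem and remains heuristic. In the proposed induction, you split off $(m-1\,m),(m-1\,m)$ and strip $m$ from the supports of the $A_i,B_i$. Nothing guarantees that the residual degree-$(m-1)$ data is still transitive with monodromy $S_{m-1}$, which the inductive hypothesis requires. Nor is it shown that this stripping can be carried out by braid moves while preserving the relation $\prod_i[A_i,B_i]\,\tau_1\cdots\tau_r=1$.
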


\begin{remark}
For the logarithmic Hitchin number
$r=m(m-1)(2g-2+n)$, $d\ge1$,
one has
$m(m-1)\ge2m-4$
for every $m\ge2$. Therefore the required Hurwitz irreducibility bound is automatically
satisfied.
\end{remark}

\begin{remark}
Over $\mathbb C$, irreducibility implies connectedness, and hence
path-connectedness.
Thus any two covers in $H^0_{m,r}(C)$ can be joined by a continuous
path in the Hurwitz space. Along such a path, the branch points move
without colliding and the local ramification types remain fixed.
\end{remark}

\begin{lemma}
\label{le:hur}
If $[f_0]$ and $[f_1]$ lie in the same connected component in
$H^{0,P}_{m,r}(C)$, then
$[f_0]_{\mathrm{Hur}}
=[f_1]_{\mathrm{Hur}}.$
\end{lemma}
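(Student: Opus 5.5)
The plan is to reduce the statement to the classical fact that a continuous path in the configuration space of $r$ distinct unordered points of $\mathring{C}$ lifts to a family of covers. Connectivity in the Hurwitz space of covers with fixed target $C$ gives equivalence up to isotopy of the base rel $P$.

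Let $\gamma:[0,1]\to H^{0,P}_{m,r}(C)$ be a continuous path with $\gamma(0)=[f_0]$ and $\gamma(1)=[f_1]$. If the two classes only lie in the same connected component, I first replace this by a path. The space is a complex analytic variety, being an open subset of a finite étale cover of the configuration space $\operatorname{Conf}_r(\mathring{C})$ via the branch-locus map. Hence connected components coincide with path components.

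Next, consider the branch-locus map $\operatorname{Br}:H^{0,P}_{m,r}(C)\to \operatorname{Conf}_r(\mathring{C})$. This map is a covering map: a cover is locally determined by its branch set together with the monodromy data, which is locally constant. So $\beta(t)=\operatorname{Br}(\gamma(t))$ is a path of $r$ distinct points avoiding $P$.

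By the isotopy extension theorem, applied to the finite-point configuration in the surface, there is an ambient isotopy $h_t:C\to C$ with $h_0=\mathrm{id}$, $h_t|_P=\mathrm{id}_P$ and $h_t(\beta(0))=\beta(t)$. The isotopy can be supported away from $P$, so each $h_t$ is isotopic to the identity relative to $P$.

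Now form the pulled-back family $h_t^{-1}\circ f_t$. Each member is branched exactly over $\beta(0)$. Its monodromy representation $\pi_1(\mathring{C}\setminus\beta(0))\to S_m$ varies continuously in $t$, hence is constant up to conjugation, since $S_m$ is discrete and the lift along the covering $\operatorname{Br}$ transports the fiber labelling continuously.

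Covers with the same branch set and conjugate monodromy are isomorphic over $C$. So there is a homeomorphism $F:\Sigma_0\to\Sigma_1$ with $f_1\circ F=h_1\circ f_0$. This is exactly Hurwitz equivalence relative to $P$, so $[f_0]_{\mathrm{Hur}}=[f_1]_{\mathrm{Hur}}$.

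The main obstacle is justifying that $\operatorname{Br}$ is a covering map, so that the monodromy of $\gamma(t)$ transported along $\beta$ is genuinely the monodromy of the fixed cover twisted by the braid of $h_t$. I would handle this by the standard local triviality argument. Near a configuration $\beta(t_0)$, choose small disjoint discs around the branch points. Any cover in a neighborhood is glued from the same unbranched cover outside the discs and standard local models $z\mapsto z^2$ inside them, and the local homeomorphisms $h_t$ move the discs' centers. Compactness of $[0,1]$ then patches finitely many such local trivializations.
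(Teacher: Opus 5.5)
Your proposal is correct and follows the same route as the paper. Both arguments take a path in $H^{0,P}_{m,r}(C)$, use isotopy extension to realize the branch-point motion by an isotopy $h_t$ fixing $P$, and transport the cover along the path to obtain $F$ with $f_1\circ F=h_1\circ f_0$. Your version additionally justifies the steps the paper compresses into ``parallel transport of the finite covers'': that components are path components, that the branch map is a finite covering, and that a cover over a fixed branch set is determined by its monodromy.
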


\begin{proof}
Choose a path
$[f_t]$, $t\in [0,1]$,
joining the two points. The $r$ branch values move in
$\mathring{C}$
without collisions. By isotopy extension, their trajectories is represented by an isotopy
$h_t:C\to C$, $h_t|_P=id.$
Parallel transport of the finite covers along the path gives a
homeomorphism
$F:X_0\to X_1$
such that
$f_1\circ F
=
h_1\circ f_0.$ This is Hurwitz equivalence relative to $P$.
\end{proof}

\begin{corollary}
\label{cor:equi}
For any $x\in B_{G,C,P}^{\mathrm{good}}$
and any generic $A_{m-1}$-web
$W$ on $C$, the spectral covers determine the same Hurwitz equivalence class in $\mathfrak C_{g,n,m}$.
\end{corollary}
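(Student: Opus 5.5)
The plan is to pass through the Hurwitz space $H^{0,P}_{m,r}(C)$ and then forget the complex structure. Fix $x\in B_{G,C,P}^{\mathrm{good}}$ and a generic $A_{m-1}$-web $W$ on $C$.

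First, by Lemma~\ref{le:hurhit}, $x$ determines a class $[\pi_x]\in H^{0,P}_{m,r}(C)$. Here $r=m(m-1)(2g-2+n)$ is the number of simple zeros of $\Delta_x$, and $\operatorname{Mon}(\pi_x)=S_m$ by Corollary~\ref{cor:good}.

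Second, by Lemma~\ref{le:hurweb}, $W$ determines a class $[\pi_W]\in H^{0,P}_{m,r}(C)$. This uses the count of $m(m-1)(2g-2+n)$ simple branch points computed from the 3-valent $\bullet$-vertices, Corollary~\ref{cor:fullmono} for full monodromy, and the Riemann existence principle to endow $\Sigma_W$ with the complex structure pulled back from $C$. In particular $H^{0,P}_{m,r}(C)\neq\varnothing$, and both covers have the same degree and the same number of branch points.

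Third, I would show that $H^{0,P}_{m,r}(C)$ is connected. By Theorem~\ref{thm:irr}, $H^0_{m,r}(C)$ is irreducible, since the remark following it checks the bound $r\ge\max\{2,2m-4\}$ for the logarithmic Hitchin number. Lemma~\ref{le:irr} then makes $H^{0,P}_{m,r}(C)$ irreducible. An irreducible complex variety is connected in the analytic topology, and it is locally path-connected, hence path-connected. Therefore $[\pi_x]$ and $[\pi_W]$ lie in the same connected component. Lemma~\ref{le:hur} then gives $[\pi_x]_{\mathrm{Hur}}=[\pi_W]_{\mathrm{Hur}}$, that is, they define the same class in $\mathfrak C_{g,n,m}$. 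Along the path the branch values move in $\mathring C$ without colliding and without meeting $P$, and isotopy extension produces $h$ isotopic to the identity rel $P$.

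The main obstacle is the third step, specifically the claim that removing the locus where a branch point hits $P$ keeps the space connected. Lemma~\ref{le:irr} asserts this, but I would double-check that this locus is a proper closed analytic subset, so that its complement is Zariski open and dense. A proper closed subset suffices because a nonempty Zariski open subset of an irreducible variety is irreducible and hence connected, and properness is automatic here since $[\pi_W]$ lies in the complement. I would also check that Kanev's irreducibility result applies with a fixed target $C$ when $g\ge1$, and handle $g=0$ by the classical Clebsch–Hurwitz connectivity, which uses the fact that braid moves act transitively on tuples of transpositions generating $S_m$ with product one.

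A minor point is that the web cover is a priori only topological and defined over $\mathring S$. One must fill in the $m$ points over each puncture, which is possible since the cover is unramified there, before invoking the Riemann existence principle.
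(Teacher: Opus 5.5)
Your proposal is correct and is exactly the argument the paper intends. The corollary is stated without a separate proof, as the immediate consequence of three steps: Lemmas~\ref{le:hurhit} and~\ref{le:hurweb} (both covers give points of $H^{0,P}_{m,r}(C)$), Theorem~\ref{thm:irr} together with Lemma~\ref{le:irr} (irreducibility, hence path-connectedness, of $H^{0,P}_{m,r}(C)$), and Lemma~\ref{le:hur} (a path there yields a Hurwitz equivalence relative to $P$).
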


\begin{corollary}
\label{cor:iso}
Let $\pi_0:\Sigma_0\to C$ be either a Hitchin spectral cover corresponding to a point in $B_{G,C,P}^{\mathrm{good}}$ or a spectral cover associated with a generic $A_{m-1}$-web.
Let $\pi_W:\Sigma_W\to C$
be a spectral cover associated with a generic $A_{m-1}$-web $W$.
After replacing $W$ by a proper isotopic representative, Hurwitz equivalence implies
that the two spectral covers are isomorphic over $\id_C$, i.e., they determine the same Hurwitz isomorphism class in $H_{m,r}^{0,P}(C)$.
\end{corollary}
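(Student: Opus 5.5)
The plan is to reduce the statement to path-connectedness of the Hurwitz space $H^{0,P}_{m,r}(C)$, and then to absorb the resulting base homeomorphism into the choice of web representative.

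\textbf{Step 1: both covers give points of $H^{0,P}_{m,r}(C)$.} If $\pi_0$ comes from $x\in B_{G,C,P}^{\mathrm{good}}$, Lemma~\ref{le:hurhit} applies; by Corollary~\ref{cor:good} its monodromy is $S_m$. If $\pi_0$ comes from a generic web, Lemma~\ref{le:hurweb} applies, using Corollary~\ref{cor:fullmono} and the Riemann existence principle. The same holds for $\pi_W$. In every case the branch number is $r=m(m-1)(2g-2+n)$, and the branch values avoid $P$.

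\textbf{Step 2: join the two points by a path.} By Theorem~\ref{thm:irr} and the remark after it, $H^0_{m,r}(C)$ is irreducible, since $r\ge\max\{2,2m-4\}$. Step~1 shows $H^{0,P}_{m,r}(C)$ is nonempty, so Lemma~\ref{le:irr} makes it irreducible, hence connected and path-connected over $\mathbb C$. Choose a path from $[\pi_W]$ to $[\pi_0]$. Lemma~\ref{le:hur} then gives a Hurwitz equivalence relative to $P$: homeomorphisms $F:\Sigma_W\to\Sigma_0$ and $h:C\to C$ with
\[
\pi_0\circ F=h\circ\pi_W ,
\]
where $h|_P=\id_P$ and $h$ is isotopic to $\id_C$ rel $P$. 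Here $h$ is the endpoint of an ambient isotopy $h_t$ carrying the branch values of $\pi_W$ along the path to those of $\pi_0$.

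\textbf{Step 3: replace $W$ by $W':=h(W)$, with bipartite graph $h(\Gamma)$.}
\begin{itemize}
\item Since $h$ is isotopic to the identity rel $P$, $W'$ lies in the isotopy class of $W$.
\item Since $h$ is a homeomorphism, valences of vertices are preserved, so $W'$ is still a generic $A_{m-1}$-web.
\item The construction of \S\ref{sssec:231} is natural under homeomorphisms of the base. The map $h$ lifts to the universal cover $\widetilde S$, carries strands to strands and the regions $\widetilde S_\gamma$ to the regions $\widetilde S_{h(\gamma)}$, and commutes with the deck action up to the induced automorphism of $\pi_1$. This gives a homeomorphism $G:\Sigma_W\to\Sigma_{W'}$ with $\pi_{W'}\circ G=h\circ\pi_W$.
\end{itemize}
Equivalently, through the conjugate-surface description $\mathbf\Sigma=\Gamma^\vee\cup F_{\varphi_i}$, $G$ is induced by $h$ on the ribbon graph and on the attached faces.

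Then $\Phi:=F\circ G^{-1}:\Sigma_{W'}\to\Sigma_0$ satisfies
\[
\pi_0\circ\Phi=h\circ\pi_W\circ G^{-1}=\pi_{W'} .
\]
So $\pi_{W'}$ and $\pi_0$ are isomorphic over $\id_C$ and define the same class in $H^{0,P}_{m,r}(C)$. If desired, the complex structure on $\Sigma_{W'}$ can be taken to be the one pulled back from $\Sigma_0$, which by uniqueness in the Riemann existence principle is the canonical one.

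\textbf{Main obstacle.} The delicate point is the interface between the topological Hurwitz equivalence and the web. Webs are defined only up to isotopy, yet the actual cover $\pi_W$ depends on the chosen representative: its branch points sit at the $3$-valent $\bullet$-vertices, and it only becomes a point of $H^{0,P}_{m,r}(C)$ once such a representative is fixed. I would first make explicit that the branch set of $\pi_W$ is exactly the set of $\bullet$-vertices of $\Gamma$, by the ramification theorem for non-singular spectral webs. Then I would check that the isotopy $h_t$ supplied by Lemma~\ref{le:hur} can be applied to the whole graph $\Gamma$, not just to its vertices, without changing the combinatorial web. This holds because $h_t$ is an ambient isotopy of $C$ fixing $P$, so it moves every vertex and edge of $\Gamma$ along with the branch values. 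This is precisely the sense of \emph{replacing $W$ by a proper isotopic representative}.
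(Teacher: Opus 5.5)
Your proof is correct and follows the paper's argument: you get a Hurwitz equivalence $(F,h)$ relative to $P$ from path-connectedness of $H^{0,P}_{m,r}(C)$ (the paper simply cites Corollary~\ref{cor:equi} for this step), and then absorb $h$ into the choice of web representative (the paper writes $W'=h^{-1}(W)$ only because its $F$ points the other way). Your explicit check that the spectral-cover construction is natural under base homeomorphisms, giving $G$ with $\pi_{W'}\circ G=h\circ\pi_W$, fills in a step the paper leaves implicit.
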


\begin{proof}
Since the two spectral covers lie in the same Hurwitz equivalence class in $\mathfrak C_{g,n,m}$, by definition there exist
orientation-preserving homeomorphisms $F:\Sigma_0\to\Sigma_W$ and 
$h:C\to C$, 
$h\simeq \id_C$,
such that
$\pi_W\circ F=h\circ\pi_0.$

$W$ is defined only up to isotopy, then the web $W':=h^{-1}(W)$ represents the same isotopy class as $W$. For the corresponding spectral cover $\pi_{W'}:\Sigma_{W'}\to C,$
there exists an orientation-preserving homeomorphism
$F':\Sigma_0\to\Sigma_{W'}$
such that
$\pi_{W'}\circ F'=\pi_0.$
\end{proof}

\begin{corollary}
\label{cor:iso1}
For any point $x_0\in B_{G,C,P}^{\mathrm{good}}$, we can find a generic $A_{m-1}$-web $W$, such that $\pi_0:\Sigma_0\to C$ and $\pi_W:\Sigma_W\to C$ are Hurwitz isomorphic, i.e., there is an orientation-preserving homeomorphism $f:\Sigma_W\to \Sigma_0$, such that $\pi_0\circ f=\pi_W$.

Then there is an induced isomorphism $f_*:H_1(\Sigma_W;\mathbb Z)\to H_1(\Sigma_0;\mathbb Z)$, and its restriction $f_*|_{\Lambda_{\Gamma}^{\circ}}:\ker (H_1(\Sigma_W;\mathbb Z)\to H_1(C;\mathbb Z))\to \ker(H_1(\Sigma_0;\mathbb Z)\to H_1(C;\mathbb Z))$ gives a basis for the $SL_m$ Hitchin fiber lattice at $x_0$.
\end{corollary}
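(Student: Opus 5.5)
The statement to prove is Corollary~\ref{cor:iso1}. I would obtain the web $W$ from the Hurwitz-theoretic results of Section~4 and then transport the lattice $\Lambda_\Gamma^\circ$ through the resulting homeomorphism.

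\textbf{Step 1: choose a generic web.} Fix $x_0\in B_{G,C,P}^{\mathrm{good}}$. Start from any ideal triangulation $T$ of $C$ without self-folded triangles. Its standard $A_{m-1}$-web has only $3$-valent $\bullet$-vertices, so it is generic. Generic webs exist in any case, since by the lemma on expansion moves every $A_{m-1}$-web can be made generic.

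\textbf{Step 2: identify the Hurwitz classes.} By Lemma~\ref{le:hurhit}, $x_0$ determines a class $[\pi_0]\in H^{0,P}_{m,r}(C)$. Here the number of branch points is the same, $r=m(m-1)(2g-2+n)$, on both sides, and $\operatorname{Mon}=S_m$ holds by Corollary~\ref{cor:good}. By Lemma~\ref{le:hurweb}, $W$ gives $[\pi_W]\in H^{0,P}_{m,r}(C)$. Theorem~\ref{thm:irr} together with the subsequent remark shows that $H^0_{m,r}(C)$ is irreducible. Since the space $H^{0,P}_{m,r}(C)$ is nonempty, Lemma~\ref{le:irr} makes it irreducible as well, hence path-connected. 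Lemma~\ref{le:hur} then gives $[\pi_0]_{\mathrm{Hur}}=[\pi_W]_{\mathrm{Hur}}$; this is exactly Corollary~\ref{cor:equi}.

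\textbf{Step 3: get an isomorphism over $\id_C$.} Apply Corollary~\ref{cor:iso}. This replaces $W$ by the isotopic web $h^{-1}(W)$, which does not change the isotopy class, the bipartite graph up to isotopy, or the lattice $\Lambda_\Gamma^\circ$. It produces $F':\Sigma_0\to\Sigma_W$ with $\pi_W\circ F'=\pi_0$. Set $f=F'^{-1}$. Then $\pi_0\circ f=\pi_W$, and $f$ is an orientation-preserving homeomorphism.

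\textbf{Step 4: homology.} The map $f_*$ is an isomorphism on $H_1$ because $f$ is a homeomorphism of the compactified surfaces. The relation $\pi_{0*}\circ f_*=\pi_{W*}$ shows that $f_*$ carries $\ker(\pi_{W*})$ isomorphically onto $\ker(\pi_{0*})$.
\begin{itemize}
\item The target: by the Prym lemma, $\ker(\pi_{0*})\cong H_1(\Prym(\Sigma_0/C);\mathbb Z)$, which is the $SL_m$ fiber lattice.
\item The source: Goncharov's lemma identifies $\ker(H_1(\Sigma_W)\to H_1(\mathring C))$ with $\Lambda_\Gamma^\circ$. That lemma is stated on the punctured surface, so I must compare it with the compact kernel $\ker(H_1(\Sigma_W)\to H_1(C))$.
\end{itemize}

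\textbf{Main obstacle: reconciling punctured and compact kernels.} I would use the Mayer--Vietoris description: $\Lambda_\Gamma^\circ$ is spanned by the lifts of internal face loops, and for an $A_{m-1}$-graph on closed $S$ every face is internal (Proposition~\ref{prop:3.3}). The plan is to show that the map from $\Lambda_\Gamma^\circ$ to $\ker(H_1(\overline\Sigma_W)\to H_1(C))$ induced by filling punctures is an isomorphism. Concretely:
\begin{itemize}
\item \emph{Rank count.} Compare both ranks via Riemann--Hurwitz, equation~\eqref{eq:genus}, and the internal-face count.
\item \emph{Kernel of filling.} The kernel of filling is spanned by the puncture loops (the class $Z_\Gamma$). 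These project to nontrivial classes in $H_1(\mathring C)$ unless they are combined into a full fiber sum, so they meet $\Lambda_\Gamma^\circ$ trivially.
\end{itemize}
If this identification holds only after passing to the saturation or modulo $Z_\Gamma$, I would state the basis claim on the image of $\Lambda_\Gamma^\circ$. That image is what the later monodromy comparison actually uses.
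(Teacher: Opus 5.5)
\textbf{Where your route matches the paper.} Your Steps 1--3 are exactly the paper's route. The paper gives no separate proof of this corollary: it reads it off from Corollary~\ref{cor:equi} and Corollary~\ref{cor:iso} (built on Lemmas~\ref{le:hurhit}, \ref{le:hurweb}, \ref{le:irr}, \ref{le:hur} and Theorem~\ref{thm:irr}) together with the Prym lemma. It then simply writes $f_*|_{\Lambda_\Gamma^\circ}$ on the compact kernel. Write $\mathring\Sigma_W$ for the punctured cover and $\overline\Sigma_W$ for its completion. The paper silently identifies $\ker(H_1(\mathring\Sigma_W)\to H_1(\mathring C))$ with $\ker(H_1(\overline\Sigma_W)\to H_1(C))$, and the same identification reappears in the remark after Theorem~\ref{thm:7com}.

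\textbf{The gap: that identification is false.} You are right that this is the crux, but your proposed resolution fails because the two kernels are not isomorphic. Let $\ell_{i,1},\dots,\ell_{i,m}$ be small loops around the $m$ points of $\pi_W^{-1}(p_i)$. Since $\pi_W$ is unramified over $P$, each maps to the same puncture class $\ell_i\in H_1(\mathring C;\mathbb Z)$. Hence $\ell_{i,j}-\ell_{i,j'}$ lies in $\Lambda_\Gamma^\circ=\ker(H_1(\mathring\Sigma_W)\to H_1(\mathring C))$. It is nonzero there, since the only relation among the $mn\ge 2$ puncture loops of $\mathring\Sigma_W$ is that their total sum vanishes, and it dies under filling. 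So $\Lambda_\Gamma^\circ\cap Z_\Gamma$ is not zero; it has rank exactly $(m-1)n$. Your claim that puncture loops ``meet $\Lambda_\Gamma^\circ$ trivially'' has it backwards.

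The rank count you propose would have shown this. One has $\rank\Lambda_\Gamma^\circ=(m^2-1)(2g-2+n)$, whereas \eqref{eq:genus} gives $2(g_\Sigma-g)=(m^2-1)(2g-2+n)-(m-1)n$. The paper's own examples exhibit the mismatch:
\begin{itemize}
\item For $m=2$ on the four-punctured sphere there are six non-frozen vertices, but the spectral curve is elliptic over $\mathbb P^1$, so the Prym lattice is $\mathbb Z^2$.
\item On the once-punctured torus the Markov quiver has three vertices, while a genus-$2$ double cover of a genus-$1$ curve has Prym lattice of rank $2$.
\end{itemize}

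\textbf{Consequence.} No argument can make $f_*$ carry the face basis to a basis of $H_1(\Prym(\Sigma_0/C);\mathbb Z)$. Your fallback does not rescue the literal statement either: the image of $\Lambda_\Gamma^\circ$ is the whole compact Prym lattice, and $(m^2-1)(2g-2+n)$ vectors cannot form a basis of a lattice of smaller rank.

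\textbf{What your Steps 1--3 do prove.}
\begin{itemize}
\item Because $f$ covers $\id_C$, it maps $\pi_W^{-1}(P)$ onto $\pi_0^{-1}(P)$. So it identifies $\Lambda_\Gamma^\circ$ with $\ker(H_1(\mathring\Sigma_0)\to H_1(\mathring C))$.
\item Composing with the filling map gives a surjection $\Lambda_\Gamma^\circ\to H_1(\Prym(\Sigma_0/C);\mathbb Z)$. For surjectivity, lift a class and subtract a suitable combination of the loops $\ell_{i,1}$.
\item The kernel of this surjection is $\Lambda_\Gamma^\circ\cap Z_\Gamma$. This equals the radical of the intersection (exchange) form on $\Lambda_\Gamma^\circ$, because the form on the compact Prym lattice is nondegenerate over $\mathbb Q$ and $\Lambda_\Gamma^\circ\cap Z_\Gamma$ is saturated.
\end{itemize}
So the Hitchin fiber lattice is $\Lambda_\Gamma^\circ$ modulo the radical of its form, and the face classes only span it. This corrected statement is what the later monodromy comparison can actually use. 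The Dehn twist extends over the filled surface, so the filling map is equivariant and the Picard--Lefschetz transvection descends to the quotient.
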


\subsection{Pullbacks and Descents of Higgs Bundles}

\subsubsection{Markings}
Let $S^{\mathrm{top}}$ be a fixed oriented reference surface. A
\emph{marking} of a complex curve $C$ is an isotopy class of orientation-preserving homeomorphisms $\phi:S^{\mathrm{top}}\xrightarrow{\sim}C$.
A topological marking induces an isomorphism $\phi_*:
H_1(S^{\mathrm{top}};\mathbb Z)
\xrightarrow{\sim}
H_1(C;\mathbb Z)$.
This induced map is called the \emph{homological marking}.

\begin{definition}
Fix an oriented topological ramified cover
$\pi^{\mathrm{top}}:
\Sigma^{\mathrm{top}}
\to C^{\mathrm{top}}$.
A \emph{compatible marking} of a ramified cover
$\pi:\Sigma\to C$
is a pair of orientation-preserving homeomorphisms
$\phi:\Sigma^{\mathrm{top}}\to\Sigma$,
$\psi:C^{\mathrm{top}}\to C$,
such that
$\pi\circ\phi
=\psi\circ\pi^{\mathrm{top}}$.
\end{definition}

\begin{lemma}[\cite{Hatcher}]
Let $\psi:C^{\mathrm{top}}\to C$
be an orientation-preserving homeomorphism carrying the branch points and punctures of $\pi^{\mathrm{top}}$ to those of $\pi$.
A compatible upstairs lift
$\phi:\Sigma^{\mathrm{top}}\to\Sigma$
exists if and only if the two monodromy representations $\rho^\mathrm{top}$ and $\rho$ agree up to sheet
relabelling:
$\rho\circ\psi
=\sigma\,\rho^{\mathrm{top}}\,\sigma^{-1}$
for some
$\sigma\in S_m.$
When such lifts exist, any two compatible lifts of the same $\psi$
differ by a deck transformation.
\end{lemma}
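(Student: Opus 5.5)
The plan is to reduce to the classification of unramified covering spaces, via the monodromy (sheet-permutation) description of covers in \cite{Hatcher}, and then extend over the finitely many removed points.

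\textbf{Step 1: pass to honest coverings.} Let $B^{\mathrm{top}}\subset C^{\mathrm{top}}$ and $B\subset C$ be the branch sets, and $P^{\mathrm{top}},P$ the puncture sets. By hypothesis $\psi(B^{\mathrm{top}}\cup P^{\mathrm{top}})=B\cup P$. Set
\[
U^{\mathrm{top}}:=C^{\mathrm{top}}\setminus(B^{\mathrm{top}}\cup P^{\mathrm{top}}),\qquad U:=C\setminus(B\cup P),
\]
with preimages $V^{\mathrm{top}},V$. Over these sets $\pi^{\mathrm{top}}$ and $\pi$ are genuine $m$-sheeted coverings, and $\psi$ restricts to a homeomorphism $U^{\mathrm{top}}\to U$. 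Fix a base point $x_0\in U^{\mathrm{top}}$ and use $\psi(x_0)$ as base point downstairs, labelling the fibres over both points. The condition $\rho\circ\psi=\sigma\rho^{\mathrm{top}}\sigma^{-1}$ is then read as $\rho\circ\psi_*=\sigma\,\rho^{\mathrm{top}}\sigma^{-1}$ on $\pi_1(U^{\mathrm{top}},x_0)$.

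\textbf{Step 2: existence and necessity over $U$.} The composite $\psi\circ\pi^{\mathrm{top}}:V^{\mathrm{top}}\to U$ is a covering of $U$. Its monodromy is $\rho^{\mathrm{top}}\circ\psi_*^{-1}$, because $\psi$ is a homeomorphism and so carries path lifting to path lifting.

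By the classification of $m$-sheeted coverings, two such coverings of the connected, locally path-connected space $U$ are isomorphic over $\mathrm{id}_U$ if and only if their monodromy actions on the labelled fibre are conjugate in $S_m$. This is equivalent to the stabilizer subgroups of $\pi_1$ being conjugate, and one invokes the lifting criterion. Such an isomorphism is exactly a homeomorphism $\phi^\circ:V^{\mathrm{top}}\to V$ with $\pi\circ\phi^\circ=\psi\circ\pi^{\mathrm{top}}$. The element $\sigma$ records where $\phi^\circ$ sends the labelled fibre.

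The converse is direct. If $\phi$ exists, then for a loop $\gamma$ in $U^{\mathrm{top}}$ the map $\phi$ carries the lifts of $\gamma$ to the lifts of $\psi\circ\gamma$. Hence the permutations agree after the relabelling $\sigma$ induced by $\phi$ on the fibre.

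\textbf{Step 3: extension over the removed points (main obstacle).} I expect this to be the only non-formal step. Take a point $q\in B^{\mathrm{top}}\cup P^{\mathrm{top}}$ and a small disc $D\ni q$ with $\psi(D)$ a disc around $\psi(q)$. The components of $\pi^{\mathrm{top}\,-1}(D\setminus q)$ are punctured discs, each mapping to $D\setminus q$ like $z\mapsto z^{e}$. The number of components and the indices $e$ are the cycle type of the local monodromy, and $\phi^\circ$ matches them because the monodromies are conjugate.

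On each such component, $\phi^\circ$ is a homeomorphism of punctured discs commuting with the projections, which are proper. It therefore sends sequences tending to the puncture to sequences tending to the corresponding puncture. Setting $\phi$ at the filled point equal to the unique preimage point of $\psi(q)$ in the matching component gives a continuous bijection of compact Hausdorff spaces, hence a homeomorphism. Orientation preservation holds on the dense set $V^{\mathrm{top}}$, where $\phi^\circ$ is locally $\psi$ composed with local inverses of the covering maps, so it persists after extension.

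\textbf{Step 4: uniqueness up to deck transformations.} If $\phi,\phi'$ are two compatible lifts of the same $\psi$, then $\tau:=\phi'\circ\phi^{-1}$ satisfies $\pi\circ\tau=\pi$. On $V$ it is therefore a covering automorphism of $\pi|_V$, and by the argument of Step 3 it extends uniquely to the filled points. Thus $\tau$ is a deck transformation of $\pi$, and $\phi'=\tau\circ\phi$.
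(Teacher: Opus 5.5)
Your proposal is correct. The paper gives no proof of its own and only cites \cite{Hatcher}; your argument (classify the unramified coverings over $C\setminus(B\cup P)$ by their monodromy via the lifting criterion, then extend $\phi^\circ$ over the finitely many removed points using properness and the local $z\mapsto z^{e}$ model) is exactly the standard covering-space argument that citation stands for, and the extension in Step 4 is unnecessary since $\tau=\phi'\circ\phi^{-1}$ is already a homeomorphism of all of $\Sigma$ with $\pi\circ\tau=\pi$, hence a deck transformation by definition.
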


Let us assign a marking $f:S_{g,n}=(S;p_1,\ldots,p_n)\to(C;q_1,\ldots,q_n)$, where $f(p_i)=q_i$ and $S_{g,n}$ is a given topological surface. The complex structure on $C$ pulls back to
$J_S=f^*J_C.$ We assign to $(C,P,f)$ a generic $A_{m-1}$-web, and hence obtain a topological spectral cover $\pi_{W}:\Sigma_{W}\to S_{g,n}$. Thus there is a unique marking $f_\Sigma:\Sigma_{W}\to \Sigma$ satisfying the following commutative diagram:
\begin{equation}
\begin{tikzcd}
\Sigma_{W}
\arrow[r, "f_\Sigma"]
\arrow[d, "\pi_{W}"]
&
\Sigma
\arrow[d, "\pi"]
\\
S_{g.n}
\arrow[r, "f"]
&
C.
\end{tikzcd}
\end{equation}

Let $p:\widetilde{S_{g,n}}\to S_{g,n}$ be the topological universal cover, then $\widetilde{S_{g,n}}$ carries the unique complex
structure
$J_{\widetilde{S_{g,n}}}=p^*J_S$. Denote $\widetilde{S_{g,n}}$ with $J_{\widetilde{S_{g,n}}}$ by $\widetilde{C}$, and the lifted marking by $\widetilde{f}:\widetilde{S_{g,n}}\to \widetilde{C}$. $f\circ p:\widetilde{C}\to C$ is holomorphic and locally biholomorphic.
Every deck transformation
$\gamma\in\Gamma:=\pi_1(S)$
acts biholomorphically on $\widetilde{C}$, and
$C\cong \widetilde{C}/\Gamma.$
We could obtain the following commutative diagram:
\begin{equation}
\begin{tikzcd}
\widetilde{\Sigma}_{\widetilde{W}}
\arrow[r, "\widetilde{f_\Sigma}"]
\arrow[d, "\widetilde{\pi}_W"]
&
\widetilde{\Sigma}
\arrow[d, "\widetilde{\pi}"]
\\
\widetilde{S_{g,n}}
\arrow[r, "\widetilde{f}"]
&
\widetilde{C}.
\end{tikzcd}
\end{equation}

\subsubsection{Pullbacks of Higgs Bundles}
Let $E\to C$ be a rank-$r$ holomorphic vector bundle on $C$. Its pullback to the universal cover $\widetilde{C}$ is
$\widetilde E:=p^*E,$
with fibers
$\widetilde E_u=E_{p(u)}$, $u\in \widetilde{C}$.
Intrinsically,
$p^*E
=
\{(u,v):v\in E_{p(u)}\}$ \cite{GRR14}.

For every deck transformation $\gamma\in\Gamma$,
$\gamma^*(p^*E)\cong p^*E.$
If $E$ is flat and
$\rho:\Gamma\to GL_r(\mathbb C)$
is its monodromy representation, then after choosing a parallel trivialization,
$p^*E\cong \widetilde{C}\times\mathbb C^r,$
and
$\gamma(u,v)
=
(\gamma u,\rho(\gamma)v).$
Thus
$E\cong(\widetilde{C}\times\mathbb C^r)/\Gamma.$

Let $\phi:E\to E\otimes M$
be an $M$-valued Higgs field, where $M=K_C(P)$.
Its pullback is \cite{GRR14}
\begin{equation}
p^*\phi:
p^*E\to p^*E\otimes p^*M.
\end{equation}
If $P=0$, then $p$ is locally biholomorphic,
$p^*K_C\simeq K_{\widetilde{C}}.$
In the logarithmic case,
$p^*K_C(P)\simeq K_{\widetilde{C}}(p^{-1}P).$
Locally, if
$\phi=A(z)\,dz$
and $z=p(w)$, then
$p^*\phi=A(p(w))\,p'(w)\,dw.$
The pulled-back Higgs bundle
$(\widetilde E,\widetilde\phi)
:=
(p^*E,p^*\phi)$
is $\Gamma$-equivariant.

\begin{lemma}
Let $\pi:\Sigma\to C$ be the spectral curve of $(E,\phi)$, with spectral line bundle $\mathcal L\to\Sigma$
and tautological section
$\lambda\in H^0(\Sigma,\pi^*M).$
The total space satisfies
$\operatorname{Tot}(p^*M)
\simeq
\operatorname{Tot}(M)\times_C \widetilde{C}.$
Since determinants commute with pullback,
$\det(\lambda-p^*\phi)
=
p^*\det(\lambda-\phi).$
Hence the spectral data of the pulled-back Higgs bundle $(p^*E,p^*\phi)$ are
\begin{equation}
\widetilde\Sigma=\Sigma\times_C \widetilde{C},
\qquad
\widetilde{\mathcal L}=q^*\mathcal L,
\qquad
\widetilde\lambda=q^*\lambda,
\end{equation}
where
\begin{equation}
\label{eq:car}
\begin{tikzcd}
\widetilde\Sigma
\arrow[r, "q"]
\arrow[d, "\widetilde\pi"]
&
\Sigma
\arrow[d, "\pi"]
\\
\widetilde{C}
\arrow[r, "p"]
&
C
\end{tikzcd}
\end{equation}
is Cartesian.
Moreover,
$\widetilde\pi_*\widetilde{\mathcal L}\simeq p^*E,$
and
$p^*\phi
=
\widetilde\pi_*
(\text{multiplication by }\widetilde\lambda).$
\end{lemma}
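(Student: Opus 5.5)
The plan is to reduce everything to base change along the local biholomorphism $p:\widetilde{C}\to C$, using that $\pi:\Sigma\to C$ is finite, hence affine, so all objects are determined by sheaves of $\mathcal O_C$-algebras and modules, and these pull back compatibly.

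\textbf{Step 1 (total spaces).} For any line bundle $M$ on $C$, $\operatorname{Tot}(M)=\operatorname{Spec}_C\operatorname{Sym}(M^{-1})$. Relative $\operatorname{Spec}$ commutes with base change, so
\[
\operatorname{Tot}(p^*M)=\operatorname{Spec}_{\widetilde C}\operatorname{Sym}(p^*M^{-1})\simeq \operatorname{Tot}(M)\times_C\widetilde C .
\]
Under this identification the tautological section of $p^*M$ is the pullback of the tautological section $\lambda$. Since $p$ is a local isomorphism and $M=K_C(P)$, one also has $p^*M\simeq K_{\widetilde C}(p^{-1}P)$, although this is not needed for the statement.

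\textbf{Step 2 (characteristic polynomial and spectral curve).} Locally trivialize $E$ and $M$ on $U\subset C$, so that $\phi=A(z)\,dz$. On $p^{-1}(U)$ the bundle $p^*E$ has the pulled-back frame, and $p^*\phi$ has matrix $A\circ p$. The coefficients of $\det(\lambda I-p^*\phi)$ are the invariant polynomials of $A\circ p$, that is, $p^*a_i$. Hence $\det(\lambda-p^*\phi)=p^*\det(\lambda-\phi)$ as sections of $p^*M^{\otimes m}$ on $\operatorname{Tot}(p^*M)$. Equivalently, the local spectral algebras satisfy
\[
\mathcal O_{\widetilde C}[\lambda]/(p^*P_a)=p^*\bigl(\mathcal O_C[\lambda]/(P_a)\bigr).
\]
So the zero locus $\widetilde\Sigma$ is $\Sigma\times_{\operatorname{Tot}(M)}\operatorname{Tot}(p^*M)=\Sigma\times_C\widetilde C$, and the square \eqref{eq:car} is Cartesian by construction. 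Because $p$ is étale, $q$ is étale as well, so $\widetilde\Sigma$ is smooth whenever $\Sigma$ is. The section $\widetilde\lambda$ is the restriction of the tautological section, which by Step 1 equals $q^*\lambda$.

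\textbf{Step 3 (spectral sheaf and Higgs field).} Set $\widetilde{\mathcal L}:=q^*\mathcal L$. Since $\pi$ is affine and $p$ is flat, being a local homeomorphism, base change gives
\[
\widetilde\pi_*q^*\mathcal L\simeq p^*\pi_*\mathcal L\simeq p^*E .
\]
Concretely, this can be checked on $U$, where both sides are the module $p^*\bigl(\mathcal L(\pi^{-1}U)\bigr)$ over the algebra $p^*\bigl(\mathcal O_C[\lambda]/(P_a)\bigr)$. Base change is natural in module endomorphisms. Applying it to multiplication by $\lambda$, $\mathcal L\to\mathcal L\otimes\pi^*M$, and using the projection formula on both sides identifies $\widetilde\pi_*(\cdot\,\widetilde\lambda)$ with $p^*\pi_*(\cdot\,\lambda)=p^*\phi$. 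Finally, the $\Gamma$-equivariance of all these data follows because $p\circ\gamma=p$ for every deck transformation $\gamma$.

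\textbf{Main obstacle.} The only delicate point is the validity of base change for the non-algebraic map $p$, whose source $\widetilde C$ is non-compact. I would handle this by working in the analytic category and checking everything locally on $U$ and on the sheets of $p^{-1}(U)\cong\bigsqcup U$. There $p$ is literally a disjoint union of isomorphisms, so every identification above is tautological on each sheet. The local identifications then glue, because they are canonical and the transition data on $C$ pull back to the transition data on $\widetilde C$.
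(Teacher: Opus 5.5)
Your proposal is correct and follows the paper's own route. The paper argues inline that $\operatorname{Tot}(p^*M)\simeq\operatorname{Tot}(M)\times_C\widetilde{C}$ and $\det(\lambda-p^*\phi)=p^*\det(\lambda-\phi)$ give the Cartesian square, and it simply asserts $\widetilde{\mathcal L}=q^*\mathcal L$, $\widetilde\lambda=q^*\lambda$ and the pushforward identities; your Step 3 supplies the justification it leaves out: base change for the finite map $\pi$, checked sheet by sheet on $p^{-1}(U)\cong\bigsqcup U$, plus naturality applied to multiplication by $\lambda$, which shows $\widetilde\pi_*q^*\mathcal L\simeq p^*E$ as modules over the spectral algebra and hence that $q^*\mathcal L$ is the spectral sheaf.
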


\begin{remark}
We can see from the Diagram~\ref{eq:car} and the construction of $\widetilde{\Sigma}_W$ in section~\ref{sssec:231}:
\begin{equation}
\label{eq:con}
\begin{tikzcd}
\widetilde{\Sigma}_{\widetilde{W}}
\arrow[r, "q"]
\arrow[d, "\widetilde{\pi}_W"']
&
\Sigma_{W}
\arrow[d, "\pi_W"]
\\
\widetilde{S_{g,n}}
\arrow[r, "p"]
&
S_{g,n},
\end{tikzcd}
\end{equation}
that the topological spaces in Diagram~\ref{eq:con} is the underlying topological surface of the complex curves in Diagram~\ref{eq:car}.
\end{remark}

\subsubsection{Descents of Higgs Bundles}
\begin{proposition}
\label{prop:descent}
Suppose
$C=\widetilde{C}/\Gamma,$
where
$\Gamma\subset\operatorname{Aut}_{\mathrm{hol}}(\widetilde{C})$
acts freely and properly discontinuously. Let
$\Gamma'\subset\operatorname{Aut}_{\mathrm{hol}}(\widetilde{C})$
be another such group, and set
$C':=\widetilde{C}/\Gamma'.$
Then the pulled-back Higgs bundle
$(\widetilde E,\widetilde\phi)$
descends to a Higgs bundle
$(E',\phi')$
on $C'$ if and only if it admits a compatible
$\Gamma'$-equivariant structure.
\end{proposition}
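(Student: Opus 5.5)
The statement to prove is Proposition~\ref{prop:descent}: $(\widetilde E,\widetilde\phi)$ descends to $C'$ if and only if it carries a compatible $\Gamma'$-equivariant structure. I would treat it as an instance of descent along a Galois covering. The key point is that $\widetilde{C}\to C'=\widetilde{C}/\Gamma'$ is a holomorphic unramified covering with deck group $\Gamma'$, because $\Gamma'$ acts freely and properly discontinuously. Holomorphic vector bundles on $C'$ are therefore equivalent to $\Gamma'$-equivariant holomorphic vector bundles on $\widetilde{C}$. The proof then splits into the two implications, plus a check that the Higgs field and the twisting bundle $K(P)$ descend as well.

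\emph{Necessity.} Suppose $(\widetilde E,\widetilde\phi)\cong(q'^*E',q'^*\phi')$, where $q':\widetilde{C}\to C'$ is the quotient map. For every $\gamma\in\Gamma'$ we have $q'\circ\gamma=q'$. This gives canonical isomorphisms $\gamma^*q'^*E'\cong q'^*E'$, and these satisfy the cocycle condition $\theta_{\gamma\delta}=\delta^*\theta_\gamma\circ\theta_\delta$ automatically. Since $\widetilde\phi$ is pulled back from $C'$, it commutes with these isomorphisms. This is the compatible $\Gamma'$-equivariant structure.

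\emph{Sufficiency.} Given a linearization $\{\theta_\gamma\}$, define $E':=\widetilde E/\Gamma'$, where $\gamma$ acts by $(u,v)\mapsto(\gamma u,\theta_\gamma v)$. Because the action on $\widetilde{C}$ is free and properly discontinuous, each point of $C'$ has an evenly covered neighbourhood $U$ with $q'^{-1}(U)=\bigsqcup_{\gamma}\gamma\widetilde U$. Restricting $\widetilde E$ to one sheet $\widetilde U$ gives a local trivialization of $E'$. The transition functions are holomorphic because the $\theta_\gamma$ are holomorphic bundle maps, so $E'$ is a holomorphic vector bundle and $q'^*E'\cong\widetilde E$ equivariantly.

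\emph{Descent of the Higgs field.} This is the step I expect to need the most care. The twisting bundle must descend: one needs $p^*M\cong K_{\widetilde{C}}(p^{-1}P)$ with its canonical $\Gamma'$-equivariant structure coming from the holomorphic action on differentials. This in turn requires the lifted puncture set $p^{-1}P$ to be $\Gamma'$-invariant, and I would state that as part of ``compatible''. Then $M':=K_{C'}(P')$, with $P'$ the image of $p^{-1}P$, satisfies $q'^*M'\cong p^*M$ equivariantly. Compatibility of $\widetilde\phi$ means $\widetilde\phi\circ\theta_\gamma=(\theta_\gamma\otimes\gamma^*)\circ\gamma^*\widetilde\phi$. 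So $\widetilde\phi$ is a $\Gamma'$-invariant section of $\operatorname{End}(\widetilde E)\otimes p^*M$. Invariant sections descend sheet by sheet on evenly covered sets and give $\phi'\in H^0(C',\operatorname{End}(E')\otimes M')$. Holomorphicity is local, and near the punctures the pole order is preserved because $q'$ is a local biholomorphism.

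Finally, both constructions are inverse to each other up to canonical isomorphism, which gives the ``if and only if''. In the $SL_m$ case I would add that the determinant trivialization and $\tr\widetilde\phi=0$ descend by the same invariance argument.
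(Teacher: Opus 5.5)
Your Galois-descent argument is correct and matches what the paper has: the paper gives no separate proof, only the remark after the statement that $E'\simeq\widetilde E/\Gamma'$, $\phi'\simeq\widetilde\phi/\Gamma'$, with $M'$ the line bundle descended from $\widetilde M$, which is exactly your sufficiency construction. By making the $\Gamma'$-equivariance of the twisting bundle part of ``compatible'' (via $\Gamma'$-invariance of $p^{-1}P$, which is what you need to get $M'=K_{C'}(P')$ with its canonical linearization), you state explicitly a hypothesis the paper uses without saying so.
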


If the condition in Proposition~\ref{prop:descent} hold, then $E'\simeq \widetilde E/\Gamma'$, $\phi'\simeq \widetilde\phi/\Gamma'$.
Suppose moreover that the spectral curve of
$(\widetilde E,\widetilde\phi)$ is $\widetilde\pi:
\widetilde\Sigma\to \widetilde{C}$. 
Then the equivariant structure induces a compatible action of
$\Gamma'$ on $\widetilde\Sigma$, and $\Sigma':=\widetilde\Sigma/\Gamma'$
is the spectral curve of $(E',\phi')$. Thus there is a commutative
diagram
\begin{equation}
\begin{tikzcd}
\widetilde{\Sigma}
\arrow[r, "\widetilde{q'}"]
\arrow[d, "\widetilde{\pi}"'] & \Sigma'
\arrow[d, "\pi'"] \\
\widetilde{C} \arrow[r, "q'"'] & C'
\end{tikzcd}
\end{equation}

If, in addition, the spectral line bundle $\widetilde{\mathcal L}\to\widetilde\Sigma$ admits a compatible $\Gamma'$-linearization, then it descends to a line
bundle $\mathcal L'\to\Sigma'$.
If the tautological section $\widetilde\lambda
\in
H^0\!\left(
\widetilde\Sigma,
\widetilde\pi^*\widetilde M
\right)$
is $\Gamma'$-invariant, then it descends to the tautological section $\lambda'
\in
H^0\!\left(
\Sigma',
(\pi')^*M'
\right)$,
where $M'\to C'$ is the line bundle descended from $\widetilde M$.

Consequently, the spectral correspondence gives
$E'\simeq\pi'_*\mathcal L'$,
$\phi'=\pi'_*
\bigl(\text{multiplication by}\lambda'
\bigr)$.

\begin{remark}[Special case one]
If
$\Gamma'\subset\Gamma,$
then equivariance is automatic, and
$C'=\widetilde{C}/\Gamma'\to C$
is a covering. In this case,
$(E',\phi')=q^*(E,\phi),$
and $\Sigma'=\Sigma\times_C C'$, $\mathcal L'=\operatorname{pr}_\Sigma^*\mathcal L$, $\lambda'=\operatorname{pr}_\Sigma^*\lambda$.
\end{remark}

\begin{remark}[Special case two]
If
$
\Gamma'=h\Gamma h^{-1}$, $h\in\operatorname{Aut}_{\mathrm{hol}}(\widetilde{C})$, then
$\widetilde{C}/\Gamma\cong \widetilde{C}/\Gamma'$
by
$[z]_\Gamma\mapsto[h(z)]_{\Gamma'}.$
Thus conjugate deck groups define biholomorphic unmarked curves.
\end{remark}

\begin{remark}
For $h\in\operatorname{Aut}_{\mathrm{hol}}(\widetilde{C})$, if $h\Gamma h^{-1}=\Gamma,$
then $h$ descends to a biholomorphic automorphism
$\overline h:C\to C,$
whose isotopy class defines an element in the mapping class group
$[\overline h]\in\operatorname{MCG}(S).$
In the hyperbolic case,
$\operatorname{Aut}(C)
\cong
N_{\operatorname{Aut}_{\mathrm{hol}}(\widetilde{C})}(\Gamma)/\Gamma,$
which is generally only a finite subgroup of
$\operatorname{MCG}(S).$

If
$h\Gamma h^{-1}=\Gamma'\neq\Gamma,$
then $h$ gives a biholomorphism between two quotient curves $\overline h:C=\widetilde{C}/\Gamma\to C'=\widetilde{C}/\Gamma'$. After choosing markings
$f:S\to C$, and $f':S\to C',$
one obtains the mapping class
$\left[
(f')^{-1}\circ\overline h\circ f
\right]
\in\operatorname{MCG}(S).$
\end{remark}

\subsection{Ehresmann Connection and Gauss--Manin Transport}

Let $p:E\to B$ be a topological fiber bundle with typical fiber $F$.
Given a path $\gamma:[0,1]\to B$,
the pullback bundle is
$\gamma^*E
=
\left\{
(s,e)\in[0,1]\times E:
p(e)=\gamma(s)
\right\},$
with projection
$\gamma^*E\to[0,1],$ $(s,e)\mapsto s.$
Its fiber over $s$ is $(\gamma^*E)_s\cong E_{\gamma(s)}.$

\begin{definition}
Let $p:E\to B$
be a smooth map between smooth manifolds.
The map $p$ is called \emph{smooth} if it is a submersion, i.e. if for every $x\in E$ the differential
$dp_x:T_xE\to T_{p(x)}B$
is surjective.
\end{definition}

\begin{definition}
Let $p:E\to B$
be a smooth map between smooth manifolds.
The map $p$ is called \emph{proper} if the inverse image of every
compact subset of $B$ is compact.
\end{definition}

\begin{definition}
A \emph{smooth proper family} is a proper smooth submersion
$p:E\to B$.
For each $b\in B$, the fiber
$E_b:=p^{-1}(b)$
is a compact smooth manifold.
\end{definition}

\subsubsection{Ehresmann Connection}

\begin{definition}
Let $p:E\to B$
be a smooth submersion. Its vertical tangent bundle is
$V:=\ker(dp)\subset TE$.
An \emph{Ehresmann connection} is a choice of a horizontal subbundle
$H\subset TE$
such that
$TE=V\oplus H$.
A path
$\widetilde\gamma:[0,1]\to E$
is horizontal if
$\dot{\widetilde\gamma}(s)\in H_{\widetilde\gamma(s)}$
for all $s$.
\end{definition}

\begin{definition}
Let $H$ be an Ehresmann connection and let $\gamma:[0,1]\to B$
be a smooth path. Assume that the horizontal lift through every
$x\in E_{\gamma(0)}$ exists on the whole interval $[0,1]$.
Let
$\widetilde\gamma_x$
be the unique horizontal lift satisfying
$p\circ\widetilde\gamma_x=\gamma$,
$\widetilde\gamma_x(0)=x$.

The \emph{Ehresmann parallel transport} is
$T_\gamma^H:
E_{\gamma(0)}
\to
E_{\gamma(1)}$,
$T_\gamma^H(x)
=
\widetilde\gamma_x(1)$.
It is a diffeomorphism, with inverse given by parallel transport along
the reversed path. In particular, if $\gamma(0)=\gamma(1)$, then
$T_\gamma^H$ is a self-diffeomorphism of the fiber.
\end{definition}

\begin{theorem}[Ehresmann Fibration Theorem]
Let $p:E\to B$
be a proper smooth submersion.
Then $p:E\to B$
is a locally trivial smooth fiber bundle.
\end{theorem}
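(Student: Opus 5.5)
The plan is the standard argument: build an Ehresmann connection, lift coordinate vector fields horizontally, and use properness to integrate their flows, giving a local trivialization. Local triviality is a local statement on $B$, so I fix $b_0\in B$ and a coordinate chart $x=(x_1,\dots,x_k):U\xrightarrow{\sim}(-2,2)^k$ centred at $b_0$. It then suffices to produce a diffeomorphism $p^{-1}(V)\cong V\times E_{b_0}$ over $V=x^{-1}((-1,1)^k)$.

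\textbf{Step 1 (connection).} Choose a Riemannian metric on $E$ by a partition of unity, and set $H:=V^{\perp}$, where $V=\ker(dp)$ is the vertical bundle. Since $p$ is a submersion, $dp|_{H}:H_e\to T_{p(e)}B$ is an isomorphism at every $e$, so $H$ is an Ehresmann connection in the sense of the definition above. Let $\widetilde X_i$ be the unique horizontal vector field on $p^{-1}(U)$ with $dp(\widetilde X_i)=\partial/\partial x_i$. It is smooth because $H$ and $dp|_H^{-1}$ vary smoothly.

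\textbf{Step 2 (completeness via properness).} This is the step I expect to be the main obstacle; everything else is formal. The integral curves of $\widetilde X_i$ project to the straight coordinate lines $t\mapsto x^{-1}(x(b)+te_i)$. I need the flow of $\widetilde X_i$ starting in $p^{-1}(\overline V)$ to exist for all $|t|\le 1$. Suppose a maximal integral curve starting over $\overline V$ stopped at some time $t_*<1$. Then on $[0,t_*)$ it stays inside $p^{-1}(K)$, where $K=x^{-1}([-1.5,1.5]^k)$ is compact, because its projection moves at unit coordinate speed. By properness $p^{-1}(K)$ is compact. The escape lemma for flows says an integral curve with bounded maximal time cannot remain in a compact set, so this is a contradiction, and the flow exists for all $|t|\le 1$ (indeed up to $|t|<1.5$). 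This is exactly where properness is used; without it, fibers could ``run off to infinity''.

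\textbf{Step 3 (trivialization).} Let $\theta^i_t$ denote the flow of $\widetilde X_i$. Define
\[
\Psi:V\times E_{b_0}\to p^{-1}(V),\qquad \Psi(b,e)=\theta^{k}_{x_k(b)}\circ\cdots\circ\theta^{1}_{x_1(b)}(e).
\]
This is the Ehresmann parallel transport $T^H_\gamma$ along the piecewise coordinate path $\gamma$ from $b_0$ to $b$, so $p\circ\Psi(b,e)=b$ and $\Psi$ is smooth. Its inverse is
\[
e'\mapsto\Bigl(p(e'),\ \theta^{1}_{-x_1(p(e'))}\circ\cdots\circ\theta^{k}_{-x_k(p(e'))}(e')\Bigr),
\]
which is smooth for the same reason. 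Each intermediate point stays over $V$, so Step 2 applies to every flow involved. Hence $\Psi$ is a diffeomorphism commuting with the projections to $V$, which is local triviality near $b_0$.

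\textbf{Step 4 (global conclusion).} Since $b_0$ was arbitrary, $p$ is a locally trivial bundle. If $B$ is connected, chaining these trivializations along a path shows all fibers are diffeomorphic to a fixed $F$. As a by-product, this construction is what will later give the Gauss--Manin transport along meridians in $B^{\mathrm{good}}_{G,C,P}$.
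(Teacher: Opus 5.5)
The paper gives no proof of this theorem. It quotes it as a standard result and uses it only to get local triviality of the spectral-curve family over $B^{\mathrm{good}}_{G,C,P}$, so there is no alternative argument to compare against. Your route is the standard textbook proof: take the orthogonal complement as horizontal bundle, lift the coordinate fields, use properness with the escape lemma, and compose the flows. Steps 1, 3 and 4 are sound. In particular, you correctly invert the flows in reverse order, which matters because the lifts $\widetilde X_i$ need not commute.

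Step 2, however, asserts something false. Take $b\in\overline V$ with $x_i(b)=1$. The integral curve of $\widetilde X_i$ through a point of $E_b$ projects to $s\mapsto x^{-1}(x(b)+se_i)$, which reaches $x_i=2$, the edge of the chart, at $s=1$. Since $\widetilde X_i$ is only defined on $p^{-1}(U)$, this maximal integral curve exists only for $s<1$. So the flow from $p^{-1}(\overline V)$ does not exist for all $|t|\le 1$, and certainly not for $|t|<1.5$. Your justification fails at the same point: for $t\in(1/2,1)$ this curve lies over $x_i=1+t>1.5$, outside $K=x^{-1}([-1.5,1.5]^k)$, so the escape lemma gives no contradiction.

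The repair is easy, because Step 3 never needs that range. Every flow used in $\Psi$ and in its inverse starts over a point $y\in V$. It runs for a time $T$ such that the coordinate segment from $x(y)$ to $x(y)+Te_i$ stays in $(-1,1)^k$. For such flows your argument works with $K=\overline V$: if the maximal curve stopped before time $T$, it would up to then lie in the compact set $p^{-1}(\overline V)$, contradicting the escape lemma. Alternatively, take the chart onto $(-3,3)^k$ and $K=x^{-1}([-2,2]^k)$; then flows from $\overline V$ do exist for $|t|\le 1$. With Step 2 restated in either way, the proof is complete.

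Separately, you use $V$ both for $\ker(dp)$ and for the open set $x^{-1}((-1,1)^k)$; rename one of them.
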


\begin{lemma}
Let $p:\mathcal X\to I$
be a smooth submersion over an interval.
Giving an Ehresmann connection is equivalent to giving a smooth vector field
$X\in\Gamma(T\mathcal X)$
such that
$dp(X)=\partial_t.$
The horizontal bundle is then
$
H=\mathbb R X.
$
\end{lemma}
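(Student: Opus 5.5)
The plan is to set up mutually inverse constructions between horizontal subbundles $H\subset T\mathcal X$ complementary to $V=\ker(dp)$ and smooth vector fields $X$ with $dp(X)=\partial_t$. Everything rests on the fact that the base $I$ is one-dimensional, so any complement $H$ of $V$ has rank one.

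\emph{From a connection to a vector field.} Let $H$ be an Ehresmann connection, so $T\mathcal X=V\oplus H$. Since $p$ is a submersion, $dp_x:T_x\mathcal X\to T_{p(x)}I$ is surjective with kernel $V_x$. Hence the restriction $dp_x|_{H_x}:H_x\to T_{p(x)}I$ is a linear isomorphism, and in particular $\operatorname{rank}H=\dim I=1$. Globally, $dp|_H:H\to p^*TI$ is a morphism of smooth vector bundles which is an isomorphism on each fiber, so it is a smooth bundle isomorphism. I then define
\[
X:=(dp|_H)^{-1}(p^*\partial_t).
\]
This is a smooth section of $H\subset T\mathcal X$ with $dp(X)=\partial_t$. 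Moreover $X$ is nowhere zero, so $H=\mathbb R X$.

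\emph{From a vector field to a connection.} Conversely, let $X$ be smooth with $dp(X)=\partial_t$. Since $\partial_t$ never vanishes, $X$ is nowhere zero, so $H:=\mathbb R X$ is a smooth rank-one subbundle, spanned by a global nonvanishing frame. We also have $H_x\cap V_x=0$, because $dp(cX)=c\,\partial_t$ vanishes only for $c=0$. Counting dimensions, $\dim V_x+1=\dim T_x\mathcal X$, which gives $T\mathcal X=V\oplus H$.

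\emph{The constructions are inverse.} Starting from $H$, we get $\mathbb R X=H$, as noted above. Starting from $X$, the element of $\mathbb R X$ lying over $\partial_t$ is $cX$ with $c\,\partial_t=\partial_t$, so $c=1$ and we recover $X$ itself.

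As a consistency check with the parallel-transport definition, a path $\widetilde\gamma$ lifting $t\mapsto t$ is horizontal exactly when it is an integral curve of $X$. Therefore $T^H_\gamma$ is the time-one flow of $X$ between fibers.

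The only step needing care is the smoothness of $X$ in the first direction. I handle it by viewing $dp|_H$ as a smooth bundle map that is a fiberwise isomorphism, hence a smooth bundle isomorphism. The remaining steps are linear algebra.
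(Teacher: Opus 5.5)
Your proof is correct and follows the same route as the paper: rank-one horizontal spaces, $X=\operatorname{Hor}(\partial_t)$ as the unique horizontal lift, and conversely $T_x\mathcal X=\ker(dp_x)\oplus\mathbb R X_x$ because $X$ is never vertical. You also spell out the smoothness of $X$ via the fiberwise-isomorphic bundle map $dp|_H\colon H\to p^*TI$ and check that the two constructions are mutually inverse, which the paper leaves implicit.
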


\begin{proof}
Since
$TI=\mathbb R\partial_t$
has rank one, the horizontal subspace at every point must also have rank one.
If $H$ is an Ehresmann connection, there is a unique horizontal lift
$X=\operatorname{Hor}(\partial_t)$
satisfying
$dp(X)=\partial_t$.

Conversely, if such a smooth $X$ is given, then $X$ is never vertical, so
$T_x\mathcal X
=
\ker(dp_x)\oplus\mathbb RX_x.$
Thus
$H=\mathbb RX$
is an Ehresmann connection.
\end{proof}

\begin{lemma}
Let
$\mathscr X_\gamma=\{P(\lambda,z,t)=0\}.$
In local complex coordinates, write a lift of $\partial_t$ as
$X
=\partial_t
+A\,\partial_\lambda
+B\,\partial_z.$
Then $X$ is tangent to $\mathscr X_\gamma$ if and only if
$\partial_tP
+A\,\partial_\lambda P
+B\,\partial_zP
=0$ $\text{on }\mathscr X_\gamma.$
\end{lemma}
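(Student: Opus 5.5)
The plan is to reduce the claim to the standard description of the tangent space of a regular level set, and then compute the derivative of $P$ along $X$ by the chain rule. Throughout, $t$ is the real parameter along the path $\gamma$. In the local chart, $P(\lambda,z,t)$ is holomorphic in $(\lambda,z)$ and smooth in $t$, and the total space is $\mathscr X_\gamma=\{P=0\}\subset \mathbb C_\lambda\times\mathbb C_z\times I$. The coefficients $A,B$ are complex-valued smooth functions, and $A\,\partial_\lambda$, $B\,\partial_z$ are read as the real vector fields they determine on the underlying real manifold.

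First I would check that $\mathscr X_\gamma$ is a regular zero set of the map $P:\mathbb C^2\times I\to\mathbb C\cong\mathbb R^2$, so that it is a smooth real submanifold of real codimension two. This uses the standing assumption that $\gamma$ lies in $B_{G,C,P}^{\mathrm{good}}$. Each fiber $\Sigma_{\gamma(t)}$ is then smooth, so at every point of $\mathscr X_\gamma$ the pair $(\partial_\lambda P,\partial_z P)$ does not vanish simultaneously.

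Because $P$ is holomorphic in $(\lambda,z)$, the real differential of $P$ restricted to the $(\lambda,z)$-directions is the complex-linear map $(v,w)\mapsto \partial_\lambda P\, v+\partial_z P\, w$. This map is surjective onto $\mathbb C$ whenever one of the two partial derivatives is nonzero. Hence $dP$ has real rank two along $\mathscr X_\gamma$. By the regular value theorem,
\[
T_x\mathscr X_\gamma=\ker\bigl(dP_x:T_x(\mathbb C^2\times I)\to \mathbb C\bigr)\qquad\text{for all }x\in\mathscr X_\gamma .
\]

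Second, I would evaluate $dP(X)$ by the chain rule. Write $X=\partial_t+A\partial_\lambda+B\partial_z$, where the $\partial_\lambda$ and $\partial_z$ parts are holomorphic directions with complex coefficients. Since $P$ is holomorphic in $\lambda$ and $z$, the antiholomorphic derivatives $\partial_{\bar\lambda}P$ and $\partial_{\bar z}P$ vanish, which gives
\[
dP(X)=\partial_tP+A\,\partial_\lambda P+B\,\partial_z P .
\]

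Combining the two steps, $X_x\in T_x\mathscr X_\gamma$ if and only if this expression vanishes at $x$. Requiring it at every point of $\mathscr X_\gamma$ gives the stated equivalence. The forward implication needs no regularity at all: if $X$ is tangent to $\mathscr X_\gamma$, its flow stays inside $\{P=0\}$, so the derivative of $P$ along $X$ is zero there.

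The only real obstacle is the converse, which needs the regularity established in the first step. At a nodal point over $\mathcal D_{bdry,C,P}$, where $\partial_\lambda P=\partial_z P=0$, the equation $dP(X)=0$ no longer characterizes tangency. I would therefore state explicitly that the lemma is applied only along paths inside $B_{G,C,P}^{\mathrm{good}}$. With that restriction, for example along a small meridian that avoids the discriminant locus, the regularity holds and the argument goes through.
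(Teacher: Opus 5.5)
Your proof is correct. The paper states this lemma without proof, and your argument supplies the standard justification it relies on: $T_x\mathscr X_\gamma=\ker dP_x$ for a regular level set, combined with the chain rule $dP(X)=\partial_tP+A\,\partial_\lambda P+B\,\partial_zP$, which uses $\partial_{\bar\lambda}P=\partial_{\bar z}P=0$. Your explicit regularity hypothesis, that $(\partial_\lambda P,\partial_zP)\neq(0,0)$ on $\mathscr X_\gamma$ because $\gamma$ stays in $B_{G,C,P}^{\mathrm{good}}$, is what the paper's setting of a smooth submersion over an interval presupposes, and your caveat about nodal points is apt.
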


A family of diffeomorphisms
$F_t:\Sigma_0\xrightarrow{\sim}\Sigma_t$ is \emph{smooth} if
$F_0=\id$
and the associated trivializing map
$\Phi:\Sigma_0\times I\to\mathcal X,
$ $\Phi(x,t)=F_t(x)$ is smooth.

\begin{lemma}[\cite{CavalieriMiles}]
\label{le:diff}
Let
$\pi_i:\Sigma_i\to C,
$ $ i=1,2,$
be smooth branched covers, and suppose
$F:\Sigma_1\to\Sigma_2$
is a homeomorphism satisfying
$\pi_2\circ F=\pi_1.$
Then $F$ is a diffeomorphism. In fact, locally near a ramification point it is
holomorphic in suitable branched-cover coordinates.
\end{lemma}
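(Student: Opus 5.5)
The plan is to show that $F$ is in fact \emph{holomorphic} with respect to the complex structures on $\Sigma_1,\Sigma_2$ for which $\pi_1,\pi_2$ are holomorphic. These structures exist and are unique by the Riemann Existence Principle recalled above. A bijective holomorphic map between Riemann surfaces has holomorphic inverse, so it is a diffeomorphism. Holomorphy is local, so I would split into two cases: points of $\Sigma_1$ that are not ramification points, and ramification points.

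\emph{Unramified points.} Let $x\in\Sigma_1$ with $e_x=1$. Then $\pi_1$ is a local biholomorphism on a neighbourhood $U\ni x$. I first check that $F(x)$ is also unramified for $\pi_2$. The local degree of $\pi_i$ at a point is the number of preimages near that point of a generic nearby value. Since $F$ is a homeomorphism with $\pi_2\circ F=\pi_1$, it carries $\pi_1$-preimages near $x$ bijectively onto $\pi_2$-preimages near $F(x)$, so $e_{F(x)}=e_x$. Hence $\pi_2$ is a local biholomorphism on a neighbourhood $V\ni F(x)$. Shrinking $U$ so that $F(U)\subset V$, the relation $\pi_2\circ F=\pi_1$ gives
\[
F|_U=(\pi_2|_V)^{-1}\circ\pi_1|_U ,
\]
which is holomorphic.

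\emph{Ramification points.} Let $p\in\Sigma_1$ with $e_p=e\ge2$. The same local-degree argument gives $e_{F(p)}=e$ and $\pi_2(F(p))=\pi_1(p)=:q$. I would choose a coordinate $z$ on $C$ centred at $q$, together with coordinates $t$ at $p$ and $s$ at $F(p)$, in which $\pi_1(t)=t^e$ and $\pi_2(s)=s^e$. In these coordinates the covering relation reads $F(t)^e=t^e$. Consequently, on a small punctured disc $D^*$ the function $t\mapsto F(t)/t$ is continuous and takes values in the finite set $\mu_e$ of $e$-th roots of unity. Since $D^*$ is connected, this function is a constant $\zeta\in\mu_e$. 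Hence $F(t)=\zeta t$ on $D^*$, and by continuity $F(0)=0$ as well. So $F$ is linear, in particular holomorphic, in these branched-cover coordinates. This is exactly the local form asserted in the lemma.

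\emph{Main obstacle.} The step needing the most care is the claim that a homeomorphism over $\id_C$ preserves ramification indices and therefore sends ramification points to ramification points of the same order. I would handle this as above, by counting preimages of points near $q$ inside small connected neighbourhoods. This count is a topological invariant, so it is preserved by $F$.

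Combining the two cases, $F$ is holomorphic everywhere. Applying the identical argument to $F^{-1}$, which satisfies $\pi_1\circ F^{-1}=\pi_2$, shows that $F^{-1}$ is holomorphic too. Thus $F$ is a biholomorphism, in particular a diffeomorphism.
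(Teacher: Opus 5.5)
The paper gives no proof of this lemma; it is quoted from Cavalieri--Miles and used as a black box, so there is no argument in the paper to compare yours against. Your proof is correct and is the standard one.

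\begin{itemize}
\item Off the ramification locus, $F=(\pi_2|_V)^{-1}\circ\pi_1|_U$ is holomorphic.
\item At a ramification point you may use a common coordinate $z$ downstairs. This is legitimate because $z\circ\pi_i$ vanishes to order exactly $e$, so it has a holomorphic $e$-th root that is a coordinate upstairs.
\item The normal forms $z\circ\pi_1=t^e$, $z\circ\pi_2=s^e$ then force $F(t)=\zeta t$ with $\zeta^e=1$, which is precisely the ``in fact'' clause.
\end{itemize}

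If you only want holomorphy, there is a shortcut that avoids preservation of ramification indices. $F$ is holomorphic away from the finite set of points $x$ for which $x$ or $F(x)$ is a ramification point. Since $F$ is continuous, Riemann's removable singularity theorem extends holomorphy across that set. Your normal-form computation is what gives the sharper local description.

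The one thing you should make explicit is how the hypothesis is read. Your argument shows that $F$ is biholomorphic, hence a diffeomorphism, for the smooth structures underlying the complex structures in which $\pi_1,\pi_2$ are holomorphic. This is the only reading under which the lemma is true. For covers that are merely smooth, even with standard local models in smooth charts, it fails. Take $\pi_1(t)=t^2$ and $\pi_2(s)=\phi^{-1}(s^2)$ with $\phi(w)=w+c\bar w$, $0<|c|<1$. Then $F(t)=t\,(1+c\,\bar t^{2}t^{-2})^{1/2}$ is a homeomorphism with $\pi_2\circ F=\pi_1$. It is positively homogeneous of degree one but not real-linear, hence not differentiable at $0$.

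So invoking the Riemann Existence Principle is fine in the two cases the paper uses: topological covers with no prior smooth structure (such as $\Sigma_W$), or covers that are already holomorphic (such as the spectral curves $\Sigma_a$). It would not rescue a pre-given smooth structure that differs from the induced one.
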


\subsubsection{Gauss--Manin Parallel Transport}

\begin{definition}
Let $p:E\to B$ be a smooth proper family.
Then $\mathcal H_k
:=\left\{
H_k(E_b;\mathbb Z)
\right\}_{b\in B}$
is a local system.
Its path transport
$\PT_\gamma^{GM}:
H_k(E_{\gamma(0)};\mathbb Z)
\to
H_k(E_{\gamma(1)};\mathbb Z)$
is called \emph{Gauss--Manin (parallel) transport}.
After tensoring with $\mathbb C$, this local system determines a flat connection
$\nabla^{GM},$
called the \emph{Gauss--Manin connection}.
\end{definition}

\begin{definition}
Fix $b_0\in B$. For a loop
$\gamma:[0,1]\to B,$
$\gamma(0)=\gamma(1)=b_0,$
Gauss--Manin transport gives an automorphism
$\PT_\gamma^{GM}
\in
\Aut H_k(E_{b_0};\mathbb Z).$
Thus one obtains the monodromy representation
$\rho:
\pi_1(B,b_0)
\to
\Aut H_k(E_{b_0};\mathbb Z).$
\end{definition}

\begin{proposition}
Let $p:E\to B$
be a smooth proper family and let
$\gamma:b_0\to b_1.$
Choose any topological parallel-transport representative
$T_\gamma:E_{b_0}\to E_{b_1}.$
Then
$(T_\gamma)_*
=\PT_\gamma^{GM}
:H_k(E_{b_0};\mathbb Z)
\to H_k(E_{b_1};\mathbb Z).$
In particular, the induced homology map is independent of the chosen
trivialization or Ehresmann connection.
\end{proposition}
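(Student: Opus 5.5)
The statement to prove is the last proposition: for a smooth proper family $p:E\to B$ and a path $\gamma$, any topological parallel-transport representative $T_\gamma$ induces the Gauss--Manin transport on homology. The plan is to reduce everything to the statement that any two such representatives are isotopic, hence homotopic, as maps $E_{b_0}\to E_{b_1}$, and then use homotopy invariance of homology.

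First, by the Ehresmann Fibration Theorem, $p$ is a locally trivial smooth fiber bundle. Pulling back along $\gamma$ gives $\gamma^*E\to[0,1]$, which is a bundle over a contractible base and is therefore trivial. I will fix a trivialization $\Phi:E_{b_0}\times[0,1]\xrightarrow{\sim}\gamma^*E$ with $\Phi_0=\id$.

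The local system $\mathcal H_k$ is defined so that over each trivializing open set $U$ the groups $H_k(E_b;\mathbb Z)$ are identified through the inclusions $E_b\hookrightarrow p^{-1}(U)$, which are homotopy equivalences. Its path transport $\PT^{GM}_\gamma$ is obtained by subdividing $[0,1]$ finely enough that each piece lands in such a $U$ and composing these identifications. On each piece, $\Phi_t$ composed with the inclusion is homotopic to $\Phi_s$ composed with the inclusion, via $\Phi$ restricted to $[s,t]$. It follows that $(\Phi_1)_*=\PT^{GM}_\gamma$.

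Next I will interpret "topological parallel-transport representative" as any map of the form $T_\gamma=\Psi_1$ arising from some trivialization $\Psi$ of $\gamma^*E$ with $\Psi_0=\id$. This class includes Ehresmann transports $T^H_\gamma$, because flowing along the lift $X$ of $\partial_t$ from the earlier Lemma gives such a trivialization; properness guarantees the flow exists on all of $[0,1]$. Given two trivializations $\Phi$ and $\Psi$, the family $\Psi_t^{-1}\circ\Phi_t$ consists of self-maps of $E_{b_0}$. The family $\Phi_t$ composed with $\Psi_t^{-1}\circ\Phi_t$, regarded as maps $E_{b_0}\to\gamma^*E$, interpolates between $\Phi_1$ and $\Psi_1$ after the appropriate composition. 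More directly, $H(x,s)=\Psi_1\circ\Psi_s^{-1}\circ\Phi_s(x)$ is a homotopy from $\Psi_1$ at $s=0$ to $\Phi_1$ at $s=1$, through maps $E_{b_0}\to E_{b_1}$. Hence $(\Psi_1)_*=(\Phi_1)_*$.

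The main obstacle is this well-definedness step. The point to check is that $H$ is continuous and lands in $E_{b_1}$, which holds because $\Psi_s^{-1}\circ\Phi_s$ preserves fibers. The remaining care is in the subdivision argument for the local system: the result must not depend on the subdivision, and the inclusions $E_b\hookrightarrow p^{-1}(U)$ must be homotopy equivalences. The latter holds after shrinking $U$ to a contractible trivializing ball.
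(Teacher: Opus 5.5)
Your proposal is correct and follows the same route as the paper: any two trivializations of $\gamma^*E$ give isotopic endpoint maps, hence the same map on homology, and this common map is the local-system transport. Your explicit homotopy $\Psi_1\circ\Psi_s^{-1}\circ\Phi_s$ makes the paper's ``differ by an isotopy'' precise, and your subdivision argument identifying $(\Phi_1)_*$ with $\PT^{GM}_\gamma$ spells out a step the paper only asserts; since that argument works for an arbitrary trivialization, it already gives the independence of choices on its own.
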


\begin{proof}
Different trivializations of the family along the interval give endpoint maps
which differ by an isotopy, so they induce the same map on singular homology.
Thus all smooth trivializations and all Ehresmann connections produce the same map
$H_k(E_{b_0};\mathbb Z)
\to
H_k(E_{b_1};\mathbb Z).$
This is the transport of the homology local system associated with the smooth family, namely Gauss--Manin transport.
\end{proof}

\begin{definition}
Given a $n$-punctured smooth complex curve $\mathring{C}=C\setminus P$,
there are two distinct fibrations over $B_{G,C,P}^{\mathrm good}$.

\noindent
(1) The spectral-curve family
$q:\Sigma\to B_{G,C,P}^{\mathrm good},$
$q^{-1}(t)=\Sigma_t.$

Here
$\pi_t:\Sigma_t\to C$
is the spectral covering associated with the Hitchin parameter $t$.

\noindent
(2) The regular Hitchin fibration
$h:\M_{\mathrm{Higgs}}^{\mathrm good}\to B_{G,C,P}^{\mathrm good},$
$h^{-1}(t)=\Lambda_t.$

Here $\Lambda_t$ is a Jacobian or Prym-type abelian variety
(or a torsor for one) associated with $\Sigma_t$.
\end{definition}

\begin{corollary}
Suppose
$q:\Sigma\to B_{G,C,P}^{\mathrm good}$
is the smooth proper family of spectral curves.
Then
$\mathcal H^\Sigma_1
:=
\left\{
H_1(\Sigma_t;\mathbb Z)
\right\}_{t\in B_{G,C,P}^{\mathrm good}}$
is a local system.
For every path
$\gamma:t_0\to t_1$
there is canonical Gauss--Manin transport
$\PT_\gamma^{GM,\Sigma}:
H_1(\Sigma_{t_0};\mathbb Z)
\xrightarrow{\sim}
H_1(\Sigma_{t_1};\mathbb Z).$
\end{corollary}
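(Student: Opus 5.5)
The plan is to derive the corollary from the general statement just above it, that a smooth proper family yields a local system with canonical Gauss--Manin transport, by checking its hypotheses for $q:\Sigma\to B_{G,C,P}^{\mathrm good}$. I will realize the total space as
\[
\Sigma=\{(t,\lambda)\in B_{G,C,P}^{\mathrm good}\times \Tot(K_C(P)) : P_t(\lambda)=0\},
\]
a closed analytic subvariety of $B_{G,C,P}^{\mathrm good}\times\Tot(K_C(P))$, with $q$ the first projection.

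The argument proceeds in three steps.

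\textbf{Step 1 (smoothness).} I show that $\Sigma$ is a complex manifold and that $q$ is a submersion. For $t\in B_{G,C,P}^{\mathrm good}$ the curve $\Sigma_t$ is smooth, so at each point of $\Sigma_t$ the partials $\partial_\lambda P$ and $\partial_z P$ do not both vanish. Hence the fiberwise differential of $P$ is already nonzero. It follows that $\Sigma$ is smooth of dimension $\dim B+1$, and that $dq$ is surjective, since its kernel, the tangent space of $\Sigma_t$, has complex dimension $1$. Equivalently, by the lemma on lifts $X=\partial_t+A\partial_\lambda+B\partial_z$, the tangency equation can be solved locally for $A,B$, and a partition of unity glues these local solutions into a horizontal lift.

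\textbf{Step 2 (properness).} This is the main obstacle, because $\Tot(K_C(P))$ is non-compact and $\Sigma_t$ is defined there. I plan to compactify. Either take the closure of $\Sigma$ in $B\times\mathbb P(\OO_C\oplus K_C(P))$, or, more simply, use the compact smooth spectral curves $\overline{\Sigma}_t$ already used in the paper, where the $m$ punctures over each $p_i$ are filled in.

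Since $\pi_t$ is unramified over $P$ for good $t$ (by the lemma of \cite{Basok}, $\Delta_t(p_i)\neq0$), the eigenvalue $1$-forms have $m$ distinct residue-normalized values at $p_i$. Hence $\Sigma_t$ stays away from the divisor at infinity except over $P$, where it meets it transversally in $m$ distinct points. Over a compact $K\subset B^{\mathrm{good}}$, the roots $\lambda$ are bounded in terms of the coefficients $a_i$, which lie in a compact set. Therefore $q^{-1}(K)$ is closed and bounded in a compact space, so $q$ is proper.

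\textbf{Step 3 (conclusion).} I apply the Ehresmann fibration theorem to obtain local triviality. If one prefers the punctured curves, one instead uses the family of pairs (compact curve, $mn$ marked sections over $P$), and these sections are smooth because of the transversality above. The earlier proposition then shows that $(T_\gamma)_*$ is independent of the choice of connection or trivialization, which gives the local system $\mathcal H^\Sigma_1$ and the canonical transport $\PT^{GM,\Sigma}_\gamma$, an isomorphism with inverse given by the reversed path. Compatibility with the Prym sublattice $\ker\pi_*$ follows because $\pi_t$ commutes with the trivialization over the fixed base $C$.
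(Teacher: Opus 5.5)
You follow the paper's route. Its proof simply takes the smooth proper submersion as given, since that is the hypothesis of the corollary, and applies Ehresmann's theorem to get local triviality and hence the homology local system and its transport. Your Steps 1 and 3 do exactly this, and your Step 1 check that $q$ is a submersion is correct.

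Step 2, however, contains a false claim. The curve $\Sigma_t\subset\Tot(K_C(P))$ does not go to infinity over $P$. The coefficients $a_i$ are holomorphic sections of $K_C(P)^{\otimes i}$ on all of $C$, so over each $p_i$ the polynomial $P_t$ has $m$ finite roots in the fiber $K_C(P)_{p_i}$. These are the residues of the eigenvalue forms, and they are distinct because $\Delta_t(p_i)\neq0$.

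Hence $\Sigma_t$ is already compact, and its closure in $\mathbb P(\OO_C\oplus K_C(P))$ adds nothing. Your assertion that it meets the divisor at infinity transversally in $m$ points over each $p_i$ is therefore wrong, and it contradicts your very next sentence.

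That next sentence is in fact the entire proof of properness. Roots of a monic polynomial are bounded in terms of its coefficients. So for compact $K\subset B_{G,C,P}^{\mathrm{good}}$, fix a hermitian metric on $K_C(P)$; for some $R$, the set $q^{-1}(K)$ is closed in $K\times\{\lambda\in\Tot(K_C(P)):|\lambda|\le R\}$, which is compact. No compactification and no marked sections are needed for the statement as posed.

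Finally, your closing remark about $\ker\pi_*$ is not part of this corollary, and as phrased it is inaccurate. An Ehresmann trivialization $F_t$ cannot commute with the projections over $\mathrm{id}_C$, because the branch points move. The correct reason is that $\pi_t\circ F_t$ is a homotopy starting at $\pi_{t_0}$. Equivalently, it is functoriality for the map of families $\Sigma\to C\times B_{G,C,P}^{\mathrm{good}}$, which is how the paper argues in the next lemma.
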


\begin{proof}
Over $B_{G,C,P}^{\mathrm good}$, the spectral curves are smooth and the family is a smooth
proper submersion.
Ehresmann's theorem therefore gives local smooth triviality.
Consequently the groups
$
H_1(\Sigma_t;\mathbb Z)
$
are locally identified and form the homology local system of the family.
Its path transport is the Gauss--Manin transport.
\end{proof}

\begin{lemma}
For a fixed base curve $C$, the family of spectral curves
$\pi_t:\Sigma_t\to C$
defines a morphism of homology local systems
$\mathcal H^\Sigma_1
\to
H_1(C;\mathbb Z)_{B_{G,C,P}^{\mathrm good}},$
where the right-hand side is the constant local system with fiber
$H_1(C;\mathbb Z)$.

Consequently, for every path
$\gamma:t_0\to t_1,$
the following diagram commutes.
\begin{equation}
\begin{tikzcd}
H_1(\Sigma_{t_0};\mathbb Z)
\arrow[r, "\operatorname{PT}^{GM,\Sigma}_\gamma"]
\arrow[d, "{(\pi_{t_0})_*}"']
&
H_1(\Sigma_{t_1};\mathbb Z)
\arrow[d, "{(\pi_{t_1})_*}"]
\\
H_1(C;\mathbb Z)
\arrow[r, "\mathrm{id}"']
&
H_1(C;\mathbb Z)
\end{tikzcd}
\end{equation}
\end{lemma}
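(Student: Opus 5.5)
The plan is to reduce the statement to the homotopy invariance of singular homology. The key input is that all spectral covers $\pi_t$ are restrictions of one continuous map on the total space of the family, namely the bundle projection $p:\Tot(K_C(P))\to C$. Concretely, the spectral-curve family $q:\Sigma\to B_{G,C,P}^{\mathrm{good}}$ sits inside $\Tot(K_C(P))\times B_{G,C,P}^{\mathrm{good}}$ as the zero locus of $P_t(\lambda)=\lambda^m+a_2\lambda^{m-2}+\cdots+a_m$ (with $a_1$ also present for $GL_m$). I would therefore define
\[
\Pi:\Sigma\to C,\qquad \Pi:=p\circ \mathrm{pr}_1\big|_{\Sigma}.
\]
This map is continuous, even holomorphic, and its restriction to each fiber $q^{-1}(t)=\Sigma_t$ is exactly $\pi_t$.

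Next, fix a path $\gamma:[0,1]\to B_{G,C,P}^{\mathrm{good}}$ from $t_0$ to $t_1$ and pull the family back to $\mathcal X_\gamma:=\gamma^*\Sigma\to[0,1]$. By the preceding corollary, $q$ is a smooth proper submersion over the good locus, since each $\Sigma_t$ is compact as a closed curve in the total space of a line bundle over the compact curve $C$. Ehresmann's theorem, applied via an Ehresmann connection on $\mathcal X_\gamma$ (equivalently, a lift $X$ of $\partial_t$ as in the lemma above), then gives a smooth trivialization
\[
\Phi:\Sigma_{t_0}\times[0,1]\to\mathcal X_\gamma,\qquad \Phi(x,s)=F_s(x),
\]
with $F_0=\id$ and $F_s:\Sigma_{t_0}\xrightarrow{\sim}\Sigma_{\gamma(s)}$. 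By the proposition identifying topological parallel transport with Gauss--Manin transport, $\PT^{GM,\Sigma}_\gamma=(F_1)_*$.

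The commutativity then follows from a homotopy. The map $H:=\Pi\circ\Phi:\Sigma_{t_0}\times[0,1]\to C$ satisfies $H(\cdot,0)=\pi_{t_0}$ and $H(\cdot,1)=\pi_{t_1}\circ F_1$. Hence $(\pi_{t_1})_*\circ(F_1)_*=(\pi_{t_1}\circ F_1)_*=(\pi_{t_0})_*$ on $H_1(\,\cdot\,;\mathbb Z)$, which is exactly the commutativity of the square with bottom arrow $\id$. Applying this to every path, including short paths inside trivializing neighbourhoods, shows that the fiberwise maps $(\pi_t)_*$ are compatible with the local trivializations of $\mathcal H_1^\Sigma$ and of the constant system $H_1(C;\mathbb Z)_{B^{\mathrm{good}}}$. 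That compatibility is precisely the statement that $\{(\pi_t)_*\}_t$ is a morphism of local systems.

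The main point requiring care is the first step: the base $C$ must be fixed, so that the projection to $C$ is a globally defined continuous map on the total family. This requires that the transport $F_1$ is merely some fiber-preserving trivialization and is not required to cover the identity of $C$; the homotopy $H$ is what absorbs the motion of the branch points. Everything else is standard. As a byproduct, $\PT^{GM,\Sigma}_\gamma$ restricts to an isomorphism between the kernels $\ker(\pi_{t_0})_*$ and $\ker(\pi_{t_1})_*$, i.e.\ between the Prym lattices $H_1(\Prym(\Sigma_{t_i});\mathbb Z)$.
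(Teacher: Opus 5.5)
Your argument is correct and is essentially the paper's. The paper notes that the spectral projections assemble into a map of families $\Sigma\to C\times B_{G,C,P}^{\mathrm{good}}$ over the base and invokes functoriality of homology local systems; your map $\Pi$ together with the homotopy $H=\Pi\circ\Phi$ is exactly that functoriality unpacked by hand. One small correction: compactness of $\Sigma_t$ comes from $\pi_t$ being finite, because the defining polynomial is monic in $\lambda$, not merely from $\Sigma_t$ being closed in $\Tot(K_C(P))$; since you also cite the preceding corollary, this point is not load-bearing.
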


\begin{proof}
The spectral projections assemble into a map of families
$\Sigma
\to
C\times B_{G,C,P}^{\mathrm good}$
over $B_{G,C,P}^{\mathrm good}$.
Homology is functorial, so this map induces a morphism between the corresponding
Gauss--Manin local systems.
Path transport in a local system is with respect to morphisms of local
systems, which gives the commutative diagram.
\end{proof}

\begin{corollary}
Let
$\Lambda_t
=
\ker(
H_1(\Sigma_t;\mathbb Z)
\xrightarrow{(\pi_t)_*}
H_1(C;\mathbb Z)).$
Then
$\Lambda
=\{\Lambda_t\}_{t\in B_{G,C,P}^{\mathrm good}}$
is a sub-local system of
$\mathcal H^\Sigma_1.$
For every path
$\gamma:t_0\to t_1,$
one has
$\PT_\gamma^{GM,\Sigma}(\Lambda_{t_0})
=\Lambda_{t_1}.$
Hence the Gauss--Manin transport of the Prym lattice is
$\PT_\gamma^{GM,\Lambda}
=\left.
\PT_\gamma^{GM,\Sigma}
\right|_{\Lambda_{t_0}}.$
\end{corollary}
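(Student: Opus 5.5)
The statement follows from the commutative square in the preceding lemma: once Gauss--Manin transport on $H_1(\Sigma_t;\mathbb Z)$ is shown to intertwine the projections $(\pi_t)_*$ with the identity on $H_1(C;\mathbb Z)$, kernels must go to kernels. The plan has three steps.

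First, I check that transport preserves the kernels. Fix a path $\gamma:t_0\to t_1$ in $B_{G,C,P}^{\mathrm{good}}$ and take $\alpha\in\Lambda_{t_0}$, so $(\pi_{t_0})_*\alpha=0$. Commutativity of the diagram gives
\[
(\pi_{t_1})_*\bigl(\PT_\gamma^{GM,\Sigma}\alpha\bigr)=\mathrm{id}\bigl((\pi_{t_0})_*\alpha\bigr)=0,
\]
so $\PT_\gamma^{GM,\Sigma}(\Lambda_{t_0})\subseteq\Lambda_{t_1}$. Applying the same argument to the reversed path $\gamma^{-1}$, and using that $\PT_{\gamma^{-1}}^{GM,\Sigma}$ is inverse to $\PT_\gamma^{GM,\Sigma}$ because $\mathcal H^\Sigma_1$ is a local system, gives the reverse inclusion. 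Hence $\PT_\gamma^{GM,\Sigma}(\Lambda_{t_0})=\Lambda_{t_1}$, and the restriction is an isomorphism $\Lambda_{t_0}\to\Lambda_{t_1}$.

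Second, I deduce that $\Lambda=\{\Lambda_t\}$ is a sub-local system. Over a contractible neighbourhood $U\subset B_{G,C,P}^{\mathrm{good}}$, Ehresmann's theorem trivialises the spectral family, so $\mathcal H^\Sigma_1|_U$ is constant. By the first step, the fibres $\Lambda_t$ for $t\in U$ all correspond to a single subgroup of this constant fibre. So $\Lambda|_U$ is a constant subsheaf, and $\Lambda$ is locally constant. Its transport is, by construction, the restriction $\PT_\gamma^{GM,\Sigma}|_{\Lambda_{t_0}}$. Functoriality in $\gamma$ and homotopy invariance are inherited from $\mathcal H^\Sigma_1$.

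Third, I identify this restricted transport with the Gauss--Manin transport of the $SL_m$ Hitchin fibres, i.e.\ the Prym lattices. This is where I expect the real work. The canonical isomorphism $\Phi_t=(a_{\Sigma_t})_*^{-1}\circ\iota_*$ from the Prym lemma identifies $H_1(\Prym(\Sigma_t/C);\mathbb Z)$ with $\Lambda_t$ for each $t$, but we need these isomorphisms to be flat in $t$. I would argue as follows. Over $U$, choose a base point section avoiding the ramification. The relative Abel--Jacobi maps $\Sigma\to\Jac(\Sigma/U)$ and the relative norm $\Nm:\Jac(\Sigma/U)\to\Jac(C)\times U$ then form continuous families of maps. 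Any continuous family of maps between local systems of homology groups commutes with parallel transport, so $\Phi_t$ intertwines the two transports. Independence of the choice of base point holds because changing it changes the Abel--Jacobi map by a translation, which is homotopic to the identity. Finally, the Hitchin fibre $h_{SL_m}^{-1}(t)$ is a Prym torsor, and translations again act trivially on $H_1$, so the torsor structure does not affect the identification.
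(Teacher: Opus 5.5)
Your Steps 1 and 2 are exactly the argument the paper intends. The corollary is stated with no proof of its own, as an immediate consequence of the commutative square in the preceding lemma. Equivalently, $\Lambda$ is the kernel of the morphism of local systems $\mathcal H^\Sigma_1\to H_1(C;\mathbb Z)_{B_{G,C,P}^{\mathrm{good}}}$, and kernels of morphisms of local systems are local systems. In the paper the final ``hence'' is then definitional. $\PT_\gamma^{GM,\Lambda}$ just denotes the transport of this sub-local system, whose fibres are called Prym lattices because of the pointwise isomorphism $H_1(\Prym(\Sigma_t/C);\mathbb Z)\cong\Lambda_t$ from the Prym lemma.

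Your Step 3 proves something the paper does not. It shows that the restricted transport coincides with the Gauss--Manin transport of the actual $SL_m$ Hitchin fibration $h:\M_{\mathrm{Higgs}}^{\mathrm{good}}\to B_{G,C,P}^{\mathrm{good}}$, i.e.\ that the isomorphisms $\Phi_t$ are flat in $t$. The paper relies on this silently later, when the monodromy on the Hitchin fiber lattice in Proposition~\ref{prop:hittoweb} and Theorem~\ref{thm:GM} is computed by Picard--Lefschetz on $H_1(\Sigma_a;\mathbb Z)$. So your extra step adds genuine content, and its strategy is sound. Families of Abel--Jacobi, inclusion and norm maps induce morphisms of homology local systems, and translations on a connected group act trivially on $H_1$, so the local choices glue.

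To make it airtight you should justify three things you currently assume:
\begin{enumerate}
\item The relative Prym over $U$ is a smooth proper family. This holds because the relative norm is a fibrewise surjective homomorphism of abelian schemes, so the identity component of its kernel is an abelian scheme.
\item The Hitchin fibre is a torsor under the connected group $\Prym(\Sigma_t/C)$, rather than under a possibly disconnected $\ker\Nm$. This is dual to the injectivity of $\pi_t^*$ on $\Jac(C)$, which follows from $(\pi_t)_*\mathcal O_{\Sigma_t}\cong\bigoplus_{i=0}^{m-1}L^{-i}$ with $\deg L>0$.
\item The torsor identification is itself a map of families over $U$. For example, take $\mathcal L\mapsto\mathcal L\otimes s(t)$ for a local holomorphic section $s$ of $h$ over $U$, which exists because $h$ is a submersion over the good locus.
\end{enumerate}
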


\newpage
\mysection{Monodromies around Discriminant Loci and Elementary Moves on Webs}

In this section, we will explore the correspondence between the compositions of elementary moves on generic $A_{m-1}$-webs and small disc areas around discriminant loci.

In subsection 5.1, we will recall the monodromies on the Hitchin fiber lattices around singular locus, and identify the monodromies with the automorphisms on the quiver lattices obtained by performing two successive \ttt moves in the same orientation on generic $A_{m-1}$-webs.

In subsection 5.2 and 5.3, we will first observe the local deformations of spectral covers when two affected simple branch points on $C$ braid or collide, as suggested by the ura operation and the Dehn twist. We will then find deformations of the Hitchin polynomials that realize the prescribed local motions, and hence determine small disc areas around $\mathcal{D}_{bdry,C,P}$.

We will also discuss, in subsection 5.4, the monodromies around the two other types of the discriminant loci: $\mathcal{D}_{cau,C,P}$ and $\mathcal{D}_{Max,C,P}$, and compare the results with the automorphisms on quiver lattices under two shrink \& expand moves and isotopy of webs.

We will reach to the conclusion at the end of Section 5 that local monodromies around three types of the discriminant loci are the same as the automorphisms on quiver lattices given by compositions of three elementary moves on generic $A_{m-1}$-webs.

In Section 6 and 7, we will see that the consistency between local monodromies on Hitchin fibrations and automorphisms on quiver lattices, can be seen at the level of cluster mutations on relevant regular functions, and tropical mutations on relevant $K_0$ groups.

\medskip
Let $G_*$ be the groupoid whose objects are $A_{m-1}$-webs on a given decorated surface $C$, and morphisms are the three types of elementary moves. 
Let $G_0$ be the underlying abstract oriented graph of $G_*$, whose vertices and edges are objects and morphisms of $G_*$ respectively.
$G_0$ is connected by the connectedness of $A_{m-1}$-webs.

Let an object $W_1$ in $G_*$ be the initial generic $A_{m-1}$-web, and let $W_2$ be the generic $A_{m-1}$-web after one \ttt move on $W_1$.
Let $v_{W_1}$ and $v_{W_2}$ be vertices in $G_0$ corresponding to $W_1$ and $W_2$ respectively. The two morphisms in $G_*$ corresponding to the two \ttt moves from $W_1$ to $W_2$ and back to $W_1$ form a 2-cycle in $G_0$ starting and ending at $v_{W_1}$, passing through $v_{W_2}$.

The desired correspondence between the web perspective and the Hitchin perspective comparing two successive \ttt moves and meridians around $\mathcal{D}_{bdry,C,P}$ has three aspects as following:

\begin{table}[htbp]
\centering
\caption{Correspondence between the web perspective and the Hitchin perspective.}
\label{tab:compare}
\begin{tabular}{c|c|c}
&Web Perspective & Hitchin Perspective \\
\hline
$\text{Parametrization}$
&$\begin{array}{ccc}
\text{A 2-cycle in $G_0$ for 2 two-by-two moves}  \\
\text{A singular rank-$m$ spectral web}   \\
\text{Return to the initial generic $A_{m-1}$-web}   \\
\end{array}
$
& $\begin{array}{ccc}
\text{A small meridian around } \mathcal{D}_{bdry,C,P}  \\
\text{A point in } \mathcal{D}_{bdry,C,P}^{\circ}\\
\text{Return to the initial point in }B_{SL_m,C,P}^{\mathrm{good}}\\
\end{array}$ \\
\hline
$\begin{array}{cc}
\mathring{S}=S\setminus P \\
\mathring{C}=C\setminus P  
\end{array}$
&$\begin{array}{ccc}
\text{Half-twist of simple branch points }q_+,q_-  \\
\text{Simple branch points $q_+,q_-$ collide}  \\
q_+ \text{ and }q_- \text{ switch positions}  \\
\end{array}$ 
& $\begin{array}{ccc}
\text{Half-twist of simple zeros $q_+,q_-$ of }\Delta  \\
\text{Simple zeros $q_+,q_-$ collide into a double zero}  \\
q_+ \text{ and }q_- \text{ switch positions}  \\
\end{array}$ \\
\hline
$\text{Spectral Cover}$
&$\begin{array}{ccc}
\text{Dehn twist along the face loop lift}  \\
\text{Nodal singularity at the degenerated loop}  \\
\text{Automorphism on the quiver lattice}\\
\end{array}$ 
& $\begin{array}{ccc}
\text{Dehn twist along the vanishing cycle}  \\
\text{Nodal singularity at the "vanished" cycle}  \\
\text{Monodromy on the Hitchin fiber lattice}\\
\end{array}$
\end{tabular}
\end{table}

\subsection{Local Monodromies around \texorpdfstring{$\mathcal{D}_{\mathrm{bdry,C,P}}$}{}}

Let $\mathring{C}=C\setminus P$ be a $n$-punctured smooth complex genus-$g$ curve satisfying $2g-2+n>0$.
Let $G=SL_m$, $m \ge 2$. Set $L:=K_{C}(P)$, $d:=\deg L=2g-2+n>0.$ 

Let $b_0$ be a point in $\mathcal{D}_{bdry,C,P}^{\circ}$.
Denote by $D=\{z\in\mathbb C:|z|\le 1\}
=\left\{
(1-s)e^{2\pi i t}
:s,t\in[0,1]
\right\}$ the closed unit disc.
Let $\gamma:U\hookrightarrow B_{G,C,P}$ be a holomorphic embedding
defined on an open neighborhood
$U\supset D$,
with
$\gamma(0)=b_0\in \mathcal D_{bdry,C,P}^\circ,$
$\mathcal D_{bdry,C,P}\cap \gamma(D)=b_0$,
and $\gamma|_{\partial D}
: S^1\to B_{G,C,P}^{\mathrm{good}}$ a small meridian around
$\mathcal D_{bdry,C,P}$.

Let $a=(a_1,\ldots,a_m)\in B_{G,C,P}^{\mathrm{good}}$. The total family of spectral curves is
$\mathscr X
=\{(a,\lambda,z)\in
B_{G,C,P}^{\mathrm{good}}\times\operatorname{Tot}(L)
: P_a(\lambda,z)=0\}$.
The projection
$p:\mathscr X\to B_{G,C,P}^{\mathrm{good}},$
has fiber $p^{-1}(a)=\Sigma_a.$
$P_a$ depends holomorphically on $(a,\lambda,z)$, so
$\mathscr X$ is a holomorphic hypersurface in the 3 dimensional
complex manifold $B_{G,C,P}^{\mathrm{good}}\times\operatorname{Tot}(L)$.

The total family of spectral curves over the image of the holomorphic embedding $\gamma:D\hookrightarrow B_{G,C,P}$ is a 2 dimensional complex manifold $\mathscr X_D
=\{
(a,\lambda,z)\in D\times\operatorname{Tot}(L)
:P_a(\lambda,z)=0\}.$
The projection
$p|_{D}:\mathscr X_D\to D,$
is holomorphic with fiber $\Sigma_a$.
Let $r=1-s$.

\begin{theorem}[Picard--Lefschetz Theorem for Riemann Surfaces \cite{Lamotke}]
\label{thm:pic}
Let
$f:\mathscr X_D\to D$
be a proper holomorphic map from a 2 dimensional complex manifold $\mathscr X_D$ to a disc $D\subset\mathbb C$.
Assume:
\begin{enumerate}
    \item $f$ is a submersion over
    $D^\times=D\setminus\{0\};$
    \item the singular fiber $\Sigma_0=f^{-1}(0)$ contains exactly one critical point $p$;
    \item the critical point $p$ is holomorphically nondegenerate.
\end{enumerate}
Then, by the holomorphic Morse lemma, there are local holomorphic coordinates $(u,v)$ on $\mathscr X_D$ centred at $p$, such that $f(u,v)=uv=r\in D$.
Thus in a sufficiently small neighborhood $U$ of $p$, if $r=0$, $\Sigma_0\cap U=\{uv=0\}
=\{u=0\}\cup\{v=0\}$ consists of two smooth local branches meeting transversely at the node;
if $r\neq 0$, $\Sigma_r\cap U=\{uv=r\}$ contains a vanishing cycle $\delta\simeq S^1$ which collapses to $p$ as $r\to0$.

Let $\gamma(\theta)
=\varepsilon e^{i\theta}$,
$0\leq\theta\leq2\pi$,
be a positively oriented meridian around $0$.
Then the Gauss--Manin monodromy
$T_\gamma:
H_1(\Sigma_\varepsilon;\mathbb Z)
\to
H_1(\Sigma_\varepsilon;\mathbb Z)$
is
$T_\gamma(\alpha)
=\alpha+\langle\alpha,\delta\rangle\delta$,
where $\langle\alpha,\delta\rangle$ is the intersection form.
\end{theorem}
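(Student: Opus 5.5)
This is the classical Picard--Lefschetz formula for a one-parameter degeneration of curves, and I would prove it by localizing the monodromy to a neighbourhood of the node.

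\textbf{Step 1: the local normal form.} Hypothesis (3) says $p$ is a nondegenerate critical point. The holomorphic Morse lemma then gives coordinates $(u,v)$ centred at $p$ with $f=uv$. Fix a small closed ball $B$ around $p$ in these coordinates. For $0<|r|\le\varepsilon$ the piece $\Sigma_r\cap B=\{uv=r\}\cap B$ is an annulus. Its core circle $\delta=\{|u|=|v|=|r|^{1/2}\}$ collapses to $p$ as $r\to 0$.

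\textbf{Step 2: triviality away from $p$.} Let $X'=\mathscr X_D\setminus \mathrm{int}(B)$.
\begin{itemize}
\item By hypotheses (1) and (2), $f|_{X'}$ is a proper submersion, including over $r=0$, since $p$ is the only critical point of $\Sigma_0$.
\item Choosing $B$ so that $\partial B$ is transverse to the fibres, the boundary behaves the same way.
\item The relative form of Ehresmann's theorem, applied to the manifold with boundary $X'$, then trivializes $X'\to D$.
\item I would pick the Ehresmann connection so that, near $\partial B$, it agrees with an explicit connection on the model $\{uv=r\}$.
\end{itemize}
Consequently, parallel transport around $\gamma(\theta)=\varepsilon e^{i\theta}$ can be taken to be the identity on $\Sigma_\varepsilon\setminus B$. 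The monodromy diffeomorphism is therefore supported in the annulus $A=\Sigma_\varepsilon\cap B$.

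\textbf{Step 3: the local model.} This is the main computational step. On the model fibre, parametrize $A$ by $u=\rho e^{i\phi}$ and $v=(\varepsilon/\rho)e^{-i\phi}$, with $\rho$ in an interval $[\rho_0,\varepsilon/\rho_0]$.
\begin{itemize}
\item Transport along $r=\varepsilon e^{i\theta}$ is realised by $(u,v)\mapsto (e^{i\theta\chi(\rho)}u,\,e^{i\theta(1-\chi(\rho))}v)$.
\item Here $\chi$ is a cutoff equal to $0$ near one boundary circle and $1$ near the other, so the map is the identity near one end.
\item At $\theta=2\pi$ both ends are fixed pointwise, since there $(u,v)\mapsto(u,v)$ again. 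In the interior the angular coordinate $\phi$ is rotated by $2\pi\chi(\rho)$, a full turn across the annulus.
\end{itemize}
This is precisely a Dehn twist $T_\delta$ about the core. The sign is fixed by the complex orientation: one checks it is a right-handed twist for the positively oriented meridian.

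\textbf{Step 4: the homology formula.} For a Dehn twist, $(T_\delta)_*(\alpha)=\alpha+\langle\alpha,\delta\rangle\delta$. This is checked by isotoping $\alpha$ to meet $A$ transversally in arcs crossing it. Each crossing arc acquires a copy of $\pm\delta$, with the sign given by the local intersection number. Combining this with the previous proposition of the paper, that any topological parallel-transport representative computes $\PT^{GM}$, yields $T_\gamma$.

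\textbf{Main obstacle.} I expect the obstacle to be Step 2 rather than Step 4. One must justify that the fibration remains trivial near $\Sigma_0$ away from the node, and glue it compatibly to the local model so that the monodromy really is the identity outside $A$. Properness plus the single-critical-point hypothesis are exactly what is needed; I would write the glued vector field explicitly, lifting $\partial_\theta$, using a partition of unity. The orientation/sign convention in Step 3 is the other delicate point, and I would verify it on the model by computing $\langle\alpha,\delta\rangle$ for a transverse arc $\alpha=\{\phi=0\}$.
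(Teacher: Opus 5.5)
\textbf{Verdict.} The paper gives no argument for this statement beyond citing Lamotke. Your localization proof (Ehresmann trivialization off a Morse ball, explicit transport on $\{uv=r\}$, then the Dehn twist formula) is the standard proof of the cited result and is sound in outline. What does not close up is the sign, and it fails in two places.

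\textbf{First: which end of $A$ is held fixed.} In Step~3 the handedness of the local twist depends on which end of $A$ carries $\chi=0$. Exchanging the ends replaces $\chi$ by $1-\chi$, and since $e^{2\pi i(1-\chi)}=e^{-2\pi i\chi}$, this replaces the twist by its inverse. The choice is forced by Step~2, not by the complex orientation. Let $\eta$ be the radius of $B$.
\begin{itemize}
\item Near the part of $\partial B$ where $|u|\approx\eta$, the collar trivialization that extends across $r=0$ is $(u,r)\mapsto(u,r/u)$, i.e.\ $u$ is held fixed.
\item Holding $v$ fixed there instead forces $\dot u=\dot r/v=u\dot r/r$, which is singular on $\Sigma_0$.
\item So you need $\chi=0$ at the end of $A$ where $|u|$ is large, and $\chi=1$ where $|v|$ is large.
\end{itemize}
With the opposite assignment the boundary connection does not extend over the central fibre, so the relative Ehresmann argument is unavailable. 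The outer monodromy is then no longer the identity: it contributes two boundary-parallel twists that compensate the inverted local one. In short, the sign is fixed by the complex orientation together with this matching condition, not by the complex orientation alone.

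\textbf{Second: the sign in Step~4.} With the correct choice, your transport at $\theta=2\pi$ is $u\mapsto e^{2\pi i\chi(|u|)}u$ with $\chi$ decreasing in $|u|$. The outward radial arc $\alpha=\{\phi=0\}$ is carried to $\alpha-\delta$, where $\delta$ is the core oriented counterclockwise in the $u$-plane. Use the complex orientation and the usual convention: a crossing counts $+1$ when the tangent of the first curve followed by that of the second is an oriented basis. Then $\langle\alpha,\delta\rangle=+1$.

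So the check you propose gives $T_\gamma(\alpha)=\alpha-\langle\alpha,\delta\rangle\delta$, which is the classical Picard--Lefschetz formula for curves in this convention. This contradicts your assertion in Step~4 that the twist acts by $\alpha+\langle\alpha,\delta\rangle\delta$. The formula as stated holds only if the pairing is read in the reversed order, $\langle\alpha,\delta\rangle:=\delta\cdot\alpha$. You should fix that convention explicitly, rather than claim the sign falls out of the model. The paper does this tacitly with ``up to the chosen sign convention'' in the next corollary.

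\textbf{Minor point.} Shrink to $|r|\le\varepsilon$ so that $\partial B$ stays transverse to every fibre, and reach other radii by radial transport of $\delta$.
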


\begin{corollary}[\cite{FarbMargalit}]
For a nodal degeneration of Riemann surfaces, the geometric monodromy
around a positive meridian is represented, up to isotopy, by a Dehn twist along the vanishing cycle:
$[\tau_\delta]
\in\operatorname{MCG}(\Sigma)$.
Consequently
$(\tau_\delta)_*(\alpha)
=\alpha
+\langle\alpha,\delta\rangle\delta$,
up to the chosen sign convention.
\end{corollary}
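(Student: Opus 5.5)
The plan is to realize the Gauss--Manin monodromy of Theorem~\ref{thm:pic} geometrically. I will construct an explicit parallel-transport diffeomorphism of $\Sigma_\varepsilon$ along the meridian $\gamma(\theta)=\varepsilon e^{i\theta}$ and show it is isotopic to a Dehn twist $\tau_\delta$ supported near the vanishing cycle. The homological statement then follows from the Proposition identifying $(T_\gamma)_*$ with $\PT^{GM}_\gamma$ for any topological parallel-transport representative, together with the standard formula for the action of a Dehn twist on $H_1$.

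First I split the total space. Let $U$ be the Morse chart around $p$ from Theorem~\ref{thm:pic}, where $f(u,v)=uv$, and shrink it to $U=\{|u|\le 1,|v|\le 1\}$. On $\mathscr X_D\setminus \mathrm{int}(U)$ the map $f$ is a proper submersion over all of $D$, including $0$, because the only critical point is $p$. Hence, by the Ehresmann fibration theorem applied to this manifold with boundary, it is trivial over the disc. I choose an Ehresmann connection there, viewed as lifts of $\partial_\theta$ via the Lemma on vector fields lifting $\partial_t$, so that on the two boundary circles $\{|u|=1\}$ and $\{|v|=1\}$ of $U\cap\Sigma_r$ it agrees with the model flow described next. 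Transport around the full meridian on this piece is then isotopic to the identity, since it extends over the contractible disc.

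Second, the local model. On $\Sigma_r\cap U=\{uv=r,\ \varepsilon\le|u|\le1\}$, an annulus whose core is $\delta$, I take a cutoff $\rho(|u|)$ equal to $0$ near $|u|=\varepsilon$ and to $1$ near $|u|=1$. Transport along $r=\varepsilon e^{i\theta}$ is defined by
$(u,v)\mapsto (u e^{i\theta\rho},\, v e^{i\theta(1-\rho)})$.
This preserves $uv=r e^{i\theta}$, and near each boundary circle it is one of the coordinate rotations $u\mapsto ue^{i\theta}$ or $v\mapsto ve^{i\theta}$. I then choose the outer connection to match these rotations, which is possible because they are the transports of the trivial product structure in the $v$- (resp.\ $u$-) chart. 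At $\theta=2\pi$ the map restricted to the annulus is $u\mapsto u e^{2\pi i\rho(|u|)}$. This is the identity on both boundaries and rotates by a full turn across the annulus, so it is exactly a Dehn twist about $\delta$. Its handedness is fixed by the complex orientation: a positive meridian gives a right-handed (positive) twist. Gluing the two pieces gives the geometric monodromy $[\tau_\delta]\in\operatorname{MCG}(\Sigma)$.

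Finally, for $\alpha\in H_1(\Sigma_\varepsilon;\mathbb Z)$ I represent $\alpha$ by a cycle transverse to $\delta$. Each crossing acquires one copy of $\pm\delta$, with sign equal to the local intersection sign, which gives $(\tau_\delta)_*\alpha=\alpha+\langle\alpha,\delta\rangle\delta$ with the sign convention matched to that of Theorem~\ref{thm:pic}. The main obstacle I expect is the bookkeeping at the gluing interface: making the outer Ehresmann connection agree with the two boundary rotations while still having trivial total monodromy. I would handle this by noting that the outer region retracts fibrewise onto a product and that the boundary rotations extend as vertical isotopies. A secondary point is that the sign of the twist depends on the chosen orientation conventions, which is why the statement is asserted only up to a chosen sign convention.
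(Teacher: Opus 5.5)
The paper gives no argument for this statement (it is quoted from \cite{FarbMargalit}). Your local-model strategy is the standard one, but the gluing step has a genuine gap, and the remedy you propose does not close it.

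To conclude that the glued map is the annulus twist, you need the outer monodromy on $F=\Sigma_\varepsilon\setminus\mathrm{int}(U)$ to be isotopic to the identity \emph{relative to} $\partial F$. Extending over the disc only gives an isotopy that moves $\partial F$, and with your boundary data the rel-boundary statement is false.

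The outer region near $\{|u|=1\}$ is trivialized over the whole $r$-disc, including $r=0$, by holding $u$ fixed (with $v=r/u$). Near $\{|v|=1\}$ it is trivialized by holding $v$ fixed. Holding $v$ fixed near $|u|=1$, as you do, is not a product structure over $D$: as $r\to 0$ it pushes $u=r/v$ out of the region. So your justification via ``the trivial product structure in the $v$- (resp.\ $u$-) chart'' is backwards.

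Relative to the trivialization that does extend over $D$, your model makes each boundary circle perform one full turn as $\theta$ runs from $0$ to $2\pi$. Two outer connections with the same boundary transport have monodromies isotopic rel $\partial F$, and the canonical boundary transport gives the identity. Hence your outer monodromy is, rel $\partial F$, the product of the twists about the two boundary curves. Extending the rotations into collars as vertical isotopies is exactly what produces these twists.

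Both boundary curves are isotopic to $\delta$. Comparing in the coordinate $u$, each collar twist is the inverse of your annulus map $\tau\colon u\mapsto ue^{2\pi i\rho(|u|)}$. So the glued monodromy is $\tau\circ\tau^{-2}=\tau^{-1}$, and the sense of the twist you read off from the inner piece is opposite to the actual one. Without this bookkeeping, the argument does not even show that the total is a single twist rather than $\tau^{1+a+b}$ for undetermined $a,b$.

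The repair is to impose the canonical boundary behaviour in the local model. For instance, take $(u,v)\mapsto(ue^{i\theta(1-\rho)},ve^{i\theta\rho})$, which fixes $u$ near $|u|=1$ and fixes $v$ near $|v|=1$. These boundary trivializations are defined over all of $D$. The outer family can therefore be trivialized over $D$ compatibly with them, and its monodromy is the identity rel $\partial F$. The total monodromy is then the single Dehn twist $u\mapsto ue^{-2\pi i\rho(|u|)}$ about $\delta$.

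Your homological step applies verbatim to this map. It gives $\alpha\mapsto\alpha-\langle\alpha,\delta\rangle\delta$, for the complex orientation of the fibre and the ordered intersection pairing, which is the stated formula up to the sign convention. The other ingredients you use are fine: Ehresmann for the manifold with boundary, and transversality of the fibres to $\partial U$ for $|r|<1$.
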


\begin{lemma}
\label{le:picprym}
Let $a\in \gamma(D^{\times})$, and let $\pi_a:\Sigma_a\to C$ be the spectral curve.
Let $\delta\in H_1(\Sigma_a;\mathbb Z)$ be the vanishing cycle associated with a nodal degeneration
produced by the collision of two simple branch values.
Then
$(\pi_a)_*\delta=0$, i.e., $\delta\in H_1(\Prym(\Sigma_a);\mathbb Z)$.

Hence the Picard--Lefschetz transformation preserves the $SL_m$ Hitchin fiber lattice $H_1(\Prym(\Sigma_a);\mathbb Z)$
and acts by
$T(\alpha)
=\alpha+\langle\alpha,\delta\rangle\delta$, $\forall \alpha\in H_1(\Prym(\Sigma_a);\mathbb Z)$,
up to the chosen sign convention.
\end{lemma}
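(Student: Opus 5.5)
The claim is that $(\pi_a)_*\delta=0$ in $H_1(C;\mathbb Z)$. The plan is to realize $\delta$ concretely as a lift of a small loop in $C$ that encircles the two colliding branch values, and then to compute its pushforward directly. Once that is done, the second assertion is formal.

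\textbf{Step 1: the local model over $C$.} Near $b_0\in\mathcal D_{bdry,C,P}^\circ$, the discriminant $\Delta_{b_0}$ has exactly one double zero $q$. For $a\in\gamma(D^\times)$, this double zero splits into two simple zeros $q_+,q_-$ lying in a small disc $V\subset\mathring C$ around $q$. Since $\pi_{b_0}^{-1}(q)$ contains a node and no other degenerate point, I restrict to $\pi_a^{-1}(V)$. It splits as $m-2$ unramified sheets, each mapping isomorphically onto $V$, together with one component $V_r$. On $V_r$ only two sheets $i,j$ are involved. Both $q_\pm$ carry the same transposition $(ij)$, as in the lemma on \ttt moves and the nodal (boundary) type.

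The two sheets interchanged by $(ij)$ are exchanged twice going around $\partial V$, so $V_r\to V$ is a double cover of a disc branched at two points. Hence $V_r$ is an annulus. In the Morse coordinates $uv=r$ of Theorem~\ref{thm:pic}, this annulus is the local piece $\{uv=r\}$ near the node, and its core circle is the vanishing cycle $\delta$.

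\textbf{Step 2: choosing a representative of $\delta$.} Let $\beta\subset V$ be a simple closed curve around both $q_+$ and $q_-$. The monodromy of $\beta$ is $(ij)(ij)=\mathrm{id}$, so $\pi_a^{-1}(\beta)\cap V_r$ consists of two disjoint circles $\beta_i,\beta_j$, one on each sheet. Each $\beta_k$ is a boundary circle of the annulus $V_r$, so $\beta_k$ is isotopic to the core. This gives $\delta=\pm[\beta_i]$ in $H_1(\Sigma_a;\mathbb Z)$.

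An alternative representative is the lift of the arc from $q_+$ to $q_-$: its two lifts glue at the ramification points into a circle that maps onto the arc traversed back and forth. Either description works.

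\textbf{Step 3: computing the pushforward.} Using Step 2, $(\pi_a)_*\delta=\pm[\beta]$. The loop $\beta$ bounds the disc $V\subset C$, so $[\beta]=0$ in $H_1(C;\mathbb Z)$, and therefore $(\pi_a)_*\delta=0$. Using the arc representative instead, the image is $\alpha\cdot\alpha^{-1}$, which is null-homologous.

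By the lemma identifying $H_1(\Prym(\Sigma_a);\mathbb Z)$ with $\ker\pi_*$, it follows that $\delta\in H_1(\Prym(\Sigma_a);\mathbb Z)$. Since $V$ avoids $P$, $\beta$ also bounds in $\mathring C$, so $\delta$ lies in the finer kernel as well.

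\textbf{Step 4: preserving the Prym lattice.} By Theorem~\ref{thm:pic}, $T(\alpha)=\alpha+\langle\alpha,\delta\rangle\delta$ on all of $H_1(\Sigma_a;\mathbb Z)$. Suppose $\pi_*\alpha=0$. Then
\[
\pi_*T(\alpha)=\pi_*\alpha+\langle\alpha,\delta\rangle\,\pi_*\delta=0,
\]
so $T$ preserves $\ker\pi_*$. This is consistent with the corollary that Gauss--Manin transport preserves the sub-local system $\Lambda$. Since $T$ is invertible with inverse $\alpha\mapsto\alpha-\langle\alpha,\delta\rangle\delta$, the restriction is an automorphism of $H_1(\Prym(\Sigma_a);\mathbb Z)$ given by the stated formula, up to the sign convention.

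\textbf{Main obstacle.} The delicate point is Step 1: identifying the abstract vanishing cycle from the Morse chart with the core of the annulus $V_r$. This requires that the node of $\Sigma_{b_0}$ lies over $q$ and that $\mathcal D_{bdry}$ is exactly the locus where both colliding branch points carry the same transposition. I would argue that the Morse neighborhood of the node can be taken inside $\pi^{-1}(V)$ for the whole family over a small disc. The degenerating piece is then the family of annuli $\{uv=r\}$, and in that piece the core circle is the unique primitive class up to sign.
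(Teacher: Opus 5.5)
Your proof is correct. The paper gives no proof of this lemma. The only supporting reasoning is in the proof of Proposition~\ref{prop:hittoweb}: after passing to a web model through the Hurwitz isomorphism of Corollary~\ref{cor:iso1}, the vanishing cycle is asserted to be isotopic to the lift of the face loop of an internal face $F$. That lift lies in $\Lambda_\Gamma^\circ=\ker\bigl(H_1(\Sigma_W;\mathbb Z)\to H_1(\mathring C;\mathbb Z)\bigr)$ because the face loop bounds $F$.

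Your route stays on the Hitchin side and uses only the local two-sheet model over a small disc $V$. So it needs neither the web construction nor the somewhat heuristic collapse of the face-loop lift onto the node. The paper's route instead produces the identification $\delta\leftrightarrow e_F$, which is what is used later to compare with the quiver lattice.

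The step you flag as the main obstacle is not needed for $(\pi_a)_*\delta=0$. The cycle $\delta$ is supported in a small Milnor neighbourhood of the node, and for $a$ near $b_0$ that neighbourhood meets $\Sigma_a$ inside $\pi_a^{-1}(V)$. Hence $(\pi_a)_*\delta$ factors through $H_1(V;\mathbb Z)=0$.

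If you do want $\delta$ identified with the core of $V_r$, it follows explicitly from the factorization $Q_t=(\lambda-c)^2-\beta^2\bigl((z-z_0)^2-\alpha\bigr)$. Set $\nu=(\lambda-c)/\beta$ and $w=z-z_0$. The functions $u=\nu-w$, $v=\nu+w$ give coordinates near the node (taking $\alpha$ as the transverse disc parameter), in which the family is $uv=-\alpha$. The vanishing circle $|u|=|v|=|\alpha|^{1/2}$ is then exactly the preimage of the segment joining $q_-$ and $q_+$, i.e. your arc representative.
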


\begin{proposition}
\label{prop:hittoweb}
Given any sufficiently small oriented meridian $m$ in $B_{G,C,P}^{\mathrm{good}}$ around $\mathcal{D}_{bdry,C,P}$, we can find a generic $A_{m-1}$-web $W$, such that the monodromy action on the Hitchin fiber lattice along $m$ can be realized by the automorphism on the quiver lattice resulting from two successive \ttt moves at some internal face of $W$ in the same orientation.
\end{proposition}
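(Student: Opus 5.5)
The plan is to match both sides through the Dehn twist picture. On the Hitchin side, Theorem~\ref{thm:pic} and Lemma~\ref{le:picprym} describe the monodromy along $m$. On the web side, Proposition~\ref{prop:dehn} and the corollary to Theorem~\ref{thm:ttt} describe the two successive ura operations. The work is to produce a generic web $W$ whose spectral cover is identified with $\Sigma_a$ in such a way that some internal face loop lift goes to the vanishing cycle $\delta$.

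Let $a\in B_{SL_m,C,P}^{\mathrm{good}}$ be the base point of $m$. We may take $m=\gamma|_{\partial D}$ for a holomorphic disc $\gamma$ centred at $b_0\in\mathcal D_{bdry,C,P}^\circ$.

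\textbf{Step 1: the Hitchin side.} By Theorem~\ref{thm:pic} and Lemma~\ref{le:picprym}, the monodromy on $H_1(\Prym(\Sigma_a);\mathbb Z)$ is $T(\alpha)=\alpha+\langle\alpha,\delta\rangle\delta$. Here $\delta$ is the vanishing cycle created by two simple branch values $q_+,q_-$ colliding at the double zero $q$ of $\Delta_{b_0}$.

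Since $b_0\in\mathcal D^\circ_{bdry,C,P}$, both points lie over a single transposition $(ij)$. In a small disc $V\ni q$ the preimage $\pi_a^{-1}(V)$ therefore has one component, which is a $2{:}1$ cover of $V$ branched at $q_\pm$, hence an annulus. The remaining $m-2$ components are discs. Consequently $\delta$ is the core circle of this annulus, and $\pi_{a*}\delta$ is null-homotopic in $\mathring C$.

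\textbf{Step 2: choosing the web.} Choose an embedded arc $\beta\subset V$ from $q_-$ to $q_+$; $\delta$ is the lift of $\partial N(\beta)$ to the two sheets $i,j$. We want a generic $A_{m-1}$-web whose two $\bullet$-vertices bounding some internal face $F$ (the face that a \ttt move acts on, the two ramification points of Property~1) lie at $q_\pm$, with $F\simeq N(\beta)$ and the transposition equal to $(ij)$.

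First take any generic web $W_0$ and choose one of its \ttt-admissible internal faces, with its two $\bullet$-vertices $v_\pm$. By Corollary~\ref{cor:iso} and Corollary~\ref{cor:iso1}, after isotopy $\pi_{W_0}$ is Hurwitz isomorphic to $\pi_a$. The isotopy of $W_0$ may be chosen in two stages. It first sends the cover to $\pi_a$, so that $\{v_\pm\}$ lands on some pair of branch points. It then composes with an element of the surface braid group that realizes, via the Hurwitz action, a braid carrying that pair to $(q_+,q_-)$ along $\beta$.

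The key point is transitivity. Full monodromy $S_m$ (Corollary~\ref{cor:good}, Corollary~\ref{cor:fullmono}) together with Kanev's irreducibility (Theorem~\ref{thm:irr}, Lemma~\ref{le:irr}, Lemma~\ref{le:hur}) should imply that Hurwitz moves act transitively on pairs of adjacent branch points having equal transpositions, jointly with the cover. So we may arrange that the face loop lift $l_F$ on $\Sigma_W$ maps to $\delta$ under the Hurwitz isomorphism $f$ of Corollary~\ref{cor:iso1}.

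\textbf{Step 3: the web side and comparison.} By the Proposition identifying \ttt moves with ura operations, together with Proposition~\ref{prop:dehn}, two successive \ttt moves at $F$ in the same orientation give the Dehn twist along $l_F$. By the corollary to Theorem~\ref{thm:ttt}, this acts on $\Lambda_\Gamma^\circ$ by $\alpha\mapsto\alpha\pm\langle\alpha,l_F\rangle l_F$. Conjugating by $f_*$ and using that $f_*$ preserves intersection forms, we get $f_*\circ(\phi'^\pm\phi^\pm)\circ f_*^{-1}=T$. The sign is matched by choosing the clockwise or anticlockwise orientation according to the orientation of $m$.

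We also need $f_*(\Lambda^\circ_\Gamma)$ to be the Prym lattice. Since $C$ is closed, the kernel to $H_1(C)$ contains the one to $H_1(\mathring C)$. The Dehn twist acts compatibly on both, since it fixes the external/puncture loops.

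\textbf{Main obstacle.} Step 2 is the hard part: producing the web so that the Hurwitz isomorphism sends a \emph{face loop lift} to exactly the vanishing cycle. A bare Hurwitz isomorphism only matches covers; it does not match the arc $\beta$ with the edge joining $v_\pm$. To handle this, I would first show that two branch points joined by an arc whose lift is a non-separating loop in $\ker\pi_*$ can be moved by a branch-point braid, meaning an isotopy of $C$ relative to $P$ as in Lemma~\ref{le:hur}, onto the pair bounding an admissible face. Then I would transport $W$ by that isotopy. Transitivity of the braid action on such arcs with fixed transposition label would be deduced from full $S_m$ monodromy.
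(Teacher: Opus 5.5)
Your Steps 1 and 3 follow the paper's proof. The paper uses Corollary~\ref{cor:iso1} to make $\pi_W$ and $\pi_a$ Hurwitz isomorphic, then compares Proposition~\ref{prop:dehn} and Theorem~\ref{thm:ttt} with Theorem~\ref{thm:pic} and Lemma~\ref{le:picprym}. The paper handles your Step~2 by assertion only. It says that since $q_\pm$ are branch points of $\pi_W$, they are the two $3$-valent $\bullet$-vertices of a $4$-valent internal face $F$, and that the face-loop lift collapses to the node as in the s-channel figure. You are right that a Hurwitz isomorphism over $\id_C$ gives neither fact. It matches branch sets and monodromy. It does not say which $\bullet$-vertices share a face, nor the isotopy class (relative to $P$ and the other branch points) of the arc along which the Hitchin path collides $q_\pm$. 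That arc is what determines $\delta$.

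The genuine gap is that your repair is only conjectured, and the results you cite do not give it. Kanev's irreducibility (Theorem~\ref{thm:irr}), full $S_m$ monodromy and Lemma~\ref{le:hur} give connectedness of $H^{0,P}_{m,r}(C)$, i.e.\ surface braids act transitively on Hurwitz classes. You need something stronger: the \emph{stabilizer} of $[\pi_a]$ must act transitively on isotopy classes of non-separating vanishing arcs (arcs whose endpoints carry the same transposition). Equivalently, the space of pairs (cover, such an arc) must be connected. Shrinking the arc to a point shows this is equivalent to connectedness of the relevant one-node boundary locus of the Hurwitz space. That locus can be described by triples $(\pi',z_0,\{x,y\})$, where $\pi'$ is the normalization (with $r-2$ simple branch points) and $\{x,y\}\subset\pi'^{-1}(z_0)$. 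Connectedness would follow from irreducibility of the Hurwitz space of such $\pi'$ plus transitivity of $\operatorname{Mon}(\pi')=S_m$ on unordered pairs in a fibre. But you would have to check that $\pi'$ has full monodromy and that $r-2$ lies in Kanev's range; it does not when $m=2$, $2g-2+n=1$. Without such an argument, the central step of the proposition remains open in your proposal, and it is only asserted in the paper.

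A further point in Step~3: you cannot conjugate by $f_*$. The lattice $\Lambda_\Gamma^\circ$ has rank larger by $n(m-1)$ than $H_1(\Prym(\Sigma_a);\mathbb Z)$. The induced map $\Lambda_\Gamma^\circ\to H_1(\Prym(\Sigma_a);\mathbb Z)$ is a surjection whose kernel is spanned by differences of puncture loops over the same puncture. These classes lie in the radical of the form and are fixed by the transvection, so the correct statement is $f_*\circ(\phi'^\pm\circ\phi^\pm)=T\circ f_*$. The paper's ``basis'' identification has the same defect.
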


\begin{proof}
By Corollary~\ref{cor:iso1}, given any point $a\in m \subset B_{G,C,P}^{\mathrm{good}}$ near a point $b_0\in \mathcal D_{bdry,C,P}^\circ$, we can find a generic $A_{m-1}$-web $W$, such that $\pi_a:\Sigma_a\to C$ and $\pi_W:\Sigma_W\to C$ are Hurwitz isomorphic. Hence a basis for $H_1(\Prym(\Sigma_a);\mathbb Z)$ can be imposed from the basis for $\Lambda_{\Gamma}^{\circ}$.

Let $q_+,q_-\in C$ be the two initial simple branch points of $\pi_a$ that move close to each other along a path from $a$ to $b_0$ and eventually collide at $b_0$. Since $q_+,q_-$ are also two simple branch points of $\pi_W$, they lie on two 3-valent $\bullet$-vertices on the boundary of an internal face $F$ of $W$ corresponding to a 4-valent vertex in the quiver.

The lift of the face loop $F$ is a simple closed loop $\delta\in \ker (H_1(\Sigma_W;\mathbb Z)\to H_1(\mathring{C};\mathbb Z)$ passing through the two simple ramification points over the two simple branch points on a cylinder area on the spectral cover $\Sigma_W$.

As $q_+,q_-$ move close to each other along a path from $a$ to $b_0$ and eventually collide at $b_0$, $\delta$ collapses to a nodal singularity, as shown in Figure~\ref{fig:schannelnew}. Then the vanishing cycle in the Picard--Lefschetz Theorem is isotopic to $\delta$.

By Proposition~\ref{prop:dehn} and Theorem~\ref{thm:ttt}, the automorphism on the quiver lattice is given by the Dehn twist along $\delta$, which provides the automorphism $(T_\delta)_*:
\Lambda_{\Gamma}
\to
\Lambda_{\Gamma},$
$(T_\delta)_*(\alpha)
=\alpha+\langle\alpha,\delta\rangle\delta$.
This is the same as the Picard--Lefschetz transformation on $H_1(\Prym(\Sigma_a);\mathbb Z)$ along $m$ in Theorem~\ref{thm:pic} and Lemma~\ref{le:picprym}.
\end{proof}

\subsection{Local Branch Motions}

Let us now consider the "converse" of Proposition~\ref{prop:hittoweb}. We start with an isotopy class of generic $A_{m-1}$-webs on $C$ and an internal face $F$ that we perform two successive \ttt moves on.

According to Corollary~\ref{cor:iso}, we pick two nearby points $x_0\in B_{G,C,P}^{\mathrm{good}}$ and $b_0\in \mathcal D_{bdry,C,P}^\circ$ such that $\pi_0:\Sigma_0\to C$ is Hurwitz isomorphic to $\pi_W:\Sigma_W\to C$ for some isotopy representative $W$,
and $\pi_{b_0}:\Sigma_{b_0}\to C$ has one nodal singularity formed by the collision of two simple branch points $q_+,q_-$ on the boundary of $F$.
A basis for $H_1(\Prym(\Sigma_0);\mathbb Z)$ can be imposed from the basis for $\Lambda_{\Gamma}^{\circ}$.

We will observe in the following two subsections that the local branch motion for $q_+,q_-\in C$ and the local spectral cover deformation produced by performing two successive \ttt moves on $F$ in the same orientation, determine a deformation of the Hitchin polynomial and hence a small oriented meridian $m$ around $\mathcal D_{bdry,C,P}$ starting and ending at $x_0$.

Thus the automorphism on $\Lambda_{\Gamma}^{\circ}$ resulting from two successive \ttt moves at $F$ in the same orientation can be given by the monodromy on $H_1(\Prym(\Sigma_0);\mathbb Z)$ along $m$.

\subsubsection{Continuous Variations of the Ura Operation}

We introduce two continuous variations of the \bbb operation on spectral covers defined in Definition~\ref{def:Ura}.

\medskip
\noindent
The \textbf{s-channel} variation of the \bbb operation is the process shown in Figure~\ref{fig:schannelnew}.

\begin{figure}[htbp]
\centering
\includegraphics[width=0.65\textwidth]{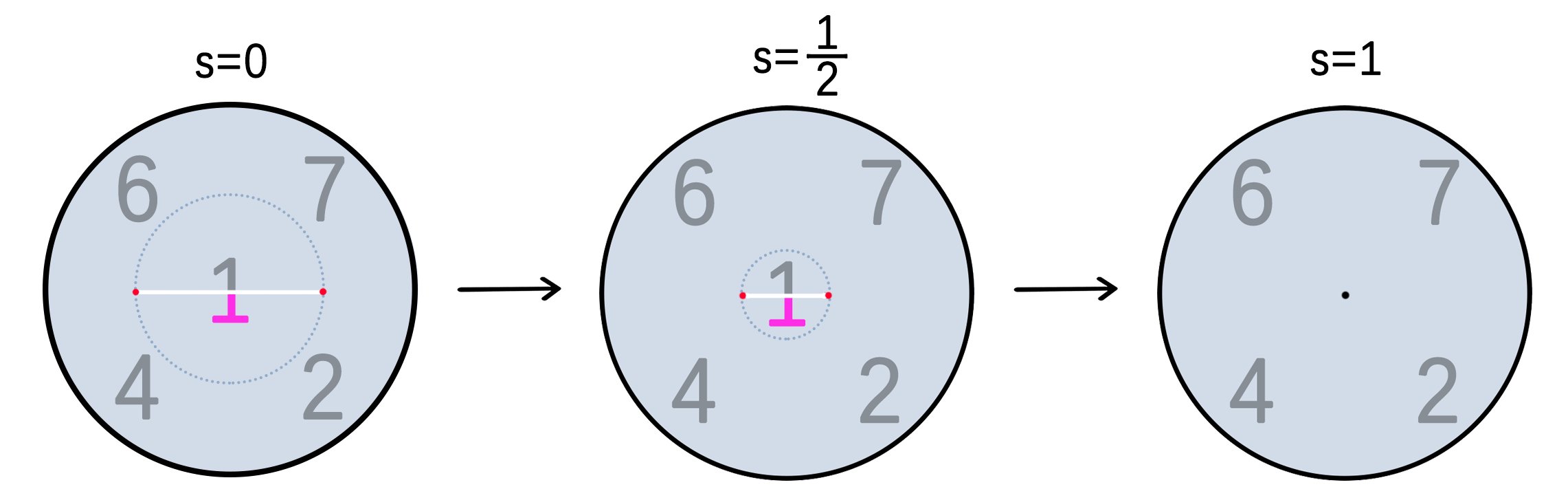}
\caption{s-channel.}
\label{fig:schannelnew}
\end{figure}

A local holomorphic expression on an open set $U\subset C$ for the discriminant of the varying Hitchin characteristic polynomial equation $P_t=0$ defining the spectral curves may have the form:
\begin{equation}
\Delta_t(z)=u(z,t)((z-z_0)^2-r(1-s)e^{2 \pi it})=0, \qquad s \in [0, 1],
\end{equation}
where $u(z,t)$ is nowhere zero and has order $m-2$, $z_0$ may be chosen as the middle point in face 1, $t=0$ and $r$ are constant. The two simple zeros $z=z_0 \pm \sqrt{r(1-s)}$ are the two simple branch points on the boundary of face 1. During the s-channel, the face 1 contracts to the middle point $z_0$ while the two simple zeros collide into one order-2 zero at $z_0$ when $s=1$. This means that an s-channel hit $\mathcal{D}_{bdry,C,P}$ when the two branch points collide at $z_0$ and the spectral curve $\Sigma$ develops a nodal singularity.

\begin{figure}[htbp]
\centering
\includegraphics[width=1.0\textwidth]{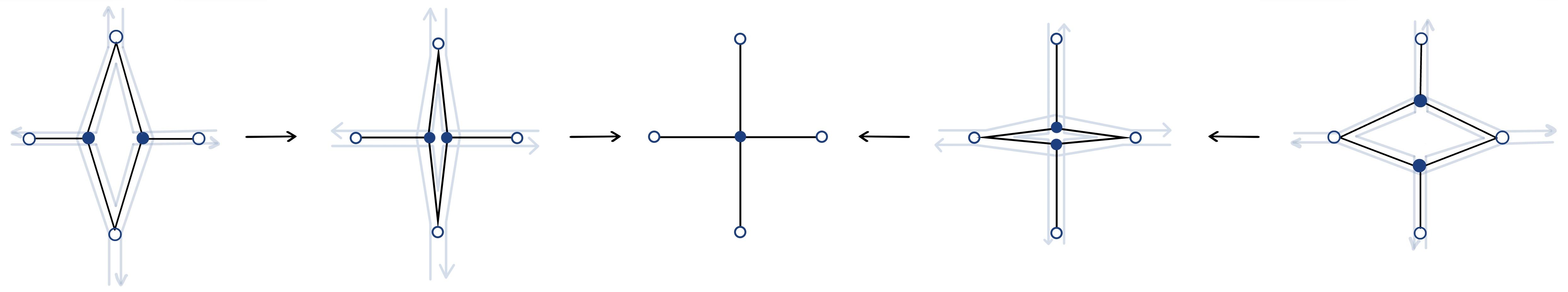}
\caption{Local motions of $A_{m-1}$-webs becoming singular through s-channel.}
\label{fig:singularweb}
\end{figure}

We can see from Figure~\ref{fig:schannelnew} and Figure~\ref{fig:singularweb} that when $s=1$, the web $W^{\mathrm{s=1}}$ is not an ideal web anymore, and the spectral cover $\Sigma_{W^{\mathrm{s=1}}}$ has a nodal singularity at the middle 4-valent $\bullet$-vertex. $W^{\mathrm{s=1}}$ is a singular web. An example of a singular web is given in Figure~\ref{fig:singular}.

\medskip
\noindent
The \textbf{t-channel} variation of the \bbb operation is the process shown in Figure~\ref{fig:tchannelnew}.


We can see the t-channel model is topologically the same as the model in Figure~\ref{fig:Ura}. This is simply an alternative way to view the \bbb operation as a continuous change of the discontinuous processes, namely infinitesimal cutting and gluing, on the spectral cover.

As discussed before, a local holomorphic expression on $U\subset C$ for the discriminant of the varying Hitchin characteristic polynomial equation $P_t=0$ defining the spectral curves may have the form:
\begin{equation}
\Delta_t(z)=u(z,t)((z-z_0)^2-re^{2 \pi it})=0, \qquad t \in [0, \frac{1}{2}],
\end{equation}
where $u(z,t)$ is nowhere zero and has order $m-2$, $z_0$ may be chosen as the middle point in face 1, $r$ is constant. The two simple zeros $z=z_0 \pm \sqrt r e^{\pi it}$ are the two simple branch points on the boundary of face 1. The two branch points braid around $z_0$ during the t-channel. At $t=1$, two simple branch points make a half braid and switch positions.

\begin{figure}[htbp]
\centering
\includegraphics[width=0.74\textwidth]{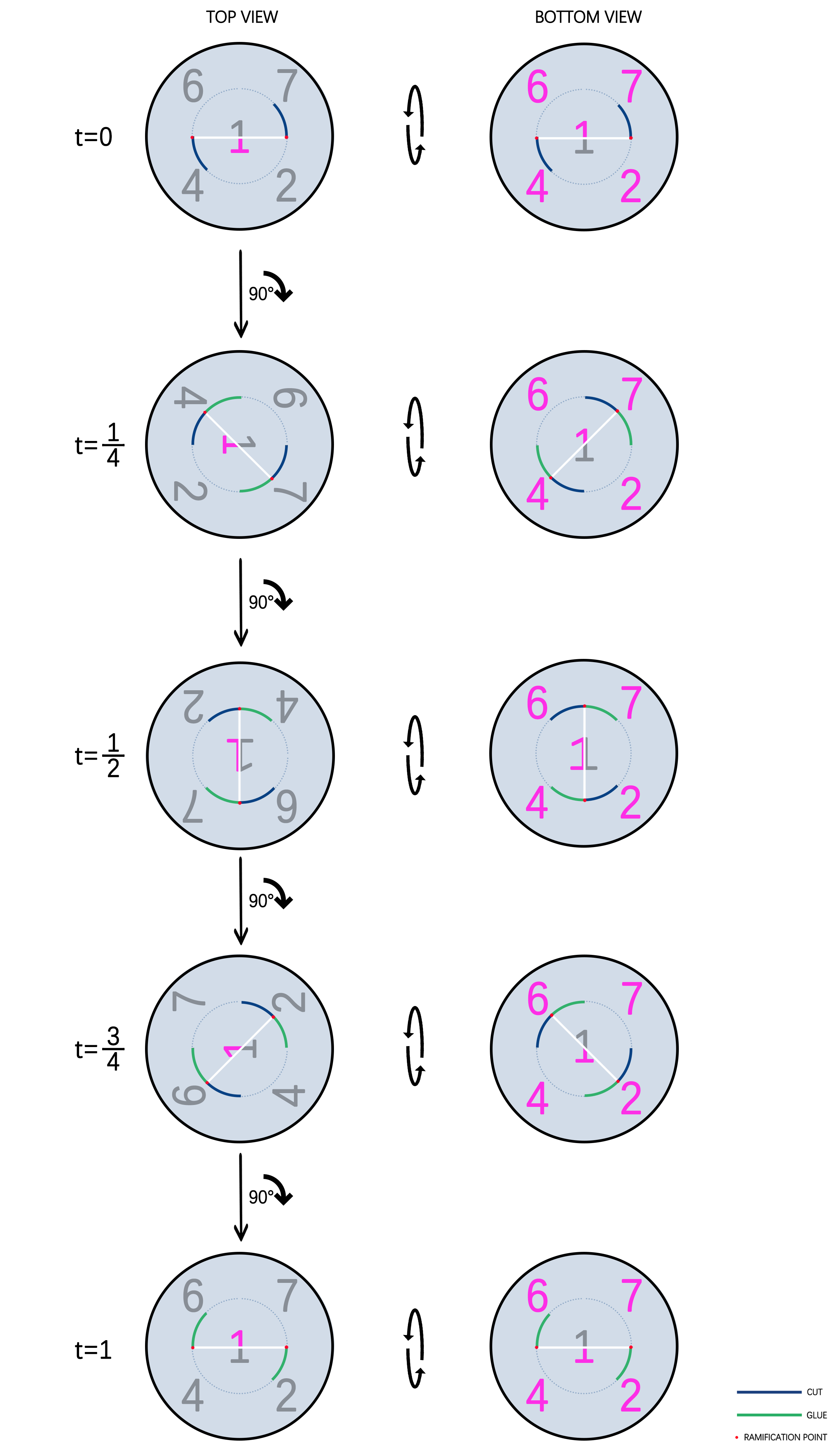}
\caption{t-channel. The $90^\circ$ clockwise rotation indicates rotation of the top layer relative to the bottom layer. (Instructions for making the paper model are provided in the appendix. For the t-channel, replace all the squares with circles.)}
\label{fig:tchannelnew}
\end{figure}

\subsubsection{Local Two-sheet Factorization}

Let $x_0\in B_{G,C,P}^{\mathrm{good}}$ be close to a point $b_0\in \mathcal D_{bdry,C,P}^\circ$.
Suppose $z_0$ is the double zero of the discriminant $\Delta_{b_0}$.
Let $P_0(\lambda,z)=0$ be the initial Hitchin characteristic polynomial equation over $x_0$. Let $q_+(0)$, $q_-(0)$ be the two simple branch points over $x_0$ that move close to each other along a path from $x_0$ to $b_0$ and eventually collide at $z_0$ over $b_0$.

Locally on an open set $U\subset C$ where the braiding/collision of two simple branch points $q_+$ and $q_-$ happens, write
$P_t(\lambda,z)=Q_t(\lambda,z)R_t(\lambda,z),$
where $Q_t=(\lambda-\lambda_i)(\lambda-\lambda_j)$ is the quadratic factor corresponding to the two participating sheets, and $R_t(\lambda,z)$ is the non-vanishing degree-$(m$-$2)$ factor corresponding to the $m-2$ unramified sheets over $U$.

Let $c(z)=\frac{\lambda_i(z)+\lambda_j(z)}2$ and $d_0(z)=
\left(
\frac{\lambda_i(z)-\lambda_j(z)}2
\right)^2.$
Then
$Q_0(\lambda,z)
=(\lambda-c(z))^2-d_0(z).$
Both $c$ and $d_0$ are invariant under the interchange
$\lambda_i\leftrightarrow\lambda_j.$
Thus $Q_0(\lambda,z)$ depends only on the unordered pair $\{\lambda_i,\lambda_j\}$.

Choose a local coordinate $z$ centred near the selected pair of simple branch points.
Set $d_t(z)
=\beta(z,t)^2
\left(
(z-z_0)^2-\alpha(t)
\right)$,
where $\alpha$, $\beta$ are holomorphic and $\beta$ is nowhere zero.
Then $Q_t(\lambda,z)=(\lambda-c(z))^2-\beta(z,t)^2
\left((z-z_0)^2-\alpha(t)\right)$.

Then the two eigenvalues are
$\lambda_{i,j}(z,t)
=c(z)\pm\beta(z,t)
\sqrt{(z-z_0)^2-\alpha(t)}$.
Since no other sheet collides in $U$, the discriminant satisfies
$\Delta_t(z)=u(z,t)
\left((z-z_0)^2-\alpha(t)\right)$,
for some non-vanishing $u(z,t)$.
Thus the two branch points are $q_\pm(t)=z_0\pm\sqrt{\alpha(t)}.$

\subsection{Global Hitchin Deformations}

In this subsection, we will find the deformations of the Hitchin characteristic polynomial $P_t$ which realize the local motions we described in Section 5.2, by controlling the local branch points motions parametrized by $D=\{(1-s)e^{2\pi i t}:s\in [0,1], t\in [0,1]\}$. Such deformations of $P_t$ are hence parametrized by $D$ as well.

We will see that such a deformation of $P_t$ gives a holomorphic embedding $\gamma:U\hookrightarrow B_{G,C,P},$ defined on an open neighborhood $U\supset D,$
with $\gamma(0)=b_0\in \mathcal D_{bdry,C,P}^\circ,$
$\mathcal D_{bdry,C,P}\cap \gamma(D)=b_0$,
and $\gamma|_{\partial D}
: S^1\to B_{G,C,P}^{\mathrm{good}}$ a small meridian around
$\mathcal D_{bdry,C,P}$.

The $P_t$, $t\in D$, realizes the braiding of two simple branch points.
$t=1$ corresponds to the half-twist.
$P_s$, $s\in D$ realizes the collision of two simple branch points.
$s=1$ corresponds to the collision which causes a nodal singularity on the spectral curve.

\subsubsection{Hitchin Polynomial Deformations Controlled by Determinants of Higgs Fields}

Let $x_0\in B_{G,C,P}^{\mathrm{good}}$ be close to a point $b_0\in \mathcal{D}_{bdry,C,P}^{\circ}$. Let $\pi_0:\Sigma_0\to C$
be the corresponding spectral cover, defined by $P_0(\lambda(z),z)=0$. Denote a pair of simple branch point downstairs by $q_\pm\in C$, and denote the corresponding pair of ramification point upstairs by
$p_\pm=(q_\pm,\mu_\pm)\in\Sigma_0\subset \Tot(L)$, where $\mu_\pm\in L_{q_\pm}$ is the repeated eigenvalue. Thus $\pi_0(p_\pm)=q_\pm.$

For a simple ramification point $p=(q,\mu)\in\Sigma_0$ on a smooth spectral curve,
one has $P_0(\mu,q)=\partial_\lambda P_0(\mu,q)=0$, and $\partial_\lambda^2 P_0(\mu,q)$, $\partial_zP_0(\mu,q)\neq0$ in a local coordinate $z$ on $C$.

Let $t\in [0, 1]$ be a parameter.
Write
$P_t(\lambda,z)
=G(\lambda,z)+a_m(z,t)$,
where
$G(\lambda,z)=\lambda^m+a_1(z)\lambda^{m-1}+\cdots+a_{m-1}(z)\lambda$
is held fixed. $a_m(z,t)=(-1)^m\det\phi_t$ is the determinant of the Higgs field up to a sign.
All lower invariant polynomials remain fixed. The individual eigenvalues
$\lambda_1,\ldots,\lambda_m$
generally all move.
For this family parametrized by t,
$\partial_\lambda P_t=\partial_\lambda G.$

The critical locus
$\Gamma_{\mathrm{crit}}
=\{(\lambda(z),z):\partial_\lambda P_t(\lambda,z)=\partial_\lambda G(\lambda,z)=0\}$
is independent of $t$.
For fixed $z$,
$\partial_\lambda G(\lambda,z)$
is a polynomial of degree $m-1$ in $\lambda$.
$\Gamma_{\mathrm{crit}}\to C$
is a degree-$(m$-$1)$ ramified cover.
Changing $a_m$ moves the spectral curve $\Sigma_t=\{P_t=0\}$
relative to this fixed critical locus.

\subsubsection{Local Branch Motions}

At a simple branch point $q_\pm(t)\in C$, two eigenvalues coincide:
$\lambda_{i_\pm}=\lambda_{j_\pm}=\mu_\pm(q_\pm(t))\in L_{q_\pm(t)}$.
The ramification point
$p_\pm=(q_\pm(t),\mu_\pm)\in  \Gamma_{\mathrm{crit}} \cap \Sigma_t$. When t varies, $\pi_t:\Sigma_t\to C$ varies relative to fixed $\pi_c:\Gamma_{\mathrm{crit}}\to C$, while their intersections are the ramification points.

\begin{lemma}
Let $p_\pm=(q_\pm,\mu_\pm)$
be an initial simple ramification point. On a small open neighbourhood $U_\pm$ of $q_\pm\in C$, there exists a unique holomorphic function
$\mu_\pm:U_\pm\to\mathbb C$ satisfying $\mu_\pm(q_\pm)=\mu_\pm$,
and $\partial_\lambda G(\mu_\pm(z),z)=0$, $z\in U_\pm$.
\end{lemma}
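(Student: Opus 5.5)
This is a local existence-and-uniqueness statement for the critical branch through a simple ramification point, so I would prove it with the holomorphic implicit function theorem applied to $F(\lambda,z):=\partial_\lambda G(\lambda,z)$ near $(\mu_\pm,q_\pm)$. First I trivialize $L=K_C(P)$ over a small coordinate disc around $q_\pm$, which we may take to avoid $P$ since $\operatorname{Br}(\pi_0)\cap P=\varnothing$. Then $G$ and $F$ become holomorphic functions of $(\lambda,z)$ on an open set of $\mathbb C^2$, and $\mu_\pm$ becomes a complex number.

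There are two hypotheses to check, and both come from the conditions recorded earlier for a simple ramification point of a smooth spectral curve.
\begin{enumerate}
\item The point lies on the zero locus: $F(\mu_\pm,q_\pm)=\partial_\lambda G(\mu_\pm,q_\pm)=\partial_\lambda P_0(\mu_\pm,q_\pm)=0$. This uses $\partial_\lambda P_t=\partial_\lambda G$, since $a_m$ does not depend on $\lambda$.
\item The derivative in $\lambda$ is nonzero: $\partial_\lambda F(\mu_\pm,q_\pm)=\partial_\lambda^2 P_0(\mu_\pm,q_\pm)\neq 0$, because the ramification is simple.
\end{enumerate}

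The holomorphic implicit function theorem then gives neighbourhoods $U_\pm\ni q_\pm$ and $V\ni\mu_\pm$, and a unique holomorphic $\mu_\pm:U_\pm\to V$ with $\mu_\pm(q_\pm)=\mu_\pm$ and $F(\mu_\pm(z),z)=0$ on $U_\pm$. Uniqueness is meant in the standard sense: among solutions taking values in $V$, equivalently among continuous solutions with the given initial value, after shrinking $U_\pm$. This is exactly the statement of the lemma.

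Next I check that this is independent of the trivialization. If the frame $e$ of $L$ is changed to $fe$, then $\lambda\mapsto f^{-1}\lambda$ and $\partial_\lambda G$ is multiplied by a nonvanishing factor. Hence the zero set of $\partial_\lambda G$, the local sheet of $\Gamma_{\mathrm{crit}}$, is intrinsic, and so is the section $z\mapsto\mu_\pm(z)$ of $L|_{U_\pm}$. Equivalently, since $\partial_\lambda F\neq0$, the projection $\pi_c:\Gamma_{\mathrm{crit}}\to C$ is unramified at $(\mu_\pm,q_\pm)$, and $\mu_\pm$ is its local holomorphic section through that point.

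The only step that needs care is the nondegeneracy $\partial_\lambda^2P_0\neq0$. It should be argued from simplicity rather than just quoted. Near $p_\pm$ the factorization $P_0=Q_0R_0$ holds, with $R_0(\mu_\pm,q_\pm)\neq0$ and $Q_0=(\lambda-c)^2-d_0$. This gives $\partial_\lambda^2P_0(\mu_\pm,q_\pm)=2R_0(\mu_\pm,q_\pm)\neq0$, because $e_{p_\pm}=2$ means only two sheets meet there. I would include this one-line check and otherwise treat the lemma as a direct application of the implicit function theorem.
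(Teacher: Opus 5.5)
Your proof is correct and follows the paper's route: the paper applies the holomorphic implicit function theorem to $\partial_\lambda G$ at $(\mu_\pm,q_\pm)$, using $\partial_\lambda G=\partial_\lambda P_0$ there and $\partial_\lambda^2 G=\partial_\lambda^2 P_0\neq 0$. Your additional checks are sound refinements of that argument: independence of the frame of $L$, the precise sense of uniqueness, and the derivation of $\partial_\lambda^2P_0(\mu_\pm,q_\pm)=2R_0(\mu_\pm,q_\pm)\neq0$ from the local factorization, which the paper quotes rather than proves.
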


\begin{proof}
At $(\mu_\pm,q_\pm)$,
$\partial_\lambda G(\mu_\pm(q_\pm),q_\pm)=0$.
Moreover,
$\partial_\lambda^2G(\mu_\pm,q_\pm)
=\partial_\lambda^2P_0(\mu_\pm,q_\pm)
\neq0$.
The holomorphic implicit-function theorem therefore gives the required branch $\mu_\pm(z)$.
\end{proof}

\begin{lemma}
Let $q\in C$ be a point near $q_\pm$. Then $q$ is a branch point of $P_t(\lambda,z)
=G(\lambda,z)+a_m(z,t)$ if and only if
$a_m(q,t)=-G(\mu_\pm(q),q)$, i.e., $P_t(\mu_\pm(q),q)=0$.
\end{lemma}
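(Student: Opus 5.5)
The statement is: for $q$ near $q_\pm$, $q$ is a branch point of $P_t=G+a_m(\cdot,t)$ if and only if $a_m(q,t)=-G(\mu_\pm(q),q)$, equivalently $P_t(\mu_\pm(q),q)=0$. I would prove it directly from the characterization of branch points by the two equations $P_t=\partial_\lambda P_t=0$. The key structural input is that $\partial_\lambda P_t=\partial_\lambda G$ does not depend on $t$, so the critical locus $\Gamma_{\mathrm{crit}}$ is fixed. The previous lemma gives this locus, near $(\mu_\pm,q_\pm)$, as the graph of the holomorphic function $\mu_\pm(z)$ on $U_\pm$.

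\emph{Forward direction.} Suppose $q\in U_\pm$ is a branch point of $\pi_t$ whose ramification point lies near $p_\pm$, i.e.\ it has eigenvalue $\lambda$ close to $\mu_\pm$.

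1. At that ramification point, $P_t(\lambda,q)=0$ and $\partial_\lambda P_t(\lambda,q)=\partial_\lambda G(\lambda,q)=0$.
2. By the uniqueness part of the implicit function theorem applied to $\partial_\lambda G$, the only solution of $\partial_\lambda G(\lambda,q)=0$ near $\mu_\pm$ is $\lambda=\mu_\pm(q)$.
3. Substituting into $P_t=0$ gives $G(\mu_\pm(q),q)+a_m(q,t)=0$, which is the claimed equation.

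\emph{Converse.} Suppose $P_t(\mu_\pm(q),q)=0$.

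1. Since $\partial_\lambda P_t(\mu_\pm(q),q)=\partial_\lambda G(\mu_\pm(q),q)=0$, the value $\lambda=\mu_\pm(q)$ is a root of $P_t(\cdot,q)$ of multiplicity at least $2$.
2. It is a double root exactly: $\partial_\lambda^2P_t=\partial_\lambda^2G$ is independent of $t$ and nonzero at $(\mu_\pm,q_\pm)$, hence nonzero nearby.
3. So two eigenvalues coincide over $q$, $\Delta_t(q)=0$, and $q$ is a branch point. The ramification is simple, since the multiplicity is exactly $2$.

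\emph{Main obstacle: the word ``near''.} The equivalence is meant for branch points arising from the $\Gamma_{\mathrm{crit}}$-branch through $p_\pm$, and for parameters $t$ with $a_m(\cdot,t)$ close to $a_m(\cdot,0)$. A branch point $q\in U_\pm$ could in principle come from another sheet of $\Gamma_{\mathrm{crit}}$, i.e.\ another critical value of $\partial_\lambda G$ over $q$.

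This cannot happen for small $U_\pm$ and small deformations. At $x_0$ the only zero of $\Delta_0$ in $U_\pm$ is the simple zero $q_\pm$, and the other roots of $\partial_\lambda G(\cdot,q)$ stay a definite distance from $\Sigma_t$ by continuity and compactness. So after shrinking $U_\pm$ and the $t$-range, every ramification point of $\pi_t$ over $U_\pm$ lies in a neighbourhood of $p_\pm$ where the implicit-function uniqueness applies. I would state this shrinking explicitly at the start of the proof.
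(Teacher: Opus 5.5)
Your proposal is correct and is essentially the paper's own reasoning: the paper gives no separate proof, relying on the preceding observation that the ramification points are exactly $\Sigma_t\cap\Gamma_{\mathrm{crit}}$, with $\Gamma_{\mathrm{crit}}$ independent of $t$ and locally the graph of $\mu_\pm$, and your compactness/shrinking step spells out the localization that the paper leaves implicit. (A minor aside, not needed for the lemma: exact $\lambda$-multiplicity $2$ gives a \emph{simple} ramification point only together with $\partial_zP_t\neq0$, which holds near $(\mu_\pm,q_\pm)$ by continuity from $\partial_zP_0(\mu_\pm,q_\pm)\neq0$.)
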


\begin{corollary}
\label{cor:oncrit}
Suppose $q_\pm(t)\in U$ is prescribed nearby branch trajectory.
If a global section
$a_m(t)\in H^0(C,L^m)$,
satisfying
$a_m(q_+(t),t)=-G\bigl(\mu_+(q_+(t)),q_+(t)\bigr)$ and 
$a_m(q_-(t),t)=-G\bigl(\mu_-(q_-(t)),q_-(t)\bigr)$ is found, $P_t=P_0-a_{m,0}+a_m(t)$ is a deformation of the Hitchin polynomials that realizes the prescribed local branch motions.
\end{corollary}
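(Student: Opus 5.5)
The statement to prove is Corollary~\ref{cor:oncrit}: if a global section $a_m(t)\in H^0(C,L^m)$ satisfies the two interpolation conditions at $q_\pm(t)$, then $P_t=P_0-a_{m,0}+a_m(t)$ realizes the prescribed branch motion. The plan is to reduce everything to the preceding lemma, which characterizes branch points near $q_\pm$ as the solutions of $P_t(\mu_\pm(q),q)=0$, and then to check the two things "realizes" needs. First, $q_\pm(t)$ must actually be branch points of $\pi_t$. Second, no other branch point may be created or destroyed in $U$, so that the moving pair is exactly $q_\pm(t)$.

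\textbf{Step 1: $P_t$ is a Hitchin family.} Note that $P_t=G+a_m(t)$ with $G$ fixed and $a_m(t)\in H^0(C,L^m)$. Hence $P_t$ is again of the form $\lambda^m+a_1\lambda^{m-1}+\cdots+a_m$ with coefficients in the correct spaces, so it defines a path in $B_{G,C,P}$. For $SL_m$, $a_1=0$ is unchanged.

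\textbf{Step 2: the prescribed points are branch points.} Substitute $q=q_\pm(t)$ into the previous lemma. The hypothesis $a_m(q_\pm(t),t)=-G(\mu_\pm(q_\pm(t)),q_\pm(t))$ gives $P_t(\mu_\pm(q_\pm(t)),q_\pm(t))=0$. By construction $\partial_\lambda P_t=\partial_\lambda G$ vanishes at $(\mu_\pm(q),q)$. So $(q_\pm(t),\mu_\pm(q_\pm(t)))\in\Sigma_t\cap\Gamma_{\mathrm{crit}}$, which is a ramification point over $q_\pm(t)$.

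\textbf{Step 3: the branch points are simple and no others appear.} At $t=0$ the branch points in $U$ are simple, with $\partial_\lambda^2P_0\neq0$ and $\partial_zP_0\neq0$. Both are open conditions, so they persist for $q_\pm(t)$ near $q_\pm(0)$, provided the trajectory stays in the neighbourhoods $U_\pm$ where $\mu_\pm$ is defined.

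Uniqueness is then a zero count on the function $h_\pm(q,t)=P_t(\mu_\pm(q),q)$. At $t=0$ this function has a simple zero at $q_\pm(0)$, because $\partial_q h_\pm=\partial_zP_0\neq0$ there. By Hurwitz's theorem (Rouché), for $a_m(t)$ close to $a_{m,0}$ it keeps exactly one zero in $U_\pm$, and that zero must be $q_\pm(t)$.

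Away from $U$, the remaining simple zeros of $\Delta_t$ vary continuously and stay simple and off $P$ for small deformations, since $B^{\mathrm{good}}$ is open.

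\textbf{Main obstacle.} The delicate step is the regime where $q_+$ and $q_-$ approach each other, as in the s-channel. There the neighbourhoods $U_+$ and $U_-$ merge, and the branches $\mu_+$ and $\mu_-$ of $\Gamma_{\mathrm{crit}}$ must be the same branch over the shared disc: the same pair of sheets $(ij)$, by the earlier lemma on the transposition monodromies.

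I would handle this with the local two-sheet factorization, working with $d_t$ on a single disc $U$. Instead of counting zeros of $h_+$ and $h_-$ separately, I would count the zeros of $\Delta_t$ on $U$ directly, and check that this count is constant and equal to $2$ along the deformation by the argument principle on $\partial U$. This shows the pair $q_\pm(t)$ exhausts the branch locus in $U$, and hence that the prescribed local motion is realized.
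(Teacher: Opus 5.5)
Your proposal is correct, and your Step 2 is exactly the paper's argument: the corollary has no separate proof there because it is just the preceding lemma evaluated at $q=q_\pm(t)$, using that $\partial_\lambda P_t=\partial_\lambda G$ keeps $\Gamma_{\mathrm{crit}}$ fixed. Your Step 3 (simplicity, the Rouch\'e/argument-principle count on $U$, and the observation that $\mu_+$ and $\mu_-$ are one critical branch on the merged disc) goes beyond what the corollary asserts and corresponds to what the paper treats later, in its subsections on simple zeros of the discriminant and on motions of the remaining branch points. Note that Step 3 needs $a_m(t)-a_{m,0}$ itself to be small, not just the trajectory, since adding to $a_m(t)$ any section vanishing at $q_\pm(t)$ leaves the hypotheses unchanged.
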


\begin{example}
For $P_t(\lambda,z)=\lambda^3+a(z)\lambda+b_t(z)$,
one has
$G(\lambda,z)=\lambda^3+a(z)\lambda$.
The critical-root equation is
$\partial_\lambda G=3\mu^2(z)+a(z)=0$.
Thus
$G(\mu(z),z)=\mu^3+a(z)\mu
=\mu^3-3\mu^3
=-2\mu^3$.
Therefore the required condition is
$b_t(q)=-G(\mu(q),q)=2\mu^3(q)$.
\end{example}

\subsubsection{Globalizing the Branch Point Motions}

Recall that $L=K_C(P)$ has degree $2g-2+n> 0$, so $L$ is ample. $\deg (L^m)=m(2g-2+n)\ge 2g+1$ for $m\ge 3$, so $L^m$, $m\ge3$, is very ample. $\deg L^m=m(2g-2+n)\ge 2g$ for $m\ge 2$, so $L^m$, $m\ge 2$, is base-point free.

\begin{lemma}[\cite{EisenbudHarris}]
\label{le:veryample}
If a line bundle $M$ on a smooth projective curve $C$ is very ample, then for any distinct points
$q_+,q_-\in C$,
the evaluation map
$\operatorname{ev}_{q_+,q_-}:H^0(C,M)
\to M_{q_+}\oplus M_{q_-}$
is surjective.
\end{lemma}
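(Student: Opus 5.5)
The statement is Lemma~\ref{le:veryample}: for very ample $M$ on a smooth projective curve $C$ and distinct points $q_+,q_-$, the evaluation map $H^0(C,M)\to M_{q_+}\oplus M_{q_-}$ is surjective. The plan is to reduce surjectivity to a cohomology vanishing and then read that vanishing off the definition of very ampleness, which separates points.

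First, consider the reduced effective divisor $D=q_++q_-$. There is the exact sequence of sheaves
\[
0\to M(-q_+-q_-)\to M\to M\otimes\mathcal O_D\to 0,
\]
and $H^0(C,M\otimes\mathcal O_D)=M_{q_+}\oplus M_{q_-}$ because $D$ is reduced and consists of two distinct points. In the long exact sequence the map $H^0(C,M)\to H^0(M\otimes \mathcal O_D)$ is exactly $\operatorname{ev}_{q_+,q_-}$. So surjectivity is equivalent to the injectivity of $H^1(M(-q_+-q_-))\to H^1(M)$. Equivalently, by Riemann--Roch, it is equivalent to the dimension count
\[
h^0(M(-q_+-q_-))=h^0(M)-2.
\]

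Second, I would get this dimension drop directly from the fact that $M$ is very ample. Then $M$ is base-point free, so the sections vanishing at $q_+$ form a hyperplane: $h^0(M(-q_+))=h^0(M)-1$. Very ampleness also means the map $\varphi_M$ separates points. Hence there is a section vanishing at $q_+$ but not at $q_-$; if every such section also vanished at $q_-$, then $\varphi_M(q_+)=\varphi_M(q_-)$. This gives $h^0(M(-q_+-q_-))=h^0(M(-q_+))-1=h^0(M)-2$, which is the count needed in the first step.

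Third, I would phrase the conclusion without cohomology: choose $s_+$ with $s_+(q_-)=0\neq s_+(q_+)$ and $s_-$ symmetrically. These exist by separation of points applied in both orders. Their images $(s_+(q_+),0)$ and $(0,s_-(q_-))$ span the two-dimensional target, so $\operatorname{ev}_{q_+,q_-}$ is surjective.

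There is no real obstacle here. The only point needing care is that separating points must mean a section vanishing at one point and not the other, not merely that the images in projective space differ. These two readings agree, since $\varphi_M(q_+)\neq\varphi_M(q_-)$ says the hyperplanes of sections vanishing at $q_+$ and at $q_-$ differ. In the paper's application one could also note that base-point freeness together with $\deg M\ge 2g+1$ suffices: then $h^1(M(-q_+-q_-))=0$ because $\deg M(-q_+-q_-)\ge 2g-1>2g-2$, so the first step concludes directly.
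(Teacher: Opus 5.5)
Your proof is correct: it is the standard easy direction of the criterion that $M$ is very ample if and only if $h^0(M(-p-q))=h^0(M)-2$ for all $p,q\in C$, and the paper gives no proof of its own beyond the citation to Eisenbud--Harris, so this is essentially the argument being invoked. Two minor points: the equivalence in your first step between surjectivity and the dimension count is just exactness (rank--nullity) in the long exact sequence, not Riemann--Roch; and in your closing remark, $\deg M\ge 2g+1$ alone already forces $H^1(M(-q_+-q_-))=0$, so base-point freeness is not needed there.
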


\begin{lemma}[\cite{EisenbudHarris}]
\label{le:basepointfree}
If a line bundle $M$ on a smooth projective curve $C$ is base-point free, then for any point $q\in C$, the evaluation map $\operatorname{ev}_{q}:H^0(C,M)\to M_{q}$ is surjective.
\end{lemma}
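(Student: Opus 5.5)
The statement to be proved is Lemma~\ref{le:basepointfree}: if $M$ is base-point free on a smooth projective curve $C$, then for every $q\in C$ the evaluation map $\operatorname{ev}_q:H^0(C,M)\to M_q$ is surjective. The proof is a direct unwinding of the definition of base-point freeness. The main point is that $M_q$ is the fiber $M\otimes k(q)$, which is one-dimensional over $\mathbb C$.

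The first step is to recall the definition. $M$ is base-point free if for every $q\in C$ there is a global section $s\in H^0(C,M)$ with $s(q)\neq0$. Here $s(q)$ denotes the image of $s$ in the fiber $M_q$.

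The second step identifies the fiber. Trivialize $M$ on a neighbourhood of $q$, so that $M_q\cong\mathbb C$. The evaluation map $\operatorname{ev}_q$ is $\mathbb C$-linear, since it is the composition of restriction with the quotient map $M\to M\otimes\mathcal O_C/\mathfrak m_q$. Hence its image is a linear subspace of a one-dimensional space, and is either $0$ or all of $M_q$.

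The third step is the conclusion. By base-point freeness there is a section $s$ with $\operatorname{ev}_q(s)=s(q)\neq0$, so the image is nonzero. It is therefore all of $M_q$, and $\operatorname{ev}_q$ is surjective.

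An equivalent cohomological route would use the exact sequence $0\to M(-q)\to M\to M_q\to0$, but this is unnecessary here. There is no genuine obstacle; the only care needed is to note that the fiber has rank one, so that a single nonvanishing section suffices. In the application, $M=L^m$, and base-point freeness holds because $\deg L^m\ge 2g$, as stated earlier in the paper.
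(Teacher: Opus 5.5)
Your argument is correct. The paper gives no proof of this lemma and simply cites Eisenbud--Harris. Your unwinding of the definition is the standard justification: a section $s$ with $s(q)\neq 0$ exists, and the $\mathbb C$-linear map $\operatorname{ev}_q$ lands in the one-dimensional fiber $M_q$. Under the other common definition of base-point freeness, surjectivity of $H^0(C,M)\otimes\mathcal O_C\to M$, the statement follows at once by tensoring with $k(q)$, so the lemma is essentially definitional either way.
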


\medskip
\noindent
\textbf{Case 1: $m=2$.}

Let $P_0(\lambda,z)
=\lambda^2+a_1(z)\lambda+a_{2,0}(z)$ be the initial Hitchin polynomial, where $a_1\in H^0(C,L)$, $a_{2,0}\in H^0(C,L^2)$. Then the critical locus $\Gamma_{\mathrm{crit}}
=\{(\lambda(z),z):\partial_\lambda P_0(\lambda,z)=2\lambda+a_1=0\}$ is the degree-one cover of $C$.
The initial branch points are the simple zeros of $\Delta_0=a_1^2-4a_{2,0}$.

Suppose $b_0\in\mathcal D_{bdry,C,P}^\circ$ corresponds to the collision of two
simple branch points at $z_0 \in C$. Then $\Delta_{b_0}$ has a double zero at $z_0$.
Locally, after choosing a coordinate $w$ centred at $z_0$, $\Delta_{b_0}=u(w)w^2$, $u(0)\neq0$.

During the deformation of $P_t$, $a_1$ remains fixed and $a_{2,0}$ varies.
By Lemma~\ref{le:basepointfree}, we can choose a fixed global section $s\in H^0(C,L^2)$
such that $s(z_0)\neq0$.
Define the one-parameter deformation $a_{2,t}=a_{2,0}+ts$, i.e., $\Delta_t=\Delta_0-4ts$.

Since $s(z_0)\neq0$, this deformation is transverse to the double-zero discriminant locus.
After a local holomorphic change of coordinate and parameter,
$\Delta_t=v(w,t)(w^2-\alpha(t))$, $v(0,0)\neq0$.
Thus the two nearby branch values are
$w_\pm(t)=\pm\sqrt{\alpha(t)}$.

Therefore the small meridian $\alpha(t)=\varepsilon e^{i\theta}$,
$0\le\theta\le2\pi$,
exchanges the two simple branch points $w_+$ and $w_-$ by a half-twist; the path $\alpha(t)=\varepsilon(1-r)$, $r\in [0,1]$, collides the two simple branch points $w_+$ and $w_-$.

\newpage
\noindent
\textbf{Case 2: $m\ge 3$.}

Let $U\subset C$ be a small open disc containing only two branch simple points $q_+,q_-$.
By Lemma~\ref{le:veryample}, we can choose fixed global sections
$s_+,s_-\in H^0(C,L^m)$ such that they distinguish the two simple branch points: after choosing local trivializations of $L^m$ on $U$, they may be normalized by
\begin{equation}
\begin{aligned}
s_+(q_+)&=1,
&
s_+(q_-)&=0,
\\
s_-(q_+)&=0,
&
s_-(q_-)&=1.
\end{aligned}
\end{equation}

Let $q_+(t),q_-(t)$
be prescribed nearby trajectories of two originally simple branch points within $U$.
We choose $s_+,s_-$ so that the evaluation matrix
\begin{equation}
M(t)=
\begin{pmatrix}
s_+(q_+(t)) & s_-(q_+(t))\\
s_+(q_-(t)) & s_-(q_-(t))
\end{pmatrix}
\end{equation}
is invertible along the deformation.

Choose holomorphic functions
$u_+(t)$ and $u_-(t)$.
Let $a_m(t)
=a_{m,0}+u_+(t)s_++u_-(t)s_-$, where $a_{m,0}$ is the determinant of the initial Higgs field, and keep $a_1,\dots,a_{m-1}$ fixed.
Then $P_t(\lambda,z)
=G(\lambda,z)+
a_{m,0}(z)+u_+(t)s_+(z)
+u_-(t)s_-(z)$
is the global Hitchin characteristic polynomial at $t$.

Since $a_m(q_\pm(t),t)-a_{m,0}(q_\pm(t))=u_+(t)s_+(q_\pm(t))+u_-(t)s_-(q_\pm(t))$. And by Corollary~\ref{cor:oncrit}, we would like to find $a_m(q_\pm(t),t)
=-G\bigl(\mu_\pm(q_\pm(t)),q_\pm(t)\bigr)$. Thus given fixed $s_\pm$, we may solve the equation
\begin{align}
\notag
P_t(\lambda,q_\pm(t))-P_0(\lambda,q_\pm(t))=u_+(t)s_+(q_\pm(t))+u_-(t)s_-(q_\pm(t))\\
=-G\bigl(\mu_\pm(q_\pm(t)),q_\pm(t)\bigr)-a_{m,0}(q_\pm(t))=-P_0(\mu_\pm(q_\pm(t)),q_\pm(t)),
\end{align}
to find desired $u_\pm(t)$.

\begin{lemma}
There are unique holomorphic coefficients $u_\pm(t)$ satisfying
\begin{equation}
M(t)
\begin{pmatrix}
u_+(t)\\
u_-(t)
\end{pmatrix}
=-
\begin{pmatrix}
P_0(\mu_+(q_+(t)),q_+(t))\\
P_0(\mu_-(q_-(t)),q_-(t))
\end{pmatrix}.
\end{equation}
The resulting Hitchin polynomial equation $P_t(\lambda,z)
=P_0(\lambda,z)+u_+(t)s_+(z)
+u_-(t)s_-(z)=0$ has
$q_+(t),q_-(t)$ as branch points.
\end{lemma}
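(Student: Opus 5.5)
The lemma asserts two things: existence and uniqueness of holomorphic $u_\pm(t)$ solving the $2\times 2$ linear system, and that the resulting $P_t$ has $q_\pm(t)$ as branch points. I will prove them in that order.

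\medskip
\noindent\textbf{Step 1: solving the linear system.} Along the deformation, $q_\pm(t)$ stay inside the small disc $U$ on which $L^m$ has been trivialized, so $s_\pm(q_\pm(t))$ become ordinary holomorphic functions of $t$. Hence the entries of $M(t)$ are holomorphic in $t$, and by the choice of $s_\pm$, $\det M(t)$ is nowhere zero along the deformation.

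For the invertibility I rely on the normalization $s_+(q_+)=1$, $s_+(q_-)=0$, $s_-(q_+)=0$, $s_-(q_-)=1$, available by Lemma~\ref{le:veryample}. This gives $M(0)=I$. By continuity, $\det M(t)\neq0$ for trajectories $q_\pm(t)$ close to $q_\pm$, and after shrinking $U$ if necessary this holds along the whole prescribed trajectory. This is exactly where I must make sure the trajectories stay short; I regard it as the main point to check.

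The right-hand side $-P_0(\mu_\pm(q_\pm(t)),q_\pm(t))$ is also holomorphic in $t$. Indeed, $\mu_\pm$ is the holomorphic branch of the critical locus $\partial_\lambda G=0$ given by the implicit-function lemma, and $P_0$ is holomorphic in $(\lambda,z)$. Cramer's rule,
\[
\begin{pmatrix}u_+(t)\\ u_-(t)\end{pmatrix}=-M(t)^{-1}\begin{pmatrix}P_0(\mu_+(q_+(t)),q_+(t))\\ P_0(\mu_-(q_-(t)),q_-(t))\end{pmatrix},
\]
then yields holomorphic solutions, and these are unique because $M(t)$ is invertible.

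\medskip
\noindent\textbf{Step 2: $q_\pm(t)$ are branch points.} Substituting $z=q_\pm(t)$ and $\lambda=\mu_\pm(q_\pm(t))$ into $P_t=P_0+u_+s_++u_-s_-$ gives
\[
P_t\bigl(\mu_\pm(q_\pm(t)),q_\pm(t)\bigr)=P_0\bigl(\mu_\pm(q_\pm(t)),q_\pm(t)\bigr)+\bigl(M(t)u(t)\bigr)_\pm=0 .
\]
The deformation changes only $a_m$, so $\partial_\lambda P_t=\partial_\lambda G$. Since $\mu_\pm$ lies on the critical locus, $\partial_\lambda P_t$ also vanishes at these points. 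Therefore $(q_\pm(t),\mu_\pm(q_\pm(t)))$ lies in $\Sigma_t\cap\Gamma_{\mathrm{crit}}$.

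By the preceding lemma (a nearby point $q$ is a branch point iff $P_t(\mu_\pm(q),q)=0$), or equivalently by Corollary~\ref{cor:oncrit}, $q_\pm(t)$ are branch points of $\pi_t$.

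It remains to see that they are simple branch points. The quantities $\partial^2_\lambda P_t=\partial_\lambda^2G$ and $\partial_zP_t$ are nonzero at $t=0$. Since the deformation is small and continuous, they stay nonzero along it, so simplicity is preserved.
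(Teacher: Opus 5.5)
Your argument is essentially the paper's. Invertibility of $M(t)$ gives the unique pair $(u_+(t),u_-(t))$. Substituting back, with $\partial_\lambda P_t=\partial_\lambda G$, places $(q_\pm(t),\mu_\pm(q_\pm(t)))$ on $\Sigma_t\cap\Gamma_{\mathrm{crit}}$. Your explicit check $P_t(\mu_\pm(q_\pm(t)),q_\pm(t))=P_0(\mu_\pm(q_\pm(t)),q_\pm(t))+(M(t)u(t))_\pm=0$ and the Cramer's-rule remark are more detailed than the paper's two-line proof.

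\textbf{Invertibility of $M(t)$.} The step that does not hold up is your attempt to derive invertibility from the normalization. Continuity from $M(0)=I$ only controls trajectories that stay near their starting points. The deformation this lemma serves is the t-channel half-twist, in which each branch point ends where the other started, so
\[
M(1)=\begin{pmatrix}0&1\\1&0\end{pmatrix},\qquad \det M(1)=-1 .
\]
In the s-channel, $M$ degenerates at the collision. Shrinking $U$ changes nothing, because the size of the trajectory is fixed by $|q_+-q_-|$. Whether $\det M(t)$ avoids $0$ on its way from $1$ to $-1$ is a genuine condition on $s_\pm$. Take it, as the paper does, as the standing hypothesis under which $s_\pm$ are chosen.

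\textbf{Holomorphy of the right-hand side.} For the same reason, this presupposes two things. The prescribed trajectories must be holomorphic in $t$. And $\mu_\pm$ must be analytically continued along them, since the implicit-function lemma only defines $\mu_\pm$ on a small $U_\pm$, which $q_+(t)$ leaves.

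\textbf{Simplicity.} Your last paragraph is justified too quickly; this condition is also what rules out the point being a node rather than a ramification point, which the paper's proof passes over. The value $\partial_zP_0(p_\pm)$ tends to $0$ as $x_0\to b_0$, since it vanishes at the node. Meanwhile $s_\pm$ must drop from $1$ to $0$ between two nearby points. So the correction $u_+s_+'+u_-s_-'$ to $\partial_zP_t$ need not be small relative to $\partial_zP_0$. The paper does not prove this here; it assumes it when it next writes $\Delta_t=u(z,t)(z-q_+(t))(z-q_-(t))$ for simple prescribed branch values.
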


\begin{proof}
Invertibility of $M(t)$ determines a unique pair $(u_+(t),u_-(t))\in \mathbb C^2$, hence a unique $a_m(t)
=a_{m,0}+u_+(t)s_++u_-(t)s_-\in H^0(C,L^m)$.
Since $P_t(\mu_\pm(q_\pm(t)),q_\pm(t))=0$,
then
$\left(
q_\pm(t),\mu_\pm(q_\pm(t))
\right)$
are ramification points of $\pi_t$, so $q_\pm(t)$ are the desired branch
values.
\end{proof}

\begin{example}
For
$P_0(\lambda,z)=\lambda^3+a(z)\lambda+b(z)$, take
$
P_t(\lambda,z)
=
\lambda^3+a(z)\lambda
+b(z)+u_+(t)s_+(z)+u_-(t)s_-(z),
$
where
$s_\pm\in H^0(C,L^3).$
Since
$-G\bigl(\mu_\pm(q_\pm(t)),q_\pm(t)\bigr)
=2\mu_\pm(q_\pm(t))^3$,
the coefficients satisfy
\begin{equation}
M(t)
\begin{pmatrix}
u_+(t)\\
u_-(t)
\end{pmatrix}
=
\begin{pmatrix}
2\mu_+(q_+(t))^3-b(q_+(t))\\
2\mu_-(q_-(t))^3-b(q_-(t))
\end{pmatrix}.
\end{equation}
\end{example}

\subsubsection{Simple Zeros of the Discriminants}
For a fixed $z\in C$, let
$\mu_1(z),\ldots,\mu_{m-1}(z)$
be the roots of
$\partial_\lambda G(\lambda,z)=0$.
For a monic degree-$m$ polynomial,
$\Delta_t(z)
=(-1)^{m(m-1)/2}
\operatorname{Resultant}(P_t,\partial_\lambda P_t)$.
Since
$\partial_\lambda P_t=\partial_\lambda G$
has leading coefficient $m$ and roots
$\mu_1,\ldots,\mu_{m-1},$
so $\operatorname{Resultant}(P_t,\partial_\lambda P_t)
=m^m
\prod_{j=1}^{m-1}P_t(\mu_j,z).$
Then
\begin{equation}
\Delta_t(z)
=
(-1)^{m(m-1)/2}m^m
\prod_{j=1}^{m-1}
P_t(\mu_j(z),z)
=
(-1)^{m(m-1)/2}m^m
\prod_{j=1}^{m-1}
\left[
G(\mu_j(z),z)+a_m(z)+\delta_t(z)
\right].
\end{equation}
For the prescribed branch points,
$\Delta_t(q_+(t))
=\Delta_t(q_-(t))
=0.$

Suppose the two prescribed branch values remain simple and no other branch value enters the chosen local disc. Then locally
$\Delta_t(z)
=u(z,t)
(z-q_+(t))(z-q_-(t))$,
where
$u(z,t)\neq0.$
If
$q_\pm(t)
=z_0\pm\sqrt{\alpha(t)},$
then locally
$\Delta_t(z)
=u(z,t)
\left((z-z_0)^2-\alpha(t)\right).$

\subsubsection{Motions of the Remaining Branch Points}

Let the close disc $D\subset C$
contain only two selected simple branch points, and let
$q_3,\dots,q_r$ be all the remaining simple branch points. Choose pairwise disjoint small closed discs
$D_j\ni q_j$
which are disjoint from $D$ and from the punctures, such that
$q_j$ is the unique zero of $\Delta_0$ in $D_j,$
and $\Delta_0|_{\partial D_j}\neq0$.

\begin{lemma}
For every sufficiently small deformation $P_t$,
the discriminant
$\Delta_t$ has exactly one zero
$q_j(t)\in D_j$,
counted with multiplicity.
This zero is simple and depends
holomorphically on $t$.
\end{lemma}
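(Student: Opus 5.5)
This is a stability statement under small perturbation, so I would prove it by the argument principle (Rouché) together with the implicit function theorem.

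\textbf{Setup.} Work in a local trivialization of $L^{\otimes m(m-1)}$ over a neighbourhood of each closed disc $D_j$. In this trivialization $\Delta_t$ is a holomorphic function of $(z,t)$. The deformations constructed above have the form $P_t=P_0+u_+(t)s_++u_-(t)s_-$ (or $a_{2,0}+ts$ when $m=2$). By the resultant formula for $\Delta_t$ just displayed, $\Delta_t$ depends holomorphically on the finitely many coefficients $u_\pm(t)$. Hence $\Delta_t\to\Delta_0$ uniformly on each compact $D_j$ as the deformation parameters tend to $0$.

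\textbf{Counting zeros.} By hypothesis $\Delta_0$ has no zero on $\partial D_j$, so
\[
\eta_j:=\min_{\partial D_j}|\Delta_0|>0 .
\]
For every sufficiently small deformation we have $|\Delta_t-\Delta_0|<\eta_j$ on $\partial D_j$, simultaneously for the finitely many indices $j$. Rouché's theorem then says that $\Delta_t$ and $\Delta_0$ have the same number of zeros in $D_j$, counted with multiplicity. Since $q_j$ is the unique zero of $\Delta_0$ in $D_j$ and it is simple, this number is $1$. A single zero counted with multiplicity is automatically simple. Call it $q_j(t)$.

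\textbf{Holomorphic dependence.} There are two equivalent routes.
\begin{itemize}
\item The residue formula
\[
q_j(t)=\frac{1}{2\pi i}\oint_{\partial D_j} z\,\frac{\partial_z\Delta_t(z)}{\Delta_t(z)}\,dz
\]
expresses $q_j(t)$ as the integral of a function holomorphic in $t$ with a denominator bounded away from zero on $\partial D_j$. Differentiating under the integral shows $q_j(t)$ is holomorphic.
\item Alternatively, apply the holomorphic implicit function theorem to $\Delta(z,t)=0$ at $(q_j,0)$, using $\partial_z\Delta_0(q_j)\neq0$. Uniqueness in $D_j$ identifies the implicit solution with $q_j(t)$.
\end{itemize}

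\textbf{Main obstacle.} The key technical point is the uniform smallness of $\Delta_t-\Delta_0$ on all the $D_j$ at once. This requires $u_\pm(t)$ to stay small along the whole deformation. That in turn requires the prescribed trajectories $q_\pm(t)$ to remain near their initial positions, and $M(t)$ to have inverse bounded uniformly. The right-hand side $-P_0(\mu_\pm(q_\pm(t)),q_\pm(t))$ vanishes at $t=0$ and is continuous. So shrinking the disc $D$ of parameters, i.e.\ taking $\varepsilon$ small, makes the deformation as small as needed.

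Finally, since the $D_j$ are disjoint from $D$ and from $P$, these zeros neither collide with $q_\pm(t)$ nor meet the punctures.
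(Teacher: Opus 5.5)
Your argument is correct and is essentially the paper's proof: use the holomorphic dependence of the discriminant on the coefficients to make $|\Delta_t-\Delta_0|<\min_{\partial D_j}|\Delta_0|$ on $\partial D_j$, apply Rouch\'e's theorem there, and get holomorphic dependence on $t$ from the implicit function theorem. Two of your additions go slightly beyond the paper: your remark that a zero of total multiplicity one is automatically simple removes the paper's hedge ``as long as this zero remains simple'', and the residue formula $q_j(t)=\frac{1}{2\pi i}\oint_{\partial D_j} z\,\partial_z\Delta_t/\Delta_t\,dz$ gives holomorphy on the whole small parameter neighbourhood at once rather than only near $t=0$.
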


\begin{proof}
Since
$\Delta_0\neq0$ on $\partial D_j$
and $\partial D_j$ is compact,
$c_j
:=
\min_{z\in\partial D_j}|\Delta_0(z)|
>0.$
The discriminant depends holomorphically on the coefficients of the polynomial.
Thus, for sufficiently small deformation,
$\sup_{z\in\partial D_j}
|\Delta_t(z)-\Delta_0(z)|
<c_j.$
Hence
$|\Delta_t-\Delta_0|
<|\Delta_0|$ on $\partial D_j$.

By Rouch\'e's theorem, $\Delta_t$ and $\Delta_0$ have the same number of
zeros in $D_j$, counted with multiplicity.
As long as this zero remains simple, the implicit-function theorem gives its
continuous, locally analytic continuation in $t$. Hence
$q_j(t)\in D_j$
throughout the deformation.
\end{proof}

The initial Hitchin polynomial $P_0$ corresponds to $x_0\in B_{G,C,P}^{\mathrm{good}}$ that is sufficiently close to $b_0\in \mathcal D_{bdry,C,P}^\circ$. Hence the deformation of the Hitchin polynomial $P_t$ can be made sufficiently small, such that the corresponding deformation in the Hitchin base is constrained within a small disc $D$ that is transversal to $\mathcal D_{bdry,C,P}$ with $D\cap \mathcal D_{bdry,C,P}=b_0$.
Hence all the remaining branch points don't braid.

\subsubsection{s-channel}
If $q_+(t),q_-(t)\to z_0,$
then the two rows of the evaluation matrix $M(t)$ coincide at the collision. Hence at the limit $z=z_0$, $\Delta_t(z_0)=0$, 
$\partial_z\Delta_t(z_0)=0$, i.e., discriminant has a double zero at $z_0$.

\subsubsection{t-channel}
At $t=1$, the two branch points braid a half turn and switch positions: $q_+(1)=q_-(0)$, $q_-(1)=q_+(0)$, hence the unordered pair
$\{q_+(1),q_-(1)\}
=\{q_+(0),q_-(0)\}$. During the channel, the two chosen critical-root branches continue compatibly: $(q_+(1),\mu_+(1))
=(q_-(0),\mu_-(0))$,
$(q_-(1),\mu_-(1))=
(q_+(0),\mu_+(0))$. Assume we choose the $s_+,s_-$ such that the matrix $M(t)$ remains invertible.
Then $u_\pm(1)=u_\pm(0)=0$,
and hence $P_1=P_0$.

\begin{remark}
The motion of the two branch points alone does not determine $P_t$.
Indeed, if $v_t\in H^0(C,L^m)$
satisfies $v_t(q_\pm(t))=0$,
then replacing $a_m'(t)$ by $a_m(t)+v_t$ produces the same two prescribed branch points.

$P_1=P_0$ follows from the chosen normalization
$a_m(t)-a_{m,0}
\in \operatorname{span}\{s_+,s_-\}$:
setting $a_m(t)=a_m+u_+(t)s_++u_-(t)s_-$, two conditions for two branch motions uniquely determine $u_\pm(t)$ whenever the matrix $M(t)$ is invertible.
Therefore when $q_\pm(1)=q_\mp(0)$, uniqueness of $u_\pm(t)$ forces $a_m(1)=a_{m,0}$, hence $P_1=P_0$.
\end{remark}

\begin{remark}
The coefficients $(u_+,u_-)$ are independent coordinates on the chosen two-dimensional family $a_m+\operatorname{span}_{\mathbb C}\{s_+,s_-\}$.
However, when choosing a one-parameter deformation
$\gamma:D\to\mathbb C^2$,
$z\mapsto (u_+(z),u_-(z))$ to describe the motion of two simple branch points together, the functions $u_+(z)$ and $u_-(z)$ are no longer independent.
\end{remark}

\subsubsection{Conclusion}
We have found small deformations of the Hitchin polynomial $P_t$, during which two affected simple branch points braid a half turn and switch the positions and the remaining branch points only move within the disjoint discs.

Such small deformations of $P_t$ determine parametrization spaces in $B_{G,C,P}$:
Fix $a_1,\ldots,a_{m-1}$
and consider
$P_t=P_0+zw$, for some fixed $w\in H^0(C,L^m)$, $z\in D=
\{z\in\mathbb C:|z|\le 1\}
=\left\{
(1-s)e^{2\pi i t}
:s,t\in[0,1]
\right\}$.
The holomorphic embedding $\gamma:D\hookrightarrow B_{G,C,P}$, $z\mapsto\text{coefficients of }P_t=P_0+zw$ is transverse to $\mathcal{D}_{bdry,C,P}$ at $\gamma(s=1)=b_0\in \mathcal{D}_{bdry,C,P}^{\circ}$.

The parametrization line for s-channel $[0,1]\to B_{G,C,P}$,
$s\mapsto
(a_1,\ldots,a_{m-1},a_m+sw)$
meets $\mathcal{D}_{bdry,C,P}$ transversely. The small meridian for t-channel $[0,1]\to B_{G,C,P}^{\mathrm{good}}$, $t\mapsto
(a_1,\ldots,a_{m-1},a_m+tw)$ goes around $\mathcal{D}_{bdry,C,P}$, and the monodromies on the Hitchin fiber lattices is given by Picard--Lefschetz transformations and geometrically Dehn twists along vanishing cycles.

\medskip
We now state a counterpart to Proposition~\ref{prop:hittoweb}.

\begin{proposition}
\label{prop:match}
Given any generic $A_{m-1}$-web $W$ on $C$, the automorphism on the quiver lattice resulting from two successive \ttt moves at an internal face $F$ of $W$ in the same orientation, can be realized by the monodromy action on the Hitchin fiber lattice along some sufficiently small oriented meridian in $B_{G,C,P}^{\mathrm{good}}$ around $\mathcal{D}_{bdry,C,P}$.
\end{proposition}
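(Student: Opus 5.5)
Start from a generic $A_{m-1}$-web $W$ with bipartite graph $\Gamma$ and an internal face $F$ at which a \ttt move can be performed, so $F$ corresponds to a 4-valent vertex of $Q_\Gamma^\circ$. The plan reverses the argument of Proposition~\ref{prop:hittoweb}: first realize the topological cover $\pi_W$ by a Hitchin spectral curve, then build a meridian from the branch motion dictated by the ura operation.

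\emph{Step 1.} By Corollary~\ref{cor:equi} and Corollary~\ref{cor:iso}, after replacing $W$ by an isotopic representative, I pick $x_0\in B_{G,C,P}^{\mathrm{good}}$ whose spectral cover $\pi_0:\Sigma_0\to C$ is Hurwitz isomorphic to $\pi_W$ via some $f:\Sigma_W\to\Sigma_0$. By Corollary~\ref{cor:iso1}, $f_*$ carries $\Lambda_\Gamma^\circ$ isomorphically onto $H_1(\Prym(\Sigma_0);\mathbb Z)$, and it preserves the intersection form. Let $q_\pm$ be the two simple branch points at the two 3-valent $\bullet$-vertices on $\partial F$. By the lemma on \ttt moves they have the same transposition monodromy $(ij)$. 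Let $\delta\subset\Sigma_W$ be the lift of the face loop $F$ in the ramified cylinder $U_r$; it passes through the two ramification points over $q_\pm$.

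\emph{Step 2.} I construct the disc in the Hitchin base using the machinery of Section 5.3. In a small disc $U\ni q_\pm$ containing no other branch points, choose $z_0$ in the face $F$ and set the trajectories $q_\pm(z)=z_0\pm\sqrt{\alpha(z)}$ with $\alpha(z)=\varepsilon z$, $z\in D$. For $m=2$, use Case 1 with a section $s$ satisfying $s(z_0)\neq0$. For $m\ge3$, use Case 2: choose $s_\pm$ by Lemma~\ref{le:veryample} and solve for $u_\pm$ via the invertible matrix $M(t)$. Then $P_t=P_0+u_+s_++u_-s_-$ realizes the prescribed motion, and by Corollary~\ref{cor:oncrit} the pair $(q_\pm,\mu_\pm)$ stays on the critical locus. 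The Rouch\'e lemma on the remaining branch points shows that for $\varepsilon$ small the other branch points stay in their disjoint discs $D_j$ and stay simple. So the boundary circle lies in $B_{G,C,P}^{\mathrm{good}}$, and the only degeneration, at $z=0$, is the collision of $q_+$ and $q_-$. Because both branch points carry the same transposition $(ij)$, the local factor $Q_t=(\lambda-c)^2-\beta^2((z-z_0)^2-\alpha)$ from the two-sheet factorization shows that the collision produces an ordinary node $\lambda-c=\pm\beta(z-z_0)$ rather than a triple ramification point. Hence the centre $b_0$ lies in $\mathcal D_{bdry,C,P}^\circ$. I will also need to check that the loop is a transverse meridian, i.e.\ that $\alpha$ has a simple zero. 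The t-channel discussion gives $P_1=P_0$ after a full turn of $\alpha$ (a half-twist of $q_\pm$), so $\partial D$ is a closed meridian based at $x_0$.

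\emph{Step 3.} I compute the monodromy. By the Picard--Lefschetz Theorem~\ref{thm:pic} and Lemma~\ref{le:picprym}, the monodromy along $\partial D$ on $H_1(\Prym(\Sigma_0);\mathbb Z)$ is $\alpha\mapsto\alpha+\langle\alpha,\delta_{van}\rangle\delta_{van}$. The vanishing cycle is the loop in $\{(\lambda-c)^2=\beta^2((z-z_0)^2-\alpha)\}$ lying over a segment joining $q_+$ and $q_-$ inside $F$. Under $f$ this is isotopic to $f(\delta)$, since $\delta$ is exactly the closed loop through the two ramification points in the cylinder $U_r$; this matches the s-channel picture in Figure~\ref{fig:schannelnew}. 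By Proposition~\ref{prop:dehn} and the corollary to Theorem~\ref{thm:ttt}, two successive \ttt moves at $F$ in the same orientation act on $\Lambda_\Gamma^\circ$ as $(T_\delta)_*$. Conjugating by $f_*$ then gives the statement. Finally, I match orientations: the clockwise or anticlockwise choice of the ura operation corresponds to traversing the meridian positively or negatively, which selects $\phi'^\mp\circ\phi^\mp$.

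The main obstacle is Step 2. It requires showing that the explicitly built family really meets $\mathcal D_{bdry,C,P}$ transversely at a point of $\mathcal D_{bdry,C,P}^\circ$ and not at a caustic or Maxwell point, and that $M(t)$ stays invertible as $q_+$ and $q_-$ merge. The two rows of $M$ coincide in that limit, so invertibility fails there. To get around this, I would first reparametrize by the symmetric functions $q_++q_-$ and $q_+q_-$, equivalently replacing $s_\pm$ by sections with prescribed value and derivative at $z_0$, which is possible because $L^m$ is very ample for $m\ge3$. Only after that reparametrization would I check the identical-transposition condition that forces a node.
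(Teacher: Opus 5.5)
Your route is essentially the paper's: a Hurwitz-isomorphic Hitchin point, the determinant-only deformation of Section~5.3 with Rouch\'e control, then Theorem~\ref{thm:pic} and Proposition~\ref{prop:dehn}. Two of your additions make explicit points the paper leaves implicit: using the equal transpositions $(ij)$ to force a node rather than a caustic point, and the symmetric-function repair of $M(t)$ at the collision.

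However, Step~2 has a gap you did not list among your obstacles. Every tool you invoke there is valid only for a \emph{small} deformation. This includes the disc $U$ containing only $q_\pm$, the implicit-function branches $\mu_\pm$, invertibility of $M$, the Rouch\'e lemma, and uniqueness of $u_\pm$ giving $P_1=P_0$. The size of the deformation is not yours to choose. The meridian is based at $x_0$, so at the base point $z=1$ the prescribed points $z_0\pm\sqrt{\varepsilon z}$ must be the actual branch points of $\pi_0$. Hence $\varepsilon=\bigl((q_+-q_-)/2\bigr)^2$ is fixed by $x_0$.

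Corollary~\ref{cor:iso} produces $f$ and an isotoped $W$, but gives no control over \emph{which} branch points of $\pi_0$ sit at the two $\bullet$-vertices of $\partial F$. They may be far apart for every choice of $x_0$. Even for $x_0$ near a point of $\mathcal D_{bdry,C,P}^\circ$, the pair colliding there need not correspond to the pair on $\partial F$, or to any face of $W$. Bringing a distant pair together is a global motion in $B_{G,C,P}$, where branch points cannot in general be moved independently. Along such a motion other branch points may collide, enter $F$, or wind around the arc. So none of the following is justified: $b_0\in\mathcal D_{bdry,C,P}^\circ$, the identification of the vanishing cycle with $f(\delta)$, or smallness of the meridian.

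What is missing is a transitivity statement. You need $x_0$ near some $b_0\in\mathcal D_{bdry,C,P}^\circ$ and a Hurwitz equivalence relative to $P$ that carries the colliding pair of $b_0$, with its short collision arc, to the two $\bullet$-vertices of $\partial F$ joined by an arc inside $F$ (after shrinking $F$ by an isotopy). Because $W$ is only defined up to isotopy, this is a statement about Hurwitz spaces rather than about the Hitchin family. One form it could take is connectedness of the space of simply branched covers with a marked close pair of branch points having equal transposition, i.e.\ of the corresponding boundary stratum. It does not follow from Corollary~\ref{cor:iso}, which concerns unmarked covers only. Once it is supplied, your Step~2 works as written in the small regime. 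The paper's own argument has the same hole: it simply asserts, ``according to Corollary~\ref{cor:iso}'', that $x_0$ and $b_0$ can be chosen with the node of $\Sigma_{b_0}$ formed by the pair on $\partial F$.

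A smaller point concerns Step~1. $\Lambda_\Gamma^\circ$ has rank exceeding that of $H_1(\Prym(\Sigma_0);\mathbb Z)$ by $n(m-1)$, coming from the lifts of puncture loops. So $f_*$ is a form-preserving surjection, not an isomorphism. The intertwining $f_*\circ(T_\delta)_*=T_{f_*\delta}\circ f_*$ still holds and is all you need, but it should replace ``conjugating by $f_*$''.
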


\subsection{Local Monodromies around \texorpdfstring{$\mathcal{D}_{\mathrm{cau,C,P}}$}{} and \texorpdfstring{$\mathcal{D}_{\mathrm{Max,C,P}}$}{}}

In this subsection, we will see that local monodromies around $\mathcal D_{Max,C,P}$ and $\mathcal D_{cau,C,P}$ are trivial as suggested by isotopy deformations and the compositions of two shrink \& expand moves respectively on generic $A_{m-1}$-webs.

\subsubsection{Local Monodromies around \texorpdfstring{$\mathcal{D}_{\mathrm{cau,C,P}}$}{}}

Let $x_0\in B_{G,C,P}^{\mathrm{good}}$ be close to a point $b_0\in \mathcal{D}_{cau,C,P}^{\circ}$.
Let $\pi_t:\Sigma_t\to C$
be a family of smooth Hitchin spectral covers parametrized by a path in $B_{G,C,P}$ starting at $x_0$. Suppose two simple ramification points, whose local monodromies are adjacent transpositions $(ij)$ and $(jk)$,
coalesce and then split again. Denote this process by $A$.

Near the collision on an open set $U\subset C$, the three participating sheets can be isolated from the remaining sheets in a local coordinate:
$P_t(\lambda,z)
=Q_t(\lambda,z)\,R_t(\lambda,z),$
where $Q_t$ is a cubic factor and $R_t$ is non-vanishing at the collision.
After choosing local coordinates, the cubic factor may be put in the form
$Q_t(\lambda,z)=\lambda^3-3t\lambda-z.$

For $t\neq 0$, the equation
$\partial_\lambda Q_t=0$
has two distinct solutions $\lambda_\pm=\pm\sqrt{t}$, and hence $Q_t=0$ gives two simple branch points $z_\pm=\mp2t^{3/2}$. At $t=0$, $\partial_\lambda Q_0=0$ has one solution $\lambda=0$, the two simple ramification points coalescing into a single ramification point of index 3 over the double branch point $z=0$.
The local monodromy around both simple branch values is the product
$(ij)(jk)=(ijk)$, which is the monodromy of the double ramification point.

Locally on $U$, the discriminant satisfies
$\Delta_t(z)=\Delta(Q_t)\Delta(R_t)\Res(Q_t,R_t)^2.$
Since the last two factors are non-zero near the collision, the local behaviour is governed by the cubic factor.
Locally after choosing local coordinates, we may write $\Delta(Q_t)
=b(z)\cdot(z^2-c(z)\,t^3)$, where $b(z),c(z)$ are non-vanishing on $U$.
Hence, when $t\neq 0$,
$\Delta(P_t)$ has two simple zeros; when $t=0$, $\Delta(P_0)$ has one double zero.

\begin{remark}
Both a nodal singularity and a double ramification point produce
$\operatorname{ord}_q\Delta(P)=2$.
For an index-$3$ ramification point over $q\in C$, $P(q)=\partial_\lambda P(q)=\partial_\lambda^2P(q)=0$,
with $\partial_\lambda^3P(q)$, $\partial_zP(q)\neq0$.
The spectral curve is smooth over $q$.
By contrast, a nodal singularity satisfies
$P(q)=\partial_\lambda P(q)=\partial_zP(q)=0$, with $\partial_\lambda^2P(q)\neq 0$.
The spectral curve itself is singular.
\end{remark}

Let $B_{G,C,P}^{\mathrm{sm}}
=\{a\in B_{G,C,P}:\Sigma_a\text{ is smooth}\}$ be the smooth-spectral-curve locus. $B_{G,C,P}^{\mathrm{good}}\subset B_{G,C,P}^{\mathrm{sm}}$.
The index-$3$ ramification locus lies inside $\mathcal{D}_{cau,C,P}\subset B_{G,C,P}^{\mathrm{sm}}\setminus B_{G,C,P}^{\mathrm{good}}$.

\begin{lemma}
\label{le:contractible-monodromy}
Let
$q:\Sigma\to B_{G,C,P}^{\mathrm{sm}}$
be a smooth proper family of spectral curves,
and let
$\gamma:[0,1]\to B_{G,C,P}^{\mathrm{sm}}$
be a loop based at $a\in B_{G,C,P}^{\mathrm{sm}}$.
If $\gamma$ is contractible in
$B_{G,C,P}^{\mathrm{sm}}$, then the monodromy representation
$\rho:
\pi_1(B_{G,C,P}^{\mathrm{sm}},a)
\to
\operatorname{Aut}\bigl(H_1(\Sigma_a;\mathbb Z)\bigr)$
sends $[\gamma]$ to the identity. Hence its Gauss--Manin monodromy is trivial:
$\PT_{\gamma}^{GM}
=\operatorname{id}_{H_1(\Sigma_a;\mathbb Z)}$.
\end{lemma}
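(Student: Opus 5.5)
The plan is to use the fact that the Gauss--Manin local system $\mathcal H_1^\Sigma$ over $B_{G,C,P}^{\mathrm{sm}}$ is locally constant. Its monodromy representation therefore factors through $\pi_1(B_{G,C,P}^{\mathrm{sm}},a)$. Concretely, if $\gamma$ is null-homotopic in $B_{G,C,P}^{\mathrm{sm}}$, the pulled-back family over the disc is trivial, and transport around the boundary is the identity.

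\textbf{Step 1 (local system).} Over $B_{G,C,P}^{\mathrm{sm}}$ the total space $\mathscr X^{\mathrm{sm}}=\{P_a(\lambda,z)=0\}\subset B_{G,C,P}^{\mathrm{sm}}\times\operatorname{Tot}(L)$ will be shown to be a proper smooth submersion onto the base. Properness holds because the $\Sigma_a$ are compact curves in the total space of $L$ over the compact curve $C$ (the polynomial is monic). Submersivity follows from the Jacobian criterion: at any point of a smooth fiber, $(\partial_\lambda P_a,\partial_z P_a)\neq 0$, so the fiberwise differential already has full rank.

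Ehresmann's Fibration Theorem then makes $q$ locally trivial. This gives $\mathcal H_1^\Sigma=\{H_1(\Sigma_a;\mathbb Z)\}$ as a local system on all of $B_{G,C,P}^{\mathrm{sm}}$, extending the corollary already stated over $B^{\mathrm{good}}_{G,C,P}$. By the proposition identifying $(T_\gamma)_*$ with $\PT^{GM}_\gamma$, transport along any path is well defined on homology.

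\textbf{Step 2 (homotopy invariance).} Let $H:[0,1]^2\to B_{G,C,P}^{\mathrm{sm}}$ be a based null-homotopy of $\gamma$. Pull back the family to $H^*\mathscr X\to[0,1]^2$. This is a fiber bundle over a contractible base, hence trivial, so $H^*\mathscr X\cong[0,1]^2\times\Sigma_a$. Transport along the boundary loop $\gamma$ then equals transport along the three other sides of the square, which is the constant path at $a$. Equivalently, a locally constant sheaf on a contractible space is constant. In either form, $\rho([\gamma])=\PT^{GM}_\gamma=\id_{H_1(\Sigma_a;\mathbb Z)}$. This also shows $\rho$ is a group homomorphism on $\pi_1$, since transport depends only on the homotopy class rel endpoints.

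\textbf{Main obstacle.} The only substantive point is Step 1: checking that the family is a proper submersion over the whole smooth locus, including the caustic points where ramification of index $3$ occurs. There $\partial_\lambda P=\partial_\lambda^2P=0$ but $\partial_zP\neq0$ (cf.\ the preceding remark), so the fiber remains smooth and the total space is still submersive. Once this is verified, Step 2 is formal. For the application to $\mathcal D_{cau,C,P}$, the lemma then says that a small meridian bounding a disc inside $B^{\mathrm{sm}}_{G,C,P}$ (which crosses $\mathcal D_{cau,C,P}$ only at smooth-curve points) has trivial monodromy.
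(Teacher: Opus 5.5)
Your argument is correct and is the standard one the paper relies on: the paper gives no separate proof, treating the lemma as immediate from the fact that Gauss--Manin transport of the homology local system, obtained via Ehresmann's theorem, depends only on the homotopy class of the loop, so a null-homotopic loop has trivial monodromy, which is exactly your Step 2. Your Step 1 checks the proper-submersion property over all of $B_{G,C,P}^{\mathrm{sm}}$, including caustic and Maxwell points; the lemma simply assumes this as a hypothesis, but the paper does need it, and only asserts it ("the family extends smoothly over the contraction"), when it applies the lemma to meridians around $\mathcal D_{cau,C,P}$ and $\mathcal D_{Max,C,P}$.
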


Take a sufficiently small meridian around $\mathcal{D}_{cau,C,P}$ in $B_{G,C,P}^{\mathrm{good}}$ obtained by performing the process $A$ twice.
This meridian is contractible in $B_{G,C,P}^{\mathrm{sm}}$. The family of spectral curves parametrized by the meridian extends smoothly
over the contraction.

Therefore its Gauss--Manin monodromy is trivial,
as suggested by the trivial automorphism on the lattices we got in Section 3.4 by performing shrink \& expand moves twice on $A_{m-1}$-webs.
The restricted monodromy on the $SL_m$ Hitchin fiber lattice $H_1(\Prym(\Sigma_0);\mathbb Z)
=\ker(
H_1(\Sigma_0,\mathbb Z)
\to
H_1(C,\mathbb Z))$ is also trivial.

\medskip
\subsubsection{Local Monodromies around \texorpdfstring{$\mathcal{D}_{\mathrm{Max,C,P}}$}{}}
Similarly, consider the locus $\mathcal{D}_{Max,C,P}\subset B_{G,C,P}^{\mathrm{sm}}\setminus B_{G,C,P}^{\mathrm{good}}$ where two distinct simple ramification points on the smooth spectral curve have the same branch value.
Since the family of spectral curves extends smoothly across
$\mathcal D_{Max,C,P}$,
a small meridian around $\mathcal D_{Max,C,P}$ is contractible in $B_{G,C,P}^{\mathrm{sm}}$. Therefore the corresponding Gauss--Manin monodromies on Hitchin-fiber lattices are trivial, consistent with the trivial automorphisms on quiver lattices produced by isotopy moves on generic $A_{m-1}$-webs.

\medskip
\begin{proposition}
\label{prop:trimono}
Sufficiently small oriented meridians in $B_{G,C,P}^{\mathrm{good}}$ around $\mathcal{D}_{cau,C,P}$ $($resp. $\mathcal{D}_{Max,C,P})$ and two successive shrink $\&$ expand moves $($resp. isotopy$)$ on generic $A_{m-1}$-webs both give the trivial monodromy/automorphism on corresponding lattices.
\end{proposition}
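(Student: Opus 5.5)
The plan is to treat the two cases separately, and in each to show that the relevant loop is nullhomotopic in a space over which the topological data, whether the web isotopy class or the smooth spectral family, does not change.

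\textbf{Hitchin side.} For both $\mathcal D_{cau,C,P}$ and $\mathcal D_{Max,C,P}$ the spectral curve over a generic point $b_0$ of the stratum is smooth. For the caustic this is the Remark distinguishing $P=\partial_\lambda P=\partial_\lambda^2P=0$ (with $\partial_zP\neq0$) from the nodal condition; for the Maxwell stratum it holds by definition. So $b_0\in B_{G,C,P}^{\mathrm{sm}}$.

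Next I would observe that $B_{G,C,P}^{\mathrm{sm}}$ is open, since smoothness of $\{P_a=0\}$ in $\Tot(L)$ is an open condition in $a$. Take a small ball $V\ni b_0$ contained in $B_{G,C,P}^{\mathrm{sm}}$. Because $b_0\in\mathcal D^\circ$ lies on no other discriminant component, a sufficiently small meridian around the stratum can be taken inside $V\cap B_{G,C,P}^{\mathrm{good}}$. This meridian bounds the transverse disc $\gamma(D)\subset V\subset B_{G,C,P}^{\mathrm{sm}}$, so it is contractible in $B_{G,C,P}^{\mathrm{sm}}$.

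The family $q:\Sigma\to B_{G,C,P}^{\mathrm{sm}}$ is a proper holomorphic submersion, hence smooth proper. Lemma~\ref{le:contractible-monodromy} then gives trivial Gauss--Manin monodromy on $H_1(\Sigma_a;\mathbb Z)$. By the sub-local-system corollary, its restriction to $\Lambda_a=H_1(\Prym(\Sigma_a);\mathbb Z)$ is also trivial.

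For the caustic I would also check explicitly, using the local model $Q_t=\lambda^3-3t\lambda-z$ from the text, that the total space is smooth over $t=0$. This confirms that $\mathcal D_{cau}$ is not a locus of critical values. The same holds for the Maxwell stratum, where two distinct points upstairs simply lie over the same value.

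\textbf{Web side.} For isotopy this is immediate. An isotopy of a generic web, even one passing through a configuration with two $\bullet$-vertices over the same point, does not change the isotopy class of $W$ or $\Gamma$. Hence it does not change the spectral cover up to isomorphism isotopic to the identity, and the induced map on $\Lambda_\Gamma^\circ=\mathbb Z[\{\text{internal faces}\}]$ fixes every basis vector $e_G$.

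For two successive shrink \& expand moves I would invoke Lemma~\ref{le:quiverchange}: shrink/expand moves leave $Q_\Gamma$ and the face set unchanged, so each move induces the identification $e_G\mapsto e_G$. The composition returning to $\Gamma$ is therefore the identity, which is the earlier Lemma on compositions of shrink \& expand moves.

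\textbf{Main obstacle.} The delicate step is making sure the web-side identification is the \emph{same} as the homological identification on spectral covers. One must rule out the possibility that, as in the \ttt case, there is a hidden orientation choice producing a twist. I would argue this via the local model of Figure~\ref{fig:shrinkexpand}. The passage through the index-$3$ point is a continuous family of smooth covers, with no cutting and regluing, so the induced map on $H_1$ is given by a homeomorphism supported in a disc around the three participating sheets. That homeomorphism is isotopic to the identity rel boundary, because the loop in the local parameter $t$ bounds a disc of smooth covers. Face-loop lifts can be isotoped off this disc, so they are fixed, which matches the Hitchin-side triviality.
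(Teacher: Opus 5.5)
Your proposal is correct and follows essentially the same route as the paper. On the Hitchin side, a sufficiently small meridian is contractible in the smooth-spectral-curve locus $B_{G,C,P}^{\mathrm{sm}}$, so Lemma~\ref{le:contractible-monodromy} gives trivial monodromy, which then restricts to the Prym sub-local system; on the web side, the quiver and the isotopy class are unchanged, and shrink \& expand involves no orientation choice, unlike the ura operation. Your local check via $Q_t=\lambda^3-3t\lambda-z$ and the disc-supported homeomorphism argument only make explicit what the paper asserts in Section~3.4.
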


\newpage
\subsubsection{Conclusion}

Now we can finalize our summary on the correspondence between the compositions of three elementary moves on generic $A_{m-1}$-webs on $C$ and the small disc areas around three types of discriminant loci in the Hitchin base $B_{SL_m,C,P}$, for a $n$-punctured smooth complex genus-$g$ curve $\mathring{C}=C\setminus P$ satisfying $2g-2+n>0$, $G=SL_m$, $m \ge 2$.

\begin{table}[htbp]
\centering
\caption{Elementary moves and disc areas around discriminant loci.}
\label{table:3compare}
\begin{tabular}{c|c|c}
Two \ttt moves & Two shrink \& expand moves & Isotopy of webs \\
\hline
Around $\mathcal{D}_{bdry,C,P}$
& Around $\mathcal{D}_{cau,C,P}$
& Around $\mathcal{D}_{Max,C,P}$\\
\hline
2 simple zeros of $\Delta$ braid/coalesce
& 2 simple zeros of $\Delta$ braid/coalesce
& 2 simple zeros of $\Delta$ braid/coalesce\\
\hline
2 simple branch points braid/collide
& 2 simple branch points braid/coalesce
& 2 simple branch points braid/coincide\\
\hline
Dehn twist/nodal singularity
& 2 ramification points rotate/coalesce
& 2 ramification points over close values\\
\hline
Picard-Lefschetz transformation
& Trivial automorphism or monodromy
& Trivial automorphism or monodromy
\end{tabular}
\end{table}

\newpage
\mysection{Cluster Atlases on Moduli Spaces of Decorated Local Systems}

Let $G$ be a reductive Lie group whose derived subgroup has Dynkin type $A_{m-1}$, $m \ge 2$.
In this section, we focus on $GL_m$ and $Sl_m$. We will see how $A_{m-1}$-webs and $A_{m-1}^*$-webs, i.e., rank $m$ ideal webs, produce regular functions, and hence cluster atlases on moduli spaces of decorated $G$-local systems.

\medskip
\noindent
Let us first recall the framework introduced in \cite{Goncharov}.

We denote by $S$ a decorated surface without boundary. Removing a little disc around each special point of $S$, we get a surface $\mathring{S}$. Let $\partial_{\mathrm{th}} \mathring{S}$ be the thickened boundary of $\mathring{S}$, a small neighborhood of the boundary
of $\mathring{S}$.

Denote by $T'S$ the unit tangent bundle to $S$. A \emph{twisted $G$-local system} on $S$ is a $G$-local system on $T'S$ which has
monodromy $(-1)^{m-1}$ around a loop rotating a tangent vector at any point by
$2\pi$.

Let $W$ be an $A_{m-1}$-web on $S$. Denote by $\widetilde W:=p^{-1}(W)$ its lift to the universal cover $\widetilde S$. Define $\mathcal F(W)
:=
\{\text{faces of }\widetilde W\}/\pi_1(S)$. Thus, in the non-strict case, a face of $W$ means a
$\pi_1(S)$-orbit of faces of the lifted web $\widetilde W$. Denote by $d_i(F)$ the codistance of a face $F\in \mathcal F(W)$ and a special point $s_i$.


\subsection{Moduli Spaces of Decorated Local Systems}

Let $V_m$ be an $m$-dimensional vector space with volume form $\Omega_m\in \bigwedge^m V_m^*.$

\begin{definition}[{\cite{Goncharov}}]
A decorated flag in $(V_m,\Omega_m)$ is a datum
$(F_{\bullet},\alpha_{\bullet})$:
\begin{equation}
\label{eq:decorated-flag}
F_{\bullet}:\quad
0=F_0\subset F_1\subset \cdots \subset F_{m-1}\subset F_m=V_m,
\qquad
\dim F_p=p,
\qquad
\alpha_p\in \det F_p \setminus 0,
\end{equation}
consisting of a flag $F_{\bullet}$ and its degree-$p$ decorations: non-zero $p$-vectors
$\alpha_p\in \det F_p=\bigwedge^p F_p$ for $p=1,\ldots,m-1$.
\end{definition}

\begin{definition}[{\cite{Goncharov}}]
A $*$-decorated flag in $(V_m,\Omega_m)$ is a datum
$(F_{\bullet},\alpha_{\bullet})$:
\begin{equation}
F_{\bullet}:\quad
0=F_0\subset F_1\subset \cdots \subset F_{m-1}\subset F_m=V_m,
\qquad
\dim F_p=p,
\qquad
\alpha_p\in \det F_p.
\end{equation}
consisting of a flag $F_{\bullet}$ and its degree-$p$ $*$-decorations: $p$-vectors
$\alpha_p\in \det F_p=\bigwedge^p F_p$ for $p=1,\ldots,m$.
\end{definition}

The decorations determine non-zero $p$-vectors in the successive quotients
$\bar v_p\in F_p/F_{p-1}$,
such that intrinsically
$\det F_p \cong \det F_{p-1}\otimes(F_p/F_{p-1}),$ and, after choosing any lift $v_p\in F_p$, $\alpha_p=\alpha_{p-1}\wedge v_p.$

\medskip
Let $G$ be either $GL_m$ or $SL_m$. Let B be a Borel subgroup of G, U be a maximal unipotent subgroup of G. T $\cong$ B/U is the Cartan torus. For $GL_m$, $T \cong (G_m)^m$, for $SL_m$, $T \cong (G_m)^{m-1}$. G/B is the flag variety. G/U is the space of decorated flags.

Denote by $\mathcal A_{G}$ the moduli space of decorated flags for $G$ in
$(V_m,\Omega_m)$. We have
\begin{equation}
\mathcal A_{G}:=G/U
\end{equation}

\begin{definition}[{\cite{Goncharov}}]
Let $\mathcal L$ be a $G$-local system on $\mathring{S}$. Then $\mathcal L_{\mathcal A}:=\mathcal L/U$ is a decorated $G$-local system associated with $\mathcal L$, with fiber $G/U=\mathcal A_G$. 
A decoration on $\mathcal L_{\mathcal A}$ is a flat section of the
restriction of $\mathcal L_{\mathcal A}$ to $T'(\partial_{\mathrm{th}}\mathring{S})$.
\end{definition}

\begin{definition}[{\cite{Goncharov}}]
Let $\mathcal L$ be a $SL_m$-local system on $\mathring{S}$. Then $\mathcal L_{\mathcal A^*}
:=
\mathcal L\times_{SL_m}(GL_m/U)$ is a $*$-decorated $SL_m$-local system associated with $\mathcal L$, with fiber $GL_m/U$.
A $*$-decoration on $\mathcal L_{\mathcal A^*}$ is a flat section of the restriction of $\mathcal L_{\mathcal A^*}$
to $T'(\partial_{\mathrm{th}}\mathring{S})$. 
\end{definition}

\begin{definition}
Let $\Gamma$ be a group, and $C$ be a $\Gamma$-set. Let $H$ be an algebraic group, and $Y$ be a $H$-space. The \emph{equivariant configuration space} is
\begin{equation}
\mathrm{Conf}_{\Gamma,H}(C,Y)
:=
\left\{
(\rho,f)
\ \middle|\
\begin{array}{l}
\rho:\Gamma\to H,\\[2pt]
f:C\to Y,\\[2pt]
f(\gamma x)=\rho(\gamma)f(x)
\quad\text{for all }\gamma\in\Gamma,\ x\in C
\end{array}
\right\}/H,
\end{equation}
where $\rho$ is a group homomorphism, and $H$ acts by $h\cdot(\rho,f)
=
(h\rho h^{-1},\,hf)$.
\end{definition}
Thus $\mathrm{Conf}_{\Gamma,H}(C,Y)$ means: a monodromy representation $\rho$ together
with a $\rho$-equivariant configuration of points of $Y$, modulo
simultaneous $H$-action.

\begin{definition}
The moduli space of decorated $G$-local systems on $S$ is given by
\begin{equation}
\mathcal A_{G,S}
=
\mathrm{Conf}_{\pi_1(T' \mathring{S}),G}
\left(
\widetilde{T'(\partial_{\mathrm{th}} \mathring{S})},
G/U
\right).
\end{equation}
Expanded, this means
\begin{equation}
\mathcal A_{G,S}
=
\left\{
\begin{array}{c}
(\rho,\alpha)\\[2pt]
\rho:\pi_1(T' \mathring{S})\to G,\\[2pt]
\alpha:\widetilde{T'(\partial_{\mathrm{th}} \mathring{S})}\to G/U
\end{array}
\ \middle|\
\begin{array}{l}
\alpha(\gamma x)=\rho(\gamma)\alpha(x),\forall\gamma\in\pi_1(T' \mathring{S}),\ x\in \widetilde{T'(\partial_{\mathrm{th}} \mathring{S})}\\[2pt]
\rho(\tau)\alpha(x)=\alpha(x)
\end{array}
\right\}/G.
\end{equation}
Here $\tau$ is a loop around any puncture.

The map $\alpha$ assigns a decorated flag to each lifted boundary point, and the equivariance condition makes it
descend to a flat section of the associated decorated flag bundle downstairs.

For $G=SL_m$, one also imposes the twisting condition $\rho(\sigma)=(-1)^{m-1}I$, where $\sigma$ is the loop rotating a non-zero tangent vector by
$2\pi$.
\end{definition}

\begin{definition}
The moduli space of $*$-decorated $SL_m$-local
systems on $S$ is given by
\begin{equation}
\mathcal A^*_{SL_m,S}
=
\mathrm{Conf}_{\pi_1(T' \mathring{S}),SL_m}
\left(
\widetilde{T'(\partial_{\mathrm{th}} \mathring{S})},
GL_m/U
\right).
\end{equation}
Expanded, this means
\begin{equation}
\mathcal A^*_{SL_m,S}
=
\left\{
\begin{array}{c}
(\rho,\alpha^*)\\[2pt]
\rho:\pi_1(T' \mathring{S})\to SL_m,\\[2pt]
\alpha^*:\widetilde{T'(\partial_{\mathrm{th}} \mathring{S})}\to GL_m/U
\end{array}
\ \middle|\
\begin{array}{l}
\alpha^*(\gamma x)=\rho(\gamma)\alpha^*(x),\forall\gamma\in\pi_1(T' \mathring{S}),\ x\in \widetilde{T'(\partial_{\mathrm{th}} \mathring{S})}\\[2pt]
\rho(\tau)\alpha^*(x)=\alpha^*(x),\\[2pt]
\rho(\sigma)=(-1)^{m-1}I
\end{array}
\right\}/SL_m.
\end{equation}
Here $\sigma$ is the loop rotating a nonzero tangent vector by $2\pi$, $\tau$ is a loop around any puncture.
\end{definition}

\subsection{Cluster \texorpdfstring{$\mathcal{A}$}{A}-Mutations on Moduli Spaces of Decorated Local Systems}

\subsubsection{Cluster Seeds and Cluster \texorpdfstring{$\mathcal{A}$}{A}-Mutations}

Let $I$ be a finite set of vertices, and let $I_{\mathrm{f}}\subset I$ be the subset of frozen vertices. The mutable vertices are $I_{\mathrm{mut}}:=I\setminus I_{\mathrm{f}}$. A quiver without loops or two-cycles is described by $I$, $I_{\mathrm{f}}\subset I$, and a skew-symmetric matrix $\varepsilon=(\varepsilon_{ij})_{i,j\in I}$, where $\varepsilon_{ij}
=
\#\{i\to j\}
-
\#\{j\to i\}$.

Equivalently, one may encode the quiver by a lattice $\Lambda=\bigoplus_{i\in I}\mathbb Z e_i$, a basis $\{e_i\}_{i\in I}$ of $\Lambda$, together with a
skew-symmetric $\mathbb Z$-valued bilinear form $(*,*):\Lambda\times\Lambda\to \mathbb Z$ such that $(e_i,e_j)=\varepsilon_{ij}$. The frozen sublattice is $\Lambda_{\mathrm{f}}
=
\bigoplus_{i\in I_{\mathrm{f}}}\mathbb Z e_i$. The non-frozen
basis vectors generate a subquiver, called the non-frozen part of the initial quiver.

\begin{definition}[\cite{Goncharov}]
A quiver seed $Q$ consists of data
\begin{equation}
Q=(I,I_{\mathrm{f}},\Lambda,\{e_i\}_{i\in I},(*,*)),
\end{equation}
where $\{e_i\}$ is a basis of $\Lambda$, $I_{\mathrm{f}}\subset I$ is the frozen subset,
and $(*,*)$ is a skew-symmetric $\mathbb Z$-valued form.
\end{definition}

Let $Q=(I,I_{\mathrm{f}},\Lambda,(*,*),\{e_i\}_{i\in I})$ be a quiver seed. The \emph{mutable part} is $I_{\mathrm{mut}}:=I\setminus I_{\mathrm{fr}}$, and the \emph{mutable lattice} is $\Lambda_{\mathrm{mut}}
=
\bigoplus_{i\in I_{\mathrm{mut}}}\mathbb Z e_i
\subset \Lambda$. A \emph{compatible $\mathcal A$-lattice} is a lattice $\Lambda_{\mathcal A}$ with basis $\{\theta_i\}_{i\in I}$ such that, for mutable vertices, $(e_i,\theta_j)=\delta_{ij}$, $i\in I_{\mathrm{mut}},\ j\in I$, where $(*,*)$ denotes the pairing between the quiver lattice and the $\mathcal A$-lattice. Therefore the $\{\theta_i\}_{i\in I_{\mathrm{mut}}}$ are dual to the mutable exchange directions $\{e_i\}_{i\in I_{\mathrm{mut}}}$, i.e., $\Lambda_{\mathcal A,\mathrm{mut}}
:=
\bigoplus_{i\in I_{\mathrm{mut}}}\mathbb Z\theta_i
\cong
\Lambda_{\mathrm{mut}}^\vee$ \cite{SS25}.

\begin{definition}[\cite{SS25}]
A cluster $\mathcal A$-seed is a quiver seed together with a compatible $\mathcal A$-lattice
and a distinguished set of algebraically independent coordinate functions $\{A_i:=z^{\theta_i}\}_{i\in I}$. Thus a cluster $\mathcal A$-seed determines an algebraic torus $T_{\mathcal A,q}
=
\operatorname{Hom}(\Lambda_{\mathcal A},\mathbb G_m)$ with coordinates $\{A_i\}_{i\in I}$.
\end{definition}

A mutation of a quiver $Q$ in the direction of a non-frozen basis vector $e_k$ is a new quiver $Q'=\mu_{e_k}(Q)$. The lattice $\Lambda$ and the form $(*,*)$ for $Q'$ are the same as for $Q$. 

The basis
$\{e_i'\}$ for $Q'$ is given by the \emph{tropical mutation formula} \cite{Goncharov}
\begin{equation}
e_i' :=
\begin{cases}
-e_k,
& \text{if } i=k,\\[4pt]
e_i+(e_i,e_k)_+e_k,
& \text{if } i\neq k,
\end{cases}
\qquad
a_+ := \max(a,0).
\end{equation}

The composition of mutations $\mu_{e_k'}\circ \mu_{e_k}$ gives $e_k''=e_k$,
$e_i''=e_i+(e_i,e_k)e_k$, $i\neq k$.
It is the reflection of the
basis $\{e_i\}$ at $e_k$.

Equivalently, at the level of the exchange matrix $\varepsilon_{ij}:=(e_i,e_j)$, the mutation is \cite{Goncharov}
\begin{equation}
\varepsilon'_{ij}
=
\begin{cases}
-\varepsilon_{ij}, & i=k \text{ or } j=k,\\[4pt]
\varepsilon_{ij}, & \varepsilon_{ik}\varepsilon_{kj}\leq 0,\ i,j\neq k,\\[4pt]
\varepsilon_{ij}+|\varepsilon_{ik}|\varepsilon_{kj},
& \varepsilon_{ik}\varepsilon_{kj}>0,\ i,j\neq k.
\end{cases}
\end{equation}

Let $\mathbf A=\{A_i\}_{i\in I}$ be a cluster $\mathcal A$-seed with quiver $Q$, and let $B=(\varepsilon_{ij})$ be its exchange matrix. A \emph{cluster $\mathcal A$-mutation} at a mutable
vertex $k$ changes the $\mathcal A$-coordinates by $A_i'=A_i$, $i\neq k$, and
\begin{equation}
A_kA_k'
=
\prod_i A_i^{[\varepsilon_{ik}]_+}
+
\prod_i A_i^{[-\varepsilon_{ik}]_+}
=\prod_{\varepsilon_{ik}>0}A_i^{\varepsilon_{ik}}
+
\prod_{\varepsilon_{ik}<0}A_i^{-\varepsilon_{ik}}, \qquad i\neq k,
\end{equation}
where $a_+:=\max(a,0)$. Equivalently,
\begin{equation}
A_k'
=
\frac{\prod_i A_i^{\#(i\to k)}}{A_k}
+
\frac{\prod_i A_i^{\#(k\to i)}}{A_k}, \qquad i\neq k.
\end{equation}
This is a birational coordinate change.
The two terms are determined by the incoming and outgoing
arrows at $k$.
Frozen vertices are not mutated, although their variables may occur as coefficients in exchange relations \cite{SS25}.

\subsubsection{Cluster Coordinates, Charts, and Atlases}

Let $X$ be an irreducible variety with function field $K=k(X)$.

\begin{definition}
A \emph{cluster coordinate} is a single rational function $x_i:X\dashrightarrow \mathbb G_m$ appearing as a variable in a cluster seed.
\end{definition}

\begin{definition}[\cite{SS25}]
A \emph{cluster coordinate system} is a collection $\mathbf x=(x_1,\ldots,x_N)$ of cluster coordinates belonging to one seed and defining a birational identification
\begin{equation}
\phi_{\mathbf x}:
X\dashrightarrow
T_{\mathbf x}:=
\operatorname{Spec}
k[x_1^{\pm1},\ldots,x_N^{\pm1}]
\cong \mathbb G_m^N,
\qquad
p\mapsto (x_1(p),\ldots,x_N(p)).
\end{equation}
The seed torus $T_{\mathbf x}$, together with $\phi_{\mathbf x}$, is called the
\emph{cluster coordinate chart} associated with $\mathbf x$.
\end{definition}

\begin{remark}
The seed torus $T_{\mathbf x}$ should not be identified with the whole $X$. Rather, birational means that there is a dense open subset $U_{\mathbf x}\subset X$ on which the coordinates give an isomorphism onto a dense open subset of the torus.
\end{remark}

\begin{definition}[\cite{SS25}]
A \emph{cluster atlas} on $X$ is a collection of birational torus charts $X\dashrightarrow\mathbb G_m^N$ whose transition maps are finite compositions of cluster mutations.
\end{definition}

\begin{remark}
A cluster atlas is not necessarily an ordinary atlas covering every point of $X$. Rather, it is a mutation-connected collection of birational torus charts.
\end{remark}

\begin{proposition}
Suppose geometric objects $W$ determine seeds $\mathbf s_W$ and coordinate systems $\mathbf x_W$. Assume:
\begin{enumerate}
    \item each $\mathbf x_W$ gives a birational torus chart on $X$;
    \item each elementary geometric move $W\to W'$
    induces a seed mutation
    $\mathbf s_W\to\mathbf s_{W'}$;
    \item any two relevant geometric objects are connected by finitely many elementary moves.
\end{enumerate}
Then the charts associated with the $\mathbf x_W$ form a \emph{cluster atlas} on $X$.
\end{proposition}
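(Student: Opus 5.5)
The plan is to unwind the definition of a cluster atlas given just above: a collection of birational torus charts $X\dashrightarrow\mathbb G_m^N$ whose pairwise transition maps are finite compositions of cluster mutations. Hypothesis (1) already says that each $\mathbf x_W$ is a birational torus chart $\phi_{\mathbf x_W}:X\dashrightarrow T_{\mathbf x_W}$. So everything reduces to showing the following. For any two relevant objects $W,W'$, the transition map
\[
\phi_{\mathbf x_{W'}}\circ\phi_{\mathbf x_W}^{-1}:T_{\mathbf x_W}\dashrightarrow T_{\mathbf x_{W'}}
\]
is a finite composition of cluster $\mathcal A$-mutations, where compositions are taken as rational maps.

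First, fix $W,W'$ and use hypothesis (3) to choose a chain of elementary moves $W=W_0\to W_1\to\cdots\to W_k=W'$. By hypothesis (2), each step $W_{i}\to W_{i+1}$ induces a seed mutation $\mathbf s_{W_i}\to\mathbf s_{W_{i+1}}$. I read "induces a seed mutation" as saying that the coordinate change
\[
\phi_{\mathbf x_{W_{i+1}}}\circ\phi_{\mathbf x_{W_i}}^{-1}
\]
is exactly the birational map $A_k'=A_k^{-1}\bigl(\prod_i A_i^{[\varepsilon_{ik}]_+}+\prod_i A_i^{[-\varepsilon_{ik}]_+}\bigr)$, $A_i'=A_i$ for $i\neq k$, at the relevant mutable vertex $k$. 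In the situation of this paper that vertex is the face at which a \ttt move is performed, cf. Proposition~\ref{prop:22}.

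Moves that change the web only by shrink/expand do not change the coordinates, by Lemma~\ref{le:quiverchange} and the remark that such moves leave the codistances unchanged. For these moves the transition map is the identity, which I count as the empty composition of mutations.

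The chain may traverse some moves backwards. Since a cluster mutation is involutive, $\mu_k'\circ\mu_k=\mathrm{id}$ on seeds and on coordinates, the inverse of a mutation transition is again a mutation. So the orientation of each step along the chain is irrelevant.

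Next, I telescope. As rational maps,
\[
\phi_{\mathbf x_{W'}}\circ\phi_{\mathbf x_W}^{-1}=\prod_{i=k-1}^{0}\bigl(\phi_{\mathbf x_{W_{i+1}}}\circ\phi_{\mathbf x_{W_i}}^{-1}\bigr),
\]
and each factor on the right is a mutation or the identity. This gives the required finite composition of mutations.

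The step I expect to need the most care is justifying this telescoping identity rigorously. Each chart $\phi_{\mathbf x_{W_i}}$ is an isomorphism only between dense opens $U_{\mathbf x_{W_i}}\subset X$ and a dense open of $T_{\mathbf x_{W_i}}$. The plan is to use irreducibility of $X$: the finite intersection $U=\bigcap_i U_{\mathbf x_{W_i}}$ is nonempty and dense. On $U$ all the charts are honest isomorphisms onto their images, so the identity holds pointwise there. Two rational maps that agree on a dense open are equal as rational maps, which proves the identity.

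Finally, because the transitions are compositions of mutations of seeds connected by (2), the collection $\{\phi_{\mathbf x_W}\}$ is mutation-connected. It is therefore a cluster atlas in the sense of the definition above.
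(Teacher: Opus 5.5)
Your argument is correct and is essentially what the paper intends: the paper states this proposition without any proof, treating it as an immediate unwinding of the definition of a cluster atlas, which is exactly what you do (birationality from (1), then telescoping the single-move mutation transitions along a chain supplied by (3) and (2)). One small precision: the involutivity you invoke holds on the $\mathcal A$-coordinates, which is all your argument needs, but not literally ``on seeds'' in the paper's lattice-basis convention, where $\mu_{e_k'}\circ\mu_{e_k}$ is the reflection $e_i\mapsto e_i+(e_i,e_k)e_k$ rather than the identity.
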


\begin{theorem}[\cite{BFZ05}]
Let $\{\mathbf x_\alpha\}_{\alpha\in I}$
be cluster coordinate systems associated with all seeds in a fixed mutation class, viewed in a
common function field $K$, and let $T_\alpha
=
\operatorname{Spec}
k[x_{\alpha,1}^{\pm1},\ldots,x_{\alpha,N}^{\pm1}]$
be the corresponding seed tori. Suppose the $T_\alpha$ are glued along the open
subsets on which the corresponding
cluster transformations and their inverses are regular, producing $X_{\mathrm{cl}}
=
\bigcup_{\alpha\in I}T_\alpha.$
Then
\begin{equation}
\mathcal O(X_{\mathrm{cl}})
=
\bigcap_{\alpha\in I}
\mathcal O(T_\alpha)
=
\bigcap_{\alpha\in I}
k[x_{\alpha,1}^{\pm1},\ldots,x_{\alpha,N}^{\pm1}]
\subset K.
\end{equation}
The intersection $\bigcap_{\alpha\in I}
\mathcal O(T_\alpha)$ is called the \emph{upper cluster algebra} associated with the mutation class.
\end{theorem}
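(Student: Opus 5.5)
The plan is to prove the equality by the sheaf axiom for $\mathcal O_{X_{\mathrm{cl}}}$. The seed tori are all birationally embedded in the one function field $K$, so "a function regular on every chart" will mean the same thing as "an element of every Laurent ring". No cluster-theoretic input beyond the birationality of mutations should be needed.

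\textbf{Step 1: $X_{\mathrm{cl}}$ is an integral scheme with function field $K$.} Each $T_\alpha$ is an open subscheme of $X_{\mathrm{cl}}$ and is integral, being a split torus. For any two indices $\alpha,\beta$, the chart $T_\alpha$ is glued to $T_\beta$ along the dense open locus where the composite cluster transformation and its inverse are regular.
\begin{itemize}
\item Each cluster transformation is a finite composition of mutations, so it is birational and acts as the identity on $K$.
\item This gives the cocycle condition on triple overlaps, since the three transition maps are all the identity of $K$ restricted to opens.
\item It also shows that the overlaps are nonempty.
\end{itemize}
Hence $X_{\mathrm{cl}}$ is irreducible and reduced, with generic point common to all charts. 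Its function field is $k(T_\alpha)=K$ for every $\alpha$. The scheme may be non-separated and the index set $I$ may be infinite, but neither affects what follows, which only uses the sheaf property on an open cover.

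\textbf{Step 2: the inclusion $\subseteq$.} Restriction to the generic point gives an injective map $\mathcal O(X_{\mathrm{cl}})\hookrightarrow K$, using integrality. A global function $f$ restricts to each open $T_\alpha$. Its image in $K$ therefore lies in
\[
\mathcal O(T_\alpha)=k[x_{\alpha,1}^{\pm1},\ldots,x_{\alpha,N}^{\pm1}]\subset K
\]
for every $\alpha$. Injectivity holds because a regular function on an integral scheme vanishing at the generic point vanishes identically.

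\textbf{Step 3: the inclusion $\supseteq$.} Take $f\in\bigcap_\alpha\mathcal O(T_\alpha)$ and let $f_\alpha$ be $f$ regarded as a regular function on $T_\alpha$. On an overlap $T_\alpha\cap T_\beta$, the restrictions of $f_\alpha$ and $f_\beta$ have the same image in $K$. The overlap is a nonempty open subset of an integral scheme, so the two restrictions agree. The sheaf axiom for the open cover $\{T_\alpha\}$ then glues the $f_\alpha$ to a global section, whose image in $K$ is $f$.

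\textbf{Main obstacle.} The substantive point is Step 1: checking that the gluing is well defined and that $X_{\mathrm{cl}}$ really is integral with function field $K$. This is where one uses that transition maps are birational identifications inside the single field $K$, rather than arbitrary isomorphisms of opens. I would first verify the cocycle condition by noting that all transition maps are induced by the identity of $K$. I would then deduce irreducibility from the pairwise nonempty overlaps. Once this is established, Steps 2 and 3 are formal.
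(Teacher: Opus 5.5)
Your argument is correct. The paper gives no proof of this statement; it is simply quoted from \cite{BFZ05}. So the only meaningful comparison is with the cited source.

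The substantive theorem of \cite{BFZ05} is not this identity. There the intersection of Laurent rings is the \emph{definition} of the upper cluster algebra. The main result is a finiteness statement: under a coprimality hypothesis, the intersection over all seeds already equals the upper bound $\mathcal O(T_\alpha)\cap\bigcap_{\beta}\mathcal O(T_\beta)$, where $\beta$ runs over the seeds adjacent to $\alpha$ (one for each mutable direction). Your proof neither needs nor yields that. What it does show, correctly, is that the displayed equality is formal. Once $X_{\mathrm{cl}}$ is integral and every chart induces the same identification of its function field with $K$, the two inclusions reduce to injectivity of restriction maps and the sheaf axiom for the cover $\{T_\alpha\}$. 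No finiteness of $I$, no separatedness of $X_{\mathrm{cl}}$, and no cluster-theoretic input beyond birationality are required.

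One precision in Step 1 concerns the gluing, which the statement already takes as given, so you need not verify it. Write $U_{\alpha\beta}\subseteq T_\alpha$ for the gluing locus and $\phi_{\beta\alpha}\colon U_{\alpha\beta}\to U_{\beta\alpha}$ for the transition isomorphism. If you do verify the gluing, the fact that all transition maps are induced by the identity of $K$ only shows that $\phi_{\gamma\beta}\circ\phi_{\beta\alpha}$ and $\phi_{\gamma\alpha}$ agree wherever both are defined. The gluing lemma also needs $\phi_{\beta\alpha}(U_{\alpha\beta}\cap U_{\alpha\gamma})=U_{\beta\alpha}\cap U_{\beta\gamma}$, which is what makes the composite defined on all of $U_{\alpha\beta}\cap U_{\alpha\gamma}$. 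This condition comes from two facts. First, $U_{\alpha\beta}$, the locus where the transformation and its inverse are both regular, is the \emph{largest} open set on which the transition map is an isomorphism onto an open subset. Second, a morphism from an integral scheme into an affine torus is determined by its effect on $K$.
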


\subsubsection{Regular Functions on Moduli Spaces of Decorated Local Systems}

Let $\widetilde{\mathcal L_{\mathcal A}}$ be the lift of a decorated $G$-local system $\mathcal L_{\mathcal A}$
to $\widetilde S$. For each special point $s_i$ on $S$, let $\widetilde s_i$ be one lift of $s_i$. Providing a decoration $A(s_i)$ around $s_i$, and a zig-zag strand $\gamma$ of $\widetilde{W}$ associated with $\widetilde s_i$, we can extend $A(s_i)$ to a flat section of $\widetilde{\mathcal L}_{\mathcal A}$ on the punctured disc
$\widetilde S_{\gamma}-\widetilde s_i$ \cite{Goncharov}.

Let $F$ be a face of $\widetilde{W}$.
Choose a non-zero tangent vector
$v$ at a point inside $F$. Let $p$ be a path in $T'\widetilde S$ connecting $v$ to a non-zero tangent vector $v_i$ at a point near the special point $s_i$ on $\widetilde{S}$. Transport the decoration $A(s_i)$ along $p$ from $v_i$ to $v$. The resulting decoration $A_v(s_i)$ over $v$ is well-defined since $A(s_i)$ is a flat section of $\widetilde{\mathcal L}_{\mathcal A}$ over $\widetilde S_{\gamma}-s_i$. We then get decorations $A_v(s_1),\ldots,A_v(s_n)$ over $v$ \cite{Goncharov}.

We can similarly get $*$-decorations on a given face $F$ for $*$-decorated $SL_m$-local systems on $S$.

\begin{definition}[\cite{Goncharov}]
Given an $A_{m-1}$ (resp. $A_{m-1}^*$)-web $W$ on $S$ and a face $F$ of
$\widetilde{W}$, a regular function $A_F^{W}$ on
the moduli space $\mathcal A_{SL_m,S}$ (resp. $\mathcal A^*_{SL_m,S}$) is
\begin{equation}
A_F^{W}
:=
\bigl\langle
\Omega_m,
\alpha_{d_1(F)}(A_v(s_1))\wedge \cdots \wedge
\alpha_{d_n(F)}(A_v(s_n))
\bigr\rangle.
\end{equation}
\end{definition}

This expression is well-defined because $d_1(F)+\cdots+d_n(F)=m$. The regular functions $A_F^{W}$ are $\pi_1(S)$-invariant, so we may assume that $F$ stands for the $\pi_1(S)$-orbit in the space of
faces of $\widetilde{W}$.

\begin{lemma}[\cite{Goncharov}]
The regular functions $A_F^{W}$ are invariant under the twisted cyclic shift
\begin{equation}
t:
(A_v(s_1),\ldots,A_v(s_n))
\mapsto
\bigl((-1)^{m-1}A_v(s_n),A_v(s_1),\ldots,A_v(s_{n-1})\bigr).
\end{equation}
\end{lemma}

\begin{remark}
A cyclic shift of the wedge product gives an extra sign $(-1)^{d_n(m-d_n)}$.

The twisted cyclic shift replaces $A_n$ by $(-1)^{m-1}A_n$. Since the central element $(-I)$ acts on $\Lambda^{d_n}V_m$ by $(-1)^{d_n}$, the twist contributes the additional factor $(-1)^{(m-1)d_n}$. Hence the total sign is
\begin{equation}
(-1)^{d_n(m-d_n)}(-1)^{(m-1)d_n}
=
(-1)^{d_n(2m-d_n-1)}
=
1,
\end{equation}
Therefore the coordinate $A_F^W$ is independent of the choice of
starting point in the cyclic order.
\end{remark}

\medskip
\noindent
Recall, in Section 3.1, the quiver $Q_{\Gamma}
:=
\bigl(\Lambda_{\Gamma},\{e_F\},(*,*)_{\Gamma}\bigr)$ and the non-frozen subquiver $Q_{\Gamma}^{\circ}
:=
\bigl(
\Lambda_{\Gamma}^{\circ},
\{e_G\},
\operatorname{Res}_{\Lambda_{\Gamma}^{\circ}}(*,*)_{\Gamma}
\bigr)$ assigned to a bipartite graph $\Gamma$ on $S$, with the vectors $\{e_G\}$ assigned to the internal faces $G$ of $\Gamma$ the
non-frozen basic vectors \cite{Goncharov}.

The face $F$ also labels a
regular cluster $\mathcal A$-coordinate $A_F^W$.
After choosing a compatible $\mathcal A$-lattice with basis
$\{\theta_F\}$, we may write
$A_F^W=z^{\theta_F}$.
Thus the set of regular functions $\{A_F^W\}_{F\in\mathcal F(W)}$ assigned to faces of a given web $W$ together with the associated quiver seed $Q=(I,I_{\mathrm{f}},\Lambda,(*,*),\{e_i\}_{i\in I})$ form a cluster $\mathcal{A}$-seed \cite{Goncharov}.

For an ideal triangulation $T$ and the associated standard $A_{m-1}$-web $W_m(T)$, it is proven that the functions $\{A_F^{W_m(T)}\}_{F\in\mathcal F(W_m(T))}$ form a regular cluster $\mathcal{A}$-coordinate system on $\mathcal A_{SL_m,S}$, hence they are algebraically independent. \cite{FG06,Goncharov}

Given an $A_{m-1}$-web $W$ on $S$, we will show later in Proposition~\ref{prop:22} that a \ttt move induces a cluster $\mathcal{A}$-mutation at $F$, which is birational. Thus algebraic independence is preserved under every \ttt move. 

By Proposition~\ref{prop:connect}, any $A_{m-1}$-web on $S$ is connected by a finite sequence of \ttt moves to the standard $A_{m-1}$-web $W_m(T)$ associated with an ideal triangulation $T$. Then the corresponding $A$-functions $\{A_F^W\}_{F\in\mathcal F(W)}$ are algebraically independent. Thus the regular functions $\{A_F^W\}_{F\in\mathcal F(W)}$ form a regular cluster $\mathcal A$-coordinate system on $\mathcal A_{SL_m,S}$.

For an $A_{m-1}$ or $A_{m-1}^*$ web, the ordinary $\mathcal A$-coordinates are face functions $A_F$, where $F$ runs over faces of a bipartite graph or web. For a strict ideal web, these are
ordinary faces downstairs on the surface. For a non-strict ideal web, one should interpret
the faces upstairs on the universal cover and then take deck-transformation orbits. Thus one writes $\mathcal F(\Gamma)
=
\pi_1(S)\backslash
\{\text{faces of } \widetilde{\Gamma}\}$.

\subsubsection{Cluster \texorpdfstring{$\mathcal A$}{A}-Mutations Induced by Two-by-two Moves}

Let $W$ be an ideal web of rank-$m$ on a decorated surface $S$. Let $F$ be a face of $W$.
Consider a regular function
$A_F^{W}
=
\bigl\langle
\Omega_m,
\alpha_{d_1(F)}(A_v(s_1))\wedge \cdots \wedge
\alpha_{d_n(F)}(A_v(s_n))
\bigr\rangle$.
Denote $d_i(F)$ by $d_i$, and $A_v(s_i)$ by $A_i$.
We may isolate two degree-one pieces $v_{d_i}$ and $v_{d_j}$ from the wedge product $\bigwedge_k\alpha_{d_k}(A_k)=\alpha_{d_1}(A_1)\wedge \cdots\wedge \alpha_{d_i}(A_i)\wedge\cdots\wedge \alpha_{d_j}(A_j) \wedge\cdots\wedge
\alpha_{d_n}(A_n)$ to the right.

\medskip
\noindent
\textbf{Case 1: $i\neq j$, $d_i,d_j\ge 1$.}
Write
\begin{equation}
\alpha_{d_i}(A_i)
=
\alpha_{d_i-1}(A_i)\wedge v_{d_i},
\qquad
\alpha_{d_j}(A_j)=\alpha_{d_j-1}(A_j)\wedge v_{d_j},
\end{equation}
with
$\bar v_{d_i}\in F_{d_i}^{(i)}/F_{d_i-1}^{(i)},
$ $
\bar v_{d_j}\in F_{d_j}^{(j)}/F_{d_j-1}^{(j)}.$
Define
\begin{equation}
\varrho_{ij}
=
\alpha_{d_1}(A_1)\wedge\cdots
\wedge\alpha_{d_i-1}(A_i)
\wedge\cdots
\wedge\alpha_{d_j-1}(A_j)
\wedge\cdots
\wedge\alpha_{d_n}(A_n).
\end{equation}
Then
$\varrho_{ij}\in\bigwedge^{m-2}V,$
and, up to a sign $\varepsilon_{ij}$,
$
\bigwedge_k\alpha_{d_k}(A_k)
=
\varepsilon_{ij}\,
\varrho_{ij}\wedge v_{d_i}\wedge v_{d_j}.
$
Hence
\begin{equation}
A_{F}^{W}
=
\varepsilon_{ij}
\bigl\langle
\iota_{\varrho_{pq}}\Omega_m, v_{d_i}\wedge v_{d_j}\bigr\rangle.
\end{equation}

\medskip
\noindent
\textbf{Case 2: $i=j$, $d_i\ge 2$.}
Write
\begin{equation}
\alpha_{d_i}(A_i)
=
\alpha_{d_i-2}(A_i)
\wedge
v_{d_i-1}
\wedge
v_{d_i},
\end{equation}
with
$\bar v_{d_i-1}
\in
F_{d_i-1}^{(i)}/F_{d_i-2}^{(i)},$
and
$\bar v_{d_i}
\in
F_{d_i}^{(i)}/F_{d_i-1}^{(i)}.$
Define
\begin{equation}
\varrho_{ii}
=
\alpha_{d_1}(A_1)\wedge\cdots
\wedge
\alpha_{d_i-2}(A_i)
\wedge\cdots
\wedge
\alpha_{d_n}(A_n)
\in\bigwedge^{m-2}V.
\end{equation}
Then
\begin{equation}
A_{F}^{W}
=
\varepsilon_{ii}
\bigl\langle
\iota_{\varrho_{ii}}\Omega_m, v_{d_i-1}\wedge v_{d_i}\bigr\rangle.
\end{equation}

\begin{remark}
If a lift is changed by
$v_k\mapsto v_k+x$,
$x\in F_{k-1},$
then
$\alpha_{k-1}(A)\wedge x=0.$
Hence the wedge-volume scalar is independent of the chosen lift.
Therefore the contracted form
$\iota_{\varrho}\Omega_m
\in\bigwedge^2V^*$
descends canonically to a pairing of the relevant quotients.

For $i\neq j$,
$\iota_{\varrho}\Omega_m:
\left(F_{d_i}^{(i)}/F_{d_i-1}^{(i)}\right)
\times
\left(F_{d_j}^{(j)}/F_{d_j-1}^{(j)}\right)
\to\mathbb C.$

For $i=j$,
$\iota_{\varrho}\Omega_m:
\left(F_{d_i-1}^{(i)}/F_{d_i-2}^{(i)}\right)
\times
\left(F_{d_i}^{(i)}/F_{d_i-1}^{(i)}\right)
\to\mathbb C.$

Thus isolating two vector representatives amounts to removing two degree-one quotient decorations from the original wedge product and contracting the remaining degree-$(m-2)$ decorated-flag data into the volume form.
\end{remark}

\begin{proposition}
\label{prop:22}
Let $W$ be an ideal web of rank-$m$ on a decorated surface $S$.
A \ttt move $\mu_F:W\to W'$ centred at an internal face $F$ induces a cluster $\mathcal{A}$-mutation at $F$. In particular, it is a birational isomorphism between the two cluster seed tori associated with the two webs before and after the mutation.
\end{proposition}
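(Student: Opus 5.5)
The plan is to reduce the statement to a single Plücker-type three-term relation, using the local structure of a \ttt move and the codistance bookkeeping. By the definition of the \ttt move, only the face $F$ changes its codistances. All other faces of $W$ and $W'$ keep the same codistance vectors, so $A_G^{W'}=A_G^{W}$ for $G\neq F$, which is the required $A_i'=A_i$, $i\neq k$. It remains to prove the exchange relation for $A_F^WA_F^{W'}$.

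Let $F$ be surrounded by the four faces $F_1,\dots,F_4$ (labelled 2, 4, 6, 7 as in Figure~\ref{fig:localquiver3}), and let the move involve the four strands attached to the special points $s_a,s_b,s_c,s_d$ (possibly with repetitions). The move lowers by one the codistances of $F$ to two of these points, say $s_a,s_b$, and raises by one those to $s_c,s_d$. The neighbouring faces differ from $F$ by crossing exactly one of the four strands. So, writing $d=d(F)$ and $e_x$ for the unit vector at $s_x$,
\begin{equation}
d(F_1)=d-e_a+e_c,\quad d(F_2)=d-e_b+e_c,\quad d(F_3)=d-e_a+e_d,\quad d(F_4)=d-e_b+e_d,
\end{equation}
and $d(F')=d-e_a-e_b+e_c+e_d$. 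I would check the labelling against Figure~\ref{fig:22move} so that the arrow pattern matches $\varepsilon_{ik}$. Next, apply Case 1 or Case 2 above, isolating the degree-one pieces $v_{d_a}$ and $v_{d_b}$ from $A_F^W$. Let $\varrho\in\bigwedge^{m-2}V$ be the common wedge of all decorations with codistance vector $d-e_a-e_b$. Then all six functions are of the form $\langle\iota_\varrho\Omega_m,x\wedge y\rangle$ with $x,y\in\{v_{d_a},v_{d_b},w_c,w_d\}$, where $w_c,w_d$ are the next-degree quotient vectors from the flags at $s_c,s_d$:
\begin{equation}
A_F\sim v_{d_a}\wedge v_{d_b},\ A_{F'}\sim w_c\wedge w_d,\ A_{F_1}\sim w_c\wedge v_{d_b},\ A_{F_2}\sim v_{d_a}\wedge w_c,\ A_{F_3}\sim w_d\wedge v_{d_b},\ A_{F_4}\sim v_{d_a}\wedge w_d.
\end{equation}
The Remark above makes these pairings well-defined on quotients. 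Because of it, all four vectors may be viewed in the two-dimensional quotient $V/\ker$ on which $\iota_\varrho\Omega_m$ is a nondegenerate $2$-form. There the Plücker relation for four vectors in a $2$-dimensional space gives
\begin{equation}
[ab][cd]=[ac][bd]+[ad][cb],
\end{equation}
which is exactly $A_FA_{F'}=A_{F_1}A_{F_4}+A_{F_2}A_{F_3}$ up to signs. This has the form $\prod A_i^{[\varepsilon_{iF}]_+}+\prod A_i^{[-\varepsilon_{iF}]_+}$ once we identify the incoming pair $\{F_1,F_4\}$ and the outgoing pair $\{F_2,F_3\}$ from Figure~\ref{fig:localquiver3}. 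Birationality then follows, because $A_F'$ is a Laurent expression in the old coordinates and the inverse is the same formula. Algebraic independence is preserved, as already noted before the proposition.

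I expect the main obstacle to be the sign and degeneracy bookkeeping. Three things need care. First, the sign $\varepsilon_{ij}$ from reordering wedges must be tracked, together with the twisted cyclic shift invariance, so that both terms come with $+$ signs. Second, one must show that the transported decorations $A_v(s_x)$ on all five faces can be taken over a single tangent vector $v$; this holds because all strands involved bound discs containing $F\cup F'$, so flat transport is consistent. Third, the degenerate cases where $s_a=s_b$ or $s_c=s_d$ (Case 2) or where neighbouring faces coincide (the degenerated local quiver) have to be handled. For the sign problem, my first step would be to fix the cyclic order of $s_a,s_c,s_b,s_d$ around $F$ from the strand orientations. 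I would then verify the signs in the standard $A_1$-flip example, where the relation reduces to the classical Ptolemy relation.
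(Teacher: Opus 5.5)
Your proposal is correct and follows essentially the same route as the paper. Only the function of the central face changes, and after isolating two degree-one pieces with a common $(m-2)$-vector $\varrho$, the six face functions become $2\times 2$ minors of four vectors, so the three-term Pl\"ucker relation gives $A_FA_{F'}=A_{F_1}A_{F_4}+A_{F_2}A_{F_3}$ (the paper's $A_1A_1'=A_6A_2+A_7A_4$).

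One small correction to your justification of a common base point: it is not true that every involved strand bounds a disc containing $F\cup F'$ (for instance, $F$ lies to the left of the strands attached to $s_c,s_d$). Consistency still holds, however, because all the needed transports can be carried out along short paths inside a small simply connected neighbourhood of the move that contains no special points.
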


\begin{proof}
Let $W$ be an $A_{m-1}$-web on a decorated surface $S$. Now we perform a \ttt move on the web $W$ at face 1 and change it to a new web $W'$.

We know from Section 2 that a \ttt move changes the web as following:

\begin{figure}[htbp]
\centering
\includegraphics[width=0.5\textwidth]{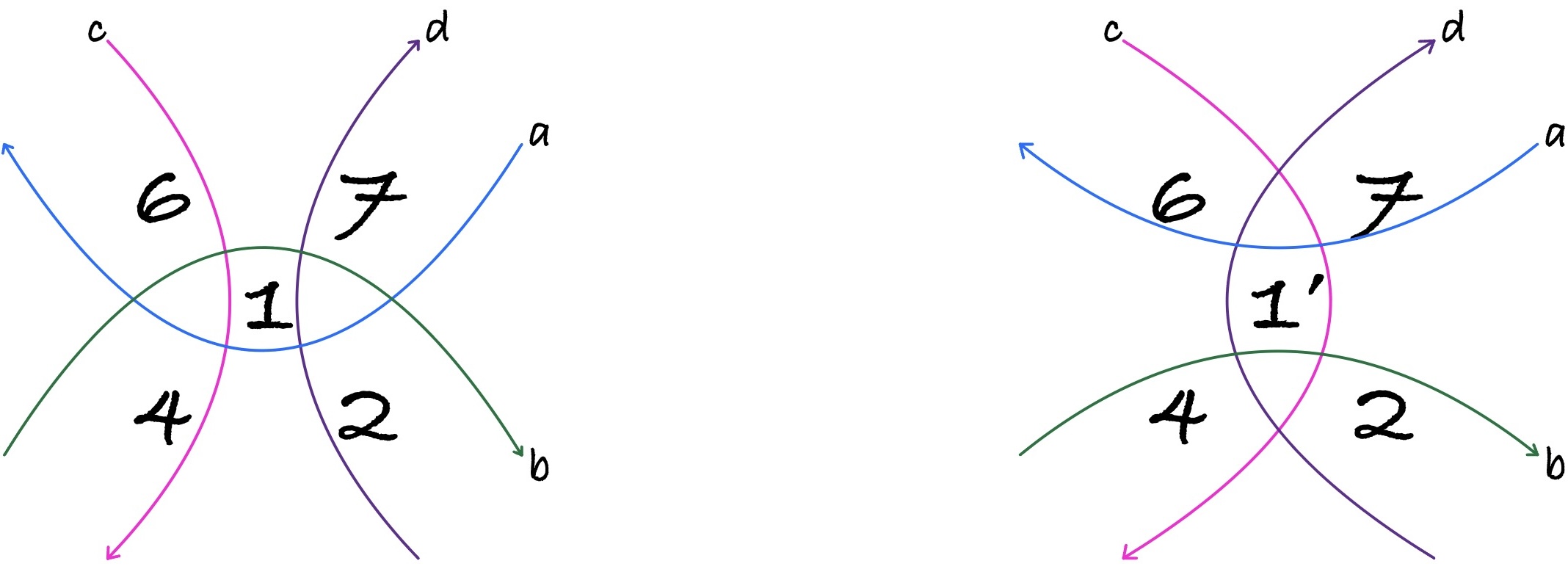}
\caption{Change of strands under a two-by-two move at face 1.}
\label{fig:webreg}
\end{figure}

Recall that the regular function assigned to an internal face $F$ of $W$ is defined as:
\begin{equation}
A_F^{W}
:=
\bigl\langle
\Omega_m,
\alpha_{d_1(F)}(A_v(s_1))\wedge \cdots \wedge
\alpha_{d_n(F)}(A_v(s_n))
\bigr\rangle .
\end{equation}

We can conclude from Figure~\ref{fig:webreg} and the construction of the regular functions, namely first extending the decorations from the special points on the universal cover $\widetilde{S}$ to discs bounded by strands by parallel transport and then taking the volume form of the wedge product of the decorations, that the regular functions assigned to face 2, 4, 6, 7, and all the other faces beside face 1 remain the same under the cluster mutation induced by the \ttt move at face 1. For instance, face 6 is always bounded by strands a and c. The only regular function that changes is the one assigned to face 1, since face 1 was originally bounded by strands a and b, but is now bounded by strands c and d after the \ttt move.

A regular function $A_i$ assigned to face $i$ can be expressed as $A_i =
\bigl\langle
\Omega_m,
\alpha_{d_1(s_1,F_i)}(A_v(s_1))\wedge \cdots \wedge
\alpha_{d_n(s_n,F_i)}(A_v(s_n))
\bigr\rangle$, where $d_j(s_j,F_i)$ is the codistance between the special point $s_j$ and $\widetilde{S}$ and a lift of face $i$. Notice that $A_i$ is $\pi_1(S)$-invariant, so the face $i$ can be viewed as the $\pi_1(S)$-orbit of lifts of face $i$ on $\widetilde{S}$.

Recall the cover of the spectral cover $\widetilde{\Sigma}_{\widetilde{W}}$ used in the construction of the spectral cover $\Sigma_W$ in section~\ref{sssec:231}, where $\Sigma_W = \widetilde{\Sigma}_{\widetilde{W}}/\pi_1(S)$. Now we consider the extended decorations on $\widetilde{\pi}: \widetilde{\Sigma}_{\widetilde{W}}\to \widetilde{S}$. For each $\alpha_{d_j(s_j,F_i)}(A_v(s_j))$ in the expression, it means that there are $d_j(s_j,F_i)$ discs on $\widetilde{\Sigma}_{\widetilde{W}}$, each bounded by a strand associated with the special point $s_j$, that cover a lift of face $i$ on $\widetilde{S}$, and $\alpha_{d_j(s_j,F_i)}(A_v(s_j))$ contributes into the wedge product in defining the regular function $A_i$.

Under a \ttt move at face 1, the strands that bound all the faces except face 1 remain unchanged, thus the discs bounded by strands on $\widetilde{\Sigma}_{\widetilde{W}}$ that cover any lift of those faces on $\widetilde{S}$ remain unchanged, hence the regular functions remain the same. For face 1, since there are only two strands that changes, there are only two discs on $\widetilde{\Sigma}_{\widetilde{W}}$, covering a lift of face 1 on $\widetilde{S}$, are affected by the \ttt move. These are the two discs having the two relevant branch points on their boundaries, and the two discs are glued on two edges incident to the two branch points that are not the boundaries of face 1 when quotienting $\pi_1(S)$.

The regular function assigned to face 1 before the mutation is $A_1 =
\bigl\langle
\Omega_m,
\alpha_{d_1(s_1,F_1)}(A_v(s_1))\wedge \cdots \wedge
\alpha_{d_n(s_n,F_1)}(A_v(s_n))
\bigr\rangle$. We rearrange the wedge product such that the contributions of the two affected discs $v_a$ and $v_b$ are moved to the right.

We write $\rho$ for the unchanged part of wedge product coming from all the discs that are unramified around face 1. Now we have
\begin{equation}
\label{eq:A1}
A_1 = (-1)^r
\bigl\langle
\Omega_m, \rho\wedge v_a\wedge v_b
\bigr\rangle=(-1)^r\bigl\langle
\iota_\rho \Omega_m, v_a\wedge v_b
\bigr\rangle,
\end{equation}
where $(-1)^r$ is the extra sign coming from rearranging the wedge product.

Now similarly, we rearrange the wedge product in all the other regular functions. Specifically, we write the rearranged regular functions assigned to face 2, 4, 6, and 7:
\begin{equation}
\label{eq:regfunc}
\begin{array}{cc}
A_2 = (-1)^r
\bigl\langle
\Omega_m, \rho \wedge v_b\wedge v_d
\bigr\rangle=(-1)^r\bigl\langle
\iota_\rho \Omega_m,v_b\wedge v_d
\bigr\rangle,\\[2pt]
A_4 = (-1)^r
\bigl\langle
\Omega_m, \rho\wedge v_b\wedge v_c
\bigr\rangle=(-1)^r\bigl\langle
\iota_\rho \Omega_m,v_b\wedge v_c
\bigr\rangle,\\[2pt]
A_6 = (-1)^r
\bigl\langle
\Omega_m, \rho\wedge v_a\wedge v_c
\bigr\rangle=(-1)^r\bigl\langle
\iota_\rho \Omega_m,v_a\wedge v_c
\bigr\rangle,\\[2pt]
A_7 = (-1)^r
\bigl\langle
\Omega_m, \rho\wedge v_a\wedge v_d
\bigr\rangle=(-1)^r\bigl\langle
\iota_\rho \Omega_m,v_a\wedge v_d
\bigr\rangle,
\end{array}
\end{equation}
where the subscribes correspond to the strands bounding the faces.

We also have the rearranged regular function assigned to face $1'$ after the mutation:
\begin{equation}
A_1' = (-1)^r
\bigl\langle
\Omega_m, \rho\wedge v_c\wedge v_d
\bigr\rangle=(-1)^r\bigl\langle
\iota_\rho \Omega_m,v_c\wedge v_d
\bigr\rangle.
\end{equation}

Recall that the Pl\"ucker coordinates on $Gr(2,4)$ satisfy
\begin{equation}
\Delta_{ab}\Delta_{cd}
=
\Delta_{ac}\Delta_{bd}
+
\Delta_{ad}\Delta_{bc}.
\end{equation}
Set $A_k=\Delta_{ab}$, $A_k'=\Delta_{cd}$. Then $A_kA_k'
=
\Delta_{ac}\Delta_{bd}
+
\Delta_{ad}\Delta_{bc}$, and therefore
\begin{equation}
A_k'
=
\frac{
\Delta_{ac}\Delta_{bd}
+
\Delta_{ad}\Delta_{bc}
}{
A_k
}.
\end{equation}

This is the cluster mutation replacing the diagonal minor $\Delta_{ab}$ by the other diagonal minor $\Delta_{cd}$. The boundary minors $\Delta_{ac}$, $\Delta_{bc}$, $\Delta_{bd}$, $\Delta_{ad}$ remain fixed. Geometrically, the mutation corresponds to the flip of a diagonal in a quadrilateral: $ab\longleftrightarrow cd$.

Therefore, we conclude that
\begin{equation}
\label{eq:clustermutation}
A_1'
=\frac{
A_6 A_2
+
A_7 A_4}{A_1},
\end{equation}
and $A_j'=A_j$ for $j\neq 1$.
This is the cluster mutation formula for general cluster coordinate systems formed by regular functions on the moduli spaces of decorated local systems.
\end{proof}

We now use the \bbb operation here to illustrate the above formula. Let us consider a affected lift of face 1 $F_{1_{1}}$ originally glued to lifts of the other four faces $F_{2_{1}}$, $F_{6_{1}}$, $F_{4_{1}}$, and $F_{7_{1}}$; another affected lift of face 1 $F_{1_{2}}$ originally glued to lifts of the other four faces $F_{2_{2}}$, $F_{6_{2}}$, $F_{4_{2}}$, and $F_{7_{2}}$. During the \bbb operation, we cut the edges between $F_{1_{1}}$ and $F_{4_{1}}$ / $F_{7_{1}}$, and between $F_{1_{2}}$ and $F_{4_{2}}$ / $F_{7_{2}}$. Then we glue $F_{1_{1}}$ to $F_{4_{2}}$ / $F_{7_{2}}$; $F_{1_{2}}$ to $F_{4_{1}}$ / $F_{7_{1}}$. After the \bbb operation, two lifts of face 4 / 7 switched places, while the lifts of other faces stay unchanged.

If the original regular functions are the same as Equation~\ref{eq:A1} and Equation~\ref{eq:regfunc}, then the above means that the $v$ in $A_1 = (-1)^r
\bigl\langle
\Omega_m, \rho\wedge v_a\wedge v_b
\bigr\rangle$, that appears in $A_4 = (-1)^r
\bigl\langle
\Omega_m, \rho\wedge v_b\wedge v_c
\bigr\rangle$, namely $v_b$ here, changes to the other $v$ in $A_4$, namely $v_c$ here. Similarly, $v_a$ in $A_1$ changes to $v_d$ which appears in $A_7$ but not in $A_1$. And the rest remains the same. Therefore, we again arrive at the new regular function assigned to face $1'$: $A_1' = (-1)^r
\bigl\langle
\Omega_m, \rho\wedge v_c\wedge v_d
\bigr\rangle$, and the same cluster mutation by the Pl\"ucker relation on $Gr(2,4)$.

\subsection{Cluster Atlases on Moduli Spaces of Decorated Local Systems}

\begin{lemma}[\cite{Goncharov}]
Let $S$ be a decorated surface of genus-$g$ with $n>0$ special points, and assume $2g-2+n>0$. Then the moduli space of decorated $SL_m$-local systems $\mathcal A_{SL_m,S}$ on $S$ has complex dimension
\begin{equation}
\dim_{\mathbb C}\mathcal A_{SL_m,S}
=
-\chi(S)\dim SL_m
=
(2g-2+n)(m^2-1).
\end{equation}
\end{lemma}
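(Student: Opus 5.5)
The plan is to verify the dimension formula in two independent ways: by a direct parameter count on the configuration-space model of $\mathcal A_{SL_m,S}$, and by counting the cluster coordinates of a standard web. The two counts must agree.

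\textbf{Direct parameter count.} I would use the description of $\mathcal A_{SL_m,S}$ as $\mathrm{Conf}_{\pi_1(T'\mathring S),SL_m}(\widetilde{T'(\partial_{\mathrm{th}}\mathring S)},SL_m/U)$. The twisting condition $\rho(\sigma)=(-1)^{m-1}I$ fixes the fibre direction of $T'\mathring S$, so it contributes no parameters. The data reduce to the following:
\begin{enumerate}
\item monodromies $\rho(a_1),\rho(b_1),\dots,\rho(a_g),\rho(b_g)\in G$, contributing $2g\dim G$ parameters;
\item for each puncture $s_i$, a decorated flag $\alpha_i\in G/U$, contributing $\dim G-\dim U$;
\item for each puncture, a monodromy $\rho(\tau_i)$ in the stabilizer of $\alpha_i$, which is a conjugate of $U$, contributing $\dim U$.
\end{enumerate}
These are subject to the single relation $\prod_j[\rho(a_j),\rho(b_j)]\prod_i\rho(\tau_i)=1$, which imposes $\dim G$ conditions, and to the quotient by simultaneous $G$-conjugation, which removes another $\dim G$. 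The stabilizer is generically finite (central) once $n>0$ and $2g-2+n>0$, and the relation map is generically submersive. Hence
\[
\dim\mathcal A_{SL_m,S}=2g\dim G+n\dim G-2\dim G=(2g-2+n)(m^2-1)=-\chi(\mathring S)\dim SL_m .
\]
The main obstacle is justifying that genericity. I would establish it by exhibiting one point with trivial (or central) stabilizer at which the differential of the relation map is surjective. For the differential, the group-cohomology computation $H^2(\pi_1,\mathfrak g)$ for a free group vanishes once the punctured boundary is included. Alternatively, the genericity step can be bypassed entirely by the second count below.

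\textbf{Cluster count.} Take an ideal triangulation $T$ and the standard $A_{m-1}$-web $W_m(T)$. Its face functions $\{A_F\}$ give a birational torus chart, as recalled above from \cite{FG06,Goncharov}, so the dimension equals the number of faces. By Proposition~\ref{prop:3.3}, all faces are internal. They are distributed as follows:
\begin{itemize}
\item each of the $2(2g-2+n)$ triangles contributes $\binom{m-1}{2}$ interior faces;
\item each of the $3(2g-2+n)$ edges contributes $m-1$ faces.
\end{itemize}
The total is
\[
(2g-2+n)\bigl((m-1)(m-2)+3(m-1)\bigr)=(2g-2+n)(m^2-1).
\]
As a sanity check, the once-punctured torus with $m=2$ gives three faces, which matches Figure~\ref{fig:A1web}.
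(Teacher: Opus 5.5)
Your direct parameter count is essentially the paper's own proof: $2g\dim SL_m$ from the handles, $\dim(SL_m/U)+\dim U=\dim SL_m$ per puncture, and one $\dim SL_m$ removed each for the relation and for conjugation. Your genericity remarks make explicit what the paper leaves implicit, although the vanishing of $H^2$ of a free group does not by itself handle the constraint that the puncture monodromies lie in conjugates of $U$, so the bypass you offer is the cleaner justification; that bypass, your cluster count, is the paper's subsequent face-counting lemma read in reverse and is a valid independent check once the Fock--Goncharov birationality of the standard chart is taken as given.
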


\begin{proof}
Using the presentation
\begin{equation}
\pi_1(S)
=
\left\langle
a_1,b_1,\ldots,a_g,B_{G,C,P},c_1,\ldots,c_n
\ \middle|\
\prod_{i=1}^{g}[a_i,b_i]\prod_{j=1}^{n}c_j=1
\right\rangle,
\end{equation}
the $2g+n$ monodromies contribute $(2g+n)\dim SL_m$.
The fundamental-group relation removes one copy of $\dim SL_m$, and quotienting by
global conjugation removes another.

The decoration at each puncture does not contribute an additional independent
$\dim(SL_m/U)$ parameters. A decorated flag is a point of $SL_m/U$, and its stabilizer is $U$. Hence the space of a decorated flag together with a puncture monodromy preserving it has dimension
\begin{equation}
\dim(SL_m/U)+\dim U
=
\dim SL_m.
\end{equation}
Thus the puncture decoration is already incorporated into the above dimension count.

Therefore, $\dim_{\mathbb C}\mathcal A_{SL_m,S}
=
-\chi(S)\dim SL_m
=
(2g-2+n)(m^2-1)$.
\end{proof}

\begin{remark}
Since a cluster coordinate chart on $\mathcal A_{SL_m,S}$
is birational to an algebraic torus of the same dimension, any cluster coordinate system
has $(2g-2+n)(m^2-1)$ independent coordinates.
\end{remark}

\begin{lemma}
Let $S$ be a decorated surface of genus-$g$ with $n>0$ special points satisfying $2g-2+n>0$.
Let $W$ be an ideal web of rank-$m$ on $S$, $m\ge 2$.
Then $W$ has $(2g-2+n)(m^2-1)$ internal faces, and hence there are $(2g-2+n)(m^2-1)$ independent mutable cluster $\mathcal{A}$-coordinates in one cluster $\mathcal{A}$-coordinate system on $\mathcal A_{SL_m,S}$.
\end{lemma}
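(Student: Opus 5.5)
The count will come from the spectral cover. By Proposition~\ref{prop:3.3}, it suffices to treat an $A_{m-1}$-web $W$. An $A_{m-1}^*$-web reduces to such a web by shrinking its $2$-valent $\bullet$-vertices. This shrinking contracts only the external faces and leaves the internal faces unchanged.

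For $W$ an $A_{m-1}$-web on a surface without boundary, $\Lambda_\Gamma^\circ \cong \mathbb Z[\{\text{Internal faces of }\Gamma\}]$. By the lemma identifying $\Lambda_\Gamma^\circ$ with $\ker(H_1(\Sigma;\mathbb Z)\to H_1(\mathring S;\mathbb Z))$, we get
\[
\#\{\text{internal faces}\}=\rank\ker\bigl(\pi_*:H_1(\mathring\Sigma;\mathbb Z)\to H_1(\mathring S;\mathbb Z)\bigr).
\]
I expect $\pi_*$ to have finite-index image, in fact to be surjective. A simple branch point lets one lift loops, and the full-monodromy result Corollary~\ref{cor:fullmono} makes the cover connected. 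If so, the rank equals $\rank H_1(\mathring\Sigma)-\rank H_1(\mathring S)$.

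These ranks are computed from Euler characteristics.
\begin{itemize}
\item $\mathring S$ has $\chi=2-2g-n$, so $\rank H_1(\mathring S)=2g-1+n$.
\item $\mathring\Sigma$ has $\chi=m\chi(\mathring S)-m(m-1)(2g-2+n)$, using the branch-point count behind (\ref{eq:RH}). Hence $\chi(\mathring\Sigma)=-m^2(2g-2+n)$ and $\rank H_1(\mathring\Sigma)=1+m^2(2g-2+n)$.
\end{itemize}
The difference is $m^2(2g-2+n)-(2g-2+n)=(m^2-1)(2g-2+n)$.

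A generic $A_{m-1}$-web is needed for the branch-point count. A non-generic one is first reached from a generic one by shrink/expand moves, using the lemma that every web is related to a generic one. These moves preserve the quiver by Lemma~\ref{le:quiverchange}, hence preserve the number of internal faces.

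The main obstacle is justifying surjectivity of $\pi_*$ on $H_1$ of the punctured surfaces. I would argue it directly. Since the monodromy group is $S_m$, any loop in $\mathring S\setminus B$ can be composed with loops around branch points so that some lift closes up. Alternatively, one can use the transfer $\tau$ with $\pi_*\tau=m$, which already gives finite index and hence the rank equality; the rank equality is all the count needs.

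For the second assertion, each internal face $F$ gives a mutable $\mathcal A$-coordinate $A_F^W$. The set $\{A_F^W\}$ is algebraically independent, since a \ttt move is birational by Proposition~\ref{prop:22} and webs are connected to standard ones by Proposition~\ref{prop:connect}. The count above then matches $\dim\mathcal A_{SL_m,S}=(2g-2+n)(m^2-1)$ from the preceding lemma.
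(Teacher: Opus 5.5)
Your argument is correct, but it takes a genuinely different route from the paper. The paper counts faces directly on a compatible $(m-1)$-triangulation. Each of the $2(2g-2+n)$ ideal triangles contains $\frac{(m-1)(m-2)}{2}$ interior faces and $3(m-1)$ faces on its sides, each side face shared with a neighbouring triangle, which gives $\frac{m^2-1}{2}$ faces per triangle.

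You instead identify the number of internal faces with $\chi(\mathring S)-\chi(\mathring\Sigma)$, the rank of the punctured lattice $\ker(H_1(\mathring\Sigma)\to H_1(\mathring S))$, and evaluate it by Riemann--Hurwitz. Your route is more conceptual: the factor $m^2-1=m+m(m-1)-1$ visibly comes from the degree, the ramification and the base. That explains the match with $\dim\mathcal A_{SL_m,S}=-\chi(S)\dim SL_m$. You are also right to keep the punctured surfaces, since with compact $\Sigma$ and $S$ the kernel has rank smaller by $(m-1)n$. The paper's route is elementary and needs no input about spectral covers.

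Neither route is more general. Your branch-point count $m(m-1)(2g-2+n)$ is itself obtained in the paper from $\binom{m}{2}$ trivalent $\bullet$-vertices per triangle, on a web compatible with a triangulation. So for an arbitrary generic web you still need that $\sum_\bullet(\mathrm{val}-2)$ is unchanged by elementary moves, together with Proposition~\ref{prop:connect}. This is the same kind of move-invariance that the paper's count tacitly uses (there, of the face set) for webs not compatible with any triangulation.

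Two bookkeeping corrections.

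\textbf{The lattice lemma applies to $A^*$-graphs.} The identification of $\Lambda_\Gamma^\circ$ with $\ker(H_1(\Sigma;\mathbb Z)\to H_1(\mathring S;\mathbb Z))$ is stated for $A^*$-graphs. These lie in $\mathring S$ and are homotopy equivalent to $\mathring\Sigma$. An $A_{m-1}$-graph passes through the special points, and for it $\rank H_1(\Gamma)=\rank H_1(\mathring\Sigma)-n$. So run the spectral-cover computation on the $A_{m-1}^*$-graph $\Gamma^*$ and then transfer the count via Proposition~\ref{prop:3.3}(3), instead of first reducing in the opposite direction.

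\textbf{Surjectivity of $\pi_*$ is already available.} The surjectivity you flag as the main obstacle comes for free. The Mayer--Vietoris sequence $0\to\mathbb Z[\{\text{internal faces}\}]\to H_1(\Gamma^*;\mathbb Z)\to H_1(\mathring S;\mathbb Z)\to 0$ of Section~3.1 is already exact. Moreover $\Gamma^*\subset\mathring\Sigma$ maps identically onto $\Gamma^*\subset\mathring S$ under $\pi$. Hence the count is simply $\rank H_1(\mathring\Sigma)-\rank H_1(\mathring S)$. Your transfer and normal-closure arguments are valid but unnecessary.
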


\begin{proof}
Given an $(m-1)$-ideal triangulation $T$ on $S$ compatible with $W$, the number of ideal triangles in $T$ is $2(2g-2+n)$. The number of faces of $W$ that lie inside each ideal triangle is $\frac{(m-2)(m-1)}{2}$, and the number of faces of $W$ that lie on the boundaries of an ideal triangle is $3(m-1)$.

Each face that lies on a common boundary of two ideal triangles contributes a half face in the face counting of one ideal triangle. Thus there are $\frac{(m-2)(m-1)}{2}+\frac{3(m-1)}{2}=\frac{m^2-1}{2}$ faces in each ideal triangle of $T$. Hence $W$ has $(2g-2+n)(m^2-1)$ internal faces, providing $(2g-2+n)(m^2-1)$ independent mutable cluster $\mathcal{A}$-coordinates in one cluster $\mathcal{A}$-coordinate system on $\mathcal A_{SL_m,S}$.
\end{proof}

\begin{remark}
More general spectral webs may also produce functions $A_F^W$, but the number of faces can
exceed $\dim\mathcal A_{SL_m,S}$.
Such functions therefore need not form a coordinate system.
\end{remark}

\begin{theorem}
\label{thm:general}
Let $S$ be a decorated surface, and let $p:\widetilde S\to S$ be its universal cover. Let $W$ be an ideal web on $S$, Denote by $\widetilde W:=p^{-1}(W)$ its lift to $\widetilde S$. Define $\mathcal F(W)
:=
\{\text{faces of }\widetilde W\}/\pi_1(S)$, and let $\mathcal F_{\mathrm{int}}(W)
:=
\{\text{internal faces of }\widetilde W\}/\pi_1(S)$. In the non-strict ideal case, a face of $W$ means a
$\pi_1(S)$-orbit of faces of the lifted web $\widetilde W$.

\begin{enumerate}
\item[(i)]
An $A_{m-1}$-web $W$ on $S$ provides a cluster $A$-coordinate chart on $\mathcal A_{SL_m,S}$:

The regular functions $\{A_F^W\}_{F\in \mathcal F(W)}$ form cluster $A$-coordinate system on $\mathcal A_{SL_m,S}$. Equivalently, they define a birational coordinate map
\begin{equation}
\mathcal A_{SL_m,S}
\dashrightarrow
(\mathbb G_m)^{\mathcal F(W)},
\qquad
L\mapsto
\bigl(A_F^W(L)\bigr)_{F\in\mathcal F(W)}.
\end{equation}

\item[(ii)]
An $A_{m-1}^*$-web $W$ on $S$ provides a cluster $A$-coordinate chart on $\mathcal A_{SL_m,S}^*$:

The regular functions $\{A_F^W\}_{F\in \mathcal F(W)}$ form cluster $A$-coordinate system on $\mathcal A_{SL_m,S}^*$. The coordinates corresponding to external face-orbits are frozen. Equivalently, they define a birational coordinate map
\begin{equation}
\mathcal A_{SL_m,S}^*
\dashrightarrow
(\mathbb G_m)^{\mathcal F(W)},
\qquad
L\mapsto
\bigl(A_F^W(L)\bigr)_{F\in\mathcal F(W)}.
\end{equation}

\item[(iii)]
The collection of such cluster $A$-coordinate charts gives a cluster atlas:

Any \ttt move $\mu_F:W\to W'$ centred at an internal face $F$ amounts to a cluster mutation at the coordinate $A_F^W$.

The shrink/expand moves do not affect the coordinates.

Any two $A_m$-webs $($or $A_m^*$-webs$)$ are connected by a finite sequence of \ttt and shrink/expand moves.
\end{enumerate}
\end{theorem}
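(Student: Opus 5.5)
The plan is to assemble Theorem~\ref{thm:general} from three ingredients: Proposition~\ref{prop:22}, Proposition~\ref{prop:connect}, and the known seed given by the standard web $W_m(T)$ of an ideal triangulation. We prove (i) and (ii) by transporting the standard chart along a sequence of elementary moves. Part (iii) then follows as a formal consequence via the abstract atlas proposition stated just before \cite{BFZ05}.

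\textbf{Step 1: the standard chart and its transport.} For an ideal triangulation $T$, the functions $\{A_F^{W_m(T)}\}$ form a regular cluster $\mathcal A$-coordinate system on $\mathcal A_{SL_m,S}$ \cite{FG06,Goncharov}. This gives a birational map to $(\mathbb G_m)^{\mathcal F(W_m(T))}$.

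For an arbitrary $A_{m-1}$-web $W$, Proposition~\ref{prop:connect} gives a finite chain of \ttt and shrink/expand moves $W_m(T)=W_0\to W_1\to\cdots\to W_k=W$. We check the effect of each move on the face-function family.

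\emph{Shrink/expand moves.} These leave the web isotopic (Remark after Figure~\ref{fig:22move}). So the strands, the codistances $d_i(F)$, and hence every $A_F^W$ are unchanged, and the quiver is unchanged by Lemma~\ref{le:quiverchange}.

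\emph{Two-by-two moves.} By Proposition~\ref{prop:22}, such a move at $F$ replaces $A_F$ by $A_F'=(A_6A_2+A_7A_4)/A_F$ and fixes every other coordinate.

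This exchange relation is invertible on the open set $A_F\neq0$, $A_F'\neq0$. So the new family differs from the old one by a birational automorphism of the torus. Composing with the previous chart, $W$ again gives a birational map $\mathcal A_{SL_m,S}\dashrightarrow(\mathbb G_m)^{\mathcal F(W)}$. Algebraic independence is inherited, and the count matches $\dim\mathcal A_{SL_m,S}=(2g-2+n)(m^2-1)$ by the two dimension lemmas. This proves (i).

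\textbf{Step 2: the $*$-decorated case (ii).} We use $\Gamma_{A_{m-1}^*}(T)$, whose external faces surround special points. Shrinking its $2$-valent $\bullet$-vertices recovers the $A_{m-1}$-graph. By Proposition~\ref{prop:3.3}, the internal faces coincide with those of the reduced graph, and the external faces add exactly one frozen coordinate per puncture. These frozen coordinates are $A_F=\langle\Omega_m,\alpha_m(A_v(s_i))\rangle$, i.e. the top degree $*$-decoration.

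The extra $*$-decoration at each puncture accounts for the $n$ extra dimensions of $\mathcal A^*_{SL_m,S}$ over $\mathcal A_{SL_m,S}$. Combining this with (i) for the reduced web gives a birational chart for the standard $A^*_{m-1}$-web. Elementary moves never mutate at external faces. By Proposition~\ref{prop:3.3} their forms with all other vectors vanish, so the frozen coordinates are unaffected. The transport argument of Step 1 then gives (ii) for all $A_{m-1}^*$-webs.

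\textbf{Step 3: the atlas (iii).} The three hypotheses of the atlas proposition are now satisfied:
\begin{itemize}
\item each web gives a birational torus chart (Steps 1 and 2);
\item each \ttt move is a seed mutation, by Proposition~\ref{prop:22} combined with Lemma~\ref{le:quiverchange}, which says the quiver mutates at the corresponding $4$-valent vertex, and shrink/expand moves are trivial;
\item any two webs are connected by Proposition~\ref{prop:connect}.
\end{itemize}

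\textbf{Main obstacle.} The hard step is checking that the exchange relation of Proposition~\ref{prop:22} matches the cluster mutation for the quiver $Q_\Gamma$ \emph{with the correct arrows}. Concretely, the monomials $A_6A_2$ and $A_7A_4$ must be exactly $\prod A_i^{\#(i\to1)}$ and $\prod A_i^{\#(1\to i)}$ in Figure~\ref{fig:localquiver3}, including the degenerate case where $2=6$ and $4=7$.

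I would first verify this on the Plücker model of $\mathrm{Gr}(2,4)$. There the quiver for the quadrilateral is explicit: faces $2,6$ are the sources into $1$ and faces $4,7$ are the targets. I would then use the sign bookkeeping of Lemma~\ref{lemma:form} on the edges around face $1$ to identify the orientation.

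If the face labels turn out to be swapped, the exchange relation is unchanged, since it is symmetric in the two monomials. The statement therefore survives either way.
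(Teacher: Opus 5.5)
Your proposal is correct and follows essentially the paper's own route: you start from the Fock--Goncharov standard chart for $W_m(T)$, transport it along the chain of elementary moves supplied by Proposition~\ref{prop:connect} using the birational exchange relation of Proposition~\ref{prop:22} (with shrink/expand moves leaving the face functions unchanged), check the face count against $\dim\mathcal A_{SL_m,S}=(2g-2+n)(m^2-1)$, and deduce (iii) from the abstract atlas proposition. Your Step~2 for the $*$-decorated case splits off one frozen top-degree coordinate per special point and reduces the internal faces to the shrunk $A_{m-1}$-graph via Proposition~\ref{prop:3.3}; this only makes explicit what the paper leaves implicit. Your labelling of $2,6$ as sources into $1$ is reversed from the paper's convention ($1\to2$, $1\to6$, $4\to1$, $7\to1$), but this is harmless because, as you note, only the grouping $\{2,6\},\{4,7\}$ enters the exchange relation.
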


\begin{remark}
When $W$ is a strict ideal web, the lifted face-orbits are identified with
ordinary faces of $W$ downstairs: $\mathcal F(W)
=\{\text{faces of }W\}$.
\end{remark}

Let $\mathcal A_{\mathbb G_m,S}
:=\operatorname{Loc}_{GL(1)}(S)
=\operatorname{Hom}\bigl(\pi_1(S),\mathbb G_m\bigr)
=\operatorname{Hom}\bigl(H_1(S;\mathbb Z),\mathbb G_m\bigr)$ and $\mathcal A_{\mathbb G_m,\Gamma}
:=\operatorname{Loc}_{GL(1)}(\Gamma)
=\operatorname{Hom}\bigl(\pi_1(\Gamma),\mathbb G_m\bigr)
=\operatorname{Hom}\bigl(H_1(\Gamma;\mathbb Z),\mathbb G_m\bigr)$.

\begin{remark}
Goncharov introduces the moduli space
$\mathcal A^{*}_{SL_m,S}$ of twisted $SL_m$-local systems equipped
with an $*$-decoration on a decorated surface $S$, whose decorated flags take values in $GL_m/U$.

There is a scalar action $\mathbb G_m\curvearrowright \mathcal A^*_{SL_m,S}$, given by $\lambda\cdot(\rho,\alpha^*)
=
(\rho,\lambda\alpha^*)$, $\lambda\in\mathbb G_m$,
which rescales the decoration while leaving the underlying
$SL_m$-monodromy unchanged. Thus the quotient
$\mathcal A^{*}_{SL_m,S}/\mathbb G_m$
forgets the common scalar in the decoration, i.e., $\mathcal A^*_{SL_m,S}/\mathbb G_m$ is the part of the $GL_m$-decorated local-system moduli space with trivial determinant local system.

On the other hand, a $GL_m$-local system has an additional determinant local system $\det\rho:\pi_1(S)\to\mathbb G_m$, which induces a determinant map $\det:\mathcal A_{GL_m,S}
\to
\mathcal A_{\mathbb G_m,S}$. The quotient by the scalar action gives the trivial-determinant part. Goncharov describes this as \cite{Goncharov}
\begin{equation}
\mathcal A_{GL_m,S}
=
\mathcal A_{SL_m,S}^*/\mathbb G_m
\times
\mathcal A_{\mathbb G_m,S}.
\end{equation}
\end{remark}

\begin{corollary}
Let $W$ be an $A_{m-1}^{*}$-web with associated bipartite $A_{m-1}^*$-graph $\Gamma$ on a decorated surface $S$ without boundary.
There is a birational isomorphism
\begin{equation}
\mathcal A_{SL_m,S}^*/\mathbb G_m
\dashrightarrow
\operatorname{Hom}\left(
\frac{\mathbb Z[\mathcal F(\Gamma)]}{H_2(S;\mathbb Z)},
\mathbb G_m
\right)
\end{equation}
Hence there is a birational isomorphism
\begin{equation}
\label{eq:AA}
\mathcal A_{GL_m,S}
\dashrightarrow
\operatorname{Hom}\left(
\frac{\mathbb Z[\mathcal F(\Gamma)]}{H_2(S;\mathbb Z)},
\mathbb G_m
\right)
\times
\mathcal A_{\mathbb G_m,S}.
\end{equation}
\end{corollary}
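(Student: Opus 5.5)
The plan is to deduce the first birational isomorphism from Theorem~\ref{thm:general}(ii) by checking that the scalar $\mathbb G_m$-action on $\mathcal A^*_{SL_m,S}$ matches a one-parameter subgroup of the seed torus. The second isomorphism should then follow from Goncharov's decomposition $\mathcal A_{GL_m,S}=\mathcal A^*_{SL_m,S}/\mathbb G_m\times\mathcal A_{\mathbb G_m,S}$.

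\emph{Step 1.} By Theorem~\ref{thm:general}(ii), the face functions $\{A_F^W\}_{F\in\mathcal F(W)}$ define a birational map
\[
\Phi_W:\mathcal A^*_{SL_m,S}\dashrightarrow(\mathbb G_m)^{\mathcal F(W)}=\mathbb Z[\mathcal F(\Gamma)]\otimes\mathbb G_m .
\]
The faces of $W$ are the faces of $\Gamma$, including the external, frozen ones. For a generic $A^*_{m-1}$-web these faces are discs, so by Proposition~\ref{prop:3.3}(2) the class $r([S])\in\mathbb Z[\mathcal F(\Gamma)]$ is $\sum_F e_F$.

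\emph{Step 2.} I will compute how $\Phi_W$ transforms under the scalar action $\lambda\cdot(\rho,\alpha^*)=(\rho,\lambda\alpha^*)$. If $\lambda$ acts on each $*$-decorated flag as the scalar matrix, then $\alpha_p\mapsto\lambda^p\alpha_p$. Since $\sum_i d_i(F)=m$ for every face, the definition of $A_F^W$ gives $A_F^W\mapsto\lambda^m A_F^W$ for every $F$. Hence $\Phi_W$ intertwines the scalar action with the diagonal action of $\lambda^m$. The image of the diagonal cocharacter is exactly $(\sum_F e_F)\otimes\mathbb G_m=r(H_2(S;\mathbb Z))\otimes\mathbb G_m$.

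\emph{Step 3.} I will pass to quotients. The quotient torus is
\[
(\mathbb Z[\mathcal F(\Gamma)]\otimes\mathbb G_m)\big/(r(H_2)\otimes\mathbb G_m)=\bigl(\mathbb Z[\mathcal F(\Gamma)]/r(H_2(S;\mathbb Z))\bigr)\otimes\mathbb G_m .
\]
Following the paper's convention, I write this as $\operatorname{Hom}\bigl(\mathbb Z[\mathcal F(\Gamma)]/H_2(S;\mathbb Z),\mathbb G_m\bigr)$; this is legitimate because $\sum_F e_F$ is primitive, so the quotient lattice is free. Since $\Phi_W$ is equivariant and birational, it descends to a birational map between the quotients. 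The dense open set on which $\Phi_W$ is an isomorphism is $\mathbb G_m$-stable, because both sides are preserved by the action.

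\emph{Main obstacle.} The equivariance is only up to the $m$-th power map, so the descended map is a priori the composite of a birational map with the isogeny coming from $\mu_m\subset\mathbb G_m$ acting trivially on coordinates. I would resolve this in two moves. First, check that $\mu_m$ acts trivially on $\mathcal A^*_{SL_m,S}$ modulo $SL_m$: the scalars in $\mu_m$ are central in $SL_m$, and they are absorbed by the $SL_m$-quotient in the definition of $\mathcal A^*_{SL_m,S}$. Second, verify that the scalar action is free on the open set where all $A_F^W\neq 0$, which follows because at least one $A_F^W$ is then a nonzero weight-$m$ function. Together these show that the map $\mathcal A^*_{SL_m,S}/\mathbb G_m\dashrightarrow(\mathbb G_m)^{\mathcal F}/\mathbb G_m$ is birational. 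Should the $\mu_m$ bookkeeping fail, the fallback is to replace the scalar action by the $\lambda^{1/m}$-reparametrization of the decoration.

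\emph{Step 4.} Finally, taking the product with the identity on $\mathcal A_{\mathbb G_m,S}$ and using the decomposition $\mathcal A_{GL_m,S}=\mathcal A^*_{SL_m,S}/\mathbb G_m\times\mathcal A_{\mathbb G_m,S}$ from the preceding remark yields \eqref{eq:AA}.
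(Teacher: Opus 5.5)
Your proposal is correct and follows the route the paper leaves implicit for this corollary: Theorem~\ref{thm:general}(ii), then passage to the quotient by the scalar $\mathbb G_m$-action, then the decomposition $\mathcal A_{GL_m,S}=\mathcal A^*_{SL_m,S}/\mathbb G_m\times\mathcal A_{\mathbb G_m,S}$ from the preceding remark; your uniform weight-$m$ computation from $\sum_i d_i(F)=m$ and your check that the central scalars $\mu_m\subset SL_m$ act trivially supply details the paper omits. One point should be stated more carefully: the descended map naturally lands in the quotient torus $\bigl(\mathbb Z[\mathcal F(\Gamma)]/r(H_2(S;\mathbb Z))\bigr)\otimes\mathbb G_m$, whereas $\operatorname{Hom}\bigl(\mathbb Z[\mathcal F(\Gamma)]/H_2(S;\mathbb Z),\mathbb G_m\bigr)$ is naturally the subtorus $\prod_F x_F=1$, so your identification is a non-canonical isomorphism of split tori of rank $|\mathcal F(\Gamma)|-1$ rather than a convention, which suffices because the statement only asserts existence.
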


\begin{corollary}
Given an $A_{m-1}^*$-web $W$ with associated bipartite $A_{m-1}^*$-graph $\Gamma$ on a decorated surface $S$ without boundary, there is a birational isomorphism
\begin{equation}
\mathcal A_{GL_m,S}
\dashrightarrow
\mathcal A_{\mathbb G_m,\Gamma}.
\end{equation}
\end{corollary}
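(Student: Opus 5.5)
The plan is to obtain the statement by combining the birational isomorphism~\eqref{eq:AA} of the preceding corollary with the Mayer--Vietoris sequence for $\Gamma$ recalled in Section~3.1. The two steps are: identify the right-hand side of~\eqref{eq:AA} with $\operatorname{Hom}(H_1(\Gamma;\mathbb Z),\mathbb G_m)$, then compose.

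\textbf{Step 1: the short exact sequence.} Since $\Gamma$ is an $A_{m-1}^*$-graph on a surface without boundary, its faces are discs. For such a graph the sequence
\[
0\to H_2(S;\mathbb Z)\xrightarrow{r}\mathbb Z[\{\text{Faces of }\Gamma\}]\to H_1(\Gamma;\mathbb Z)\to H_1(S;\mathbb Z)\to 0
\]
is exact. Truncating it gives a short exact sequence
\[
0\to \mathbb Z[\mathcal F(\Gamma)]/H_2(S;\mathbb Z)\to H_1(\Gamma;\mathbb Z)\to H_1(S;\mathbb Z)\to 0 .
\]
Here I use that the face set of $\Gamma$ coincides with $\mathcal F(\Gamma)$. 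For a strict ideal web this is the remark following Theorem~\ref{thm:general}. In the non-strict case I would check that the $\pi_1(S)$-orbits of faces of $\widetilde\Gamma$ correspond bijectively to faces of $\Gamma$ on $S$. This holds because $p:\widetilde S\to S$ is a covering and each face of $\Gamma$ is a disc, so it lifts homeomorphically and its lifts form exactly one orbit.

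\textbf{Step 2: splitting and dualizing.} Since $S$ is a closed oriented surface, $H_1(S;\mathbb Z)\cong\mathbb Z^{2g}$ is free, so the short exact sequence splits. This gives
\[
H_1(\Gamma;\mathbb Z)\cong \mathbb Z[\mathcal F(\Gamma)]/H_2(S;\mathbb Z)\oplus H_1(S;\mathbb Z).
\]
Applying $\operatorname{Hom}(-,\mathbb G_m)$ turns this into an isomorphism of algebraic tori
\[
\mathcal A_{\mathbb G_m,\Gamma}\cong \operatorname{Hom}\!\left(\mathbb Z[\mathcal F(\Gamma)]/H_2(S;\mathbb Z),\mathbb G_m\right)\times \operatorname{Hom}(H_1(S;\mathbb Z),\mathbb G_m).
\]
The last factor is $\mathcal A_{\mathbb G_m,S}$ by definition. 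Even without a splitting, $\operatorname{Hom}(-,\mathbb G_m)$ is exact on finitely generated free abelian groups because $\mathbb G_m(\mathbb C)$ is divisible. So $\mathcal A_{\mathbb G_m,\Gamma}$ is a torus extension of $\mathcal A_{\mathbb G_m,S}$ by the first factor, and this extension is trivial.

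\textbf{Step 3: composition.} Composing~\eqref{eq:AA} with the inverse of this torus isomorphism gives the desired birational map $\mathcal A_{GL_m,S}\dashrightarrow\mathcal A_{\mathbb G_m,\Gamma}$.

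The main obstacle is the splitting in Step~2, which is not canonical, so the resulting map depends on a choice of section $H_1(S;\mathbb Z)\to H_1(\Gamma;\mathbb Z)$. The statement only asserts existence, so any choice will do. If a more natural map is wanted, I would first try to build one directly from the determinant map $\det:\mathcal A_{GL_m,S}\to\mathcal A_{\mathbb G_m,S}$ together with the face functions $A_F^W$. The idea is to show that they assemble into a $\mathbb G_m$-local system on $\Gamma$ whose restriction to face loops reproduces the monomials in the $A_F^W$. That refinement is not needed for the statement as posed.
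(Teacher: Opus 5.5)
Your proposal is correct and follows the paper's own argument: split the short exact sequence $0\to\mathbb Z[\mathcal F(\Gamma)]/H_2(S;\mathbb Z)\to H_1(\Gamma;\mathbb Z)\to H_1(S;\mathbb Z)\to 0$, dualize to get $\mathcal A_{\mathbb G_m,\Gamma}\cong\operatorname{Hom}(\mathbb Z[\mathcal F(\Gamma)]/H_2(S;\mathbb Z),\mathbb G_m)\times\mathcal A_{\mathbb G_m,S}$, and compose with \eqref{eq:AA}. You add two details the paper leaves implicit, both correct: the sequence splits because $H_1(S;\mathbb Z)$ is free, and $\pi_1(S)$-orbits of faces of $\widetilde\Gamma$ correspond to faces of $\Gamma$ because those faces are discs.
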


\begin{proof}
Let $p:\widetilde S\to S$ be the universal cover, and set $\widetilde\Gamma:=p^{-1}(\Gamma)$. Denote by $\mathcal F(\Gamma)
:=\{\text{faces of }\widetilde\Gamma\}/\pi_1(S)$ the set of deck-transformation orbits of faces of the lifted graph.

Recall the splitting short exact sequence introduced in Section 3.1
\begin{equation}
0
\to
\frac{\mathbb Z[\mathcal F(\Gamma)]}{H_2(S;\mathbb Z)}
\to
H_1(\Gamma;\mathbb Z)
\to
H_1(S;\mathbb Z)
\to
0.
\end{equation}
After choosing a splitting, one obtains an algebraic-torus isomorphism
\begin{equation}
\mathcal A_{\mathbb G_m,\Gamma}
\cong
\operatorname{Hom}
\left(
\frac{\mathbb Z[\mathcal F(\Gamma)]}{H_2(S;\mathbb Z)},
\mathbb G_m
\right)
\times
\mathcal A_{\mathbb G_m,S}.
\end{equation}

Combining this with the
previous birational isomorphism of $\mathcal A_{GL_m,S}$ in Equation~\ref{eq:AA} gives a birational isomorphism
\begin{equation}
\mathcal A_{GL_m,S}
\dashrightarrow
\mathcal A_{\mathbb G_m,\Gamma}.
\end{equation}
\end{proof}

\newpage
\mysection{Mutations on Grothendieck Groups Assigned to Quivers with Potential}

In this section, we will study how mutations, especially \ttt moves, on quivers with potential induce isomorphisms between corresponding $K_0$ groups. We will also discuss that tropicalizing the cluster $\mathcal A$-mutation relation gives two linear pieces corresponding to the transformations associated with the two signed $K_0$-dual basis mutations.

We will see at the end of this section, that under a \ttt move on the quiver assigned to a bipartite graph on a decorated surface $S$ without boundary, the induced isomorphism between $K_0$ groups coincides with the isomorphism between the quiver lattices induced by the ura bijection, which amounts to the \ttt move at the level of the spectral cover. The relevant theorems are Theorem~\ref{thm:ttt} and Theorem~\ref{thm:7com}.

We will reach to the conclusion in Theroem~\ref{thm:GM} at the end of this section, that the automorphisms on the $SL_m$ Hitchin fiber lattices given by Gauss--Manin monodromies agree with the induced mutations on the $K_0$ groups of the quiver with potential assigned to certain generic $A_{m-1}$-web on a smooth complex genus-$g$ curve $C$ with a set of $n$ special points satisfying $2g-2+n>0$.

\subsection{Mutations on Quivers with Potential}

\begin{definition}[\cite{DWZ}]
Let $Q$ be a finite quiver, with $Q_l$ its set of paths of length $l$. The \emph{path algebra} $kQ$ of $Q$ is the $k$-vector space spanned by all oriented paths in $Q$,
with multiplication given by concatenation of paths whenever possible and zero otherwise:
\begin{equation}
kQ
:=
\bigoplus_{n\geq 2}
\bigoplus_{i_1,\ldots,i_n\in I}
A(i_1,i_2)\otimes \cdots \otimes A(i_{n-1},i_n).
\end{equation}

Equivalently, one may write
\begin{equation}
kQ=\bigoplus_{\ell\geq 0} kQ_\ell.
\end{equation}
An element of $kQ$ is a finite formal sum $\sum_{p} c_p p$, where $p$ runs over paths in $Q$, and $c_p\in k$.
\end{definition}

\begin{definition}[\cite{DWZ}]
Let $Q$ be a finite quiver, with $Q_l$ its set of paths of length $l$. Let
$kQ$ be its path algebra, and let $\mathfrak m
\subset kQ$ be the ideal generated by all arrows of $Q$. Equivalently, $\mathfrak m$ is spanned by
all paths of positive length. The \emph{completed path algebra} $\widehat{kQ}$ of $Q$ is the $\mathfrak m$-adic completion
\begin{equation}
\widehat{kQ}
:=
\varprojlim_{n} kQ/\mathfrak m^n.
\end{equation}

Equivalently, one may write
\begin{equation}
\widehat{kQ}
=
\prod_{\ell\geq 0} kQ_\ell,
\end{equation}
An element of $\widehat{kQ}$ is a possibly infinite formal sum $\sum_{p} c_p p$, where $p$ runs over paths in $Q$, and $c_p\in k$.
\end{definition}

$\operatorname{HH}_0(kQ)$ is the cyclic envelope of $kQ$:
\begin{equation}
\operatorname{HH}_0(kQ)
:=
\bigoplus_{n\geq 2}
\left(
\bigoplus_{i_1,\ldots,i_n\in I}
A(i_1,i_2)\otimes \cdots \otimes A(i_n,i_1)
\right)_{\mathbb Z/n\mathbb Z}.
\end{equation}
Here the subscript $\mathbb Z/n\mathbb Z$ denotes the coinvariants of the
cyclic shift \cite{Goncharov}.

\begin{definition}[\cite{Goncharov}]
A potential is a linear functional
\begin{equation}
\mathcal P:\operatorname{HH}_0(kQ)\to \mathbf k.
\end{equation}
\end{definition}

The vector space $\operatorname{HH}_0(kQ)$ has a natural basis given by oriented cycles of the quiver. So a potential can be equivalently described as a formal linear combination of oriented cycles in $Q$, considered
up to cyclic rotation of each cycle. We write it as
\begin{equation}
W=\sum_{\alpha} c_{\alpha} C_{\alpha},
\end{equation}
where each $C_{\alpha}$ is an oriented cycle in $Q$. Thus a quiver with potential is a pair $(Q,W)$.

\begin{remark}
The potential is not needed for ordinary cluster mutation of variables, but it is essential for
constructing Jacobian algebras, Ginzburg dg algebras, and $3$-Calabi--Yau categories.
\end{remark}

\medskip
A bipartite graph $\Gamma$ on a decorated surface $S$ gives a quiver with potential $(Q_\Gamma,P_\Gamma)$. The quiver $Q_\Gamma$ is the dual graph of $\Gamma$ with edges oriented such that the $\bullet$-vertex of each corresponding edge of $\Gamma$ is on the right. The oriented edges of $Q_\Gamma$ are called quiver arrows \cite{Goncharov}.

Each $\bullet$-vertex $b$ of a bipartite graph $\Gamma$ gives a simple cyclic path $C_b$ on $Q_\Gamma$: the arrows go clockwise around the vertex $b$. Each $\circ$-vertex $r$ of $\Gamma$ gives a simple
cyclic path $C_r$ on $Q_\Gamma$: the arrows go anticlockwise around the
$\circ$-vertex $r$. The potential is given by \cite{Goncharov}
\begin{equation}
P_\Gamma
:=
\sum_{b:\,\bullet\text{-vertices of }\Gamma} C_b
-
\sum_{r:\,\circ\text{-vertices of }\Gamma} C_r
\in
\operatorname{HH}_0(\mathcal P(Q_\Gamma)).
\end{equation}

\begin{lemma}[\cite{Goncharov}]
A \ttt move $\Gamma \to \Gamma'$ centred at an internal face $G$ gives a mutation of quivers with potential $(Q_\Gamma,P_\Gamma) \to (Q_{\Gamma'},P_{\Gamma'})$ in the direction of the vector $e_G$.
\end{lemma}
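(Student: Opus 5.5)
The plan is to verify the statement locally by a direct comparison with the Derksen--Weyman--Zelevinsky mutation of quivers with potential \cite{DWZ}. Both the \ttt move and the DWZ mutation $\mu_G$ are supported near the face $G$ and its neighbours. So it suffices to fix the local picture and check that $\mu_G(Q_\Gamma,P_\Gamma)$ and $(Q_{\Gamma'},P_{\Gamma'})$ are right-equivalent. The local picture is the internal quadrilateral face $G$ (face $1$ in Figure~\ref{fig:localquiver3}), bounded alternately by two $\bullet$-vertices $b_1,b_2$ and two $\circ$-vertices $r_1,r_2$, together with the neighbouring faces $2,4,6,7$.

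\textbf{Step 1: local data of $(Q_\Gamma,P_\Gamma)$.} Using the orientation rule ($\bullet$-vertex on the right), I would record the arrows at $G$: two arrows in, say from $4$ and $7$, and two arrows out, to $2$ and $6$, as in Figure~\ref{fig:localquiver3}. I would also record the terms of $P_\Gamma$ passing through $G$. The trivalent $\bullet$-vertices $b_1,b_2$ contribute $3$-cycles $C_{b_1}=(G\to 2\to 7\to G)$ and $C_{b_2}=(G\to 6\to 4\to G)$; the labels are to be matched with the figure. The $\circ$-vertices $r_1,r_2$ contribute longer anticlockwise cycles $C_{r_1},C_{r_2}$ through $G$. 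All other terms of $P_\Gamma$ avoid $G$.

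\textbf{Step 2: premutation.} Following DWZ, I reverse the four arrows at $G$. For each path $i\xrightarrow{a}G\xrightarrow{b}j$ I add a composite arrow $[ba]:i\to j$; there are four such arrows. The new potential is $\widetilde P=[P_\Gamma]+\sum b^*[ba]a^*$, where $[P_\Gamma]$ replaces each occurrence of $ba$ by $[ba]$. In $[C_{b_k}]$ the length-two subpath through $G$ becomes one composite arrow. Hence $[C_{b_k}]$ is a $2$-cycle $[ba]c$, where $c$ is the remaining arrow of the triangle, and the two composite arrows $[ba]$ occurring there come with $2$-cycles in $\widetilde P$. The DWZ splitting theorem then removes the trivial part: these two pairs $([ba],c)$ cancel, via the substitution $c\mapsto c-b^*\cdots$ and $[ba]\mapsto\cdots$ that solves the cyclic derivative equations.

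\textbf{Step 3: reduced part versus $(Q_{\Gamma'},P_{\Gamma'})$.} After the move, $G$ is again a quadrilateral, with the roles of the diagonals exchanged. Its two new $\bullet$-vertices produce $3$-cycles made of a reversed arrow $a^*$ or $b^*$ together with a surviving composite arrow. These match the terms $b^*[ba]a^*$ that remain after the cancellation. The old $\circ$-cycles $C_{r_k}$ lose their passage through $G$ and become the $\circ$-cycles of $\Gamma'$; this is where the substitutions of Step 2 must produce exactly the right terms with the right signs. If the move creates $2$-valent vertices, I would use Lemma~\ref{le:quiverchange}: shrink/expand moves do not change $Q_\Gamma$, and the corresponding $2$-cycles are precisely those removed by reduction.

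\textbf{Main obstacle.} I expect the main difficulty to be Step 3. Tracking the right-equivalence through the reduction requires care with signs (the $\bullet$-cycles carry $+$, the $\circ$-cycles carry $-$). It also has to handle the degenerate configuration in which neighbouring faces coincide (faces $2=6$, $4=7$, as for the once-punctured torus). There extra cycles through repeated vertices may interfere with the splitting, and I would treat this case separately by passing to the universal cover $\widetilde S$, doing the computation equivariantly, and descending.
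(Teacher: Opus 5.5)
Your plan is the paper's argument. In Goncharov's local notation $W_\Gamma=xab+Ax+ycd+yC+Bad+Dcb+W_0$, the paper performs the DWZ premutation at $G$, splits off the trivial part $x[ab]+[cd]y$ via the right-equivalence $[ab]\mapsto[ab]-A$, $x\mapsto x-b^*a^*$, $[cd]\mapsto[cd]-C$, $y\mapsto y-d^*c^*$, and then applies $b^*\mapsto-b^*$, $d^*\mapsto-d^*$ to land exactly on $W_{\Gamma'}$. The one correction to your Step 3 bookkeeping is that the old $\circ$-cycles through $G$ ($Bad$ and $Dcb$) become $B[ad]$ and $D[cb]$, which no longer pass through $G$, while the $\circ$-cycles of $\Gamma'$ around the new face come from the cycles $Ax$ and $yC$ through the cancelled third arrows $x,y$ of the two $\bullet$-triangles (terms your Step 1 sets aside as avoiding $G$), which the substitution turns, together with the cubic terms $[ab]b^*a^*$ and $[cd]d^*c^*$, into $-Ab^*a^*$ and $-Cd^*c^*$.
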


For the non-frozen subquiver
$Q=
\bigl(
\Lambda_{\Gamma}^{\circ},
\{e_G\},
\operatorname{Res}_{\Lambda_{\Gamma}^{\circ}}(*,*)_{\Gamma}
\bigr),$
write $h(a)$ and $t(a)$ for the head and tail of an arrow $a$.
We use the convention that $ab$ means that $b$ is followed by $a$.

Recall that the DWZ mutation of a quiver with potential $(Q,W)$ at a
vertex $v$ is obtained by first:
\begin{enumerate}
    \item reversing all arrows incident to $v$;
    \item introducing an arrow $[qp]$ for each length-two path
    $p:i\to v$, $q:v\to j$;
    \item replacing each occurrence of $qp$ in $W$ by $[qp]$ and
    adding the cubic term $[qp]p^*q^*$.
\end{enumerate}
The mutation $\mu_v(Q,W)$ is the reduced part of this premutation,
obtained by removing the trivial quadratic $2$-cycle summand
\cite{DWZ}. Quivers with potential related by an automorphism of the
completed path algebra fixing the vertices are called
\emph{right-equivalent}.

Let $\Gamma\to\Gamma'$ be a two-by-two move at $v$. In the local notation of \cite{Goncharov}, the potential of $Q_\Gamma$ is
\begin{equation}
W_\Gamma
=xab+Ax+ycd+yC+Bad+Dcb+W_0,
\end{equation}
where $W_0$ contains none of the arrows $a,b,c,d,x,y$.

Applying the DWZ mutation procedure at $v$, before reduction, gives
\begin{equation}
\widetilde W
=x[ab]+[cd]y+Ax+B[ad]+yC+D[cb]
+[cd]d^*c^*
+[ad]d^*a^*
+[cb]b^*c^*
+[ab]b^*a^*
+W_0.
\end{equation}

The right-equivalence
$[ab]\mapsto[ab]-A,$
$x\mapsto x-b^*a^*,$
$[cd]\mapsto[cd]-C,$
$y\mapsto y-d^*c^*$
gives
\begin{equation}
\widetilde W
\sim_{\mathrm{r.e.}}
x[ab]+[cd]y
+B[ad]+D[cb]
-Cd^*c^*-Ab^*a^*
+[ad]d^*a^*
+[cb]b^*c^*
+W_0.
\end{equation}
The terms
$x[ab]+[cd]y$
form a trivial quadratic $2$-cycle summand. Splitting off this
summand and applying
$b^*\mapsto-b^*$, $d^*\mapsto-d^*$
gives the reduced potential
\begin{equation}
W_{\Gamma'}
=B[ad]+D[cb]
+Cd^*c^*+Ab^*a^*
-[ad]d^*a^*
-[cb]b^*c^*
+W_0.
\end{equation}

This is the canonical potential of the quiver
$Q_{\Gamma'}$ assigned to the bipartite graph obtained from
$\Gamma$ by the two-by-two move \cite{Goncharov}. Hence
$\mu_v(Q_\Gamma,W_\Gamma)
\simeq
(Q_{\Gamma'},W_{\Gamma'})$
up to right-equivalence.

\subsection{Ginzburg Algebras and Derived Categories}
Let $Q$ be a finite quiver, with $Q_l$ its set of paths of length $l$. Let $a\in Q_1$ be an arrow. The cyclic derivative of a cycle $C=a_1a_2\cdots a_r$ with respect to $a$ is defined by cutting the cycle at every occurrence of $a$:
\begin{equation}
\frac{\partial C}{\partial a}
=
\sum_{a_s=a}
a_{s+1}a_{s+2}\cdots a_r a_1\cdots a_{s-1}.
\end{equation}
Extending linearly gives $\frac{\partial W}{\partial a}$.

\begin{definition}[\cite{DWZ}]
The \emph{Jacobian algebra} of a quiver with potential $(Q,W)$ is
\begin{equation}
J(Q,W)
=
\widehat{kQ}\big/\overline{\left\langle
\frac{\partial W}{\partial a}
\;\middle|\;
\forall a\in Q_1
\right\rangle},
\end{equation}
where $\widehat{kQ}$ is the completed path algebra and the bar denotes
closure in the $\mathfrak m$-adic topology.
\end{definition}

\begin{definition}[\cite{Ginzburg}]
Let $(Q,W)$ be a quiver with potential. The \emph{Ginzburg dg algebra} $\Gamma(Q,W)$ is generated as a differential graded algebra by the following data:
\begin{equation}
\begin{array}{c|c|c|c}
\text{generator} & \text{meaning} & \text{degree} & \text{differential} \\ \hline
e_i & \text{idempotent at vertex } i & 0 & d(e_i)=0 \\[2pt]
a:i\to j & \text{original arrow of }Q & 0 & d(a)=0 \\[2pt]
a^\ast:j\to i & \text{opposite arrow} & -1 & d(a^\ast)=\frac{\partial W}{\partial a} \\[2pt]
t_i:i\to i & \text{loop at vertex }i & -2 & d(t_i)
=\sum_{a:j\to i} aa^*
-\sum_{a:i\to j} a^*a.
\end{array}
\end{equation}
\end{definition}

The zeroth cohomology of the Ginzburg dg algebra is the Jacobian algebra:
\begin{equation}
H^0(\Gamma(Q,W))\cong J(Q,W).
\end{equation}

\medskip
Let $\mathcal D(\Gamma(Q,W))$ be the derived category of right dg $\Gamma(Q,W)$-modules. For a dg $\Gamma$-module $M$, write $H^\bullet(M)=\bigoplus_{p\in\mathbb Z}H^p(M)$ for its total cohomology.

\begin{definition}[\cite{KellerYang}]
The \emph{finite-dimensional derived category} of $\mathcal D(\Gamma(Q,W))$ is
\begin{equation}
\mathcal D_{\mathrm{fd}}(\Gamma(Q,W))
=
\left\{
M\in \mathcal D(\Gamma(Q,W)):
\dim_k H^\bullet(M)<\infty
\right\}.
\end{equation}
Denote $\mathcal D_{\mathrm{fd}}(\Gamma(Q,W))$ as $\mathcal D(\Gamma)$.
\end{definition}

\begin{itemize}
    \item The category $\mathcal D(\Gamma)$ is \emph{triangulated}. A distinguished triangle is the derived-category analogue of a short exact sequence. For objects $X,Y\in\mathcal D(\Gamma)$,
    \begin{equation}
    \operatorname{Ext}^i_{\mathcal D(\Gamma)}(X,Y)
    :=
    \operatorname{Hom}_{\mathcal D(\Gamma)}(X,Y[i]).
    \end{equation}

    \item The category $\mathcal D(\Gamma)$ is a \emph{$3D$-Calabi--Yau category}. This means that For objects $X,Y\in\mathcal D(\Gamma)$, there is a functorial duality
    \begin{equation}
    \operatorname{Hom}_{\mathcal D}(X,Y)
    \cong
    \operatorname{Hom}_{\mathcal D}(Y,X[3])^\vee.
    \end{equation}
    More generally, for every $i\in\mathbb Z$, there are bifunctorial isomorphisms
    \begin{equation}
    \operatorname{Ext}^i_{\mathcal D(\Gamma)}(X,Y)
    \cong
    \operatorname{Ext}^{3-i}_{\mathcal D(\Gamma)}(Y,X)^\vee.
    \end{equation}
\end{itemize}

\begin{definition}[\cite{KellerYang}]
The \emph{standard heart} of $\mathcal D_{\mathrm{fd}}(\Gamma(Q,W))$ is
\begin{equation}
\mathcal A(Q,W)
=
\left\{
M\in\mathcal D_{\mathrm{fd}}(\Gamma(Q,W))
\;\middle|\;
H^p(M)=0 \text{ for } p\neq0
\right\}.
\end{equation}
\end{definition}

The category $\mathcal A(Q,W)$ is abelian. It is the extra choice that extracts a cluster seed from the 3CY category. The zeroth
cohomology functor induces an equivalence
\begin{equation}
H^0:
\mathcal A(Q,W)
\xrightarrow{\sim}
\operatorname{mod}_{\mathrm{fd}}H^0(\Gamma(Q,W))
\simeq
\operatorname{mod}_{\mathrm{fd}}J(Q,W).
\end{equation}

The simple objects of the standard heart determine the vertices of the
associated exchange quiver, and their $\operatorname{Ext}^1$-groups
determine its arrows.

Let $Q_0=\{1,\dots,n\}$ be the vertex set of $Q$. For each vertex $i$, let $e_i\in J(Q,W)$ be the primitive idempotent represented by the stationary path at $i$. These satisfy $e_i^2=e_i$, $e_ie_j=0$ $(i\neq j)$, $1_A=\sum_{i\in Q_0}e_i$.

Let $\mathfrak m\subset J(Q,W)$ be the closed ideal generated by the arrows of $Q$. The indecomposable projective right module associated with $i$ is $P_i:=e_iJ(Q,W)$, and the associated simple modules $S_i
:=\frac{e_iJ(Q,W)}{e_i\mathfrak m}$ is the representation of $Q$ supported only at vertex $i$:
\begin{equation}
(S_i)_j=
\begin{cases}
k, & j=i,\\
0, & j\neq i,
\end{cases}
\end{equation}
and every arrow acts by zero.

The simple $J(Q,W)$-modules $\{S_1,\dots,S_n\}$ form a complete set of pairwise non-isomorphic simple objects of the standard heart $\mathcal A(Q,W)$. Hence, there is a bijection between $Q_0$ and simple objects of $\mathcal A(Q,W)$. The heart therefore provides a distinguished collection of simple objects inside the triangulated category:
\begin{equation}
\mathcal D(\Gamma)
\supset
\mathcal A(Q,W)
=
\langle S_1,\dots,S_n\rangle_{\mathrm{ext}},
\end{equation}
where $\langle S_1,\dots,S_n\rangle_{\mathrm{ext}}$ denotes the smallest full subcategory containing the $S_i$ and closed under finite extensions \cite{KellerYang}.

\subsection{Tropical Mutations on the Grothendieck Groups}

\subsubsection{Triangulated Equivalences and Tropical Mutations}

\begin{definition}[\cite{KellerYang}]
Let $\mathcal D$ be a triangulated category. The \emph{Grothendieck group} $K_0(\mathcal D)$ is
the abelian group generated by $[X]$, one for each object $X\in \mathcal D$,
modulo the relations $[Y]=[X]+[Z]$ whenever there is a distinguished triangle $X\to Y\to Z\to X[1]$.
\end{definition}

The classes of the vertex simple modules give a natural basis of the Grothendieck group:
\begin{equation}
K_0(\mathcal D(\Gamma))
\cong
\bigoplus_{i\in Q_0}\mathbb Z[S_i].
\end{equation}
Therefore, the standard heart determines the quiver seed lattice $\Lambda_\Gamma
\cong
K_0\bigl(\mathcal D_{\mathrm{fd}}(\Gamma)\bigr)$ and basis $e_i=[S_i]$.

\begin{lemma}
Inside the standard heart, $\operatorname{Ext}^1$ has its usual abelian meaning: $\operatorname{Ext}^1_{\mathcal A(Q,W)}(S_i,S_j)$ classifies short
exact sequences $0\to S_j\to E\to S_i\to 0$. With our convention of right $J(Q,W)$-modules and the path convention
that $ab$ denotes the path in which $b$ is followed by $a$, one has
\begin{equation}
\dim_k
\operatorname{Ext}^1_{\mathcal A(Q,W)}(S_j,S_i)
=
\#\{\text{arrows }i\to j\}.
\end{equation}
\end{lemma}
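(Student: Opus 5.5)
The claim is that $\dim_k\operatorname{Ext}^1_{\mathcal A(Q,W)}(S_j,S_i)$ equals the number of arrows $i\to j$. I would compute it with a minimal projective resolution of $S_j$ over the Jacobian algebra $J:=J(Q,W)$, working inside $\operatorname{mod}_{\mathrm{fd}}J$. This is legitimate because $H^0$ identifies the standard heart $\mathcal A(Q,W)$ with $\operatorname{mod}_{\mathrm{fd}}J(Q,W)$. Moreover, $\operatorname{Ext}^1$ in the heart of a t-structure agrees with $\operatorname{Hom}_{\mathcal D}(S_j,S_i[1])$, which in turn agrees with Yoneda $\operatorname{Ext}^1$ in the abelian category, since extensions in a heart are exactly the distinguished triangles with all three terms in the heart. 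So it suffices to compute $\operatorname{Ext}^1_J(S_j,S_i)$ for right $J$-modules.

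\textbf{Projective cover.} The projective cover of $S_j$ is $P_j=e_jJ\to S_j$, with kernel the radical $e_j\mathfrak m$. Modulo $\mathfrak m^2$, this radical decomposes as $e_j\mathfrak m/e_j\mathfrak m^2\cong\bigoplus_{i} (e_j\, kQ_1\, e_i)\otimes S_i$. The first step is to check that the Jacobian relations do not alter $\mathfrak m/\mathfrak m^2$. This holds because $W$ is a sum of cycles of length $\ge 3$ (the potentials $P_\Gamma$ come from $\bullet$/$\circ$-vertices of valency $\ge 3$, and the DWZ mutation is taken to be reduced), so every cyclic derivative $\partial W/\partial a$ lies in $\mathfrak m^2$. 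Consequently the top of $e_j\mathfrak m$ is $\bigoplus_i S_i^{\oplus \dim e_jkQ_1e_i}$, and $e_j kQ_1 e_i$ is spanned by the arrows connecting $i$ and $j$ in the direction fixed by the composition convention.

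\textbf{Standard formula.} The minimal resolution then begins
\[
\bigoplus_i P_i^{\oplus \dim e_jkQ_1e_i}\to P_j\to S_j\to 0 .
\]
Apply $\operatorname{Hom}_J(-,S_i)$ and use $\operatorname{Hom}_J(P_l,S_i)=\delta_{li}k$. Minimality makes the induced differentials zero, so
\[
\operatorname{Ext}^1_J(S_j,S_i)\cong \operatorname{Hom}(\mathrm{top}(e_j\mathfrak m),S_i)\cong (e_jkQ_1e_i)^\vee .
\]
Its dimension is the number of arrows between $i$ and $j$ in the direction recorded by $e_j\cdot(\text{arrow})\cdot e_i\neq 0$.

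\textbf{Main obstacle: orientation bookkeeping.} The expected difficulty is matching this direction to the stated count of arrows $i\to j$. With right modules and the convention that $ab$ means "$b$ followed by $a$", a path from $i$ to $j$ is an element $e_j\,p\,e_i$ or $e_i\,p\,e_j$ depending on how idempotents compose. I would fix this by direct inspection of an extension instead of trusting conventions. For an arrow $a:i\to j$, form the two-dimensional representation with $k$ at $i$ and at $j$, in which $a$ acts by $1$. This representation is a $J$-module because $\partial W/\partial a\in\mathfrak m^2$ acts by zero. For right modules, $a$ maps the $i$-component to the $j$-component or the reverse according to the convention, and I would check directly which simple is the submodule. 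This determines whether the extension is $0\to S_i\to E\to S_j\to0$, which is the orientation in the statement, and pins down the sign of the correspondence.
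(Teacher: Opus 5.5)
Your proposal is correct and follows the same argument as the paper: build the minimal projective presentation of a simple from $\operatorname{rad}P/\operatorname{rad}^2P$, then apply $\operatorname{Hom}_J(-,S)$ into the other simple. The paper does exactly this with the roles of $i$ and $j$ swapped.

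The orientation question you leave open settles directly from the convention: with $ab$ meaning ``$b$ then $a$'', one has $e_j a e_i\neq 0$ exactly when $a:i\to j$, so your formula $\operatorname{Ext}^1(S_j,S_i)\cong(e_jkQ_1e_i)^\vee$ gives $\#\{i\to j\}$. Your remark that $\partial W/\partial a\in\mathfrak m^2$ (that is, $W$ has no loop or $2$-cycle terms) supplies the hypothesis that the paper uses without comment when it identifies $\operatorname{rad}P_i/\operatorname{rad}^2P_i$.
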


Thus the quiver is recovered from the simple objects and their $\operatorname{Ext}^1$-groups.

\begin{proof}
Let $P_i=e_i A$ be the indecomposable projective right $A$-module corresponding to the
vertex $i$. With our convention that $ab$ denotes the path in which
$b$ is followed by $a$, an arrow
$a:j\to i$
lies in $e_i A e_j$. Hence
\begin{equation}
\operatorname{rad}(P_i)/\operatorname{rad}^2(P_i)
\cong
\bigoplus_{a:j\to i} S_j.
\end{equation}
Therefore the beginning of a projective resolution of $S_i$ is
$\bigoplus_{a:j\to i}P_j
\to
P_i
\to
S_i
\to0$.
Applying $\operatorname{Hom}_A(-,S_j)$ gives
$\dim_k\operatorname{Ext}^1_A(S_j,S_i)
=\#\{a:i\to j\}$.
\end{proof}

\medskip
\begin{definition}[\cite{KellerYang}]
Let $\mathcal C$ be a $k$-linear triangulated category. Assume that for
all $X,Y\in\mathcal C$,
$\sum_{i\in\mathbb Z}
\dim_k\operatorname{Hom}_{\mathcal C}(X,Y[i])<\infty$.
The \emph{Euler form} on $K_0(\mathcal C)$ is
\begin{equation}
\chi(X,Y)
=
\sum_{i\in\mathbb Z}
(-1)^i
\dim_k\operatorname{Hom}_{\mathcal C}(X,Y[i]).
\end{equation}
\end{definition}

\begin{remark}
If $\mathcal C$ is 3-Calabi--Yau, this form is skew-symmetric: $\chi(X,Y)=-\chi(Y,X)$. For $K_0(\mathcal D(\Gamma))$, on the basis of simple objects, the exchange matrix is $\varepsilon_{ij}
=\#\{i\to j\}
-\#\{j\to i\}
=\chi(S_i,S_j)$.
\end{remark}

\begin{definition}
Let $\mathcal D_1,\mathcal D_2$ be triangulated categories. An \emph{exact equivalence} is a functor $F:\mathcal D_1\to \mathcal D_2$, such that:
\begin{enumerate}
    \item $F$ is an equivalence of categories,
    \item $F$ is equipped with a natural isomorphism
    $\xi_X:F(X[1])\xrightarrow{\sim}F(X)[1]$,
    \item $F$ sends every distinguished triangle
    $X\to Y\to Z\to X[1]$ in $\mathcal D_1$ to a distinguished triangle
    $F(X)\to F(Y)\to F(Z)
    \to F(X)[1]$ in
    $\mathcal D_2$,
    where the last morphism is composed with $\xi_X$.
\end{enumerate}
\end{definition}

\begin{lemma}
Every exact equivalence induces an isomorphism on Grothendieck groups: $F_\ast:
K_0(\mathcal D_1)
\overset{\sim}{\to}
K_0(\mathcal D_2)$, defined by $F_\ast([X])=[F(X)]$.
This is well-defined because exact functors preserve distinguished triangles.
\end{lemma}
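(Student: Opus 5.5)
The statement to prove is the last lemma: an exact equivalence $F:\mathcal D_1\to\mathcal D_2$ induces an isomorphism $F_\ast:K_0(\mathcal D_1)\to K_0(\mathcal D_2)$, $[X]\mapsto[F(X)]$.

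The plan is to use the universal property of $K_0(\mathcal D)$ as a quotient of the free abelian group on isomorphism classes of objects. First we show that $F_\ast$ is well defined. Define a homomorphism on the free abelian group $\mathbb Z[\operatorname{Ob}\mathcal D_1]$ by $X\mapsto [F(X)]\in K_0(\mathcal D_2)$. Isomorphic objects go to isomorphic objects, so this is compatible with identifying isomorphism classes; this also follows from the triangle relation applied to $X\to X'\to 0\to X[1]$. It then suffices to check that the defining relations are killed. Take a distinguished triangle
\[
X\to Y\to Z\to X[1]
\]
in $\mathcal D_1$. By condition (3) in the definition of an exact equivalence, composing the last morphism with $\xi_X$ gives a distinguished triangle
\[
F(X)\to F(Y)\to F(Z)\to F(X)[1]
\]
in $\mathcal D_2$. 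Hence $[F(Y)]=[F(X)]+[F(Z)]$ in $K_0(\mathcal D_2)$, so the relation subgroup maps to zero and $F_\ast$ descends to $K_0(\mathcal D_1)$.

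Next we show that $F_\ast$ is bijective. Since $F$ is an equivalence, choose a quasi-inverse $G:\mathcal D_2\to\mathcal D_1$. The main point to verify, and the only real obstacle, is that $G$ is again exact. This is the standard fact that a quasi-inverse of a triangulated equivalence inherits a triangulated structure. Its commutation isomorphism with the shift is obtained as
\[
G(Y[1])\cong G(FG(Y)[1])\cong GF(G(Y)[1])\cong G(Y)[1],
\]
using $\xi$ and the unit and counit of the equivalence. Given a distinguished triangle $\Delta$ in $\mathcal D_2$, we complete the first morphism of $G(\Delta)$ to a distinguished triangle $\Delta'$ in $\mathcal D_1$. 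Applying $F$, we compare $F(\Delta')$ with $\Delta\cong FG(\Delta)$ using the triangulated five lemma, and full faithfulness of $F$ then transports the resulting isomorphism back to $G(\Delta)\cong\Delta'$. If one wishes to avoid this, it is enough for $K_0$ purposes to note that $G$ sends distinguished triangles to triangles isomorphic to distinguished ones, and that class of triangles is closed under isomorphism.

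By the first step applied to $G$, we obtain $G_\ast:K_0(\mathcal D_2)\to K_0(\mathcal D_1)$. Since $GF(X)\cong X$ and $FG(Y)\cong Y$, and isomorphic objects have equal classes, we get $G_\ast F_\ast=\mathrm{id}$ and $F_\ast G_\ast=\mathrm{id}$ on generators, hence everywhere. Therefore $F_\ast$ is an isomorphism, and it is clearly additive because it is defined on generators.
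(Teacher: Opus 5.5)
Your proposal is correct and matches the paper: the paper's only justification is the one you give for well-definedness (exact functors carry distinguished triangles to distinguished triangles), and the bijectivity the paper leaves implicit is correctly supplied by your quasi-inverse argument. For $K_0$ you do not even need exactness of $G$: if $G(A)\to G(B)\to Z\to G(A)[1]$ completes $G(f)$ for a distinguished triangle $A\xrightarrow{f}B\to C\to A[1]$, the triangulated five lemma gives $F(Z)\cong C$, hence $Z\cong GF(Z)\cong G(C)$, which already yields $[G(B)]=[G(A)]+[G(C)]$.
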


Let $(Q,W)$ be a quiver with potential such that $Q$ has no
loops or oriented $2$-cycles, and let $k\in Q_0$ be a vertex. Mutation
at $k$ gives a new quiver with potential $(Q',W')=\mu_k(Q,W)$. At the level of cluster combinatorics, this is quiver mutation. At the categorical level, it is
realized by an equivalence of 3CY categories.

Starting with $\mathcal A(Q,W)\subset \mathcal D(\Gamma)$, mutation at a simple object $S_k$ produces a tilted heart $\mu_k(\mathcal A(Q,W))$, while the ambient 3CY category $\mathcal D(\Gamma)$ remains equivalent. Write $S_i\in\mathcal A(Q,W)$, $S_i'\in\mathcal A(Q',W')$ for the simple objects of the original and mutated hearts. The classes of $S_i'$ give the mutated basis of $K_0$: $[S_i']\in K_0(\mathcal D(\Gamma'))$. Thus a cluster mutation amounts to a tilting of hearts \cite{BS15}.

\medskip
\begin{theorem}[\cite{KellerYang}]
The Keller--Yang theorem gives triangulated equivalences
\begin{equation}
\Phi^\pm:
\mathcal D(\Gamma')
\overset{\sim}{\to}
\mathcal D(\Gamma).
\end{equation}
$\Phi^+$ identifies the standard heart of the mutated quiver with
the right tilt of $\mathcal A$, while $\Phi^-$ identifies it with the
left tilt:
\begin{equation}
\Phi^\pm(\mathcal A(Q',W'))
=
\mu^\pm_{k}(\mathcal A(Q,W)).
\end{equation}
\end{theorem}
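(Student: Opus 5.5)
Since the paper quotes this statement from \cite{KellerYang}, I would reconstruct the standard Keller--Yang argument in three steps: build an explicit dg bimodule $T$ realizing the mutation, show it induces the equivalence $\Phi^\pm$, and then compute where the standard heart $\mathcal A(Q',W')$ lands. Write $\Gamma=\Gamma(Q,W)$ and $\Gamma'=\Gamma(Q',W')$ for the Ginzburg dg algebras, $P_i=e_i\Gamma$ for the indecomposable dg projectives, and $k$ for the mutated vertex. Throughout I assume, as above, that $Q$ has no loops or $2$-cycles at $k$; this is what makes $\mu_k(Q,W)$ well defined up to right-equivalence, by the DWZ reduction recalled in the previous subsection.

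\emph{Step 1 (the tilting objects).} In the perfect derived category $\operatorname{per}(\Gamma)$ I would define
\[
T^{+}=\bigoplus_{i\neq k}P_i\oplus T_k^{+},\qquad T_k^{+}=\operatorname{Cone}\Bigl(P_k\longrightarrow\bigoplus_{a:k\to j}P_{j}\Bigr),
\]
where the map is given by left multiplication by the arrows leaving $k$. The object $T^-$ is built dually from the arrows entering $k$, with the cone shifted by $[-1]$. Two things must then be checked. First, $T^\pm$ generates $\operatorname{per}(\Gamma)$: this is clear, since $P_k$ is recovered from $T_k^\pm$ and the $P_j$ by a triangle. Second, $T^\pm$ is silting, i.e.\ $\operatorname{Hom}(T^\pm,T^\pm[p])=0$ for $p>0$. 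This vanishing comes from the non-positive grading of $\Gamma$ together with a short long-exact-sequence computation.

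\emph{Step 2 (the endomorphism dg algebra; the main obstacle).} This is the step I expect to be hardest: identifying the derived endomorphism algebra $\operatorname{REnd}(T^\pm)$ with $\Gamma'$. My plan is to write down an explicit dg algebra morphism $\Gamma'\to\operatorname{REnd}(T^\pm)$ on generators, and then prove that it is a quasi-isomorphism.
\begin{itemize}
\item The reversed arrows $a^*$ at $k$ are sent to the components of the cone maps.
\item The new composite arrows $[ba]$ are sent to the corresponding compositions $P_i\to P_k\to P_j$, read through $T_k^\pm$.
\item The degree $-1$ generators are sent to homotopies. These are forced by $d(a^*)=\partial W/\partial a$, and I would check that they satisfy the relations coming from $\partial W'/\partial(\cdot)$, including the cubic terms $[qp]p^*q^*$ and the right-equivalence used in the reduction.
\item The degree $-2$ loops $t_i$ are handled the same way.
\end{itemize}
To prove the quasi-isomorphism I would compare the resulting morphism on $H^0$ (the Jacobian algebras) and on the associated graded pieces, using completeness of both algebras. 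An alternative route is to invoke the Derksen--Weyman--Zelevinsky statement that $\mu_k\mu_k(Q,W)$ is right-equivalent to $(Q,W)$, which lets one reduce to checking a single composition.

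\emph{Step 3 (restricting to $\mathcal D_{\mathrm{fd}}$ and identifying the hearts).} Once $\Gamma'\simeq\operatorname{REnd}(T^\pm)$, the functor $-\otimes^{L}_{\Gamma'}T^\pm$ gives an equivalence $\mathcal D(\Gamma')\xrightarrow{\sim}\mathcal D(\Gamma)$. It preserves the finite-dimensional-cohomology subcategories, because the total cohomology of an object is detected by its $\operatorname{Hom}$ from the compact generator. This defines $\Phi^\pm$.

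For the hearts, I would compute the images of the simples by adjunction, testing against the $P_j$:
\begin{itemize}
\item $\Phi^\pm(S_k')=S_k[\mp1]$;
\item for $i\neq k$, $\Phi^\pm(S_i')$ is the universal extension of $S_i$ by copies of $S_k$, with multiplicity given by $\operatorname{Ext}^1$, on the incoming or the outgoing side according to the sign.
\end{itemize}
These are exactly the simple objects of the right and left tilts of $\mathcal A(Q,W)$ at $S_k$. A bounded heart of finite length is determined by its simples, so $\Phi^\pm(\mathcal A(Q',W'))=\mu_k^\pm(\mathcal A(Q,W))$. As a by-product, passing to $K_0$ recovers the two signed tropical mutation formulas, which is what the paper will use afterwards.
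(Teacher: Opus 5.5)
The paper gives no argument of its own here; it only quotes Keller--Yang. Your three-step outline is the Keller--Yang strategy: mutate $\Gamma$ at $P_k$ to get a tilting object, identify its derived endomorphism algebra with $\Gamma'$, restrict to $\mathcal D_{\mathrm{fd}}$, and compute the images of the simples. However, as written the signs are attached to the wrong objects, so your Step 3 is false for the functors you actually define.

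Put $\Phi=-\otimes^{L}_{\Gamma'}T^{+}$ with your $T_k^{+}=\operatorname{Cone}\bigl(P_k\to\bigoplus_{a:k\to j}P_j\bigr)$.
\begin{itemize}
\item For $j\neq k$ you have $\operatorname{Hom}(P_j,\Phi(S_k')[p])=\operatorname{Hom}(P_j',S_k'[p])=0$ for all $p$.
\item Applying $\operatorname{Hom}(-,\Phi(S_k')[p])$ to the triangle $P_k\to\bigoplus_{a}P_j\to T_k^{+}\to P_k[1]$ then gives $\operatorname{Hom}(P_k,\Phi(S_k')[p])\cong\operatorname{Hom}(T_k^{+},\Phi(S_k')[p+1])=\operatorname{Hom}(P_k',S_k'[p+1])$. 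This is nonzero only for $p=-1$, so $\Phi(S_k')\cong S_k[1]$, not $S_k[-1]$.
\item The same computation for $i\neq k$ shows that $\Phi(S_i')$ lies in $\mathcal A(Q,W)$ with class $e_i+\#\{k\to i\}\,e_k=e_i+[-\varepsilon_{ik}]_+e_k$.
\end{itemize}
That is the paper's $(\Phi^-)_\ast$ and the left tilt. Dually, your shifted cone $\operatorname{Cone}\bigl(\bigoplus_{b:j\to k}P_j\to P_k\bigr)[-1]$ yields $S_k[-1]$ and classes $e_i+[\varepsilon_{ik}]_+e_k$, i.e.\ the right tilt. So you must exchange the labels $T^{+}\leftrightarrow T^{-}$. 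This matters here: beyond the existence of an equivalence, the content of the statement is which equivalence realizes which tilt. The paper later matches exactly these two signs with the two orientations of the ura operation.

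Two smaller points need tightening.
\begin{itemize}
\item \emph{Premutation versus reduced mutation.} The generators $[ba]$ and the cubic terms $[qp]p^*q^*$ that you map in Step 2 belong to the unreduced premutation $\widetilde\mu_k(Q,W)$, not to the reduced $(Q',W')$. The clean route, and the way Keller--Yang organise it, is to prove the equivalence for the Ginzburg algebra of the premutation. One then invokes the separate fact that the complete Ginzburg algebra of a quiver with potential is quasi-isomorphic to that of its reduced part, rather than folding the reduction into a generator-by-generator map.
\item \emph{The quasi-isomorphism in Step 2.} Agreement on $H^0$ cannot detect a quasi-isomorphism, because $\Gamma'$ generally has cohomology in negative degrees. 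Likewise, the involutivity $\mu_k\mu_k(Q,W)\simeq(Q,W)$ does not by itself show that $\Gamma'\to\operatorname{REnd}(T)$ is a quasi-isomorphism. Your completeness/associated-graded comparison has to be carried out honestly in all degrees.
\end{itemize}
With these repairs Step 3 goes through. Once every $\Phi^\pm(S_i')$ is shown to lie in $\mu_k^\pm(\mathcal A)$, the two bounded finite-length hearts coincide.
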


For the right tilt, one may choose conventions such that
\begin{equation}
\Phi^+(S_i')
=
\begin{cases}
S_k[-1], & i=k,\\[4pt]
\operatorname{Cone}\!\left(
\operatorname{Ext}^1(S_k,S_i)\otimes S_k[-1]
\xrightarrow{\mathrm{ev}}
S_i
\right), & i\neq k.
\end{cases}
\end{equation}
Here $\mathrm{ev}$ is the universal evaluation morphism. In particular, if $\operatorname{Ext}^1(S_k,S_i)=0$, then $\Phi^+(S_i')\cong S_i$.

Dually, for the left tilt,
\begin{equation}
\Phi^-(S_i')
=
\begin{cases}
S_k[1], & i=k,\\[4pt]
\operatorname{Cone}\!\left(
S_i
\xrightarrow{\mathrm{coev}}
\operatorname{Ext}^1(S_i,S_k)^\vee\otimes S_k[1]
\right)[-1], & i\neq k.
\end{cases}
\end{equation}

Thus mutation replaces the simple $S_k$ by a shift and replaces each
neighbouring simple by a universal extension or coextension involving
$S_k$.

\medskip
Let $e_i=[S_i]\in K_0(\mathcal D(\Gamma))$ be the basis given by the simple objects of the original heart. Let $e_i'=[S_i']\in K_0(\mathcal D(Q',W'))$ be the basis given by the simple objects of the mutated heart.

\begin{lemma}[\cite{BS15}]
\label{le:k0}
The equivalences $\Phi^\pm:
\mathcal D(\Gamma')
\to
\mathcal D(\Gamma)$ induces isomorphisms
\begin{equation}
(\Phi^\pm)_\ast:
K_0(\mathcal D(\Gamma'))
\overset{\sim}{\to}
K_0(\mathcal D(\Gamma)).
\end{equation}
The two signed mutations
of the basis of $K_0$ are given by:
\begin{equation}
\begin{aligned}
(\Phi^-)_*:\quad
e_i'
&\longmapsto
\begin{cases}
-e_k, & i=k,\\[2pt]
e_i+[-\varepsilon_{ik}]_+e_k, & i\neq k.
\end{cases}
\\[6pt]
(\Phi^+)_*:\quad
e_i'
&\longmapsto
\begin{cases}
-e_k, & i=k,\\[2pt]
e_i+[\varepsilon_{ik}]_+e_k, & i\neq k,
\end{cases}
\end{aligned}
\end{equation}
\end{lemma}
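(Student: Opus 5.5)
The statement to prove is Lemma~\ref{le:k0}: it asks for the effect on the simple basis of the Keller--Yang equivalences $\Phi^\pm$. The plan is to compute $(\Phi^\pm)_*$ directly from the explicit images $\Phi^\pm(S_i')$ recorded just above, using additivity of $K_0$ on distinguished triangles. First, the fact that $(\Phi^\pm)_*$ is an isomorphism follows from the preceding lemma, since every exact equivalence induces an isomorphism on Grothendieck groups. It then suffices to evaluate $(\Phi^\pm)_*$ on the basis $e_i'=[S_i']$ and express the result in the basis $e_j=[S_j]$ of $K_0(\mathcal D(\Gamma))$.

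For the vertex $i=k$, use that the shift acts by $-1$ on classes: the triangle $X\to 0\to X[1]\to X[1]$ gives $[X[1]]=-[X]$, and likewise $[X[-1]]=-[X]$. Hence $[S_k[\pm1]]=-e_k$ in both cases.

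For $i\neq k$ and the right tilt, take the defining triangle
\[
\operatorname{Ext}^1(S_k,S_i)\otimes S_k[-1]\to S_i\to \Phi^+(S_i')\to \operatorname{Ext}^1(S_k,S_i)\otimes S_k .
\]
It gives $[\Phi^+(S_i')]=e_i-\dim\operatorname{Ext}^1(S_k,S_i)\,[S_k[-1]]=e_i+\dim\operatorname{Ext}^1(S_k,S_i)\,e_k$. By the $\operatorname{Ext}^1$-lemma above, this dimension equals the number of arrows $i\to k$ (equivalently $k\to i$; the convention has to be tracked carefully here). Because $Q$ has no $2$-cycles, that count is $[\varepsilon_{ik}]_+$, where $\varepsilon_{ik}=\chi(S_i,S_k)$.

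The left tilt is handled the same way, via the triangle
\[
\Phi^-(S_i')\to S_i\to \operatorname{Ext}^1(S_i,S_k)^\vee\otimes S_k[1]\to \Phi^-(S_i')[1].
\]
This yields $e_i+\dim\operatorname{Ext}^1(S_i,S_k)\,e_k$, where the dimension counts arrows in the opposite direction and hence equals $[-\varepsilon_{ik}]_+$.

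The main obstacle is this bookkeeping of sign and direction conventions. Three conventions have to be matched so that $\Phi^+$ gets $[\varepsilon_{ik}]_+$ and not $[-\varepsilon_{ik}]_+$: the right-module path convention in the $\operatorname{Ext}^1$-lemma, the definition $\varepsilon_{ij}=\#\{i\to j\}-\#\{j\to i\}$, and the orientation of the evaluation/coevaluation maps. I would first fix these on a single arrow $i\to k$ in an $A_2$ quiver and check both formulas there. I would then note that the answer is consistent with the tropical mutation formula $e_i'=e_i+(e_i,e_k)_+e_k$. As a final check, composing $(\Phi^+)_*$ for two successive mutations should reproduce the reflection $e_i\mapsto e_i+(e_i,e_k)e_k$.
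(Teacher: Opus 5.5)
Your proposal is correct and follows essentially the paper's route: the paper gives no separate proof and cites \cite{BS15}, but just before the lemma it records $\Phi^\pm(S_i')$ as shifts of $S_k$ and as evaluation/coevaluation cones, and the lemma follows from those by additivity of classes on distinguished triangles, exactly as you compute. The one thing to fix is your hedge ``equivalently $k\to i$'': the two counts are not equivalent, and the hedge is unnecessary, because the paper's $\operatorname{Ext}^1$-lemma with $j=k$ gives $\dim\operatorname{Ext}^1(S_k,S_i)=\#\{i\to k\}=[\varepsilon_{ik}]_+$ for $\Phi^+$ and $\dim\operatorname{Ext}^1(S_i,S_k)=\#\{k\to i\}=[-\varepsilon_{ik}]_+$ for $\Phi^-$, which are precisely the stated formulas.
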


Thus mutation changes the basis of $K_0$ by a piecewise-linear rule. Therefore, quiver mutation is categorified by derived equivalence, and tropical cluster mutation is the induced change of basis on $K_0$.

\subsubsection{Dual Lattices and Cluster \texorpdfstring{$\mathcal{A}$}{A}-Mutations}

let $K_0(\mathcal{D}_\Gamma):=K_0(\mathcal D(\Gamma(Q,W)))=\bigoplus_i\mathbb Ze_i,$ $e_i:=[S_i].$ $K_0(\mathcal{D}_\Gamma)$
carries the skew Euler form
$\varepsilon_{ij}:=\langle e_i,e_j\rangle.$
The positive and negative parts of $\varepsilon_{ik}
=
\#(i\to k)-\#(k\to i)$ record respectively the arrows entering
and leaving $k$:
$[\varepsilon_{ik}]_+=\#(i\to k),$
$[-\varepsilon_{ik}]_+=\#(k\to i).$
Let $K_0(\mathcal{D}_\Gamma)^\vee:=\operatorname{Hom}(K_0(\mathcal{D}_\Gamma),\mathbb Z)$ be the dual lattice with dual basis $f_i$: $f_i(e_j)=\delta_{ij}.$

\medskip
Let $\mathbf A=\{A_i\}_{i\in I}$ be a cluster $\mathcal A$-seed with quiver $Q$, and let $B=(\varepsilon_{ij})$ be the exchange matrix of $\mathbf A=\{A_i\}_{i\in I}$. A cluster $\mathcal A$-mutation at a mutable vertex $k$ changes the $\mathcal A$-coordinates by $A_j'=A_j$ for $j\neq k$, and \cite{SS25}
\begin{equation}
A_k'
=
\frac{\prod_i A_i^{[-\varepsilon_{ik}]_+}}{A_k}
+\frac{\prod_i A_i^{[\varepsilon_{ik}]_+}}{A_k}
=
\frac{\prod_{k\to i}A_i}{A_k}
+\frac{\prod_{i\to k}A_i}{A_k}, \qquad i\neq k.
\end{equation}

\medskip
For the mutation $K_0(\mathcal{D}_\Gamma')\to K_0(\mathcal{D}_\Gamma)$ of the $K_0$-basis at $k$, one convention is
\begin{equation}
\begin{array}{ll}
\widetilde e_k\mapsto-e_k,\\
\widetilde e_i
\mapsto
e_i+[-\varepsilon_{ik}]_+e_k,\quad i\neq k.
\end{array}
\end{equation}
Hence only vertices with arrows $k\to i$ contribute to this basis change.

The corresponding mutation on the dual basis in the same direction is
the inverse-dual map $K_0(\mathcal{D}_\Gamma')^\vee\to K_0(\mathcal{D}_\Gamma)^\vee$
\begin{equation}
\begin{array}{ll}
\widetilde f_k\mapsto-f_k+\sum_i[-\varepsilon_{ik}]_+f_i,\\[1mm]
\widetilde f_j\mapsto f_j,\qquad j\neq k.
\end{array}
\end{equation}

If $A^{f}$ denotes the monomial with exponent vector $f$, then
\begin{equation}
A^{\widetilde f_k}
=A_k^{-1}\prod_iA_i^{[-\varepsilon_{ik}]_+}
=
\frac{\prod_{k\to i}A_i}{A_k}.
\end{equation}
Thus the outgoing-arrow monomial in the cluster $\mathcal A$-mutation relation is the monomial associated with the $k$-th element of the dual of the mutated $K_0$-basis vector with $[-\varepsilon_{ik}]_+$ convention.

\medskip
The mutation $K_0(\mathcal{D}_\Gamma')\to K_0(\mathcal{D}_\Gamma)$ of the $K_0$-basis at $k$ with the opposite convention uses $[\varepsilon_{ik}]_+$:
\begin{equation}
\begin{array}{ll}
e_k'\mapsto -e_k,\\
e_i'\mapsto e_i+[\varepsilon_{ik}]_+ e_k,\quad i\neq k.
\end{array}
\end{equation}
The lattice and skew bilinear form themselves remain fixed; only the distinguished basis changes.

Similarly, the corresponding mutation on the dual basis in the same direction under this convention is
the inverse-dual map $K_0(\mathcal{D}_\Gamma')^\vee\to K_0(\mathcal{D}_\Gamma)^\vee$
\begin{equation}
\begin{array}{ll}
f_k'\mapsto -f_k+\sum_i[\varepsilon_{ik}]_+f_i,\\[1mm]
f_j'\mapsto f_j,\qquad j\neq k.
\end{array}
\end{equation}
Hence the incoming-arrow monomial in the cluster $\mathcal A$-mutation relation is the monomial associated with the dual of the mutated $K_0$-basis vector with $[\varepsilon_{ik}]_+$ convention:
\begin{equation}
A^{f_k'}
=
A_k^{-1}\prod_iA_i^{[\varepsilon_{ik}]_+}
=
\frac{\prod_{i\to k}A_i}{A_k}.
\end{equation}

\begin{proposition}[\cite{SS25}]
Let $\{e_i'\}$ and $\{\widetilde e_i\}$ be the two signed mutations
of the $K_0$-basis at $k$, determined respectively by the positive and
negative parts of the Euler pairing, and let
$\{f_i'\}$ and $\{\widetilde f_i\}$ be their dual bases.
Then the cluster $\mathcal A$-mutation relation at $k$ can be written as
\begin{equation}
\label{eq:A}
A_k'
=
A^{f_k'}+A^{\widetilde f_k}.
\end{equation}
The two monomials in the cluster $\mathcal A$-mutation relation
are precisely the monomials whose exponent vectors are the $k$-th
elements of the dual bases associated with the two signed
$K_0$-basis mutations.
\end{proposition}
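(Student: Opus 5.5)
The statement to prove is the last Proposition, attributed to \cite{SS25}: the cluster $\mathcal A$-mutation relation at $k$ equals $A^{f_k'}+A^{\widetilde f_k}$. My plan is to compute the two inverse-dual maps explicitly and then compare their monomials with the exchange relation already recalled in Section 6.2.

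The first step is the dual computation. For each signed convention, write the basis change $K_0(\mathcal D_\Gamma')\to K_0(\mathcal D_\Gamma)$ in matrix form, namely $e_k'\mapsto -e_k$ and $e_i'\mapsto e_i+c_i e_k$, where $c_i=[\pm\varepsilon_{ik}]_+$. I then characterize the dual basis $\{f_j'\}$ by the conditions $f_j'(e_i')=\delta_{ij}$, after transporting $f_j'$ to a functional on $K_0(\mathcal D_\Gamma)$ via the inverse-dual map.

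Next I solve for $f_k'=\sum_l x_l f_l$.
\begin{itemize}
\item Evaluating on $-e_k$ gives $-x_k=1$, so $x_k=-1$.
\item For $i\neq k$, evaluating on $e_i+c_ie_k$ gives $x_i+c_ix_k=0$, so $x_i=c_i$.
\end{itemize}
Hence $f_k'=-f_k+\sum_i c_i f_i$. For $j\neq k$ the same conditions give $f_j'=f_j$. This is a routine triangular inversion. The one point to check is that $c_k=0$, which holds because $\varepsilon_{kk}=0$.

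Then I exponentiate. Using the convention that $A^f=\prod_i A_i^{f(e_i)}$, i.e. $f_i\leftrightarrow$ exponent of $A_i$, I get
\[
A^{f_k'}=A_k^{-1}\prod_i A_i^{[\varepsilon_{ik}]_+},\qquad A^{\widetilde f_k}=A_k^{-1}\prod_i A_i^{[-\varepsilon_{ik}]_+}.
\]
Summing these reproduces exactly the exchange relation $A_kA_k'=\prod_iA_i^{[\varepsilon_{ik}]_+}+\prod_iA_i^{[-\varepsilon_{ik}]_+}$ recalled in Section 6.2. Finally, I use $[\varepsilon_{ik}]_+=\#(i\to k)$ and $[-\varepsilon_{ik}]_+=\#(k\to i)$ to identify these as the incoming- and outgoing-arrow monomials, which matches the two summands.

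The main obstacle is bookkeeping of conventions rather than mathematics. The identification of the exponent lattice of the torus with $K_0(\mathcal D_\Gamma)^\vee$ through the compatible $\mathcal A$-lattice $(e_i,\theta_j)=\delta_{ij}$ must be the one used for both mutations. The sign convention $\varepsilon_{ik}=\chi(S_i,S_k)$ must also be consistent with Lemma~\ref{le:k0}. If these are inconsistent, the two monomials swap, but their sum is unchanged. So I would fix $\theta_i\leftrightarrow f_i$ at the start and observe that the final relation is symmetric under swapping the two signed conventions. This makes the statement robust to that ambiguity.
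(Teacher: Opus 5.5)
Your proposal is correct and follows essentially the same route as the paper. The paper likewise computes the inverse-dual maps for the $[\varepsilon_{ik}]_+$ and $[-\varepsilon_{ik}]_+$ conventions, obtaining $f_k'=-f_k+\sum_i[\varepsilon_{ik}]_+f_i$ and $\widetilde f_k=-f_k+\sum_i[-\varepsilon_{ik}]_+f_i$ (with $f_j'=\widetilde f_j=f_j$ for $j\neq k$), exponentiates these to the incoming- and outgoing-arrow monomials, and matches their sum with the exchange relation; your explicit triangular inversion just spells out the dualization step the paper states without computation, and your symmetry remark about swapping conventions is a harmless addition.
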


\begin{proposition}[\cite{SS25}]
Let $a_i=\operatorname{trop}(A_i)$.
Using the min-plus tropical convention,
$\operatorname{trop}(XY)
=\operatorname{trop}(X)+\operatorname{trop}(Y)$,
$\operatorname{trop}(X+Y)
=\min\{\operatorname{trop}(X),\operatorname{trop}(Y)\}$.
Tropicalizing the cluster $\mathcal A$-mutation relation $(\ref{eq:A})$ gives $a_j'=a_j$ for $j\neq k$ and
\begin{equation}
\begin{aligned}
a_k'
&=
-a_k+
\min\left\{
\sum_i[-\varepsilon_{ik}]_+a_i,\,
\sum_i[\varepsilon_{ik}]_+a_i
\right\}
\\
&=
\begin{cases}
-a_k+\displaystyle\sum_{k\to i}a_i
=
-a_k+\displaystyle\sum_i[-\varepsilon_{ik}]_+a_i,
&
\text{if the outgoing linear form is smaller},
\\[2mm]
-a_k+\displaystyle\sum_{i\to k}a_i
=
-a_k+\displaystyle\sum_i[\varepsilon_{ik}]_+a_i,
&
\text{if the incoming linear form is smaller}.
\end{cases}
\end{aligned}
\end{equation}
The two linear pieces of the tropical mutation are the transformations associated with the two signed $K_0$-dual basis mutations,
determined respectively by the negative and positive parts of the Euler form.
\end{proposition}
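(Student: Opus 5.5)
The plan is to tropicalize the exchange relation directly, in the form given by the preceding proposition, and then read off each linear piece as the pairing of the tropical point with a dual basis vector computed earlier in this subsection.

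\textbf{Step 1: express everything as monomials.} Write the cluster $\mathcal A$-mutation at $k$ as in Equation~(\ref{eq:A}), $A_k'=A^{f_k'}+A^{\widetilde f_k}$. For a monomial $A^{f}=\prod_i A_i^{f(e_i)}$ with exponent vector $f\in K_0(\mathcal D_\Gamma)^\vee$, the min-plus rules give the linear expression
\[
\operatorname{trop}(A^f)=\sum_i f(e_i)\,a_i .
\]
So, regarding $a=(a_i)$ as a point of $\operatorname{Hom}(K_0(\mathcal D_\Gamma)^\vee,\mathbb Z)$, we have $\operatorname{trop}(A^f)=\langle f,a\rangle$.

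\textbf{Step 2: justify tropicalizing a sum.} The right-hand side of Equation~(\ref{eq:A}) is a sum of two Laurent monomials with coefficient $+1$. It is therefore a subtraction-free rational expression, and the rule $\operatorname{trop}(X+Y)=\min\{\operatorname{trop}X,\operatorname{trop}Y\}$ is legitimate: no cancellation can occur. This gives
\[
a_k'=\min\{\langle f_k',a\rangle,\langle\widetilde f_k,a\rangle\}.
\]
For $j\neq k$ we have $A_j'=A_j$, so $a_j'=a_j$.

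\textbf{Step 3: evaluate the two pairings.} Substitute the inverse-dual formulas derived above:
\[
\widetilde f_k=-f_k+\sum_i[-\varepsilon_{ik}]_+f_i,
\qquad
f_k'=-f_k+\sum_i[\varepsilon_{ik}]_+f_i .
\]
Pairing each with $a$ yields $-a_k+\sum_i[-\varepsilon_{ik}]_+a_i$ and $-a_k+\sum_i[\varepsilon_{ik}]_+a_i$ respectively. Since $[\varepsilon_{ik}]_+=\#(i\to k)$ and $[-\varepsilon_{ik}]_+=\#(k\to i)$, these are the outgoing and incoming sums in the statement. The minimum is attained by whichever linear form is smaller, which gives the two-case formula.

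\textbf{Step 4: identify the pieces with the $K_0$ mutations.} Each piece is, by construction, $a\mapsto\langle\widetilde f_k,a\rangle$ or $a\mapsto\langle f_k',a\rangle$ on the $k$-th coordinate and the identity on the others. That is precisely the transpose of the inverse-dual map attached to the $[-\varepsilon_{ik}]_+$ (resp.\ $[\varepsilon_{ik}]_+$) basis mutation, i.e.\ to $(\Phi^-)_*$ (resp.\ $(\Phi^+)_*$) in Lemma~\ref{le:k0}.

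\textbf{Main obstacle.} The computation itself is formal. The delicate part is the bookkeeping of conventions, and I would check it explicitly on a single arrow $i\to k$. Three points need to be consistent. First, the convention $\varepsilon_{ik}=\#(i\to k)-\#(k\to i)$ must agree with the Euler-form sign $\chi(S_i,S_j)$ and with the $\operatorname{Ext}^1$ lemma. Second, the inverse-dual direction $K_0(\mathcal D_\Gamma')^\vee\to K_0(\mathcal D_\Gamma)^\vee$ must indeed send $\widetilde f_k$ to $-f_k+\sum_i[-\varepsilon_{ik}]_+f_i$, which I would check by verifying $\widetilde f_k(\widetilde e_i)=\delta_{ki}$. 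Third, the min-plus (rather than max-plus) convention must be the one that matches ``smaller linear form'' to the correct sign. A mismatch in any of these would swap the labels $\Phi^\pm$ on the two pieces without affecting the piecewise-linear formula.
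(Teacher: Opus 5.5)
Your proposal is correct and follows essentially the same route as the paper. The paper gives no separate argument beyond citing \cite{SS25}; the statement comes, exactly as in your proposal, from applying the min-plus rules termwise to $A_k'=A^{f_k'}+A^{\widetilde f_k}$ and pairing the previously computed dual vectors $\widetilde f_k=-f_k+\sum_i[-\varepsilon_{ik}]_+f_i$ and $f_k'=-f_k+\sum_i[\varepsilon_{ik}]_+f_i$ with $(a_i)$, and your matching of the outgoing piece with $(\Phi^-)_\ast$ and the incoming piece with $(\Phi^+)_\ast$ agrees with Lemma~\ref{le:k0}.
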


\begin{remark}
A signed $K_0$-basis mutation uses one of the two linear conventions,
whereas the cluster $\mathcal A$-mutation relation contains both
corresponding monomials simultaneously. After tropicalization, their
sum becomes a minimum (or a maximum under the opposite tropical
convention). Hence the cluster $\mathcal A$-mutation becomes piecewise linear.
\end{remark}

The cluster $\mathcal A$-mutation relation under a \ttt move at face 1 in Figure~\ref{fig:localquiver} is given by Equation~\ref{eq:clustermutation}:
$A_j'=A_j$ for $j\neq 1$ and $A_1 A_1'=A_6 A_2+A_7 A_4$. Tropicalization gives
\begin{equation}
\begin{aligned}
a_j'&=a_j, \quad j\neq 1,\\[2pt]
a_1'
&=
\begin{cases}
-a_1+\displaystyle\sum_{1\to i}a_i
=-a_1+a_2+a_6,
&
\text{if the outgoing linear form is smaller},
\\[2mm]
-a_1+\displaystyle\sum_{i\to 1}a_i
=-a_1+a_4+a_7,
&
\text{if the incoming linear form is smaller}.
\end{cases}
\end{aligned}
\end{equation}
The two linear pieces are the transformations of the two signed $K_0$-dual basis mutations. Hence the corresponding two signed $K_0$-basis mutations are
\begin{equation}
\label{eq:trop}
\begin{aligned}
e_1'&\mapsto -e_1, \\[2pt]
e_i'
&\mapsto
\begin{cases}
e_i+\displaystyle\sum_{1\to i}e_1,
& i\neq 1,
\text{ left tilt convention},
\\[2mm]
e_i+\displaystyle\sum_{i\to 1}e_1,
& i\neq 1,
\text{ right tilt convention}.
\end{cases}
\end{aligned}
\end{equation}
This matches Equation~\ref{eq:basis} and Equation~\ref{eq:basis1}.

\subsection{Tropical Mutations on \texorpdfstring{$\mathrm{K_0(Q,W)}$}{} Induced by Two-by-two Moves}
\label{sec:7.4}
Let $S$ be a genus-$g$ decorated surface without boundary with $n$ special points satisfying $2g-2+n>0$.

For an $A_m^*$-bipartite graph $\Gamma^*$ on $S$, the full quiver assigned to $\Gamma^*$ is
$Q_{\Gamma^*}
:=
\bigl(\Lambda_{\Gamma^*},\{e_F\},(*,*)_{\Gamma^*}\bigr)$.
Its non-frozen part is
$Q_{\Gamma^*}^{\circ}
:=
\bigl(
\Lambda_{\Gamma^*}^{\circ},
\{e_G\},
\operatorname{Res}_{\Lambda_{\Gamma^*}^{\circ}}(*,*)_{\Gamma^*}
\bigr)$,
where the non-frozen basic vectors correspond to the internal faces
$G$ of $\Gamma$ \cite{Goncharov}.
For an $A_m$-bipartite graph $\Gamma$ on $S$, the full quiver assigned to $\Gamma$ is $Q_{\Gamma}^{\circ}
:=\bigl(\Lambda_{\Gamma}^{\circ},\{e_F\},(*,*)_{\Gamma}\bigr)$.

\subsubsection{Quiver Lattices and Bases of $\mathrm{K}_0(\mathcal{D}_{\Gamma})$ Groups}

Now let $\Gamma$ be an $A_m$ or $A_m^*$-bipartite graph on $S$. The full quiver $Q_{\Gamma}$ is equipped with the canonical potential
\begin{equation}
W_\Gamma
:=\sum_{b:\,\bullet\text{-vertices of }\Gamma} C_b
-\sum_{r:\,\circ\text{-vertices of }\Gamma} C_r,
\end{equation}
where each $C_b$ is a clockwise cycle in $Q_{\Gamma}$ around a $\bullet$-vertex $b$ in $\Gamma$ and each $C_r$ is a anticlockwise cycle in $Q_{\Gamma}$ around a $\circ$-vertex $r$ in $\Gamma$ \cite{Goncharov}. If $\Gamma$ has external faces, then the non-frozen subquiver $Q_{\Gamma}^{\circ}$ is equipped with the same potential $W_{\Gamma}$, as there is no arrow between external vertices with other vertices in $Q_{\Gamma}$, according to Proposition~\ref{prop:3.3}.

The quivers with potential $(Q_\Gamma,W_\Gamma)$ and $(Q_\Gamma^{\circ},W_\Gamma)$ determine 3CY categories
$\mathcal D_\Gamma
:=
\mathcal D_{\mathrm{fd}}
\bigl(\Gamma(Q_\Gamma,W_\Gamma)\bigr)$ and $\mathcal D_\Gamma^{\circ}
:=\mathcal D_{\mathrm{fd}}
\bigl(\Gamma(Q_\Gamma^{\circ},W_\Gamma)\bigr)$ respectively. Denote by $\mathcal F(\Gamma)$ the set of faces of $\Gamma$, and by $\mathcal F^\circ(\Gamma)$ the set of internal faces of $\Gamma$. Set $\widetilde\Lambda_\Gamma
:=\mathbb Z[\mathcal F(\Gamma)]$.

\begin{proposition}
\label{prop:quiver-K0-lattice}
Let $\Gamma$ be an $A_m^*$-bipartite graph on a decorated surface $S$ without boundary.
The classes of the vertex simple objects
$\{S_F\}_{F\in \mathcal F(\Gamma)}$
form a basis of the Grothendieck group $K_0(\mathcal D_\Gamma)$, and hence there is a canonical isomorphism of abelian groups
\begin{equation}
k:
\widetilde\Lambda_\Gamma
=\mathbb Z[\mathcal F(\Gamma)]
\xrightarrow{\sim}
K_0(\mathcal D_\Gamma),
\qquad
e_F\longmapsto [S_F].
\end{equation}

The full quiver lattice is the quotient
$\Lambda_\Gamma
\cong
\mathbb Z[\mathcal F(\Gamma)]/r(H_2(S;\mathbb Z))$ \cite{Goncharov}.
Consequently there is a quotient map
\begin{equation}
\widetilde\Lambda_\Gamma
\to
\frac{K_0(\mathcal D_\Gamma)}
     {r(H_2(S;\mathbb Z))},
\end{equation}
where $\sum_{F\in \mathcal F(\Gamma)} e_F$ is sent to $0$.
Since $S$ is connected, compact, and has no boundary,
$H_2(S;\mathbb Z)\cong\mathbb Z$.
\begin{equation}
\Lambda_\Gamma
\cong
\frac{K_0(\mathcal D_\Gamma)}
     {\mathbb Z\left\langle
       \sum_{F\in\mathcal F(\Gamma)}[S_F]
     \right\rangle}.
\end{equation}

The classes of vertex simple objects
$\{[S_G]\}_{G\in\mathcal F^\circ(\Gamma)}$
form a basis of the Grothendieck group $K_0(\mathcal D_\Gamma^\circ)$, and there is a canonical isomorphism of abelian groups
\begin{equation}
k|_{\Lambda_\Gamma^\circ}:\Lambda_\Gamma^\circ
\cong
\mathbb Z[\mathcal F^\circ(\Gamma)]
\xrightarrow{\sim}
K_0(\mathcal D_\Gamma^\circ),
\qquad
e_G\longmapsto[S_G].
\end{equation}
\end{proposition}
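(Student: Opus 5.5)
The statement to prove is Proposition~\ref{prop:quiver-K0-lattice}. I would reduce it to general facts about finite-dimensional derived categories of Ginzburg algebras, and then quotient by the relation coming from $H_2(S;\mathbb Z)$.

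\textbf{Step 1: a basis of $K_0(\mathcal D_\Gamma)$.} Every $M\in\mathcal D_{\mathrm{fd}}(\Gamma(Q_\Gamma,W_\Gamma))$ has finite-dimensional total cohomology. The standard t-structure with heart $\mathcal A(Q_\Gamma,W_\Gamma)$ is bounded on $\mathcal D_{\mathrm{fd}}$, so $M$ admits a finite filtration by cohomology triangles whose graded pieces are $H^p(M)[-p]$, with each $H^p(M)\in\mathcal A(Q_\Gamma,W_\Gamma)$. Hence $[M]=\sum_p(-1)^p[H^p(M)]$.

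Each object of the heart is a finite-dimensional $J(Q_\Gamma,W_\Gamma)$-module. It is nilpotent because we work with the completed algebra, so it has a finite composition series by the simples $S_F$. This shows that the classes $[S_F]$ generate $K_0(\mathcal D_\Gamma)$.

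For linear independence, I would use the dimension-vector homomorphism $K_0(\mathcal D_\Gamma)\to\mathbb Z^{\mathcal F(\Gamma)}$ given by $M\mapsto\sum_p(-1)^p\underline{\dim}\,H^p(M)$. It is additive on triangles by the long exact cohomology sequence, and it sends $[S_F]$ to the standard basis vector. Together these give the isomorphism $k:e_F\mapsto[S_F]$. This is the step I expect to carry the real content: the boundedness of the t-structure and the nilpotence of the heart, which I would cite from \cite{KellerYang}.

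\textbf{Step 2: the quotient.} The full quiver lattice is $\Lambda_\Gamma\cong\mathbb Z[\mathcal F(\Gamma)]/r(H_2(S;\mathbb Z))$ by the Mayer--Vietoris sequence of Section~3.1. Since $S$ is closed, connected and oriented, $H_2(S;\mathbb Z)=\mathbb Z[S]$. The map $r$ sends the fundamental class to the sum of all faces with their induced orientations, $r([S])=\sum_F e_F$. Transporting this along $k$ identifies $\Lambda_\Gamma$ with $K_0(\mathcal D_\Gamma)/\mathbb Z\langle\sum_F[S_F]\rangle$, which is the stated quotient map.

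\textbf{Step 3: the non-frozen part.} Here I would apply Step 1 verbatim to $(Q_\Gamma^\circ,W_\Gamma)$; its vertices are exactly the internal faces $\mathcal F^\circ(\Gamma)$. By Proposition~\ref{prop:3.3}, external vertices carry no arrows to the other vertices. So the potential restricts without change, and the Ginzburg algebra splits off the frozen vertices.

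This gives $K_0(\mathcal D_\Gamma^\circ)=\bigoplus_{G}\mathbb Z[S_G]$. Combined with the isomorphism $\Lambda_\Gamma^\circ\cong\mathbb Z[\mathcal F^\circ(\Gamma)]$, on which no relation holds, this yields $k|_{\Lambda_\Gamma^\circ}$.

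Finally, I would check that the restriction is compatible with the inclusion $\Lambda_\Gamma^\circ\hookrightarrow\Lambda_\Gamma$. This amounts to showing that $\sum_F e_F$ is not contained in $\mathbb Z[\mathcal F^\circ(\Gamma)]$ when external faces exist. That holds because the relation involves every external face with coefficient $1$.
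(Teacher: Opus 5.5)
Your proposal is correct and follows the route the paper takes implicitly. The paper gives no separate proof: it asserts, citing Keller--Yang, that the vertex simples of the standard heart give a basis of $K_0(\mathcal D_{\mathrm{fd}})$, and then quotients by $r([S])=\sum_F e_F$ using the Mayer--Vietoris sequence of Section~3.1, while your Step~1 supplies the details the paper leaves unstated (bounded t-structure, finite-length nilpotent heart, dimension-vector map). One caution on Step~3: the splitting remark is unnecessary, since Step~1 applies to any quiver with potential on the vertex set $\mathcal F^\circ(\Gamma)$, and Proposition~\ref{prop:3.3} shows only that the skew form between external and internal faces vanishes, not that there are no arrows (the $2$-valent $\bullet$-vertices on the boundaries of external faces give cancelling $2$-cycles).
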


\begin{proposition}
\label{prop:quiverK0lattice}
Let $\Gamma$ be an $A_m$-bipartite graph on a decorated surface $S$ without boundary. Then $\mathcal F(\Gamma)=\mathcal F^\circ(\Gamma)$, and hence the full quiver lattice is $\Lambda_\Gamma^\circ\cong\mathbb Z[\mathcal F^\circ(\Gamma)]=Z[\mathcal F(\Gamma)]=\widetilde\Lambda_\Gamma$.

The classes of the vertex simple objects
$\{S_G\}_{G\in \mathcal F(\Gamma)}$
form a basis of the Grothendieck group $K_0(\mathcal D_\Gamma)=K_0(\mathcal D_\Gamma^\circ)$, and hence there is a canonical isomorphism of abelian groups
\begin{equation}
k:
\Lambda_\Gamma^\circ
\xrightarrow{\sim}
K_0(\mathcal D_\Gamma^\circ),
\qquad
e_G\longmapsto [S_G].
\end{equation}
\end{proposition}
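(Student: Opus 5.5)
The plan is to deduce the statement directly from the identification $e_G\mapsto[S_G]$ already used for $A_m^*$-graphs (Proposition~\ref{prop:quiver-K0-lattice}), together with Proposition~\ref{prop:3.3}(1).

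First we show $\mathcal F(\Gamma)=\mathcal F^\circ(\Gamma)$. By Proposition~\ref{prop:3.3}(1), an $A_m$-bipartite graph on a surface without boundary has only internal faces. The reason is that every special point sits on a $\circ$-vertex, so no face contains a special point, and there is no boundary for a face to meet. It follows that $\mathbb Z[\mathcal F^\circ(\Gamma)]=\mathbb Z[\mathcal F(\Gamma)]=\widetilde\Lambda_\Gamma$. The Mayer--Vietoris isomorphism of Section~3.1 then identifies $\Lambda_\Gamma^\circ$ with this free group, with no relation among the basis vectors, again as in Proposition~\ref{prop:3.3}(1).

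Next we compare the two quivers. Since there are no frozen vertices, the non-frozen subquiver $Q_\Gamma^\circ$ coincides with $Q_\Gamma$, and both carry the same potential $W_\Gamma$. Hence the Ginzburg dg algebras agree, and so do the categories: $\mathcal D_\Gamma=\mathcal D_\Gamma^\circ$.

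For the basis of $K_0$, we invoke the description of the standard heart $\mathcal A(Q,W)\simeq\operatorname{mod}_{\mathrm{fd}}J(Q,W)$. Its simple objects $S_G$, one for each vertex $G\in Q_0=\mathcal F(\Gamma)$, generate the heart under extensions. By the stated fact $K_0(\mathcal D(\Gamma))\cong\bigoplus_{i\in Q_0}\mathbb Z[S_i]$, their classes therefore form a $\mathbb Z$-basis. The map $k:e_G\mapsto[S_G]$ sends a basis to a basis, so it is an isomorphism of abelian groups. It is canonical because both bases are indexed by the same set $\mathcal F(\Gamma)$.

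The only step needing care is the freeness of $K_0(\mathcal D_{\mathrm{fd}})$ on the simples. The quiver $Q_\Gamma$ has finitely many vertices, so we rely on the cited result (Keller--Yang): every object of $\mathcal D_{\mathrm{fd}}$ admits a finite filtration by shifts of objects of the heart, and the heart is a finite-length category whose simples are exactly the $S_G$. We assume this rather than reprove it. We also note that the $r(H_2(S;\mathbb Z))$ quotient from the $A_m^*$ case does not arise here. The relation $\sum_F e_F=0$ involves only face classes in $H_1(\Gamma)$, and since $\Lambda^\circ_\Gamma\cong\mathbb Z[\{\text{internal faces}\}]$ is free by the second Mayer--Vietoris sequence, we simply work with $\Lambda_\Gamma^\circ$, exactly as the statement does.
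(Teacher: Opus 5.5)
Your argument is essentially the paper's. The proposition is stated there without a separate proof, as a direct consequence of Proposition~\ref{prop:3.3}(1) (all faces are internal, so $Q_\Gamma^\circ=Q_\Gamma$ with the same potential) together with the fact that the vertex simples give a $\mathbb Z$-basis of $K_0(\mathcal D_{\mathrm{fd}})$, and this is exactly what you do. One small correction concerns the second Mayer--Vietoris sequence, which does not apply literally to an $A_m$-graph because $\Gamma$ passes through the special points; on the closed surface $S$ the first sequence in fact gives the relation $\sum_F e_F=0$ in $H_1(\Gamma)$, so justify the freeness of $\Lambda_\Gamma^\circ$ on the faces via Proposition~\ref{prop:3.3}(3), i.e.\ by passing to the associated $A_m^*$-graph $\Gamma^*\subset\mathring S$, whose internal faces are precisely the faces of $\Gamma$.
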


\begin{figure}[htbp]
\centering
\includegraphics[width=0.4\textwidth]{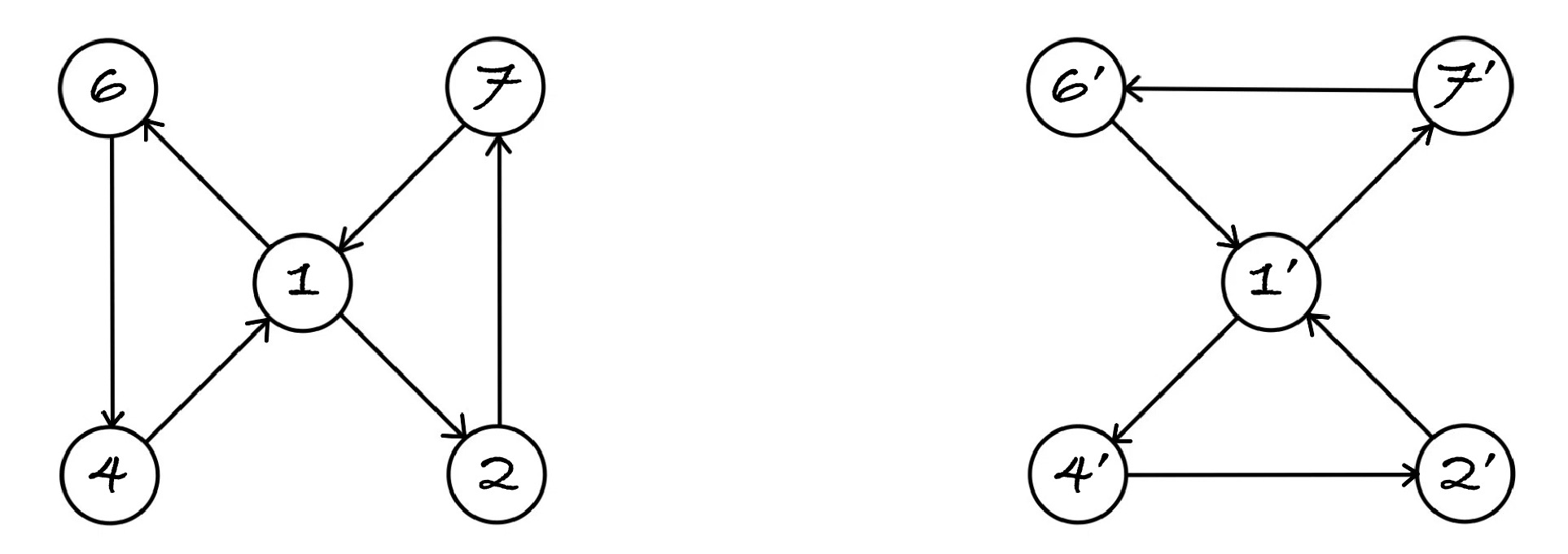}
\caption{The local change of the quiver under a \ttt move at face 1.\\ (This figure only shows the part of the quiver that changes under the mutation.\\ The rest of the quiver remains unchanged.)}
\label{fig:localquiver}
\end{figure}

\subsubsection{Mutations on $\mathrm{K}_0(\mathcal{D}_{\Gamma})$ Induced by Two-by-two Moves}
Let $\mu_F:\Gamma\to\Gamma'$
be a two-by-two move centred at an internal face $F$ whose corresponding vertex $v_F$ in $Q_\Gamma$ and $Q_\Gamma^\circ$ is 4-valent.
Then $\mu_{v_F}(Q_\Gamma,W_\Gamma)
\sim_{\mathrm{r.e.}}
(Q_{\Gamma'},W_{\Gamma'})$ and $\mu_{v_F}(Q_\Gamma^\circ,W_\Gamma)
\sim_{\mathrm{r.e.}}
(Q_{\Gamma'}^\circ,W_{\Gamma'})$,
after reduction of the trivial $2$-cycle part \cite{Goncharov}.

Consequently, Keller--Yang mutation gives exact equivalences
$\mathcal D_{\Gamma'}
\simeq
\mathcal D_\Gamma$ and $\mathcal D_{\Gamma'}^{\circ}
\simeq
\mathcal D_{\Gamma}^{\circ}$.
The induced isomorphisms on Grothendieck groups
$K_0(\mathcal D_{\Gamma'})
\xrightarrow{\sim}
K_0(\mathcal D_\Gamma)$
and
$K_0(\mathcal D_{\Gamma'}^{\circ})
\xrightarrow{\sim}
K_0(\mathcal D_\Gamma^{\circ})$
carry the basis of vertex simple objects to the corresponding mutated basis of vertex simple objects.

Recall the change of the local part of $Q_{\Gamma}$ or $Q_{\Gamma}^{\circ}$ under a \ttt move at face 1, as shown in Figure~\ref{fig:localquiver}. A \ttt move at face 1 corresponds to the mutation at vertex 1. The orientation of the \ttt move at the level of the spectral cover and the orientation of the induced isomorphism between $K_0$ groups are chosen in accordance with each other.

If we choose the induced isomorphism $(\Phi^-)_\ast:K_0(\mathcal D_{\Gamma'})
\xrightarrow{\sim}
K_0(\mathcal D_\Gamma)$, and accordingly $(\Phi^-)_\ast|_{K_0(\mathcal D_{\Gamma'}^{\circ})}:K_0(\mathcal D_{\Gamma'}^{\circ})
\xrightarrow{\sim}
K_0(\mathcal D_\Gamma^{\circ})$ between the Grothendieck groups, then by Lemma~\ref{le:k0},
\begin{equation}
\label{eq:left}
e_1'\mapsto -e_1,\qquad 
\begin{array}{l}
e_7'\mapsto e_7,\\[1pt]
e_4'\mapsto e_4,
\end{array}
\qquad
\begin{array}{l}
e_6'\mapsto e_6+e_1,\\[1pt]
e_2'\mapsto e_2+e_1.
\end{array}
\end{equation}
The rest of the basis is mapped to the corresponding ones with the same indices.

If we choose the induced isomorphism $(\Phi^+)_\ast:K_0(\mathcal D_{\Gamma'})
\xrightarrow{\sim}
K_0(\mathcal D_\Gamma)$, and accordingly $(\Phi^+)_\ast|_{K_0(\mathcal D_{\Gamma'}^{\circ})}:K_0(\mathcal D_{\Gamma'}^{\circ})
\xrightarrow{\sim}
K_0(\mathcal D_\Gamma^{\circ})$ between the Grothendieck groups, then by Lemma~\ref{le:k0},
\begin{equation}
\label{eq:right}
e_1'\mapsto -e_1,\qquad 
\begin{array}{l}
e_7'\mapsto e_7+e_1,\\[1pt]
e_4'\mapsto e_4+e_1,
\end{array}
\qquad
\begin{array}{l}
e_6'\mapsto e_6,\\[1pt]
e_2'\mapsto e_2.
\end{array}
\end{equation}
The rest of the basis is mapped to the corresponding ones with the same indices.

The isomorphisms between Grothendieck groups in Equation~\ref{eq:left} and Equation~\ref{eq:right} have the same form as the isomorphisms between quiver lattices in Theorem~\ref{thm:ttt}, and Equation~\ref{eq:trop} as well.

Therefore, we arrive at the following theorem.

\begin{theorem}
\label{thm:7com}
Let $\Gamma$ be an $A_m$ or $A_m^*$-bipartite graph on a genus-$g$ decorated surface $S$ without boundary, with $n$ special points, satisfying $2g-2+n>0$.

Let $Q_{\Gamma}
:=\bigl(\Lambda_{\Gamma},\{e_F\},(*,*)_{\Gamma}\bigr)$ be the full quiver and let $Q_{\Gamma}^{\circ}
:=\bigl(\Lambda_{\Gamma}^{\circ},\{e_F\},\operatorname{Res}_{\Lambda_{\Gamma}^{\circ}}(*,*)_{\Gamma}\bigr)$ be the mutable subquiver.
Denote by
$\mathcal D_\Gamma
:=
\mathcal D_{\mathrm{fd}}
\bigl(\Gamma(Q_\Gamma,W_\Gamma)\bigr)$ and $\mathcal D_\Gamma^{\circ}
:=\mathcal D_{\mathrm{fd}}
\bigl(\Gamma(Q_\Gamma^{\circ},W_\Gamma)\bigr)$ the 3CY categories determined by the quivers with potential $(Q_\Gamma,W_\Gamma)$ and $(Q_\Gamma^{\circ},W_\Gamma)$ respectively. 

Let $\mu_F:\Gamma\to\Gamma'$
be a two-by-two move centred at an internal face $F$ whose corresponding vertex $v_F$ in $Q_\Gamma$ is 4-valent.
Then there is a commutative diagram of isomorphisms:
\begin{equation}
\label{eq:diagram}
\begin{tikzcd}
\widetilde\Lambda_{\Gamma'}
=
\mathbb Z[\mathcal F(\Gamma')]
\arrow[r, "\phi^\pm"]
\arrow[d, "k'"]
&
\widetilde\Lambda_\Gamma
=
\mathbb Z[\mathcal F(\Gamma)]
\arrow[d, "k"]
\\
K_0(\mathcal D_{\Gamma'})
\arrow[r, "(\Phi^\pm)_\ast"]
&
K_0(\mathcal D_\Gamma).
\end{tikzcd}
\end{equation}
The skew bilinear intersection form on the quiver lattices and the skew Euler form on the $K_0$ groups are compatible with the isomorphisms in the commutative diagram.
The restriction of the above commutative diagram to $\Lambda_{\Gamma'}^{\circ}
\cong\mathbb Z[\mathcal F^\circ(\Gamma')]$ and $K_0(\mathcal{D}_{\Gamma'}^{\circ})$ is also a commutative diagram.
\end{theorem}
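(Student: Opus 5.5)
The plan is to reduce the commutativity of Diagram~\ref{eq:diagram} to a comparison of two explicit basis changes. The bottom arrow is given by Lemma~\ref{le:k0}, and the top arrow is given by Theorem~\ref{thm:ttt}.

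First, fix the vertical maps. By Proposition~\ref{prop:quiver-K0-lattice} (for $A_m^*$-graphs) and Proposition~\ref{prop:quiverK0lattice} (for $A_m$-graphs), $k$ and $k'$ are the canonical isomorphisms $e_G\mapsto[S_G]$ and $e'_G\mapsto[S'_G]$, indexed by faces of $\Gamma$ and $\Gamma'$. Faces of $\Gamma$ and $\Gamma'$ are in canonical bijection away from the mutated face, since a \ttt move only relabels face $F$. So it suffices to check commutativity on the basis $\{e'_G\}$ of $\widetilde\Lambda_{\Gamma'}$.

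Next, identify the local quiver around $v_F$ with the picture in Figure~\ref{fig:localquiver}. The arrows $1\to 2$, $1\to 6$ are outgoing, the arrows $4\to 1$, $7\to 1$ are incoming, and face $1=F$. This is the non-degenerate case. In the degenerate case where $2=6$ and $4=7$, the same formulas hold with multiplicities, since $\epsilon_{1j}$ then counts two arrows. With these conventions, $\varepsilon_{i1}=\#(i\to1)-\#(1\to i)$.

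With this in place, compute $k\circ\phi^-$ on basis vectors:
\begin{equation*}
e_1'\mapsto -[S_1],\qquad e_j'\mapsto [S_j]+\epsilon_{1j}[S_1]\quad (j\neq 1).
\end{equation*}
Compute also $(\Phi^-)_*\circ k'$. This sends $e_1'\mapsto-[S_1]$ and $e_j'\mapsto [S_j]+[-\varepsilon_{j1}]_+[S_1]$, where $[-\varepsilon_{j1}]_+=\#(1\to j)=\epsilon_{1j}$. The two agree, and this is exactly the comparison of Equation~\ref{eq:basis} with Equation~\ref{eq:left}. The $+$ case compares Equation~\ref{eq:basis1} with Equation~\ref{eq:right} using $[\varepsilon_{j1}]_+=\#(j\to1)$. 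Faces not adjacent to $F$ map identically in all four maps.

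For the restriction to the non-frozen part, note that $\phi^\pm$ preserves $\Lambda^\circ$ by Theorem~\ref{thm:ttt}, and $(\Phi^\pm)_*$ preserves $K_0(\mathcal D^\circ)$. For frozen faces, Proposition~\ref{prop:3.3} gives that external face vectors have zero pairing with everything. So they never acquire an $e_1$ term, and the restricted square commutes.

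The compatibility of forms comes in two parts:
\begin{itemize}
\item Both vertical maps are isometries. By Lemma~\ref{lemma:form}, $\langle e_F,e_G\rangle_\Gamma$ counts signed edges, which equals the signed arrow count. By the remark after the Euler form definition, $\chi(S_i,S_j)=\varepsilon_{ij}$, so $k$ and $k'$ carry one form to the other.
\item The bottom maps $(\Phi^\pm)_*$ are induced by exact equivalences, so they preserve $\chi$.
\end{itemize}
Since the square commutes and three of its sides preserve the forms, the fourth side $\phi^\pm$ does as well. This is consistent with $\phi^\pm$ being induced by a homeomorphism (the ura bijection) on intersection forms.

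The main obstacle is sign and orientation bookkeeping. One must match the clockwise ura operation with the left tilt $\Phi^-$, and check that the arrow conventions behind Theorem~\ref{thm:ttt} (the $\epsilon_{1j}$ there) agree with the Ext-convention $\dim\operatorname{Ext}^1(S_j,S_i)=\#(i\to j)$ used for $\varepsilon$. I would fix one explicit local model, Figure~\ref{fig:latticeiso}, and verify a single pairing such as $\langle l_1,l_2\rangle$ against $\chi(S_1,S_2)$. If the signs turn out opposite, I would swap the labels $\pm$ on the bottom row; this is allowed because the orientations are declared to be chosen in accordance with each other.
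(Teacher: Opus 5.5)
Your proposal is correct and takes essentially the same route as the paper. The paper obtains the theorem by observing that the signed $K_0$-mutations of Lemma~\ref{le:k0} (Equations~\ref{eq:left} and~\ref{eq:right}) have the same form as the ura-induced maps $\phi^\pm$ of Theorem~\ref{thm:ttt} under $e_F\mapsto[S_F]$. Your extra step goes slightly beyond this: you deduce that $\phi^\pm$ respects the forms because the other three sides of the square are isometries, which makes explicit a form-compatibility the paper only asserts.
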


\begin{remark}
Let $W$ be an $A_{m-1}$-web on a decorated surface $S$ without boundary, and let $\Gamma$ be the corresponding bipartite $A_{m-1}$-graph. Let $W'$ and $\Gamma'$ be the $A_{m-1}$-web and bipartite $A_{m-1}$-graph respectively after a \ttt move.

By Proposition~\ref{prop:quiverK0lattice},
the full quiver lattice is 
$\Lambda_{\Gamma}^{\circ}$ as $W$ only has internal faces.
Then one has $\Lambda_{\Gamma}^{\circ}\cong \ker (H_1(\Sigma_W;\mathbb Z)\xrightarrow{(\pi_{W})_*}H_1(\mathring{S}; \mathbb Z))\cong\ker (H_1(\Sigma_W;\mathbb Z)\xrightarrow{(\pi_{W})_*}H_1(S;\mathbb Z))$.

Hence the commutative diagram in Equation~\ref{eq:diagram} can be written as
\begin{equation}
\begin{tikzcd}
\boxed{
\ker (H_1(\Sigma_{W'};\mathbb Z)\xrightarrow{(\pi_{W'})_*}H_1(S;\mathbb Z))
}
\arrow[r, "\phi^\pm"]
\arrow[d, "k'"]
&
\boxed{
\ker (H_1(\Sigma_W;\mathbb Z)\xrightarrow{(\pi_{W})_*}H_1(S;\mathbb Z))
}
\arrow[d, "k"]
\\
K_0(\mathcal{D}_{\Gamma'}^{\circ})
\arrow[r, "(\Phi^\pm)_\ast"]
&
K_0(\mathcal{D}_{\Gamma}^{\circ}),
\end{tikzcd}
\end{equation}
\end{remark}

\begin{remark}
The \ttt move on $\Gamma$ is the only type of elementary move that change the quiver with potential $(Q_\Gamma, W_\Gamma)$.
Shrink/expand moves on $\Gamma$ leave the quiver with potential $(Q_\Gamma, W_\Gamma)$ unchanged. So they induce \emph{trivial automorphisms} on $K_0(\mathcal{D}_\Gamma)$.
\end{remark}

\subsubsection{Gauss--Manin Monodromies and Mutations on $\mathrm{K}_0(\mathcal{D}_{\Gamma})$}

Let $\mathring{C}=C\setminus P$
be a $n$-punctured smooth complex curve of genus-$g$ with $2g-2+n>0$. Let $G=SL_m$, $m\ge2$.
Let $a$ be a point in $B_{G,C,P}^{\mathrm{good}}$.

Recall that by Corollary~\ref{cor:iso1}, given any point $a\in B_{G,C,P}^{\mathrm{good}}$, we can find a generic $A_{m-1}$-web $W$ and its corresponding bipartite $A_{m-1}$-graph $\Gamma$, such that $\pi_a:\Sigma_a\to C$ and $\pi_W:\Sigma_W\to C$ are Hurwitz isomorphic. Hence a basis of $H_1(\Prym(\Sigma_a);\mathbb Z)\cong \ker (H_1(\Sigma_a;\mathbb Z)\xrightarrow{(\pi_{a})_*}H_1(C;\mathbb Z))$ can be imposed from the basis of $\Lambda_{\Gamma}^{\circ}\cong \ker (H_1(\Sigma_W;\mathbb Z)\xrightarrow{(\pi_{W})_*}H_1(S;\mathbb Z))$ by the induced isomorphism.

\begin{lemma}
There is an isomorphism $k_c:H_1(\Prym(\Sigma_a);\mathbb Z)\to K_0(\mathcal{D}_{\Gamma}^{\circ})$, where $K_0(\mathcal{D}_{\Gamma}^{\circ})$ is the Grothendieck group associated with the quiver with potential assigned to some generic $A_{m-1}$-web $W$ and its corresponding bipartite $A_{m-1}$-graph $\Gamma$ on $C$.
\end{lemma}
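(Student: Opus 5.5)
The isomorphism $k_c$ will be built as a composite of three isomorphisms that are already available: a Hurwitz-isomorphism pushforward, Goncharov's identification of the kernel lattice with the face lattice, and the simple-object basis of $K_0$.

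First, by Corollary~\ref{cor:iso1}, applied to the given point $a\in B_{G,C,P}^{\mathrm{good}}$, I choose a generic $A_{m-1}$-web $W$, with bipartite $A_{m-1}$-graph $\Gamma$, in the appropriate isotopy class. This gives an orientation-preserving homeomorphism $f:\Sigma_W\to\Sigma_a$ with $\pi_a\circ f=\pi_W$; by Lemma~\ref{le:diff} it is a diffeomorphism. Functoriality of $H_1$ together with $\pi_a\circ f=\pi_W$ shows that $f_*$ carries $\ker\bigl((\pi_W)_*\bigr)$ isomorphically onto $\ker\bigl((\pi_a)_*\bigr)$. Since $f$ preserves orientation, $f_*$ also preserves the intersection form.

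Second, I identify both ends of this map with the lattices in the statement.
\begin{itemize}
\item On the Hitchin side, the Prym lemma gives $H_1(\Prym(\Sigma_a);\mathbb Z)\cong\ker\bigl(H_1(\Sigma_a;\mathbb Z)\to H_1(C;\mathbb Z)\bigr)$.
\item On the web side, Goncharov's lemma gives $\Lambda_\Gamma^\circ\cong\ker\bigl(H_1(\Sigma_W;\mathbb Z)\to H_1(\mathring S;\mathbb Z)\bigr)$.
\end{itemize}
Here one must match kernels taken over $\mathring C$ with kernels taken over the compactified $C$. This is the step I expect to be the main obstacle. The first Remark after Theorem~\ref{thm:7com} records that the two kernels agree when $\Gamma$ is an $A_{m-1}$-graph, because $\Gamma$ has only internal faces. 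To justify this I would argue as follows:
\begin{itemize}
\item the completion of $\Sigma_W$ fills in the $mn$ punctures over $P$;
\item the classes killed by the compactification are exactly the puncture loops;
\item these puncture loops project to the puncture loops in $H_1(\mathring C)$, which are not in the kernel over $\mathring C$;
\item hence the natural map between the two kernels is an isomorphism.
\end{itemize}
This is consistent with Proposition~\ref{prop:3.3}(1), where the lattice is the free lattice on internal faces with no relation. I would also use Proposition~\ref{prop:3.3} to check that the intersection form restricts to the quiver form $\langle\,,\rangle_\Gamma$ of Lemma~\ref{lemma:form}.

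Third, Proposition~\ref{prop:quiverK0lattice} gives the canonical isomorphism $k:\Lambda_\Gamma^\circ\to K_0(\mathcal D_\Gamma^\circ)$, $e_G\mapsto[S_G]$. I then set
\[
k_c:=k\circ(\text{kernel identification})^{-1}\circ(f_*)^{-1}.
\]
As a composite of isomorphisms, $k_c$ is an isomorphism. For form compatibility, which is needed later in Theorem~\ref{thm:intro}, I note that on simples the Euler form is $\chi(S_i,S_j)=\varepsilon_{ij}$ (the Remark after the definition of the Euler form), and this equals $\langle e_i,e_j\rangle_\Gamma$ by Lemma~\ref{lemma:form}. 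So $k_c$ intertwines the intersection form with the skew Euler form, up to the paper's fixed sign convention.
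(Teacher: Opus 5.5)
\textbf{The kernel-matching step fails, and no argument can rescue it.}

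Your route is the one the paper itself uses: the paragraph before the lemma combines Corollary~\ref{cor:iso1}, the kernel identification in the Remark after Theorem~\ref{thm:7com}, and $k$ from Proposition~\ref{prop:quiverK0lattice}. The Hurwitz-pushforward step and the $K_0$ step are fine. The step you flagged as the main obstacle, however, cannot be completed, because the two lattices have different ranks.

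\begin{itemize}
\item The rank of $K_0(\mathcal D_\Gamma^\circ)$ is the number of faces of $\Gamma$, namely $(m^2-1)(2g-2+n)$.
\item The lattice $H_1(\Prym(\Sigma_a);\mathbb Z)$ has rank $2(g(\Sigma_a)-g)=2(m^2-1)(g-1)+m(m-1)n$.
\item The second is smaller by $(m-1)n\ge 1$. This is also exactly the amount by which $\dim B_{SL_m,C,P}$ exceeds $\dim\Prym(\Sigma_a)$, i.e.\ the residue (Casimir) directions.
\end{itemize}
In the paper's own example ($m=2$, once-punctured torus), you would be identifying $\mathbb Z^3$, the $K_0$ of the three-vertex Markov quiver, with $\mathbb Z^2$, the Prym lattice of a genus-$2$ double cover of an elliptic curve.

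\textbf{Where your third bullet goes wrong.} Since $\pi_W$ is unramified over $P$, the loop around $p_i$ has $m$ lifts $\ell_{i,1},\dots,\ell_{i,m}$, and all of them project to the same class $c_i\in H_1(\mathring C;\mathbb Z)$. So the differences $\ell_{i,j}-\ell_{i,j'}$ lie in $\Lambda_\Gamma^\circ$, yet they are killed by the compactification. When $n=1$, even a single $\ell_{1,j}$ lies in $\Lambda_\Gamma^\circ$, because $c_1=0$ in $H_1(\mathring C;\mathbb Z)$.

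Consequently the natural map $\Lambda_\Gamma^\circ\to\ker\bigl(H_1(\overline{\Sigma}_W;\mathbb Z)\to H_1(C;\mathbb Z)\bigr)$ is surjective (lift a class, then subtract suitable multiples of the $\ell_{i,1}$). Its kernel is $N:=\Lambda_\Gamma^\circ\cap Z_\Gamma$, which has rank $(m-1)n$, so the map is not injective. Your planned form check fails for the same reason. The sublattice $N$ lies in the radical of the Euler form (for the Markov quiver the radical is spanned by $e_1+e_2+e_3$), whereas the intersection form on the Prym lattice is nondegenerate. So no isometry exists even rationally.

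\textbf{What your argument does prove.} The map $f$ preserves the fibres over $P$, so it restricts to the punctured curves. This gives $K_0(\mathcal D_\Gamma^\circ)\cong\ker\bigl(H_1(\mathring{\Sigma}_a;\mathbb Z)\to H_1(\mathring C;\mathbb Z)\bigr)$, the analogue of Bridgeland--Smith's hat-homology of the punctured spectral curve. It also gives $H_1(\Prym(\Sigma_a);\mathbb Z)\cong K_0(\mathcal D_\Gamma^\circ)/k(N)$. The statement has to be corrected in one of these two ways. The Remark after Theorem~\ref{thm:7com}, which asserts that the kernels over $\mathring S$ and over $S$ agree and on which the paper's own argument rests, has the same defect.
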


\begin{theorem}
\label{thm:GM}
Let $\mathring{C}=C\setminus P$
be a $n$-punctured smooth complex curve of genus-$g$ with $2g-2+n>0$. Let $G=SL_m$, $m\ge2$.
Let $a\in B_{G,C,P}^{\mathrm{good}}$ be close to a point $b_0\in \mathcal{D}_{bdry,C,P}^\circ$.

Then for any sufficiently small meridian in $B_{G,C,P}^{\mathrm{good}}$ around $\mathcal{D}_{bdry,C,P}$ starting and ending at $a$, there is a commutative diagram of isomorphisms:
\begin{equation}
\begin{tikzcd}
H_1(\Prym(\Sigma_a);\mathbb Z)
\arrow[r, "GM"]
\arrow[d, "k_c"]
&
H_1(\Prym(\Sigma_a);\mathbb Z)
\arrow[d, "k_c"]
\\
K_0(\mathcal{D}_{\Gamma}^{\circ})
\arrow[r, "(\Phi'^\pm)_\ast\circ(\Phi^\pm)_\ast"]
&
K_0(\mathcal{D}_{\Gamma}^{\circ}),
\end{tikzcd}
\end{equation} 
where the top automorphism is given by the Gauss--Manin monodromy, and $K_0(\mathcal{D}_{\Gamma}^{\circ})$ is the Grothendieck group associated with the quiver with potential assigned to certain generic $A_{m-1}$-web $W$ and its corresponding bipartite $A_{m-1}$-graph $\Gamma$ on $C$.

$(\Phi'^\pm)_\ast\circ(\Phi^\pm)_\ast$ is the composition of two successive mutations on $K_0(\mathcal{D}_{\Gamma}^{\circ})$-basis induced by two successive \ttt moves in the same orientation at certain internal face of $W$ in the same orientation.

The skew bilinear intersection form on the Hitchin fiber lattices and the skew Euler form on the $K_0$ groups are compatible with the isomorphisms in the commutative diagram.
\end{theorem}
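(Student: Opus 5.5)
The plan is to assemble the diagram from three pieces already in place: the Hurwitz identification of Corollary~\ref{cor:iso1}, the matching of Gauss--Manin monodromy with the double \ttt move (Propositions~\ref{prop:hittoweb} and~\ref{prop:match}), and the lattice--$K_0$ comparison of Theorem~\ref{thm:7com}.

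First I fix the vertical maps. Since $a\in B_{G,C,P}^{\mathrm{good}}$, Corollary~\ref{cor:iso1} gives a generic $A_{m-1}$-web $W$ with bipartite graph $\Gamma$ and a Hurwitz isomorphism $f:\Sigma_W\to\Sigma_a$. By Lemma~\ref{le:diff}, $f$ is a diffeomorphism, so $f_*$ preserves intersection forms and restricts to an isomorphism $\Lambda_\Gamma^\circ\cong H_1(\Prym(\Sigma_a);\mathbb Z)$. I choose the representative of $W$ exactly as in the proof of Proposition~\ref{prop:hittoweb}: the two simple branch points $q_\pm$ that collide at $b_0$ lie on the two $3$-valent $\bullet$-vertices bounding an internal face $F$. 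Then I set $k_c:=k\circ(f_*)^{-1}$, where $k:\Lambda_\Gamma^\circ\to K_0(\mathcal D_\Gamma^\circ)$ is the isomorphism of Proposition~\ref{prop:quiverK0lattice}, and $k_c$ is the isomorphism of the lemma stated just before Theorem~\ref{thm:GM}.

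Next I show the top arrow is conjugate to the double \ttt move. By Theorem~\ref{thm:pic} and Lemma~\ref{le:picprym}, $GM(\alpha)=\alpha+\langle\alpha,\delta\rangle\delta$, with vanishing cycle $\delta$ in the Prym lattice. Proposition~\ref{prop:hittoweb} identifies $\delta$, up to isotopy, with $f_*$ of the lift of the face loop of $F$. Proposition~\ref{prop:dehn} and the corollary to Theorem~\ref{thm:ttt} then give
\[
f_*^{-1}\circ GM\circ f_*=\phi'^\pm\circ\phi^\pm\big|_{\Lambda_\Gamma^\circ},
\]
where the sign $\pm$ is determined by the orientation of the meridian.

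Finally I pass to $K_0$. Applying Theorem~\ref{thm:7com} twice, once for $\Gamma\to\Gamma'$ and once for $\Gamma'\to\Gamma''=\Gamma$ with the same orientation, and stacking the two commutative squares, gives
\[
k\circ\phi'^\pm\circ\phi^\pm=(\Phi'^\pm)_*\circ(\Phi^\pm)_*\circ k .
\]
Composing with $f_*$ yields the required square. Compatibility of forms follows because each arrow is compatible: $f_*$ preserves intersection forms, $GM$ is induced by a diffeomorphism, $k$ matches the intersection form with the Euler form (Theorem~\ref{thm:7com}), and $\varepsilon_{ij}=\chi(S_i,S_j)$.

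The main obstacle is the orientation bookkeeping. The sign $\pm$ of the ura/tilt convention must agree with the direction of the meridian, so that the Dehn twist $\alpha\mapsto\alpha+\langle\alpha,\delta\rangle\delta$ matches the explicit formula $l_7\mapsto l_7+l_1,\ l_2\mapsto l_2-l_1$ (or its opposite) and not its inverse. To settle this I would compute $\langle l_j,l_1\rangle$ from Lemma~\ref{lemma:form} on the local quiver, and then fix the sign of the meridian so that the two formulas coincide.
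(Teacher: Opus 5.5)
Your argument is the paper's own proof: the paper simply cites Corollary~\ref{cor:iso1}, Proposition~\ref{prop:hittoweb} and Theorem~\ref{thm:7com}, and you have spelled out how these are meant to fit together. The step that fails is one that both you and the paper take for granted: that the Hurwitz homeomorphism $f$ gives an isomorphism $\Lambda_\Gamma^\circ\cong H_1(\Prym(\Sigma_a);\mathbb Z)$.

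The two lattices have different ranks. The lattice $\Lambda_\Gamma^\circ=\ker\bigl(H_1(\Sigma_W;\mathbb Z)\to H_1(\mathring C;\mathbb Z)\bigr)$ belongs to the \emph{punctured} spectral cover $\Sigma_W\simeq\Gamma$. Its rank is the number of internal faces, $(2g-2+n)(m^2-1)$. By contrast, $H_1(\Prym(\Sigma_a);\mathbb Z)=\ker\bigl(H_1(\Sigma_a;\mathbb Z)\to H_1(C;\mathbb Z)\bigr)$ belongs to the \emph{compact} spectral curve. Its rank is $2(g(\Sigma_a)-g)=(2g-2+n)(m^2-1)-n(m-1)$, which is strictly smaller whenever $n\ge1$. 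For example, take $m=2$ on the four-punctured sphere. Then $K_0(\mathcal D_\Gamma^\circ)\cong\mathbb Z^6$ (six non-frozen vertices, Figure~\ref{fig:4Squiver}), while the compact spectral curve has genus one, so the Prym lattice is $\mathbb Z^2$.

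The missing $n(m-1)$ classes are differences of loops around the $m$ preimages of each puncture. They lie in the radical of the form on $\Lambda_\Gamma^\circ$, equivalently of the Euler form. The intersection form on the compact Prym lattice, however, is nondegenerate. So no isomorphism $k_c$ exists, and the statement cannot hold as a diagram of isomorphisms with the compact Prym lattice. The defect sits in Corollary~\ref{cor:iso1} and in the lemma defining $k_c$, so the paper's proof has it too.

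The rest of your argument survives once the lattice is corrected, in either of two ways.
\begin{enumerate}
\item Replace $\Sigma_a$ by $\mathring\Sigma_a=\Sigma_a\setminus\pi_a^{-1}(P)$ and the top lattice by $\ker\bigl(H_1(\mathring\Sigma_a;\mathbb Z)\to H_1(\mathring C;\mathbb Z)\bigr)$. This works for three reasons. Since $f$ covers $\id_C$, it restricts to the punctured surfaces and genuinely identifies this lattice with $\Lambda_\Gamma^\circ$. The open curves form a locally trivial family over $B^{\mathrm{good}}_{G,C,P}$, because $\pi_a$ is unramified over $P$ there. Finally, the Dehn twist about the vanishing cycle is supported away from the punctures, so the Picard--Lefschetz formula is unchanged.
\item Keep the Hitchin fiber lattice. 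Filling in the punctures gives a surjection $\Lambda_\Gamma^\circ\to H_1(\Prym(\Sigma_a);\mathbb Z)$ whose kernel is exactly the radical. So $k_c$ should be the induced isomorphism $H_1(\Prym(\Sigma_a);\mathbb Z)\cong K_0(\mathcal D_\Gamma^\circ)/\operatorname{rad}\chi$. The bottom arrow descends to this quotient because the double mutation fixes $\operatorname{rad}\chi$ pointwise.
\end{enumerate}

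Your orientation worry, by contrast, is harmless. Because $[x]_+-[-x]_+=x$, the two same-orientation double mutations are $v\mapsto v\pm\chi(v,e_k)e_k$. These are mutually inverse, so reversing the meridian just exchanges the two signs.
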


\begin{proof}
By Corollary~\ref{cor:iso1}, Proposition~\ref{prop:hittoweb}, and Theorem~\ref{thm:7com}.
\end{proof}

\newpage
\appendix

\includepdf[
    pages=1,
    scale=1,
    pagecommand={
        \thispagestyle{plain}
        \mysection{Paper Model Instructions}
    }
]{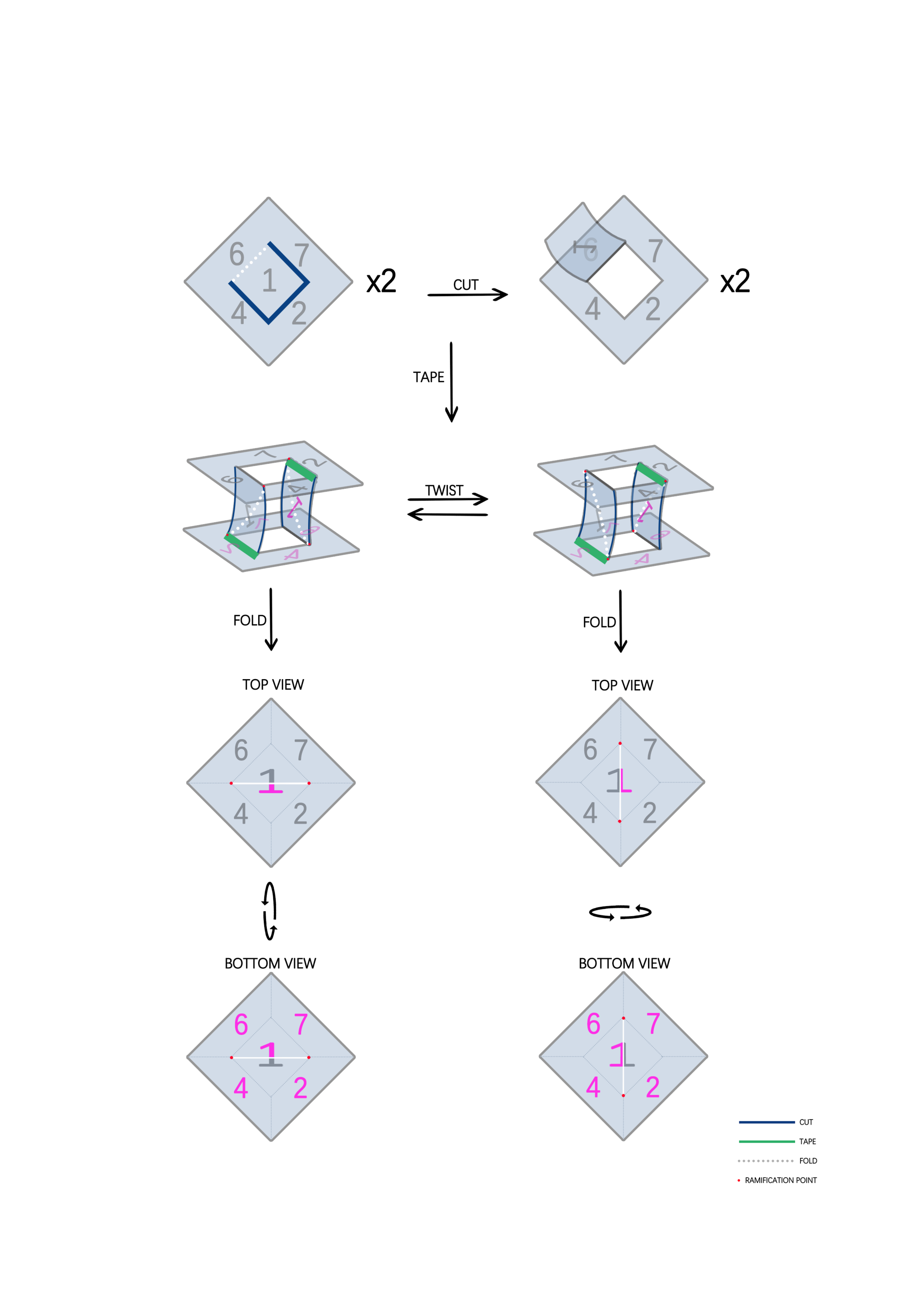}

\newpage
\mysection{Mathematica Animation Code}

\footnotesize
\begin{multicols}{2}
\begin{verbatim}
Clear["Global`*"]

A = 0.5;

s[t_] :=
 Piecewise[{
   {A (1 - 2 t), t <= 1/2},
   {I A (2 t - 1), t > 1/2}
 }];

f[z_, t_] := z^3 - 3 s[t]^2 z;

Manipulate[

 Module[
  {
   critZ,
   surface,
   points3D
  },

  critZ =
   If[Abs[s[t]] < 10^-6,
    {0},
    {s[t], -s[t]}
   ];

  surface =
   ParametricPlot3D[

    {
     Re[f[x + I y, t]],
     Im[f[x + I y, t]],
     x + y
    },

    {x, -1.2, 1.2},
    {y, -1.2, 1.2},

    PlotStyle ->
     Directive[
      LightBlue,
      Opacity[0.5]
     ],

    Mesh -> {15, 15},
    MeshStyle ->
     Directive[
      Gray,
      Thin,
      Opacity[0.5]
     ],

    PlotPoints ->
     ControlActive[25, 55],

    MaxRecursion ->
     ControlActive[0, 1],

    Axes -> False,
    Boxed -> False,

    PlotRange -> {
      {-3, 3},
      {-3, 3},
      {-3, 3}
    },

    BoxRatios -> {1, 1, 1},
    ImageSize -> Large
   ];

  points3D =
   Graphics3D[{

     Red,
     PointSize[0.03],

     Point[
      Table[
       {
        Re[f[z, t]],
        Im[f[z, t]],
        Re[z] + Im[z]
       },
       {z, critZ}
      ]
     ]
    }];

  Show[
   surface,
   points3D
  ]
 ],
 {{t, 0}, 0, 1, 0.005},
 TrackedSymbols :> {t}
]
\end{verbatim}
\end{multicols}

\newpage
\clearpage

\gdef\currentsectiontitle{References}
\thispagestyle{mainstyle}

\phantomsection
\addcontentsline{toc}{section}{References}

\bibliographystyle{amsalpha}
\bibliography{references}

\end{document}